\newtheorem{theorem}{Theorem}
\newtheorem{corollary}{Corollary}[section]
\newtheorem{lemma}[corollary]{Lemma}
\newtheorem{proposition}[corollary]{Proposition}
\newcommand{\beq}{\begin{eqnarray*}}
\newcommand{\eeq}{\end{eqnarray*}}
\newcommand{\beqn}{\begin{eqnarray}}
\newcommand{\eeqn}{\end{eqnarray}}
\newcommand{\Prob} {{\mathbb P}}
\newcommand{\Pro}{{\bf P}}
\newcommand{\Z}{{\mathbb Z}}
\newcommand{\E}{{\mathbb E}}
\newcommand{\Ex}{{\bf E}}
\newcommand{\I}{{\mathbb I}}
\newcommand{\R}{{\mathbb{R}}}
\newcommand{\C}{{\mathbb C}}
\newcommand{\dist}{{\rm dist}}
\def \al {{\alpha}}
\def \de {{\delta}}
\def \ep {{\epsilon}}
\def \ga {{\gamma}}
\def \Im {{\rm Im}}
\def \Re {{\rm Re}}
\def \p {\partial}
\def \Half {{\mathbb H}}
\def \A {{\mathcal A}}
\def \ball{{\mathcal B}}
\def \be{{\bf e}}
\def \loop {{\cal L}}
\def \soup {{\cal C}}
\def \G  {{\cal G}}
\newenvironment{remark}[1][Remark]{\begin{trivlist}
\item[\hskip \labelsep {\bfseries #1}]}{\end{trivlist}}
\newenvironment{definition}[1][Definition]{\begin{trivlist}
\item[\hskip \labelsep {\bfseries #1}]}{\end{trivlist}}
\begin{document}
\title{$SLE_6$ and 2-d critical bond percolation on the square
lattice}

\author{Wang Zhou\thanks{ Department of Statistics and Data Science, National University of Singapore, Singapore 117546. Email: stazw@nus.edu.sg}\\
{\it \small National University of Singapore}}

\date{}

\maketitle

\begin{abstract}
Through the rotational invariance of the 2-d critical bond percolation exploration path on the  square lattice we express Smirnov's edge parafermionic observable as a sum of two new edge observables. With the help of these two new edge observables  we can apply the discrete harmonic analysis and conformal mapping theory to prove the convergence of the 2-d critical bond percolation exploration path on the  square lattice to the trace of $SLE_6$ as the mesh size of the lattice tends to zero.
\end{abstract}
\bigskip
\noindent {\bf Key words and Phrases:} $SLE$, exploration path, bond percolation, parafermionic observable, conformal invariance.

\smallskip
\noindent {\bf AMS 2000 subject classification:} 82B27, 60K35, 82B43, 60D05, 30C35.

\tableofcontents

\section{Introduction}
\subsection{Background}
The pioneer work in percolation dates back to~\cite{BH57}, where Broadbent and  Harnmerslev proposed an easy random model for the probability of the venter of a large porous stone getting wet after being immersed into a bucket of water. Since then, more and more papers and books have been devoted to this area. Among these work, we should mention Kestern's famous result that $1/2$ is the critical probability in bond percolation on the two dimensional square lattice. In other words, if every edge on the square lattice is removed with probability larger than $1/2$, there is no infinite open cluster; if every edge on the square lattice is removed with probability smaller than $1/2$, one can observe infinite open cluster.  Therefore, a phase transition occurs at the critical probability $1/2$ in percolation. This interesting phase transition attracts not only physicists, but also mathematicians into the area of percolation. In fact, it has
become a major stream of mathematics. One can refer to Grimmett's book~\cite{Grim99} for more background.

Although it is well-developed, there are still many open problems in percolation theory. The most well-known is perhaps the long-standing ``conformal invariance" conjecture of Aizenman and
Langlands, Pouliotthe and Saint-Aubin~\cite{LPS94}, which states that  the probabilities of some macroscopic events have conformally invariant limits. Interestingly, Schramm rephrased this conjecture in his seminal paper~\cite{Schramm}, conjecturing that the percolation exploration path converges to his conformally invariant
Schramm-Loewner evolution ($SLE$) curve as the mesh of the lattice tends to zero. $SLE$ is a one-parameter family of random processes which is obtained as the solution to Loewner's differential equation with driving function a one parameter Brownian motion. The success of $SLE$ is two-fold. It is not only a powerful tool in the determination of the intersection exponent of two dimensional Brownian motion~\cite{LSW01A, LSW01B, LSW02}, but also a unique candidate for the scaling limit in various lattice models in statistical physics at their critical point  such as the critical site percolation on the triangular lattice~\cite{Smirnov}, ~\cite{CN}, loop erased random walks and uniform spanning tree Peano paths~\cite{LSWlerw}, the level lines of the discrete Gaussian free field~\cite{SSfree},
the interfaces of the random cluster model associated with the Ising model~\cite{Smi10}.
There are good surveys on $SLE$, see for example,~\cite{wernernotes},~\cite{GKnotes},~\cite{Cardynotes},~\cite{lawlerbook}.

Although the exploration path in the critical percolation on the two-dimensional triangular lattice converges to $SLE_6$~\cite{Smirnov}, ~\cite{CN}, the picture on the general lattice, especially on the square lattice, is still unclear. This  is unexpected since the whole situation is always affected by moving one part slightly. Perhaps the main reason that there is  no proof of the critical percolation exploration path on the general lattice is due to the fact that the proof for site percolation on the triangular lattice depends heavily on the nice properties of equilateral triangle.

In this paper, we will prove that the exploration path in the critical bond percolation on the two dimensional square lattice converges to $SLE_6$.

\subsection{Definition}
In this part, we begin with the review of Dobrushin domain briefly, on which the percolation model is defined. For more details on the Dobrushin domain, we refer to Chapter 3 in~\cite{DC13}. Then we define Smirnov's edge parafermionic observable~\cite{Smi10} on  the Dobrushin domain.

We start with terminology on graph.
An edge $\text{e}$ with endvertices $x$ and $y$ is denoted by $\text{e}=<x,y>$; we say that $x$ and $y$ are adjacent vertices and write $x\sim y$. The square lattice $(\Z^2, \E)$ is the graph whose vertex set and edge set are respectively $\Z^2=\{(n,k):n\in\Z, k\in \Z\}$ and $\E=\{<x,y>: x\in \Z^2, y\in \Z^2, |x-y|=1\}$. The vertices and edges of $(\Z^2, \E)$ are called primal vertices and primal edges respectively. The dual lattice $\big((\Z^2)^*, \E^*\big)$ is the graph whose vertex set $(\Z^2)^*$ is $(1/2, 1/2)+\Z^2$, and whose edge set $\E^*$ consists of edges with end-vertices the adjacent neighbours in $(\Z^2)^*$. The vertices and edges of $\big((\Z^2)^*, \E^*\big)$ are called dual vertices and dual edges respectively. The medial lattice $\big((\Z^2)^{\diamond}, \E^{\diamond}\big)$ is the graph whose vertex set $(\Z^2)^{\diamond}$ consists of centers of edges in $\E$, and whose edge set $\E^{\diamond}$ consists of edges connecting adjacent vertices in $(\Z^2)^{\diamond}$. The vertices and edges of $\big((\Z^2)^{\diamond}, \E^{\diamond}\big)$ are called medial vertices and medial edges respectively. Edges in $\E^{\diamond}$ are oriented counterclockwise around the vertices in $\Z^2$. We will use $x,y,z$ with or without scripts to denote the primal vertices, use $x^*,y^*,z^*$ with or without scripts to denote the dual vertices, and use $v,w$ with or without scripts to denote the medial vertices. To emphasize the primal edge, we use $v_E$ to denote the center of $E$. We also use $E_v$ to denote the primal edge with center $v$.

Let $a^{\diamond}$ and $b^{\diamond}$ be two different medial-vertices. Suppose $\partial_{ab}^{\diamond}=\{a^{\diamond}=v_0\sim v_1\sim\cdots\sim v_n=b^{\diamond}\}$,
$\partial_{ba}^{\diamond}=\{a^{\diamond}=w_0\sim w_1\sim\cdots\sim w_m=b^{\diamond}\}$ are two edge-avoiding paths in the medial lattice from $a^{\diamond}$ to $b^{\diamond}$ such that $\partial_{ab}^{\diamond}$ goes counterclockwise and $\partial_{ba}^{\diamond}$ goes clockwise. In addition,  $\partial_{ab}^{\diamond}\cap \partial_{ba}^{\diamond}=\{a^{\diamond}, b^{\diamond}\}$.  Let $\Omega^{\diamond}$ be the subgraph of $(\Z^2)^{\diamond}$ whose vertices are on the closure of the set enclosed by $\partial_{ab}^{\diamond}$ and $\partial_{ba}^{\diamond}$. Then $(\Omega^{\diamond},a^{\diamond}, b^{\diamond})$ is called a medial Dobrushin domain. The edge set and vertex set of $\Omega^{\diamond}$ are denoted as $E_{\Omega^{\diamond}}$ and $V_{\Omega^{\diamond}}$ respectively. Clearly both $a^\diamond$ and $b^\diamond$ have three adjacent medial edges in $E_{\Omega^{\diamond}}$. The fourth medial-edge incident to $a^\diamond$ and $b^\diamond$ will be denoted by $e_a$ and $e_b$ respectively.
$\partial\Omega^{\diamond}$ is the set of medial vertices in $\partial_{ab}^{\diamond}\cup\partial_{ba}^{\diamond}$ which have less than four medial edges in $E_{\Omega^{\diamond}}$.

Let $\Omega$ be the subgraph of $\Z^2$ whose edge set consists of edges in $\E$ passing through end-vertices of medial-edges in $E_{\Omega^{\diamond}}\setminus \partial_{ab}^{\diamond}$, and whose vertex set consists of end-vertices of these edges. Let $a$ be the end-vertex of the edge in $\E$ with center $a^{\diamond}$ such that $a\notin \Omega^{\diamond}$, $b$ the end-vertex of the edge in $\E$ with center $b^{\diamond}$ such that $b\notin \Omega^{\diamond}$. Denote by $\p_{ba}$ the set of edges in $\E$ corresponding to medial vertices in $\p \Omega^{\diamond}$, which are also end-vertices of medial edges in $\p_{ba}^{\diamond}$, and set $\p_{ab}=\p \Omega\setminus \p_{ba}$. Then $(\Omega, a,b)$ is called a primal Dobrushin domain.  Its edge set is denoted by $E_\Omega$.

Similarly, one can define a dual Dobrushin domain  $(\Omega^*, a^*, b^*)$ by changing $\Z^2$, $\Omega$, $a$, $b$ in the above paragraph to $(\Z^2)^*$, $\Omega^*, a^*, b^*$ respectively.  Its edge set is denoted by $E_{\Omega^*}$.

For $\delta>0$, on the three rescaled lattices $(\de\Z^2,\de\E)$, $\big(\de(\Z^2)^*,\de\E^*\big)$, $\big(\de (\Z^2)^{\diamond},\de\E^\diamond\big)$, the Dobrushin domains will be denoted by $(\Omega_\de^{\diamond},a_\de^{\diamond}, b_\de^{\diamond})$, $(\Omega_\de, a_\de,b_\de)$, $(\Omega_\de^*, a_\de^*, b_\de^*)$ respectively. Their respective boundaries are $\p^\diamond_{ab,\de}$, $\p^\diamond_{ba,\de}$, $\p_{ab,\de}$, $\p_{ba,\de}$, $\p^*_{ab,\de}$, and $\p^*_{ba,\de}$.

The Dobrushin boundary condition is defined by taking the primal edges in $\p_{ba,\de}$ (also in $\de\E$) to be open, the dual edges in $\p_{ab,\de}^*$ (also in $\de\E^*$) to be dual-open. Now each edge in $\Omega_\de$ is made open or dual-open with probability $1/2$ independently. Equivalently, each dual edge in  $\Omega_\de^*$ is made dual-open or dual-closed with probability $1/2$ independently. This induces a product probability measure $\Pro$ on  $(\Omega_\de, E_{\Omega_\de})$,
$$
\Pro(\omega)=(1/2)^{|E_{\Omega_\de}\setminus \p_{ba,\de}|}.
$$
 The expectation with respext to $\Pro$ will be denoted by $\Ex$. Then there are two biggest disjoint loops consisting of open edges and dual-open edges respectively. One loop contains $\p_{ba,\de}$ and the other contains $\p_{ab,\de}^*$. Starting from $a_\de^{\diamond}$, there is a curve $\gamma_\de$ consisting of medial edges. When it arrives at a vertex in $\Omega_\de^{\diamond}$,  $\gamma_\de$ always make a $\pm \pi/2$ turn not to cross the open or dual-open edge through vertex. This $\gamma_\de$ is called the exploration path in percolation.   We index $\ga_\de$ by the number of $\pm \pi/2$ turns the exploration path has made, so $\ga_\de(j)$ can be understood as the $j$th medial vertex $\ga_\de$ has visited in $\Omega_\de^\diamond$.

The winding $W_{\gamma_\de}(e_1,e_2)$ of $\gamma_\de$ between two medial-edges $e_1$ to $e_2$ is defined as the total signed rotation in radians that $\gamma_\de$ makes from the center of $e_1$ to that of $e_2$. Smirnov's edge parafermionic observable~\cite{Smi10} for any medial edge $e$ in percolation model is defined as
\begin{equation} \label{para}
F(e)=\Ex\exp\big(\frac i3 W_{\gamma_\de}(e,e_b)\I(e\in \gamma_\de)\big),
\end{equation}
where $e_b$ is the unit vector starting at $b_\de^\diamond$ and pointing towards the outside of $\Omega_\de$. So $e_b$ is also the direction of the medial edge adjacent to $b_\de$ pointing towards the outside of $\Omega_\de$.

Consider $v\in \Omega_\de^\diamond$ with four incident medial edges $A, B, C$ and $D$ indexed in the counterclockwise order such that $A$ and $C$ are pointing towards $v$. A well-known property of Smirnov's edge parafermionic observable~\cite{Smi10} is
\begin{equation} \label{CR}
F(A)-F(C)=i\big(F(B)-F(D)\big).
\end{equation}

We also define
\beq
F(v)&=&e_b^{-1/3}\big(e^{-i\pi/4}\frac{F(A)+F(C)}{2}+e^{i\pi/4}\frac{F(B)+F(D)}{2}\big),\\
F^*(v)&=&e_b^{-1/3}\big(e^{i\pi/4}\frac{F(A)+F(C)}{2}+e^{-i\pi/4}\frac{F(B)+F(D)}{2}\big).
\eeq
if $v\in V_{\Omega_\de^\diamond\setminus \p\Omega_\de^\diamond}$;
and
$$
F(v)=e_b^{-1/3}\big(e^{-i\pi/4}F(A)+e^{i\pi/4}F(D)\big)
 $$
if $v\in \p\Omega_\de^\diamond\cap V_{\Omega_\de^\diamond}$ and neither $B$ nor $C$ is in $E_{\Omega_\de^\diamond}$;
and
$$
F(v)=e_b^{-1/3}\big(e^{-i\pi/4}F(C)+e^{i\pi/4}F(B)\big)
 $$
if $v\in \p\Omega_\de^\diamond\cap V_{\Omega_\de^\diamond}$ and neither $A$ nor $D$ is in $E_{\Omega_\de^\diamond}$. Other situations can be handled similarly.

\subsection{Main results}
The boundary of  Dobrushin domain in this paper should satisfy the following technical {\it Condition} $\mathrm{C}$. Essentially, in order to obtain the convergence of $\ga_\de$ to $SLE_6$ as $\de\to 0$, we have to assume that $\p\Omega_\de^\diamond$ consists of line segments whose length is of the order $(\log\de^{-1})^{-1}$.

\

\noindent{\it Condition} $\mathrm{C}$.
 $\p_{ba,\de}$ consists of line segments, $\mathrm{L}_1$, $\mathrm{L}_2$, $\cdots$, $\mathrm{L}_{\mathrm{n}}$ in the counterclockwise order such that the Euclidean length $\mathrm{l}_j$ of each line segment $\mathrm{L}_j$ is at least $r_\de:=(\log \de^{-1})^{-1}$.   Also $\p_{ab,\de}^*$ consists of line segments, $\mathrm{L}_1^*,$, $\mathrm{L}_2^*$, $\cdots$, $\mathrm{L}_{\mathrm{n}^*}^*$ in the counterclockwise order such that the Euclidean length $\mathrm{l}_{j^*}^*$ of each line segment $\mathrm{L}_{j^*}^*$ is at least $r_\de$. The ends of $\mathrm{L}_j$ are denoted by $\mathrm{v}_j$ and $\mathrm{v}_{j+1}$ respectively, so that $\mathrm{v}_1=b_\de$, $\mathrm{v}_{\mathrm{n}+1}=a_\de$. The ends of $\mathrm{L}_{j^*}^*$ are denoted by $\mathrm{v}_{j^*}^*$ and $\mathrm{v}_{j^*+1}^*$ respectively, so that $\mathrm{v}_1^*=a^*_\de$, $\mathrm{v}_{\mathrm{n}^*+1}^*=b^*_\de$. Suppose the arguments of the two complex numbers $(\mathrm{v}_{\mathrm{n^*+1}}^*-\mathrm{v}_{\mathrm{n^*}}^*)/(\mathrm{v}_1-\mathrm{v}_2)$ and $(\mathrm{v}_\mathrm{n}-\mathrm{v}_{\mathrm{n}+1})/(\mathrm{v}_2^*-\mathrm{v}_1^*)$  are $\pi/2$.
In addition,  the line segment $[v_j,v_j']$ with length $r_\de$ doesn't share any point with $\cup_{k=1,k\not=j}^{\mathrm{n}}\mathrm{L}_j$ or $\p_{ab,\de}^*$, and  the line segment $[v^*_{j^*},(v^*_{j^*})'] $ with length $r_\de$ doesn't share any point with $\cup_{k=1,k\not=j^*}^{\mathrm{n^*}}\mathrm{L}_k^*$ or $\p_{ba,\de}$, where $v_j$ is any point in $\mathrm{L}_j$ such that  the line segment $[v_j,v_j']$ is perpendicular to $\mathrm{L}_j$ and contained in $\Omega_\de$, and $v_{j^*}^*$ is any point in $\mathrm{L}_{j^*}^*$ such that  the line segment $[v^*_{j^*},(v^*_{j^*})'] $ is perpendicular to $\mathrm{L}_{j^*}^*$ and contained in $\Omega_\de^*$.

\begin{remark}
{\it Condition} $\mathrm{C}$  can be satisfied by considering the Dobrushin domain $\Omega_{\de'}$. We split each edge in $E_{\Omega_{\de'}}$ into sub-edges with length $\de$ such that $\de'=r_\de$. So $\Omega_{\de'}$ becomes $\Omega_\de$ with the required property on the boundary. Based on this assumption, one has conclusion that $\mathrm{n}+\mathrm{n}^*$ is of the order $r_\de^{-2}$ if the area of $\Omega_\de$ is bounded.
\end{remark}

\begin{remark}
It may happen that part of $\mathrm{L}_j$ overlaps with part of $\mathrm{L}_{j+1}$ for $j=1,\cdots,\mathrm{n}-1$.  In other words, part of $\mathrm{L}_j$ is traversed twice if one walks along $\p_{ba,\de}$ from $b_\de$ to $a_\de$. Similarly,  it may happen that part of $\mathrm{L}_{j^*}$ is equal to part of $\mathrm{L}_{j^*+1}$  for $j^*=1,\cdots,\mathrm{n}^*-1$..   The angle between different line segments  is either $\pi/2$ or $3\pi/2$ or $2\pi$.
\end{remark}

Our first result is on the conformal invariance of Smirnov's edge parafermionic observable.
\begin{theorem} \label{paraconverge}
Suppose $\Omega_0$ is a bounded simply connected domain in $\C$ with two points $a$ and $b$ on its boundary. Assume that $(\Omega_\de^\diamond,a_\de^\diamond,b_\de^\diamond)$ is a family of Dobrushin domains converging to $(\Omega_0,a,b)$ in the Carath\'eodory sense as $\de\to 0$. Under {\it Condition} $\mathrm{C}$, we have
$$
\de^{-1/3}\mathrm{c}_\de^{-1/3}F(v) \to (\mathbf{F}'(v)\big)^{1/3}
$$
uniformly over $v\in  \mathcal{K}$ for any compact subset $\mathcal{K}$ of $\Omega_0$, where $\mathrm{c}_\de$ is defined in \eqref{c-de-ci} of Section \ref{con-of-int} such that $0<\liminf_{\de\to 0} \mathrm{c}_\de\leq \limsup_{\de\to 0} \mathrm{c}_\de<\infty$, and $\mathbf{F}$ is any conformal map from $\Omega_0$ to $\R\times (0,1)$ mapping $a$ to $-\infty$ and $b$ to $\infty$.
\end{theorem}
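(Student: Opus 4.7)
The plan is to follow Smirnov's parafermionic convergence strategy, adapted to the square-lattice Dobrushin domain. First, from \eqref{CR} at every interior medial vertex, together with the vertex definitions of $F(v)$ and $F^*(v)$, I would show that a certain discrete one-form built from $F$ is approximately closed: its contour sum around each medial face vanishes up to an error controlled by the new two-observable decomposition advertised in the abstract. Integrating this one-form yields a real-valued potential $H_\de$ on $V_{\Omega_\de^\diamond}$, which will play the role of a discrete analogue of $\Im(\mathbf{F})$. A careful pairing of the primal and dual discrete Cauchy--Riemann equations should give that $H_\de$ is approximately discrete harmonic with respect to a standard (weighted) square-lattice Laplacian.

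Next, I would analyze the boundary values of $H_\de$. On $\p_{ba,\de}$ the exploration path $\gamma_\de$ runs alongside the open primal edges, so the winding $W_{\gamma_\de}$ at a boundary medial edge is deterministic and equals, modulo $2\pi$, the tangent angle of $\p_{ba,\de}$ there; the parallel statement holds on $\p_{ab,\de}^*$. Substituting into \eqref{para} forces $F$ along the boundary to be a unit-modulus phase times a power of the local tangent direction, and combining with the primitive construction forces $H_\de$ to be constant along each straight segment $\mathrm{L}_j$ or $\mathrm{L}_{j^*}^*$. After choosing an integration base point one obtains $H_\de\equiv 0$ on $\p_{ba,\de}$ and $H_\de\equiv \mathrm{c}_\de$ on $\p_{ab,\de}^*$, with $\mathrm{c}_\de$ a normalization whose two-sided boundedness will fall out of the identification of the limit. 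Condition~C is essential here: the lower bounds $\mathrm{l}_j,\mathrm{l}_{j^*}^*\ge r_\de=(\log\de^{-1})^{-1}$ guarantee that the constant-boundary-value approximation holds uniformly up to the boundary, while the argument conditions at $a_\de,b_\de$ ensure that the phases on the two arcs are compatible.

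The third step is precompactness and identification of the limit. The discrete maximum principle bounds $\mathrm{c}_\de^{-1}H_\de$ in $L^\infty$ uniformly in $\de$, and standard discrete Harnack or Beurling-type estimates then give equicontinuity on every compact subset $\mathcal{K}\subset\Omega_0$. Any subsequential Carath\'eodory limit $H_0$ is harmonic on $\Omega_0$ with boundary values $0$ on the arc from $b$ to $a$ and $1$ on the arc from $a$ to $b$; Riemann-mapping uniqueness identifies $H_0=\Im(\mathbf{F})$, where $\mathbf{F}:\Omega_0\to\R\times(0,1)$ is the strip uniformization fixed in the statement, and the boundedness of $\mathrm{c}_\de$ drops out. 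Since the limit is unique, no subsequence is needed. Finally, $F$ being essentially a discrete gradient of $H_\de$ on a medial-plaquette scale, convergence of $\mathrm{c}_\de^{-1}H_\de$ to $\Im(\mathbf{F})$, together with the $\de^{1/3}$ scaling reflecting the parafermionic spin in \eqref{para}, yields the claimed $\de^{-1/3}\mathrm{c}_\de^{-1/3}F(v)\to(\mathbf{F}'(v))^{1/3}$ uniformly on compact subsets.

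The main obstacle is the first step. On the square lattice, unlike the triangular one, the single relation \eqref{CR} does not by itself produce a closed discrete one-form whose primitive is harmonic with respect to a single canonical Laplacian. The resolution announced in the abstract is to exploit the rotational invariance of the critical bond percolation model to decompose $F$ into two new edge observables; each contributes a different piece of the primitive, and only the combination gives a truly harmonic primitive with clean boundary data. Making this decomposition rigorous, quantifying the error terms up to the boundary under Condition~C, and carrying out the passage to the limit with constants independent of $\de$, is where the real work lies; the precompactness, boundary identification, and Riemann-mapping identification of the limit in the later steps then follow a template by now standard in Smirnov-type convergence results.
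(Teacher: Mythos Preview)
Your outline follows the right template, but three concrete points separate it from a working proof. First, the one-form you need is built from $\mathrm{F}^3$ (the modified observable cubed), not from $F$: the spin of the percolation parafermion is $1/3$, so only the cube has the chance to be an honest discrete holomorphic $1$-form. In the paper the primitive is $\Im\int\de^{-1}\mathrm{F}(z)^3\,dz$, and the final step is $\de^{-1}\mathrm{F}^3\to \mathrm{c}\,\mathbf{F}'$, hence $\de^{-1/3}\mathrm{F}\to \mathrm{c}^{1/3}(\mathbf{F}')^{1/3}$; the $\de^{1/3}$ is the cube root of the discrete-derivative factor, not a separate spin scaling. Second, the approximate closedness error coming from the decomposition is of order $(\de/d_v)^2$, which is \emph{not} summable up to the boundary. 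You therefore cannot run the argument on $\Omega_\de$; the paper retreats to an inner domain $\Omega_\de^{in}$ at distance $\de^{1/2+\ep}$ from $\p\Omega_\de$, and the boundary condition on $\p\Omega_\de^{in}$ is not the exact ``constant along each segment'' you describe but an approximate one, obtained from a separate reflectional-invariance analysis near straight pieces of $\p_{ba,\de}\cup\p_{ab,\de}^*$ (this is where {\it Condition}~$\mathrm{C}$ is really used, to make those error sums $\preceq\de^{\ep-\varepsilon}r_\de^{-4}$).

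Third, the two-sided boundedness of $\mathrm{c}_\de$ does not ``drop out'' of the limit identification; that reasoning is circular, since without $0<\liminf\mathrm{c}_\de\le\limsup\mathrm{c}_\de<\infty$ you cannot even normalize. The paper obtains it by a genuinely independent input: under an auxiliary {\it Condition}~$\mathrm{C}'$ (a macroscopic first boundary segment near $b$) one proves boundary convergence of $\de^{-1/3}F(v)$ on $\mathrm{L}_1$, and then the half-plane one-arm estimate $\Pro(v\in\gamma_\de)\asymp\de^{1/3}$ pins $\mathrm{c}$ to a finite nonzero number; a short argument shows $\mathrm{c}_\de$ is insensitive to {\it Condition}~$\mathrm{C}'$. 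You also need, and do not mention, the step $\mathrm{F}_i\to 0$ (equivalently $\mathrm{c}_i=0$), which the paper extracts from a symmetry argument on a square domain with diagonal $a_\de^\diamond b_\de^\diamond$; without it you only get convergence of $\mathrm{F}$, not of $F=\mathrm{F}+\mathrm{F}_i$.
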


The next result shows the convergence of $\ga_\de$ to $SLE_6$ under {\it Condition} $\mathrm{C}$.

\begin{theorem} \label{pathconverge}
Suppose $\Omega_0$ is a bounded simply connected domain in $\C$ with two points $a$ and $b$ on its boundary. Assume that $(\Omega_\de^\diamond,a_\de^\diamond,b_\de^\diamond)$ is a family of Dobrushin domains converging to $(\Omega_0,a,b)$ in the Carath\'eodory sense as $\de\to 0$. Under {\it Condition} $\mathrm{C}$, $\ga_\de$ converges weakly to $SLE_6$ in the metric space $(\mathcal{M},\mathrm{d})$ as $\de\to 0$, where $\mathcal{M}$ is the set of continuous parametrized curves, and $\mathrm{d}$ is the metric defined on $\mathcal{M}$ by
$$
\mathrm{d}(\ga_1,\ga_2)=\inf_{\varphi_1:[0,1]\to I,\varphi_2:[0,1]\to J, \varphi_1 \ \text{and} \ \varphi_2 \ \text{are increasing.}}\sup_{t\in[0,1]}|\ga_1(\varphi_1(t))-\ga_2(\varphi_2(t))|
$$
with $\ga_1:I\rightarrow \C$ and $\ga_2:J\rightarrow \C$ being two continuous curves in $\mathcal{M}$. Here $I$ and $J$ can be $(0,\infty)\cup \{\infty\}$.
\end{theorem}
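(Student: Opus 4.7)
The plan is to follow the standard three-step scheme of Smirnov, Camia--Newman, and Kemppainen--Smirnov for identifying a lattice interface with SLE. First I would establish precompactness of $\{\ga_\de\}_{\de>0}$ in $(\mathcal{M},\mathrm{d})$. Second I would argue that any subsequential weak limit is almost surely the Loewner chain of a continuous driving function. Third---and this is where Theorem~\ref{paraconverge} enters decisively---I would use the conformal covariance of the limiting parafermionic observable to identify that driving function as $\sqrt{6}\,B_t$.

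For tightness I would verify the Kemppainen--Smirnov criterion (equivalently, the Aizenman--Burchard condition), which reduces to a uniform power-law upper bound on the probability that $\ga_\de$ makes an unforced crossing of a thin annulus, both in the bulk and near $\p\Omega_\de$. Such bounds for critical bond percolation on $\Z^2$ follow from the classical RSW theorems of Russo, Seymour--Welsh, and Kesten, which supply all multi-arm estimates needed in the interior. Condition~$\mathrm{C}$ ensures that $\p\Omega_\de^\diamond$ is locally straight on the scale $r_\de=(\log\de^{-1})^{-1}$, so reflection reduces the boundary estimates to interior ones and the RSW bounds transfer uniformly up to the boundary. This produces subsequential limits along any sequence $\de_k\to 0$.

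Given such a subsequential limit $\ga$, I would conformally uniformize $\Omega_0$ onto the strip $\R\times(0,1)$ via the map $\mathbf{F}$ of Theorem~\ref{paraconverge} and invoke Kemppainen--Smirnov once more to conclude that the image $\mathbf{F}(\ga)$ generates a chordal Loewner chain with continuous driving function $W_t$. To identify $W_t$, I would exploit the fact that, by the discrete domain Markov property together with the defining expectation \eqref{para}, the observable $F$ on the slit domain $\Omega_\de\setminus\ga_\de[0,\tau_\de]$ is an exact martingale in any stopping time $\tau_\de$ of the exploration. Applying Theorem~\ref{paraconverge} to these random slit domains (parametrized by half-plane capacity in the strip) and passing to the limit converts this discrete martingale into the statement that an explicit conformally covariant function of spin $1/3$, built from the Loewner map $\mathbf{g}_t$ and $W_t$, is a local martingale. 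An It\^o's-formula computation then forces $dW_t=\sqrt{\kappa}\,dB_t$ with $\kappa=6$, the unique $\kappa$ for which Smirnov's spin-$1/3$ observable is a martingale under $SLE_\kappa$. Since chordal $SLE_6$ in the strip is unique in law, tightness plus unique identification upgrades subsequential convergence to full weak convergence of $\ga_\de$.

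The main obstacle I anticipate is step three: Theorem~\ref{paraconverge} is formulated for a fixed target domain $\Omega_0$ satisfying Condition~$\mathrm{C}$ on compact subsets bounded away from the boundary, whereas the slit domains $\Omega_\de\setminus\ga_\de[0,\tau_\de]$ are random, acquire a rough fractal boundary along the explored path, and cannot literally preserve Condition~$\mathrm{C}$ near the tip. Overcoming this requires (a) a Beurling-type estimate---available from RSW---to control $F$ near the tip of $\ga_\de$, and (b) a uniform version of Theorem~\ref{paraconverge} that is stable under slitting, so that convergence of the observable can be transferred to the time-evolving slit geometry and the martingale identity survives in the continuum limit.
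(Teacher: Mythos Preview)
Your three-step scheme (tightness via RSW/Aizenman--Burchard, Loewner chain property, martingale identification) matches the paper's architecture, and you correctly flag the slit-domain problem as the crux. However, the paper's resolution of that obstacle differs substantially from what you sketch, and the proof of Theorem~\ref{pathconverge} itself is not a direct application of Theorem~\ref{paraconverge} to slit domains.

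Two points you miss. First, Theorem~\ref{paraconverge} gives convergence only on compacts of $\Omega_0$, whereas the martingale identification in the paper (Proposition~\ref{subseq}) evaluates the observable at a point $v$ \emph{on the boundary segment} $\mathrm{L}_1$ near $b_\de$, and then expands $\bigl(g_t'(w)/(g_t(w)-W_t)\bigr)^{1/3}$ as $w=\varphi(v)\to\infty$ to extract the martingales $W_t$ and $W_t^2-6t$. Boundary convergence is supplied by Proposition~\ref{convFb}, which in turn requires the extra \emph{Condition}~$\mathrm{C}'$ (that $\mathrm{L}_1$ has macroscopic length). Second, to handle the rough slit boundary the paper does not prove a ``uniform version of Theorem~\ref{paraconverge} stable under slitting''; instead it builds, for each stopping time, an explicit approximating Dobrushin domain $\hat\Omega_{\de,n}$ whose boundary is smoothed to scale $r_\de$ so that Condition~$\mathrm{C}$ is restored, and shows via a direct one-arm/RSW argument (Lemma~\ref{samelimit}) that the observable in $\Omega_{\de,n}$ and in $\hat\Omega_{\de,n}$ differ by $o(\de^{1/3})$. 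This yields Proposition~\ref{pathconvergeC'}: convergence to $SLE_6$ under Conditions~$\mathrm{C}$ and~$\mathrm{C}'$ jointly.

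The actual proof of Theorem~\ref{pathconverge} is then a short coupling argument: one takes an auxiliary domain $(\Omega_\de^\diamond)'$ satisfying both $\mathrm{C}$ and $\mathrm{C}'$ that agrees with $\Omega_\de^\diamond$ outside $B(b';2\mathrm{l}_1')$, couples the two configurations to coincide there, and uses the one-arm estimate \eqref{one} to show the exploration paths are $\mathrm{d}$-close with probability $1-O((\mathrm{l}_1')^{\al/2})$; sending $\de\to0$ and then $\mathrm{l}_1'\to0$ transfers the conclusion of Proposition~\ref{pathconvergeC'}. Your proposed fixes---a Beurling estimate near the tip and an unspecified uniform-in-slit version of Theorem~\ref{paraconverge}---do not substitute for these two concrete constructions (the smoothed slit domain and the $\mathrm{C}\Rightarrow\mathrm{C}'$ coupling), and as written leave the identification step incomplete.
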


Based on the convergence of $\ga_\de$ to $SLE_6$, one can derive Cardy's formula as a corollary. In order to present this result, we need some notation. Let $c_\de^\diamond$ and $d_\de^\diamond$ be two points in $\p\Omega_\de^\diamond$ such that $c_\de^\diamond\in \p_{ba,\de}^\diamond$, $d_\de^\diamond\in \p_{ab,\de}^\diamond$,  $c_\de^\diamond\to c$ and $d_\de^\diamond\to d$ as $\de\to 0$ with both $c$ and $d$ in $\p\Omega_0$. Suppose neither $c$ or $d$ is equal to $a$ or $b$. Denote the boundary of $\Omega_\de^\diamond$ which is between $a_\de^\diamond$ and $d_\de^\diamond$ and in $\p_{ab,\de}^\diamond$ by $\p_{1,\de}$, the boundary of $\Omega_\de^\diamond$ which is between $b_\de^\diamond$ and $c_\de^\diamond$ and in $\p_{ba,\de}^\diamond$ by $\p_{2,\de}$.

\begin{corollary} \label{card}
Suppose $\Omega_0$ is a bounded simply connected domain in $\C$ with two points $a$ and $b$ on its boundary. Assume that $(\Omega_\de^\diamond,a_\de^\diamond,b_\de^\diamond)$ is a family of Dobrushin domains converging to $(\Omega_0,a,b)$ in the Carath\'eodory sense as $\de\to 0$.  Under {\it Condition} $\mathrm{C}$, as $\de\to 0$, the probability that there is a dual-open cluster connecting $\p_{1,\de}$ and $\{z: \dist(z,\p_{2,\de})\leq \de/2\}$ has a limit which is given by Cardy's formula.
\end{corollary}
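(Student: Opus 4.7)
The plan is to leverage Theorem~\ref{pathconverge} to transfer the discrete crossing event to an event on the $SLE_6$ scaling limit, and then to evaluate the resulting probability using the locality property of $SLE_6$, reproducing Cardy's formula as in the Camia--Newman argument~\cite{CN} for site percolation on the triangular lattice.

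The first step is to reformulate the discrete crossing event in terms of the exploration path. Denote the event in the corollary by $A_\de$. By the standard interface--cluster duality in the Dobrushin setup, $\ga_\de$ separates the dual-open cluster attached to $\p_{ab,\de}^*$ (which contains $\p_{1,\de}$) from the open cluster attached to $\p_{ba,\de}$ (which lies along $\p_{2,\de}$); consequently $A_\de$ coincides, up to lattice-scale discrepancies that vanish in probability as $\de\to 0$, with the event that $\ga_\de$ reaches a $\de$-neighborhood of $c_\de^\diamond$ before it reaches a $\de$-neighborhood of $d_\de^\diamond$. Condition~$\mathrm{C}$ on the boundary ensures that this reformulation is clean near the two marked points $c_\de^\diamond$ and $d_\de^\diamond$.

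The second step is to pass to the limit. By Theorem~\ref{pathconverge}, $\ga_\de$ converges weakly to the trace $\ga$ of $SLE_6$ from $a$ to $b$ in $\Omega_0$ in the metric $\mathrm{d}$. Since for $\kappa=6\in(4,8)$ the $SLE$ trace is a.s.\ continuous up to and including the boundary endpoints and hits each macroscopic boundary arc with positive probability, the hitting-order event is a continuity functional of the limiting curve, and
$$
\Prob(A_\de) \;\longrightarrow\; \Prob\big(\tau_c < \tau_d\big)\qquad\text{as } \de\to 0,
$$
where $\tau_c,\tau_d$ are the first times the $SLE_6$ hull swallows $c$, respectively $d$. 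The third step evaluates this probability: conformally mapping $\Omega_0$ to the upper half-plane $\Half$ with $a\mapsto 0$ and $b\mapsto\infty$ so that $c,d$ are sent to two distinct real points on a common side of the origin, the locality property of $SLE_6$ reduces the question to chordal $SLE_6$ in $\Half$, and the standard martingale computation (Schramm's formula for $\kappa=6$) yields the hypergeometric expression of Cardy, which conformal invariance re-expresses as Cardy's formula on $\Omega_0$.

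I expect the main obstacle to be the first step: rigorously identifying $A_\de$ with a hitting-order event of $\ga_\de$, uniformly in $\de$. This amounts to controlling the behaviour of $\ga_\de$ in a lattice-scale neighborhood of the boundary marked points $c_\de^\diamond$ and $d_\de^\diamond$, where Condition~$\mathrm{C}$ provides the required boundary regularity and the tightness-type arguments that feed into Theorem~\ref{pathconverge} supply the needed uniform estimates. Once this identification is in hand, the second and third steps are by now standard.
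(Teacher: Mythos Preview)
Your proposal is correct and is exactly the route the paper intends: the paper's own proof of Corollary~\ref{card} is the single sentence ``This is a direct consequence of Theorem~\ref{pathconverge},'' and your three steps (crossing $\leftrightarrow$ hitting order for $\ga_\de$, weak convergence to $SLE_6$, Cardy's formula from the $SLE_6$ swallowing computation) spell out precisely how that consequence is drawn. The only place to be careful is, as you note, the identification in Step~1---the clean statement is that $A_\de$ coincides with the event that $\ga_\de$ hits (a $\de/2$-neighborhood of) the arc $\p_{2,\de}$ before the arc from $d_\de^\diamond$ to $b_\de^\diamond$, which in the limit becomes $\tau_c<\tau_d$ for the $SLE_6$ hull; once Theorem~\ref{pathconverge} is in hand this is the standard Camia--Newman continuity argument.
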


Before we end this subsection, let us briefly describe the proof strategy of our main results. Our main tool in the proof are two modified versions of Smirnov's edge parafermionic observable. The main reason that we don't work on Smirnov's edge parafermionic observable directly is that this observable doesn't have the property that $F(A)+F(C)=F(B)+F(D)$ in the percolation model while it is correct for the Fortuin-Kasteleyn Ising model. So we construct two new edge observables $\mathrm{F}(e)$ and $\mathrm{F}_i(e)$ such that $F(e)=\mathrm{F}(e)+\mathrm{F}_i(e)$,  based on the rotational property of percolation on the square lattice. The important property of $\mathrm{F}(e)$ and $\mathrm{F}_i(e)$ is that they  satisfy not only the discrete Cauchy-Riemann equation \eqref{CR}, but also $\mathrm{F}(A)+\mathrm{F}(C)=\mathrm{F}(B)+\mathrm{F}(D)+O(\de^2)$ and $\mathrm{F}_i(A)+\mathrm{F}_i(C)=-\mathrm{F}_i(B)-\mathrm{F}_i(D)+O(\de^2)$ when $v$ is away from the boundary. Then we define the discrete integrals $\int_{\soup}\de^{-1} \mathrm{F}(z)^3dz$ and $\int_{\soup}\de^{-1} \mathrm{F}_i(z)^3dz$.
Under {\it Condition} $\mathrm{C}$ on the boundary of Dobrushin domain, we can control the integrals of $\de^{-1} \mathrm{F}^3(z)$ and $\de^{-1} \mathrm{F}_i^3(z)$ along near-boundary squares with side length $\de$ and the discrete Laplacian of these two integrals.   With the machinery from discrete harmonic analysis and conformal mapping theory, we can prove the convergence of $\de^{-1/3}\mathrm{F}(z)$ to a holomorphic mapping and  the convergence of $\de^{-1/3}\mathrm{F}_i(z)$ to zero. Finally, we apply the martingale method to obtain the convergence of the exploration path to $SLE_6$.

\subsection{Review of arm event}
Our proof depends on several arm events in the two dimensional bond percolation. So we briefly review arm events here.

Define $\Lambda(n)$ to be the subgraph of $\Z^2$ induced by the square $[-n,n]^2$ or the open ball $B(0;n)$ with center $0$ and radius $n$. A self-avoiding path of type $0$ or $1$ connecting the inner boundary $\p_r$ to the outer boundary $\p_R$ of an annulus $\Lambda_R\setminus\Lambda_{r-1}$ is called an arm. An arm is of type $1$ if it consists of open primal edges, and of type $0$ if it consists of dual-open edges. For $k\geq 1$ and $\sigma\in \{0,1\}^k$, define $A_{\sigma}(r,R)$ to be the event that there are $k$ disjoint arms from $\p_r$ to $\p_R$ with type $\sigma_1$, $\cdots$, $\sigma_k$ in the counterclockwise order.  We also introduce   $A_1^+(0,R)$, $A_{01}^+(r,R)$ and $A_{010}^+(r,R)$ to be the
same events as $A_1(0,R)$, $A_{010}(r,R)$ and $A_{010}(r,R)$ respectively, except that the paths must lie in the upper half-plane $\Half$. Similarly, $A_{01}^{++}(r,R)$ is the same as $A_{01}(r,R)$ except that the paths must lie in the quarter plane $\{z: \Re z>0,\Im z>0\}$.
Then there are universal positive constants $c_0$, $\alpha_1$, $\alpha_2$ such that
\beqn
&&\Pro(A_1(r,R))\leq c_0 (r/R)^{\alpha_1}, \label{one} \\
&&\Pro(A_{0101}(r,R))\leq c_0(r/R)^{1+\alpha_2}, \label{four}\\
&&\Pro(A_{0011}(r,R)) \leq  c_0(r/R)^{1+\alpha_2}, \label{four-a}\\
&&\Pro(A_{01011}(r,R))\leq c_0(r/R)^{2}, \label{five}\\
&&\Pro(A_{010101}(r,R))\leq c_0(r/R)^{2+\alpha_1}, \label{six}\\
&&\Pro(A_{01}^+(r,R))\leq c_0(r/R), \label{halftwo}\\
&&\Pro(A_{010}^+(r,R))\leq c_0(r/R)^{2}, \label{halfthree}\\
&&c_0^{-1} R^{-1/3}\leq \Pro(A_1^+(0,R))\leq c_0 R^{-1/3}, \label{halfone}\\
&&\Pro(A_{01}^{++}(r,R))\leq c_0(r/R)^{1+\alpha_2}, \label{quarter}
\eeqn
where we require that the two adjacent arms with the same type should be vertex-disjoint in \eqref{four-a}.
One can refer to Lemma 4 in Chapter 7 of~\cite{BR12} for the proof of \eqref{one} by the Russo-Seymour-Welsh (RSW) method. The probability estimate of $4$-arm event  \eqref{four} is proved by Garban in Appendix B of~\cite{SS11}. While for \eqref{four-a}, suppose $O_1$, $O_2$, $DO_1$ and $DO_2$ are four respective open and dual-open arms from $\p_r$ to $\p_R$ in the counterclockwise order and there is no dual-open arm from $\p_r$ to $\p_R$ which is between $O_1$ and $O_2$ or open arm from $\p_r$ to $\p_R$ which is between $DO_1$ and $DO_2$; one can shift the edges touched or crosses by the exploration path between $O_1$ and $DO_2$ to the upper-left side by $1/2$ unit to get $A_{0101}(r+1,R-1)$. An analogous argument to the proof of Lemma  5 in~\cite{KSZ98} shows  \eqref{five}, which combined with \eqref{one} implies \eqref{six}. Note that the two adjacent arms with type $1$ can share some common vertices of open edges. The relations \eqref{halftwo}-\eqref{halfthree} are from Proposition 6.6 in~\cite{DMT20}. The result \eqref{halfone} is derived in~\cite{IkPon12}. The relation \eqref{quarter} is from Lemma 7.9 in~\cite{DMT20}.

From now on, let $\alpha=\min(\alpha_1,\alpha_2)$, and $\ep$ and $\varepsilon$ two positive small numbers such that $\ep\leq \al/100$ and $\varepsilon\leq \ep/100$.

\subsection{Notation}
We will use $\dist(\cdot, \cdot)$ to denote the Euclidean distance between two vertices, or between a vertex and a compact set. An open ball with center $v$ and radius $r_1$ is denoted by $B(v;r_1)$, whose boundary is denoted by $\partial B(v;r_1)$. An open annulus with center $v$, inner radius $r_1$ and outer radius $r_2$ is denoted by $B(v;r_1,r_2)$. The relations $a\preceq b$ and $a=O(b)$ mean there exists a universal finite constant $c>0$ such that $|a|\leq cb$. The line segment with two respective ends $z_1$ and $z_2$ is denoted by $[z_1,z_2]$ or $z_1z_2$.

For the configuration $\omega$ which leads to the exploration path $\ga_\de$, we also use the notation $\omega(\ga_\de)$ and $\ga_\de(\omega)$ to emphasize the respective $\ga_\de$ and $\omega$.  For any set of edges $\mathrm{E}\subseteq  E_{\Omega_\de}$, we define the configuration $\omega_{\setminus\mathrm{E}}$ by
  \beq
  \omega_{\setminus\mathrm{E}}(E)&=&\omega(E), E\in E_{\Omega_\de}\setminus \mathrm{E}; \\
  \omega_{\setminus\mathrm{E}}(E)&=&1-\omega(E), E\in \mathrm{E}.
  \eeq
  If $\mathrm{E}$ contains only one edge $E$, we just write $\omega_{\setminus E}$ instead of $\omega_{\setminus \{E\}}$.

For two exploration paths $\ga_1[m_1,n_1]$ and $\ga_2[m_2,n_2]$, we say that the path $\ga_1[m_1,n_1]$ crosses the path $\ga_2[m_2,n_2]$ or $\ga_2[m_2,n_2]$ crosses $\ga_1[m_1,n_1]$ if one open edge touched by $\ga_1[m_1,n_1]$  crosses one dual-open edge touched by  $\ga_2[m_2,n_2]$, or one dual-open edge touched by $\ga_1[m_1,n_1]$  crosses one open edge touched by  $\ga_2[m_2,n_2]$. We also use the notation $\ga_1[j,j+1]\subseteq_c \ga_2[m_2,n_2]$ with $j\in[m_1,n_1-1]$ to denote the event that there exists $k\in[m_2,n_2-1]$ such that $\ga_1[j,j+1]=\ga_2[k,k+1]$, This event also implies that $\ga_1[m_1,n_1]$ crosses $\ga_2[m_2,n_2]$.

For any vertex $v$ in the plane $\R^2$ or $\C$, we also use the same notation $v$ to denote the corresponding  two dimensional vector in $\R^2$ or complex number in $\C$. Write $\be_1=(1,0)$, $ \be_2=(0,1)$.

Throughout the paper, we will let $\Half^{-}=\{z: \Im z\geq 0\}$.

\subsection{Outline of the paper}
In Section \ref{TranInv}, we develop  the translational property of Smirnov's edge parafermions through constructing exploration paths. The construction depends heavily on arm events. Section \ref{rot} is devoted to the rotational property of the exploration path in percolation, which results in the modified edge parafermions in Section \ref{modi}. Then in Section \ref{confinvar}. we study different properties of the modified edge parafermions, especially the discrete line integrals and Laplacian, which guarantee the conformal invariance of the modified parafermions. Finally, in Section \ref{ConSLE6} and Section \ref{ProofMain}, we prove our main theorems. Some auxiliary results from two dimensional random walks are presented in the appendix.

\section{Translational invariance} \label{TranInv}
Let $v_0$ and $v_1$ be two medial-vertices in $\Omega_\de^{\diamond}$ such that $\dist(v_0,v_1)=\de$. Suppose $v_0$ is the common vertex of  four incident medial edges $A_0, B_0, C_0$ and $D_0$ indexed in the counterclockwise order such that $A_0$ and $C_0$ are pointing towards $v_0$, and $v_1$ is the common vertex of  four incident medial edges $A_1, B_1, C_1$ and $D_1$ indexed in the counterclockwise order such that $A_1$ and $C_1$ are pointing towards $v_1$. In addition, the left-shifted or right-shifted $A_0$ by $\de$ unit is $A_1$.  Let $d_0=\dist(v_0, \p_{ba,\de}\cup\p_{ab,\de}^*)$. Denote the two edges in $E_{\Omega_\de}$ containing $v_0, v_1$ by $E_0$ and $E_1$ respectively.   Suppose $\gamma_\de$ passes $v_0$, and there are only two medial-edges incidental to $v_0$ are in $\gamma_\de$. The loop attached to $E_0$ is denoted by $\loop_0$. Define $r_0=\max\{\dist(v_0, x): x\in \loop_0\}$.

\begin{proposition} \label{Tran}
Given any $\ep>0$, if $\de$ is small enough and $\min\big(\dist(v_0,a_\de^\diamond),\dist(v_0,b_\de^\diamond)\big)\geq \de^{1-\ep}$,
\beqn
|F(A_0)-F(A_1)| &\preceq& (\de/d_0)^{1+\al-\ep}, \label{TrA}\\
|F(B_0)-F(B_1)| &\preceq& (\de/d_0)^{1+\al-\ep}, \label{TrB}\\
|F(C_0)-F(C_1)| &\preceq& (\de/d_0)^{1+\al-\ep}, \label{TrC}\\
|F(D_0)-F(D_1)| &\preceq& (\de/d_0)^{1+\al-\ep}. \label{TrD}
\eeqn
\end{proposition}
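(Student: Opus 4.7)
Proof proposal. The plan is to build, for each configuration $\omega$ with $A_0\in\ga_\de(\omega)$, a paired configuration $\omega'$ with $A_1\in\ga_\de(\omega')$ so that $\ga_\de(\omega)$ and $\ga_\de(\omega')$ agree outside a ball $B(v_0;r)$ of intermediate radius $r$ to be optimized in $[\de^{1-\ep/2},d_0/2]$. With such a pairing in hand, $F(A_0)-F(A_1)$ splits into a ``good-set'' contribution, in which the two paths differ only by a local reorganization inside $B(v_0;r)$, and a ``bad-set'' contribution, in which the pairing breaks down. The good-set contribution will be bounded using the translation invariance of Bernoulli percolation restricted to $B(v_0;r)$ combined with the orientation symmetry of the small loop $\loop_0$; the bad-set contribution will be bounded by the multi-arm event estimates recorded in Section~1.4.

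Concretely, I would condition on the configuration outside $B(v_0;r)$ together with the two arcs along which $\ga_\de$ enters and exits $B(v_0;r)$. Conditional on this data the interior is still product Bernoulli-$1/2$ percolation, and hence is invariant under the $\de$-shift $v_0\mapsto v_1$. The shifted interior configuration re-explores via $A_1$ at $v_1$ and, provided $\loop_0\subset B(v_0;r)$ and $\ga_\de$ does not re-enter $B(v_0;r)$ after leaving, produces a globally coherent exploration path that matches $\ga_\de$ outside the ball. On this good set the windings $W_{\ga_\de(\omega)}(A_0,e_b)$ and $W_{\ga_\de(\omega')}(A_1,e_b)$ differ only by their purely local contribution inside $B(v_0;r)$; expanding the exponential by Taylor and using the $\pm$-symmetry of winding increments around $\loop_0$ bounds the expectation of this difference by $O(\de/r)$.

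The two ways the pairing can fail are: $\loop_0$ crosses $\p B(v_0;r)$, forcing an alternating 4-arm event of type $0011$ from $B(v_0;2\de)$ to $\p B(v_0;r)$ with probability at most $(\de/r)^{1+\al_2}$ by \eqref{four-a}; and $\ga_\de$ re-enters $B(v_0;r)$ after leaving, forcing either a polychromatic 4-arm event \eqref{four} from $B(v_0;2\de)$ to $\p B(v_0;d_0/2)$, or---when $v_0$ lies within $d_0$ of the boundary of $\Omega_\de$---a half-plane 3-arm event bounded by \eqref{halfthree}. Combining the good- and bad-set estimates and optimizing $r$ yields $|F(A_0)-F(A_1)|\preceq (\de/d_0)^{1+\al-\ep}$, which is \eqref{TrA}. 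Estimates \eqref{TrB}--\eqref{TrD} follow by running the same argument with the pairs $(B_0,B_1)$, $(C_0,C_1)$, $(D_0,D_1)$ in place of $(A_0,A_1)$, or more efficiently by combining \eqref{TrA} with the linear identity \eqref{CR} applied at $v_0$ and at $v_1$.

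The main technical obstacle will be formalizing the conditional shift invariance used above: the exploration path $\ga_\de$ is a globally defined functional of $\omega$, so after shifting the interior one must certify that the re-exploration traverses the same entry and exit arcs on $\p B(v_0;r)$. Ruling out extra crossings of $\p B(v_0;r)$ is precisely where the 4-arm events enter, and the combined use of the hypothesis $d_0\ge\de^{1-\ep}$ and Condition~C ensures that the near-boundary half-plane estimates \eqref{halftwo}--\eqref{halfone} close the argument uniformly, accounting for the $\de^{-\ep}$ slack in the final exponent.
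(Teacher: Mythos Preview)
Your coupling idea is the right starting point, but two steps do not go through as written, and together they change the exponent you can reach.

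First, the ``shift the interior of $B(v_0;r)$'' map does not produce a coherent exploration path. Even when $\ga_\de$ enters and exits $B(v_0;r)$ only once, a rigid $\de$-shift of the inside configuration moves the entry/exit edges by $\de$, so the shifted interior path no longer hooks onto the fixed exterior arcs; the exploration of the glued configuration will in general branch off along a different route near $\p B(v_0;r)$. Ruling out multi-arm events from $\p B(v_0;r)$ to $\p B(v_0;d_0)$ controls how many crossings there are, not whether the glued object is an exploration path at all. In the paper this is exactly what forces the long construction: one does \emph{not} shift a ball. Instead one works on a dyadic stack of annuli $\A_k=B(v_0;r_0R^k,r_0R^{k+1})$ with $R=\log\de^{-1}$, finds a scale $K$ on which the path is ``clean'' (events $\mathcal{E}_K,\mathfrak{E}_K,\mathfrak{F}_K,\mathcal{G}_K$ all fail), reads off the outer open and dual-open boundaries $\ga_K^o,\ga_K^d$ of $\ga_\de$, locates a leftward step along each, and then defines $\omega^L$ by a specific cyclic permutation of edge statuses along $\Gamma_1(t_K^o,\infty)\cup\Gamma_2(t_K^d,\infty)$ and $\loop_0$. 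This is a genuine bijection landing in $\{A_1\in\ga_\de\}$, and---crucially---it yields $W_{\ga_\de}(A_0,e_b)=W_{\ga_\de^L}(A_1,e_b)$ \emph{exactly}, so the good-set contribution to $F(A_0)-F(A_1)$ is zero, not $O(\de/r)$.

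Second, and this is why the exact winding match matters: your optimization does not close. If the good set contributes $O(\de/r)$ and the bad set $O\big((\de/r)^{1+\al}+(r/d_0)^{1+\al}\big)$, balancing gives at best $(\de/d_0)^{(1+\al)/(2+\al)}$, strictly weaker than $(\de/d_0)^{1+\al-\ep}$. The paper's exponent comes from making the good-set error vanish and then bounding $\Pro\big(\bigcap_k(\mathcal{E}_k\cup\mathfrak{E}_k\cup\mathfrak{F}_k\cup\mathcal{G}_k)\big)$ by independence across annuli, together with a separate easy case when the loop $\loop_0$ is itself large ($d_0/r_0<\de^{-\al/c_v}$), handled directly by the four-arm bound. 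Finally, note that your shortcut from \eqref{TrA} to \eqref{TrB}--\eqref{TrD} via \eqref{CR} only gives one linear relation among the four differences, not three; in the paper each of \eqref{TrA}--\eqref{TrD} is obtained by the same construction.
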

\begin{proof}
Without loss of generality, assume that $E_1$ is on the left hand side of $E_0$, and $\ga_\de$ passes $A_0$. We will consider two cases according to the value of $d_0/r_0$.

 In the following we index $\gamma_\de$ by the number of medial vertices $\ga_\de$ has visited. So $\gamma_\de(0)=a_\de^{\diamond}$, $\gamma_\de(n)$ is the $n$-th medial vertex which $\gamma_\de$ has visited. It may happen that $\gamma_\de(m)=\gamma_\de(n)$ if $m\not=n$. We also write $\gamma_\de(\infty)=b_\de^{\diamond}$ although there are only finite number of steps from $a_\de^{\diamond}$ to $b_\de^{\diamond}$.

\

\noindent
{\bf 1. Assume $d_0/r_0 \geq \de^{-\alpha/c_v}$, for some constant $c_v>0$ which will be specified at the end of proof.}

Define $R=\log \de^{-1}$, $m_0=[\log(d_0/r_0)/\log R]-1$, $\A_0=B(v_0;r_0)$, $\A_k=B(v_0;r_0R^k,r_0R^{k+1})$, $\ball_k=B(v_0;r_0R^{k+1}/2)$. We introduce the following hitting times,
$$
\tau_k=\inf\{j: \gamma_\de(j)\in \ball_k\}, \ s_k=\sup\{j<n_0: \gamma_\de[j, n_0]\subset\ball_k, \gamma_\de(j-1)\notin \ball_k\},
$$
where $n_0=\min\{j:\gamma_\de(j)=v_0\}$. Define
\beq
\mathcal{E}_k&=&\{\ga_\de[0,n_0] \ \text{touches or crosses} \ \p\ball_k \ \text{after touching or crossing} \ \p B(v_0;2r_0R^k)\}.
\eeq
One important observation is that $\gamma_\de[\tau_k,s_k]\cap B(v_0;2r_0R^k)\not= \emptyset$  implies
$\mathcal{E}_k$.
Note that $\mathcal{E}_k$ results in one $4$-arm event in $B(v_0;2r_0R^k, r_0R^{k+1}/2)$. By \eqref{four},
\beqn \label{PEk}
\Pro(\mathcal{E}_k)\leq c_0(R/4)^{-1-\alpha}.
\eeqn
Now define
\beqn
t_{ok}&=&\min\{j\leq s_k: \ \text{one end of}\ E_{\ga_\de(j)} \ \text{is an end of open edge which} \nonumber\\
&& \ \ \   \ \ga_\de[s_k,n_0] \ \text{touches}\}, \label{tok}\\
T_{ok}&=&\min\{j\geq s_k: E_{\ga_\de(j)} \ \text{shares one vertex with} \ E_{\ga_\de(t_{ok})}\}, \label{Tok}\\
t_{dk}&=&\min\{j\leq s_k: \ \text{one end of}\ E_{\ga_\de(j)} \ \text{is an end of dual-open edge which} \nonumber\\
&& \ \ \   \ \ga_\de[s_k,n_0] \ \text{touches}\}. \label{tdk}\\
T_{dk}&=&\min\{j\geq s_k: E_{\ga_\de(j)} \ \text{shares one vertex with} \ E_{\ga_\de(t_{dk})}\}. \label{Tdk}
\eeqn
One can refer to Figure \ref{translation} for illustration.

\begin{figure}[hp]
 \begin{center}
\scalebox{0.3}{\includegraphics{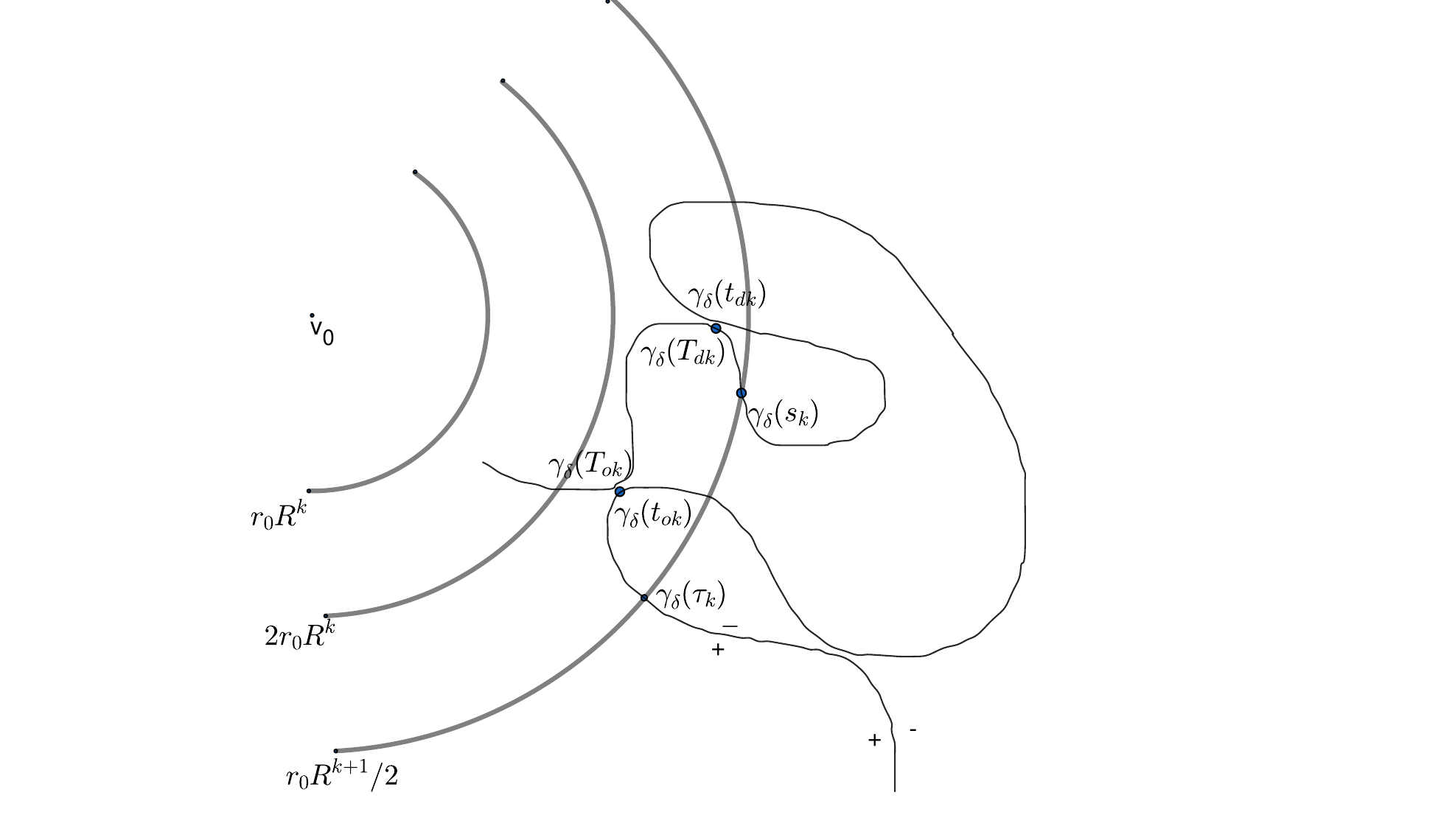}}
 \end{center}
\caption{Illustration of $t_{ok}$, $T_{ok}$, $t_{dk}$, $T_{dk}$. In the figure, $+$ represents open edges touched by $\ga_\de$ and $-$ represents dual-open edges touched by $\ga_\de$.} \label{translation}
\end{figure}

When $T_{ok}=s_k$ or $T_{dk}=s_k$, we define
\beq
t_{ok}'&=&\inf\{j\geq s_k: \ga_\de(j)\in \ga_\de[T_{dk},n_0],T_{ok}=s_k\}, \\
 T_{ok}'&=&\inf\{j\geq T_{dk}: \ga_\de(j)=\ga_\de(t_{ok}'),T_{ok}=s_k\},\\
 t_{dk}'&=&\inf\{j\geq s_k: \ga_\de(j)\in \ga_\de[T_{ok},n_0],T_{dk}=s_k\}, \\
  T_{dk}'&=&\inf\{j\geq T_{ok}: \ga_\de(j)=\ga_\de(t_{dk}'),T_{dk}=s_k\},
\eeq
and $\G_{1k}=\{T_{ok}=T_{dk}=s_k\}$. So $\G_{1k}$ implies $\ga_\de(0,s_k)\cap\ga_\de(s_k,n_0)=\emptyset$, the open edges which $\ga_\de(0,s_k)$ touches are at least $\de$ unit away from the open edges which $\ga_\de(s_k,n_0)$ touches, and the dual-open edges which $\ga_\de(0,s_k)$ touches are at least $\de$ unit away from the dual-open edges which $\ga_\de(s_k,n_0)$ touches . Then there are four disjoint clusters connecting $\p B(\ga_\de(s_k);2\de)$ and $\p B(\ga_\de(s_k);r_0R^k)$. Hence by \eqref{four-a},
$$
\Pro(\G_{1k}) \preceq r_0R^{k+1}\de^{-1} \big(\de/(r_0R^k)\big)^{1+\al}\leq R^{-1-\al}
$$
when $k\geq 2\al^{-1}+1$. And let $\G_{2k}=\{$there exists one point $w_k\in B(v_0;1.5r_0R^k,1.5r_0R^k+\de R^{3/2})$ such that there are four vertex-disjoint arms with respective types $0,0,1,1$ or four disjoint arms with respective types $0,1,0,1$  in cyclic order connecting the inner and outer boundaries of $B(w_k;2\de, 0.5r_0R^k)\}$. So by \eqref{four} and \eqref{four-a},
\beq
\Pro(\G_{2k}) \preceq r_0R^{k}(\de R^{3/2}) \de^{-2} \big(\de/(r_0R^{k})\big)^{1+\al} \preceq R^{-1-\al}
\eeq
when $k\geq2.5\al^{-1}+1$. From now on, write $\G_k=\G_{1k}\cup\G_{2k}$. Hence
\begin{equation} \label{PGk}
\Pro(\G_{k}) \preceq R^{-1-\al}
\end{equation}
when $k\geq2.5\al^{-1}+1$.
To proceed, we need to introduce another set of touching times.
\beq
\tau_{ok}&=&\sup\{j\geq T_{ok}: \ga_\de(j)\in \ga[T_{dk},T_{ok}], s_k<T_{dk}<T_{ok}\}, \\
\tau_{dk}&=&\sup\{j\geq T_{dk}: \ga_\de(j)\in \ga[T_{ok},T_{dk}], s_k<T_{ok}<T_{dk}\}, \\
\tau_{ok}'&=&\sup\{j\geq T_{ok}': \ga_\de(j)\in \ga[T_{dk},T_{ok}'], s_k<T_{dk}, s_k=T_{ok}\}, \\
\tau_{dk}'&=&\sup\{j\geq T_{dk}': \ga_\de(j)\in \ga[T_{ok},T_{dk}'], s_k<T_{ok}, s_k=T_{dk}\}.
\eeq
Based on these times, define
\beq
\mathcal{E}_{ok1}&=&\{\ga_\de[T_{ok},\tau_{ok}]\cap B(v_0;2r_0R^k)\not=\emptyset\}, \
\mathcal{E}_{dk1}=\{\ga_\de[T_{dk},\tau_{dk}]\cap B(v_0;2r_0R^k)\not=\emptyset\},\\
\mathcal{E}_{ok1}'&=&\{\ga_\de[T_{ok}',\tau_{ok}']\cap B(v_0;2r_0R^k)\not=\emptyset\}, \
\mathcal{E}_{dk1}'=\{\ga_\de[T_{dk}',\tau_{dk}']\cap B(v_0;2r_0R^k)\not=\emptyset\},
\eeq
$\mathcal{E}_{ok2}=\{\ga_\de[\tau_{ok},n_0)$ passes through any medial point whose Euclidean distance to $\ga_\de[0,\tau_{ok}]$ is at most $2\de$ after touching or crossing $B(v_0;2r_0R^k)\}$,
$\mathcal{E}_{dk2}=\{\ga_\de[\tau_{dk},n_0)$ passes through any medial point whose Euclidean distance to $\ga_\de[0,\tau_{dk}]$ is at most $2\de$ after touching or crossing $B(v_0;2r_0R^k)\}$,
$\mathcal{E}_{ok2}'=\{\ga_\de[\tau_{ok}',n_0)$ passes through any medial point whose Euclidean distance to $\ga_\de[0,\tau_{ok}']$ is at most $2\de$ after touching or crossing $B(v_0;2r_0R^k)\}$,
$\mathcal{E}_{dk2}'=\{\ga_\de[\tau_{dk}',n_0)$ passes through any medial point whose Euclidean distance to $\ga_\de[0,\tau_{dk}']$ is at most $2\de$ after touching or crossing $B(v_0;2r_0R^k)\}$,
and $\mathcal{E}_{ok}=\mathcal{E}_{ok1}\cup \mathcal{E}_{ok2}$, $\mathcal{E}_{dk}=\mathcal{E}_{dk1}\cup \mathcal{E}_{dk2}$, $\mathcal{E}_{ok}'=\mathcal{E}_{ok1}'\cup \mathcal{E}_{ok2}'$, $\mathcal{E}_{dk}'=\mathcal{E}_{dk1}'\cup \mathcal{E}_{dk2}'$.

\begin{remark}
In the definition of $\mathcal{E}_{ok1}$ $\mathcal{E}_{dk1}$, $\mathcal{E}_{ok1}'$ and $\mathcal{E}_{dk1}'$, we implicitly assume that $s_k<T_{dk}<T_{ok}$,  $s_k<T_{ok}<T_{dk}$,   $\{s_k<T_{dk}, s_k=T_{ok}\}$ and $\{s_k<T_{ok}, s_k=T_{dk}\}$ respectively.
\end{remark}

We will find an upper bound for the probabilities of these events. For $\mathcal{E}_{ok1}$, we consider six sub-events,
$$
\mathcal{E}_{ok1}\cap \{d_{ok,j_1}\leq d_{ok,j_2}\leq d_{ok,j_3}\},
$$
where $(j_1,j_2,j_3)$ is any permutation of $(1,2,3)$, and
$d_{ok,1}=|\ga_\de(t_{ok})-v_0|$, $d_{ok,2}=|\ga_\de(t_{dk})-v_0|$, $d_{ok,3}=|\ga_\de(\tau_{ok})-v_0|$. Let $\rho=\log R$, $\mathrm{k}=[\log(R/2)/\log\log R]$. Note that there are four disjoint clusters connecting the inner and outer boundaries of each of the annuli $B(v_0;2r_0R^k, d_{ok,j_1})$, $B(v_0;d_{ok,j_1},d_{ok,j_2})$, $B(v_0;d_{ok,j_2},d_{ok,j_3})$, $B(v_0;d_{ok,j_3},r_0R^{k+1}/2)$. When $j_1=1$, $j_2=2$, and $j_3=3$, it follows from \eqref{four} that for any $\mathrm{k}_1$, $\mathrm{k}_2$, $\mathrm{k}_3$ in $[1,\mathrm{k}]$ with $\mathrm{k}_1\leq\mathrm{k}_2\leq \mathrm{k}_3$
\beq
&&\Pro\big(\mathcal{E}_{ok1}, 2r_0R^k\rho^{\mathrm{k}_j} \leq d_{ok,j}\leq 2r_0R^k\rho^{\mathrm{k}_j+1}, j=1,2,3\big)\\
&\preceq& \Big(\frac{r_0R^{k+1}/2}{2r_0R^k\rho^{\mathrm{k}_3+1}})^{-1-\al} \Big(\frac{2r_0R^k\rho^{\mathrm{k}_3}}{2r_0R^k\rho^{\mathrm{k}_2+1}})^{-1-\al}\Big(\frac{2r_0R^k\rho^{\mathrm{k}_2}}{2r_0R^k\rho^{\mathrm{k}_1+1}})^{-1-\al} \Big(\frac{2r_0R^k\rho^{\mathrm{k}_1}}{2r_0R^k})^{-1-\al}\\
&\preceq& \rho^{3+3\al}R^{-1-\al}.
\eeq
Considering the number of $\mathrm{k}_1$, $\mathrm{k}_2$, $\mathrm{k}_3$ in $[1,\mathrm{k}]$ with $\mathrm{k}_1\leq\mathrm{k}_2\leq \mathrm{k}_3$, we obtain that
\beq
\Pro\big(\mathcal{E}_{ok1},d_{ok,1}\leq d_{ok,2}\leq d_{ok,3}\big) \preceq \mathrm{k}^3\rho^{3+3\al}R^{-1-\al}.
\eeq
The same argument implies that
$$
\Pro\big(\mathcal{E}_{ok1},d_{ok,j_1}\leq d_{ok,j_2}\leq d_{ok,j_3}\big) \preceq \mathrm{k}^3\rho^{3+3\al}R^{-1-\al}
$$
for any permutation $(j_1,j_2,j_3)$ of $(1,2,3)$.
Hence
\beqn
\Pro(\mathcal{E}_{ok1})\preceq \mathrm{k}^3\rho^{3+3\al}R^{-1-\al}. \label{PEok01}
\eeqn
For $\mathcal{E}_{ok2}$, we can apply the same method of obtaining \eqref{PEok01} to get
\beqn
\Pro(\mathcal{E}_{ok2})\preceq \mathrm{k}^3\rho^{3+3\al}R^{-1-\al}. \label{PEok02}
\eeqn
Combining \eqref{PEok01} and \eqref{PEok02}, we have
\beqn
\Pro(\mathcal{E}_{ok})\preceq \mathrm{k}^3\rho^{3+3\al}R^{-1-\al}. \label{PEok}
\eeqn
Similarly,
\beqn
\Pro(\mathcal{E}_{dk})&\preceq& \mathrm{k}^3\rho^{3+3\al}R^{-1-\al}, \label{PEdk}\\
\Pro(\mathcal{E}_{ok}')&\preceq& \mathrm{k}^3\rho^{3+3\al}R^{-1-\al}, \label{PEok1}\\
\Pro(\mathcal{E}_{dk}')&\preceq& \mathrm{k}^3\rho^{3+3\al}R^{-1-\al}. \label{PEdk1}
\eeqn
In addition, we have to exclude one more exceptional case. Define
\begin{quote}
$\mathcal{F}_{ok}=\{$the set in the definition of $\tau_{ok}$ is not empty, and $\exists j_1, j_2\in[\tau_{ok},n_0]$ such that
$\dist(\ga_\de(j_1),\ga_\de(j_2))=\de,  \ga_\de(j_1),  \ga_\de(j_2)\in B(v_0;1.5r_0R^k,0.5r_0R^{k+1})$,
 and $\ga_\de[j_1,j_2]$ and the straight line segment connecting $\ga_\de(j_1)$ and $\ga_\de(j_2)$ surround $v_0\} \cup \{$there
 exists an exploration path $\ga_{1\de}$ passing $v_1$ such that there are $j_1$ and $j_2$ satisfying $\dist(\ga_{1\de}(j_1),\ga_{1\de}(j_2))=\de,  \ga_{1\de}(j_1),  \ga_{2\de}(j_2)\in B(v_0;1.5r_0R^k,0.5r_0R^{k+1})$,
 and $\ga_{1\de}[j_1,j_2]$ and the straight line segment connecting $\ga_{1\de}(j_1)$ and $\ga_{1\de}(j_2)$ surround $v_0\}$.
\end{quote}
After changing $\tau_{ok}$ in the definition of $\mathcal{F}_{ok}$ to the respective $\tau_{dk}$, $\tau_{ok}'$ and $\tau_{dk}'$, we have the corresponding $\mathcal{F}_{dk}$, $\mathcal{F}_{ok}'$ and $\mathcal{F}_{dk}'$. Since $\mathcal{F}_{ok}$ implies that there are $6$ disjoint clusters connecting the inner and outer boundaries of the annulus $B(\ga_\de(j_1);2\de,2^{-1} r_0R^k)$,
\beqn
\Pro(\mathcal{F}_{ok})
\preceq (  r_0R^{k})(r_0R^{k+1})\de^{-2} \de^{2+\al} (r_0R^k)^{-2-\al}
\preceq R^{-1-\al}, \label{PFok}
\eeqn
when $k\geq 1+2\al^{-1}$.
Similarly,
\beqn
\Pro(\mathcal{F}_{dk}) \preceq R^{-1-\al},  \ \Pro(\mathcal{F}_{ok}') \preceq R^{-1-\al},  \ \Pro(\mathcal{F}_{dk}') \preceq R^{-1-\al}, \label{PFdk}
\eeqn
when $k\geq 1+2\al^{-1}$.

Let $\mathfrak{E}_k=\mathcal{E}_{ok}\cup\mathcal{E}_{dk}\cup\mathcal{E}_{ok}'\cup\mathcal{E}_{dk}'$, $\mathfrak{F}_k=\mathcal{F}_{ok}\cup\mathcal{F}_{dk}\cup\mathcal{F}_{ok}'\cup\mathcal{F}_{dk}'$. It follows from \eqref{PEok}-\eqref{PEdk1}, \eqref{PFok} and \eqref{PFdk} that
$$
\Pro(\mathfrak{E}_k) \preceq  \mathrm{k}^3\rho^{3+3\al}R^{-1-\al}, \ \Pro(\mathfrak{F}_k) \preceq  R^{-1-\al}.
$$
Let $k_0=\max(1+[2.5\al^{-1}+1],1+[1+2\al^{-1}])=1+[2.5\al^{-1}+1]$. Hence by independence,
\beqn
&&\Pro\big(\cap_{k=1}^{m_0} (\mathcal{E}_k\cup\mathfrak{E}_k\cup\mathfrak{F}_k\cup\mathcal{G}_k)\big) \nonumber\\
&\leq& \Pro\big(\cap_{k=k_0}^{m_0} (\mathcal{E}_k\cup\mathfrak{E}_k\cup\mathfrak{F}_k\cup\mathcal{G}_k), r_0\leq \de R\big)  \nonumber\\
&& \ \ +\sum_{j=1}^{[\log(d_0/\de)/\log R]}\Pro\big(\cap_{k=k_0}^{m_0} (\mathcal{E}_k\cup\mathfrak{E}_k\cup\mathfrak{F}_k\cup\mathcal{F}_k),  \de R^j\leq r_0\leq \de R^{j+1}\big)\nonumber\\
&\preceq& \big(\frac{\de R}{d_0}\big)^{1+\alpha-\epsilon/2}+\sum_{j=1}^{[\log(d_0/\de)/\log R]} \big(\frac{\de R^{j+1}}{d_0}\big)^{1+\alpha-\epsilon/2} \times c_0\big( \frac{\de}{\de R^{j}}\big)^{1+\al}\nonumber\\
&\preceq& \big(\frac{\de}{d_0}\big)^{1+\alpha-\epsilon}, \label{Pexcept}
\eeqn
where in the 2nd inequality we have used the fact that in $\A_0$, there are four disjoint clusters connecting $\partial B(v_0;2\de)$ and $\partial B(v_0;r_0)$, and in the last inequality we have used the assumption that $d_0/r_0 \geq \de^{-\alpha/c_v}$ for some constant $c_v>0$.

From now on, after ignoring the event $\cap_{k=1}^{m_0} (\mathcal{E}_k\cup\mathfrak{E}_k\cup\mathfrak{F}_k\cup\mathcal{G}_k)$ with probability at most of the order $(\de/d_0)^{1+\alpha-\epsilon}$, we can assume that $\cup_{k=1}^{m_0} (\mathcal{E}_k^c\cap\mathfrak{E}_k^c\cap\mathfrak{F}_k^c\cap\mathcal{G}_k^c)$ holds.
Let $\omega\in \cup_{k=1}^{m_0} (\mathcal{E}_k^c\cap\mathfrak{E}_k^c\cap\mathfrak{F}_k^c\cap\mathcal{G}_k^c)$. Define
$$
K=\min\{k\in [1,m_0]: \omega \in (\mathcal{E}_k^c\cap\mathfrak{E}_k^c\cap\mathfrak{F}_k^c\cap\mathcal{G}_k^c)\}.
$$
Without loss of generality, assume that
$$\ga_\de[T_{oK},\tau_{oK}]\cap B(v_0;2r_0R^k)=\emptyset, s_K<T_{dK}<T_{oK}.$$

Now let $\ga_{K}^o$  be
  the outer open boundary of $\ga_\de[T_{oK},n_0)$, and $\ga_{K}^d$ the outer dual-open boundary of $\ga_\de[\tau_{oK},n_0)$. So $\ga_K^o$ is a self-vertex-avoiding path which consists of open edges $\ga_\de[T_{oK},n_0)$ touches,  and $\ga_K^d$ is a self-vertex-avoiding path which consists of dual-open edges $\ga_\de[\tau_{oK},n_0)$ touches.
  Suppose $\ga_K^o$ and $\ga_K^d$ are indexed by the number of edges.  More precisely, for $j\geq 1$, $\ga_{K}^o(j)$ is the $j$-th edge  which one visits along $\ga_{K}^o$ moving towards $v$, and
  $\ga_{K}^d(j)$ is the $j$-th dual edge which one visits along $\ga_{K}^d$  moving towards $v$. Sometimes, we let $\ga_{K}^x(j)$ denote the center of the $j$-th edge which one visits along $\ga_{K}^x$, where  $x=o,d$. The meaning of $\ga_{K}^x(j)$ will be clear from the context. And let $\ga_{K}^{o-}$ and $\ga_{K}^{d-}$ be the respective reversed paths of $\ga_{K}^o$ and $\ga_{K}^d$. Similarly, $\ga_{K}^{o-}(j)$ and $\ga_{K}^{d-}(j)$ are the respective $j$-th open edge and $j$-th dual-open edge (or edge centers) which one visits along $\ga_{K}^{o-}$ and $\ga_{K}^{d-}$. Write
  \beq
  j_{oK}&=&\inf\{j\geq 1: \ga_K^{o-}[j,\infty) \subseteq B^c(v_0;1.5r_0R^k)\},\\
  j_{dK}&=&\inf\{j\geq 1: \ga_K^{d-}[j,\infty) \subseteq B^c(v_0;1.5r_0R^k)\},\\
  J_{oK}&=&\inf\{j>j_{oK}: \ga_K^{o-}[j_{oK},j]\subseteq B(v_0;1.5r_0R^k+0.5\de R^{3/2})\},\\
  J_{dK}&=&\inf\{j>j_{dK}: \ga_K^{d-}[j_{dK},j]\subseteq B(v_0;1.5r_0R^k+0.5\de R^{3/2})\},
  \eeq
  where $\ga_K^{o-}(\infty)$ and $\ga_K^{d-}(\infty)$ represent $\ga_\de(T_{oK})$ and $\ga_\de(\tau_{oK})$ respectively although there are only finite number of edges in $\ga_{K}^{o-}$ and $\ga_{K}^{d-}$. We will prove the claim that after ignoring a set with probability tending to zero faster than any power of $\de$, there are $j_o>j_{oK}$ and $j_d>j_{dK}$ such that $\Re \ga_K^{o-}(j_o)<\Re \ga_K^{o-}(j_o-1)$ and $\Re \ga_K^{d-}(j_d)<\Re \ga_K^{d-}(j_d-1)$. To achieve this, we suppose $\Re\ga_K^{o-}(j)\leq\Re\ga_K^{o-}(j+1)$ for any $j\in [j_{oK},J_{oK})$. Define
  $$
  j_{oK}^r=\inf\{j\geq 1: \ga_{K}^{o}(j)=\ga_K^{o-}(j_{oK})\}, \ J_{oK}^r=\inf\{j\geq 1: \ga_{K}^{o}(j)=\ga_K^{o-}(J_{oK})\},
  $$
  and $\tau_{K,m}=\inf\{j\geq J_{oK}^r+1: \ga_{K}^{o}[J_{oK}^r,j-1] \subseteq \mathbb{B}_m^c\setminus \p \mathbb{B}_m,  \ga_{K}^{o}(j)\cap \overline{\mathbb{B}_m}\not=\emptyset, \ga_{K}^{o}(j)\cap (\mathbb{B}_m^c\setminus \p\mathbb{B}_m)\not=\emptyset \}$, where $\mathbb{B}_m=B(v_0;1.5r_0R^k+0.5\de R^{3/2}-m\de)$, $\overline{ \mathbb{B}_m}=\mathbb{B}_m \cup \p \mathbb{B}_m$, and $\ga_K^o(j)$ is regarded as edge.
  Given $\ga_K^o[J_{oK}^r, j_{oK}^r]$ such that $\Re\ga_K^{o}(j)\geq \Re\ga_K^{o}(j+1)$ when $j\in [J_{oK}^r, j_{oK}^r]$, we consider the behaviour of $\ga_K^o[\tau_{K,20m+5},\tau_{20m+20})$.

  Now suppose $v_0$ is the origin $\C$ and $E_0$ is in the imaginary axis without loss of generality.
  Define
$$
\Half_l=\inf\{z: \Im(z-v)< 0\}, \ \Half_u=\{z: \Im(z-v)> 0\}, \ L_r=\inf\{z: \Im(z-v)= 0\}.
$$
  We consider the open edge $\ga_K^o(\tau_{K,20m+5})$. Suppose the center of $\ga_K^o(\tau_{K,20m+5})$ is in $\Half_l$. We have to handle three cases.

   The first case is that $\ga_K^o(\tau_{K,20m+5})$ is an upward edge when one moves along $\ga_K^o[J_{oK}^r, j_{oK}^r]$ starting from $\ga_K^o(J_{oK}^r)$.

   If the primal edge with center $\ga_K^o(\tau_{K,20m+5})+\de e^{i3\pi/4}/\sqrt 2$ is open and touched by  $\ga_\de[T_{oK},n_0)$ before $\ga_K^o(\tau_{K,20m+5})$, then $\ga_K^o(\tau_{K,20m+5})$ will not be in $\ga_K^o$. If the dual edge with center $\ga_K^o(\tau_{K,20m+5})+\de e^{i3\pi/4}/\sqrt 2$ is dual-open and touched by  $\ga_\de[T_{oK},n_0)$ before $\ga_K^o(\tau_{K,20m+5})$, then $\ga_K^o(\tau_{K,20m+5})$ will not be in $\ga_K^o$ or $\mathcal{G}_{2K}$ holds. If $\ga_\de[T_{oK},n_0)$ passes through the center $\ga_K^o(\tau_{K,20m+5})+i\de$ before $\ga_K^o(\tau_{K,20m+5})$, then either $\ga_\de[T_{oK},n_0)$ passes through the center $\ga_K^o(\tau_{K,20m+5})+\de e^{i3\pi/4}/\sqrt 2$ before $\ga_K^o(\tau_{K,20m+5})$ or $\ga_K^o(\tau_{K,20m+5})$ will not be in $\ga_K^o$. In summary, $\ga_\de[T_{oK},n_0)$ doesn't pass through the center $\ga_K^o(\tau_{K,20m+5})+\de e^{i3\pi/4}/\sqrt 2$ or $\ga_K^o(\tau_{K,20m+5})+i\de$ before $\ga_K^o(\tau_{K,20m+5})$.

   So, conditioned on $\ga_K^o[J_{oK}^r,\tau_{K,20m+5}]$, the edges with the respective centers $\ga_K^o(\tau_{K,20m+5})+i\de$ and $\ga_K^o(\tau_{K,20m+5})+\de e^{i3\pi/4}/\sqrt 2$ can't be dual-open simultaneously, otherwise $\Re\ga_K^{o}(j)< \Re\ga_K^{o}(j+1)$ for some $j\in [J_{oK}^r, j_{oK}^r]$.

   Let $\mathcal{S}$ be the set of primal edges touched or crossed by $\ga_\de[T_{oK},n_0)$. We explore the statuses of edges in $\mathcal{S}$ only. So given  $$\big(\mathcal{S}\cap \mathbb{B}_{20m+5}^c\big)\cup \{E\in\mathcal{S}: E\cap \p \mathbb{B}_{20m+5}\not=\emptyset, E\cap \p \mathbb{B}_{20m+5} \ \text{is not any end of} \ E\}$$
   and the statuses of all its elements,
   which locate the path $\ga_K^o[1,,\tau_{K,20m+5}]$ as the most left self-vertex-avoiding open path from $\ga_\de(T_{oK})$ to $\p B\big(v_0;1.5r_0R^k+0.5\de R^{3/2}-(20m+5)\de\big))$
   if we only explore the statuses of edges in $\mathcal{S}$,  the probability that the constrained $\omega$ in the annulus $B_m(v_0):=B(v_0;1.5r_0R^k+0.5\de R^{3/2}-20(m+1)\de, 1.5r_0R^k+0.5\de R^{3/2}-20m\de)$ satisfies $\Re\ga_K^{o}(j)\geq \Re\ga_K^{o}(j+1)$ when $\ga_K^{o}(j)\in B_m(v_0)$ is  at most
  $$
1-2^{-2}.
  $$
  Here it is possible that after making the edges with the respective centers $\ga_K^o(\tau_{K,20m+5})+i\de$ and $\ga_K^o(\tau_{K,20m+5})+\de e^{i3\pi/4}/\sqrt 2$ dual-open, there is still one exploration path $\hat \ga_\de$ passing through $v_0$ such that $\Re\hat\ga_K^{o}(j)\geq \Re\hat\ga_K^{o}(j+1)$ when $j\in [\hat J_{oK}^r,\hat  j_{oK}^r]$. However for this $\hat \ga_\de$  we have to consider the edges with the respective centers $\hat\ga_K^o(\hat\tau_{K,20m+5})+i\de$ and $\hat\ga_K^o(\hat\tau_{K,20m+5})+\de e^{i3\pi/4}/\sqrt 2$ if $\hat\ga_K^o(\hat\tau_{K,20m+5})$ is an upward edge in $\Half_l$, or we move to other cases if $\hat\ga_K^o(\hat\tau_{K,20m+5})$ is not an upward edge in $\Half_l$.
   Here a random variable with hat is defined though $\hat \ga_\de$.

   The second case is that $\ga_K^o(\tau_{K,20m+5})$ is a leftward edge when one moves along $\ga_K^o[J_{oK}^r, j_{oK}^r]$ starting from $\ga_K^o(J_{oK}^r)$, and the edge  $\ga_K^o(\tau_{K,20m+5})+i\de$ with its two ends is in  $\overline{\mathbb{B}_{20m+5}}$.

     By the same argument as in the first case,  we conclude that $\ga_\de[T_{oK},n_0)$ doesn't pass through the center $\ga_K^o(\tau_{K,20m+5})+\de e^{i3\pi/4}/\sqrt 2$ or $\ga_K^o(\tau_{K,20m+5})-\de$ before $\ga_K^o(\tau_{K,20m+5})$.
   So given
    $$\big(\mathcal{S}\cap \mathbb{B}_{20m+5}^c\big)\cup \{E\in\mathcal{S}: E\cap \p \mathbb{B}_{20m+5}\not=\emptyset, E\cap \p \mathbb{B}_{20m+5} \ \text{is not any end of} \ E\} $$
     and the statuses of all its elements,
   which locates the path $\ga_K^o[1,,\tau_{K,20m+5}]$ as the most left self-vertex-avoiding open path from $\ga_\de(T_{oK})$ to $\p B\big(v_0;1.5r_0R^k+0.5\de R^{3/2}-(20m+5)\de\big))$
   if we only explore the statuses of edges in $\mathcal{S}$, the probability that the constrained $\omega$ in the annulus $B_m(v_0)$ satisfies $\Re\ga_K^{o}(j)\geq \Re\ga_K^{o}(j+1)$ when $\ga_K^{o}(j)\in B_m(v_0)$ is  at most
   $$
   1-2^{-2}+2^{-3}+2^{-4}+2^{-5},
   $$
   where $1-2^{-2}$ is from the event that at least one of the two edges with the respective centers $\ga_K^o(\tau_{K,20m+5})+\de e^{i3\pi/4}/\sqrt 2$ and $\ga_K^o(\tau_{K,20m+5})+i\de$ is dual-open, $2^{-3}+2^{-4}+2^{-5}$ is from the event that  the edges with the respective centers $\ga_K^o(\tau_{K,20m+5})+\de e^{i3\pi/4}/\sqrt 2$ and $\ga_K^o(\tau_{K,20m+5})+i\de$  are open, but at least one of them is not in $\ga_K^o$. More precisely,
   $2^{-3}$ is from the event that the edges with the respective centers $\ga_K^o(\tau_{K,20m+5})+\sqrt 2\de e^{i3\pi/4}$, $\ga_K^o(\tau_{K,20m+5})+\de e^{i3\pi/4}/\sqrt 2$ and $\ga_K^o(\tau_{K,20m+5})+i\de$ are open, $2^{-4}$ is from the event that the edge with center $\ga_K^o(\tau_{K,20m+5})+\sqrt 2\de e^{i3\pi/4}$ is dual-open, the edges with the respective centers $\ga_K^o(\tau_{K,20m+5})-\de$, $\ga_K^o(\tau_{K,20m+5})+\de e^{i3\pi/4}/\sqrt 2$ and $\ga_K^o(\tau_{K,20m+5})+i\de$ are open, $2^{-5}$ is from the event that the edges with the respective centers $\ga_K^o(\tau_{K,20m+5})+\sqrt 2\de e^{i3\pi/4}$ and $\ga_K^o(\tau_{K,20m+5})-\de$ are dual-open, the edges with the respective centers $\ga_K^o(\tau_{K,20m+5})+\de e^{-i3\pi/4}/\sqrt 2$, $\ga_K^o(\tau_{K,20m+5})+\de e^{i3\pi/4}/\sqrt 2$ and $\ga_K^o(\tau_{K,20m+5})+i\de$ are open. (If $\ga_K^o(\tau_{K,20m+5})+\de e^{-i3\pi/4}/\sqrt 2\in B^c\big(v_0;1.5r_0R^k+0.5\de R^{3/2}-(20m+5)\de\big)$, the event with probability $2^{-5}$ must be excluded, otherwise it contradicts with the definition of $\tau_{K,20m+5}$.)

   The third case is that $\ga_K^o(\tau_{K,20m+5})$ is a leftward edge when one moves along $\ga_K^o[J_{oK}^r, j_{oK}^r]$ starting from $\ga_K^o(J_{oK}^r)$, and the edge  $\ga_K^o(\tau_{K,20m+5})+i\de$ with its two ends is not in  $\overline{\mathbb{B}_{20m+5}}$. Here we can reduce $5$ to some number $\varrho$ so that $\tau_{K,20m+5}=\tau_{K,20m+\varrho}$ and $\ga_K^o(\tau_{K,20m+\varrho})+i\de$ with its two ends is in  $\overline{\mathbb{B}_{20m+\varrho}}$. So this is reduced to the second case.

   If the center of $\ga_K^o(\tau_{K,20m+5})$ is in $\Half_u$, there are also three possibilities.

   The firs possibility is $\ga_K^o(\tau_{K,20m+5})$ is a downward edge when one moves along $\ga_K^o[J_{oK}^r, j_{oK}^r]$ starting from $\ga_K^o(J_{oK}^r)$. Similarly to the first case where the center of $\ga_K^o(\tau_{K,20m+5})$ is in $\Half_l$, we have the probability upper bound $1-2^{-2}$.

   The second possibility is that $\ga_K^o(\tau_{K,20m+5})$ is a leftward edge when one moves along $\ga_K^o[J_{oK}^r, j_{oK}^r]$ starting from $\ga_K^o(J_{oK}^r)$, and the edge  $\ga_K^o(\tau_{K,20m+5})+\de e^{3\pi/4}/\sqrt 2$ with its two ends is in  $\overline{\mathbb{B}_{20m+5}}$. The probability upper bound is
   $$
   2^{-1}+2^{-2}+2^{-3}
   $$
   where $2^{-1}$ is from the event that the primal edge $\ga_K^o(\tau_{K,20m+5})-\de$ is open, $2^{-2}$ is from the event that the primal edge $\ga_K^o(\tau_{K,20m+5})-\de$ is dual-open and  the primal edge  $\ga_K^o(\tau_{K,20m+5})+\de e^{3\pi/4}/\sqrt 2$ is open, $2^{-3}$ is from the event that the primal edges $\ga_K^o(\tau_{K,20m+5})-\de$ and $\ga_K^o(\tau_{K,20m+5})+\de e^{3\pi/4}/\sqrt 2$ are dual-open and  the primal edge  $\ga_K^o(\tau_{K,20m+5})+\de e^{-3\pi/4}/\sqrt 2$ is open.

   The third possibility is that $\ga_K^o(\tau_{K,20m+5})$ is a leftward edge when one moves along $\ga_K^o[J_{oK}^r, j_{oK}^r]$ starting from $\ga_K^o(J_{oK}^r)$, and the edge  $\ga_K^o(\tau_{K,20m+5})+\de e^{3\pi/4}/\sqrt 2$ with its two ends is not in  $\overline{\mathbb{B}_{20m+5}}$. Again, similarly to the third case when the center of $\ga_K^o(\tau_{K,20m+5})$ is in $\Half_l$, we  can reduce $5$ a little so we are back to the second possibility.

    If the center of $\ga_K^o(\tau_{K,20m+5})$ is in $L_r$,  consider $\tau_{K,20m+6}$. If the center of $\ga_K^o(\tau_{K,20m+6})$ is not in $L_r$, we have the same conclusion as above. If the center of $\ga_K^o(\tau_{K,20m+6})$ is in $L_r$, we have the probability upper bound $1/2$.

    Therefore,
  \beq
&&  \Pro\big(\Re\ga_K^{o-}(j)\leq\Re\ga_K^{o-}(j+1),j\in [j_{oK},J_{oK})\big) \\
&\leq& \big(\max(1-2^{-2}, 1-2^{-2}+2^{-3}+2^{-4}+2^{-5}, 2^{-1}+2^{-2}+2^{-3})\big)^{[0.5R^{3/2}/20]} \de^{-2} \\
&\leq& \de^{\mathbf{k}_0}
  \eeq
  for any $\mathbf{k}_0>0$, where $\de^{-2}$ accounts for the number of possibilities for $\ga_K^{o}(J_{oK}^r)$. This proves the existence of $j_o$. The claim about $j_d$ can be proved exactly in the same way, so we omit it. Now introduce
  $J_K^o=\inf\{j\in[j_{oK},J_{oK}]: \Re \ga_K^{o-}(j+1)=\min_{k\in [j_o,J_{oK}]}  \Re \ga_K^{o-}(k)\},$
 and $ J_K^d=\inf\{j\in[j_{dK},J_{dK}]: \Re \ga_K^{d-}(j+1)=\min_{k\in [j_d,J_{dK}]}  \Re \ga_K^{d-}(k)\}.
  $

Now denote the set of prime edges which are on the right hand side when one traverses along $\ga_K^{o-}$ from $v$ to $\ga_K^{o-}(J_K^o)$ and which share one vertex with any edge in $\ga_K^{o-}[1,J_K^o]$ by $\mathcal{E}_K^d$, and denote the set of dual edges which are on the left hand side when one traverses along $\ga_K^{d-}$ from $v_0$ to $\ga_K^{d-}(J_K^d)$ and which share one vertex with any dual edge in $\ga_K^{d-}[1,J_K^d]$ by $\mathcal{E}_K^o$. If we make all edges in $\mathcal{E}_K^d$  dual-open, there will be an exploration path from the medial vertex $\ga_K^{o-}(J_K^o)$ to $v_0$. This path is denoted by $\Gamma_1$. Similarly if we make all edges in $\mathcal{E}_K^o$ open, there will be an exploration path from the medial vertex $\ga_K^{d-}(J_K^d)$ to $v_0$. This path is denoted by $\Gamma_2$. In $\omega$, $\Gamma_1$ and $\Gamma_2$ are not exploration paths, however they still consist of medial edges. We index $\Gamma_1$ and $\Gamma_2$ by the number of medial  vertices visited  respectively, and let $\Gamma_1(0)=\ga_K^{o-}(J_K^o)$ and $\Gamma_2(0)=\ga_K^{d-}(J_K^d)$.
Define
\beq
t_K^o=\max\{j: \Gamma_1^l(j)\in \ga_K^{o-}[J_K^o+1,\infty),\omega(E_{\Gamma_1(j)})=0\},\\
t_K^d=\max\{j: \Gamma_2^l(j)\in \ga_K^{d-}[J_K^d+1,\infty),\omega(E_{\Gamma_2(j)})=1\},
\eeq
where $\Gamma_1^l(j)$ and $\Gamma_2^l(j)$ are the respective left-translated $\Gamma_1(j)$ and $\Gamma_2(j)$ by $\de$ unit, and $\max\emptyset$ is understood as $-1$.. From now on in this section, a quantity with superscript $l$ denotes the left shift of this quantity by $\de$ unit.
We also need the following exceptional event,
\beq
\mathcal{EX}_K^o&=&\{E^*\in \mathcal{E}_K^o\cup\ga_K^{d-}[1,J_K^d]:\dist(v_{E^*},\ga_K^{o-}[J_K^o+1,\infty))\leq \de \}.
\eeq
We consider $\mathcal{EX}_K^o$. For any $E^*\in \mathcal{EX}_K^o$, $\mathcal{G}_{2K}^c$ implies that $v_{E^*}\in B^c(v_0;1.5r_0R^k+\de R^{3/2})$. Hence if we define
\beq
j_{1st}^d=\min\{j\geq 1: \dist(\ga_K^{d-}(j),\ga_K^{o-}[J_K^o+1,\infty))\leq 2\de\},
\eeq
then $\ga_K^{d-}(j_{1st}^d)\in B^c(v_0;1.5r_0R^k+\de R^{3/2})$.  Based on $\ga_K^{d-}[1,j_{1st}^d]$, we can define
 \beq
 j_{dK,1st}&=&\inf\{j\geq 1: \ga_K^{d-}[j,j_{1st}^d) \subseteq B^c(v_0;1.5r_0R^k)\},\\
 J_{dK,1st}&=&\inf\{j> j_{dK,1st}: \ga_K^{d-}[j_{dK,1st},j]\subseteq B(v_0;1.5r_0R^k+0.5\de R^{3/2})\}.
  \eeq
So similarly to the existence of $j_o$, we can prove that after ignoring a set with probability tending to zero faster than any power of $\de$, there is $ j_{d,1st}> j_{dK,1st}$ such that $\Re \ga_K^{d-}(j_{d,1st})<\Re \ga_K^{d-}( j_{d,1st}-1)$. The only difference is that here we can't locate $J_{dK,1st}$ after the
statuses of edges touched or crossed by $\ga_\de[\tau_{oK},n_0]$ in $ \mathbb{B}_{5}^c$ are given. However the number of possible positions for $J_{dK,1st}$ is at most of the order $\de^{-1}R^k$, less than $\de^{-2}$. Hence the ignored set still has probability tending to zero faster than any power of $\de$.
This leads to $J_{K,1st}^d=\inf\{j\in[j_{dK,1st},J_{dK,1st}]: \Re \ga_K^{d-}(j+1)=\min_{k\in [j_{d,1st},J_{dK,1st}]}  \Re \ga_K^{d-}(k)\}$ and $t_{K,1st}^d=\max\{j: \Gamma_2^l(j)\in \ga_K^{d-}[J_{K,1st}^d+1,\infty),\omega(E_{\Gamma_2(j)})=1\}$. Corresponding to $\mathcal{EX}_K^o$, we have $\mathcal{EX}_{K,1st}^o=\emptyset$.
Now define
\beq
\mathcal{EX}_K^d&=&\{E\in \mathcal{E}_K^d\cup\ga_K^{o-}[1,J_K^o]:\dist(v_E,\ga_K^{d-}[J_{K,1st}^d+1,\infty))\leq \de \},
\eeq
and
\beq
j_{1st}^o=\min\{j\geq 1: \dist(\ga_K^{o-}(j),\ga_K^{d-}[J_{K,1st}^d+1,\infty))\leq 2\de\}.
\eeq
Similarly we can define $ j_{oK,1st}$, $J_{oK,1st}$, $ j_{o,1st}$,$J_{K,1st}^o$ corresponding to $ j_{dK,1st}$, $J_{dK,1st}$, $ j_{d,1st}$,$J_{K,1st}^d$. Corresponding to $\mathcal{EX}_K^d$, we have $\mathcal{EX}_{K,1st}^d=\emptyset$.

Then we denote the set of dual edges which are on the left hand side when one traverses along $\ga_K^{d-}$ from $v_0$ to $\ga_K^{d-}(J_{K,1st}^d)$ and which share one vertex with any dual edge in $\ga_K^{d-}[1,J_{K,1st}^d]$ by $\mathcal{E}_K^{1st,o}$, and define
\beq
\mathcal{EX}_K^{1st,o}&=&\{E^*\in \mathcal{E}_K^{1st,o}\cup\ga_K^{d-}[1,J_{K,1st}^d]:\dist(v_{E^*},\ga_K^{o-}[J_{K,1st}^o+1,\infty))\leq \de \}.
\eeq
This leads to $\mathcal{EX}_{K,2nd}^o=\emptyset$. We can repeat this process iteratively. However, this iteration will stop after a finite number of times since $\ga_K^o$ and $\ga_K^d$ crosses the annulus $B(v_0;1.5r_0R^k,1.5r_0R^k+\de R^{3/2})$ a finite number of times. So there exists an integer $\mathrm{m}$ such that
$\mathcal{EX}_{K,\mathrm{m}th}^o=\emptyset$ and $\mathcal{EX}_{K,\mathrm{m}th}^d=\emptyset$.
 To simplify the presentation, we just suppose $\mathcal{EX}_K^o=\emptyset$ and $\mathcal{EX}_K^d=\emptyset$.

Now we construct the exploration path $\ga_\de^L$ passing through $v_1$ as follows.

Denote by $\mathcal{E}$ any set of edges which are touched or crossed by $\Gamma_1(t_K^o,\infty)\cup\Gamma_2(t_K^d,\infty)$ and whose centers have the same imaginary part. Suppose $\mathcal{E}$ contains $E_{(1)}$, $E_{(2)}$, $\cdots$, $E_{(n)}$ permuted from the left side to the right side. Write their left-translated edges by $\de$ unit as $E_{(1)}^l$, $E_{(2)}^l$, $\cdots$, $E_{(n)}^l$, and their set as $\mathcal{E}^l$. Suppose $\mathcal{E}\setminus \mathcal{E}^l=\{\mathrm{E}_1,\mathrm{E}_2,\cdots, \mathrm{E}_{n_1}\}$ such that $\mathrm{E}_{j_1}$ is in the left side of $\mathrm{E}_{j_2}$ if $j_1<j_2$.

The new configuration $\omega^L$ on $\mathcal{E}\cup\mathcal{E}^l$ is defined as follows,
\beq
\omega^L(E_{(j)}^l)&=&\omega(E_{(j)}), \ j=1,2,\cdots,n, \\
\omega^L(\mathrm{E}_k)&=&\omega(\mathrm{E}_{k+1}^l), \ k=1,2,\cdots,n_1-1,\\
\omega^L(\mathrm{E}_{n_1})&=&\omega(\mathrm{E}_1^l).
\eeq

The definition of $\omega^L$ on the edges touched by $\loop_0$ and $\loop_0^l$ is the same as the above one. For any other edge $E$, $\omega^L(E)=\omega(E)$.
In $\omega^L$, there is an open cluster from $E_{a_\de^\diamond}$ to $E_1$ and an dual-open cluster from $E^*_{a_\de^\diamond}$ to $E_1^*$. So there must be an exploration path $\ga_\de^L$ from $a_\de^\diamond$ to $v_1$. One part of $\ga_\de^L$ is the  exploration path $\ga_\de[0,\tau_{oK}]$ due to $\mathcal{E}_{oK2}^c$. The winding of $\ga_\de$ from $\ga_\de(\tau_{oK})$ to $v_0$  is the same as the winding of $\ga_\de^L$ from $\ga_\de(\tau_{oK})$ to $v_1$ since both paths are constrained by $\ga_K^o$ and $\ga_K^d$, and $\mathcal{E}_K^c\cap\mathcal{F}_{oK}^c$ holds. So
$$
W_{\ga_\de}(e_a,A_0)=W_{\ga_\de^L}(e_a,A_1).
$$

It remains to show that the construction of $\omega^L$  is one-one. So suppose $\omega^{(1)}$ and $\omega^{(2)}$ are two different configurations resulting in the respective  $\ga^{(1)}$ and $\ga^{(2)}$ which  are two exploration paths entering $v_0$ from $A_0$ and leaving $v_0$ from $D_0$. The loop attached to $v_0$ in $\omega^{(n)}$ is denoted by $\loop_0^{(n)}$, where $n=1,2$.  We use the convention that a random variable with a superscript $(n)$ is the corresponding random variable defined through $\ga^{(n)}$. For example,
\beq
n_0^{(n)}=\min\{j:\gamma_\de^{(n)}(j)=v_0\}, \ \tau_k^{(n)}=\inf\{j: \gamma_\de^{(n)}(j)\in \ball_k\}, \\
 s_k^{(n)}=\sup\{j<n_0^{(n)}: \gamma_\de^{(n)}[j, n_0^{(n)}]\subseteq\ball_k, \gamma_\de^{(n)}(j-1)\notin \ball_k\}.
\eeq
The respective configuration and exploration path after construction are denoted by $\omega^{(n)L}$ and $\ga^{(n)L}$. We suppose $\omega^{(1)L}=\omega^{(2)L}$. Under this assumption, we will prove that $\omega^{(1)}=\omega^{(2)}$.

In $\omega^{(1)}$, we have
$$(\mathcal{E}_{K^{(1)}}^{(1)}\cup\mathfrak{E}_{K^{(1)}}^{(1)}\cup\mathfrak{F}_{K^{(1)}}^{(1)}\cup\mathcal{G}_{K^{(1)}}^{(1)})^c.$$
In $\omega^{(2)}$, we have
$$(\mathcal{E}_{K^{(2)}}^{(2)}\cup\mathfrak{E}_{K^{(2)}}^{(2)}\cup\mathfrak{F}_{K^{(2)}}^{(2)}\cup\mathcal{G}_{K^{(2)}}^{(2)})^c.$$
Since $\omega^{(1)L}=\omega^{(2)L}$, we must have $K^{(1)}=K^{(2)}=:K$ and $\loop_0^{(1)}=\loop_0^{(2)}$. For $n=1,2$, simultaneously we have $s_K^{(n)}<T_{dK}^{(n)}<T_{oK}^{(n)}$, or $s_K^{(n)}<T_{oK}^{(n)}<T_{dK}^{(n)}$, or $s_K^{(n)}<T_{dK}^{(n)}$ and $s_K^{(n)}=T_{oK}^{(n)}$, or $s_K^{(n)}=T_{dK}^{(n)}$ and $s_K^{(n)}<T_{oK}^{(n)}$ based on the definition of $\tau_{oK}$, $\tau_{dK}$, $\tau_{oK}'$ and $\tau_{dK}'$.
Without loss of generality, assume that $s_K^{(n)}<T_{dK}^{(n)}<T_{oK}^{(n)}$.
From the construction of $\omega^L$, we obtain that
$$\Gamma_1^{(1)}(t_K^{(1)o},\infty)=\Gamma_1^{(2)}(t_K^{(2)o},\infty), \ \Gamma_2^{(1)}(t_K^{(1)d},\infty)=\Gamma_2^{(2)}(t_K^{(2)d},\infty). $$
Since the definition of $\omega^L$ only involves the status of each edge which is touched or crossed by $\Gamma_1(t_K^o,\infty)$, $\Gamma_2(t_K^d,\infty)$, $\loop_0$ and $\loop_0^l$, we conclude that $\omega^{(1)}=\omega^{(2)}$.

In summary, we have proved that after ignoring an event with probability at most $(\de/d_0)^{1+\al-\ep}$, the mapping from $\ga_\de$ to $\ga_\de^L$ is one-to-one and $W_{\ga_\de}(A_0,e_b)=W_{\ga_\de^L}(A_1,e_b)$. Similarly, we can prove that after ignoring an event with probability at most $(\de/d_0)^{1+\al-\ep}$, every exploration path $\ga_\de$ passing $v_1$ can be modified to get an exploration path $\ga_\de^R$ passing $v_0$. This construction is also one-to-one and keeps the winding unchanged. Therefore we have the conclusion of this proposition when $d_0/r_0 > \de^{-\alpha/c_v}$.

\

\noindent
{\bf 2. Assume $d_0/r_0 < \de^{-\alpha/c_v}$, for some constant $c_v>0$.}

Given $\ep>0$, we can assume that $(\de/d_0)^{1+\alpha-\ep} \leq \de^{\alpha\ep}$ due to the one-arm event estimate and the assumption that $\min\big(\dist(v_0,a_\de^\diamond),\dist(v_0,b_\de^\diamond)\big)\geq \de^{1-\ep}$. So
\beqn
d_0\geq \de^{1-\alpha\ep/(1+\alpha-\ep)}. \label{bigr}
\eeqn
Since $r_0> d_0\de^{\al/c_v}$, there are four disjoint arms connecting $v_0$ and $\p B(v_0; d_0\de^{\al/c_v})$.  This happens with probability at most
$$
c_0\big(\frac{\de}{r_0}\big)^{1+\al} \leq c_0\big(\frac{\de}{d_0}\de^{-\al/c_v}\big)^{1+\al}.
$$
Now let $c_v=\ep^{-2}(1+\al-\ep)(1+\al)$. By \eqref{bigr},
$$
\big(\frac{\de}{d_0}\de^{-\al/c_v}\big)^{1+\al}\leq \big(\frac{\de}{d_0}\big)^{1+\al-\ep}.
$$
In this case, each edge parafermionic observable is of the order $(\de/d_0)^{1+\al-\ep}$. In other words, both $F(A_0)$ and $F(A_1)$ are of the order $(\de/d_0)^{1+\al-\ep}$. The conclusion is automatically correct.

\end{proof}

For a special Dobrushin domain, we have the following result which improves the error bound in
Proposition \ref{Tran} when $v_0$ is near the boundary of $\Omega_\de^{\diamond}$.

\begin{proposition} \label{Tranimp}
Suppose $\mathrm{L}$ is a horizontal line segment contained in $\p\Omega_{ba,\de}$ or $\p \Omega^*_{ab,\de}$. Denote the Euclidean distance from $v_0$ to $\mathrm{L}$ and $(\p\Omega_{ba,\de}\cup \p \Omega^*_{ab,\de})\setminus\mathrm{L} $ by $d_V$ and $d_0$ respectively. Also assume that $d_0\geq d_V$. Let $v_1$ be on the left hand side of $v_0$. Then for any $\ep>0$, if $\de$ is small enough and $\min\big(\dist(v_0,a_\de^\diamond),\dist(v_0,b_\de^\diamond)\big)\geq \de^{1-\ep}$,
\beqn
|F(A_0)-F(A_1)| &\preceq& (\de/d_0)^{1+\al-\ep}, \label{TrAs}\\
|F(B_0)-F(B_1)| &\preceq& (\de/d_0)^{1+\al-\ep}, \label{TrBs}\\
|F(C_0)-F(C_1)| &\preceq& (\de/d_0)^{1+\al-\ep}, \label{TrCs}\\
|F(D_0)-F(D_1)| &\preceq& (\de/d_0)^{1+\al-\ep}. \label{TrDs}
\eeqn
\end{proposition}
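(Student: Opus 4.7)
The plan is to follow the same multi-scale coupling argument as in Proposition \ref{Tran}, but to sharpen the exceptional-event estimates at those scales where the annulus around $v_0$ meets $\mathrm{L}$, by exploiting the fact that $\mathrm{L}\subseteq\p_{ba,\de}\cup\p^*_{ab,\de}$ provides one ``free'' open (or dual-open) arm, so that every bulk $k$-arm event restricts to a half-plane $(k{-}1)$-arm event.

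First I would set up the same dyadic data $R=\log\de^{-1}$, $\A_k=B(v_0;r_0R^k,r_0R^{k+1})$, $\ball_k$, and the same hitting times and exceptional events $\mathcal{E}_k,\mathfrak{E}_k,\mathfrak{F}_k,\mathcal{G}_k$ as in Proposition \ref{Tran}, but now letting $k$ range up to $m_0=[\log(d_0/r_0)/\log R]-1$ with $d_0$ the distance to the rest of the boundary (not to $\mathrm{L}$). Partition $[1,m_0]$ into ``interior'' scales ($r_0R^{k+1}\le d_V/2$) and ``boundary'' scales ($r_0R^{k+1}>d_V/2$, so $\A_k$ reaches $\mathrm{L}$).

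At interior scales the annulus lies fully in the bulk, so the estimates \eqref{four}--\eqref{six} used in the proof of Proposition \ref{Tran} apply verbatim and give $\Pro(\mathcal{E}_k\cup\mathfrak{E}_k\cup\mathfrak{F}_k\cup\mathcal{G}_k)\preceq \mathrm{k}^{3}\rho^{3+3\al}R^{-1-\al}$. At boundary scales, every configuration contributing to $\mathcal{E}_k,\mathfrak{E}_k,\mathfrak{F}_k$ must stay on one side of $\mathrm{L}$, so the bulk 4-arm events become half-plane 3-arm events and the bulk 6-arm events become half-plane 5-arm events. Using \eqref{halfthree} for the dominant 3-arm factor and RSW \eqref{one} for the remaining arms when needed, each boundary-scale contribution is at most $R^{-2+o(1)}$, i.e.\ not worse than the interior bound. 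Summing as in \eqref{Pexcept} but with $d_0$ in place of $d_V$ and using the telescoping four-arm probability in $\A_0$, this yields $\Pro\big(\cap_{k=1}^{m_0}(\mathcal{E}_k\cup\mathfrak{E}_k\cup\mathfrak{F}_k\cup\mathcal{G}_k)\big)\preceq(\de/d_0)^{1+\al-\ep}$, which is the key improvement.

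Once the exceptional set has the improved probability bound, the rest of the proof is identical to Proposition \ref{Tran}: extract $\ga_K^o,\ga_K^d$, perform the left-shift construction producing $\omega^L$ on the finite edge set touched by $\Gamma_1(t_K^o,\infty)\cup\Gamma_2(t_K^d,\infty)\cup\loop_0\cup\loop_0^l$, and check that the shift preserves both the exploration structure and the winding identity $W_{\ga_\de}(A_0,e_b)=W_{\ga_\de^L}(A_1,e_b)$; horizontality of $\mathrm{L}$ is exactly what is needed, since a $\de$-unit horizontal shift maps edges of $\mathrm{L}$ to edges of $\mathrm{L}$ and so does not disturb the Dobrushin boundary condition on $\p_{ba,\de}\cup\p^*_{ab,\de}$. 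One-to-oneness of $\omega\mapsto\omega^L$ is verified by the same reconstruction of $K$ and $\loop_0$ given $\omega^L$. The second regime $d_0/r_0<\de^{-\al/c_v}$ is disposed of exactly as in the second part of Proposition \ref{Tran}, since that argument only uses $d_0$ and is insensitive to the geometry of the boundary near $v_0$.

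The main obstacle I anticipate is the bookkeeping for the near-boundary versions of $\mathfrak{F}_k$ and of the six subevents $\mathcal{E}_{ok}\cap\{d_{ok,j_1}\le d_{ok,j_2}\le d_{ok,j_3}\}$: each intermediate sub-annulus that straddles $\mathrm{L}$ must be re-analyzed as a half-plane configuration, and one has to verify uniformly in the ordering of the intermediate distances that the half-plane 3-arm bound \eqref{halfthree}, multiplied by the appropriate boundary perimeter and combined with \eqref{one} for the remaining arms in the bulk sub-annuli, still sums to $O(R^{-1-\al})$ per scale after the $\mathrm{k}^3\rho^{3+3\al}$ combinatorial factor. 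Once this verification is in place the proposition follows.
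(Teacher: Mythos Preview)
Your approach is essentially the paper's: rerun the proof of Proposition \ref{Tran} and, at scales where the annulus around $v_0$ meets $\mathrm{L}$, replace the bulk arm estimates by their half-plane counterparts. The paper carries this out event by event (for $\mathcal{E}_k$, $\mathcal{G}_{1k}$, $\mathcal{G}_{2k}$, $\mathfrak{F}_k$ separately), in each case splitting according to where $d_V$ falls relative to the inner and outer radii and combining a bulk 4-arm bound on $B(v_0;2r_0R^k,d_V)$ with a half-plane 3-arm bound on the part of the annulus beyond $d_V$; it also uses \eqref{halftwo} (not only \eqref{halfthree}) for $\mathcal{G}_{1k}$ and $\mathcal{G}_{2k}$, and the BK inequality together with \eqref{halftwo}--\eqref{halfthree} for the half-plane 5-arm bound on $\mathfrak{F}_k$. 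These are precisely the verifications you flag as the main obstacle, so your outline is on target.

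One point to correct: your claim that the regime $d_0/r_0<\de^{-\al/c_v}$ is ``insensitive to the geometry of the boundary near $v_0$'' is not quite right. When $d_V<r_0$ the loop $\loop_0$ can reach $\mathrm{L}$, so the bulk 4-arm bound on $B(v_0;2\de,r_0)$ is not available as stated. The paper handles this by splitting into a bulk 4-arm event on $B(v_0;2\de,d_V)$ and a half-plane 3-arm event \eqref{halfthree} on $B(v_0;d_V,r_0)$, whose product is still $\preceq(\de/r_0)^{1+\al}$; this is the same half-plane substitution you already invoke in Case 1, so the fix is immediate once noticed.
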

\begin{proof}
Without loss of generality, we assume that $v_0$ is above the line segment $\mathrm{L}\subseteq \p\Omega_{ba,\de}$ in $\Omega_\de^\diamond$, and $\mathrm{L}$ is part of the real axis. So $v_0\in \Half$.
The argument is parallel to that of Proposition \ref{Tran}. We need to consider two cases. In the following proof, we use the same notation as in the proof of Proposition \ref{Tran} although $d_0$ has a different meaning.

\

\noindent
{\bf 1. Assume $d_0/r_0 \geq \de^{-\alpha/c_v}$, for some constant $c_v>0$.}

We have to check \eqref{PEk}, \eqref{PGk}, \eqref{PEok}-\eqref{PEdk1}, \eqref{PFok}-\eqref{PFdk} in this new situation. Note that if the open edges  which $\ga_\de[0,n_0]$ touches are disjoint from $\mathrm{L}$ in the corresponding annulus, then \eqref{PEk}, \eqref{PGk}, \eqref{PEok}-\eqref{PEdk1}, \eqref{PFok}-\eqref{PFdk} are correct. On the other hand, there is only at most one $k$ such that $\mathrm{L}$ crosses $B(v_0;r_0R^{k+1})$, and $\mathrm{L}$ doesn't cross $B(v_0;r_0R^k)$. So in the remaining proof when $d_0/r_0 < \de^{-\alpha/c_v}$, we suppose that the open edges  which $\ga_\de[0,n_0]$ touches share at least one vertex in the annulus $B(v_0;r_0R^k,r_0R^{k+1})$ with $\mathrm{L}$ and $\mathrm{L}$ crosses $B(v_0;r_0R^{k+1})$.

For \eqref{PEk}, it suffices to consider the case where the open edges  which $\ga_\de[0,n_0]$ touches share at least one vertex with $\mathrm{L}$ in the annulus $B(v_0;2r_0R^k,r_0R^{k+1}/2)$ and $d_V\leq r_0R^{k+1}/2$. If $d_V\leq 2r_0R^k$, $\mathcal{E}_k$ implies that there are three disjoint arms in $\Half$ connecting the respective inner and outer boundaries of the annulus $B(v_0-id_V;d_V+2r_0R^k,\sqrt{(r_0R^{k+1}/2)^2-d_V^2})$. It follows from \eqref{halfthree} that \eqref{PEk} is still correct. Now suppose $d_V> 2r_0R^k$. $\mathcal{E}_k$ implies that  there are four disjoint arms connecting the respective inner and outer boundaries of the annulus $B(v_0;2r_0R^k,d_V)$, and there are three disjoint arms in $\Half$ connecting the respective inner and outer boundaries of the annulus $B(v_0-id_V;d_V+2r_0R^k,\sqrt{(r_0R^{k+1}/2)^2-d_V^2})$. It follows from  \eqref{four} and \eqref{halfthree} that \eqref{PEk} is correct.

For $\G_{1k}$, when  $\dist(\ga_\de(s_k),\mathrm{L})> r_0R^{k-\al}$, it follows from \eqref{four} that $\Pro(\G_{1k})\preceq R^{-1-\al}$ for $k\geq 2+2\al^{-1}+\al$. So  it suffices to consider the case where $\dist(\ga_\de(s_k),\mathrm{L})\leq r_0R^{k-\al}$ and $\Re \big(\ga_\de(s_k)-v_0\big)>0$. In this case $\G_{1k}$ implies that there are two vertex disjoint dual-open arms  in $\Half$ connecting the respective inner and outer boundaries of  $B\big(\Re v_0+((r_0R^{k+1}/2)^2-d_V^2)^{1/2};2r_0R^{k-\al},r_0R^{k+1}/2-r_0R^k\big)$. Now consider the exploration path from $\p B\big(\Re v_0+((r_0R^{k+1}/2)^2-d_V^2)^{1/2};r_0R^{k+1}/2-r_0R^k\big)$ to $\p B\big(\Re v_0+((r_0R^{k+1}/2)^2-d_V^2)^{1/2};2r_0R^{k-\al}\big)$  which is closest to the positive real axis. After we  shift the primal edges in $\Half$ touched or crossed by this exploration path to the upper-left side by $\de/2$ unit, we can see that $\Pro(\G_{1k})\preceq R^{-1-\al}$ by \eqref{halftwo}.

For $\G_{2k}$, if $\dist(w_k,\mathrm{L})\leq r_0R^{k-1-\al}$, we apply \eqref{halftwo} to obtain that
\beq
\Pro(\G_{2k},\dist(w_k,\mathrm{L})\leq r_0R^{k-1-\al}) \preceq \frac{r_0R^{k-1-\al}}{r_0R^k} =R^{-1-\al}.
\eeq
If $\dist(w_k,\mathrm{L})\geq r_0R^{k-1-\al}$, we apply \eqref{four}  to obtain that
\beq
\Pro(\G_{2k},\dist(w_2,\mathrm{L})\geq r_0R^{k-1-\al}) \preceq \big(\frac{\de}{r_0R^{k-1-\al}}\big)^{1+\al} \frac{r_0R^{k+3/2}}{\de}\preceq R^{-1-\al}
\eeq
when $k\geq 3.5\al^{-1}+3+\al$. Combining $\Pro(\G_{1k})$ and $\Pro(\G_{2k})$, we have \eqref{PGk} when $k\geq 3.5\al^{-1}+3+\al$.

We can use the same argument of deriving \eqref{PEk} under the condition of Proposition \ref{Tranimp} to obtain  \eqref{PEok}-\eqref{PEdk1}. So we omit the details.

Finally, let us handle \eqref{PFok}-\eqref{PFdk}. It suffices to consider the case where $d_V\leq 2r_0R^k$. Without loss of generality, suppose $\Re\big( \ga_\de(j_1)-v_0\big) \geq 0$ and $k\geq 3$. If $\dist(\ga_\de(j_1),\mathrm{L})\leq r_0R^{k-1}$, then there are five disjoint arms in $\Half$ connecting the respective inner and outer boundaries of the annulus $B\big(\Re v_0+((1.5r_0R^k)^2-d_V^2)^{1/2};2r_0R^{k-1},0.5r_0R^k-2r_0R^{k-1}\big)$ no matter whether $d_V\leq 1.5r_0R^k$ or $d_V>1.5r_0R^k$. We apply the van den Berg-Kestern-Reimer inequality (one can refer to Theorem 6 in Chapter 2 of~\cite{BR12}) and the relations \eqref{halftwo},  \eqref{halfthree} to obtain
$$
\Pro(\mathcal{F}_{ok}, \dist(\ga_\de(j_1),\mathrm{L})\leq r_0R^{k-1}) \preceq \big(\frac{r_0R^{k-1}}{r_0R^k}\big)^2\frac{r_0R^{k-1}}{r_0R^k}=R^{-3}.
$$
If $\dist(\ga_\de(j_1),\mathrm{L})\geq r_0R^{k-1}$,
\beq
\Pro(\mathcal{F}_{ok})
\preceq (  r_0R^{k})(\de R^{3/2})\de^{-2} \de^{2+\al} (r_0R^{k-1})^{-2-\al}
\preceq R^{-1-\al},
\eeq
when $k\geq (9+4\al)/(2+2\al)$. Hence we have \eqref{PFok}. Similarly, we can get \eqref{PFdk}.

The construction of $\ga_\de^L$ from $\ga_\de$ is the same as that in Proposition \ref{Tran}. So we omit it.

\noindent
{\bf 2. Assume $d_0/r_0 < \de^{-\alpha/c_v}$, for some constant $c_v>0$.}

The proof is the same as the corresponding case of Proposition \ref{Tran} when $d_V\geq r_0$. When $d_V<r_0$,
in the annulus $B(v_0;d_V,r_0)$  one open arm of the four disjoint arms connecting the inner and outer boundaries may touch $\mathrm{L}$. However, we can apply \eqref{halfthree} to get the estimate $(d_V/r_0)^2$. This combined with the $4$-arm event in $B(v_0;2\de,d_V)$ implies  $\big(\de/r_0\big)^{1+\al}$. So we can complete the proof now.

\end{proof}

\section{Rotational invariance} \label{rot}
In Section \ref{partI} and \ref{partIII}, we will develop two different rotational properties of the exploration path. Each rotational property results in one modified parafermionic observable with the desirable property in the next section. However, these two observables are equal up to additive ignorable constant based on the construction in Section \ref{partII}. This again illustrates that our modified parafermionic observable has not only the translational property but also the almost equivalence of ``input current" and ``output current" at a medial vertex.
\subsection{Rotation around medial vertex} \label{partI}
There are four possibilities when $\ga_\de$ passes though $v$ and only two medial edges incident to $v$ are in $\ga_\de$. One can refer to Figure \ref{config}. Essentially in this part,  we will decompose the configurations in Possibility $j$ into two disjoint parts $\mathcal{E}_{j,1}^v$ and $\mathcal{E}_{j,2}$ and construct a one-to-one and onto mapping from $\mathcal{E}_{j,2}$ to $\mathcal{E}_{j+1,1}^v$. Here $\mathcal{E}_{5,1}^v$ is understood as $\mathcal{E}_{1,1}^v$. This result is summarized in Proposition \ref{rot-inv} at the end of this section.

\begin{figure}[hp]
 \begin{center}
\scalebox{0.38}{\includegraphics{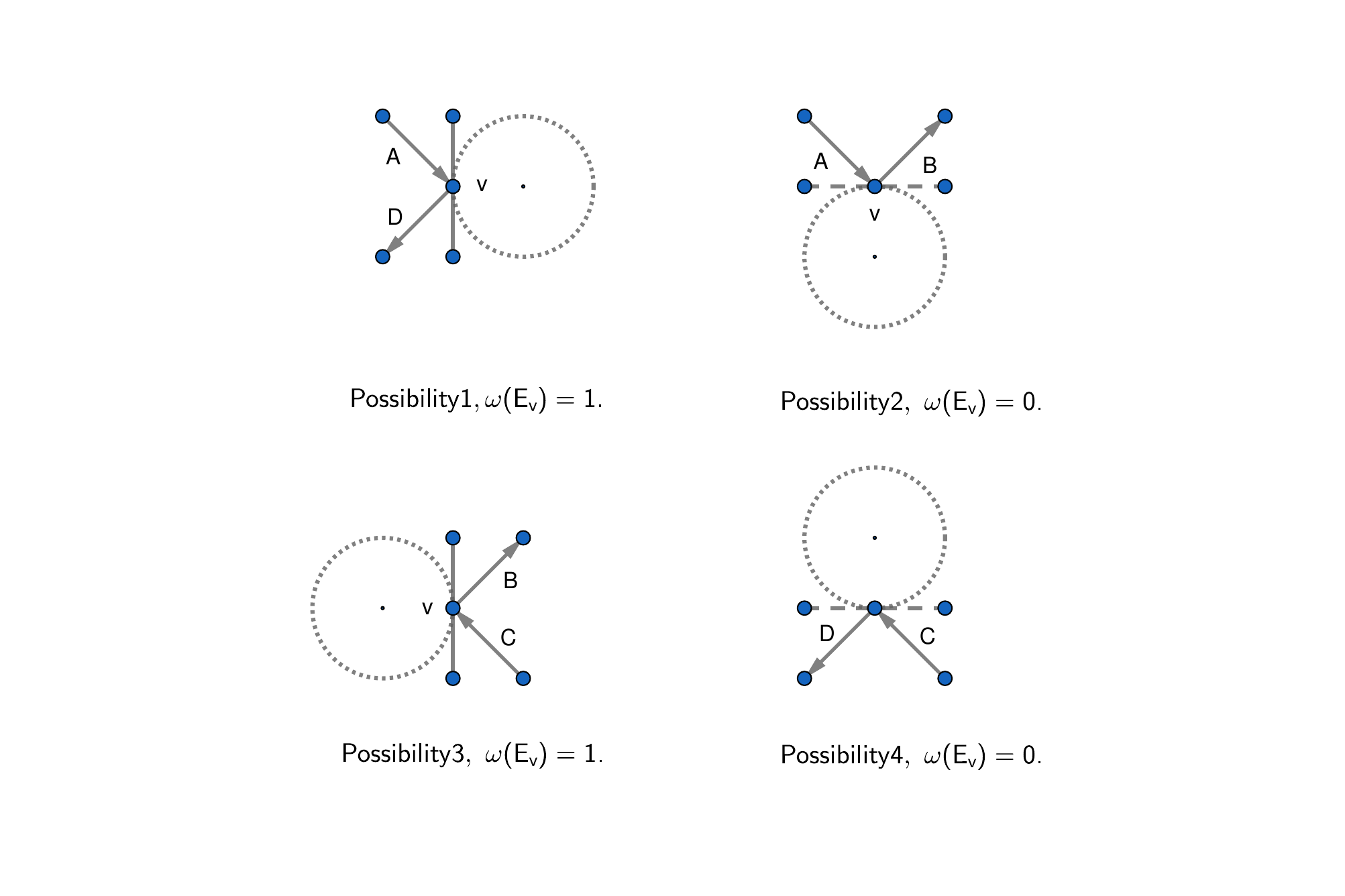}}
 \end{center}
\caption{Four possibilities when $\ga_\de$ passes though $v$ and only two medial edges incident to $v$ are in $\ga_\de$. The dashed circle represents the loop attached to $E_v$ or $E_v^*$.} \label{config}
\end{figure}

We start with the construction of the mapping from $\mathcal{E}_{j,2}$ to $\mathcal{E}_{j+1,1}^v$.
Let $v$  be one medial-vertex in $\Omega_\de^{\diamond}$ such that $d_v=\dist(v, \p_{ba,\de}\cup\p_{ab,\de}^*)$. To simplify notation, we will write $d=d_v$ in this section. Suppose $v$ is the common vertex of  four incident medial edges $A, B, C$ and $D$ indexed in the counterclockwise order such that $A$ and $C$ are pointing towards $v$.
Given the configuration $\omega\in \{0,1\}^{E_{\Omega_\de}}$, suppose $\gamma_\de$ passes through $v$, and  only two medial-edges incidental to $v$ are in $\gamma_\de$. Denote the edge in $E_{\Omega_\de}$ containing $v$ by $E_v$, whose dual-edge is written as $E^*_v$.  The loop attached to $E_v$  is denoted by $\loop_v$. The symmetric loops of $\loop_v$ around the line $L^{AC}_v$ passing $A$ and $C$ and the line $L^{BD}_v$ passing $B$ and $D$ are denoted by $\loop_v^{AC}$ and $\loop_v^{BD}$ respectively. Define $r=\max\{\dist(x,v): x\in \loop_v\}$, $n_v=\min\{j:\gamma_\de(j)=v\}$. We index the loop $\loop_v$ by the number of visited medial vertices in the counterclockwise order around any point surrounded by $\loop_v$, starting from $v$ so that $\loop_v(0)=v$, and let $\loop_v^{-1}(j)$ denote the $j$th visited medial vertex by $\loop_v$  in the clockwise order  starting from $v$ so that $\loop_v^{-1}(0)=v$.  Denote the symmetric points of $\loop_v(j)$ and $\loop_v^{-1}(j)$ around $L_v^{AC}$ by $\loop_v^{AC}(j)$ and $\loop_v^{-AC}(j)$ respectively. For any $E\in E_{\Omega_\de}$, let $E^{AC}\in E_{\Omega_\de}$ be the dual-edge of the symmetric image of $E$ around $L^{AC}_v$. If $L_v^{AC}$ crosses $E$, then $E=E^{AC}$.
Define
\beq
t^v&=&\inf\{j>0:\loop_v^{AC}[j,j+1]\subseteq_c \ga_\de[0,n_v]\},\\
t_v&=&\inf\{j>0:\ga_\de(j)=\loop_v^{AC}(t^v)\},
\eeq
where
the notation $\gamma_1\subseteq_c\gamma_2$ for two curves $\gamma_1$ and $\gamma_2$ means that $\ga_1$ crosses $\ga_2$. More specifically, $\loop_v^{AC}[j,j+1]\subseteq_c \ga_\de[0,n_v]$ means that there exists $k<n_v$ such that $\loop_v^{AC}[j,j+1]=\ga_\de[k,k+1]$.
If $\{j>0:\loop_v^{AC}[j,j+1]\subseteq_c \ga_\de[0,n_v]\}=\emptyset$, define $t^v=n_\loop=:\inf\{j>0:\loop_v(j)=v\}$, $t_v=n_v$. We also need the following set of primal edges touched by both $\loop_v$ and $\ga_\de[(0,n_v)$,
\beq
\mathfrak{E}^v&=&\{E_{\ga_\de(j)}: \ga_\de(j)\in\loop_v[0,n_\loop),0\leq j< n_v\}.
\eeq

Suppose $A\in\ga_\de$, $B\notin \ga_\de$, $C\notin\ga_\de$ and $D\in \ga_\de$. In this section, we will construct one $\omega'$ based on $\omega$ such that the exploration path $\ga_\de'$ in $\omega'$ passes through $v$ and either $A, B\not\in\ga_\de', C, D\in \ga_\de'$ with $W_{\gamma_\de'}(D,e_b)=W_{\gamma_\de}(A,e_b)+\pi/2$ and $W_{\gamma_\de'}(C,e_b)=W_{\gamma_\de}(D,e_b)+\pi/2$ or $A, B\in\ga_\de',C, D\notin \ga_\de'$ with $W_{\gamma_\de'}(B,e_b)=W_{\gamma_\de}(A,e_b)-\pi/2$ and $W_{\gamma_\de'}(A,e_b)=W_{\gamma_\de}(D,e_b)-\pi/2$. We also show that the mapping $\omega \rightarrow\omega'$ is one-to-one after neglecting one event with probability of the order $(\de/d)^2$. We consider two cases according to the magnitude of  $d/r$.

\

\noindent
{\bf Case I. Suppose $d/r>1$}.

There are three subcases according to the status of $E_{\ga_\de(t_v)}$ and the statuses of edges in $\mathfrak{E}^v$.

\

\noindent
Subcase (1).   $\omega(E_{\ga_\de(t_v)})=0$ if $t_v<n_v$,   and for any $E\in \mathfrak{E}^v $, $\omega(E)=1-\omega(E^{AC})$.

We define the first touching point of $\loop_v^{AC}$ and $\ga_\de[0,n_v]$ as follows.
\beq
s^v&=&\inf\{j>0: \loop_v^{AC}(j)\in\ga_\de[0,n_v]\},\\
s_v&=&\inf\{j>n_v: \ga_\de(j)=\loop_v^{AC}(s^v)\}.
\eeq
So $s^v\leq t^v$ and $\loop_v^{AC}[0,t^v]$ crosses $\ga_\de[n_v,\infty]$.

We define the configuration $\omega'$ corresponding to $\omega$ as follows.
  \beqn
 && \omega'(E)=1-\omega(E^{AC}), \omega'(E^{AC})=1-\omega(E) \
  \text{if} \  \loop_v[0,s^v) \ \text{touches or crosses} \ E; \ \nonumber\\
&&\omega'(E)=\omega(E), \ \text{otherwise}. \label{subcase(1)}
\eeqn

In $\omega'$, the dual-open edges touched by $\ga_\de[s_v, n_v)$ and $\loop_v^{AC}[0,s^v]$ make $A\in \ga_\de'$ and $B\in \ga_\de'$.
 In this subcase, the windings $W_{\gamma_\de}(A,e_b)$ and $W_{\gamma_\de}(D,e_b)$ are decreased by $\pi/2$ to become $W_{\gamma'_\de}(B,e_b)$ and $W_{\gamma'_\de}(A,e_b)$ respectively, and $\ga_\de'(j)=\ga_\de(j)$ when $j\leq n_v$.

\

\noindent
Subcase (2).  $t_v< n_v$, $\omega(E_{\ga_\de(t_v)})=1$,   and for any $E\in \mathfrak{E}^v$, $\omega(E)=1-\omega(E^{AC})$.

In this subcase, we need to introduce one more random time, which is the first intersection time of $\loop_v^{-AC}$ and $\ga_\de[0,n_v)$. Define
\beq
\mathrm{t}^v&=&\inf\{j>0: \loop_v^{-AC}[j,j+1]\subseteq_c \ga_\de[0,n_v)\}.\\
\mathrm{t}_v&=&\inf\{j>0:\ga_\de(j)=\loop_v^{-AC}(\mathrm{t}^v).
\eeq

We define the configuration $\omega'$ corresponding to $\omega$ as follows.
  \beqn
 && \omega'(E)=1-\omega(E^{AC}), \omega'(E^{AC})=1-\omega(E) \
  \text{if} \  \loop_v[0,t^v]\cup \loop_v^{-1}[0,\mathrm{t}^v) \ \text{touches or crosses} \ E; \ \nonumber\\
&&\omega'(E)=\omega(E), \ \text{otherwise}. \label{subcase(2)}
\eeqn

In $\omega'$, the dual-open edges touched by $\ga_\de(t_v,n_v]$ and $\loop_v^{AC}[0,t^v]$ make $C\in \ga_\de'$ and $D\in \ga_\de'$. The exploration path in $\omega'$ from $v$ to $b_\de^\diamond$ touches the exploration path from $a_\de^\diamond$ to $v$  at $\ga_\de(\mathrm{t}_v)$ such that
\beqn
\omega'(E_{\ga_\de(\mathrm{t}_v)})=1, \ \omega'(E^{AC}_{\ga_\de(\mathrm{t}_v)})=1, \label{stat1-mathtv}
\eeqn
or
\beqn
\omega'(E_{\ga_\de(\mathrm{t}_v)})=0, \ \omega'(E^{AC}_{\ga_\de(\mathrm{t}_v)})=0. \label{stat0-mathtv}
\eeqn
And $\ga_\de(\mathrm{t}_v)$ is the first touching point $w$ of the exploration path from $a_\de^\diamond$ to $v$ by  $\loop_v^{-AC}[0,\mathrm{t}^v]$ such that $\omega'(E_w)=1, \ \omega'(E^{AC}_w)=1$, or $\omega'(E_w)=0, \ \omega'(E^{AC}_w)=0$.

In this subcase,  the windings $W_{\gamma_\de}(A,e_b)$ and $W_{\gamma_\de}(D,e_b)$ are increased by $\pi/2$ to become $W_{\gamma'_\de}(D,e_b)$ and $W_{\gamma'_\de}(C,e_b)$ respectively.

\

\noindent
 Subcase (3).  There exists $E\in \mathfrak{E}^v$ such that $\omega(E)\not=1-\omega(E^{AC})$.

Based on the condition specifying this subcase, we define the following touching time $\tau_v$ and $\tau^v$,
  \beqn
  \tau_v&=&\sup\{j<n_v: E_{\ga_\de(j)}\in \mathfrak{E}^v, \omega(E_{\ga_\de(j)})\not=1-\omega(E^{AC}_{\ga_\de(j)})\}, \label{tauv1}\\
  \tau^v&=&\inf\{j>0:\loop_v(j)=\ga_\de(\tau_v)\}. \label{tauv2}
  \eeqn
  So if $w\in \ga_\de(\tau_v,n_v)\cap \loop_v(\tau^v,n_\loop)$, we have $\omega(E_w)+\omega(E_w^{AC})=1$. We also need another crossing time, which is defined by
  \beq
  \varsigma^v=\inf\{j\leq \tau_{-}^v: \loop^{-AC}_v[j,j+1]\subseteq_c \ga_\de[0,n_v), \ \text{or} \ \loop_v[0,\tau^v]\},
  \eeq
  where $\tau_{-}^v=\inf\{j>0: \loop_v^{-1}(j)=\loop_v(\tau^v)\}$.
  If the set after $\inf$ in the definition of $\varsigma^v$ is empty, define $ \varsigma^v=\tau_{-}^v$. So $\loop_v^{-AC}(\varsigma^v)$ is the first crossing point of $\ga_\de[0,n_v]\cup\loop_v[0,\tau^v]$ by $\loop_v^{-AC}$ if  $\loop_v^{AC}[\tau^v,n_\loop)$ crosses $\ga_\de[0,n_v)\cup\loop_v[0,\tau^v]$.
We remark that if $\inf\{j\leq \tau_{-}^v: \loop^{-AC}_v[j,j+1]\subseteq_c \ga_\de[0,n_v), \ \text{or} \ \loop_v[0,\tau^v]\}=\tau_{-}^v$ and the set after $\inf$ is not empty, we still say that $\loop_v^{AC}[\tau^v,n_\loop)$ crosses $\ga_\de[0,n_v)\cup\loop_v[0,\tau^v]$.

We also need two more sets of primal edges, $\mathcal{E}_{coi}=\{ E_{\loop_v^{-1}(j)}: j\leq \varsigma^v, \loop_v^{-1}(j)\in \loop_v(0,\tau^v)\}$ and $\mathcal{E}_{exc}=\{E_{\loop_v^{-1}(j)}: j\leq \varsigma^v, \loop_v^{-1}(j)\in \loop_v(0,\tau^v), \loop_v^{-AC}(j)\in\ga_\de[0,\tau_v]\cup \loop_v(0,\tau^v], \omega(E_{\loop_v^{-AC}(j)})=1\}\cup \{E_{\loop_v^{-1}(j)}: j\leq \varsigma^v, \loop_v^{-1}(j)\in \loop_v(0,\tau^v), \loop_v^{-AC}(j)\in\ga_\de[\tau_v,n_v), \omega(E_{\loop_v^{-AC}(j)})=0\}$. So $\mathcal{E}_{coi}$ is the set of primal edges crossed by both $\loop_v(0,\tau^v)$ and $\loop_v^{-1}(0,\varsigma^v]$, and $\mathcal{E}_{exc}$ is a subset of $\mathcal{E}_{coi}$. The primal symmetric image of every edge in $\mathcal{E}_{exc}$ around $L_v^{AC}$ is touched by $\ga_\de[0,\tau_v]\cup \loop_v(0,\tau^v]$ or crossed by $\ga_\de[\tau_v,n_v)$.
We express
\begin{equation}\label{decom-Ecoi}
\mathcal{E}_{coi}\setminus\mathcal{E}_{exc}=\{E_j: j\in\cup_{l=0}^{l_0}[k_l,k_{l+1}), j\in \Z^{+}\},
\end{equation}
where $k_0=1$, $k_{l_0+1}-1$ is the size of the set $\mathcal{E}_{coi}\setminus\mathcal{E}_{exc}$, $\loop_v^{-1}[0,n_\loop)$ crosses $E_{j_1}$ before $E_{j_2}$ if $j_1<j_2$, $\omega(E_j^{AC})$ is constant if $j\in [k_l,k_{l+1})$, $\omega(E_{j_1}^{AC})\not=\omega(E_{j_2}^{AC})$ if $j_1\in [k_l,k_{l+1})$ and $j_2\in [k_{l+1},k_{l+2})$.

Now we can define the configuration $\omega'$ corresponding to $\omega$ as follows.
  \beqn
 && \omega'(E_{\ga_\de(\tau_v)})=1-\omega(E_{\ga_\de(\tau_v)}), \omega'(E_v)=1-\omega(E_v); \nonumber\\
 && \omega'(E)=1-\omega(E^{AC}), \omega'(E^{AC})=1-\omega(E) \ \text{for} \ E\in \{E: v_E\in \loop_v^{-1}(0,\varsigma^v)\}\setminus \mathcal{E}_{coi}; \nonumber\\
 && \omega'(E)=\omega(E), \omega'(E^{AC})=1-\omega(E^{AC}) \ \text{for} \ E\in \mathcal{E}_{coi}\setminus\mathcal{E}_{exc};\nonumber\\
   &&  \omega'(E^{AC}_{\ga_\de(\tau_v)})=1- \omega(E^{AC}_{\ga_\de(\tau_v)}), \  \text{if} \  \loop_v^{-AC}(0,\varsigma^v]  \ \text{doesn't cross} \ \ga_\de[0,n_v)\cup\loop_v[0,\tau^v] \nonumber\\
 &&\qquad\qquad\qquad\qquad \text{and} \ \ga_\de^{AC}(\tau_v)\notin \ga_\de[0,\tau_v]\cup\loop_v[0,\tau^v];   \nonumber\\
&&\omega'(E)=\omega(E), \ \text{otherwise}. \label{subcase(3)}
\eeqn

In $\omega'$, the dual-open edges touched by $\ga_\de(\tau_v,n_v]$ and $\loop_v[\tau^v,n_\loop]$ make $C\in \ga_\de'$ and $D\in \ga_\de'$.
In this subcase, $C\in \ga_\de'$ and $D\in \ga_\de'$,  the windings $W_{\gamma_\de}(A,e_b)$ and $W_{\gamma_\de}(D,e_b)$ are increased by $\pi/2$ to become $W_{\gamma'_\de}(D,e_b)$ and $W_{\gamma'_\de}(C,e_b)$ respectively.

\

Now let us study the one-to-one property of $\omega$ to $\omega'$. We follow the convention that  a random variable with a superscript $(k)$ is the corresponding random variable defined through $\omega^{(k)}$ in this section.
Suppose $\omega^{(k)}$ is a configuration in which $\ga_\de^{(k)}$ passes through $v$, $A\in  \ga_\de^{(k)}$, $B\notin \ga_\de^{(k)}$, $C\notin\ga_\de^{(k)}$ and $D\in  \ga_\de^{(k)}$, for $k=1,2$. The loop attached to $E_v^*$ in $\omega'$ is denoted by $\loop_v'$.

\

\noindent
Comparison of Subcase (1).

Suppose $\omega^{(k)}$ is in Subcase (1) for $k=1,2$, and $(\omega^{(1)})'=(\omega^{(2)})'$. Note that in $(\omega^{(k)})'$, $\loop_v^{AC(k)}(s^{v(k)})$ is the first touching point of $(\loop_v^{(k)})'$ by $(\ga_\de^{(k)})'(0,n_v^{(k)}]$.  Hence $\loop_v^{(1)}[0,s^{v(1)}]=\loop_v^{(2)}[0,s^{v(2)}]$ based on the assumption that $(\omega^{(1)})'=(\omega^{(2)})'$.
So, we have $\omega^{(1)}=\omega^{(2)}$.
We can conclude the proof for Subcase (1).

\

\noindent
Comparison of Subcase (2).

Suppose $\omega^{(k)}$ is in Subcase (2) for $k=1,2$, and $(\omega^{(1)})'=(\omega^{(2)})'$. Since $(\omega^{(1)})'=(\omega^{(2)})'$, we can suppose $\loop_v^{AC(1)}[0,t^{v(1)}]\subseteq\loop_v^{AC(2)}[0,t^{v(2)}]$ without loss of generality. If $\loop_v^{AC(1)}[0,t^{v(1)}]\not=\loop_v^{AC(2)}[0,t^{v(2)}]$, then $\ga_\de^{(1)}[t_v^{(1)},n_v^{(1)})$ will cross $\ga_\de^{(2)}[t_v^{(2)},n_v^{(2)})$. The statuses of at least one edge whose center is the crossed point of $\ga_\de^{(1)}[t_v^{(1)},n_v^{(1)})$ by $\ga_\de^{(2)}[t_v^{(2)},n_v^{(2)})$ are different in $(\omega^{(1)})'$ and $(\omega^{(2)})'$. This contradicts with the assumption that $(\omega^{(1)})'=(\omega^{(2)})'$. So, we have $\loop_v^{AC(1)}[0,t^{v(1)}]=\loop_v^{AC(2)}[0,t^{v(2)}]$. Similarly, we have $(\loop_v^{(1)})^{-1}[0,\mathrm{t}^{v(1)}]=(\loop_v^{(2)})^{-1}[0,\mathrm{t}^{v(2)}]$. The definition of $t^v$ and $\mathrm{t}^v$ implies that $\loop_v^{(1)}=\loop_v^{(2)}$ and $\ga_\de^{(1)}[0,n_v^{(1)})=\ga_\de^{(2)}[0,n_v^{(2)})$. Hence $\omega^{(1)}=\omega^{(2)}$.

\

\noindent
Comparison of Subcase (3).

Suppose $\omega^{(k)}$ is in Subcase (3) for $k=1,2$, and $(\omega^{(1)})'=(\omega^{(2)})'$.  Since $(\omega^{(1)})'=(\omega^{(2)})'$, we can suppose $\loop_v^{(1)}[0,\tau^{v(1)}]\subseteq\loop_v^{(2)}[0,\tau^{v(2)}]$ without loss of generality. If $\loop_v^{(1)}[0,\tau^{v(1)}]\not=\loop_v^{(2)}[0,\tau^{v(2)}]$, then $\ga_\de^{(1)}[\tau_v^{(1)},n_v^{(1)})$ will cross $\ga_\de^{(2)}[\tau_v^{(2)},n_v^{(2)})$.  The statuses of at least one edge whose center is the crossed point of $\ga_\de^{(1)}[\tau_v^{(1)},n_v^{(1)})$ by $\ga_\de^{(2)}[\tau_v^{(2)},n_v^{(2)})$ are different in $(\omega^{(1)})'$ and $(\omega^{(2)})'$. This contradicts with the assumption that $(\omega^{(1)})'=(\omega^{(2)})'$.
So, we have $\loop_v^{(1)}[0,\tau^{v(1)}]=\loop_v^{(2)}[0,\tau^{v(2)}]$.

If  $\loop_v^{-AC(1)}(0,\varsigma^{v(1)}]$ crosses $\ga_\de^{(1)}[0,n_v^{(1)})\cup\loop_v^{(1)}[0,\tau^{v(1)}]$ and $\loop_v^{-AC(2)}(0,\varsigma^{v(2)}]$ crosses $\ga_\de^{(2)}[0,n_v^{(2)})$ $\cup\loop_v^{(2)}[0,\tau^{v(2)}]$, or $\loop_v^{-AC(1)}(0,\varsigma^{v(1)}]$ doesn't cross $\ga_\de^{(1)}[0,n_v^{(1)})\cup\loop_v^{(1)}[0,\tau^{v(1)}]$ and $\loop_v^{-AC(2)}(0,\varsigma^{v(2)}]$ doesn't cross $\ga_\de^{(2)}[0,n_v^{(2)})$ $\cup\loop_v^{(2)}[0,\tau^{v(2)}]$,
the definitions of $\omega'$, $\tau^v$ and $\varsigma^v$
 imply that $\loop_v^{(1)}=\loop_v^{(2)}$ and $\ga_\de^{(1)}[0,n_v^{(1)})=\ga_\de^{(2)}[0,n_v^{(2)})$.  If  $\loop_v^{-AC(1)}(0,\varsigma^{v(1)}]$ crosses $\ga_\de^{(1)}[0,n_v^{(1)})\cup\loop_v^{(1)}[0,\tau^{v(1)}]$ and $\loop_v^{-AC(2)}(0,\varsigma^{v(2)}]$ doesn't cross $\ga_\de^{(2)}[0,n_v^{(2)})$ $\cup\loop_v^{(2)}[0,\tau^{v(2)}]$, or vice versa, we will have a contradiction with the definition of $\omega'$ on $\mathcal{E}_{coi}\setminus\mathcal{E}_{exc}$ or $E^{AC}_{\ga_\de(\tau_v)}$. In summary, $\omega^{(1)}=\omega^{(2)}$.

\

\noindent
Comparison of Subcase (1)  and Subcase (2) or (3).

Suppose $\omega^{(1)}$ is in  Subcase (1) and $\omega^{(2)}$ is in Subcase (2) or (3).  Since $A\in (\ga_\de^{(1)})'$ and $A\notin (\ga_\de^{(2)})'$, we have $(\omega^{(1)})'\not=(\omega^{(2)})'$.

\

\noindent
Comparison of Subcase (2)  and Subcase (3).

Suppose $\omega^{(1)}$ is in Subcase (3) and $\omega^{(2)}$ is in Subcase (2), and $(\omega^{(1)})'=(\omega^{(2)})'$. Since $(\omega^{(1)})'=(\omega^{(2)})'$, we can suppose $\loop_v^{(1)}[0,\tau^{v(1)}]\subseteq\loop_v^{AC(2)}[0,t^{v(2)}]$ without loss of generality. If $\loop_v^{(1)}[0,\tau^{v(1)}]\not=\loop_v^{AC(2)}[0,t^{v(2)}]$, then $\ga_\de^{(1)}[\tau_v^{(1)},n_v^{(1)})$ will cross $\ga_\de^{(2)}[t_v^{(2)},n_v^{(2)})$. The statuses of at least one edge whose center is the crossed point of $\ga_\de^{(1)}[\tau_v^{(1)},n_v^{(1)})$ by $\ga_\de^{(2)}[t_v^{(2)},n_v^{(2)})$ are different in $(\omega^{(1)})'$ and $(\omega^{(2)})'$. This contradicts with the assumption that $(\omega^{(1)})'=(\omega^{(2)})'$. So, we have $\loop_v^{(1)}[0,\tau^{v(1)}]=\loop_v^{AC(2)}[0,t^{v(2)}]$.

There are still two possibilities left. One possibility is that $(\loop_v^{(1)})^{-AC}(0,\varsigma^{v(1)}]$  crosses $\ga_\de^{(1)}[0,n_v^{(1)})\cup\loop_v^{(1)}[0,\tau^{v(1)}]$, or $(\loop_v^{(1)})^{-AC}(0,\varsigma^{v(1)}]$  doesn't cross $\ga_\de^{(1)}[0,n_v^{(1)})\cup\loop_v^{(1)}[0,\tau^{v(1)}]$ and $\ga_\de^{AC(1)}(\tau_v^{(1)})\in \ga_\de^{(1)}[0,\tau_v^{(1)}]\cup\loop_v^{(1)}[0,\tau^{v(1)}]$.
For this possibility,,
$$
E_{\ga_\de^{(1)}(\tau_v^{(1)})}=E_{\ga_\de^{(2)}(t_v^{(2)})}, \ (\omega^{(1)})'(E_{\ga_\de^{(1)}(\tau_v^{(1)})})=(\omega^{(2)})'(E_{\ga_\de^{(2)}(t_v^{(2)})})=0.
$$
By the definition of $(\omega^{(k)})' $, $\tau_v^{(1)}$ and $t_v^{(2)}$, we have
$$
 (\omega^{(1)})'(E_{\ga_\de^{(1)}(\tau_v^{(1)})}^{AC})=1, \ (\omega^{(2)})'(E_{\ga_\de^{(2)}(t_v^{(2)})}^{AC})=0.
$$
But $E_{\ga_\de^{(1)}(\tau_v^{(1)})}=E_{\ga_\de^{(2)}(t_v^{(2)})}$ and $(\omega^{(1)})'=(\omega^{(2)})'$. So we have a contradiction under the assumption that $(\omega^{(1)})'=(\omega^{(2)})'$.

The other possibility is that $(\loop_v^{(1)})^{-AC}(0,\varsigma^{v(1)}]$ doesn't  crosses $\ga_\de^{(1)}[0,n_v^{(1)})\cup\loop_v^{(1)}[0,\tau^{v(1)}]$ and $\ga_\de^{AC(1)}(\tau_v^{(1)})\notin \ga_\de^{(1)}[0,\tau_v^{(1)}]\cup\loop_v^{(1)}[0,\tau^{v(1)}]$. Under the assumption that $(\omega^{(1)})'=(\omega^{(2)})'$,
if there is one edge $E\in \mathcal{E}_{coi}^{(1)}\setminus\mathcal{E}_{ext}^{(1)}$ such that $\omega^{(1)}(E)+\omega^{(1)}(E^{AC})=1$, let $E$ be the first such edge crossed by $\loop_v^{(1)}(0,\tau^{v(1)}]$. Then $E$ is crossed by $\loop_v^{AC(2)}[0,t^{v(2)})$, and the part of $\loop_v^{AC(2)}$ from $v$ to $v_E$, excluding $v_E$, doesn't cross the part of $(\loop_v^{(2)})^{-1}$ from $v$ to $v_E$. Hence $E$ is also crossed by
 $\loop_v^{(2)}$. So
 we can deduce that
 $(\omega^{(1)})'(E)+(\omega^{(1)})'(E^{AC})\not=1$  and $(\omega^{(2)})'(E)+(\omega^{(2)})'(E^{AC})=1$. This is impossible. If there is no edge $E\in \mathcal{E}_{coi}^{(1)}\setminus\mathcal{E}_{ext}^{(1)}$ such that $\omega^{(1)}(E)+\omega^{(1)}(E^{AC})=1$,
 there will be one edge $\hat E$ in $\mathfrak{E}^{v(2)}$ such that $\omega^{(2)}(\hat E)+.\omega^{(2)}(\hat E^{AC})\not=1$ by  the assumption that $(\omega^{(1)})'=(\omega^{(2)})'$. This contradicts with the assumption on $\mathfrak{E}^{v(2)}$.

Therefore, $(\omega^{(1)})'\not=(\omega^{(2)})'$.

\

\noindent
{\bf Case II. Suppose $d/r\leq 1$}.

If $\gamma_\de[0,n_v)\cap \loop_v=\emptyset$, there are five disjoint arms in the annulus $B(v;\de, d)$. By \eqref{five}, this happens with probability at most of the order
$(\de/d)^2$.
So after ignoring an event with probability of the order $(\de/d)^{2}$, we can assume that $\gamma_\de[0,n_v)\cap \loop_v\not=\emptyset$.
Since $d/r\leq 1$, $\loop_v$ and $\loop_v^{AC}$ may cross $B(v;d)$.  Hence we introduce four hitting times,
\beq
\tau_\p&=&\inf\{j<n_v: \ga_\de[j,n_v]\subseteq B(v;d)\}, \\
\tau^d&=&\sup\{j<n_\loop: \loop_v^{-1}[0,j]\subseteq B(v;d)\}, \\
t^d&=&\sup\{j<n_\loop: \loop_v[0,j]\subseteq B(v;d)\}.
\eeq
We have to consider three subcases.

\

\noindent
Subcase (1).  $s^v\leq t^d$,  $\omega(E_{\ga_\de(t_v)})=0$,    and for any $E\in \mathfrak{E}^v$, $\omega(E)=1-\omega(E^{AC})$.

\

\noindent
Subcase (2). $t^v\leq t^d$,  $\mathrm{t}^v\leq \tau^d$,  $\omega(E_{\ga_\de(t_v)})=1$,   and for any $E\in \mathfrak{E}^v$, $\omega(E)=1-\omega(E^{AC})$.

\

The above two subcases are the same as the respective two subcases in {\bf Case I}. So the construction of $\omega'$ from $\omega$ is also the same. We omit the details.

\

\noindent
 Subcase (3).  There exists $E\in \mathfrak{E}^v$ such that $\omega(E)\not=1-\omega(E^{AC})$.

The construction in this subcase is slightly different from that in Subcase (3) of {\bf Case I}. Since $d/r\leq 1$, we have to redefine $\varsigma^v$ by
\beq
  \varsigma^v=\inf\{j\leq \min(\tau^d,\tau_{-}^v): \loop^{-AC}_v[j,j+1]\subseteq_c \ga_\de[0,n_v), \ \text{or} \ \loop_v[0,\tau^v]\}.
  \eeq
  If the set after $\inf$ in the definition of $\varsigma^v$ is empty, define $ \varsigma^v=\min(\tau^d,\tau_{-}^v)$. Since this definition also applies to {\bf Case I}, we still use the same notation $ \varsigma^v$.

  If $\tau_{-}^v\leq \tau^d$, the construction of $\omega'$ is the same as that in Subcase (3) of {\bf Case I}.  So we omit the details.

  If $\tau_{-}^v> \tau^d$ and $\varsigma^v<\tau^d$, we can define the configuration $\omega'$ corresponding to $\omega$ as follows,
   \beqn
 && \omega'(E_{\ga_\de(\tau_v)})=1-\omega(E_{\ga_\de(\tau_v)}), \omega'(E_v)=1-\omega(E_v); \nonumber\\
 && \omega'(E)=1-\omega(E^{AC}), \omega'(E^{AC})=1-\omega(E) \ \text{for} \ E\in \{E: v_E\in \loop_v^{-1}(0,\varsigma^v)\}\setminus \mathcal{E}_{coi}; \nonumber\\
 && \omega'(E)=\omega(E), \omega'(E^{AC})=1-\omega(E^{AC}) \ \text{for} \ E\in \mathcal{E}_{coi}\setminus\mathcal{E}_{exc};\nonumber\\
&&\omega'(E)=\omega(E), \ \text{otherwise}. \label{subcase(3)-II}
\eeqn
 If $\tau_{-}^v> \tau^d$ and $\varsigma^v\geq\tau^d$, the following formula \eqref{spe12} in Lemma \ref{special1} shows that this happens with probability at most of the order $(\de/d)^2$.

\

There are two exceptional cases. The first one is that $s^v> t^d$ with $\omega(E_{\ga_\de(t_v)})=0$, or $t^v> t^d$ with $\omega(E_{\ga_\de(t_v)})=1$. This can be ignored by \eqref{spe1} in the following Lemma \ref{special1}. The second exceptional case is that $t^v\leq t^d$,  $\mathrm{t}^v>\tau^d$,  $\omega(E_{\ga_\de(t_v)})=1$,   and for any $E\in \mathfrak{E}^v$, $\omega(E)=1-\omega(E^{AC})$, which can be ignored by \eqref{spe2} in the following Lemma \ref{special1}.

\begin{lemma}\label{special1}
Let $\mathfrak{E}=\{t^v> t^d$,   $\loop_v^{AC}(0,t^d]\cap\ga_\de[\tau_\p,n_v)=\emptyset\}$, $\mathfrak{E}'=\{\tau_{-}^v> \tau^d$ and $\varsigma^v\geq\tau^d\}$  and $\mathfrak{E}''=\{t^v\leq t^d,  \mathrm{t}^v>\tau^d, \omega(E_{\ga_\de(t_v)})=1$,  and for any $E\in \mathfrak{E}^v, \omega(E)=1-\omega(E^{AC})\}$. Then
\beqn
\Pro(\mathfrak{E}) \preceq (\de/d)^2, \label{spe1}\\
\Pro(\mathfrak{E}') \preceq (\de/d)^2, \label{spe12}\\
\Pro(\mathfrak{E}'') \preceq (\de/d)^2. \label{spe2}
\eeqn
\end{lemma}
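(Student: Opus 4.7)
All three bounds claim probability at most $O((\de/d)^2)$, which is exactly the order of the five-arm exponent encoded in \eqref{five}. My plan is to produce, on each of the exceptional events $\mathfrak{E}$, $\mathfrak{E}'$, $\mathfrak{E}''$, a five-arm configuration in the annulus $B(v;2\de,d)$ (or, when $v$ is near $\p_{ba,\de}\cup\p^*_{ab,\de}$, a three-arm half-plane configuration), and then invoke \eqref{five} (respectively \eqref{halfthree}).

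The uniform starting point is a four-arm configuration already furnished by the hypotheses of the lemma. Since $\ga_\de$ passes through $v$ with $A,D\in\ga_\de$, one open and one dual-open arm run backward along $\ga_\de[0,\tau_\p]$ from $v$ to $\p B(v;d)$ (the exit guaranteed by $\ga_\de(\tau_\p-1)\notin B(v;d)$), and a second open-dual pair runs forward along $\ga_\de[n_v,\infty)$; since we are in Case II with $d/r\le 1$, the loop $\loop_v$ itself exits $B(v;d)$ and contributes another open arm. Together these furnish the baseline $A_{0101}(2\de,d)$, and the task is to extract a fifth disjoint arm.

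For $\mathfrak{E}$, the condition $t^v>t^d$ with $\loop_v^{AC}(0,t^d]\cap\ga_\de[\tau_\p,n_v)=\emptyset$ asserts that the reflected initial segment $\loop_v^{AC}[0,t^d]$ reaches $\p B(v;d)$ without meeting the terminal arc of $\ga_\de$ inside $B(v;d)$. I would exploit the reflection invariance of critical bond percolation across $L_v^{AC}$: couple $\omega$ to $\tilde\omega$ obtained by swapping, inside $B(v;d)$, the status of every primal edge $E$ with that of $E^{AC}$; since the Bernoulli-$\tfrac12$ product measure is invariant under this involution, $\tilde\omega$ has the same law as $\omega$. On $\mathfrak{E}$, the segment $\loop_v^{AC}[0,t^d]$ is a genuine open path in $\tilde\omega$ from $v$ to $\p B(v;d)$, and the disjointness hypothesis plus $t^v>t^d$ is precisely what prevents this arm from merging into the four baseline arms on the reflection seam. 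The resulting $A_{01011}(2\de,d)$-type event has probability at most $c_0(\de/d)^2$ by \eqref{five}. The argument for $\mathfrak{E}''$ is identical, with $\loop_v^{-AC}$, $\mathrm{t}^v$, $\tau^d$ replacing $\loop_v^{AC}$, $t^v$, $t^d$ and the additional Bernoulli constraint $\omega(E_{\ga_\de(t_v)})=1$ contributing only a harmless constant factor. For $\mathfrak{E}'$ the role of the fifth arm is played by $\loop_v^{-1}$ itself: $\tau_-^v>\tau^d$ forces $\loop_v^{-1}$ to continue past the marked crossing time beyond $B(v;d)$, and $\varsigma^v\ge\tau^d$ guarantees the reflected image $\loop_v^{-AC}$ does not meet the already-used curves within $B(v;d)$, so the same reflection coupling delivers the fifth disjoint monochromatic arm.

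The principal obstacle I anticipate is the case where $v$ lies within Euclidean distance $d$ of $\p_{ba,\de}\cup\p^*_{ab,\de}$, because then the reflection across $L_v^{AC}$ may leave $\Omega_\de$ and the bulk bound \eqref{five} does not directly apply. In that regime I would replace \eqref{five} by the half-plane three-arm bound \eqref{halfthree}, arguing exactly as in the boundary treatment inside the proof of Proposition \ref{Tranimp}: split according to whether the arms hit a straight boundary piece $\mathrm{L}\subset\p_{ba,\de}$ or $\mathrm{L}^*\subset\p^*_{ab,\de}$, use \eqref{halfthree} to bound the half-plane three-arm contribution, and combine with a four-arm estimate in the complementary annulus to produce the same $(\de/d)^2$ bound. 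A secondary bookkeeping issue will be matching the medial-vertex descriptions of $\loop_v^{AC}$, $\loop_v^{-AC}$, and $\loop_v^{-1}$ to genuine vertex-disjointness of the five monochromatic arms; this is a topological accounting analogous to the one performed to justify the baseline four-arm event earlier in this section, and the bulk of the technical effort will go into verifying it carefully case by case.
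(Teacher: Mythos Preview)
Your reflection-coupling argument has a genuine gap. Swapping the status of every edge $E$ with that of $E^{AC}$ inside $B(v;d)$ produces $\tilde\omega$ with the same law as $\omega$, but the reflected arc $\loop_v^{AC}[0,t^d]$ is then an open path \emph{in $\tilde\omega$}, while the four baseline arms supplied by $\ga_\de$ live \emph{in $\omega$}. Arms from different configurations cannot be combined into a single $A_{01011}(2\de,d)$ event; and in $\tilde\omega$ itself the entire picture is simply the mirror image of that in $\omega$, so no extra arm appears. The alternative claim that $\loop_v$ already supplies a fifth disjoint arm in $\omega$ fails for exactly the reason the lemma is needed: in Case~II one has already reduced (via \eqref{five}) to the situation $\ga_\de[0,n_v)\cap\loop_v\neq\emptyset$, so the loop's arm can merge with those of the exploration path. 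Finally, since $d=\dist(v,\p_{ba,\de}\cup\p_{ab,\de}^*)$ by definition, the ball $B(v;d)$ lies inside $\Omega_\de$ and your anticipated boundary obstacle does not arise.

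The paper does not use \eqref{five} here at all. Its key geometric observation is that the hypothesis $\loop_v^{AC}(0,t^d]\cap\ga_\de[\tau_\p,n_v)=\emptyset$ forces $\ga_\de[\tau_\p,n_v)$ and $\loop_v(0,t^d]$ to stay on opposite sides of the line $L_v^{AC}$. It then constructs, through an elaborate iterative edge-swap procedure (Steps~$0$ through $\mathfrak{n}_d+\mathfrak{n}_o+\mathfrak{m}$), a \emph{one-to-one} map $\omega\mapsto\varpi$ such that in $\varpi$ each of the two half-planes $\Half_L-\sqrt2\de/4$ and $\Half_R+\sqrt2\de/4$ carries both an open and a dual-open cluster crossing the annulus $B(v;2\de,d)$. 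Each half-plane therefore exhibits a half-plane two-arm event of probability $O(\de/d)$ by \eqref{halftwo}, and since the half-planes are disjoint the two events are independent, yielding $(\de/d)^2$. The substantial work is the injectivity of $\omega\mapsto\varpi$, established by a lengthy case analysis comparing $\varpi^{(1)}$ with $\varpi^{(2)}$; the bounds for $\mathfrak{E}'$ and $\mathfrak{E}''$ then follow by the same mechanism applied to $\loop_v^{-AC}(0,\tau^d)$ in place of $\loop_v(0,t^d]$.
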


\begin{proof}
Suppose $v$ is the origin, the medial-edge $A$ points towards $v$ through the negative direction of imaginary axis, and
$$
\Half_L=\inf\{z: \Re(z-v)\leq 0\}, \ \Half_R=\{z: \Re(z-v)\geq 0\}, \ L_v^{AC}=\inf\{z: \Re(z-v)= 0\}.
$$

We begin with the proof of \eqref{spe1}.

Let $\mathrm{F}^d=\{E^d: E^d$ is a primal edge crossed  by $\ga_\de[\tau_\p,n_v],v_{E^d}\notin L_v^{AC}\}$, $\mathrm{F}^o=\{E^o: E^o$ is a primal edge touched  by $\ga_\de[\tau_\p,n_v],v_{E^o}\notin L_v^{AC}\}$, $\mathrm{F}_L^d=\{E^d: E^d$ is a primal edge crossed   by $\ga_\de[\tau_\p,n_v],v_{E^d}\in L_v^{AC}\}$, and $\mathrm{F}_L^o=\{E^o: E^o$ is a primal edge touched   by $\ga_\de[\tau_\p,n_v],v_{E^o}\in L_v^{AC}\}$. Write $\mathrm{F}^d=\{E_j^d, 1\leq j\leq \mathfrak{n}_d\}$, $\mathrm{F}^o=\{E_k^o, 1\leq k\leq \mathfrak{n}_o\}$, $\mathrm{F}_L^d=\{F_m^d,1\leq m\leq \mathfrak{m}\}$, and $\mathrm{F}_L^o=\{F_n^d,1\leq n\leq \mathfrak{n}\}$. If $j_1<j_2$, $k_1<k_2$, $m_1<m_2$ and $n_1<n_2$, $E_{j_1}^d$ is crossed by $\ga_\de[\tau_\p,n_v]$  before $E_{j_2}^d$ and $E_{j_1}^d\not=E_{j_2}^d$, $E_{k_1}^o$ is touched by $\ga_\de[\tau_\p,n_v]$  before $E_{k_2}^o$ and $E_{k_1}^o\not=E_{k_2}^o$, $F_{m_1}^d$ is crossed by $\ga_\de[\tau_\p,n_v]$  before $F_{m_2}^d$ and $F_{m_1}^d\not=F_{m_2}^d$, $F_{n_1}^o$ is touched by $\ga_\de[\tau_\p,n_v]$  before $F_{n_2}^o$ and $F_{n_1}^o\not=F_{n_2}^o$. It is possible that $\ga_\de[\tau_\p,n_v]$ crosses or touches the respective $E_j^d$, $E_k^o$, $F_m^d$ and $F_n^o$ twice.

Since $\loop_v^{AC}(0,t^d]\cap\ga_\de[\tau_\p,n_v)=\emptyset$, we can assume that $\ga_\de[\tau_\p,n_v)$ doesn't cross the negative imaginary axis and $\loop_v(0,t^d]$ doesn't cross the positive imaginary axis. (This observation is the key in the proof.)

We first construct a configuration $\varpi_d$ from $\omega$ on $B(v;d)\cap \de\E$ so that there are essentially two respective dual-open clusters in $\Half_L-\sqrt 2\de/4$ and $\Half_R-\sqrt 2\de/4$. This is achieved by a sequence of configuration constructions.

\noindent
{\bf Step $0$}. If $E$ is one primal edge touched by $\ga_\de[\tau_\p,n_v]$ (so $E$ is an open edge touched by $\ga_\de[\tau_\p,n_v]$), define
\beq
\omega_{0}(E)=1-\omega(E^{AC}), \ \omega_{0}(E^{AC})=1-\omega(E).
\eeq
For any other primal edge $E\in B(v;d)\cap \de\E$, $\omega_{0}(E)=\omega(E)$.

{\bf Step $0$} already provides two respective dual-open clusters in $\Half_L-\sqrt 2\de/4$ and $\Half_R-\sqrt 2\de/4$. However, this is not one-to-one. We need the following steps to obtain the one-to-one property.

Next, we define $\omega_j$ by iteration. Given $\omega_{j-1}$, we define $\omega_j$ as follows.

\noindent
{\bf Step $j$}. Consider $E_j^d \in\mathrm{F}^d$.

(1)-$j$. $v_{E_j^d}\in \Half_L\setminus L_v^{AC}$.  Define $\omega_j=\omega_{j-1}$.

(2)-$j$. $v_{E_j^d}\in \Half_R\setminus L_v^{AC}$ and there is no $j_0$ such that $E_j^{dAC}=E_{j_0}^d\in\mathrm{F}^d$.  If there is one dual-open cluster from $\p B(v;2\de)$ to $\p B(v;d)$  in $\Half_L-\sqrt 2\de/4$ after the statuses of $E_j^d$ and $E_j^{dAC}$ are changed to $1-\omega_{j-1}(E_j^{dAC})$ and $1-\omega_{j-1}(E_j^d)$ respectively in $\omega_{j-1}$, define
 \beq
\omega_j(E_j^d)&=&1-\omega_{j-1}(E_j^{dAC}), \ \omega_j(E^{dAC}_j)=1-\omega_{j-1}(E_j^d),\\
\omega_j(E)&=&\omega_{j-1}(E), \ \text{other edge} \ E\in B(v;d)\cap \de\E.
\eeq
 If there is no dual-open cluster from $\p B(v;2\de)$ to $\p B(v;d)$  in $\Half_L-\sqrt 2\de/4$ after the statuses of $E_j^d$ and $E_j^{dAC}$ are changed to $1-\omega_{j-1}(E_j^{dAC})$ and $1-\omega_{j-1}(E_j^d)$ respectively in $\omega_{j-1}$,
define $\omega_j=\omega_{j-1}$.

(3)-$j$. $v_{E_j^d}\in \Half_R\setminus L_v^{AC}$ and there exists $j_0$ such that $E_j^{dAC}=E_{j_0}^d\in\mathrm{F}^d$.   If there is one dual-open cluster from $\p B(v;2\de)$ to $\p B(v;d)$  in $\Half_L-\sqrt 2\de/4$  after the status of $E_j^d$ is changed to $1-\omega_{j-1}(E_j^d)$ in $\omega_{j-1}$, define
 \beq
\omega_j(E_j^d)&=&1-\omega_{j-1}(E_j^d),\\
\omega_j(E)&=&\omega_{j-1}(E), \ \text{other edge} \ E\in B(v;d)\cap \de\E.
\eeq
If there is no dual-open cluster from $\p B(v;2\de)$ to $\p B(v;d)$  in $\Half_L-\sqrt 2\de/4$ after the status of $E_j^d$ is changed to $1-\omega_{j-1}(E_j^d)$ in $\omega_{j-1}$, define $\omega_j=\omega_{j-1}$.

\noindent
{\bf Step $\mathfrak{n}_d+k$}. Consider $E_k^o \in\mathrm{F}^o$.

(1)-$\mathfrak{n}_d+k$. $v_{E_k^o}\in \Half_L\setminus L_v^{AC}$ and there is no $k_0$ such that $E_k^{oAC}=E_{k_0}^o\in\mathrm{F}^o$.  If there is one dual-open cluster from $\p B(v;2\de)$ to $\p B(v;d)$  in $\Half_L-\sqrt 2\de/4$  after the statuses of $E_k^o$ and $E_k^{oAC}$ are changed to $1-\omega_{\mathfrak{n}_d+k-1}(E_k^{oAC})$ and $1-\omega_{\mathfrak{n}_d+k-1}(E_k^o)$ respectively in $\omega_{\mathfrak{n}_d+k-1}$, define
 \beq
\omega_{\mathfrak{n}_d+k}(E_k^o)&=&1-\omega_{\mathfrak{n}_d+k-1}(E_k^{oAC}), \ \omega_{\mathfrak{n}_d+k}(E_k^{oAC})=1-\omega_{\mathfrak{n}_d+k-1}(E_k^o),\\
\omega_{\mathfrak{n}_d+k}(E)&=&\omega_{\mathfrak{n}_d+k-1}(E), \ \text{other edge} \ E\in B(v;d)\cap \de\E.
\eeq
  If there is no dual-open cluster from $\p B(v;2\de)$ to $\p B(v;d)$  in $\Half_L-\sqrt 2\de/4$ after the statuses of $E_k^o$ and $E_k^{oAC}$ are  changed to $1-\omega_{\mathfrak{n}_d+k-1}(E_k^{oAC})$ and $1-\omega_{\mathfrak{n}_d+k-1}(E_k^o)$ respectively in $\omega_{\mathfrak{n}_d+k-1}$,
define $\omega_{\mathfrak{n}_d+k}=\omega_{\mathfrak{n}_d+k-1}$.

(2)-$\mathfrak{n}_d+k$. $v_{E_k^o}\in \Half_L\setminus L_v^{AC}$ and there exists $k_0$ such that $E_k^{oAC}=E_{k_0}^o\in\mathrm{F}^o$. If there is one dual-open cluster from $\p B(v;2\de)$ to $\p B(v;d)$  in $\Half_L-\sqrt 2\de/4$  after the status of $E_k^o$ is changed to $1-\omega_{\mathfrak{n}_d+k-1}(E_k^o)$ in $\omega_{\mathfrak{n}_d+k-1}$, define
 \beq
\omega_{\mathfrak{n}_d+k}(E_k^o)&=&1-\omega_{\mathfrak{n}_d+k-1}(E_k^o), \\
\omega_{\mathfrak{n}_d+k}(E)&=&\omega_{\mathfrak{n}_d+k-1}(E), \ \text{other edge} \ E\in B(v;d)\cap \de\E.
\eeq
  If there is no dual-open cluster from $\p B(v;2\de)$ to $\p B(v;d)$  in $\Half_L-\sqrt 2\de/4$ after the status of $E_k^o$ is changed to  $1-\omega_{\mathfrak{n}_d+k-1}(E_k^o)$ in $\omega_{\mathfrak{n}_d+k-1}$,
define $\omega_{\mathfrak{n}_d+k}=\omega_{\mathfrak{n}_d+k-1}$.

(3)-$\mathfrak{n}_d+k$. $v_{E_k^o}\in \Half_R\setminus L_v^{AC}$. Define $\omega_{\mathfrak{n}_d+k}=\omega_{\mathfrak{n}_d+k-1}$.

\noindent
{\bf Step $\mathfrak{n}^d+\mathfrak{n}^o+m$}. Consider $F_m^d \in\mathrm{F}_L^d$.

Define
 \beq
\omega_{\mathfrak{n}_d+\mathfrak{n}_o+m}(F_m^d)&=&1-\omega_{\mathfrak{n}_d+\mathfrak{n}_o+m-1}(F_m^d),\\
\omega_{\mathfrak{n}_d+\mathfrak{n}_o+m}(E)&=&\omega_{\mathfrak{n}_d+\mathfrak{n}_o+m-1}(E), \ \text{other edge} \ E\in B(v;d)\cap \de\E.
\eeq

After {\bf Step $\mathfrak{n}_d+\mathfrak{n}_o+\mathfrak{m}$}, we obtain $\omega_{\mathfrak{n}_d+\mathfrak{n}_o+\mathfrak{m}}$. Let $\varpi_d=\omega_{\mathfrak{n}_d+\mathfrak{n}_o+\mathfrak{m}}$.

\

We also apply the same method to $\ga_\de[\tau_\p,n_v)$, swapping the roles of open edges and dual-open edges and replacing $\Half_L-\sqrt 2\de/4$ with $\Half_R+\sqrt 2\de/4$, to obtain the configuration $\varpi_o$. To avoid any confusion, we still put down the detailed construction as follows. In the following construction, we use {\bf $Step_o$} to specify each step.

\noindent
{\bf $Step_o$ $0$}. If $E$ is one primal edge crossed by $\ga_\de[\tau_\p,n_v]$ (so $E^*$ is a dual-open edge touched by $\ga_\de[\tau_\p,n_v]$), define
\beq
\omega'_{0}(E)=1-\omega(E^{AC}), \ \omega_{0}(E^{AC})=1-\omega(E).
\eeq
For any other primal edge $E\in B(v;d)\cap \de\E$, $\omega'_{0}(E)=\omega(E)$.

Next, we define $\omega'_j$ by iteration. Given $\omega'_{j-1}$, we define $\omega'_j$ as follows.

\noindent
{\bf $Step_o$ $j$}. Consider $E_j^d \in\mathrm{F}^d$.

(1)-$j$. $v_{E_j^d}\in \Half_L\setminus L_v^{AC}$.  Define $\omega'_j=\omega'_{j-1}$.

(2)-$j$. $v_{E_j^d}\in \Half_R\setminus L_v^{AC}$ and there is no $j_0$ such that $E_j^{dAC}=E_{j_0}^d\in\mathrm{F}^d$.  If there is one open cluster from $\p B(v;2\de)$ to $\p B(v;d)$  in $\Half_R+\sqrt 2\de/4$ after the statuses of $E_j^d$ and $E_j^{dAC}$ are changed to $1-\omega'_{j-1}(E_j^{dAC})$ and $1-\omega'_{j-1}(E_j^d)$ respectively in $\omega'_{j-1}$, define
 \beq
\omega'_j(E_j^d)&=&1-\omega'_{j-1}(E_j^{dAC}), \ \omega'_j(E^{dAC}_j)=1-\omega'_{j-1}(E_j^d),\\
\omega'_j(E)&=&\omega'_{j-1}(E), \ \text{other edge} \ E\in B(v;d)\cap \de\E.
\eeq
 If there is no open cluster from $\p B(v;2\de)$ to $\p B(v;d)$  in $\Half_R+\sqrt 2\de/4$ after the statuses of $E_j^d$ and $E_j^{dAC}$ are changed to $1-\omega'_{j-1}(E_j^{dAC})$ and $1-\omega'_{j-1}(E_j^d)$ respectively in $\omega'_{j-1}$,
define $\omega'_j=\omega'_{j-1}$.

(3)-$j$. $v_{E_j^d}\in \Half_R\setminus L_v^{AC}$ and there exists $j_0$ such that $E_j^{dAC}=E_{j_0}^d\in\mathrm{F}^d$.   If there is one open cluster from $\p B(v;2\de)$ to $\p B(v;d)$  in $\Half_R+\sqrt 2\de/4$  after the status of $E_j^d$ is changed to $1-\omega'_{j-1}(E_j^d)$ in $\omega'_{j-1}$, define
 \beq
\omega'_j(E_j^d)&=&1-\omega'_{j-1}(E_j^d),\\
\omega'_j(E)&=&\omega'_{j-1}(E), \ \text{other edge} \ E\in B(v;d)\cap \de\E.
\eeq
If there is no open cluster from $\p B(v;2\de)$ to $\p B(v;d)$  in $\Half_R+\sqrt 2\de/4$ after the status of $E_j^d$ is changed to $1-\omega'_{j-1}(E_j^d)$ in $\omega'_{j-1}$, define $\omega'_j=\omega'_{j-1}$.

\noindent
{\bf $Step_o$ $\mathfrak{n}_d+k$}. Consider $E_k^o \in\mathrm{F}^o$.

(1)-$\mathfrak{n}_d+k$. $v_{E_k^o}\in \Half_L\setminus L_v^{AC}$ and there is no $k_0$ such that $E_k^{oAC}=E_{k_0}^o\in\mathrm{F}^o$.  If there is one open cluster from $\p B(v;2\de)$ to $\p B(v;d)$  in $\Half_R+\sqrt 2\de/4$  after the statuses of $E_k^o$ and $E_k^{oAC}$ are changed to $1-\omega'_{\mathfrak{n}_d+k-1}(E_k^{oAC})$ and $1-\omega'_{\mathfrak{n}_d+k-1}(E_k^o)$ respectively in $\omega'_{\mathfrak{n}_d+k-1}$, define
 \beq
\omega'_{\mathfrak{n}_d+k}(E_k^o)&=&1-\omega'_{\mathfrak{n}_d+k-1}(E_k^{oAC}), \ \omega'_{\mathfrak{n}_d+k}(E_k^{oAC})=1-\omega'_{\mathfrak{n}_d+k-1}(E_k^o),\\
\omega'_{\mathfrak{n}_d+k}(E)&=&\omega'_{\mathfrak{n}_d+k-1}(E), \ \text{other edge} \ E\in B(v;d)\cap \de\E.
\eeq
  If there is no open cluster from $\p B(v;2\de)$ to $\p B(v;d)$  in $\Half_R+\sqrt 2\de/4$ after the statuses of $E_k^o$ and $E_k^{oAC}$ are  changed to $1-\omega'_{\mathfrak{n}_d+k-1}(E_k^{oAC})$ and $1-\omega'_{\mathfrak{n}_d+k-1}(E_k^o)$ respectively in $\omega'_{\mathfrak{n}_d+k-1}$,
define $\omega'_{\mathfrak{n}_d+k}=\omega'_{\mathfrak{n}_d+k-1}$.

(2)-$\mathfrak{n}_d+k$. $v_{E_k^o}\in \Half_L\setminus L_v^{AC}$ and there exists $k_0$ such that $E_k^{oAC}=E_{k_0}^o\in\mathrm{F}^o$. If there is one open cluster from $\p B(v;2\de)$ to $\p B(v;d)$  in $\Half_R+\sqrt 2\de/4$  after the status of $E_k^o$ is changed to $1-\omega'_{\mathfrak{n}_d+k-1}(E_k^o)$ in $\omega'_{\mathfrak{n}_d+k-1}$, define
 \beq
\omega'_{\mathfrak{n}_d+k}(E_k^o)&=&1-\omega'_{\mathfrak{n}_d+k-1}(E_k^o), \\
\omega'_{\mathfrak{n}_d+k}(E)&=&\omega'_{\mathfrak{n}_d+k-1}(E), \ \text{other edge} \ E\in B(v;d)\cap \de\E.
\eeq
  If there is no open cluster from $\p B(v;2\de)$ to $\p B(v;d)$  in $\Half_R+\sqrt 2\de/4$ after the status of $E_k^o$ is changed to  $1-\omega'_{\mathfrak{n}_d+k-1}(E_k^o)$ in $\omega'_{\mathfrak{n}_d+k-1}$,
define $\omega'_{\mathfrak{n}_d+k}=\omega'_{\mathfrak{n}_d+k-1}$.

(3)-$\mathfrak{n}_d+k$. $v_{E_k^o}\in \Half_R\setminus L_v^{AC}$. Define $\omega'_{\mathfrak{n}_d+k}=\omega'_{\mathfrak{n}_d+k-1}$.

\noindent
{\bf $Step_o$ $\mathfrak{n}^d+\mathfrak{n}^o+n$}. Consider $F_n^o \in\mathrm{F}_L^o$.

Define
 \beq
\omega'_{\mathfrak{n}_d+\mathfrak{n}_o+n}(F_n^o)&=&1-\omega'_{\mathfrak{n}_d+\mathfrak{n}_o+n-1}(F_n^o),\\
\omega'_{\mathfrak{n}_d+\mathfrak{n}_o+n}(E)&=&\omega'_{\mathfrak{n}_d+\mathfrak{n}_o+n-1}(E), \ \text{other edge} \ E\in B(v;d)\cap \de\E.
\eeq

After {\bf $Step_o$ $\mathfrak{n}_d+\mathfrak{n}_o+\mathfrak{n}$}, we obtain $\omega'_{\mathfrak{n}_d+\mathfrak{n}_o+\mathfrak{n}}$. Let $\varpi_o=\omega'_{\mathfrak{n}_d+\mathfrak{n}_o+\mathfrak{n}}$.

\

Next we express $\varpi_d$ as the product of the respective constraints of $\varpi_d$ in $\Half_L-\sqrt 2\de/4$ and $\Half_R-\sqrt 2\de/4$. In other words,  $\varpi_d=\varpi_{Ld}\times \varpi_{Rd}$, where $\varpi_{Ld}=\varpi_d|_{\Half_L-\sqrt 2\de/4}$ and $\varpi_{Rd}=\varpi_d|_{\Half_R-\sqrt 2\de/4}$. From the construction of  $\varpi_d$, we know that $\varpi_{Ld}$ contains one dual-open cluster in $\Half_L-\sqrt 2\de/4$  from $\p B(v;2\de)$ to $\p B(v;d)$. We also write $\varpi_o=\varpi_{Lo}\times \varpi_{Ro}$, where $\varpi_{Lo}=\varpi_o|_{\Half_L-\sqrt 2\de/4}$, $\varpi_{Ro}=\varpi_o|_{\Half_R-\sqrt 2\de/4}$, and $\varpi_{Ro}$ contains an open cluster in $\Half_R+\sqrt 2\de/4$  from $\p B(v;2\de)$ to $\p B(v;d)$.

Utilizing $\varpi_{Ld}$ and $\varpi_{Ro}$,  we can define
$$
\varpi=\varpi_{Ld}\times \varpi_{Ro},
$$
which contains one dual-open cluster in $\Half_L-\sqrt 2\de/4$  from $\p B(v;2\de)$ to $\p B(v;d)$ and one open cluster in $\Half_R+\sqrt 2\de/4$  from $\p B(v;2\de)$ to $\p B(v;d)$.

Now we turn to the one-to-one property of the above mapping $\omega \rightarrow \varpi$.

Suppose $\omega^{(k)}$, $k=1,2$, are two configurations on $B(v;d)\cap \de\E$ such that $\varpi^{(1)}=\varpi^{(2)}$, and $\ga_\de^{(1)}(\tau_\p^{(1)})\not=\ga_\de^{(2)}(\tau_\p^{(2)})$, or $\ga_\de^{(1)}(\tau_\p^{(1)})=\ga_\de^{(2)}(\tau_\p^{(2)})$ with $\omega^{(1)}(E_{\ga_\de^{(1)}(\tau_\p^{(1)})})\not=\omega^{(2)}(E_{\ga_\de^{(2)}(\tau_\p^{(2)})})$. Since both $\ga_\de^{(1)}$ and $\ga_\de^{(2)}$ move towards $v$, we can define
\beq
\mathbf{t}_1&=&\inf\{j\geq \tau_\p^{(1)}: \ga_\de^{(1)}[j,j+1]\subseteq_c \ga_\de^{(2)}[\tau_\p^{(2)},n_v^{(2)}]\},\\
\mathbf{t}_2&=&\inf\{j\geq \tau_\p^{(2)}: \ga_\de^{(2)}(j)=\ga_\de^{(1)}(\mathbf{t}_1)\}.
\eeq
Without loss of generality, suppose $\omega^{(1)}(E_{\ga_\de^{(1)}(\mathbf{t}_1)})=0$ and $\omega^{(2)}(E_{\ga_\de^{(2)}(\mathbf{t}_2)})=1$.

Firstly, suppose $\ga_\de^{(1)}(\mathbf{t}_1)\in \Half_R\setminus L_v$. We have to consider four cases.

Case 1.  $\omega^{(1)}(E^{AC}_{\ga_\de^{(1)}(\mathbf{t}_1)})=1$ and $\omega^{(2)}(E^{AC}_{\ga_\de^{(2)}(\mathbf{t}_2)})=0$. Then
\beqn
\varpi_d^{(1)}(E^{AC}_{\ga_\de^{(1)}(\mathbf{t}_1)})=1\not=0=\varpi_d^{(2)}(E^{AC}_{\ga_\de^{(2)}(\mathbf{t}_2)}). \label{varpi}
\eeqn

Case 2. $\omega^{(1)}(E^{AC}_{\ga_\de^{(1)}(\mathbf{t}_1)})=1$ and $\omega^{(2)}(E^{AC}_{\ga_\de^{(2)}(\mathbf{t}_2)})=1$. We directly have $\varpi_d^{(1)}(E_{\ga_\de^{(1)}(\mathbf{t}_1)})=\varpi_o^{(1)}(E_{\ga_\de^{(1)}(\mathbf{t}_1)})=0$.  If $E^{AC}_{\ga_\de^{(2)}(\mathbf{t}_2)}\notin \mathrm{F}^{o(2)}$, then we have \eqref{varpi}. If $E^{AC}_{\ga_\de^{(2)}(\mathbf{t}_2)}\in \mathrm{F}^{o(2)}$, we consider $\varpi_o^{(2)}$. Noting the open edges touched by $\ga_\de^{(1)}$ and $\ga_\de^{AC(2)}$, one can see that $E^{AC}_{\ga_\de^{(2)}(\mathbf{t}_2)}$ and $E_{\ga_\de^{(2)}(\mathbf{t}_2)}$ are irrelevant in the construction of $\varpi_o^{(2)}$. So $\varpi_o^{(2)}(E_{\ga_\de^{(2)}(\mathbf{t}_2)})=1$.

Case 3. $\omega^{(1)}(E^{AC}_{\ga_\de^{(1)}(\mathbf{t}_1)})=0$ and $\omega^{(2)}(E^{AC}_{\ga_\de^{(2)}(\mathbf{t}_2)})=0$. We directly have $\varpi_d^{(2)}(E_{\ga_\de^{(2)}(\mathbf{t}_2)})=\varpi_o^{(2)}(E_{\ga_\de^{(2)}(\mathbf{t}_2)})=1$.  If $E^{AC}_{\ga_\de^{(1)}(\mathbf{t}_1)}\notin \mathrm{F}^{d(1)}$, then we have \eqref{varpi}.   If $E^{AC}_{\ga_\de^{(1)}(\mathbf{t}_1)}\in \mathrm{F}^{d(1)}$, we consider $\varpi_o^{(1)}$. Noting the open edges touched by $\ga_\de^{(1)}$, one can see that $E_{\ga_\de^{(1)}(\mathbf{t}_1)}$ is irrelevant in the construction of $\varpi_o^{(1)}$. So $\varpi_o^{(1)}(E_{\ga_\de^{(1)}(\mathbf{t}_1)})=0$.

Case 4.  $\omega^{(1)}(E^{AC}_{\ga_\de^{(1)}(\mathbf{t}_1)})=0$ and $\omega^{(2)}(E^{AC}_{\ga_\de^{(2)}(\mathbf{t}_2)})=1$.

If $E^{AC}_{\ga_\de^{(1)}(\mathbf{t}_1)}\notin \mathrm{F}^{d(1)}$ and $E^{AC}_{\ga_\de^{(2)}(\mathbf{t}_2)}\notin \mathrm{F}^{o(2)}$, then
$$\varpi_d^{(2)}(E_{\ga_\de^{(2)}(\mathbf{t}_2)})=\varpi_d^{(2)}(E^{AC}_{\ga_\de^{(2)}(\mathbf{t}_2)})=0.$$
Note that  the dual-open edges touched by $\ga_\de^{AC(1)}[\tau_\p^{(1)},n_v^{(1)}]$ obtained in {\bf Step $0$} and the dual-open edges touched by $\ga_\de^{(1)}$ and $\ga_\de^{(2)}$  guarantee that both $E^{AC}_{\ga_\de^{(1)}(\mathbf{t}_1)}$ and $E_{\ga_\de^{(1)}(\mathbf{t}_1)}$ are irrelevant in the construction of $\varpi_d^{(1)}$. In other words,  there is one dual-open cluster from $\p B(v;2\de)$ to $\p B(v;d)$  in $\Half_L-\sqrt 2\de/4$ if  we change the status of $E_{\ga_\de^{(1)}(\mathbf{t}_1)}$ and $E^{AC}_{\ga_\de^{(1)}(\mathbf{t}_1)}$ from dual-open to open in the construction of $\varpi_d^{(1)}$ from $\omega^{(1)}$ before {\bf Step $\mathfrak{n}_d+\mathfrak{n}_o+1$}. Hence
$$\varpi_d^{(1)}(E_{\ga_\de^{(1)}(\mathbf{t}_1)})=\varpi_d^{(1)}(E^{AC}_{\ga_\de^{(1)}(\mathbf{t}_1)})=1.$$
The same argument shows the following \eqref{varpi41}, \eqref{varpi42} and \eqref{varpi43}.

\noindent
If $E^{AC}_{\ga_\de^{(1)}(\mathbf{t}_1)}\in \mathrm{F}^{d(1)}$ and $E^{AC}_{\ga_\de^{(2)}(\mathbf{t}_2)}\in \mathrm{F}^{o(2)}$, then
\beqn
\varpi_d^{(1)}(E^{AC}_{\ga_\de^{(1)}(\mathbf{t}_1)})=0, \varpi_d^{(2)}(E^{AC}_{\ga_\de^{(2)}(\mathbf{t}_2)})=1. \label{varpi41}
\eeqn
If $E^{AC}_{\ga_\de^{(1)}(\mathbf{t}_1)}\notin \mathrm{F}^{d(1)}$ and $E^{AC}_{\ga_\de^{(2)}(\mathbf{t}_2)}\in \mathrm{F}^{o(2)}$, then
\beqn
\varpi_o^{(1)}(E_{\ga_\de^{(1)}(\mathbf{t}_1)})=0, \varpi_o^{(2)}(E_{\ga_\de^{(2)}(\mathbf{t}_2)})=1..\label{varpi42}
\eeqn
If $E^{AC}_{\ga_\de^{(1)}(\mathbf{t}_1)}\in \mathrm{F}^{d(1)}$ and $E^{AC}_{\ga_\de^{(2)}(\mathbf{t}_2)}\notin \mathrm{F}^{o(2)}$, then
\beqn
\varpi_o^{(1)}(E_{\ga_\de^{(1)}(\mathbf{t}_1)})=0, \varpi_o^{(2)}(E_{\ga_\de^{(2)}(\mathbf{t}_2)})=1.\label{varpi43}
\eeqn

Summarizing the above four cases, we can obtain that $\varpi^{(1)}\not=\varpi^{(2)}$ if $\ga_\de^{(1)}(\mathbf{t}_1)\in \Half_R\setminus L_v$

Secondly, suppose $\ga_\de^{(1)}(\mathbf{t}_1)\in \Half_L\setminus L_v$. By the symmetric property of the respective construction of $\varpi_d$ and $\varpi_o$, we have $\varpi^{(1)}\not=\varpi^{(2)}$.

Finally suppose $\ga_\de^{(1)}(\mathbf{t}_1)\in L_v^{AC}$.  {\bf $Step_o$ $0$} and {\bf $Step_o$ $\mathfrak{n}^d+\mathfrak{n}^o+n$} show that  $\varpi^{(1)}\not=\varpi^{(2)}$.

Hence we can suppose $\varpi^{(1)}=\varpi^{(2)}$ and $\ga_\de^{(1)}(\tau_\p^{(1)})=\ga_\de^{(2)}(\tau_\p^{(2)})$ with $\omega^{(1)}(E_{\ga_\de^{(1)}(\tau_\p^{(1)})})=\omega^{(2)}(E_{\ga_\de^{(2)}(\tau_\p^{(2)})})$. To simplify the parametrization of $\ga_\de^{(1)}$ and $\ga_\de^{(2)}$, suppose $\tau_\p^{(1)}=\tau_\p^{(2)}$. If $\ga_\de^{(1)}[\tau_\p^{(1)},n_v^{(1)}]=\ga_\de^{(2)}[\tau_\p^{(2)},n_v^{(2)}]$, then we can conclude that $\omega^{(1)}=\omega^{(2)}$. Therefore we suppose $\ga_\de^{(1)}[\tau_\p^{(1)},n_v^{(1)}]\not=\ga_\de^{(2)}[\tau_\p^{(2)},n_v^{(2)}]$ and define
\beq
\mathbf{t}&=&\inf\{j>\tau_\p^{(1)}=\tau_\p^{(2)}: \omega^{(1)}(E_{\ga_\de^{(1)}(j)})\not=\omega^{(2)}(E_{\ga_\de^{(2)}(j)})\}, \\
 d_\mathbf{t}&=&\max\{|w-v|: w\in \ga_\de^{(1)}[\mathbf{t},n_v^{(1)}]\cap \ga_\de^{(2)}[\mathbf{t},n_v^{(2)}], \omega^{(1)}(E_w)\not=\omega^{(2)}(E_w)\}\\
 \mathbf{t}_3&=&\sup\{j\in [\mathbf{t}, n_v^{(1)}]: \ga_\de^{(1)}(j)\in\ga_\de^{(2)}[\tau_\p^{(2)},n_v^{(2)}], |\ga_\de^{(1)}(j)-v|=d_\mathbf{t} \},\\
\mathbf{t}_4&=&\sup\{j\geq \mathbf{t}: \ga_\de^{(2)}(j)=\ga_\de^{(1)}(\mathbf{t}_3)\}.
\eeq
So, $\ga_\de^{(1)}(\mathbf{t}_3)\not=\ga_\de^{(2)}(\mathbf{t}_4)$ or $\ga_\de^{(1)}(\mathbf{t}_3)=\ga_\de^{(2)}(\mathbf{t}_4)$ with $\omega^{(1)}(E_{\ga_\de^{(1)}(\mathbf{t}_3)})\not=\omega^{(2)}(E_{\ga_\de^{(2)}(\mathbf{t}_4)})$.
The same argument in the case where $\varpi^{(1)}=\varpi^{(2)}$ and $\ga_\de^{(1)}(\tau_\p^{(1)})\not=\ga_\de^{(2)}(\tau_\p^{(2)})$ shows that
$\ga_\de^{(1)}(\mathbf{t}_3)=\ga_\de^{(2)}(\mathbf{t}_4)$ with $\omega^{(1)}(E_{\ga_\de^{(1)}(\mathbf{t}_3)})=\omega^{(2)}(E_{\ga_\de^{(2)}(\mathbf{t}_4)})$. This is a contradiction. Therefore, $\mathbf{t}$ does't exist. In other words, $\ga_\de^{(1)}[\tau_\p^{(1)},n_v^{(1)}]=\ga_\de^{(2)}[\tau_\p^{(2)},n_v^{(2)}]$, which implies that $\omega^{(1)}=\omega^{(2)}$.

So we have established the one-to-one property of the above mapping $\omega \rightarrow \varpi$.

Similarly, we apply the same method to $\loop_v(0,t^d]$, swapping the roles of open edges and dual-open edges,  to get the configuration $\varpi_\loop$ which contains one open cluster in $\Half_L-\sqrt 2\de/4$  from $\p B(v;2\de)$ to $\p B(v;d)$, and one dual-open cluster in $\Half_R+\sqrt 2\de/4$  from $\p B(v;2\de)$ to $\p B(v;d)$.

  Since $t^v> t^d$ and $\loop_v^{AC}(0,t^d]\cap\ga_\de[\tau_\p,n_v)=\emptyset$, we can construct $\varpi$ and $\varpi_\loop$ simultaneously. Therefore  in $\Half_R+\sqrt 2\de/4$, there are open and dual-open clusters from $\p B(v;2\de)$ to  $\p B(v;d)$. By \eqref{halftwo}, we can get that the above event happens with probability of the order $\de/d$.  Similarly the configuration in $ \Half_L-\sqrt 2\de/4$ has probability of the order $\de/d$.
By independence, we can complete the proof of \eqref{spe1}.

The proof of \eqref{spe12} and  \eqref{spe2} is essentially the same as that of \eqref{spe1} since $\loop_v^{-AC}(0,\tau^d)$ doesn't cross $\ga_\de(0,n_v)$, and  for any primal edge $E$ with center in $\ga_\de(0,n_v)\cap \loop_v^{-1}(0,\tau^d)$, $\omega(E)=1-\omega(E^{AC})$. So we can omit the details.

\end{proof}

Now let us study the one-to-one property of $\omega$ to $\omega'$.
Suppose $\omega^{(k)}$ is a configuration in which $\ga_\de^{(k)}$ passes through $v$, $A\in  \ga_\de^{(k)}$, $B\notin \ga_\de^{(k)}$, $C\notin\ga_\de^{(k)}$ and $D\in  \ga_\de^{(k)}$, for $k=1,2$.

\

\noindent
Comparison of Subcase ($j$) in {\bf Case II}, $j=1,2,3$.

The argument is the same as Comparison of Subcase ($\mathrm{j}$) in {\bf Case I} for $\mathrm{j}=1,2,3$, so we omit the details.

\

\noindent
Comparison of Subcase ($j_1$) and Subcase ($j_2$)  in {\bf Case II}, $1\leq j_1<j_2\leq 3$.

The argument is the same as Comparison of {\bf Case I}. So we omit the details.

\

\noindent
Comparison of {\bf Case I} and  {\bf Case II}.

The argument is the same as Comparison in {\bf Case I}.  So we omit the details.

\

  This completes the construction from $\omega$ to $\omega'$ when $A, D\in \ga_\de(\omega)$ and $B,C\notin\ga_\de(\omega)$. Now we define the following event.
  \beq
   \mathcal{E}_{1,1}^v&=&\{w: A,D\in \ga_\de, B, C\not\in \ga_\de\}\setminus  \mathcal{E}_{1,2}^v,\\
  \mathcal{E}_{1,2}^v&=&\{w: A,D\in \ga_\de, B, C\not\in \ga_\de; A,B\in \ga_\de', C, D\not\in \ga_\de'\},
  \eeq
  One can see that $ \mathcal{E}_{1,2}^v$ corresponds to Subcase (1)  in {\bf Case I}, and Subcase (1) in {\bf Case II}.   $ \mathcal{E}_{1,1}^v$ corresponds to the others.

  Similarly, if $\omega$ is a configuration such that $A,B\in \ga_\de, C, D\not\in \ga_\de$, we can construct one $\omega'$ based on $\omega$ such that the exploration path $\ga_\de'$ in $\omega'$ passes through $v$ and either $A, D\not\in\ga_\de', B, C\in \ga_\de'$ or $A, D\in\ga_\de',B, C\notin \ga_\de'$.
 Actually the method in the case of $A,D\in\ga_\de$ and $B,C\notin\ga_\de$ applies after we change the role of $B$ and $D$. The only difference is that $\loop_v^*$, the loop attached to $E_v^*$, is indexed by the number of visited medial vertices in the clockwise order. In other words, $\loop_v^*(0)=v$ and $\loop_v^*(j)$ is the $j$th medial vertex which $\loop_v^*$ passes through in the clockwise order (around any point surrounded by $\loop_v^*$). And $t^v$ and $t_v$ should be defined through $\loop_v^*$ as follows.
 \beq
 t^v&=&\inf\{j>0:\loop_v^{*AC}[j,j+1]\subseteq_c \ga_\de[0,n_v]\},\\
t_v&=&\inf\{j>0:\ga_\de(j)=\loop_v^{*AC}(t^v)\},
\eeq
where $\loop_v^{*AC}(j)$ is the symmetric image of $\loop_v^{*}(j)$ around $L_v^{AC}$. If $\inf\{j>0:\loop_v^{*AC}[j,j+1]\subseteq_c \ga_\de[0,n_v]\}=\emptyset$, define $t_v=n_v$ and $t^v=n_\loop=:\inf\{j>0:\loop_v^*(j)=v\}$.

So there are also {\bf Case I} and {\bf Case II} when $A,B\in \ga_\de, C, D\not\in \ga_\de$. {\bf Case I} contains  Subcase (1), Subcase (2) and Subcase (3). {\bf Case II} contains Subcases (1)-(3). This leads to the following events.
    \beq
   \mathcal{E}^{2,1}_v&=&\{w: A,B\in \ga_\de, C,D\not\in \ga_\de; A,D\in \ga_\de', B, C\not\in \ga_\de'\},\\
   \mathcal{E}^{2,2}_v&=&\{w: A,B\in \ga_\de, C,D\not\in \ga_\de\}\setminus   \mathcal{E}^{2,1}_v.
  \eeq
Changing the role of $C, B$ and $A, D$, we have the following two events, which correspond to $ \mathcal{E}_{1,1}^v$ and $ \mathcal{E}_{1,2}^v$ respectively.
   \beq
   \mathcal{E}_{3,1}^v&=&\{w: B,C\in \ga_\de, A,D\not\in \ga_\de\}\setminus  \mathcal{E}_{3,2}^v, \\
  \mathcal{E}_{3,2}^v&=&\{w: B,C\in \ga_\de, A,D\not\in \ga_\de; C,D\in \ga_\de',A,B\notin \ga_\de'\}.
  \eeq
Changing the role of $C, D$ and $A, B$, we have the following two events, which correspond to $ \mathcal{E}^{2,1}_v$ and $ \mathcal{E}^{2,2}_v$ respectively.
   \beq
   \mathcal{E}^{4,1}_v&=&\{w: C,D\in \ga_\de, A,B\not\in \ga_\de; B,C\in \ga_\de', A,D\not\in \ga_\de'\}, \\
   \mathcal{E}^{4,2}_v&=&\{w: C,D\in \ga_\de, A,B
   \not\in \ga_\de\} \setminus  \mathcal{E}^{4,1}_v.
  \eeq

In $\mathcal{E}_{j,m}^v$ for $j=1,3$ and $m=1,2$, and $\mathcal{E}^{j,m}_v$ for $j=2,4$ and $m=1,2$, only two medial edges incident to $v$ are in $\ga_\de$, It is also possible that if $\ga_\de$ passes through $v$, all four medial edges incident to $v$ are in $\ga_\de$. So corresponding to  $\mathcal{E}_{j,m}^v$, we define
$$\mathcal{E}_{j,m}^{v,a}=\{\omega_{\setminus E_v}: \omega\in \mathcal{E}_{j,m}^v\}$$
 for $j=1,3$ and $m=1,2$; and corresponding to  $\mathcal{E}^{j,m}_v$, we define
$$\mathcal{E}^{j,m}_{v,a}=\{\omega_{\setminus E_v}: \omega\in \mathcal{E}^{j,m}_v\}$$
 for $j=2,4$ and $m=1,2$

 From the construction in {\bf Case I} and  {\bf Case II}, we know that there is a one-to-one mapping
  $$
  \mathrm{f}_{AD}:  \mathcal{E}_{1,1}^v\mapsto \mathcal{E}^{4,1}_v\cup \mathcal{E}^{4,2}_v, \   \mathcal{E}_{1,2}^v\mapsto   \mathcal{E}^{2,1}_v\cup \mathcal{E}^{2,2}_v
  $$
  after ignoring an event with probability at most of the order $(\de/d)^2$. Similarly, we can construct three one-to-one mappings in the following,
  \beq
 &&  \mathrm{f}_{AB}:  \mathcal{E}^{2,1}_v\mapsto \mathcal{E}_{1,1}^v\cup\mathcal{E}_{1,2}^v,  \   \mathcal{E}^{2,2}_v\mapsto   \mathcal{E}_{3,1}^v\cup \mathcal{E}_{3,2}^v, \\
 &&  \mathrm{f}_{BC}:  \mathcal{E}_{3,1}^v\mapsto \mathcal{E}^{2,1}_v\cup \mathcal{E}^{2,2}_v, \   \mathcal{E}_{3,2}^v\mapsto   \mathcal{E}^{4,1}_v\cup\mathcal{E}^{4,2}_v,\\
 &&  \mathrm{f}_{CD}:  \mathcal{E}^{4,1}_v\mapsto \mathcal{E}_{3,1}^v\cup\mathcal{E}_{3,2}^v,  \   \mathcal{E}^{4,2}_v\mapsto   \mathcal{E}_{1,1}^v\cup \mathcal{E}_{1,2}^v
  \eeq
   after ignoring an event with probability at most of the order $(\de/d)^2$.
   Putting $\mathrm{f}_{AD}$ and  $\mathrm{f}_{BC}$ together, we have a  mapping
   $$
   \mathrm{f}_{AD}\cup\mathrm{f}_{BC}:  \mathcal{E}_{1,1}^v\cup \mathcal{E}_{1,2}^v\cup \mathcal{E}_{3,1}^v\cup \mathcal{E}_{3,2}^v \rightarrow \mathcal{E}^{2,1}_v\cup \mathcal{E}^{2,2}_v\cup \mathcal{E}^{4,1}_v\cup \mathcal{E}^{4,2}_v
   $$
   after ignoring an event with probability at most of the order $(\de/d)^2$.
   This mapping is actually one-to-one. In order to prove this one-to-one property, it suffices to show
   \beqn
    \mathrm{f}_{AD}( \mathcal{E}_{1,2}^v)\cap\mathrm{f}_{BC}(\mathcal{E}_{3,1}^v)=\emptyset. \label{ADnotBC}
   \eeqn

   Now we start to prove \eqref{ADnotBC}. Let $\omega^{(1)}$ be a configuration in which $\ga_\de^{(1)}$ passes through $v$, $A\in  \ga_\de^{(1)}$, $B\notin \ga_\de^{(1)}$, $C\notin\ga_\de^{(1)}$ and $D\in  \ga_\de^{(1)}$, and $\omega^{(2)}$ a configuration in which $\ga_\de^{(2)}$ passes through $v$, $A\notin  \ga_\de^{(2)}$, $B\in \ga_\de^{(2)}$, $C\in\ga_\de^{(2)}$ and $D\notin  \ga_\de^{(2)}$. Here we use the convention that any random variable with super-script $(2)$ is the corresponding random variable with super-script $(1)$ after $A, D$ are replaced by $C, B$ respectively. For example,
   $t^{(2)v}=\inf\{j>0:\loop_v^{AC(2)}[j,j+1]\subseteq_c \ga_\de^{(2)}[0,n_v^{(2)}]\}$. So for $\omega^{(2)}$ there are also {\bf Case I} and {\bf Case II}. More precisely, we change $A, D$  in {\bf Case I} and {\bf Case II} for $\omega^{(1)}$ to $C, B$ to obtain the respective {\bf Case I} and {\bf Case II} for $\omega^{(2)}$. Similarly  {\bf Case I} for $\omega^{(2)}$ contains  Subcase (1), Subcase (2) and Subcase (3).

   Note that $ \mathcal{E}_{1,2}^v$ corresponds to Subcase (1)  in {\bf Case I}, and Subcases (1)  in {\bf Case II}. $ \mathcal{E}_{3,1}^v$ corresponds to Subcases (2)-(3)  in {\bf Case I}, and Subcases (2)-(3)  in {\bf Case II}

\

\noindent
Comparison of $ \mathcal{E}_{1,2}^v$--Subcase (1) in {\bf Case I} and $ \mathcal{E}_{3,1}^v$--Subcase (2) in {\bf Case I}.

Suppose $\omega^{(1)}$ belongs to Subcase (1) in {\bf Case I}, $\omega^{(2)}$ belongs to  Subcase (2) in {\bf Case I}, and $(\omega^{(1)})'=(\omega^{(2)})':=\omega'$. The assumption that $(\omega^{(1)})'=(\omega^{(2)})'$ implies that
$$
\loop_v^{AC(1)}[0,s^{v(1)}]\subsetneqq \ga_\de^{(2)}[t_v^{(2)},n_v^{(2)}], \ \text{or} \ \loop_v^{AC(1)}[0,s^{v(1)}]\supseteq \ga_\de^{(2)}[t_v^{(2)},n_v^{(2)}].
$$

Now suppose $\loop_v^{AC(1)}[0,s^{v(1)}]\subsetneqq \ga_\de^{(2)}[t_v^{(2)},n_v^{(2)}]$. Then there exists $s$ such that $s>t_v^{(2)}$, $\loop_v^{AC(1)}[0,s^{v(1)}]=\ga_\de^{(2)}[s,n_v^{(2)}]$.
 We obtain that $\ga_\de^{AC(2)}[s,n_v^{(2)}]$ doesn't cross $\loop_v^{-AC(2)}(0,\mathrm{t}^{v(2)}]$.  However, since $\omega^{(1)}(E)=1-\omega^{(1)}(E^{AC})$   for any $E$ in $\mathfrak{E}^{v(1)}$, and $\ga_\de^{(2)}(\mathrm{t}_v^{(2)})$ is the first touching point of $(\ga_\de^{(2)})'[0,(n_v^{(2)})')$ and $(\ga_\de^{(2)})'((n_v^{(2)})',\infty)$ satisfying \eqref{stat1-mathtv} or \eqref{stat0-mathtv} after $\omega'$ is replaced by $(\omega^{(2)})'$, we conclude that $\loop_v^{(1)}[0,s^{v(1)}]$ crosses $\loop_v^{-AC(2)}(0,\mathrm{t}^{v(2)}]$, which is also part of $(\ga_\de^{(1)})'$ in $(\omega^{(1)})'$.
So we only have $\loop_v^{AC(1)}[0,s^{v(1)}]\supseteq \ga_\de^{(2)}[t_v^{(2)},n_v^{(2)}]$. Note that in $(\omega^{(1)})'$, $\loop_v^{AC(1)}(s^{v(1)})$ is the first touching point of $(\loop_v^{(1)})'$ by $(\ga_\de^{(1)})'(0,n_v^{(1)}]$. We must have
$\ga_\de^{(1)}[0,s_v^{(1)}]=\ga_\de^{(2)}[0,t_v^{(2)}]$, $\ga_\de^{(1)}[s_v^{(1)},n_v^{(1)}]=\loop_v^{AC(2)}[0,t^{v(2)}]$, and  $\loop_v^{AC(1)}[0,s^{v(1)}]=\ga_\de^{(2)}[t_v^{(2)},n_v^{(2)}]$. This also gives $s_v^{(1)}=t_v^{(1)}$ since $\omega^{(2)}(E_{\ga_\de^{(2)}(t_v^{(2)})})=\omega^{(2)}(E^{AC}_{\ga_\de^{(2)}(t_v^{(2)})})=1$.

Observe that the symmetric image of the part of $(\loop_v^{(2)})'$ from $\ga_\de^{(2)}(t_v^{(2)})$ to $v$ in the counterclockwise order around the line $L_v^{AC}$, which is $\ga_\de^{AC(2)}[t_v^{(2)},n_v^{(2)}]$, does not cross $(\loop_v^{(2)})^{-AC}(0,\mathrm{t}^{v(2)}]$. However, since $\omega^{(1)}(E)=1-\omega^{(1)}(E^{AC})$   for any $E$ in $\mathfrak{E}^{v(1)}$, and $\ga_\de^{(2)}(\mathrm{t}_v^{(2)})$ is the first touching point of $(\ga_\de^{(2)})'[0,(n_v^{(2)})')$ and $(\ga_\de^{(2)})'((n_v^{(2)})',\infty)$ satisfying \eqref{stat1-mathtv} or \eqref{stat0-mathtv} after $\omega'$ is replaced by $(\omega^{(2)})'$, we conclude that $\loop_v^{(1)}[0,t^{v(1)}]$ crosses $(\ga_\de^{(1)})'((n_v^{(1)})',\mathfrak{t}_v^{(1)}]$ and finally leaves $(\ga_\de^{(1)})'((n_v^{(1)})',\mathfrak{t}_v^{(1)}]$ from an open edge touched by $(\ga_\de^{(1)})'((n_v^{(1)})',\mathfrak{t}_v^{(1)}]$, where $(\ga_\de^{(1)})'(\mathfrak{t}_v^{(1)})$ is the first touching point of $(\ga_\de^{(1)})'[0,(n_v^{(1)})')$ and $(\ga_\de^{(1)})'((n_v^{(1)})',\infty)$ satisfying\eqref{stat1-mathtv} or \eqref{stat0-mathtv} after $\omega'$ is replaced by $(\omega^{(1)})'$. Note that $\loop_v^{(1)}[0,t^{v(1)}]$ is equal to the symmetric image of the part of $(\loop_v^{(2)})'$ from $\ga_\de^{(2)}(t_v^{(2)})$ to $v$ in the counterclockwise order around the line $L_v^{AC}$ and $(\omega^{(1)})'=(\omega^{(2)})'$. We have reached a contradiction. So, $(\omega^{(1)})'\not=(\omega^{(2)})'$.

\

\noindent
Comparison of $ \mathcal{E}_{1,2}^v$--Subcase (1) in {\bf Case I} and $ \mathcal{E}_{3,1}^v$--Subcase (3) in {\bf Case I}.

Suppose $\omega^{(1)}$ belongs to Subcase (1) in {\bf Case I}, $\omega^{(2)}$ belongs to Subcase (3) in {\bf Case I}, and $(\omega^{(1)})'=(\omega^{(2)})':=\omega'$.
The assumption that $(\omega^{(1)})'=(\omega^{(2)})'$ implies that
$$\loop_v^{AC(1)}[0,s^{v(1)}]  \subseteq \ga_\de^{(2)}[\tau_v^{(2)},n_v^{(2)}] , \ \text{or} \ \loop_v^{AC(1)}[0,s^{v(1)}]  \supsetneqq \ga_\de^{(2)}[\tau_v^{(2)},n_v^{(2)}].$$

The definition of $s^{v(1)}$ shows that we can not have $\loop_v^{AC(1)}[0,s^{v(1)}]  \supsetneqq \ga_\de^{(2)}[\tau_v^{(2)},n_v^{(2)}]$. So we suppose
$$\loop_v^{AC(1)}[0,s^{v(1)}]  \subseteq \ga_\de^{(2)}[\tau_v^{(2)},n_v^{(2)}].$$
Then there exists $s_0\geq\tau_v^{(2)}$ such that $\ga_\de^{(2)}[s_0,n_v^{(2)}]=\loop_v^{AC(1)}[0,s^{v(1)}]$. Based on the position of $\loop_v^{-(2)}(0,\varsigma^{v(2)}]$ in $\omega^{(1)}$ and the fact that $(\loop_v^{AC(1)}[0,s^{v(1)}])^{AC}=\loop_v^{(1)}[0,s^{v(1)}]$, we obtain that $\loop_v^{-AC(2)}(0,\varsigma^{v(2)}]$ doesn't cross $\ga_\de^{(2)}(\tau_v^{(2)},n_v^{(2)})$.

If $(\loop_v)^{-AC(2)}(0,\varsigma^{v(2)}]$  crosses $\ga_\de^{(2)}[0,\tau_v^{(2)}]\cup\loop_v^{(2)}[0,\tau^{v(2)}]$, then
$$\ga_\de^{AC(2)}[s_0,n_v^{(2)}) \subseteq \ga_\de^{AC(2)}[\tau_v^{(2)},n_v^{(2)}]$$
 doesn't cross the part of $(\ga_\de^{(2)})'$ from $v$ to $\loop_v^{-AC(2)}(\varsigma^{v(2)})$.
However, since $\omega^{(1)}(E)=1-\omega^{(1)}(E^{AC})$   for any $E$ in $\mathfrak{E}^{v(1)}$, and $\ga_\de^{(2)}(\varsigma^{v(2)})$ is the first touching point of $(\ga_\de^{(2)})'[0,(n_v^{(2)})')$ by $(\ga_\de^{(2)})'((n_v^{(2)})',\infty)$, we conclude that $\loop_v^{(1)}[0,s^{v(1)}]$ crosses  the corresponding part of $(\ga_\de^{(2)})'$ from $v$ to $\loop_v^{-AC(2)}(\varsigma^{v(2)})$ in $(\omega^{(1)})'$. This is a contradiction.

If $\loop_v^{-AC(2)}(0,\varsigma^{v(2)}]$  doesn't cross $\ga_\de^{(2)}[0,\tau_v^{(2)}]\cup\loop_v^{(2)}[0,\tau^{v(2)}]$, we have to consider two possibilities.

The first possibility is that the dual-open edges touched by
$\loop_v^{-AC(2)}(0,\varsigma^{v(2)}]$ in $(\omega^{(2)})'$ crosses at least one open edge touched by $\loop_v^{(1)}[s^{v(1)},n_\loop^{(1)})\setminus \loop_v^{(1)}(0,s^{v(1)})$, or the open edges touched by $\loop_v^{-AC(2)}(0,\varsigma^{v(2)}]$ in $(\omega^{(2)})'$ crosses at least one dual-open edge touched by $\loop_v^{(1)}[s^{v(1)},$ $n_\loop^{(1)})\setminus \loop_v^{(1)}(0,s^{v(1)})$. This crossed edge has different statuses in $(\omega^{(1)})'$ and $\omega^{(2)})'$. The second possibility is that the primal and dual edges touched by $\loop_v^{-AC(2)}(0,\varsigma^{v(2)}]$ in $(\omega^{(2)})'$ don't cross any primal or dual edge touched by $\loop_v^{(1)}[s^{v(1)},n_\loop^{(1)})\setminus \loop_v^{(1)}(0,s^{v(1)})$.  However this is contradictory with the definition of $(\omega^{(2)})'$ on $\mathcal{E}^{(2)}_{coi}\setminus\mathcal{E}^{(2)}_{exc}$ or $E^{AC}_{\ga_\de^{(2)}(\tau_v^{(2)})}$. In fact, for the second possibility, $E_1^{AC(2)}$ (if $\mathcal{E}^{(2)}_{coi}\setminus\mathcal{E}^{(2)}_{exc}\not=\emptyset$)  or  $E^{AC}_{ \loop_v^{(2)}(\tau^{v(2)}) }$ (if $\mathcal{E}^{(2)}_{coi}\setminus\mathcal{E}^{(2)}_{exc}=\emptyset$ and $\ga_\de^{AC(2)}(\tau_v^{(2)})\notin \ga_\de^{(2)}[0,\tau_v^{(2)}]\cup\loop_v^{(2)}[0,\tau^{v(2)}]$) has different statuses in $(\omega^{(1)})'$ and $(\omega^{(2)})'$, or
$(\loop_v)^{-AC(2)}(0,\varsigma^{v(2)}]$  crosses $\ga_\de^{(2)}[0,\tau_v^{(2)}]\cup\loop_v^{(2)}[0,\tau^{v(2)}]$
 (if $\mathcal{E}^{(2)}_{coi}\setminus\mathcal{E}^{(2)}_{exc}=\emptyset$, $\ga_\de^{AC(2)}(\tau_v^{(2)})\in \ga_\de^{(2)}[0,\tau_v^{(2)}]\cup\loop_v^{(2)}[0,\tau^{v(2)}]$).

So, $(\omega^{(1)})'\not=(\omega^{(2)})'$.

\

\noindent
Comparison of $ \mathcal{E}_{1,2}^v$--Subcase (1) in {\bf Case II} and $ \mathcal{E}_{3,1}^v$--Subcase (2)-(3) in {\bf Case I}; comparison of $ \mathcal{E}_{1,2}^v$--Subcase (1) in {\bf Case I} and $ \mathcal{E}_{3,1}^v$--Subcase (2)-(3) in {\bf Case II}.

 The argument is the same as the previous two. So we omit the details.

\

So we have proved that the mapping
   $$
   \mathrm{f}_{AD}\cup\mathrm{f}_{BC}:  \mathcal{E}_{1,1}^v\cup \mathcal{E}_{1,2}^v\cup \mathcal{E}_{3,1}^v\cup \mathcal{E}_{3,2}^v\setminus\mathcal{E}_{v,1,3}  \rightarrow \mathcal{E}^{2,1}_v\cup \mathcal{E}^{2,2}_v\cup \mathcal{E}^{4,1}_v\cup \mathcal{E}^{4,2}_v
   $$
   is one-to-one, where $\Pro(\mathcal{E}_{v,1,3})\preceq (\de/d)^2$.

   Similarly putting $\mathrm{f}_{AB}$ and  $\mathrm{f}_{CD}$ together, we have a one-to-one mapping
   $$
   \mathrm{f}_{AB}\cup\mathrm{f}_{CD}:  \mathcal{E}^{2,1}_v\cup \mathcal{E}^{2,2}_v\cup \mathcal{E}^{4,1}_v\cup \mathcal{E}^{4,2}_v \setminus\mathcal{E}^{v,2,4}   \rightarrow
   \mathcal{E}_{1,1}^v\cup \mathcal{E}_{1,2}^v\cup \mathcal{E}_{3,1}^v\cup \mathcal{E}_{3,2}^v,
   $$
  where $\Pro(\mathcal{E}^{v,2,4})\preceq (\de/d)^2$. This implies that both $ \mathrm{f}_{AD}\cup\mathrm{f}_{BC}$ and $\mathrm{f}_{AB}\cup\mathrm{f}_{CD}$ are one-to-one and onto mappings after ignoring an event with probability at most of the order $(\de/d)^2$. Combining the above two conclusions, we have
 \beq
  \Pro\Big( \big(\mathcal{E}^{2,1}_v\cup \mathcal{E}^{2,2}_v\big)\setminus\big( \mathrm{f}_{AD}(\mathcal{E}_{1,2}^v\setminus\mathcal{E}_{v,1,3}) \cup   \mathrm{f}_{BC}(\mathcal{E}_{3,1}^v\setminus\mathcal{E}_{v,1,3}) \big) \Big)\preceq (\de/d)^2,\\
  \Pro\Big( \big(\mathcal{E}^{4,1}_v\cup \mathcal{E}^{4,2}_v\big)\setminus\big( \mathrm{f}_{BC}(\mathcal{E}_{3,2}^v\setminus\mathcal{E}_{v,1,3}) \cup   \mathrm{f}_{AD}(\mathcal{E}_{4,1}^v\setminus\mathcal{E}_{v,1,3}) \big) \Big)\preceq (\de/d)^2.
  \eeq

   From now on we will let
   \beq
&&\mathcal{E}_{2,1}^v= \mathrm{f}_{AD}(\mathcal{E}_{1,2}^v\setminus\mathcal{E}_{v,1,3})),\  \mathcal{E}_{2,2}^v= \big(\mathcal{E}^{2,1}_v\cup \mathcal{E}^{2,2}_v\big)\setminus \mathcal{E}_{2,1}^v,\\
&&\mathcal{E}_{4,1}^v= \mathrm{f}_{BC}(\mathcal{E}_{3,2}^v\setminus\mathcal{E}_{v,1,3})), \  \mathcal{E}_{4,2}^v= \big(\mathcal{E}^{4,1}_v\cup \mathcal{E}^{4,2}_v\big)\setminus \mathcal{E}_{4,1}^v,\\
&& \mathcal{E}_{v,0}=\mathcal{E}_{v,1,3}\cup \Big( \big(\mathcal{E}^{2,1}_v\cup \mathcal{E}^{2,2}_v\big)\setminus\big( \mathrm{f}_{AD}(\mathcal{E}_{1,2}^v\setminus\mathcal{E}_{v,1,3}) \cup   \mathrm{f}_{BC}(\mathcal{E}_{3,1}^v\setminus\mathcal{E}_{v,1,3}) \big) \Big)\cup\\
 &&\qquad \qquad
     \Big( \big(\mathcal{E}^{4,1}_v\cup \mathcal{E}^{4,2}_v\big)\setminus\big( \mathrm{f}_{BC}(\mathcal{E}_{3,2}^v\setminus\mathcal{E}_{v,1,3})
     \cup   \mathrm{f}_{AD}(\mathcal{E}_{4,1}^v\setminus\mathcal{E}_{v,1,3}) \big) \Big),\\
 && \mathcal{E}_{v,1}=\{\omega_{\setminus E_v}: \omega\in \mathcal{E}_{v,0}\},\
  \mathcal{E}_v= \mathcal{E}_{v,0}\cup  \mathcal{E}_{v,1}.
    \eeq
   Therefore, we have derived the following result.

 \begin{proposition} \label{rot-inv}
 After ignoring $\mathcal{E}_v$ with probability at most of the order $(\de/d)^2$, there is a mapping
  $$\mathrm{f}_v: \mathcal{E}_{j,2}^v\setminus \mathcal{E}_v \mapsto \mathcal{E}_{j+1,1}^v\setminus \mathcal{E}_v$$ for $j=1,2,3,4$ such that $\mathrm{f}_v$ is a one-to-one and onto mapping, where $\mathcal{E}_{5,1}^v$ is understood as $\mathcal{E}_{1,1}^v$.
     In addition,   $\mathrm{f}_v$ has the following property,
 \beq
 W_{\mathrm{f}_v(\ga_\de)}(A,e_b)&=&W_{\ga_\de}(A,e_b), \ \omega(\ga_\de)\in  \mathcal{E}_{1,2}^v\setminus \mathcal{E}_v;\\
 W_{\mathrm{f}_v(\ga_\de)}(B,e_b)&=&W_{\ga_\de}(B,e_b), \ \omega(\ga_\de)\in  \mathcal{E}_{2,2}^v\setminus \mathcal{E}_v;\\
  W_{\mathrm{f}_v(\ga_\de)}(C,e_b)&=&W_{\ga_\de}(C,e_b), \ \omega(\ga_\de)\in  \mathcal{E}_{3,2}^v\setminus \mathcal{E}_v;\\
   W_{\mathrm{f}_v(\ga_\de)}(D,e_b)&=&W_{\ga_\de}(D,e_b), \ \omega(\ga_\de)\in \mathcal{E}_{4,2}^v\setminus \mathcal{E}_v,
 \eeq
 where  $\mathrm{f}_v(\ga_\de)$ represents the exploration path in $\mathrm{f}_v(\omega)$ when $\ga_\de$ is in $\omega$.
  \end{proposition}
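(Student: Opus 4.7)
The plan is to assemble $\mathrm{f}_v$ from the four piecewise modification maps $\mathrm{f}_{AD},\mathrm{f}_{AB},\mathrm{f}_{BC},\mathrm{f}_{CD}$ constructed above by setting $\mathrm{f}_v\big|_{\mathcal{E}_{1,2}^v\setminus\mathcal{E}_v}=\mathrm{f}_{AD}$, $\mathrm{f}_v\big|_{\mathcal{E}_{2,2}^v\setminus\mathcal{E}_v}=\mathrm{f}_{AB}$, and cyclically for $j=3,4$. First I would collect $\mathcal{E}_v$ as the union of all bad configurations: the events of Lemma~\ref{special1} of probability $\preceq(\de/d)^2$, the $5$-arm event $\{\gamma_\de[0,n_v)\cap\loop_v=\emptyset\}$ needed in Case~II (cost $(\de/d)^2$ via \eqref{five}), together with the sets $\mathcal{E}_{v,1,3}$ and $\mathcal{E}^{v,2,4}$ on which the combined maps might fail injectivity, and their $\omega\mapsto\omega_{\setminus E_v}$ translates; this matches the $\mathcal{E}_{v,0}\cup\mathcal{E}_{v,1}$ already defined.

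The main step is to check that $\mathrm{f}_{AD}\cup\mathrm{f}_{BC}$ and $\mathrm{f}_{AB}\cup\mathrm{f}_{CD}$ are each well-defined injections, which reduces via the ``Comparison'' arguments to the non-overlap identity \eqref{ADnotBC}, namely $\mathrm{f}_{AD}(\mathcal{E}_{1,2}^v)\cap\mathrm{f}_{BC}(\mathcal{E}_{3,1}^v)=\emptyset$. I would prove this by the case split already laid out: match each of Subcase~(1) of Case~I and Case~II inside $\mathcal{E}_{1,2}^v$ against Subcases~(2)--(3) of Case~I and Case~II inside $\mathcal{E}_{3,1}^v$, assume $(\omega^{(1)})'=(\omega^{(2)})'$, and trace the first touching point $\loop_v^{AC(1)}(s^{v(1)})$ along $\ga_\de^{(1)}$ versus the exit of $\ga_\de^{(2)}[\tau_v^{(2)},n_v^{(2)}]$ or $\ga_\de^{(2)}[t_v^{(2)},n_v^{(2)}]$. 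Either the inclusion forces an edge in $\mathcal{E}_{coi}\setminus\mathcal{E}_{exc}$ or the distinguished edge $E_{\ga_\de(\tau_v)}^{AC}$ to carry inconsistent statuses in $(\omega^{(1)})'$ versus $(\omega^{(2)})'$, or it forces $\loop_v^{(1)}[0,t^{v(1)}]$ to cross $\loop_v^{-AC(2)}(0,\mathrm{t}^{v(2)}]$ in violation of the definition of $\mathrm{t}^v$ or $\varsigma^v$; in every branch a contradiction emerges.

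With both combined maps injective between domains of equal cardinality modulo $\mathcal{E}_v$, they are bijections, and restricting them to each piece $\mathcal{E}_{j,2}^v\setminus\mathcal{E}_v$ yields the desired one-to-one onto mapping onto $\mathcal{E}_{j+1,1}^v\setminus\mathcal{E}_v$. The winding identities then follow by direct inspection of each local construction: the rule $\omega\to\omega'$ only alters statuses of edges touched by $\loop_v$, $\loop_v^{AC}$, $\loop_v^{-AC}$ and the path segments between $\ga_\de(\tau_v)$ (or $\ga_\de(t_v)$) and $v$, so the initial medial edge whose winding is paired in the statement is common to $\ga_\de$ and $\mathrm{f}_v(\ga_\de)$, and the number of $\pm\pi/2$ turns along the shared portion up to that edge is unchanged. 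The main obstacle is precisely the non-overlap identity \eqref{ADnotBC}: tracking how the triple $(\loop_v,\loop_v^{AC},\loop_v^{-AC})$ of one configuration interacts with the modified loop structure of a second configuration drawn from a different subcase requires the careful subcase-by-subcase bookkeeping already carried out.
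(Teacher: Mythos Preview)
Your overall strategy mirrors the paper's: build $\mathrm{f}_{AD},\mathrm{f}_{AB},\mathrm{f}_{BC},\mathrm{f}_{CD}$ from the subcase constructions, establish the non-overlap identity \eqref{ADnotBC} so that $\mathrm{f}_{AD}\cup\mathrm{f}_{BC}$ is injective, pair this with the reverse injection $\mathrm{f}_{AB}\cup\mathrm{f}_{CD}$ to obtain bijectivity by cardinality, and read off the winding identities from the local constructions. Your description of $\mathcal{E}_v$ and of the comparison argument for \eqref{ADnotBC} is accurate.

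There is, however, a genuine gap in how you assemble $\mathrm{f}_v$. You set $\mathrm{f}_v|_{\mathcal{E}_{2,2}^v}=\mathrm{f}_{AB}$ (and cyclically $\mathrm{f}_v|_{\mathcal{E}_{4,2}^v}=\mathrm{f}_{CD}$), but this does not work as stated: $\mathrm{f}_{AB}$ maps into $\{B,C\in\gamma_\delta'\}$ only on the subcase set $\mathcal{E}^{2,2}_v$ (superscript), whereas $\mathcal{E}_{2,2}^v$ (subscript) is by definition the complement in $\{A,B\in\gamma_\delta\}$ of the image $\mathrm{f}_{AD}(\mathcal{E}_{1,2}^v\setminus\mathcal{E}_{v,1,3})$. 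These two partitions of $\{A,B\in\gamma_\delta\}$ are a priori different, so $\mathrm{f}_{AB}$ restricted to $\mathcal{E}_{2,2}^v$ may land partly in $\{A,D\in\gamma_\delta'\}$ rather than in $\mathcal{E}_{3,1}^v$. The paper builds $\mathrm{f}_v$ entirely from the single pair $\mathrm{f}_{AD}\cup\mathrm{f}_{BC}$: once this combined map is a bijection from $\mathcal{E}_{1,\cdot}^v\cup\mathcal{E}_{3,\cdot}^v$ onto $\mathcal{E}_{2,\cdot}^v\cup\mathcal{E}_{4,\cdot}^v$, and since $\mathrm{f}_{AD}(\mathcal{E}_{1,1}^v)\subseteq\{C,D\}$ and $\mathrm{f}_{BC}(\mathcal{E}_{3,1}^v)\subseteq\{A,B\}$, one gets $\mathcal{E}_{2,2}^v=\mathrm{f}_{BC}(\mathcal{E}_{3,1}^v)$ modulo $\mathcal{E}_v$ for free. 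The correct assignment is therefore $\mathrm{f}_v|_{\mathcal{E}_{2,2}^v}=\mathrm{f}_{BC}^{-1}$ and $\mathrm{f}_v|_{\mathcal{E}_{4,2}^v}=\mathrm{f}_{AD}^{-1}$. The second pair $\mathrm{f}_{AB}\cup\mathrm{f}_{CD}$ enters only to supply the reverse-direction injection in the cardinality count; it is not used to define $\mathrm{f}_v$ itself (this is exactly the content of the remark following the proposition). With this correction your winding argument goes through unchanged, since $\mathrm{f}_{BC}^{-1}$ preserves the $B$-winding for the same structural reason that $\mathrm{f}_{BC}$ does.
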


  \begin{remark}
  Although Proposition \ref{rot-inv} is still correct if we replace $\mathcal{E}_v$ by $\mathcal{E}_{v,0}$, we need $\mathcal{E}_v$ to construct the modified parafernion in Section \ref{modi}. In addition, $\mathcal{E}_v$ is a unique event corresponding to $v$, not any event with probability of the order $(\de/d)^2$.
  \end{remark}

  \begin{remark}
  The mapping $\mathrm{f}_v$ is based on  $\mathrm{f}_{AD}\cup\mathrm{f}_{BC}$. Similarly, we can obtain the second one-to-one mapping $\mathrm{f}_{2v}$ based on $ \mathrm{f}_{AB}\cup\mathrm{f}_{CD}$. On the other hand, we can also construct the mapping $\omega \rightarrow\omega'$ by replacing $\ga_\de[0,n_v)$, $\loop_v$ or $\loop_v^*$, and the reflection around $L_v^{AC}$ with $\ga_\de^{-1}[0,n_v^-)$, $\loop_v^{-1}$ or $(\loop_v^*)^{-1}$, and the reflection around $L_v^{BD}$ respectively in {\bf Case I} and {\bf Case II} and obtain two one-to-one mappings $\mathrm{f}_{3v}$ and $\mathrm{f}_{4v}$. Here $\ga_\de^{-1}$ is the reversed exploration path from $b_\de^\diamond$ to $a_\de^\diamond$, and $\ga_\de^{-1}(n_v^-)=v$. The Subcase ($j$) can be defined by replacing the respective $\ga_\de[0,n_v]$ and $\loop_v/\loop_v^*$ with $\ga_\de^{-1}[0,n_v^-)$ and $\loop_v^{-1}/(\loop_v^*)^{-1}$ in the corresponding Subcase ($j$).
  \end{remark}

\subsection{Rotation around center of medial edge} \label{partII}
In addition to the notation in Section \ref{partI}, we need more notation.  Let $w$ be a vertex in $\Omega_\de^\diamond$ which is a center of a horizontal edge $E_w$ in $\Omega_\de$ such that $\Re (v-w)=\de/2$ and $\Im(v-w)=-\de/2$. So $E_v$ is a vertical edge in $\Omega_\de$. The loop attached to $E_w$ or its dual-edge $E_w^*$ is written as $\loop_w$ or $\loop_w^*$ respectively. The four medial-edges incident to $w$ are denoted by $A_w$, $B_w$, $C_w$ and $D_w$ in the counterclockwise order such that $A_w$ and $C_w$ are pointing towards $w$. We assume that $B_w=A$.

Now suppose in the configuration $\omega_w$, the exploration path $\ga_w$ passes through $w$ such that $A_w\in\ga_w, D_w\in \ga_w$, $B_w\notin\ga_w$, $C_w\notin\ga_w$. We will construct a configuration $\omega_v$ such that the exploration path $\ga_v$ passes through $v$ and $C\in \ga_v$, $D\in \ga_v$, $A\notin \ga_v$, $B\notin \ga_v$. In addition,
$$W_{\ga_w}(A_w, e_b)=W_{\ga_v}(D, e_b), \
W_{\ga_w}(D_w, e_b)=W_{\ga_v}(C, e_b).$$
The key point is that the relation between $\ga_w[n_w,\infty)$ and $\loop_w$ is preserved after this construction. In other words, if $\omega_w$ belongs to Subcase ($j$)  of {\bf Case I} determined by $\ga_w[n_w,\infty)$ and $\loop_w$ in Subsection \ref{partI}, then $\omega_v$ is also in Subcase ($j$) of {\bf Case I} determined by $\ga_v[0,n_v]$ and $\loop_v^*$, where $j=1,2,3$.

Let $d_v=\dist(v, \p_{ba,\de}\cup\p_{ab,\de}^*)$ and $d_w=\dist(w, \p_{ba,\de}\cup\p_{ab,\de}^*)$. Define $r_w=\max\{\dist(x,w): x\in \loop_w\}$, $n_w=\min\{j:\gamma_w(j)=w\}$. We index the loop $\loop_w$ by the number of visited medial vertices in the counterclockwise order around any point surrounded by $\loop_w$, starting from $w$ so that $\loop_w(0)=w$, and let $\loop_w^{-1}(j)$ denote the $j$th visited medial vertex by $\loop_w$  in the clockwise order  starting from $w$ so that $\loop_w^{-1}(0)=w$. In this subsection, we assume that $r_w$ is given and smaller than $\min(d_v,d_w)$.

We use the superscript $R$ to denote the symmetric image of any point and curve around the point $w+\sqrt{2}e^{-i\pi/4}\de/4$. For example, $\ga_w^R$ is the symmetric image of $\ga_w$ around $w+\sqrt{2}e^{-i\pi/4}\de/4$. But $E^R$ denotes the primal edge whose center is $v_E^R$. This reflection is the same as the rotation of $180$ degrees around $w+\sqrt{2}e^{-i\pi/4}\de/4$.

One may expect that  the probability that $\ga_w^R[T_l,t_{lr}]$ doesn't cross $\ga_w[t_{rl},T_r]$ is of the order
\beqn
(r_w/\min(d_v,d_w))^2, \label{excep0}
\eeqn
where
\beq
&&T_l=\min\{j<n_w: \ga_w[j,n_w]\subseteq B\big(w,\min(d_v,d_w)\big), \ga_w(j-1)\notin B\big(w,\min(d_v,d_w)\big)\},\\
&&T_r=\max\{j>n_w: \ga_w[n_w,j]\subseteq B\big(w,\min(d_v,d_w)\big), \ga_w(j+1)\notin B\big(w,\min(d_v,d_w)\big)\},\\
&&t_{rl}=\max\{j\geq n_w: |\ga_w(j)-w| \leq r_w, |\ga_w(k)-w|>r_w, \forall k\geq j\}\\
&&t_{lr}=\min\{j\leq n_w: |\ga_w(j)-w| \leq r_w, |\ga_w(k)-w|>r_w, \forall k\leq j\}.
\eeq

At the first sight of \eqref{excep0}, we can expect that simple rotation of $\ga_w[T_l,t_{lr}]$ must complete the construction of $\omega_v$. However the irregular behaviour of $\ga_w$ makes the situation more complicated. Actually, after ignoring an event with probability of the order $(r_w/\min(d_v,d_w))^{2-\varepsilon}$ larger than $(r_w/\min(d_v,d_w))^2$, we can construct $\omega_v$ from $\omega_w$ through rotation of $180$ degrees. (Here $\varepsilon$ is an arbitrary positive small number.) To achieve this, we also need the method in Section \ref{TranInv}. In other words, we will decompose the disk $B(v;\min(d_v,d_w))$ into a sequence of annuli. In each annulus, we will investigate $\ga_w$. We begin with some notation and definitions.

Firstly, suppose $\min(d_v,d_w)/r_w\to \infty$ as $\de\to 0$.
Let $\mathrm{R}_0$ be a constant tending to $\infty$ as $\de\to 0$ such that $m_w=[\log(\min(d_v,d_w)/r_w)/\log \mathrm{R}_0]$ also tends to infinity, $\mathfrak{A}_0=B(w;r_w)$, $\mathfrak{A}_k=B(w;r_w\mathrm{R}_0^k)$, $\mathfrak{\hat A}_k=B(w;2r_w\mathrm{R}_0^k)$, $\mathfrak{B}_k=B(w;r_w\mathrm{R}_0^k/2)$.  We introduce the following crossing times.
\beq
&&\tau_{r,k,1}=\inf\{j>n_w: \ga_w[n_w,j]\subseteq \mathfrak{B}_k, \ga_w(j+1)\notin \mathfrak{B}_k\},\\
&&t_{r,k,1}=\sup\{j<\tau_{r,k,1}: \ga_w[j,\tau_{r,k,1}]\subseteq \mathfrak{B}_k\setminus\mathfrak{\hat A}_{k-1},\ga_w(j-1)\in\mathfrak{\hat A}_{k-1}\},\\
&&t_{l,k,1}=\inf\{j<n_w: \ga_w[j,n_w]\subseteq \mathfrak{A}_k,\ga_w(j-1)\notin\mathfrak{A}_{k}\},\\
&&\tau_{l,k,1}=\sup\{j<n_w: \ga_w[t_{l,k,1},j]\subseteq \mathfrak{A}_k\setminus\mathfrak{\hat A}_{k-1},\ga_w(j+1)\in \mathfrak{\hat A}_{k-1}\}.
\eeq
If $\ga_w$ is replaced by $\ga_v$ in the above definitions, then we will add one more subscript $v$ in these notation. For example, $\mathfrak{A}_{v,k}=B(v;r_v\mathrm{R}_0^k)$, $\tau_{v,r,k,1}=\inf\{j>n_v: \ga_v[n_v,j]\subseteq \mathfrak{B}_{v,k}, \ga_v(j+1)\notin \mathfrak{B}_{v,k}\}$.

So $\ga_w[t_{r,k,1}.\tau_{r,k,1}]$ is the first part of $\ga_w[n_w,\infty)$ which connects $\p\mathfrak{\hat A}_{k-1}$ and $\p\mathfrak{B}_k$, and $\ga_w[t_{l,k,1},\tau_{l,k,1}]$ is the last part of $\ga_w[0,n_w]$ which connects $\p\mathfrak{\hat A}_{k-1}$ and $\p\mathfrak{A}_k$. Similarly, we can define $\tau_{r,k,2}$, $t_{r,k,2}$, $\tau_{l,k.2}$ and $t_{l,k,2}$ such that $\tau_{r,k,2}>t_{r,k,2}>\tau_{r,k,1}$, $t_{l,k,1}>\tau_{l,k,2}>t_{l,k,2}$, $\ga_w(t_{r,k,2})$ and $\ga_w(\tau_{l,k,2})$ are close to $\p\mathfrak{\hat A}_{k-1}$, $\ga_w(\tau_{r,k,2})$  is close to $\p\mathfrak{B}_k$  and $\ga_w(t_{l,k,2})$ is close to $\p\mathfrak{A}_k$,
$\ga_w[t_{r,k,2},\tau_{r,k,2}]$ is the second part of $\ga_w[n_w,\infty)$ which connects $\p\mathfrak{\hat A}_{k-1}$ and $\p\mathfrak{B}_k$, and $\ga_w[t_{l,k,2},\tau_{l,k,2}]$ is the second-to-last part of $\ga_w[0,n_w]$ which connects $\p\mathfrak{\hat A}_{k-1}$ and $\p\mathfrak{A}_k$. Of course, $\tau_{r,k,2}$, $t_{r,k,2}$, $\tau_{l,k.2}$ or $t_{l,k,2}$ may not exist. To be specific, we provide the definition of $\tau_{l,k.2}$ and $t_{l,k,2}$  as follows.
\beqn
&&\hat\tau_{l,k,2}=\sup\{j<t_{l,k,1}:\ga_w(j)\in\mathfrak{\hat A}_{k-1}\},\nonumber\\
&&t_{l,k,2}=\inf\{j<\hat\tau_{l,k,2}: \ga_w[j,\hat\tau_{l,k,2}]\subseteq \mathfrak{A}_k,\ga_w(j-1)\notin \mathfrak{A}_k\}, \label{t-lk2}\\
&&\tau_{l,k,2}=\sup\{j<n_w: \ga_w[t_{l,k,2},j]\subseteq \mathfrak{A}_k\setminus\mathfrak{\hat A}_{k-1},\ga_w(j+1)\in\mathfrak{\hat A}_{k-1}\}. \label{tau-lk2}
\eeqn

For each $k\geq 0$, we define $\mathcal{H}_{k,0}=\{$there is only one crossing of $\mathfrak{A}_k\setminus\mathfrak{\hat A}_{k-1}$ by $\ga_w[0,n_w]$ and there are at most two  crossings of $\mathfrak{B}_k\setminus\mathfrak{\hat A}_{k-1}$ by $\ga_w[n_w,\infty]$, or there are at most two  crossings of $\mathfrak{A}_k\setminus\mathfrak{\hat A}_{k-1}$ by $\ga_w[0,n_w]$ and there is only one  crossing of $\mathfrak{B}_k\setminus\mathfrak{\hat A}_{k-1}$ by $\ga_w[n_w,\infty]\}$.
 By \eqref{six}, we know that (In this subsection the complement is taken within the set $\{\omega:w\in \ga_w(\omega)\}$.)
\beqn
\Pro(\mathcal{H}_{k,0}^c)\preceq \mathrm{R}_0^{-2-\alpha}. \label{Hk0}
\eeqn
Under $\mathcal{H}_{k,0}$, define $\ga_w[t_{l,k},\tau_{l,k}]$ as the first crossing of $\mathfrak{A}_k\setminus\mathfrak{\hat A}_{k-1}$ by $\ga_w[0,n_w]$, and define $\ga_w[t_{r,k},\tau_{r,k}]$ as the first crossing of $\mathfrak{B}_k\setminus\mathfrak{\hat A}_{k-1}$ by $\ga_w[n_w,\infty)$. In other words, $t_{l,k}=t_{l,k,2}$, $\tau_{l,k}=\tau_{l,k,2}$ if $t_{l,k,2}$ and $\tau_{l,k,2}$ exist; $t_{r,k}=t_{r,k,1}$, $\tau_{r,k}=\tau_{r,k,1}$.

Define $\mathcal{H}_{k,1}=\{\ga_w^R[t_{r,k},\tau_{r,k}]$ crosses $\ga_w[t_{l,k},\tau_{l,k}]\}$.  Lemma \ref{rotation-event} implies that
\beqn
 \Pro(\mathcal{H}_{k,1}^c)\preceq \mathrm{R}_0^{-2}. \label{Hk1}
 \eeqn

Under $\mathcal{H}_{k,1}$, define
\beqn
\mathrm{s}_{r,k}&=&\min\{j \in [t_{r,k},\tau_{r,k}]: \ga_w^R[j,j+1]\subseteq_c \ga_w[t_{l,k},\tau_{l,k}]\}, \label{rmsr}\\
 \mathrm{s} _{l,k}&=&\min\{j\in [t_{l,k},\tau_{l,k}]: \ga_w(j)=\ga_w^R(\mathrm{s}_{r,k})\}.\label{rmsl}
\eeqn

To proceed, we also need
 \beq
 &&\mathfrak{s}_{l,k}=\min\{j \in [t_{l,k},\tau_{l,k}]: \ga_w^R[j-1,j]\subseteq_c \ga_w[t_{r,k},\tau_{r,k}]\}, \ \mathfrak{s}_{r,k}=\min\{j:\ga_w(j)=\ga_w^R(\mathfrak{s}_{l,k})\}.
 \eeq
Next we consider
$$\mathcal{H}_{k,2}=\{\ga_w[0,\mathrm{s}_{l,k}]\cap\ga_w[n_w,\mathrm{s}_{r,k}]=\emptyset, \ \ga_w[\mathfrak{s}_{r,k},\infty)\cap\ga_w[\mathfrak{s}_{l,k},n_w]=\emptyset\},$$ and
$\mathcal{H}_{k,3}=\{\omega_w(E_{\ga_w(\mathrm{s}_{l,k})})=\omega_w(E_{\ga_w(\mathfrak{s}_{l,k})})\}$.
 \begin{lemma}\label{Hk23}
If $k\geq 10(\al+3)\al^{-2}$, then we have
$$
\Pro(\mathcal{H}_{k,2}^c)\preceq \mathrm{R}_0^{-2}, \ \Pro(\mathcal{H}_{k,3}^c)\preceq \mathrm{R}_0^{-2}.
$$
\end{lemma}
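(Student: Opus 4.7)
The plan is to translate each event into the occurrence of extra disjoint arms at a carefully chosen random point in $\mathfrak{A}_k\setminus\mathfrak{\hat A}_{k-1}$, and then combine the polynomial arm estimates \eqref{four}--\eqref{six} with a dyadic decomposition of this annulus and a union bound over candidate positions.

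For $\Pro(\mathcal{H}_{k,2}^c)$, I would assume without loss of generality that $\ga_w[0,\mathrm{s}_{l,k}]\cap \ga_w[n_w,\mathrm{s}_{r,k}]\neq \emptyset$ and fix an intersection medial vertex $z\in \mathfrak{B}_k$. Then $\ga_w$ visits $z$ at two distinct times $j_1<n_w<j_2$ and uses all four medial edges at $z$, which forces four disjoint arms of alternating types to emanate from $z$; in any sub-annulus $B(z;2\de,d_z)$ that stays clear of both $w$ and $\p \mathfrak{A}_k$ this is exactly an $A_{0101}$ event with probability $\preceq (\de/d_z)^{1+\alpha_2}$ by \eqref{four}. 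In addition, the piece of $\ga_w[0,j_1]$ must exit $\mathfrak{A}_k$ and one half of $\ga_w[j_1,j_2]$ must reach $w$, gaining two further one-arm factors of the form $(d_z/r_w\mathrm{R}_0^k)^{\alpha_1}$ and $(d_z/r_w\mathrm{R}_0^{k-1})^{\alpha_1}$ by \eqref{one}; these are combined via the van den Berg--Kesten--Reimer inequality and then summed over $z$ through a dyadic decomposition of $\mathfrak{A}_k\setminus\mathfrak{\hat A}_{k-1}$ into sub-annuli centered at $w$. The hypothesis $k\geq 10(\al+3)\al^{-2}$ absorbs the residual logarithmic and polynomial factors from this summation and yields the bound $\preceq \mathrm{R}_0^{-2}$.

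For $\Pro(\mathcal{H}_{k,3}^c)$, the defining inequality $\omega_w(E_{\ga_w(\mathrm{s}_{l,k})})\neq \omega_w(E_{\ga_w(\mathfrak{s}_{l,k})})$ forces $\ga_w(\mathrm{s}_{l,k})\neq \ga_w(\mathfrak{s}_{l,k})$, and this in turn means that $\ga_w^R$ crosses $\ga_w[t_{l,k},\tau_{l,k}]$ at two distinct medial edges: the minimum indexed on $\ga_w^R$ (which produces $\mathrm{s}_{l,k}$) and the minimum indexed on $\ga_w$ (which produces $\mathfrak{s}_{l,k}$). At such an extra crossing point $z'$, both $\ga_w$ and $\ga_w^R$ (the $180^\circ$-rotated copy of $\ga_w$) pass through $z'$, so $\ga_w$ itself visits both $z'$ and its rotational pre-image; together with the mismatch of the two edge statuses this forces at least a five-arm event $A_{01011}$ around $z'$, and in the generic configuration a six-arm event $A_{010101}$. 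Applying \eqref{five} and \eqref{six} in place of \eqref{four} and repeating the dyadic summation gives the same bound $\preceq \mathrm{R}_0^{-2}$.

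The main obstacle in both cases is the combinatorial and topological bookkeeping: one must verify that the arms produced by the double visits and the edge-status mismatches are genuinely vertex-disjoint, exploiting the fact that $\ga_w$ makes $\pm \pi/2$ turns at every medial vertex and that the $180^\circ$-rotation carries one crossing to the other. Once this disjointness is established, the final summation is a routine geometric-series estimate controlled by the lower bound on $k$.
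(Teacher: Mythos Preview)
Your proposal has a genuine arithmetic gap for $\mathcal{H}_{k,2}^c$ and a topological misunderstanding for $\mathcal{H}_{k,3}^c$.

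For $\mathcal{H}_{k,2}^c$: your four-arm event at $z$ carries exponent $1+\alpha$, while the union bound over the $\asymp (r_w\mathrm{R}_0^{k}/\de)^{2}$ candidate positions costs exponent $2$. Your two extra one-arm factors live at the \emph{macroscopic} scale (from $d_z\gtrsim r_w\mathrm{R}_0^{k-1}$ to $r_w\mathrm{R}_0^{k}$), so they only contribute $\mathrm{R}_0^{-O(\alpha)}$ and cannot close the $(r_w\mathrm{R}_0^{k}/\de)^{1-\alpha}$ deficit; the product diverges rather than giving $\mathrm{R}_0^{-2}$. Moreover, the intersection point $z$ need not lie in $\mathfrak{A}_k\setminus\mathfrak{\hat A}_{k-1}$: since $\ga_w[n_w,\mathrm{s}_{r,k}]$ starts at $w$, the point $z$ can sit arbitrarily close to $w$ or to $\ga_w(\mathrm{s}_{l,k}),\ga_w(\mathrm{s}_{r,k})$. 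The paper treats these positions separately (a near/far dichotomy on $\min(|z-\ga_w(\mathrm{s}_{r,k})|,|z-\ga_w(\mathrm{s}_{l,k})|)$), obtains a \emph{five}-arm event at $z$ in the far case, and --- this is the missing idea --- runs an iterative stopping-time argument conditional on the already-explored edges to force an additional one-arm event at the \emph{microscopic} scale $(\de,r_w\mathrm{R}_0^{k(1-\alpha/10)})$. It is this conditional one-arm, with exponent in $\de$, that beats the union bound; macroscopic one-arm factors obtained via BK cannot.

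For $\mathcal{H}_{k,3}^c$: a crossing of $\ga_w$ with $\ga_w^R$ at $z'$ does \emph{not} produce a multi-arm event at $z'$. It means $\ga_w$ passes through $z'$ once and through the rotated point $(z')^R$ once; these are distinct medial vertices, so generically you see only the ordinary two arms at $z'$. The edge-status mismatch $\omega_w(E_{\ga_w(\mathrm{s}_{l,k})})\neq \omega_w(E_{\ga_w(\mathfrak{s}_{l,k})})$ concerns two different edges, not a single pivotal. The paper's mechanism is different in kind: it shows that $\mathcal{H}_{k,3}^c$ forces $\ga_w[\mathfrak{s}_{r,k},\tau_{r,k}]$ to cross $\ga_w^R[t_{r,k},\mathfrak{s}_{r,k}]$, which makes $\mathfrak{s}_{l,k}$ a stopping time for the reversed path given $\ga_w[t_{r,k},S_{k,1}]$; it then runs an iterative exploration $(\tau_{k,m},\hat\tau_{k,m})$ in which each step fixes one edge status and halves the remaining probability, and termination forces a fresh dual-open arm from a $\de$-neighbourhood out to $\partial\mathfrak{A}_k$. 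Summing the geometric series yields $(r_w\mathrm{R}_0^{k}/\de)^{-\alpha}\preceq \mathrm{R}_0^{-2}$ for $k$ in the stated range. No five- or six-arm estimate enters this half of the lemma.
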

\begin{proof}
Under $\mathcal{H}_{k,0}\cap\mathcal{H}_{k,1}$, we consider $\{\ga_w[0,\mathrm{s}_{l,k}]\cap\ga_w[n_w,\mathrm{s}_{r,k}]\not=\emptyset\}$. Let
$$
t_{k,1}=\max\{j<\mathrm{s}_{l,k}: \ga_w(j)\in \ga_w[n_w,\mathrm{s}_{r,k}]\}, \ \mathrm{T}_{k,1}=\inf\{j>t_{r,k}:\ga_w(j)=\ga_w(t_{k,1})\}.
$$
Now suppose
\beqn
\min\big(|\ga_w(t_{k,1})-\ga_w(\mathrm{s}_{r,k})|,|\ga_w(t_{k,1})-\ga_w(\mathrm{s}_{l,k})|\big) \geq r_w\mathrm{R}_0^{k(1-\al/10)}. \label{1suppose}
\eeqn
Then in the annulus $B(\ga_w(t_{k,1});2\de,r_w\mathrm{R}_0^{k(1-\al/10)}/2)$, there are five disjoint arms connecting $\p B(\ga_w(t_{k,1});2\de)$ and $\p B_{k,1}$, where $B_{k,1}=B(\ga_w(t_{k,1});r_w\mathrm{R}_0^{k(1-\al/10)}/2)$. Considering the possible positions of $\ga_w(t_{k,1})$, we have the following probability estimate for this event by \eqref{five},
\beqn
\frac{r_w\mathrm{R}_0^k\cdot r_w\mathrm{R}_0^{k-1}}{\de^2}\big(\frac{\de}{r_w\mathrm{R}_0^{k(1-\al/10)}}\big)^2. \label{fivearm1}
\eeqn
Without loss of generality, assume that $\omega(E_{\ga_w(t_{k,1})})=1$.
Next, we define
$s_{k,1}=\inf\{j>t_{k,1}:\ga_w[t_{k,1},j)\subseteq B_{k,1},\ga_w(j)\notin B_{k,1}\}$, $\mathrm{S}_{k,1}=\max\{j<\mathrm{T}_{k,1}:\ga_w(j,\mathrm{T}_{k,1}]\subseteq B_{k,1},\ga_w(j)\notin B_{k,1}\}$. Let $v_{l,1}$ be the any point from $\ga_w[s_{k,1}-1,s_{k,1}]\cap \p B_{k,1}$; and let $v_{r,1}$ be any point from $\ga_w[\mathrm{S}_{k,1},\mathrm{S}_{k,1}+1]\cap \p B_{k,1}$. Denote the part of $\p B_{k,1}$ from $v_{l,1}$ to $v_{r,1}$ in the counterclockwise order by $\p_{k,1}$. Define $\tilde s_{k,1}=\inf\{j\geq s_{k,1}: \omega(E_{\ga_w(j)})=1,\ga_w(j)\notin B_{k,1} \}$ and
$\hat s_{k,1}=\inf\{j>\tilde s_{k,1}:\ga_w(j)\in B_{k,1},\ga_w[j-1,j]$ crosses $\p_{k,1}\}$. Let $v_{l,2}$ be any point from $\ga_w[\hat s_{k,1}-1,\hat s_{k,1}]\cap \p_{k,1}$.  Denote the part of $\p_{k,1}$ from $v_{l,2}$ to $v_{r,1}$ in the counterclockwise order by $\hat \p_{k,1}$.

Iteratively, after $s_{k,n}$, $\tilde s_{k,n}$, $\hat s_{k,n}$ and $\hat \p_{k,n}$ are defined, $s_{k,n+1}$. $\tilde s_{k,n+1}$ and $\hat s_{k,n+1}$  can be defined as $s_{k,n+1}=\inf\{j>\hat s_{k,n}: \ga_w(j-1)\in B_{k,1},$ the line segment between $\ga_w(j-1)$ and $\ga_w(j)$ crosses $\hat \p_{k,n}\}$, $\tilde s_{k,n+1}=\inf\{j\geq s_{k,n+1}: \omega(E_{\ga_w(j)})=1,\ga_w(j)\notin B_{k,1} \}$, $\hat s_{k,n+1}=\inf\{j>\tilde s_{k,n+1}:\ga_w(j)\in B_{k,1},\ga_w[j-1,j]$ crosses $\hat\p_{k,n}\}$. Also $\hat \p_{k,n+1}$ is the part of $\hat \p_{k,n}$ from $v_{l,n+2}$ to $v_{r,1}$ in the counterclockwise order, where $v_{l,n+2}$ is any point from $\ga_w[\hat s_{k,n+1}-1,\hat s_{k,n+1}]\cap \hat \p_{k,n}$.
Here the infimum of empty set is understood as infinity.
Due to \eqref{1suppose}, if $\hat s_{k,n+1}=\infty$, we can see that there is one arm from $\ga_w(\tilde s_{k,n})$ to $\p B(\ga_w(\tilde s_{k,n});r_w\mathrm{R}_0^{k(1-\al/10)}/2-5\de)$.  This arm is also disjoint from the edges touched or crossed by $\ga_\de[n_w,\tau_{r,k}]$ by the definition of $t_{k,1}$.  Noting that the status of $E_{\ga_w(\tilde s_{k,n})}$ is specified, we deduce by \eqref{one} that
\beqn
\Pro(\tilde s_{k,1}>0,\cdots,\tilde s_{k,n}>0,\hat s_{k,n+1}=\infty)\preceq 2^{-(n-1)}(r_w\mathrm{R}^{k(1-\al/10)}/\de)^{-\al}. \label{onearm1}
\eeqn
Combining \eqref{fivearm1} and \eqref{onearm1}, we obtain that
\beqn
\Pro\big(\mathcal{H}_{k,0}\cap\mathcal{H}_{k,1}\cap \{\ga_w[0,\mathrm{s}_{l,k}]\cap\ga_w[n_w,\mathrm{s}_{r,k}]\not=\emptyset\}\big) \preceq \mathrm{R}_0^{-2}, \label{1stsuppose}
\eeqn
if \eqref{1suppose} holds and $k\geq (4\al/5-\al^2/10)^{-1}$. (Since $\al\leq 1$, $(4\al/5-\al^2/10)^{-1}>0$.)

It remains to consider the case where
\beqn
\min\big(|\ga_w(t_{k,1})-\ga_w(\mathrm{s}_{r,k})|,|\ga_w(t_{k,1})-\ga_w(\mathrm{s}_{l,k})|\big) < r_w\mathrm{R}_0^{k(1-\al/10)}. \label{2suppose}
\eeqn
Let $d=\min\big(|\ga_w(t_{k,1})-\ga_w(\mathrm{s}_{r,k})|,|\ga_w(t_{k,1})-\ga_w(\mathrm{s}_{l,k})|\big) $. Then in $B(\ga_w(t_{k,1});d,r_w\mathrm{R}_0^{k-1})$, there are four disjoint arms connecting $\p B(\ga_w(t_{k,1});d)$ and $\p B(\ga_w(t_{k,1});r_w\mathrm{R}_0^{k-1})$. Noting that there are five disjoint arms connecting the respective inner and outer boundaries of $B(\ga_w(t_{k,1});2\de,d)$, we have
\beqn
&&\Pro\big(\mathcal{H}_{k,0}\cap\mathcal{H}_{k,1}\cap \{\ga_w[0,\mathrm{s}_{l,k}]\cap\ga_w[n_w,\mathrm{s}_{r,k}]\not=\emptyset\}\big) \nonumber\\
&\preceq& \frac{dr_w\mathrm{R}_0^k}{\de^2} \big(\frac{\de}{d} \big)^2 \big(\frac{d}{r_w\mathrm{R}_0^{k-1}} \big)^{1+\al} \nonumber\\
&\preceq&\mathrm{R}_0^{\al (1-k\al/10)+1} \preceq \mathrm{R}_0^{-2} \label{2ndsuppose}
\eeqn
if  \eqref{2suppose} holds and $k\geq 10(\al+3)\al^{-2}$. Here  $dr_w\mathrm{R}_0^k/\de^2$ up to a multiple contant is the upper bound for the  possible number of the  positions of $\ga_w(t_{k,1})$ since $\dist\big(\ga_w(\mathrm{s}_{r,k}),w)\leq r_w\mathrm{R}_0^k$.    Combining \eqref{1stsuppose} and \eqref{2ndsuppose} and noting $(4\al/5-\al^2/10)^{-1}<10(\al+3)\al^{-2}$, we can obtain
$$
\Pro\big(\mathcal{H}_{k,0}\cap\mathcal{H}_{k,1}\cap \{\ga_w[0,\mathrm{s}_{l,k}]\cap\ga_w[n_w,\mathrm{s}_{r,k}]\not=\emptyset\}\big)\preceq \mathrm{R}_0^{-2}
$$
if $k\geq 10(\al+3)\al^{-2}$. This completes the proof for $\mathcal{H}_{k,2}^c$.

 Now we turn to the estimate of $\Pro(\mathcal{H}_{k,3}^c)$. Without loss of generality, assume that $\omega_w(E_{\ga_w(\mathrm{s}_{l,k})})=0$, $\omega_w(E_{\ga_w(\mathfrak{s}_{l,k})})=1$.  Note that under $\mathcal{H}_{k,3}^c$, $\ga_w[\mathfrak{s}_{r,k},\tau_{r,k}]$ crosses $\ga_w^R[t_{r,k},\mathfrak{s}_{r,k}]$. So we can define
 $$
 S_{k,1}=\inf\{j>t_{r,k}: \ga_w[j,j+1]\subseteq \ga_w^R[t_{r,k},\tau_{r,k}]\}, \ T_{k,1}=\max\{j<S_{k,1}: \ga_w(j)=\ga_w(S_{k,1})\}.
 $$
 Then $S_{k,1}>\mathfrak{s}_{r,k}$ since $\ga_w^R[t_{r,k},T_{k,1}]$ doesn't cross $\ga_w[\mathfrak{s}_{l,k},\tau_{l,k}]$. In other words, if $\ga_w[t_{r,k}. S_{k,1}]$ is given, $\mathfrak{s}_{l,k}$ is a stopping time for the reversed exploration path process stating from $\ga_w(\tau_{l,k})$.
 In the meantime, we introduce
 \beq
 \tau_{k,1}&=&\max\{j<\mathfrak{s}_{l,k}:\ga_w[j-1,j]\subseteq_c \ga_w^R[t_{r,k},\mathfrak{s}_{r,k}]\}, \\
 \hat \tau_{k,1}&=&\max\{j<\tau_{k,1}: \ga_w(j)\in \ga_w(\tau_{k,1},\mathfrak{s}_{l,k}), \omega_w(E_{ \ga_w(j)})=0\}.
 \eeq
 Iteratively, given $\tau_{k,m}$ and $\hat \tau_{k,m}$, we define $\tau_{k,m+1}$ and $\tau_{k,m+1}$ by
\beq
 \tau_{k,m+1}&=&\max\{j<\hat \tau_{k,m}: \ga_w[j-1,j]\subseteq_c \ga_w^R[t_{r,k},\mathfrak{s}_{r,k}]\},\\
\hat \tau_{k,m+1}&=&\max\{j<\tau_{k,m+1}:\ga_w(j)\in \ga_w(\tau_{k,m+1},\mathfrak{s}_{l,k}), \omega_w(E_{ \ga_w(j)})=0\}.
 \eeq
  Here the maximum of empty set is understood as zero too. An important observation is that $\tau_{k,m}>0$ and $\tau_{k,m+1}=0$ imply that there is a dual-open arm from $\p B(\ga_w( \tau_{k,m});\de)$ to $\p \mathfrak{A}_k$.
  Noting that $\omega_w(E_{\ga_w(\tau_{k,m})})=0$, we deduce by \eqref{one} that
\beq
\Pro(\tau_{k,1}>0,\cdots,\tau_{k,m}>0,\tau_{k,m+1}=0|\ga_w[t_{r,k}. S_{k,1}]\cup \ga_w[\mathfrak{s}_{l,k},\tau_{l,k}])\preceq 2^{-m}(r_w\mathrm{R}^k/\de)^{-\al}.
\eeq
This directly implies $\Pro(\mathcal{H}_{k,3}^c)\preceq \mathrm{R}_0^{-2}$ if $k\geq 10(\al+3)\al^{-2}$.
\end{proof}

Define
\beqn
\mathrm{s}_{r,k,2}&=&\min\{j \in [n_w,\mathrm{s}_{r,k}]: \ga_w^R[j,j+1]\subseteq_c \ga_w[0,\mathrm{s}_{l,k}]\}, \label{rmsr2}\\
 \mathrm{s} _{l,k,2}&=&\max\{j\in [0,\mathrm{t}_{k,l}): \ga_w(j)=\ga_w^R(\mathrm{s}_{r,k,2})\},\label{rmsl2}\\
 \mathfrak{s}_{l,k,2}&=&\max\{j \in [ \mathrm{s} _{l,k,2},n_w]: \ga_w^R[j-1,j]\subseteq_c \ga_w[\mathrm{s}_{r,k,2},\infty)\},  \label{fraksl2}\\
 \mathfrak{s}_{r,k,2}&=&\min\{j\in [\mathrm{s}_{r,k,2},\infty):\ga_w(j)=\ga_w^R(\mathfrak{s}_{l,k})\}. \label{fraksr2}
\eeqn

Now we can introduce
\beq
\mathcal{H}_{k,4}&=&\{\omega_w(E_{\ga_w(\mathrm{s}_{l,k,2})})=\omega_w(E_{\ga_w(\mathfrak{s}_{r,k,2})})\}.
\eeq
which satisfies the following lemma.
\begin{lemma} \label{Hk4}
If $k\geq 10(\al+3)\al^{-2}$,
\beqn
&&\Pro(\mathcal{H}_{k,4}^c)\preceq \mathrm{R}_0^{-2}.
\eeqn
\end{lemma}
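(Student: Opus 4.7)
The plan is to imitate the argument for $\mathcal{H}_{k,3}^c$ in Lemma \ref{Hk23}, with the first-level crossing times replaced by their second-level analogues. The four times $\mathrm{s}_{r,k,2}, \mathrm{s}_{l,k,2}, \mathfrak{s}_{l,k,2}, \mathfrak{s}_{r,k,2}$ are nested inside the earlier segment $\ga_w[0,\mathrm{s}_{l,k}]$ and its rotated partner $\ga_w^R[\mathrm{s}_{r,k,2},\mathrm{s}_{r,k}]$ in precisely the same way that $\mathrm{s}_{r,k}$, $\mathrm{s}_{l,k}$, $\mathfrak{s}_{l,k}$, $\mathfrak{s}_{r,k}$ are nested inside the first-level crossings, so the entire combinatorial apparatus from Lemma \ref{Hk23} should carry over.

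First I would reduce to the event $\mathcal{H}_{k,0}\cap\mathcal{H}_{k,1}\cap\mathcal{H}_{k,2}$, whose complement already has probability at most $\mathrm{R}_0^{-2}$ by \eqref{Hk0}, \eqref{Hk1} and Lemma \ref{Hk23}; on this event all four second-level times are well-defined. Then, without loss of generality, assume $\omega_w(E_{\ga_w(\mathrm{s}_{l,k,2})})=0$ and $\omega_w(E_{\ga_w(\mathfrak{s}_{r,k,2})})=1$. Under $\mathcal{H}_{k,4}^c$ this status mismatch forces $\ga_w[\mathfrak{s}_{r,k,2},\mathrm{s}_{r,k}]$ to cross $\ga_w^R[\mathrm{s}_{r,k,2},\mathfrak{s}_{r,k,2}]$, which is what drives the iteration below.

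Next I would introduce stopping times paralleling $\tau_{k,m}$ and $\hat\tau_{k,m}$ from the proof of $\mathcal{H}_{k,3}^c$, namely
\beq
\tau_{k,1}'&=&\max\{j<\mathfrak{s}_{l,k,2}:\ \ga_w[j-1,j]\subseteq_c \ga_w^R[\mathrm{s}_{r,k,2},\mathfrak{s}_{r,k,2}]\},\\
\hat\tau_{k,1}'&=&\max\{j<\tau_{k,1}':\ \ga_w(j)\in\ga_w(\tau_{k,1}',\mathfrak{s}_{l,k,2}),\ \omega_w(E_{\ga_w(j)})=0\},
\eeq
and iteratively $\tau_{k,m+1}'=\max\{j<\hat\tau_{k,m}':\ \ga_w[j-1,j]\subseteq_c \ga_w^R[\mathrm{s}_{r,k,2},\mathfrak{s}_{r,k,2}]\}$, $\hat\tau_{k,m+1}'=\max\{j<\tau_{k,m+1}':\ \ga_w(j)\in\ga_w(\tau_{k,m+1}',\mathfrak{s}_{l,k,2}),\ \omega_w(E_{\ga_w(j)})=0\}$. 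The key observation is the same as in Lemma \ref{Hk23}: the event $\{\tau_{k,m}'>0,\ \tau_{k,m+1}'=0\}$ forces a dual-open one-arm from $\partial B(\ga_w(\tau_{k,m}');\de)$ all the way out to $\partial\mathfrak{A}_k$, disjoint from the already-exposed configuration. Conditioning on $\ga_w[\mathrm{s}_{r,k,2},\infty)$ together with $\ga_w[\mathfrak{s}_{l,k,2},\mathrm{s}_{l,k}]$ leaves the complementary randomness intact, so \eqref{one} yields
$$
\Pro(\tau_{k,1}'>0,\ldots,\tau_{k,m}'>0,\tau_{k,m+1}'=0)\preceq 2^{-m}(r_w\mathrm{R}_0^k/\de)^{-\al},
$$
and summing over $m$ gives the bound $\mathrm{R}_0^{-k\al}$, which is $\preceq \mathrm{R}_0^{-2}$ once $k\ge 10(\al+3)\al^{-2}$.

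The main obstacle I anticipate is justifying that the independence needed for the iteration is genuinely available at the second level: because $\mathrm{s}_{r,k,2}$ and $\mathfrak{s}_{r,k,2}$ are themselves defined through crossings with the rotated first-level piece, one has to verify carefully that conditioning on $\ga_w[\mathrm{s}_{r,k,2},\mathrm{s}_{r,k}]\cup \ga_w[\mathfrak{s}_{l,k,2},\mathrm{s}_{l,k}]$ does not inadvertently determine the statuses of the edges that are tested in the iterative scheme. This is the analogue of the stopping-time remark in Lemma \ref{Hk23} (``$\mathfrak{s}_{l,k}$ is a stopping time for the reversed exploration path process''), and should go through because $\mathrm{s}_{l,k,2}<\mathfrak{s}_{l,k,2}<t_{l,k}$ means the second-level times lie strictly earlier in the exploration than the first-level ones, so reversed exploration from $\ga_w(t_{l,k,1})$ reaches them only after the first-level piece has been frozen. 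Everything else—including the five-arm factor coming from the forced crossing pattern near $\ga_w(\tau_{k,1}')$ and the bookkeeping of positions on $\partial\mathfrak{A}_k$—is a direct transcription from Lemma \ref{Hk23}.
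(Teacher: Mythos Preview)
Your approach is essentially the same as the paper's, which simply records that $\ga_w(\mathfrak{s}_{l,k,2})\in\mathfrak{B}_k\setminus\mathfrak{\hat A}_{k-1}$ under $\mathcal{H}_{k,1}$ and then says the argument for $\Pro(\mathcal{H}_{k,3}^c)$ carries over verbatim. You should make that geometric observation explicit, since it is precisely what ensures your dual-open arm from $\p B(\ga_w(\tau_{k,m}');\de)$ reaches $\p\mathfrak{A}_k$ at scale $r_w\mathrm{R}_0^k$, giving the bound $(r_w\mathrm{R}_0^k/\de)^{-\al}\le\mathrm{R}_0^{-2}$.
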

\begin{proof}
Since $\ga_w(\mathfrak{s}_{l,k,2})$ is in $\mathfrak{B}_k\setminus\mathfrak{\hat A}_{k-1}$ under $\mathcal{H}_{k,1}$, we can follow the argument leading to $\Pro(\mathcal{H}_{k,3}^c)$ to deduce the estimate. So we omit the details.
\end{proof}

Finally, note that under $\cap_{j=0}^4\mathcal{H}_{k,j}$,
\beqn
W_{\ga_w[0,\mathrm{s}_{l,k,2}]\cup\ga_w^R[n_w,\mathrm{s}_{r,k,2}]}(e_a,C)=W_{\ga_w[0,\mathrm{s}_{l,k}]\cup\ga_w^R[n_w,\mathrm{s}_{r,k}]}(e_a,C), \label{equalwinding}
\eeqn
where $e_a$ is the unit vector starting at $a_\de^\diamond$ and pointing towards the inside of $\Omega_\de$.
 Otherwise, $\mathcal{H}_{k,4}^c$ holds.

We are ready to put all $\mathcal{H}_{k,j}$ together to form $\mathcal{H}_k=\cap_{j=0}^4\mathcal{H}_{k,j}$. It follows from \eqref{Hk0}, \eqref{Hk1} and Lemmas \ref{Hk23}-\ref{Hk4} that
$$
\Pro(\mathcal{H}_k^c)\leq C \mathrm{R}_0^{-2}
$$
if $k\geq 10(\al+3)\al^{-2}$, where $C$ is an absolute constant. Note that in  \eqref{Hk0}, \eqref{Hk1}  and Lemmas \ref{Hk23}-\ref{Hk4}, we only consider configurations inside $\mathfrak{A}_k\setminus\mathfrak{A}_{k-1}$. Thus, by independence, we have
\beq
\Pro\big(\bigcap_{10(\al+3)\al^{-2}\leq m\leq m_w}  \mathcal{H}_m^c\big) &\leq& (C\mathrm{R}_0^{-2})^{m_w-10(\al+3)\al^{-2}}\\
&\preceq& \big(r_w/\min(d_v,d_w)\big)^{2-\varepsilon},
\eeq
where $\varepsilon$ can be any arbitrary small positive number. We can choose $\varepsilon$ such that $\varepsilon<\epsilon/100$.

So, after ignoring an event with probability at most of the order $\big(r_w/\min(d_v,d_w)\big)^{2-\varepsilon}$, we can suppose $\bigcup_{10(\al+3)\al^{-2}\leq m\leq m_w}  \mathcal{H}_m$ holds. Let $k$ be the first $m$ such that $\mathcal{H}_m$ holds. Then under $\mathcal{H}_k$, we can construct the mapping $\omega_w \rightarrow\omega_v$
as follows.
 \beqn
\omega_v(E)&=&\omega_w(E^R), \ \omega_v(E^R)=\omega_w(E), \ \text{if} \ v_E\in \ga_w[n_w,\mathrm{s}_{r,k,2}]\cup B(w;r_w); \label{caseI1} \\
\omega_v(E)&=&\omega_w(E), \ \text{otherwise}. \nonumber
\eeqn

Note that the reflection $\ga_w^R(n_w,\mathrm{s}_{r,k,2}]$ around $w+\sqrt{2}e^{-i\pi/4}\de/4$ is equivalent to the rotation of $\ga_w(n_w,\mathrm{s}_{r,k,2}]$ through $180$ degrees around $w+\sqrt{2}e^{-i\pi/4}\de/4$, and in $\mathfrak{A}_k\setminus\mathfrak{A}_{k-1}$ either $\ga_w[0,n_w]$ or $\ga_w[n_w,\infty)$ have only one part which connects $\p\mathfrak{A}_{k-1}$ and $\p\mathfrak{A}_k$.  These two facts together with \eqref{equalwinding} guarantee that
$$W_{\ga_w}(A_w, e_b)=W_{\ga_v}(D, e_b), \
W_{\ga_w}(D_w, e_b)=W_{\ga_v}(C, e_b).$$

Now let us study the one-to-one property of $\omega_w$ to $\omega_v$. We follow the convention that  a random variable with a superscript $(l)$ is the corresponding random variable defined through $\omega_w^{(l)}$ in this subsection.
Suppose $\omega_w^{(l)}$ is a configuration in which $\ga_w^{(l)}$ passes through $w$, $A_w\in\ga_w^{(l)}, D_w\in \ga_w^{(l)}$, $B_w\notin\ga_w^{(l)}$, $C_w\notin\ga_w^{(l)}$, for $l=1,2$.

\

\noindent
{\bf Proof of the one-to-one property of $\omega_w$ to $\omega_v$}.
Suppose  $\omega_v^{(1)}=\omega_v^{(2)}$.
From the construction of $\omega_v$ from $\omega_w$, we can obtain that
$k^{(1)}=k^{(2)}=k$.
Without loss of generality, suppose $\ga_w^{(1)}[n_w^{(1)},\mathrm{s}_{r,k,2}^{(1)}]\subseteq \ga_w^{(2)}[n_w^{(2)},\mathrm{s}_{r,k,2}^{(2)}]$. If $\ga_w^{(1)}[n_w^{(1)},\mathrm{s}_{r,k,2}^{(1)}]$ is a proper subset of $\ga_w^{(2)}[n_w^{(2)},\mathrm{s}_{r,k,2}^{(2)}]$, then  $\ga_w^{(1)}[\mathrm{s}_{l,k,2}^{(1)},n_w^{(1)}]$ crosses $\ga_w^{(2)}[\mathrm{s}_{l,k,2}^{(2)},n_w^{(2)}]$. Let $\mathrm{v}$ be any crossed point of $\ga_w^{(1)}[\mathrm{s}_{l,k,2}^{(1)},n_w^{(1)}]$  by $\ga_w^{(2)}[\mathrm{s}_{l,k,2}^{(2)},n_w^{(2)}]$. If $\mathrm{v}\notin\ga_v^{(1)}[0,n_v^{(1)}]=\ga_v^{(2)}[0,n_v^{(2)}]$, then $\omega_v^{(1)}(E_\mathrm{v})\not=\omega_v^{(2)}(E_\mathrm{v})$.   If $\mathrm{v}\in\ga_v^{(1)}[0,n_v^{(1)}]=\ga_v^{(2)}[0,n_v^{(2)}]$, then $\omega_v^{(1)}(E_\mathrm{v}^R)\not=\omega_v^{(2)}(E_\mathrm{v}^R)$. Hence, we must have $\ga_w^{(1)}[n_w^{(1)},\mathrm{s}_{r,k,2}^{(1)}]=\ga_w^{(2)}[n_w^{(2)},\mathrm{s}_{r,k,2}^{(2)}]$ in order that $\omega_v^{(1)}=\omega_v^{(2)}$. The above argument also shows that  $\ga_w^{(1)}[\mathrm{s}_{l,k,2}^{(1)},n_w^{(1)}]=\ga_w^{(2)}[\mathrm{s}_{l,k,2}^{(2)},n_w^{(2)}]$. These two results imply that $\ga_w^{(1)}[0,\mathrm{s}_{l,k,2}^{(1)}]=\ga_w^{(2)}[0,\mathrm{s}_{l,k,2}^{(2)}]$ and $\ga_w^{(1)}[\mathrm{s}_{r,k,2}^{(1)},\infty)=\ga_w^{(2)}[\mathrm{s}_{r,k,2}^{(2)},\infty)$. Therefore, $\omega_w^{(1)}=\omega_w^{(2)}$.  This completes the proof. \qed\medskip

Secondly, suppose $\min(d_v,d_w)/r_w$ doesn't tend to infinity as $\de\to 0$. For such $\min(d_v,d_w)$ and $r_w$, we can assume that $\min(d_v,d_w)/r_w\leq \varepsilon_0$ for constant $\varepsilon_0$. Since $\big(r_w/\min(d_v,d_w)\big)^{2-\varepsilon} \geq \varepsilon_0^{-2+\varepsilon}$ is of constant order, we are done.

\

Finally we will prove the following lemma which is just \eqref{Hk1}.

\begin{lemma} \label{rotation-event}
For any $k\geq 1$,
$$
\Pro(\mathcal{H}_{k,1}^c)\preceq \mathrm{R}_0^{-2}.
$$
\end{lemma}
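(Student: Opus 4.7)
Since the standard four-arm bounds \eqref{four} and \eqref{four-a} in the annulus $\mathfrak{B}_k \setminus \mathfrak{\hat A}_{k-1}$ only give $\mathrm{R}_0^{-1-\alpha_2}$, the stronger target $\mathrm{R}_0^{-2}$ must be extracted from a five-arm configuration, in the same spirit as the proofs of Lemmas \ref{Hk23}--\ref{Hk4}.  My plan is to split
$\mathcal{H}_{k,1}^c = \mathcal{H}_{k,0}^c \cup (\mathcal{H}_{k,0} \cap \mathcal{H}_{k,1}^c)$.
The first term is handled directly by \eqref{Hk0}, which gives $\preceq \mathrm{R}_0^{-2-\alpha}$, stronger than needed.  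For the main term, I exploit that the reflection about $p_0 = w + \tfrac{\sqrt 2}{4}\,\de\, e^{-i\pi/4}$ moves every point by at most $\de$, so $\ga_w^R[t_{r,k},\tau_{r,k}]$ is an arc of the same topological type as $\ga_w[t_{r,k},\tau_{r,k}]$ in essentially the same annulus $\mathfrak{B}_k \setminus \mathfrak{\hat A}_{k-1}$, up to a $\de$-shift that is negligible at the scale $r_w\mathrm{R}_0^{k-1}$ once $k\ge 1$.

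The non-crossing of the reflected arc with the left crossing $\ga_w[t_{l,k},\tau_{l,k}]$ then forces the two arcs to lie in disjoint angular sub-annuli of $\mathfrak{B}_k \setminus \mathfrak{\hat A}_{k-1}$.  Reflecting back, this says that the inner endpoints $\ga_w(\tau_{l,k})$ and $\ga_w(t_{r,k})$ lie on a common angular sector of $\p\mathfrak{\hat A}_{k-1}$ whose complementary arc has angular measure close to $\pi$.  From this geometric constraint I can read off four disjoint macroscopic arms: each of the two exploration-path segments (left crossing and right crossing) carries an open cluster on one side and a dual-open cluster on the other, so together they produce two open and two dual-open arms from $\p\mathfrak{\hat A}_{k-1}$ outward in the annulus, in one of the cyclic patterns $0101$ or $0011$.

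To produce the missing fifth arm, I would follow the iterative one-arm-exploration template developed for $\mathcal{H}_{k,2}^c$ in Lemma \ref{Hk23}: condition on $\ga_w[0,n_w] \cup \ga_w[n_w,\tau_{r,k}]$ (which already exhibits the four arms) and track the continuation $\ga_w[\tau_{r,k},\infty)$ as it attempts to leave through the complementary, otherwise-empty angular sector.  The "same-side" constraint forces at least one additional open or dual-open arm from a neighbourhood of $\ga_w(\tau_{r,k})$ out to $\p\mathfrak{B}_k$, which combined with \eqref{one} (or with \eqref{halftwo} if the problematic point is near the boundary) supplies the extra factor $\mathrm{R}_0^{-(1-\alpha_2)}$ on top of the four-arm bound, yielding the target $\mathrm{R}_0^{-2}$ via \eqref{five}.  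The main obstacle is that $\ga_w^R[t_{r,k},\tau_{r,k}]$ is not itself an exploration path in $\omega_w$ and must be treated as a purely topological separator; extracting a genuinely disjoint fifth macroscopic arm therefore demands a careful case split by the scale of the distance between the "joining point" and $w$, analogous to the dichotomy \eqref{1suppose} versus \eqref{2suppose} in Lemma \ref{Hk23}.
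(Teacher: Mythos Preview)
Your proposal contains a fundamental geometric error that invalidates the rest of the argument. The superscript $R$ denotes the \emph{point reflection} (i.e.\ $180^\circ$ rotation) about $p_0=w+\tfrac{\sqrt 2}{4}\de e^{-i\pi/4}$, that is $z^R=2p_0-z$. Hence a point $z$ at distance $d$ from $p_0$ is moved by $2d$, not by $\de$. In particular, for $z\in\ga_w[t_{r,k},\tau_{r,k}]$ one has $|z-w|\asymp r_w\mathrm R_0^{k}$, so $|z^R-z|\asymp r_w\mathrm R_0^{k}$ as well: the reflected arc $\ga_w^R[t_{r,k},\tau_{r,k}]$ lands on the \emph{opposite side} of $w$ in the annulus, not $\de$-close to the original arc. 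Consequently your picture of ``disjoint angular sub-annuli'' with a ``complementary arc of angular measure close to $\pi$'' is incorrect, and the subsequent attempt to manufacture a five-arm event from this geometry does not get off the ground. Even without this error, the four arms you identify are already forced by the event $\{w\in\ga_w\}$ itself and carry no information about $\mathcal H_{k,1}^c$.

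The paper's argument is quite different and does not go through a full-plane five-arm event at all. One takes the horizontal line $\ell=\{\Im z=\Im p_0\}$ through the reflection centre and \emph{folds} the picture: the portion of $\ga_w[\mathrm T_{1l},\mathrm T_{1r}]$ lying below $\ell$ is replaced by its point-reflection about $p_0$, which lands in the upper half-plane. The hypothesis that $\ga_w^R[\mathrm T_{1l},\mathrm T_{2l}]$ does not cross $\ga_w[\mathrm T_{2r},\mathrm T_{1r}]$ (together with the easy reductions $\ga_w[\mathrm T_{1l},\mathrm T_{2l}]\cap\ga_w^R[\mathrm T_{1l},\mathrm T_{2l}]=\emptyset$, etc.) says exactly that after this folding the image of the left crossing and the image of the right crossing stay disjoint. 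Identifying each point of $\ell$ hit by $\ga_w$ with its reflection then produces three disjoint arms in the upper half-plane from $\p B(p_0;\de)$ to $\p B(p_0;R_1)$; these are ``generalised'' arms whose colour may flip at crossings of $\ell$, but the standard derivation of the half-plane three-arm exponent (as in \cite{Nolin}) still applies and yields the bound $(R_2/R_1)^2$, hence $\mathrm R_0^{-2}$. This half-plane folding trick is the key idea you are missing.
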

\begin{proof}
Suppose $w$ is the origin in $\C$ and $E_w$ is in $\R$.
Let $0<R_2<R_1\leq \min(d_v,d_w)$. Define
\beq
&&\mathrm{T}_{1r}=\inf\{j>n_w: \ga_w[n_w,j]\subseteq B(w;R_1), \ga_w(j+1)\notin B(w;R_1)\},\\
&&\mathrm{T}_{1l}=\sup\{j<n_w: \ga_w[j,n_w]\subseteq B(w;R_1), \ga_w(j-1)\notin B(w;R_1)\},\\
&&\mathrm{T}_{2r}=\inf\{j>n_w: |\ga_w(j-1)-w|<R_2, |\ga_w(k)-w|\geq R_2, \forall k\in [j,\mathrm{T}_{1r}]\},\\
&&\mathrm{T}_{2l}=\inf\{j<n_w: |\ga_w(j+1)-w|<R_2, |\ga_w(k)-w|\geq R_2, \forall k\in [\mathrm{T}_{1l},j]\}.
\eeq
We will prove a stronger result,
$$
\Pro\big(\ga_w^R[\mathrm{T}_{1l},\mathrm{T}_{2l}] \ \text{doesn't cross} \ \ga_w[\mathrm{T}_{2r},\mathrm{T}_{1r}]\big)\preceq (R_2/R_1)^2.
$$
If $\ga_w[\mathrm{T}_{1l},\mathrm{T}_{2l}] \cap \ga_w^R[\mathrm{T}_{1l},\mathrm{T}_{2l}]\not=\emptyset$, then $\ga_w^R[\mathrm{T}_{1l},\mathrm{T}_{2l}]$ crosses $ \ga_w[\mathrm{T}_{2r},\mathrm{T}_{1r}]$.  So we can assume that $\ga_w[\mathrm{T}_{1l},\mathrm{T}_{2l}] \cap \ga_w^R[\mathrm{T}_{1l},\mathrm{T}_{2l}]=\emptyset$. Similarly,
$\ga_w[\mathrm{T}_{2r},\mathrm{T}_{1r}]\cap \ga_w^R[\mathrm{T}_{2r},\mathrm{T}_{1r}]=\emptyset$. In addition, if $\ga_w[\mathrm{T}_{1l},\mathrm{T}_{2l}]\cap \ga_w[\mathrm{T}_{2r},\mathrm{T}_{1r}]$ contains points $\mathrm{v}_1$ and $\mathrm{v}_2$ such that $\omega_w(E_{\mathrm{v}_1})=1$ and $\omega_w(E_{\mathrm{v}_2})=0$, then we also have $\ga_w^R[\mathrm{T}_{1l},\mathrm{T}_{2l}]$ crosses $ \ga_w[\mathrm{T}_{2r},\mathrm{T}_{1r}]$. Hence without loss of generality we can assume that $\ga_w[\mathrm{T}_{1l},\mathrm{T}_{2l}]\cap \ga_w[\mathrm{T}_{2r},\mathrm{T}_{1r}]$ doesn't contain point $\mathrm{v}_1$ such that $\omega_w(E_{\mathrm{v}_1})=1$.  In addition, neither $\ga_w[\mathrm{T}_{1l},\mathrm{T}_{2l}] $ nor $\ga_w[\mathrm{T}_{2r},\mathrm{T}_{1r}]$ crosses both $\{z: \Re z>0, \Im z=0\}$ and $\{z: \Re z<0, \Im z=0\}$ simultaneously.

 We reflect the part of $\ga_w[\mathrm{T}_{1l},\mathrm{T}_{1r}]$, which is below the line $\Im z= \Im (w+\sqrt{2}e^{-i\pi/4}\de/4)$, around the point $w+\sqrt{2}e^{-i\pi/4}\de/4$. Under the condition that $\ga_w^R[\mathrm{T}_{1l},n_w)$ doesn't cross $\ga_w(n_w,\mathrm{T}_{1r}]$, the reflected part of $\ga_w[\mathrm{T}_{1l},\mathrm{T}_{1r}]$, which is below the line $\Im z= \Im (w+\sqrt{2}e^{-i\pi/4}\de/4)$, around the point $w+\sqrt{2}e^{-i\pi/4}\de/4$ doesn't cross the part of $\ga_w[\mathrm{T}_{1l},\mathrm{T}_{1r}]$, which is above the line $\Im z= \Im (w+\sqrt{2}e^{-i\pi/4}\de/4)$. If we regard each crossed point of the horizontal line $\Im z= \Im (w+\sqrt{2}e^{-i\pi/4}\de/4)$ by $\ga_w[\mathrm{T}_{1l},\mathrm{T}_{1r}]$ as the same as its symmetric image around the point $w+\sqrt{2}e^{-i\pi/4}\de/4$, we have three disjoint arms from $\p B(w+\sqrt{2}e^{-i\pi/4}\de/4; \de)$ to $\p B(w+\sqrt{2}e^{-i\pi/4}\de/4; R_1)$. Here each arm is not strictly an arm since the statuses of edges consisting of this arm can alternate. Also note that the status changes at  the crossed point of the horizontal line $\Im z= \Im (w+\sqrt{2}e^{-i\pi/4}\de/4)$ by $\ga_w[\mathrm{T}_{1l},\mathrm{T}_{1r}]$. Similarly to the derivation of the probability estimate of the upper-half plane 3-arm event in~\cite{Nolin}, we can complete the proof.
\end{proof}

Similarly, in the configuration $\omega_w$ whose  exploration path $\ga_w$ passes through $w$, after ignoring an event with probability of the order $(r_w/\min(d_v,d_w))^{2-\varepsilon}$,  if $C_w\in\ga_w, D_w\in \ga_w$, $A_w\notin\ga_w$, $B_w\notin\ga_w$,  we can construct a configuration $\omega_v$ such that the exploration path $\ga_v$ passes through $v$, $B\in \ga_v$, $C\in \ga_v$, $A\notin \ga_v$, $D\notin\ga_v$, and
$$W_{\ga_w}(C_w, e_b)=W_{\ga_v}(B, e_b), \
W_{\ga_w}(D_w, e_b)=W_{\ga_v}(C, e_b);$$
if $B_w\in\ga_w, C_w\in \ga_w$, $A_w\notin\ga_w$, $D_w\notin\ga_w$, we can construct a configuration $\omega_v$ such that the exploration path $\ga_v$ passes through $v$, $A\in \ga_v$, $B\in \ga_v$, $C\notin \ga_v$, $D\notin\ga_v$, and
$$W_{\ga_w}(B_w, e_b)=W_{\ga_v}(A, e_b), \
W_{\ga_w}(C_w, e_b)=W_{\ga_v}(B, e_b);$$
if $A_w\in\ga_w, B_w\in \ga_w$, $C_w\notin\ga_w$, $D_w\notin\ga_w$, we can construct a configuration $\omega_v$ such that the exploration path $\ga_v$ passes through $v$, $A\in \ga_v$, $D\in \ga_v$, $B\notin \ga_v$, $C\notin\ga_v$, and
$$W_{\ga_w}(A_w, e_b)=W_{\ga_v}(D, e_b), \
W_{\ga_w}(B_w, e_b)=W_{\ga_v}(A, e_b).$$

\subsection{Revisit rotation around medial vertex} \label{partIII}
In this part, we will modify the mapping $\mathrm{f}_v$ constructed in Section \ref{partI}, combining the idea from Section \ref{partII} to obtain a new mapping $\mathfrak{f}_v$, which guarantees the construction of vertex observable.

Before moving ahead, we need conventions about notation. The configuration obtained after applying the mapping constructed in Section \ref{partII} to $\omega_v$ is denoted by $\omega_w$. If a variable is defined though $\omega_v$ or $\omega_w$, we will add one subscript $v$ or $w$ to the corresponding variable. For example, $T_{w,l}$ and $\mathcal{H}_{w,k}$ are the corresponding $T_l$ and $\mathcal{H}_k$ defined in Section \ref{partII}, and  $T_{v,l}$ is the corresponding $T_l$ defined in Section \ref{partII} after $\ga_w$ is replace by $\ga_v$  in Section \ref{partII}.  However, four medial edges incident to $v$ are still denoted by $A,B,C,D$.

Similarly to Section \ref{partI}, we will decompose the configurations $\omega_v$ such that $A, D\in \ga_v=\ga_\de(\omega_v)$, $B,C\notin \ga_\de(\omega_v)$ and $v\in\ga_\de(\omega_v)$ into two classes. For each class, we will construct $\omega_v'$ based on $\omega_v$ such that the exploration path $\ga_v'$ in $\omega_v'$ passes through $v$ and either $A, B\not\in\ga_v', C, D\in \ga_v'$ with $W_{\gamma_v'}(D)=W_{\gamma_v}(A)+\pi/2$ and $W_{\gamma_v'}(C)=W_{\gamma_v}(D)+\pi/2$ or $A, B\in\ga_v',C, D\notin \ga_v'$ with $W_{\gamma_v'}(B)=W_{\gamma_v}(A)-\pi/2$ and $W_{\gamma_v'}(A)=W_{\gamma_v}(D)-\pi/2$. We also show that the mapping $\omega_v \rightarrow\omega_v'$ is one-to-one after neglecting one event with probability of the order $(\de/d_v)^{2-\varepsilon}$.

 \

 \noindent
 {\bf Class I}. $\omega_v$ must satisfy the following properties.
   \begin{enumerate}
   \item
   $\min(d_v,d_w)>r_v$.

  \item
    there exists $m\in [10(\al+3)\al^{-2}, m_w]$ such that $\mathcal{H}_{w,m}$ holds.

  \item
   there exists $m\in [10(\al+3)\al^{-2}, m_v]$ such that $\mathcal{H}_{v,m}$ holds.
  \end{enumerate}

  Now we can describe the construction of $\omega_v'$ when $\omega_v$ is in {\bf Class I}. In fact, we can decompose {\bf Class I} into three subcases, Subcase (1), Subcase (2) and Subcase (3), which are exactly the same as those in Section \ref{partI}. Then we can follow the respective construction \eqref{subcase(1)}, \eqref{subcase(2)}, \eqref{subcase(3)} to obtain $\omega_v'$.

\

 \noindent
 {\bf Class II}. All $\omega_v$ which are not in {\bf Class I}.

 For {\bf Class II}, we can exclude several events with probability of the order
 \beqn
 \big(\de/\min(d_v,d_w)\big)^{2-\varepsilon}. \label{excep1}
 \eeqn

 The first such event is that $\loop_v$ doesn't touch $\ga_v[0,n_v)$. Since in $B(v;r_v)$ there are five disjoint arms connecting $v$ to $\p B(v;r_v)$, the probability of the configurations restricted to $B(v;r_v)$ is at most of the order $(\de/r_v)^2$. On the other hand, the configurations outside $B(v;r_v)$ have probability of the order $\big(r_v/\min(d_v,d_w)\big)^{2-\varepsilon}$ since $\omega_v$  is not in {\bf Class I}.  If $\de 5^k \leq r_v<\de 5^{k+1}$ for $1\leq k\leq \log \big(\min(d_v,d_w)/\de\big)$, we obtain that the configurations have probability of the order
 $$
 \big(\frac{\de}{\de 5^k}\big)^2 \big(\frac{\de 5^{k+1}}{\min(d_v,d_w)}\big)^{2-\varepsilon}=5^{2-(k+1)\varepsilon} \big(\de/\min(d_v,d_w)\big)^{2-\varepsilon}
 $$
 by independence. When $r_v<5\de$ or $r_v>\log \big(\min(d_v,d_w)/\de\big)$, we can utilize either the configurations outside $B(v;r_v)$ or inside $B(v;r_v)$. Any of them leads to \eqref{excep1}.
 Combining the above argument,
 we can get \eqref{excep1}.

 So in {\bf Class II}, we can assume that $\ga_v[0,n_v)\cap \loop_v\not=\emptyset$. Define
   \beqn
 \hat \tau_v&=&\sup\{j<n_v: E_{\ga_\de(j)}\in \mathfrak{E}^v\}, \label{tauv1}\\
 \hat \tau^v&=&\inf\{j>0:\loop_v(j)=\ga_\de(\hat\tau_v)\}. \label{tauv2}
  \eeqn
  So $\ga_v(\hat\tau_v,n_v)\cap \loop_v(\hat\tau^v,n_\loop)=\emptyset$. We also need another crossing time, which is defined by
  \beq
 \hat \varsigma^v=\inf\{j\leq \hat\tau_{-}^v: \loop^{-AC}_v[j,j+1]\subseteq_c \ga_v[0,n_v), \ \text{or} \ \loop_v[0,\hat\tau^v]\},
  \eeq
  where $\hat\tau_{-}^v=\inf\{j>0: \loop_v^{-1}(j)=\loop_v(\hat\tau^v)\}$.
  If the set after $\inf$ in the definition of $\hat\varsigma^v$ is empty, define $ \hat\varsigma^v=\hat\tau_{-}^v$. $\mathcal{\hat E}_{coi}$ and $\mathcal{\hat E}_{exc}$ are defined in the same way as the respective $\mathcal{E}_{coi}$ and $\mathcal{E}_{exc}$ except that $\varsigma^v$, $\tau_v$ and $\tau^v$ should be replaced by the corresponding $\hat \varsigma^v$, $\hat\tau_v$ and $\hat\tau^v$. We express
$$
\mathcal{\hat E}_{coi}\setminus\mathcal{\hat E}_{exc}=\{\hat E_j: j\in\cup_{l=0}^{\hat l_0}[\hat k_l,\hat k_{l+1}), j\in \Z^{+}\},
$$
where $\hat k_0=1$, $\hat k_{\hat l_0+1}-1$ is the size of the set $\mathcal{\hat E}_{coi}\setminus\mathcal{\hat E}_{exc}$, $\loop_v^{-1}[0,n_\loop)$ crosses $\hat E_{j_1}$ before $\hat E_{j_2}$ if $j_1<j_2$, $\omega(\hat E_j^{AC})$ is constant if $j\in [\hat k_l,\hat k_{l+1})$, $\omega_v(\hat E_{j_1}^{AC})\not=\omega_v(\hat E_{j_2}^{AC})$ if $j_1\in [\hat k_l,\hat k_{l+1})$ and $j_2\in [\hat k_{l+1},\hat k_{l+2})$.

  The second excluded event is that $\loop_v[\hat \tau^v,\infty)$ contains a point such that its distance to $v$ is exactly $r_v$. The existence of such a point implies that there are five disjoint arms connecting $v$ to $\p B(v;r_v)$. By the same argument leading to the first excluded event, we can get \eqref{excep1}.

  The third excluded event is that $\loop_v[\hat \tau^v,\infty)$ doesn't contain a point such that its distance to $v$ is exactly $r_v$ and $\loop_v^{AC}(0,\tau^v)$ doesn't cross $\ga_v[0,n_v)$. We can apply Lemma \ref{special1} to obtain that the probability of the configurations restricted to $B(v;r_v)$ is at most of the order $(\de/r_v)^2$ if $\loop_v[\hat \tau^v,\infty)$ doesn't contain a point such that its distance to $v$ is exactly $r_v$ and $\loop_v^{AC}(0,\tau^v)$ doesn't cross $\ga_v[0,n_v)$. By the same argument leading to the first excluded event, we can get \eqref{excep1}.

  After excluding the above three events, we can construct $\omega_v'$ according to \eqref{subcase(3)} except that $\mathcal{E}_{coi}$, $\mathcal{E}_{exc}$, $\varsigma^v$, $\tau_v$ and $\tau^v$ should be replaced by the corresponding $\mathcal{\hat E}_{coi}$, $\mathcal{\hat E}_{exc}$ $\hat \varsigma^v$, $\hat\tau_v$ and $\hat\tau^v$.

Now we can derive the one-to-one property of $\omega_v$ to $\omega_v'$. When $\omega_v$ is in {\bf Class I} or {\bf Class II}, we can follow the argument in Section \ref{partI} to show that the mapping  $\omega_v \rightarrow\omega_v'$ is one-to-one. The comparison of {\bf Class II} with Subcase (3) from {\bf Class I} is also the same as the comparison of Subcase (3) from {\bf Case I} in Section \ref{partI}. It remains to compare {\bf Class II} with Subcase (2) from {\bf Class I}. Suppose $\omega_v^{(1)}$ is in {\bf Class II} and $\omega_v^{(2)}$ is in Subcase (2)  from {\bf Class I}, and $(\omega_v^{(1)})'=(\omega_v^{(2)})'$. By the same argument in Comparison of Subcase (2)  and Subcase (3) from {\bf Case I} in Section \ref{partI}, we have $\loop_v^{(1)}[0,\hat\tau^{v(1)}]=\loop_v^{AC(2)}[0,t^{v(2)}]$. This implies that $\omega_v^{(1)}$ is in the third excluded event. Therefore, $(\omega_v^{(1)})'\not=(\omega_v^{(2)})'$.

  This completes the construction from $\omega_v$ to $\omega_v'$ when $A, D\in \ga_\de(\omega_v)$ and $B,C\notin\ga_\de(\omega)$. Similarly to Section \ref{partI}, we can define the following events now.
  \beq
   \mathfrak{E}_{1,1}^v&=&\{w: A,D\in \ga_v, B, C\not\in \ga_v\}\setminus  \mathfrak{E}_{1,2}^v,\\
  \mathfrak{E}_{1,2}^v&=&\{w: A,D\in \ga_v, B, C\not\in \ga_v; A,B\in \ga_v', C, D\not\in \ga_v'\},
  \eeq
  One can see that $ \mathfrak{E}_{1,2}^v$ corresponds to Subcase (1)  in {\bf Class I}, and $ \mathfrak{E}_{1,1}^v$ corresponds to the others.

  Similarly, if $\omega_v$ is a configuration such that $A,B\in \ga_v, C, D\not\in \ga_v$, we can construct one $\omega_v'$ based on $\omega_v$ such that the exploration path $\ga_v'$ in $\omega_v'$ passes through $v$ and either $A, D\not\in\ga_v', B, C\in \ga_v'$ or $A, D\in\ga_v',B, C\notin \ga_v'$.  There are also {\bf Class I} and {\bf Class II} when $A,B\in \ga_v, C, D\not\in \ga_v$. {\bf Class I} contains  Subcase (1), Subcase (2) and Subcase (3).  This leads to the following events.
    \beq
   \mathfrak{E}^{2,1}_v&=&\{w: A,B\in \ga_v, C,D\not\in \ga_v; A,D\in \ga_v', B, C\not\in \ga_v'\},\\
   \mathfrak{E}^{2,2}_v&=&\{w: A,B\in \ga_v, C,D\not\in \ga_v\}\setminus   \mathfrak{E}^{2,1}_v.
  \eeq
Changing the role of $C, B$ and $A, D$, we have the following two events, which correspond to $ \mathfrak{E}_{1,1}^v$ and $ \mathfrak{E}_{1,2}^v$ respectively.
   \beq
   \mathfrak{E}_{3,1}^v&=&\{w: B,C\in \ga_v, A,D\not\in \ga_v\}\setminus  \mathfrak{E}_{3,2}^v, \\
  \mathfrak{E}_{3,2}^v&=&\{w: B,C\in \ga_v, A,D\not\in \ga_v; C,D\in \ga_v',A,B\notin \ga_v'\}.
  \eeq
Changing the role of $C, D$ and $A, B$, we have the following two events, which correspond to $ \mathfrak{E}^{2,1}_v$ and $ \mathfrak{E}^{2,2}_v$ respectively.
   \beq
   \mathfrak{E}^{4,1}_v&=&\{w: C,D\in \ga_v, A,B\not\in \ga_v; B,C\in \ga_v', A,D\not\in \ga_v'\}, \\
   \mathfrak{E}^{4,2}_v&=&\{w: C,D\in \ga_v, A,B
   \not\in \ga_v\} \setminus  \mathfrak{E}^{4,1}_v.
  \eeq
  Corresponding to  $\mathfrak{E}_{j,m}^v$, we define
$$\mathfrak{E}_{j,m}^{v,a}=\{\omega_{\setminus E_v}: \omega\in \mathfrak{E}_{j,m}^v\}$$
 for $j=1,3$ and $m=1,2$; and corresponding to  $\mathfrak{E}^{j,m}_v$, we define
$$\mathfrak{E}^{j,m}_{v,a}=\{\omega_{\setminus E_v}: \omega\in \mathfrak{E}^{j,m}_v\}$$
 for $j=2,4$ and $m=1,2$. So the four medial edges $A,B,C,D$ incident to $v$ are in $\ga_\de(\omega)$ if $\omega\in \mathfrak{E}_{j,m}^{v,a}$ or $\omega\in \mathfrak{E}^{j,m}_{v,a}$.

  From the construction in {\bf Class I} and  {\bf Class II}, we know that there is a one-to-one mapping
  $$
  \mathfrak{f}_{AD}:  \mathfrak{E}_{1,1}^v\mapsto \mathfrak{E}^{4,1}_v\cup \mathfrak{E}^{4,2}_v, \   \mathfrak{E}_{1,2}^v\mapsto   \mathfrak{E}^{2,1}_v\cup \mathfrak{E}^{2,2}_v
  $$
  after ignoring an event with probability at most of the order $(\de/d_v)^{2-\varepsilon}$. Similarly, we can construct three one-to-one mappings in the following,
  \beq
 &&  \mathfrak{f}_{AB}:  \mathfrak{E}^{2,1}_v\mapsto \mathfrak{E}_{1,1}^v\cup\mathfrak{E}_{1,2}^v,  \   \mathfrak{E}^{2,2}_v\mapsto   \mathfrak{E}_{3,1}^v\cup \mathfrak{E}_{3,2}^v, \\
 &&  \mathfrak{f}_{BC}:  \mathfrak{E}_{3,1}^v\mapsto \mathfrak{E}^{2,1}_v\cup \mathfrak{E}^{2,2}_v, \   \mathfrak{E}_{3,2}^v\mapsto   \mathfrak{E}^{4,1}_v\cup\mathfrak{E}^{4,2}_v,\\
 &&  \mathfrak{f}_{CD}:  \mathfrak{E}^{4,1}_v\mapsto \mathfrak{E}_{3,1}^v\cup\mathfrak{E}_{3,2}^v,  \   \mathfrak{E}^{4,2}_v\mapsto   \mathfrak{E}_{1,1}^v\cup \mathfrak{E}_{1,2}^v
  \eeq
   after ignoring an event with probability at most of the order $(\de/d_v)^{2-\varepsilon}$.
   Putting $\mathfrak{f}_{AD}$ and  $\mathfrak{f}_{BC}$ together, we have a  mapping
   $$
   \mathfrak{f}_{AD}\cup\mathfrak{f}_{BC}:  \mathfrak{E}_{1,1}^v\cup \mathfrak{E}_{1,2}^v\cup \mathfrak{E}_{3,1}^v\cup \mathfrak{E}_{3,2}^v \rightarrow \mathfrak{E}^{2,1}_v\cup \mathfrak{E}^{2,2}_v\cup \mathfrak{E}^{4,1}_v\cup \mathfrak{E}^{4,2}_v
   $$
   after ignoring an event with probability at most of the order $(\de/d_v)^{2-\varepsilon}$.
   This mapping is actually one-to-one. In order to prove this one-to-one property, it suffices to show
   \beqn
    \mathfrak{f}_{AD}( \mathfrak{E}_{1,2}^v)\cap\mathfrak{f}_{BC}(\mathfrak{E}_{3,1}^v)=\emptyset. \label{frak-ADnotBC}
   \eeqn

   Note that $ \mathfrak{E}_{1,2}^v$ corresponds to Subcase (1)  in {\bf Class I}. $ \mathfrak{E}_{3,1}^v$ corresponds to Subcases (2)-(3)  in {\bf Class I}, and {\bf Class II}.

   \

\noindent
Comparison of $ \mathfrak{E}_{1,2}^v$--Subcase (1) in {\bf Class I} and $ \mathfrak{E}_{3,1}^v$--Subcase (2) in {\bf Class I}.

\noindent
Comparison of $ \mathfrak{E}_{1,2}^v$--Subcase (1) in {\bf Class I} and $ \mathfrak{E}_{3,1}^v$--Subcase (3) in {\bf Class I}.

These two comparisons are exactly the same as those in Section \ref{partI}. So we omit the details.

\

\noindent
Comparison of $ \mathfrak{E}_{1,2}^v$--Subcase (1) in {\bf Class I} and $ \mathfrak{E}_{3,1}^v$--{\bf Class II}.

Suppose $\omega_v^{(1)}$ belongs to Subcase (1) in {\bf Class I}, $\omega_v^{(2)}$ belongs to {\bf Class II}, and $(\omega^{(1)})'=(\omega^{(2)})':=\omega'$.
This is exactly the same as the comparison of $ \mathcal{E}_{1,2}^v$--Subcase (1) in {\bf Case I} and $ \mathcal{E}_{3,1}^v$--Subcase (3) in {\bf Case I} in Section \ref{partI}, except that the last sentence in the second last paragraph should be replaced by the following one.

 In fact, for the second possibility, $\hat E_1^{AC(2)}$ (if $\mathcal{\hat E}^{(2)}_{coi}\setminus\mathcal{\hat E}^{(2)}_{exc}\not=\emptyset$)  or  $E^{AC}_{ \loop_v^{(2)}(\hat\tau^{v(2)}) }$ (if $\mathcal{\hat E}^{(2)}_{coi}\setminus\mathcal{\hat E}^{(2)}_{exc}=\emptyset$ and $\ga_v^{AC(2)}(\hat\tau_v^{(2)})\notin \ga_v^{(2)}[0,\hat\tau_v^{(2)}]\cup\loop_v^{(2)}[0,\hat\tau^{v(2)}]$) has different statuses in $(\omega_v^{(1)})'$ and $(\omega_v^{(2)})'$, or
$(\loop_v)^{-AC(2)}(0,\hat\varsigma^{v(2)}]$  crosses $\ga_v^{(2)}[0,\hat\tau_v^{(2)}]\cup\loop_v^{(2)}[0,\hat\tau^{v(2)}]$
 (if $\mathcal{\hat E}^{(2)}_{coi}\setminus\mathcal{\hat E}^{(2)}_{exc}=\emptyset$, $\ga_v^{AC(2)}(\hat \tau_v^{(2)})\in \ga_v^{(2)}[0,\hat\tau_v^{(2)}]\cup\loop_v^{(2)}[0,\hat\tau^{v(2)}]$ and $\omega^{(2)}(E_{\ga_\de^{AC(2)}(\tau_v^{(2)})})=1$), or
 the open edges touched by $\loop_v^{-AC(2)}(0,$ $\hat\varsigma^{v(2)}]$ in $(\omega_v^{(2)})'$ crosses at least one dual-open edge touched by $\loop_v^{(1)}[s^{v(1)},n_\loop^{(1)})\setminus \loop_v^{(1)}(0,s^{v(1)})$  (if $\mathcal{\hat E}^{(2)}_{coi}\setminus\mathcal{\hat E}^{(2)}_{exc}=\emptyset$, $\ga_v^{AC(2)}(\hat \tau_v^{(2)})\in \ga_v^{(2)}[0,\hat\tau_v^{(2)}]\cup\loop_v^{(2)}[0,\hat\tau^{v(2)}]$ and $\omega_v^{(2)}(E_{\ga_v^{AC(2)}(\hat\tau_v^{(2)})})=0$).

 \

So we have proved that the mapping
   $$
   \mathfrak{f}_{AD}\cup\mathfrak{f}_{BC}:  \mathfrak{E}_{1,1}^v\cup \mathfrak{E}_{1,2}^v\cup \mathfrak{E}_{3,1}^v\cup \mathfrak{E}_{3,2}^v\setminus\mathfrak{E}_{v,1,3}  \rightarrow \mathfrak{E}^{2,1}_v\cup \mathfrak{E}^{2,2}_v\cup \mathfrak{E}^{4,1}_v\cup \mathfrak{E}^{4,2}_v
   $$
   is one-to-one, where $\Pro(\mathfrak{E}_{v,1,3})\preceq (\de/d_v)^{2-\varepsilon}$.

   Similarly putting $\mathfrak{f}_{AB}$ and  $\mathfrak{f}_{CD}$ together, we have a one-to-one mapping
   $$
   \mathfrak{f}_{AB}\cup\mathfrak{f}_{CD}:  \mathfrak{E}^{2,1}_v\cup \mathfrak{E}^{2,2}_v\cup \mathfrak{E}^{4,1}_v\cup \mathfrak{E}^{4,2}_v \setminus\mathfrak{E}^{v,2,4}   \rightarrow
   \mathfrak{E}_{1,1}^v\cup \mathfrak{E}_{1,2}^v\cup \mathfrak{E}_{3,1}^v\cup \mathfrak{E}_{3,2}^v,
   $$
  where $\Pro(\mathfrak{E}^{v,2,4})\preceq (\de/d_v)^{2-\varepsilon}$. This implies that both $ \mathfrak{f}_{AD}\cup\mathfrak{f}_{BC}$ and $\mathfrak{f}_{AB}\cup\mathfrak{f}_{CD}$ are one-to-one and onto mappings after ignoring an event with probability at most of the order $(\de/d_v)^{2-\varepsilon}$. Combining the above two conclusions, we have
 \beq
  \Pro\Big( \big(\mathfrak{E}^{2,1}_v\cup \mathfrak{E}^{2,2}_v\big)\setminus\big( \mathfrak{f}_{AD}(\mathfrak{E}_{1,2}^v\setminus\mathfrak{E}_{v,1,3}) \cup   \mathfrak{f}_{BC}(\mathfrak{E}_{3,1}^v\setminus\mathfrak{E}_{v,1,3}) \big) \Big)\preceq (\de/d_v)^{2-\varepsilon},\\
  \Pro\Big( \big(\mathfrak{E}^{4,1}_v\cup \mathfrak{E}^{4,2}_v\big)\setminus\big( \mathfrak{f}_{BC}(\mathfrak{E}_{3,2}^v\setminus\mathfrak{E}_{v,1,3}) \cup   \mathfrak{f}_{AD}(\mathfrak{E}_{4,1}^v\setminus\mathfrak{E}_{v,1,3}) \big) \Big)\preceq (\de/d_v)^{2-\varepsilon}.
  \eeq

   From now on we will let
   \beq
&&\mathfrak{E}_{2,1}^v= \mathfrak{f}_{AD}(\mathfrak{E}_{1,2}^v\setminus\mathfrak{E}_{v,1,3})),\  \mathfrak{E}_{2,2}^v= \big(\mathfrak{E}^{2,1}_v\cup \mathfrak{E}^{2,2}_v\big)\setminus \mathfrak{E}_{2,1}^v,\\
&&\mathfrak{E}_{4,1}^v= \mathfrak{f}_{BC}(\mathfrak{E}_{3,2}^v\setminus\mathfrak{E}_{v,1,3})), \  \mathfrak{E}_{4,2}^v= \big(\mathfrak{E}^{4,1}_v\cup \mathfrak{E}^{4,2}_v\big)\setminus \mathfrak{E}_{4,1}^v,\\
&& \mathfrak{E}_{v,0}=\mathfrak{E}_{v,1,3}\cup \Big( \big(\mathfrak{E}^{2,1}_v\cup \mathfrak{E}^{2,2}_v\big)\setminus\big( \mathfrak{f}_{AD}(\mathfrak{E}_{1,2}^v\setminus\mathfrak{E}_{v,1,3}) \cup   \mathfrak{f}_{BC}(\mathfrak{E}_{3,1}^v\setminus\mathfrak{E}_{v,1,3}) \big) \Big)\cup\\
 &&\qquad \qquad
     \Big( \big(\mathfrak{E}^{4,1}_v\cup \mathfrak{E}^{4,2}_v\big)\setminus\big( \mathfrak{f}_{BC}(\mathfrak{E}_{3,2}^v\setminus\mathfrak{E}_{v,1,3})
     \cup   \mathfrak{f}_{AD}(\mathfrak{E}_{4,1}^v\setminus\mathfrak{E}_{v,1,3}) \big) \Big),\\
 && \mathfrak{E}_{v,1}=\{\omega_{\setminus E_v}: \omega\in \mathfrak{E}_{v,0}\},\
  \mathfrak{E}_v= \mathfrak{E}_{v,0}\cup  \mathfrak{E}_{v,1}.
    \eeq
   Therefore, we have derived the following result.

 \begin{proposition} \label{rot-inv-re}
 After ignoring $\mathfrak{E}_v$ with probability at most of the order $(\de/d_v)^{2-\varepsilon}$, there is a mapping
  $$\mathfrak{f}_v: \mathfrak{E}_{j,2}^v\setminus \mathfrak{E}_v \mapsto \mathfrak{E}_{j+1,1}^v\setminus \mathfrak{E}_v$$ for $j=1,2,3,4$ such that $\mathfrak{f}_v$ is a one-to-one and onto mapping, where $\mathfrak{E}_{5,1}^v$ is understood as $\mathfrak{E}_{1,1}^v$.
     In addition,   $\mathfrak{f}_v$ has the following property,
 \beq
 W_{\mathfrak{f}_v(\ga_v)}(A,e_b)&=&W_{\ga_v}(A,e_b), \ \omega(\ga_v)\in  \mathfrak{E}_{1,2}^v\setminus \mathfrak{E}_v;\\
 W_{\mathfrak{f}_v(\ga_v)}(B,e_b)&=&W_{\ga_v}(B,e_b), \ \omega(\ga_v)\in  \mathfrak{E}_{2,2}^v\setminus \mathfrak{E}_v;\\
  W_{\mathfrak{f}_v(\ga_v)}(C,e_b)&=&W_{\ga_v}(C,e_b), \ \omega(\ga_v)\in  \mathfrak{E}_{3,2}^v\setminus \mathfrak{E}_v;\\
   W_{\mathfrak{f}_v(\ga_v)}(D,e_b)&=&W_{\ga_v}(D,e_b), \ \omega(\ga_v)\in \mathfrak{E}_{4,2}^v\setminus \mathfrak{E}_v,
 \eeq
 where  $\mathfrak{f}_v(\ga_v)$ represents the exploration path in $\mathfrak{f}_v(\omega)$ when $\ga_v$ is in $\omega$.
  \end{proposition}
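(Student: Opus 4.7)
The plan is to parallel the three-stage argument of Section \ref{partI} for Proposition \ref{rot-inv}, but to replace the pure reflection-around-$v$ construction with a version that accounts for the finer rotational behavior extracted in Section \ref{partII}. Specifically, given $\omega_v$ with two consecutive medial edges of $v$ lying in $\ga_v$ (say $A,D\in \ga_v$ and $B,C\notin \ga_v$), I would first dichotomize into \textbf{Class I} and \textbf{Class II}: Class I consists of configurations for which $\min(d_v,d_w)>r_v$ and for which both hitting events $\mathcal{H}_{w,m}$ and $\mathcal{H}_{v,m}$ hold for some admissible $m\in[10(\al+3)\al^{-2},m_w]$; Class II collects all others. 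On Class I, one can split into Subcases (1)--(3) exactly as in Case I of Section \ref{partI} and copy the constructions \eqref{subcase(1)}, \eqref{subcase(2)}, \eqref{subcase(3)} verbatim to produce $\omega_v'$.

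For Class II, I would first discard three events whose combined probability is at most $\bigl(\de/\min(d_v,d_w)\bigr)^{2-\varepsilon}$: (i) $\loop_v\cap \ga_v[0,n_v)=\eset$, controlled by a $5$-arm estimate in $B(v;r_v)$ together with the Class I failure outside; (ii) $\loop_v[\hat\tau^v,\infty)$ contains a point at distance exactly $r_v$ from $v$; and (iii) the event from Lemma \ref{special1} that $\loop_v^{AC}(0,\tau^v)$ does not cross $\ga_v[0,n_v)$ while (ii) fails. After these exclusions, the construction is the natural analogue of Subcase (3) of Section \ref{partI}, with the hatted variables $\hat\tau_v$, $\hat\tau^v$, $\hat\varsigma^v$ and the sets $\mathcal{\hat E}_{coi}$, $\mathcal{\hat E}_{exc}$ replacing their unhatted counterparts. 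This is well-defined because removing (i)--(iii) guarantees that $\loop_v$ meets $\ga_v[0,n_v)$ in a usable way.

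The key step, and the main obstacle, will be the one-to-one check across the class boundary, specifically the comparison of $\mathfrak{E}_{1,2}^v$ from Subcase (1) of Class I with $\mathfrak{E}_{3,1}^v$ from Class II. The within-class comparisons follow the argument of Section \ref{partI} line by line. For the cross-class comparison, after the standard step forces $\loop_v^{(1)}[0,s^{v(1)}]$ to agree with a matching portion of the Class II configuration, one has to rule out that the open or dual-open edges touched by $\loop_v^{-AC(2)}(0,\hat\varsigma^{v(2)}]$ fail to interact appropriately with the loop in $\omega_v^{(1)}$. I would handle this by an extended case split on whether $\mathcal{\hat E}^{(2)}_{coi}\setminus\mathcal{\hat E}^{(2)}_{exc}$ is empty, whether $\ga_v^{AC(2)}(\hat\tau_v^{(2)})$ lies in $\ga_v^{(2)}[0,\hat\tau_v^{(2)}]\cup \loop_v^{(2)}[0,\hat\tau^{v(2)}]$, and on the status of the corresponding edge; in each sub-possibility one identifies an edge whose status differs in $(\omega_v^{(1)})'$ and $(\omega_v^{(2)})'$.

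Finally, I would assemble the four constructions $\mathfrak{f}_{AD},\mathfrak{f}_{AB},\mathfrak{f}_{BC},\mathfrak{f}_{CD}$ as at the end of Section \ref{partI}, verify the disjointness $\mathfrak{f}_{AD}(\mathfrak{E}_{1,2}^v)\cap \mathfrak{f}_{BC}(\mathfrak{E}_{3,1}^v)=\eset$ supplied by the previous paragraph, and let $\mathfrak{E}_{v,0}$, $\mathfrak{E}_{v,1}$ and $\mathfrak{E}_v$ absorb all exceptional events, each of probability at most $(\de/d_v)^{2-\varepsilon}$. The two combined mappings $\mathfrak{f}_{AD}\cup \mathfrak{f}_{BC}$ and $\mathfrak{f}_{AB}\cup \mathfrak{f}_{CD}$ will then be mutual inverses outside $\mathfrak{E}_v$, giving the one-to-one and onto $\mathfrak{f}_v$ between $\mathfrak{E}_{j,2}^v\setminus \mathfrak{E}_v$ and $\mathfrak{E}_{j+1,1}^v\setminus \mathfrak{E}_v$; the winding-preservation assertions follow directly from the elementary shifts recorded in each of Subcases (1)--(3), since the hatted construction on Class II is just a relabeling of the Subcase (3) formula \eqref{subcase(3)}.
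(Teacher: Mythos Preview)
Your proposal is correct and follows essentially the same route as the paper: the Class I/Class II dichotomy, the three excluded events on Class II handled via the hatted variables in the analogue of \eqref{subcase(3)}, the reuse of the Section \ref{partI} comparisons within Class I, and the extended case split on $\mathcal{\hat E}^{(2)}_{coi}\setminus\mathcal{\hat E}^{(2)}_{exc}$ and the position/status of $\ga_v^{AC(2)}(\hat\tau_v^{(2)})$ for the new cross-class comparison are all exactly what the paper does. The only point you leave implicit is the separate (brief) comparison of Class II against Subcase (2) of Class I for the injectivity of $\omega_v\mapsto\omega_v'$, which the paper dispatches by observing that equality of images would force $\loop_v^{(1)}[0,\hat\tau^{v(1)}]=\loop_v^{AC(2)}[0,t^{v(2)}]$ and hence place $\omega_v^{(1)}$ in the third excluded event; this fits seamlessly into your framework.
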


\begin{remark}
The mapping $\mathfrak{f}_v$ is based on  $\mathfrak{f}_{AD}\cup\mathfrak{f}_{BC}$. Similarly, we can obtain the second one-to-one mapping $\mathfrak{f}_{2v}$ based on $ \mathfrak{f}_{AB}\cup\mathfrak{f}_{CD}$. On the other hand, we can also construct the mapping $\omega_v \rightarrow\omega_v'$ by replacing $\ga_v[0,n_v)$, $\loop_v$ and the reflection around $L_v^{AC}$ with $\ga_v^{-1}[0,n_v^-)$, $\loop_v^{-1}$ and the reflection around $L_v^{BD}$ respectively in {\bf Class I} and {\bf Class II} and obtain two one-to-one mappings $\mathfrak{f}_{3v}$ and $\mathfrak{f}_{4v}$. Here $\ga_v^{-1}$ is the reversed exploration path from $b_\de^\diamond$ to $a_\de^\diamond$, and $\ga_v^{-1}(n_v^-)=v$. After this change, $A,B,C,D$ becomes $B,C,D,A$ respectively. However, in the next section we will show that our constructed parafermionic observable is invariant to not only these four mappings but also $\mathrm{f}_v$ in Proposition \ref{rot-inv} up to an additive constant of the order $(\de/d_v)^{2-\varepsilon}$.
\end{remark}

  \section{Modified parafermionic observable and basic relations} \label{modi}
  Naturally,  $\mathrm{f}_v$ in Proposition \ref{rot-inv} can be extended to the mapping from $\mathcal{E}_{j,2}^{v,a}\setminus \mathcal{E}_v$  to $\mathcal{E}_{j+1,1}^{v,a}\setminus\mathcal{E}_v$, where $\mathcal{E}_{j,2}^{v,a}=\{\omega_{\setminus E_v}: \omega\in \mathcal{E}_{j,2}^v\}$, $\mathcal{E}_{j+1,1}^{v,a}=\{\omega_{\setminus E_v}: \omega\in \mathcal{E}_{j+1,1}^v\}$ for $j=1,2,3,4$, and $\mathcal{E}_{5,1}^{v,a}$ is understood as $\mathcal{E}_{1,1}^{v,a}$. We still use $\mathrm{f}_v$ to denote this mapping. So $\mathrm{f}_v(\omega_{\setminus E_v})=\mathrm{f}_v(\omega)_{\setminus E_v}$ when $\omega\in \mathcal{E}_{j,2}^v\setminus \mathcal{E}_v$.

Write
\beqn
F_\theta(A)=e^{-i\theta}\Ex \exp\big(\frac i3 W_{\ga_\de(\omega)}(A,e_b)\big)\I(A\in\ga_\de,\omega(\ga_\de)\in \bigcup_{j=1}^4 \mathcal{E}_{j,2}^{v,a}\setminus \mathcal{E}_v)\nonumber\\
+e^{i\theta}\Ex \exp\big(\frac i3 W_{\ga_\de(\omega)}(A,e_b)\big)\I(A\in\ga_\de,\omega(\ga_\de)\in \bigcup_{j=1}^4\mathcal{E}_{j,1}^{v,a}\setminus \mathcal{E}_v). \label{Ftheta}
\eeqn
After replacing $A$ by $B ,C, D$ in  \eqref{Ftheta}, we can get $F_\theta(B)$, $F_\theta(C)$, $F_\theta(D)$ respectively. Similarly, we can obtain the respective $F_\theta(A_w)$, $F_\theta(B_w)$, $F_\theta(C_w)$, $F_\theta(D_w)$ after replacing $A$ by $A_w, B_w ,C_w, D_w$, and changing the index $v$ to $w$ in \eqref{Ftheta}.

We also need the following $F_{one}(\cdot)$ and $F_{two}(\cdot)$,
\beqn
F_{one}(A)=\Ex\exp\big(\frac i3 W_{\ga_\de(\omega)}(A,e_b)\big)\I\big(A\in\ga_\de,\omega(\ga_\de)\in \bigcup_{j=1}^4 (\mathcal{E}_{j,1}^v\cup\mathcal{E}_{j,2}^v)\setminus \mathcal{E}_v\big), \label{F1A}\\
F_{two}(A)=\Ex\exp\big(\frac i3 W_{\ga_\de(\omega)}(A,e_b)\big)\I\big(A\in\ga_\de,\omega(\ga_\de)\in \bigcup_{j=1}^4 (\mathcal{E}_{j,1}^{v,a}\cup\mathcal{E}_{j,2}^{v,a})\setminus \mathcal{E}_v\big), \label{F2A}
\eeqn
which are essentially two decomposition parts of $F(A)$ since
\beqn
F(A)=F_{one}(A)+F_{two}(A)+ F_{ex}(A), \label{decom-F}
\eeqn
where
 \beq
 F_{ex}(A)=\Ex \Big(\exp\big(\frac i3 W_{\gamma_\de}(A,e_b)\I(A\in \gamma_\de, \omega(\ga_\de)\in\mathcal{E}_v )\big)\Big).
 \eeq

 After replacing $A$ by $B ,C, D$ in  \eqref{F1A} and \eqref{F2A}, we can get $F_m(B)$, $F_m(C)$, $F_m(D)$ respectively for $m=one, two$. Similarly, we can obtain the respective $F_m(A_w)$, $F_m(B_w)$, $F_m(C_w)$, $F_m(A_w)$ after replacing $A$ by $A_w, B_w ,C_w, D_w$, and changing the index $v$ to $w$ in \eqref{F1A} and \eqref{F2A}.

 Now we can introduce the following modified parafermionic observables $\mathrm{F}(\cdot)$ and $\mathrm{F}_i(\cdot)$.

 \begin{definition}
 For $v\in \Omega_\de^\diamond$ with four incident medial edges $A, B, C$ and $D$ indexed in the counterclockwise order such that $A$ and $C$ are pointing towards $v$, define the respective modified parafermionic edge observables by
  \beqn
&&\mathrm{F}(e_v)=\frac{3\sqrt{3}+5}{6\sqrt{3}+6}\big(\sqrt{3}F_{one}(e_v)+F_{two}(e_v)+\frac{\sqrt 3-1}{\sqrt 2}F_{\pi/12}(e_v)\big)+\frac 12F_{ex}(e_v), \label{Fae}\\
&& \mathrm{F}_i(e_v)=F(e_v)-\mathrm{F}(e_v), \label{Fie}
 \eeqn
 where $e_v=A, B, C,D$.
 \end{definition}

 Here a natural questions arises. Since $F_{\pi/12}(e_v)$ depends on the mapping $\mathrm{f}_v$, can we obtain the same function if we replace $\mathrm{f}_v$ by $\mathrm{f}_{2v}$, $\mathrm{f}_{3v}$, $\mathrm{f}_{4v}$ or even $\mathfrak{f}_v$? To be more specific, we take $\mathfrak{f}_v$ as one example to illustrate this point. After we replace $\mathcal{E}_{j,m}^v$ and $\mathcal{E}_{j,m}^{v,a}$ by the respective $\mathfrak{E}_{j,m}^v$ and $\mathfrak{E}_{j,m}^{v,a}=\{\omega_{\setminus E_v}: \omega\in \mathfrak{E}_{j,m}^v\}$ in the definition of $F_\theta(\cdot)$, $F_{one}(\cdot)$, $F_{two}(\cdot)$ and $F_{ex}(\cdot)$, the resulted functions will be denoted by
 $$F_\theta(\cdot;\mathfrak{f}_v), \ F_{one}(\cdot;\mathfrak{f}_v), \ F_{two}(\cdot;\mathfrak{f}_v), \ F_{ex}(\cdot;\mathfrak{f}_v).$$
 We write
 $$
 \mathrm{F}(e_v;\mathfrak{f}_v)=\frac{3\sqrt{3}+5}{6\sqrt{3}+6}\big(\sqrt{3}F_{one}(e_v;\mathfrak{f}_v)+F_{two}(e_v;\mathfrak{f}_v)+\frac{\sqrt 3-1}{\sqrt 2}F_{\pi/12}(e_v;\mathfrak{f}_v)\big)+\frac 12F_{ex}(e_v;\mathfrak{f}_v).
 $$
 What is the relation between $\mathrm{F}(\cdot)$ and  $\mathrm{F}(\cdot;\mathfrak{f}_v)$?
 The following result answers this question.

 \begin{proposition} \label{invarianceF}
$\mathrm{F}(e_v)$ is  invariant to $\mathrm{f}_v$, $\mathrm{f}_{j,v}$ up to an additive constant of the order $(\de/d_v)^2$,
and $\mathrm{F}(e_v;\mathfrak{f}_v)$ is  invariant to $\mathfrak{f}_v$, $\mathfrak{f}_{j,v}$ up to an additive constant of the order $(\de/d_v)^{2-\varepsilon}$,
 where $e_v=A,B,C,D$, $j=2,3,4$. We also have
 $$
 \mathrm{F}(e_v)=\mathrm{F}(e_v;\mathfrak{f}_v)+O\big((\de/d_v)^{2-\varepsilon}\big).
 $$
 \end{proposition}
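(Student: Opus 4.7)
The proposition asserts that the particular linear combination \eqref{Fae} produces an observable $\mathrm{F}(e_v)$ that is essentially intrinsic to the percolation configuration, not dependent on which of the four bijective constructions $\mathrm{f}_v,\mathrm{f}_{2v},\mathrm{f}_{3v},\mathrm{f}_{4v}$ underlies its definition. My strategy is to isolate the mapping-dependent pieces of \eqref{Fae} and verify that they agree across constructions up to the relevant exceptional event.

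First I observe that $F_{one}(e_v)$ and $F_{two}(e_v)$ depend on the mapping only through the excised set $\mathcal{E}_v$. Indeed, the indicator set in $F_{one}$ is $\bigcup_{j=1}^4 (\mathcal{E}_{j,1}^v \cup \mathcal{E}_{j,2}^v)$, which is exactly the event ``precisely two of the four medial edges incident to $v$ lie on $\ga_\de$'', and the corresponding indicator set for $F_{two}$ is ``all four lie on $\ga_\de$'' -- both properties of $\omega$ alone. Together with the probability bound $\Pro(\mathcal{E}_v)\preceq (\de/d_v)^2$ from Proposition \ref{rot-inv}, this shows that $F_{one}$, $F_{two}$, and $F_{ex}$ change by $O((\de/d_v)^2)$ when the mapping is swapped.

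The substantive task is therefore to control $F_{\pi/12}(e_v)$, which depends on the finer partition $\mathcal{E}_{j,2}^{v,a} \sqcup \mathcal{E}_{j,1}^{v,a}$. Here I will argue that the membership of a configuration in $\mathcal{E}_{j,2}^{v,a}$ versus $\mathcal{E}_{j,1}^{v,a}$ is governed by which Subcase the construction places $\omega$ in (Subcase (1) versus Subcases (2)--(3), in either Case I or Case II of Section \ref{partI}), and that outside the exceptional set this Subcase is an intrinsic property of $\omega$ -- depending only on the loop $\loop_v$, the edges in $\mathfrak{E}^v$, and the statuses of a few specific edges such as $E_{\ga_\de(t_v)}$. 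None of these depends on which bijective construction one uses, so the four mappings produce the same partitions outside the union of their exceptional events, whose total probability is still $O((\de/d_v)^2)$. This yields the first statement; the analogous claim for $\mathrm{F}(e_v;\mathfrak{f}_v)$ follows in the same way, using the enlarged exceptional set $\mathfrak{E}_v$ from Proposition \ref{rot-inv-re} of probability $O((\de/d_v)^{2-\varepsilon})$, hence the weaker error bound.

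For the final comparison $\mathrm{F}(e_v) = \mathrm{F}(e_v;\mathfrak{f}_v) + O((\de/d_v)^{2-\varepsilon})$, I repeat the argument pairing $\mathrm{f}_v$ against $\mathfrak{f}_v$: outside $\mathcal{E}_v \cup \mathfrak{E}_v$ both constructions realize the same Subcase partition and hence the same indicator sets for $F_{one}, F_{two}, F_{\pi/12}, F_{ex}$; inside, the total probability is bounded by $\Pro(\mathfrak{E}_v) \preceq (\de/d_v)^{2-\varepsilon}$. The main obstacle is rigorously establishing that the Subcase classification is indeed intrinsic: the Subcase conditions are phrased through stopping times such as $t_v,\tau_v,\mathrm{t}_v,\varsigma^v$ that are manifestly functions of $\omega$, but they enter \eqref{Fae} through the constructed $\omega'$, so one must check that each of $\mathrm{f}_v,\mathrm{f}_{2v},\mathrm{f}_{3v},\mathrm{f}_{4v}$ (and $\mathfrak{f}_v$) assigns the same $\omega$ to the same Subcase outside the exceptional set. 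This reduces to an inspection of the ``Comparison of Subcases'' arguments in Section \ref{partI}, together with the parallel inspection in Section \ref{partIII} for the $\mathfrak{f}_v$-based construction.
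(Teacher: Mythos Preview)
Your treatment of $F_{one}$, $F_{two}$, and $F_{ex}$ is fine and matches the paper. The gap is in your handling of $F_{\pi/12}$. You claim that the Subcase classification is intrinsic to $\omega$, but the different mappings are built from genuinely different geometric data: $\mathrm{f}_v$ uses the forward path $\ga_\de[0,n_v)$ and the reflection axis $L_v^{AC}$, whereas $\mathrm{f}_{3v},\mathrm{f}_{4v}$ use the reversed path $\ga_\de^{-1}[0,n_v^{-})$ and the axis $L_v^{BD}$ (see the Remark after Proposition~\ref{rot-inv}). The stopping times $t_v,\tau_v$ and the edges $E_{\ga_\de(t_v)}$ you invoke therefore change with the mapping, so there is no reason to expect the Subcase labels to coincide configuration-by-configuration. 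The ``Comparison of Subcases'' passages you cite compare two configurations under one fixed mapping, not one configuration under two mappings, so they do not supply what you need. The same objection applies to your comparison of $\mathrm{f}_v$ with $\mathfrak{f}_v$: the Class~I/II split of Section~\ref{partIII} carries extra conditions (existence of $m$ with $\mathcal{H}_{w,m}$) absent from Case~I/II of Section~\ref{partI}, so the partitions differ by construction.

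The paper avoids this obstacle with a linear-algebra argument. It partitions by winding number, setting $\xi_{j,m,k}^v=|\{\omega\in\mathfrak{E}_{j,m}^{v,a}\setminus\mathfrak{E}_v: W_{\ga_\de}(A,e_b)=\pi/2+2k\pi\}|$, and observes that for \emph{any} mapping satisfying the one-to-one, onto, winding-preserving properties of Proposition~\ref{rot-inv-re}, the counts satisfy a fixed system of equations \eqref{Ak}--\eqref{Dk} whose right-hand sides are the intrinsic probabilities $p_{e_v,k}=\Pro(W_{\ga_\de}(e_v,e_b)=\cdots)$ plus error. The coefficient matrix is shown to be nonsingular, so after summing over residues $k\bmod 3$ (with Lemma~\ref{windings} controlling the accumulated error) the aggregated counts $\xi_{j,2,(n)}^v$ entering $F_{\pi/12}$ are uniquely determined by the $p_{e_v,(n)}$'s up to the stated error. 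Invariance then follows without ever asserting that the underlying partitions agree pointwise.
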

  \begin{proof}
  Note that
  $$
  F_{one}(A;\mathfrak{f}_v)=\Ex\exp\big(\frac i3 W_{\ga_\de(\omega)}(A,e_b)\big)\I(A\in\ga_\de,C\notin\ga_\de)+O\big((\de/d_v)^{2-\varepsilon}\big).
  $$
  So $F_{one}(A;\mathfrak{f}_v)$ and $F_{two}(A;\mathfrak{f}_v)=F(A)-F_{one}(A;\mathfrak{f}_v)-F_{ex}(A;\mathfrak{f}_v)$ are  invariant to $\mathfrak{f}_v$, $\mathfrak{f}_{j,v}$ up to an additive constant of the order $(\de/d_v)^{2-\varepsilon}$.

  Now we turn to $F_{\pi/12}(e_v;\mathfrak{f}_v)$.
  Define
  \beq
  \mathfrak{E}_{j,m,k}^v=\{\omega\in  \mathfrak{E}_{j,m}^v\setminus \mathfrak{E}_v: W_{\ga_\de(\omega)}(A,e_b)=\pi/2+2k\pi\}, \\
   \mathfrak{E}_{j,m,k}^{v,a}=\{\omega\in  \mathfrak{E}_{j,m}^{v,a}\setminus \mathfrak{E}_v: W_{\ga_\de(\omega)}(A,e_b)=\pi/2+2k\pi\},
  \eeq
  where $j=1,2,3,4$, $m=1,2$, and $k\in \Z$. By definition, $ \mathfrak{E}_{j,m,k}^v=\emptyset$ when $j=3,4$.   Let $\xi_{j,m,k}^v=|\mathfrak{E}_{j,m,k}^{v,a}|$, the cardinal number of the set $\mathfrak{E}_{j,m,k}^{v,a}$, and
  \beq
 && p_{A,k}=\Pro\big(W_{\ga_v}(A,e_b)=2k\pi+\pi/2\big), \ p_{B,k}=\Pro\big(W_{\ga_v}(B,e_b)=2k\pi\big),\\
 && p_{C,k}=\Pro\big(W_{\ga_v}(C,e_b)=2k\pi-\pi/2\big), \ p_{D,k}=\Pro\big(W_{\ga_v}(D,e_b)=2k\pi+\pi\big).
  \eeq
   From Proposition \ref{rot-inv-re}, we know that
  \beq
  \xi_{1,2,k}^v=\xi_{2,1,k}^v, \ \xi_{2,2,k}^v=\xi_{3,1,k}^v, \ \xi_{3,2,k}^v=\xi_{4,1,k-1}^v, \ \xi_{4,2,k}^v=\xi_{1,1,k}^v.
  \eeq

 Now let $\mathrm{N}$ be the number of configurations $2^{ |E_{\Omega_\de}\setminus \p_{ba,\de}|}$. By Proposition \ref{rot-inv-re}, we have
 \beqn
&& \mathrm{N}^{-1}(4\xi_{1,2,k}^v+3\xi_{2,2,k}^v+\xi_{3,2,k}^v+\xi_{3,2,k+1}^v+3\xi_{4,2,k}^v)=p_{A,k}+r_{A,k},\label{Ak}\\
&& \mathrm{N}^{-1}(3\xi_{1,2,k+1}^v+4\xi_{2,2,k+1}^v+3\xi_{3,2,k+1}^v+\xi_{4,2,k}^v+\xi_{4,2,k+1}^v)=p_{B,k+1}+r_{B,k+1},\label{Bk+1}\\
 &&\mathrm{N}^{-1}(\xi_{1,2,k}^v+\xi_{1,2,k+1}^v+3\xi_{2,2,k+1}^v+4\xi_{3,2,k+1}^v+3\xi_{4,2,k}^v)=p_{C,k+1}+r_{C,k+1},\label{Ck+1}\\
 && \mathrm{N}^{-1}(3\xi_{1,2,k}^v+\xi_{2,2,k}^v+\xi_{2,2,k+1}^v+3\xi_{3,2,k+1}^v+4\xi_{4,2,k}^v)=p_{D,k}+r_{D,k},\label{Dk}
  \eeqn
  where $r_{A,k}$, $r_{B,k}$, $r_{C,k}$ and $r_{D,k}$ are error terms which are of the order $(\de/d_v)^{2-\varepsilon}$.
  Let
  \beq
  \mathrm{K}_m&=&\min\big(\min\{k: \xi_{1,2,k}^v>0 \ \text{or} \  \xi_{2,2,k}^v>0 \ \text{or} \  \xi_{3,2,k}^v>0 \ \text{or} \  \xi_{4,2,k}^v>0\},\log\de\big),\\
  \mathrm{K}_M&=&\max\big(\max\{k: \xi_{1,2,k}^v>0 \ \text{or} \  \xi_{2,2,k}^v>0 \ \text{or} \  \xi_{3,2,k}^v>0 \ \text{or} \  \xi_{4,2,k}^v>0\},\log\de^{-1}\big).
   \eeq
   (By modifying $\mathrm{K}_m$ and $\mathrm{K}_M$ a little bit, we can assume that both apply to $\mathfrak{f}_{j,v}$ too, where $j=2,3,4$.)
   Based on \eqref{Ak}, \eqref{Bk+1}, \eqref{Ck+1} and \eqref{Dk} when $k=\mathrm{K}_m-1$, we can express $\xi_{j,2,\mathrm{K}_m}$ as a linear function of $p_{e_v,\mathrm{K}_m-1}+r_{e_v,\mathrm{K}_m-1}$ and $p_{e_v',\mathrm{K}_m}+r_{e_v',\mathrm{K}_m}$, where $j=1,2,3,4$, $e_v=A,D$ and $e_v'=B,C$. By iteration, we can obtain all $\xi_{j,2,k}$ when $\mathrm{K}_m\leq k\leq \mathrm{K}_M$. In other words, if we regard \eqref{Ak}-\eqref{Dk} for $\mathrm{K}_m-1\leq k\leq \mathrm{K}_M-1$ as a system of linear equations in $\xi_{j,2,k_1}$ for $\mathrm{K}_m\leq k_1\leq \mathrm{K}_M$, the coefficient matrix is nonsingular. (Actually, Lemma \ref{windings} shows that all $\xi_{j,2,k}$'s are determined by $p_{e_v,k}$'s up to an additive constant of the order $(\de/d_v)^{2-\varepsilon}$.)

  Next, write
  $\xi_{j,2,(n)}^v=\sum_l \xi_{j,2,3l+n}^v$, $p_{e_v,(n)}=\sum_l p_{e,3l+n}$, $r_{e_v,(n)}=\sum_l r_{e_v,3l+n}$, where $n=0,1,2$, $j=1,2,3,4$. Now taking sums of each of \eqref{Ak}-\eqref{Dk} over appropriate $k$ such that $k=3l+n$ and $\mathrm{K}_m-1\leq k\leq \mathrm{K}_M-1$  and noting Lemma \ref{windings}, we can get $12$ linear equations in $\xi_{j,2,(n)}^v$. These $12$ equations are linearly independent since the coefficient matrix of the original system of linear equations is non-singular. Note that $|r_{e_v,(n)}| \preceq (\de/d_v)^{2-\varepsilon}$. Each $\xi_{j,2,(n)}^v$ is uniquely determined by $p_{e_v,(n)}$ up to an additive constant of the order $(\de/d_v)^{2-\varepsilon}$. In summary, $\xi_{j,2,(n)}^v$ is invariant to $\mathfrak{f}_v$, $\mathfrak{f}_{j,v}$ and $\mathrm{f}_v$ up to an additive constant of the order $(\de/d_v)^{2-\varepsilon}$, where $j=2,3,4$. This implies the corresponding assertion for $F_{\pi/12}(e_v;\mathfrak{f}_v)$ by the definition of $F_{\pi/12}(e_v;\mathfrak{f}_v)$. So $\mathrm{F}(e_v;\mathfrak{f}_v)$ is  invariant to $\mathfrak{f}_v$, $\mathfrak{f}_{j,v}$ up to an additive constant of the order $(\de/d_v)^{2-\varepsilon}$,

 We can follow the same line to obtain the assertion for  $\mathrm{F}(e_v)$ since Proposition \ref{rot-inv} shows that the error term is of the order  $(\de/d_v)^2$, instead of $(\de/d_v)^{2-\varepsilon}$.

 Combining the above proof, we can get the third conclusion.
\end{proof}

By the modified parafermionic edge observables, we can define the modified parafermionic vertex observable and its dual by
\beqn
\mathrm{F}(v)=e_b^{-1/3}\big(e^{-i\pi/4}\frac{\mathrm{F}(A)+\mathrm{F}(C)}{2}+e^{i\pi/4}\frac{\mathrm{F}(B)+\mathrm{F}(D)}{2}\big), \label{hor}\\
\mathrm{F}^*(v)=e_b^{-1/3}\big(e^{i\pi/4}\frac{\mathrm{F}(A)+\mathrm{F}(C)}{2}+e^{-i\pi/4}\frac{\mathrm{F}(B)+\mathrm{F}(D)}{2}\big). \label{horstar}
\eeqn
After changing $\mathrm{F}(e_v)$ to $\mathrm{F}_i(e_v)$  in \eqref{hor} and \eqref{horstar}, we can define $\mathrm{F}_i(v)$ and $\mathrm{F}_i^*(v)$ respectively.

If $v\in \p_{ba,\de}\cup\p_{ab,\de}^*$, there are two incident medial edges which are not in $E_{\Omega_\de^\diamond}$. In this case, $\mathrm{F}(v)$ is defined by replacing two incident medial edges which are not in $E_{\Omega_\de^\diamond}$ in \eqref{hor}  by their corresponding opposite medial edges in $E_{\Omega_\de^\diamond}$ respectively. For example, in \eqref{hor}, if neither $B$ nor $C$ is in $E_{\Omega_\de^\diamond}$, then
\beqn
\mathrm{F}(v)=e_b^{-1/3}\big(e^{-i\pi/4}\mathrm{F}(A)+e^{i\pi/4}\mathrm{F}(D)\big). \label{Fvboundary}
\eeqn  $\mathrm{F}^*$, $\mathrm{F}_i$ and $\mathrm{F}_i^*$ have similar definitions when $v\in \p_{ba,\de}\cup\p_{ab,\de}^*$.

 To study $\mathrm{F}(\cdot)$ and $\mathrm{F}_i(\cdot)$, we begin with one important property shared by $F_{one}(\cdot)$, $F_{two}(\cdot)$ and $F_\theta(\cdot)$.

\begin{lemma} \label{F1F2}
 Suppose $v$ and $w$ are interior vertices in $\Omega_\de$ such that $A=B_w$. Then one has
 \beqn
 F_{one}(A)&=&F_{one}(B_w)+O\big((\de/d_v)^{2-\varepsilon}\big), \label{F1AvBw}\\
 F_{two}(A)&=&F_{two}(B_w)+O\big((\de/d_v)^{2-\varepsilon}\big), \label{F2AvBw}\\
 F_\theta(A)&=&F_\theta(B_w)+O\big((\de/d_v)^{2-\varepsilon}\big). \label{FtheAvBw}
 \eeqn
\end{lemma}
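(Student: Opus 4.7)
The plan is to exploit that $A$ and $B_w$ coincide as a single medial edge, so the winding functional $\exp(\frac{i}{3}W_{\ga_\de}(A,e_b))\I(A\in\ga_\de)$ is identical to $\exp(\frac{i}{3}W_{\ga_\de}(B_w,e_b))\I(B_w\in\ga_\de)$. This yields the exact identity $F(A)=F(B_w)$. Using the decomposition \eqref{decom-F} applied to both endpoints of this common edge, we obtain
\[
F_{one}(A)-F_{one}(B_w)=\big[F_{two}(B_w)-F_{two}(A)\big]+\big[F_{ex}(B_w)-F_{ex}(A)\big],
\]
so it suffices to control $F_{two}(A)-F_{two}(B_w)$, $F_{ex}(A)-F_{ex}(B_w)$, and then the $F_\theta$ difference separately. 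The exceptional differences are immediate: by Proposition \ref{rot-inv}, $|F_{ex}(A)|\leq\Pro(\mathcal{E}_v)\preceq(\de/d_v)^2$ and $|F_{ex}(B_w)|\leq\Pro(\mathcal{E}_w)\preceq(\de/d_v)^2$ since $w$ is adjacent to $v$ so $d_w\asymp d_v$.

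For the main difference $F_{two}(A)-F_{two}(B_w)$, since the set $\bigcup_j(\mathcal{E}_{j,1}^{v,a}\cup\mathcal{E}_{j,2}^{v,a})$ is the event that $\ga_\de$ visits $v$ twice (all four incident medial edges are traversed) and similarly at $w$, and since $\I(A\in\ga_\de)=\I(B_w\in\ga_\de)$, one has
\[
F_{two}(A)-F_{two}(B_w)=\Ex\big[e^{iW_{\ga_\de}(A,e_b)/3}\I(A\in\ga_\de)\big(\I(v\text{ twice})-\I(w\text{ twice})\big)\big]+O((\de/d_v)^2),
\]
the error absorbing the difference between the full "visited twice" event and its restriction outside $\mathcal{E}_v$ or $\mathcal{E}_w$. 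The symmetric difference is nonzero only on the two events $\{A\in\ga_\de, v\text{ twice}, w\text{ once}\}$ and $\{A\in\ga_\de, v\text{ once}, w\text{ twice}\}$, so the task reduces to bounding each of these by $(\de/d_v)^{2-\varepsilon}$.

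The key step is the arm count on, say, $\{A\in\ga_\de, v\text{ twice}, w\text{ once}\}$. Visiting $v$ twice produces a loop $\loop_v$ of some radius $r_v$ and a standard $(0,1,0,1)$ four-arm event in $B(v;2\de,r_v)$. The constraint that $w$ is visited only once while $A=B_w\in\ga_\de$ forces $\loop_v$ to avoid $w$, so the three medial edges of $\ga_\de$ at $v$ distinct from $A$ extend outward through the annulus $B(w;3\de,r_v)$ without terminating at $w$; combined with the single $\ga_\de$ arm leaving $w$ through the medial edge $Y\in\{A_w,C_w,D_w\}$, this produces a five-arm event of type $(0,1,0,1,1)$ (or a cyclic permutation) in $B(w;3\de,r_v)$, whose probability is $\preceq(\de/r_v)^2$ by \eqref{five}. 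Splitting by dyadic values $r_v\in[\de,d_v]$, one must further account for the outer $1$-arm from $v$ (respectively $w$) to the boundary at distance $d_v$, giving an extra factor handled by \eqref{one}; summing over the $O(\log\de^{-1})$ dyadic scales absorbs the logarithm into $\de^{-\varepsilon}$ and yields the bound $(\de/d_v)^{2-\varepsilon}$. The reverse event $\{v\text{ once}, w\text{ twice}\}$ is treated symmetrically. The statement for $F_\theta$ then follows from the same decomposition, using Proposition \ref{invarianceF} to identify $F_{\pi/12}(e_v)$ and $F_{\pi/12}(e_w)$ up to an additive error of the same order.

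The main obstacle will be the arm-counting itself: one must verify that the three $\ga_\de$-arms at $v$ genuinely remain disjoint from the $\ga_\de$-arm at $w$ through the whole annulus $B(w;3\de,r_v)$, rather than coalescing near $w$ or inside the loop $\loop_v$. This requires the same type of dyadic case analysis used in the proof of Proposition \ref{Tran}, where one tracks the positions at which arms might coincide and rules out near-coincidences using \eqref{four}, \eqref{four-a}, and \eqref{five}. A secondary technical point is treating the boundary between the "$v$ twice" region and the exceptional set $\mathcal{E}_v$ without double-counting; this is handled by the same convention as in Section \ref{partI}, namely restricting all expectations to $\omega\notin\mathcal{E}_v\cup\mathcal{E}_w$ from the outset.
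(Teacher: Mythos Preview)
Your approach has a fundamental gap. You reduce $F_{two}(A)-F_{two}(B_w)$ to the symmetric difference $\{v\text{ twice},\,w\text{ once}\}\cup\{v\text{ once},\,w\text{ twice}\}$ and then attempt to bound each piece in absolute value by $(\de/d_v)^{2-\varepsilon}$ via a five-arm estimate. But this probability bound is false. Take any configuration in which $\ga_\de$ visits $v$ and a loop of radius $O(\de)$ is attached to $E_v$ on the side \emph{away} from $w$: then all four medial edges at $v$ are traversed while $w$ is visited exactly once. Such local configurations occur with probability bounded below by a fixed constant given $\{v\in\ga_\de\}$, so $\Pro(v\text{ twice},\,w\text{ once})\succeq c\,\Pro(v\in\ga_\de)$, and the latter is far larger than $(\de/d_v)^{2-\varepsilon}$. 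Your five-arm picture breaks down precisely here: when the loop has microscopic radius the four arms live only in a bounded annulus, and no macroscopic fifth arm is produced. The identity in Lemma \ref{F1F2} therefore cannot come from a crude bound on the symmetric difference; it requires cancellation.

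The paper obtains this cancellation through the $180^\circ$ rotation around the \emph{centre of the medial edge} $A=B_w$ built in Section \ref{partII}. That construction (combined with the $\mathfrak{E}$-classes of Section \ref{partIII}, which are designed to be stable under it) yields, up to an exceptional set of probability $(\de/d_v)^{2-\varepsilon}$, a winding-preserving bijection $\mathfrak{E}_{1,2}^v\cup\mathfrak{E}_{2,1}^v\leftrightarrow\mathfrak{E}_{2,2}^w\cup\mathfrak{E}_{3,1}^w$ (and the analogous one with $C,D_w$). This matches the $(j,1)$ versus $(j,2)$ labels at $v$ with those at $w$ and immediately gives all three relations \eqref{F1AvBw}--\eqref{FtheAvBw}. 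Your appeal to Proposition \ref{invarianceF} for $F_\theta$ does not substitute for this: that proposition shows the observable is insensitive to the choice among $\mathrm{f}_v,\mathfrak{f}_v,\ldots$ at a \emph{single} vertex, but says nothing about compatibility of the $\mathcal{E}_{j,m}^{v,a}$ partition with the $\mathcal{E}_{j,m}^{w,a}$ partition at the adjacent vertex. The Section \ref{partII} rotation is the missing ingredient, and indeed is the reason that section exists.
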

\begin{proof}
The construction in Section \ref{partII} and Section \ref{partIII} shows that after ignoring an event with probability of the order $(\de/d_v)^{2-\varepsilon}$, there is a one-to-one mapping from $\mathfrak{E}_{1,2}^v\cup \mathfrak{E}_{2,1}^v$ to $\mathfrak{E}_{2,2}^w\cup \mathfrak{E}_{3,1}^w$ such that $W_{\ga_v}(A,e_b)=W_{\ga_w}(B_w,e_b)$. In addition, there is also a one-to-one mapping from $\mathfrak{E}_{3,2}^v\cup \mathfrak{E}_{4,1}^v$ to $\mathfrak{E}_{4,2}^w\cup \mathfrak{E}_{1,1}^w$ such that $W_{\ga_v}(C,e_b)=W_{\ga_w}(D_w,e_b)$ after ignoring an event with probability of the order $(\de/d_v)^{2-\varepsilon}$. Actually, these two mappings can be reversed based on the construction in Section \ref{partII} and Section \ref{partIII}. Combining the fact that $F(A)=F(B_w)$, we can complete the proof.
\end{proof}

Based on Lemma \ref{F1F2}, we can prove that
 the modified parafermionic observables $\mathrm{F}(\cdot)$ and $\mathrm{F}_i(\cdot)$ have several  properties stated in following proposition.

 \begin{proposition} \label{F12}
 Suppose $v$ and $w$ are interior vertices in $\Omega_\de$ such that $A=B_w$. Then one has that
 \beqn
 \mathrm{F}(A)&=&\mathrm{F}(B_w)+O\big((\de/d_v)^{2-\varepsilon}\big), \label{Fev2F}\\
\mathrm{F}_i(A)&=&\mathrm{F}_i(B_w)+O\big((\de/d_v)^{2-\varepsilon}\big), \label{F-relation}\\
  \mathrm{F}(A)- \mathrm{F}(C)&=&i\big( \mathrm{F}(B)- \mathrm{F}(D)\big), \label{CRF}\\
  \mathrm{F}(A)+ \mathrm{F}(C)&=& \mathrm{F}(B)+ \mathrm{F}(D)+O\big((\de/d_v)^2\big), \label{sumid}\\
   \mathrm{F}_i(A)- \mathrm{F}_i(C)&=&i\big( \mathrm{F}_i(B)- \mathrm{F}_i(D)\big), \label{CRFi}\\
  \mathrm{F}_i(A)+ \mathrm{F}_i(C)&=&-\big( \mathrm{F}_i(B)+ \mathrm{F}_i(D)\big)+O\big((\de/d_v)^2\big), \label{sumidi}\\
\mathrm{F}(v)&=&\mathrm{F}^*(v)+O\big((\de/d_v)^2\big), \label{idstar}\\
\mathrm{F}_i(v)&=&-\mathrm{F}_i^*(v)+O\big((\de/d_v)^2\big). \label{idstari}
 \eeqn
  \end{proposition}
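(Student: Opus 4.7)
The eight identities split into three groups: the translation identities \eqref{Fev2F} and \eqref{F-relation}, the discrete Cauchy-Riemann identities \eqref{CRF} and \eqref{CRFi}, and the sum/vertex identities \eqref{sumid}, \eqref{sumidi}, \eqref{idstar}, \eqref{idstari}.

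The translation identities are straightforward: $F(A)=F(B_w)$ holds trivially since $A=B_w$ as medial edges, while Lemma \ref{F1F2} supplies $F_m(A)=F_m(B_w)+O((\de/d_v)^{2-\varepsilon})$ for $m\in\{\mathrm{one},\mathrm{two},\pi/12\}$, and the exceptional piece is bounded by $|F_{ex}(e)|\leq\Pro(\mathcal{E}_v)\preceq(\de/d_v)^2$. Substituting into the linear combination defining $\mathrm{F}$ yields \eqref{Fev2F}, and \eqref{F-relation} follows from $\mathrm{F}_i=F-\mathrm{F}$ together with $F(A)=F(B_w)$.

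The heart of the proposition is the sum identity \eqref{sumid}. I apply the rotational bijection $\mathrm{f}_v:\mathcal{E}_{j,2}^v\to\mathcal{E}_{j+1,1}^v$ from Proposition \ref{rot-inv} -- a choice made deliberately since Proposition \ref{invarianceF} already guarantees $\mathrm{F}$ is $\mathrm{f}_v$-invariant up to the sharp error $(\de/d_v)^2$. Under $\mathrm{f}_v$ one winding at the shared medial edge is preserved while the other rotates by $\pm\pi/2$, producing a phase factor $e^{\pm i\pi/6}$ under $\exp(iW/3)$. Regrouping the contributions to $\mathrm{F}(A)+\mathrm{F}(C)$ and $\mathrm{F}(B)+\mathrm{F}(D)$ according to which pair of medial edges at $v$ lies in $\ga_\de$, and transporting $\mathcal{E}_{j,2}^{v,a}$-pieces to $\mathcal{E}_{j+1,1}^{v,a}$-pieces via $\mathrm{f}_v$, yields linear relations among the pair-sums of $F_{one}$, $F_{two}$, and $F_{\pi/12}$. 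The coefficients $\tfrac{3\sqrt{3}+5}{6\sqrt{3}+6}$, $\sqrt{3}$, $1$, $\tfrac{\sqrt{3}-1}{\sqrt{2}}$, $\tfrac{1}{2}$ appearing in the definition of $\mathrm{F}$ are the unique solution of this system that forces $\mathrm{F}(A)+\mathrm{F}(C)-\mathrm{F}(B)-\mathrm{F}(D)=O((\de/d_v)^2)$, with slack coming only from $\Pro(\mathcal{E}_v)$. The same linear system also forces the CR-defect of $\mathrm{F}$ to vanish, giving \eqref{CRF}; then \eqref{CRFi} is immediate from $\mathrm{F}_i=F-\mathrm{F}$ and the exact CR \eqref{CR} for $F$. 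The main obstacle is the explicit algebraic verification that the listed constants solve this coupled linear system.

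Identity \eqref{sumidi} reduces, using $\mathrm{F}_i=F-\mathrm{F}$ and \eqref{sumid}, to the companion identity $F(A)+F(B)+F(C)+F(D)=2(\mathrm{F}(A)+\mathrm{F}(C))+O((\de/d_v)^2)$, which drops out of the same rotational system. The vertex identities \eqref{idstar} and \eqref{idstari} are then direct substitutions into the definitions \eqref{hor}--\eqref{horstar}:
\[
\mathrm{F}(v)-\mathrm{F}^*(v)=-\tfrac{i}{\sqrt{2}}\,e_b^{-1/3}\big[(\mathrm{F}(A)+\mathrm{F}(C))-(\mathrm{F}(B)+\mathrm{F}(D))\big]=O((\de/d_v)^2)
\]
by \eqref{sumid}, and
\[
\mathrm{F}_i(v)+\mathrm{F}_i^*(v)=\tfrac{1}{\sqrt{2}}\,e_b^{-1/3}\big[\mathrm{F}_i(A)+\mathrm{F}_i(B)+\mathrm{F}_i(C)+\mathrm{F}_i(D)\big]=O((\de/d_v)^2)
\]
by \eqref{sumidi}.
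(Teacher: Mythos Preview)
Your strategy matches the paper's: Lemma \ref{F1F2} gives \eqref{Fev2F}--\eqref{F-relation}, the rotational bijection $\mathrm{f}_v$ from Proposition \ref{rot-inv} drives \eqref{CRF}--\eqref{sumidi}, and \eqref{idstar}--\eqref{idstari} follow by substitution into \eqref{hor}--\eqref{horstar} exactly as you wrote. The one substantive difference is in how \eqref{sumid} is organized. You cast it as a coupled linear system in all five constants and flag its algebraic verification as the ``main obstacle''; the paper instead shows that the two building blocks of $\mathrm{F}$ satisfy the sum identity \emph{separately and exactly}: by pairing $\omega\in\mathcal{E}_{j,2}^v\cup\mathcal{E}_{j+1,1}^v\setminus\mathcal{E}_v$ one gets
\[
F_{\pi/12}(A)+F_{\pi/12}(C)=F_{\pi/12}(B)+F_{\pi/12}(D)
\]
with no error, and likewise $\sqrt{3}F_{one}+F_{two}$ satisfies the exact sum identity. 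Hence \emph{any} linear combination of these two pieces, plus any multiple of $F_{ex}$, satisfies \eqref{sumid}, the $O((\de/d_v)^2)$ coming solely from $|F_{ex}|\leq\Pro(\mathcal{E}_v)\preceq(\de/d_v)^2$. So among your five constants only the ratio $\sqrt{3}:1$ between $F_{one}$ and $F_{two}$ is actually used in \eqref{sumid}; the remaining ones $\tfrac{3\sqrt{3}+5}{6\sqrt{3}+6}$, $\tfrac{\sqrt{3}-1}{\sqrt{2}}$, $\tfrac{1}{2}$ are determined instead by the exact CR identity \eqref{CRF} (together with the normalization $\mathrm{F}+\mathrm{F}_i=F$), which the paper checks by grouping the four configurations $\omega,\mathrm{f}_v(\omega),\omega_{\setminus E_v},\mathrm{f}_v(\omega_{\setminus E_v})$. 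This decoupling dissolves the obstacle you flagged and lets the paper treat \eqref{sumidi} as literally ``the same'' argument as \eqref{sumid}, without needing your auxiliary companion identity $F(A)+F(B)+F(C)+F(D)=2(\mathrm{F}(A)+\mathrm{F}(C))+O((\de/d_v)^2)$.
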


  \begin{proof}
 By Lemma \ref{F1F2}, one obtains \eqref{Fev2F} directly, which implies \eqref{F-relation}.

Next via checking the combined contribution of $\omega$, $\mathrm{f}_v(\omega)$,  $\omega_{\setminus E_v}$ and $\mathrm{f}_v(\omega_{\setminus E_v})$ to $\mathrm{F}(\cdot)$
 for $\omega\in \mathcal{E}_{j,2}^v\setminus \mathcal{E}_v$ with $j=1,2,3,4$, we can obtain
\eqref{CRF}.

Thirdly, considering the contribution of $\omega\in \mathcal{E}_{j,2}^{v}\cup\mathcal{E}_{j+1,1}^{v}\setminus \mathcal{E}_v$ with $j=1,2,3,4$ to $F_{\pi/12}(\cdot)$,
we have
\beqn
F_{\pi/12}(A)+F_{\pi/12}(C)=F_{\pi/12}(B)+F_{\pi/12}(D). \label{Fpi}
\eeqn
In addition,
\beqn
&&\sqrt 3 F_{one}(A)+F_{two}(A)+\sqrt 3 F_{one}(C)+F_{two}(C)\nonumber\\
&=&\sqrt 3 F_{one}(B)+F_{two}(B)+\sqrt 3 F_{one}(D)+F_{two}(D). \label{sumF1F2}
\eeqn
Combining \eqref{Fpi}, \eqref{sumF1F2} and the fact that $\Pro(\mathcal{E}_v)=O\big((\de/d_v)^2\big)$ completes the proof of \eqref{sumid}.

The proof of \eqref{CRFi} and \eqref{sumidi} is the same as that of \eqref{CRF} and \eqref{sumid}. So we omit the details. Finally, by \eqref{sumid} and \eqref{sumidi}, we have \eqref{idstar} and \eqref{idstari} directly.
 \end{proof}

\begin{remark}
We have seen that there are several mappings $\mathfrak{f}_v$, $\mathfrak{f}_{j,v}$, $\mathrm{f}_v$ and $\mathrm{f}_{jv}$, through which we can define $\mathrm{F}$. The purpose of introducing $\mathfrak{f}_v$, $\mathfrak{f}_{j,v}$ is to derive Lemma \ref{F1F2}, while the purpose of introducing $\mathrm{f}_v$ and $\mathrm{f}_{jv}$ is to obtain the following  translational property of $\mathrm{F}$, and the boundary behaviour of $\mathrm{F}$ in Section \ref{confinvar}. From now one, we will focus on $\mathrm{F}(\cdot)$.
\end{remark}

The translational property of $F$ in Propositions \ref{Tran} and \ref{Tranimp} is also correct for $\mathrm{F}$ and $\mathrm{F}_i$, as stated in the next result.

\begin{proposition} \label{TranF}
Propositions \ref{Tran} and \ref{Tranimp} are correct if one changes $F$ to $\mathrm{F}$ and $\mathrm{F}_i$ respectively.
\end{proposition}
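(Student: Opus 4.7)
The plan is to reduce the statement for $\mathrm{F}$ and $\mathrm{F}_i$ to the corresponding statement for $F$ by exploiting the linear decomposition
$$\mathrm{F}(e_v)=\tfrac{3\sqrt 3+5}{6\sqrt 3+6}\bigl(\sqrt{3}F_{one}(e_v)+F_{two}(e_v)+\tfrac{\sqrt 3-1}{\sqrt 2}F_{\pi/12}(e_v)\bigr)+\tfrac{1}{2}F_{ex}(e_v)$$
and $\mathrm{F}_i(e_v)=F(e_v)-\mathrm{F}(e_v)$. Thus it suffices to establish the translational bound of Propositions \ref{Tran}--\ref{Tranimp} separately for each of $F_{one}$, $F_{two}$, $F_{\pi/12}$ and $F_{ex}$, evaluated at the paired edges $e_0$ and $e_1$ of the vertices $v_0$ and $v_1$.

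The key observation is that the bijection $\omega\mapsto\omega^L$ constructed in Proposition~\ref{Tran} (and its boundary version in Proposition~\ref{Tranimp}) is local in the following strong sense: outside the ball of radius roughly $r_0R^{K+1}$ around $v_0$ the two configurations $\omega$ and $\omega^L$ agree with a shift by $\delta$, and the exploration paths $\gamma_\delta$ and $\gamma_\delta^L$ coincide up to the hitting time $\tau_{oK}$ and match step-by-step (after shift) again near the target vertices. Consequently the local loop structures $\mathcal{L}_{v_0}$ in $\omega$ and $\mathcal{L}_{v_1}$ in $\omega^L$ are translates of each other, and the classification of $\omega$ into the events $\mathcal{E}_{j,m}^{v_0}$, $\mathcal{E}_{j,m}^{v_0,a}$, $\mathcal{E}_{v_0}$ (which depends only on the configuration inside the loop, together with how $\gamma_\delta$ enters and leaves) coincides with the classification of $\omega^L$ into the corresponding events at $v_1$, after ignoring an additional event of probability $O((\delta/d_0)^{1+\alpha-\epsilon})$ absorbed in the same fashion as the exceptional events already handled in the proof of Proposition~\ref{Tran}. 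In particular the winding identity $W_{\gamma_\delta}(A_0,e_b)=W_{\gamma_\delta^L}(A_1,e_b)$ still holds and the indicator functions appearing in \eqref{F1A}, \eqref{F2A}, \eqref{Ftheta} are preserved under the bijection, so that
$$|F_{one}(e_0)-F_{one}(e_1)|,\ |F_{two}(e_0)-F_{two}(e_1)|,\ |F_{ex}(e_0)-F_{ex}(e_1)|\ \preceq\ (\delta/d_0)^{1+\alpha-\epsilon}.$$

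For $F_{\pi/12}$ one additionally needs the rotation bijection $\mathrm{f}_v$ from Proposition~\ref{rot-inv} to intertwine with the translation. Since $\mathrm{f}_v$ only alters edges within $\mathcal{L}_v$, within the symmetric images $\mathcal{L}_v^{AC}$, $\mathcal{L}_v^{-AC}$, and within local sets $\mathcal{E}_{coi}$, $\mathcal{E}_{exc}$ determined from these loops, the same translation argument gives a bijection between $(\omega,\mathrm{f}_{v_0}(\omega))$ and $(\omega^L,\mathrm{f}_{v_1}(\omega^L))$. I would verify this by translating the whole construction of Subcases~(1)--(3) of Section~\ref{partI} by the $\delta$-shift and checking that the touching times $t^v,t_v,\mathrm{t}^v,\tau_v,\tau^v,\varsigma^v$ are transported to the corresponding times at $v_1$ up to an exceptional event that again has probability $O((\delta/d_0)^{1+\alpha-\epsilon})$, using the same $k_0$-annulus decomposition and the four-arm/six-arm estimates of Section~\ref{TranInv}. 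This yields $|F_{\pi/12}(e_0)-F_{\pi/12}(e_1)|\preceq (\delta/d_0)^{1+\alpha-\epsilon}$, and the triangle inequality on the definition of $\mathrm{F}$ completes the proof of the translation bound for $\mathrm{F}$; the bound for $\mathrm{F}_i$ then follows from $\mathrm{F}_i=F-\mathrm{F}$ and the bound for $F$ itself from Propositions~\ref{Tran}--\ref{Tranimp}.

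The main obstacle will be the step for $F_{\pi/12}$, because the mapping $\mathrm{f}_v$ modifies edges on possibly irregular loops $\mathcal{L}_v$ whose diameter $r$ may be comparable to $d_0$ or even larger in Case~II of Section~\ref{partI}. The careful bookkeeping of exceptional events $\mathcal{E}_k,\mathfrak{E}_k,\mathfrak{F}_k,\mathcal{G}_k$ (and their rotational counterparts $\mathcal{E}_{v,1,3}$, $\mathfrak{E}^{v,2,4}$) must be shown compatible with the left-shift bijection; this essentially asks that the definitions of these events are translation-invariant up to an $O((\delta/d_0)^{1+\alpha-\epsilon})$ symmetric difference in the product measure, which can be established by the same arm-event estimates \eqref{four}--\eqref{six} already used in Section~\ref{TranInv}, but applied around $v_0$ and $v_1$ simultaneously with care taken at the scales where the two balls $B(v_0;r_0R^k)$ and $B(v_1;r_0R^k)$ overlap essentially.
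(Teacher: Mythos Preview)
Your approach is correct and is exactly the paper's: the translation bijection $\omega\mapsto\omega^L$ of Propositions~\ref{Tran}--\ref{Tranimp} rigidly shifts the loop $\mathcal{L}_{v_0}$ and the portion of $\gamma_\delta$ near $v_0$, hence carries $\mathcal{E}_{j,m}^{v_0}\to\mathcal{E}_{j,m}^{v_1}$ and $\mathcal{E}_{v_0}\to\mathcal{E}_{v_1}$ modulo the same $O((\delta/d_0)^{1+\alpha-\epsilon})$ exceptional set, and since every ingredient $F_{one},F_{two},F_{\pi/12},F_{ex}$ is defined solely through these events and the (preserved) winding, the bound follows at once.

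Your separate treatment of $F_{\pi/12}$ and the ``main obstacle'' you flag are superfluous: you do not need $\mathrm{f}_v$ itself to intertwine with the shift, only that membership in $\mathcal{E}_{j,1}^{v}$ versus $\mathcal{E}_{j,2}^{v}$ is preserved, and this is already contained in the observation you made for the other three terms (the Subcase~(1)--(3) decomposition of Section~\ref{partI} depends only on data inside the rigidly shifted region). The regime where $r$ is comparable to $d_0$ is likewise already absorbed by Case~2 of Proposition~\ref{Tran}, in which every edge observable at $v_0,v_1$ is individually $O((\delta/d_0)^{1+\alpha-\epsilon})$.
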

\begin{proof}
The translation in the proof of Propositions \ref{Tran} and \ref{Tranimp} makes $\mathcal{E}_{j,m}^{v_0}$ become $\mathcal{E}_{j,m}^{v_1}$, and $\mathcal{E}_{v_0}$ become $\mathcal{E}_{v_1}$ after ignoring a set with probability at most of the order $(\de/d_0)^{1+\al-\ep}$. So we have the assertion of the proposition.
\end{proof}

Finally we provide the following result used in the proof of Proposition \ref{invarianceF}.
\begin{lemma} \label{windings}
$$\Pro(W_{\ga_\de}(e_v,e_b)\leq 2\pi\log \de/\sqrt{\log\log\de^{-1}}) \preceq \de^2, \ \Pro(W_{\ga_\de}(e_v,e_b)\geq 2\pi\log \de^{-1}/\sqrt{\log\log\de^{-1}}) \preceq \de^2.$$
\end{lemma}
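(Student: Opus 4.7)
The plan is to bound the atypically large winding $W_{\gamma_\de}(e_v,e_b)$ by reducing it to a multi-scale analysis based on the arm-event estimates \eqref{one}--\eqref{six}. Since $W_{\gamma_\de}(e_v,e_b)=W_{\gamma_\de}(e_v,e_a)+W_{\gamma_\de}(e_a,e_b)$ and the second term depends only on the boundary data, it suffices to bound the tails of $W_{\gamma_\de}(e_v,e_a)$, which is the total net angular rotation accumulated by the $\pm\pi/2$ turns along $\gamma_\de$ between $e_v$ and $e_a$.

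The key step is to decompose the contribution by spatial scales centered at $v$. I would introduce dyadic annuli $\mathcal{A}_k=B(v;2^{k-1}\de,2^k\de)$ for $1\le k\le K$ with $2^K\de\asymp\mathrm{diam}(\Omega_\de)$, and write $W_{\gamma_\de}(e_v,e_a)=\sum_k W_k$ where $W_k$ records the net winding accumulated by $\gamma_\de$ during its excursions into $\mathcal{A}_k$. The crucial geometric observation is that if $|W_k|\ge 2\pi j$, then $\gamma_\de$ crosses $\mathcal{A}_k$ at least $2j$ times, producing a polychromatic $4j$-arm event (alternating open and dual-open arms) in $\mathcal{A}_k$. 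For the first few cases ($j=1$ or $j=2$) this can be controlled directly by \eqref{four}, \eqref{four-a}, \eqref{five} and \eqref{six}; for larger $j$ I would invoke the standard quantitative bound $\psi_{2n}\ge c n^{2}$ on the polychromatic $2n$-arm exponent, so that $\Pro(|W_k|\ge 2\pi j)\preceq 2^{-c_1 j^2}$ for each dyadic scale.

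Now, if $|W_{\gamma_\de}(e_v,e_a)|\ge 2\pi N$ with $N=\log\de^{-1}/\sqrt{\log\log\de^{-1}}$, then by a pigeonhole argument either (a) some single scale $k_0$ contributes $|W_{k_0}|\ge 2\pi\sqrt{N}$, or (b) at least $\sqrt{N}$ distinct scales each contribute $|W_k|\ge 2\pi$. Case (a) is controlled by the single-scale bound $2^{-c_1 N}$ summed over the $O(\log\de^{-1})$ choices of $k_0$, which is much smaller than $\de^2$ because $c_1 N/\log 2 \gg 2$ for small $\de$. Case (b) is handled by near-independence of the arm events in disjoint annuli: the probability is at most $\binom{K}{\sqrt N}(2^{-(1+\alpha_2)})^{\sqrt N}$, and a careful computation together with the super-polynomial decay in Case (a) (which can absorb any moderate scale where the contribution is not too concentrated) yields the desired $\de^2$ estimate. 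A symmetric argument handles the lower tail.

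The main obstacle is Case (b): a direct union bound with a single-scale probability of constant order is not enough, since $\binom{K}{\sqrt N}$ contributes a combinatorial factor that competes with the exponential gain. The fix will be to refine the pigeonhole so that each scale contributing to the winding is forced to contribute at least $\pi\sqrt{\log\log\de^{-1}}$ rather than $2\pi$, which only requires $\asymp\sqrt{\log\log\de^{-1}}$ alternating arms and thereby invokes the quadratic growth of $\psi_{2n}$ at each scale; combined with the fact that the relevant arm events live in disjoint annuli (hence are genuinely independent), this converts the bound into $\exp(-c(\log\de^{-1})^2/\log\log\de^{-1})\ll \de^2$. A final small wrinkle is taking care of the boundary correction (when $v$ is within $\de^{1-\ep}$ of $\partial_{ba,\de}\cup\partial^*_{ab,\de}$), where one substitutes the half-plane estimates \eqref{halftwo}--\eqref{halfthree} for the full-plane arm bounds.
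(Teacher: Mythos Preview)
Your approach is genuinely different from the paper's, and it has a real quantitative gap.

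\textbf{The main gap.} Your pigeonhole dichotomy is not valid. With $K\asymp\log\de^{-1}$ dyadic scales, it is perfectly possible that every scale contributes $|W_k|<2\pi$ (so neither (a) nor (b) holds) while $\sum_k W_k\ge 2\pi N$: for instance, take $W_k=\pi/2$ on $4N$ scales and $W_k=0$ on the rest, noting that $4N=4\log\de^{-1}/\sqrt{\log\log\de^{-1}}\ll K$. The ``fix'' suffers from the same problem: scales with $|W_k|<\pi\sqrt{\log\log\de^{-1}}$ can by themselves contribute up to $K\cdot\pi\sqrt{\log\log\de^{-1}}\asymp\log\de^{-1}\sqrt{\log\log\de^{-1}}\gg 2\pi N$, so you cannot discard them. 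More fundamentally, even granting independence across annuli and the sub-Gaussian tails $\Pro(|W_k|\ge 2\pi j)\le 2^{-cj^2}$, the best concentration you can get is
\[
\Pro\Big(\sum_k W_k\ge 2\pi N\Big)\;\le\;\exp\!\big(-c\,N^2/K\big)\;=\;\exp\!\big(-c\log\de^{-1}/\log\log\de^{-1}\big)\;=\;\de^{\,c/\log\log\de^{-1}},
\]
which is \emph{not} $\preceq\de^2$; in fact it tends to $1$. Your claimed bound $\exp(-c(\log\de^{-1})^2/\log\log\de^{-1})$ has a missing factor of $K$ in the denominator. The dyadic multi-scale decomposition simply has too many scales for the target $N=\log\de^{-1}/\sqrt{\log\log\de^{-1}}$ to be a large deviation.

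\textbf{Secondary issues.} The implication ``$|W_k|\ge 2\pi j\Rightarrow 4j$ alternating arms in $\mathcal A_k$'' is not correct as stated: $W_k$ is the tangent rotation, and a single simple crossing of $\mathcal A_k$ can spiral many times and accumulate large tangent rotation while producing only two arms in $\mathcal A_k$ (though it does force many arms at a smaller scale, which is a different statement). Also, the quadratic lower bound $\psi_{2n}\ge cn^2$ is not among the arm estimates available here.

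\textbf{What the paper does instead.} The paper's argument is sequential rather than spatial. It introduces stopping times $\tau_{W,k}$ at which the winding first reaches $2k\pi$, then, by tracking successive returns of $\gamma_\de$ to the real axis through $v$ and using only the one-arm bound \eqref{one}, shows
\[
\Pro\big(\tau_{W,k+1}<\infty\,\big|\,\tau_{W,k}<\infty\big)\;\le\;2Ck^{-\alpha}.
\]
Multiplying over $k\le N$ gives $(2C)^N(N!)^{-\alpha}$, and Stirling with $N=\log\de^{-1}/\sqrt{\log\log\de^{-1}}$ yields $\exp(-\alpha\log\de^{-1}\sqrt{\log\log\de^{-1}}\,(1+o(1)))\ll\de^2$. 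The factorial decay---coming from the $k$-dependent conditional bound---is precisely what your independent-scale sub-Gaussian picture cannot produce.
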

\begin{proof}
Suppose $v$ is the origin in $\C^2$ and $E_v$ belongs to the horizontal axis $\Im z=0$.
For positive integer $k$, define
\beq
\tau_{W,k}=\inf\{j>n_v: W_{\ga_\de}(A, e_{\ga_\de(j)})=2k\pi\},
\eeq
where $e_{\ga_\de(j)}$ is the first medial edge with one end $\ga_\de(j)$, which is passed by $\ga_\de[n_v,\infty)$ for the first time. Also introduce
$\tau_{W,k,0}=\inf\{j\geq \tau_{W,k}: \Im \ga_\de(j)=0\}$.
Without loss of generality, suppose $\Re \ga_\de(\tau_{W,k,0})>0$. Iteratively, based on $\tau_{W,k,l}$, we can define
$\tau_{W,k,l+1}=\inf\{j> \tau_{W,k,l}: \Im \ga_\de(j)=0, \Re\ga_\de(j)<0$ or $\Re \ga_\de(j)>\Re \ga_\de(j_1)$ for all $j_1\in[n_v,\tau_{W,k,l}]$ such that $\Im \ga_\de(j_1)=0\}$.

Note that
\beq
&&\Pro(\Re \ga_\de(\tau_{W,k,1})<0) \leq Ck^{-\al}, \\
&&\Pro(\Re \ga_\de(\tau_{W,k,1})>0,\cdots,\Re\ga_\de(\tau_{W,k,j})>0,\Re\ga_\de(\tau_{W,k,j+1})<0)\leq Ck^{-\al}2^{-j},
\eeq
where $C$ is the multiplicative constant contained in the one-arm event estimate.
This gives that $\Pro(\tau_{W,k+1}<\infty|\tau_{W,k})\leq 2Ck^{-\al}$. Hence Stirling's formula gives that
$$
\Pro(W_{\ga_\de}(e_v,e_b)\geq 2\pi\log \de^{-1}/\sqrt{\log\log\de^{-1}}) \leq (2C)^{\log \de^{-1}/\sqrt{\log\log\de^{-1}}} (\prod_{k=1}^{[\log\de^{-1}/\sqrt{\log\log\de^{-1}}]} k)^{-\al} \preceq \de^2.
$$
The first assertion can be obtained similarly. So we can complete the proof now.
\end{proof}

\section{Conformal invariance} \label{confinvar}
\subsection{Boundary probability} \label{bpro}
Suppose $\Omega_\de^\diamond$ satisfies Condition $\mathrm{C}$. Now we consider the inner discrete domain $\Omega_{in}$ in $\Omega_\de$ defined by
$$
\Omega^{in}=\{z\in \Omega_\de: \dist(z,\p\Omega_\de)\geq \de^{1/2+\ep}\}.
$$
Most parts of the boundary $\p\Omega^{in}$ are line segments with length at least of the order $r_\de$. The exceptional case occurs when part of $\mathrm{L}_j$ overlaps with part of $\mathrm{L}_{j-1}$ or the angle between $\mathrm{L}_j$ and $\mathrm{L}_{j+1}$ is $3\pi/2$. One can refer to Figure \ref{boundary} or \ref{boundary1}, where part of $\p\Omega^{in}$ consists of discretization of the semicicrcle or half semicircle with center $\mathrm{v}_j$. Now we replace the semicircle with center $\mathrm{v}_j$ and radius $\de^{1/2+\ep}$ in Figure \ref{boundary} by three line segments $z_9z'$, $z'z_{1/2}''$ and $z_{1/2}''z_{10}$, where $\{z_9,z',z_{1/2}'',z_{10}\}\subseteq V_{\Omega_\de}$, $z_{10}-\mathrm{v}_j=-(z_9-\mathrm{v}_j)=ik_0\de$ with $k_0$ the smallest integer bigger than or equal to $\de^{-1/2+\ep}$, $z'-z_9=-k_0\de$, $z_{1/2}''-z_{10}=-k_0\de$. We replace the half semicircle with center $\mathrm{v}_j$ and radius $\de^{1/2+\ep}$ in Figure \ref{boundary1} by two line segments $\mathrm{w}\mathrm{v}_j'$ and $\mathrm{v}_j'\mathrm{w}'$, where $\mathrm{w}-\mathrm{v}_j=-ik_0\de$, $\mathrm{w}'-\mathrm{v}_j=-k_0\de$, $\mathrm{v}_j'=\mathrm{w}-k_0\de$. After all semicircles and half semicircles are replaced by line segments, we get a new discrete domain $\Omega_\de^{in} \subseteq \Omega^{in}$.
By the assumptions on $\p_{ba,\de}$ and $\p_{ab,\de}^*$, we can see that $\p\Omega_\de^{in}$ consists of several line segments, whose number is of the same order as $\mathrm{n}+\mathrm{n}^*$. In addition, the line segment near $\mathrm{v}_j$ or $\mathrm{v}_{j^*}^*$ has length at least of the order $\de^{1/2+\ep}$. We will study the behaviour of $\mathrm{F}(v)^3$ on $\p\Omega_\de^{in}$. To proceed,  we need to study the property of $\Pro(v\in \ga_\de)$ when $v$ is between $\p\Omega_\de^{in}$ and $\p\Omega_\de$. According to the positions of line segments $\mathrm{L}_j$, we will consider three cases.

\noindent
{\bf Case 1}. Part of $\mathrm{L}_j$ overlaps with part of $\mathrm{L}_{j-1}$.

\begin{figure}[hp]
 \begin{center}
\scalebox{0.4}{\includegraphics{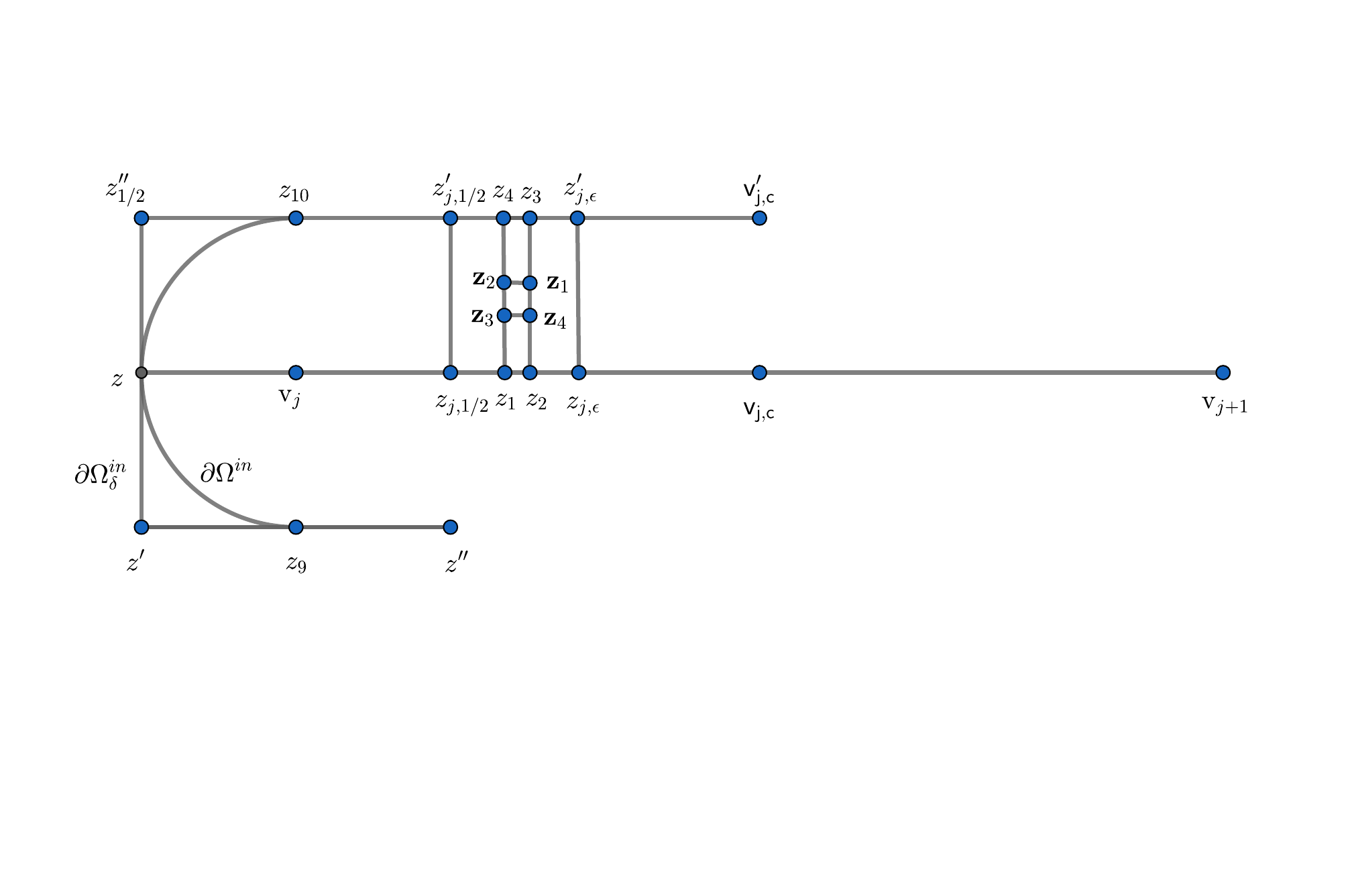}}
 \end{center}
\caption{Part of $\mathrm{L}_j$ overlaps with part of $\mathrm{L}_{j-1}$.} \label{boundary}
\end{figure}

This is illustrated in Figure \ref{boundary}, where the vertices $\mathrm{v}_j$ and $\mathrm{v}_{j+1}$ are two ends of $\mathrm{L}_{j}$. The two sides of (part of) $\mathrm{L}_j$ are in $\Omega_\de$. So part of $\mathrm{L}_j$ overlaps with part of $\mathrm{L}_{j-1}$. We can assume that $\mathrm{v}_j$ is the origin of $\C$, and $\mathrm{v}_{j+1}$ is on the positive real axis. In Figure \ref{boundary},  $\mathrm{v}_{j,c}$ is the center of $\mathrm{L}_j$, $z_{j,1/2}=k_0\de$, $z_{j,\ep}=[\de^{\ep-1}]\de$, $z_{j,1/2}'=k_0\de(1+i)$, $z_{j,1/2}'=z_{j,\ep}+ik_0\de$, $z_{1/2}''=k_0\de(-1+i)$, $z=-k_0\de$, $z'=k_0\de(-1-i)$, $z''=k_0\de(1-i)$,
$<z_1,z_2>$ is one edge in $\mathrm{L}_{j-1}$ such that $z_1\geq \de^{1/2+\ep}$, $z_2=z_1+\de$, $<z_3,z_4>$ is one edge in $E_{\Omega_\de^{in}}$ such that $\Re z_3=\Re z_2$, $\Re z_4=\Re z_1$. The line segments $z''z'$, $z'z_{1/2}''$, $z_{1/2}''\mathrm{v}'_{j,c}$ are parts of $\p\Omega_\de^{in}$, where $\mathrm{v}'_{j,c}=\Re \mathrm{v}_{j,c}+ik_0\de$.

Let $v\in \mathrm{L}_{j-1}$ with $v<r_\de$. Suppose $\ga_\de(n_v)=v$ such that $\Im(\ga_\de(n_v-1))< 0$ and $\Im(\ga_\de(n_v+1))< 0$. Define
\beq
\tau_\de^r=\inf\{j>n_v: |\ga_\de(j)-v|\geq r_\de/2\},\\
\tau_\de^l=\sup\{j<n_v: |\ga_\de(j)-v|\geq r_\de/2\},
\eeq
and
\beq
n_1=\sup\{j<n_v: \Im\ga_\de(j)=0, \Re \ga_\de(j)\leq 0\}.
\eeq
Iteration shows that given $n_k$ with $k\geq 1$, we can define $n_{k+1}$ by
\beq
n_{k+1}=\sup\{j<n_k: \Im\ga_\de(j)=0, \Re \ga_\de(j)\leq 0\}.
\eeq

\begin{proposition} \label{2pi}
Suppose $v\in \mathrm{L}_{j-1}\cap\mathrm{L}_j$, where $2\leq j\leq \mathrm{n}+1$.
For each configuration $\omega$ such that $\ga_\de(\omega)$ passes $v$ and $\ga_\de(n_v,\tau_\de^r]\cap\Half\not=\emptyset$, there exists one configuration $\omega^s$ such that in $\omega^s$ there is a dual-open cluster in $-\Half=\{z:\Im z<0\}$ connecting $v-i\de/2$ and $\p B(v,r_\de/2)$. More generally, for each configuration $\omega$ such that in $\omega$ there is a dual-open cluster connecting $v-i\de/2$ and $\p B(v,r_\de/2)$ which is not in $-\Half$, there exists one configuration $\omega^s$ such that in $\omega^s$ there is a dual-open cluster in $-\Half$ connecting $v-i\de/2$ and $\p B(v,r_\de/2)$. Moreover, the mapping from $\omega$ to $\omega^s$ is one-to-one. Hence,
$\Pro\big($there is a dual-open cluster in $-\Half$ connecting $v-i\de/2$ and $\p B(v,r_\de/2)\big)\asymp (\de/r_\de)^{1/3}$.
\end{proposition}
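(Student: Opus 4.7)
The plan has three components: the probability estimate, the construction of $\omega^s$ via reflection across the slit, and the verification of injectivity.

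For the probability estimate, observe that the event ``there is a dual-open cluster in $-\Half$ connecting $v - i\de/2$ and $\p B(v, r_\de/2)$'' is precisely a half-plane one-arm event of type $0$ centered at $v$, with inner radius of order $\de$ and outer radius $r_\de/2$. By \eqref{halfone} together with the standard RSW and quasi-multiplicativity arguments needed to pass from the single-edge endpoint $v - i\de/2$ to an annular one-arm event, this probability is of exact order $(\de/r_\de)^{1/3}$. So the final asymptotic identity will follow directly once the mapping is established (which is in fact not needed for the $\asymp$ statement, but is the main content used elsewhere in the paper).

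For the constructive part, the plan is to exploit the slit geometry: since $v \in \mathrm{L}_{j-1} \cap \mathrm{L}_j$ and these segments overlap (Figure \ref{boundary}), $\Omega_\de$ near $v$ is locally the plane minus a horizontal slit along the positive real axis, with the primal edges of $\mathrm{L}_j$ forced open by the Dobrushin boundary condition. Hence reflection across the real axis is a symmetry of the local boundary data. Given $\omega$ with a dual-open cluster from $v - i\de/2$ to $\p B(v, r_\de/2)$ that is not contained in $-\Half$, fix a canonical dual-open realizing path $\pi$ (say the leftmost such path) parametrized from $v - i\de/2$. The slit forces each maximal excursion of $\pi$ into $\Half \cap B(v, r_\de/2)$ to enter and exit the upper half plane by going around the slit tip $\mathrm{v}_{j+1}$. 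For each such excursion, swap the statuses $\omega(E) \leftrightarrow \omega(E^{\mathrm{ref}})$ for every dual edge $E$ visited, where $E^{\mathrm{ref}}$ is the reflection of $E$ across the real axis. Iterating over all excursions produces $\omega^s$ in which the image of $\pi$ lies entirely in $-\Half$. The first (more restricted) case reduces to the general one: if $\ga_\de$ passes $v$ and $\ga_\de(n_v, \tau_\de^r] \cap \Half \neq \emptyset$, then the exploration rule together with the open slit forces a dual-open arm along one side of $\ga_\de$, producing a dual-open cluster from $v - i\de/2$ to $\p B(v, r_\de/2)$ not contained in $-\Half$, to which the general construction applies.

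For injectivity, the canonical choice of $\pi$ and its excursion decomposition are deterministic functions of $\omega$; the inverse map is recovered from $\omega^s$ by identifying in the new canonical dual-open path (now lying in $-\Half$) the portions that arose from reflected excursions, and swapping them back. The main obstacle I anticipate is verifying injectivity when $\pi$ has many excursions that interact with each other, or when the reflected dual-open edges collide with pre-existing dual-open structure in $-\Half$. To handle this, I plan to proceed sequentially in the spirit of the iterative construction of $\varpi_d$ in the proof of Lemma \ref{special1} (its Steps $j$), using a BK-type disjointness argument to ensure that each elementary swap can be reversed unambiguously. Once injectivity is established, the $\asymp (\de/r_\de)^{1/3}$ conclusion follows from \eqref{halfone} as noted above.
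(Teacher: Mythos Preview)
Your high-level plan (local reflection across the real axis, concluding via \eqref{halfone}) is the same as the paper's, and your identification of the probability estimate is correct. However, your construction differs from the paper's in one substantive way, and your injectivity argument has a genuine gap.

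\textbf{What the paper actually reflects.} The paper does \emph{not} reflect a canonical dual-open path. It reflects portions of the exploration path $\ga_\de[\tau_\de^l,n_v]$, segmented by the times $n_1>n_2>\cdots$ at which $\ga_\de$ hits the \emph{negative} real axis (the slit tip is $\mathrm{v}_j=0$, not $\mathrm{v}_{j+1}$). On each interval $[\max(n_{k+1},\tau_\de^l),n_k)$ the rule depends on whether that piece lies in $-\Half$ or in $\Half$, and in the latter case whether its reflection crosses $\ga_\de(n_v,\tau_\de^r)$; the swapped edges are those \emph{touched} by the relevant portion of $\ga_\de$ and their mirror images. Crucially, after this produces a dual-open arm $\ga_r$ in $-\Half$, the paper applies an additional cleanup \textbf{Step 4} that swaps the statuses of certain extra edges (namely, those medial vertices common to $\ga_\de[n_v,\tau_\de^r)$ and the reflected/unreflected segments, away from $\ga_r$). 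This extra step exists solely to make the map injective.

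\textbf{The gap in your injectivity argument.} Your proposed inverse (``identify in the new canonical dual-open path the portions that arose from reflected excursions'') does not work as stated: once you have swapped $\omega(E)\leftrightarrow\omega(E^{\mathrm{ref}})$ along an excursion, there is no local marker in $\omega^s$ distinguishing a reflected excursion from dual-open structure that was already present in $-\Half$. Two distinct $\omega$'s whose leftmost paths differ only by whether a given sub-arc lies above or below the axis can land on the same $\omega^s$. The paper's proof of injectivity is correspondingly long: it assumes $\omega^{(1)s}=\omega^{(2)s}$ and, by tracking the first crossing point of $\ga_\de^{(1)}$ and $\ga_\de^{(2)}$ and using precisely the edges modified in Step~4, derives a contradiction. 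Reflecting an exploration path rather than a dual path is what makes this bookkeeping possible, because each medial vertex on $\ga_\de$ carries two pieces of information (the open edge it touches and the dual-open edge it touches), so one can use one to build the arm and the other to encode the inverse. Your appeal to the iterative Steps of Lemma~\ref{special1} is not a substitute: that construction solves a different problem (producing two disjoint half-plane arms) and its injectivity also relies on the two-sided information carried by the exploration path.

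In short: keep the reflection idea, but reflect edges touched by $\ga_\de[\tau_\de^l,n_v]$ segmented by the $n_k$'s, and plan for an explicit cleanup step whose only purpose is to make the map invertible.
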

\begin{proof}
For each $\ga_\de$ passing $v$ such that $\ga_\de(n_v,\tau_\de^r]\cap\Half\not=\emptyset$, we will reflect part of $\ga_\de[\tau_\de^l,n_v]$ to get one dual-open cluster in $-\Half$ stating at $v-i\de/2$. Here are detailed procedures.

{\bf Step 1i}. Swap the status of each open edge $\ga_\de[\max(n_1,\tau_\de^l),n_v)$ touches and the status of the corresponding edge $\ga_\de^s[\max(n_1,\tau_\de^l),n_v)$ touches, where $\ga_\de^s$ is the reflected path of $\ga_\de$ around the real line. And we assume that $\ga_\de^s(j)$  and $\ga_\de(j)$ are symmetric around the real line.

{\bf Step 1ii}. Swap the status of each dual-open edge $\ga_\de[\max(n_1,\tau_\de^l),n_v)\setminus\ga_\de[n_v,\infty)$ touches and the status of the corresponding edge $\ga_\de^s[\max(n_1,\tau_\de^l),n_v)\setminus\ga_\de^s[n_v,\infty)$ touches.

Next, we will handle $\ga_\de[\max(n_2,\tau_\de^l),\max(n_1,\tau_\de^l))$.

{\bf Step 2i}. If $\ga_\de(\max(n_2,\tau_\de^l),\max(n_1,\tau_\de^l))\subseteq-\Half$, we apply {\bf Step 1i} and {\bf Step 1ii} to $\ga_\de[\max(n_2,\tau_\de^l),\max(n_1,\tau_\de^l))$.

{\bf Step 2ii}. If $\ga_\de(\max(n_2,\tau_\de^l),\max(n_1,\tau_\de^l))\subseteq\Half$, we turn to {\bf Step 3i}.

{\bf Step 3i}. If $\ga_\de^s(\max(n_2,\tau_\de^l),\max(n_1,\tau_\de^l))$ doesn't cross $\ga_\de(n_v,\tau_\de^r)$, swap the status of each edge $\ga_\de[\max(n_2,\tau_\de^l),\max(n_1,\tau_\de^l))$ touches and the status of the corresponding edge $\ga_\de^s[\max(n_2,\tau_\de^l),\max(n_1,\tau_\de^l))$ touches.

{\bf Step 3ii}. If $\ga_\de^s(\max(n_2,\tau_\de^l),\max(n_1,\tau_\de^l))$ crosses $\ga_\de(n_v,\tau_\de^r)$, define
\beq
t_{12}^l=\sup\{j<n_1: \ga_\de^s[j-1,j]\subseteq_c \ga_\de(n_v,\tau_\de^r)\}, \ t_{12}^r=\inf\{j>n_v: \ga_\de(j)=\ga_\de^s(t_{12}^l)\},\\
t_{21}^l=\inf\{j>n_2:\ga_\de^s[j,j+1]\subseteq_c \ga_\de(n_v,\tau_\de^r)\}, \ t_{21}^r=\inf\{j>n_v: \ga_\de(j)=\ga_\de^s(t_{21}^l)\}.
\eeq
Swap the status of each edge $\ga_\de(t_{12}^l,\max(n_1,\tau_\de^l))$  touches and the status of the corresponding edge $\ga_\de^s(t_{12}^l,\max(n_1,\tau_\de^l))$ touches; swap the status of each edge $\ga_\de[\max(n_2,\tau_\de^l), t_{21}^l)$ touches and the status of the corresponding edge $\ga_\de^s[\max(n_2,\tau_\de^l), t_{21}^l)$ touches;
swap the status of each dual-open edge $\ga_\de(t_{21}^l,t_{12}^l)$ touches and the status of the corresponding edge $\ga_\de^s(t_{21}^l,t_{12}^l)$ touches; swap the status of each open edge which $\ga_\de(t_{21}^l,t_{12}^l)$ touches and whose symmetric image around the real axis doesn't cross any dual-open edge $\ga_\de(n_v,\tau_\de^r)$ touches, and the status of the corresponding edge $\ga_\de^s(t_{21}^l,t_{12}^l)$ touches.

For every interval $[n_{k+1},n_k)$ with $\tau_\de^l<n_k$, if $\ga_\de(\min(n_{k+1},\tau_\de^l),n_k)\subseteq-\Half$, we apply {\bf Step 1i} and {\bf Step 1ii} to $\ga_\de[\min(n_{k+1},\tau_\de^l),n_k)$; if $\ga_\de(\min(n_{k+1},\tau_\de^l),n_k)\subseteq\Half$, we apply either  {\bf Step 3i} or {\bf Step 3ii}  to $\ga_\de[\min(n_{k+1},\tau_\de^l),n_k)$.

The above reflection shows that after the last interval $[n_{k_m+1},n_{k_m})$ with $n_{k_m+1}\leq \tau_\de^l<n_{k_m}$ is handled, there is a dual-open cluster in $-\Half$ connecting $v-i\de/2$ and $\p B(v,r_\de/2)$. We denote this configuration by $\omega_r$, and the exploration path from $v$ to $\p B(v,r_\de/2)$ by $\ga_r$. However, the mapping from $\omega$ to $\omega_r$ may not be one-to-one. To guarantee that the reflection of configurations is one-to-one, we need one more step.

{\bf Step 4}. For intervals $[n_k,n_{k-1})$ with $1\leq k\leq k_m+1$, $n_0=n_v$ and $\ga_\de(n_k,n_{k-1})\subseteq -\Half$, let $\ga_\de[n_v,\tau_\de^r)\cap\ga_\de[n_k,n_{k-1})\setminus\ga_r=\{v_{k,j}, 1\leq j\leq j_k\}$. Define
\beq
&&\omega'_{r}(E_{\ga_\de(v_{k,j})})=\omega_{r}(E^s_{\ga_\de(v_{k,j})}), \ \omega'_{r}(E^s_{\ga_\de(v_{k,j})})=\omega_{r}(E_{\ga_\de(v_{k,j})}),\\
&&\omega'_{r}(E)=\omega_{r}(E), \ E\not=E_{\ga_\de(v_{k,j})}.
\eeq
For intervals $[n_k,n_{k-1})$ with $1\leq k\leq k_m+1$, $n_0=n_v$ and $\ga_\de(n_k,n_{k-1})\subseteq \Half$, let $\{w: w\in \ga_\de[n_v,\tau_\de^r)\cap\ga_\de^s[n_k,n_{k-1}), w\notin \ga_\de[\tau_\de^l,n_v), \omega(E_w)=0, \omega(E_w^s)=1\}\setminus\ga_r=\{w_{k,j}, 1\leq j\leq j_k'\}$. Define
\beq
&&\omega''_{r}(E_{\ga_\de(w_{k,j})})=\omega'_{r}(E^s_{\ga_\de(w_{k,j})}), \ \omega''_{r}(E^s_{\ga_\de(w_{k,j})})=\omega'_{r}(E_{\ga_\de(w_{k,j})}),\\
&&\omega''_{r}(E)=\omega'_{r}(E), \ E\not=E_{\ga_\de(w_{k,j})}.
\eeq
Finally, let $\omega^s=\omega_r''$.

It remains to show that the mapping from $\omega$ to $\omega^s$ is one-to-one. To achieve this, let $\omega^{(1)}$ and $\omega^{(2)}$ be two different configurations leading to the respective $\ga_\de^{(1)}$ and $\ga_\de^{(2)}$ which are two explorations paths passing $v$ such that $\ga_\de^{(j)}(n_v^{(j)},\tau_\de^{(j)r}]\cap\Half\not=\emptyset$ for $j=1,2$. From now on, a random variable with a superscript $(j)$ is the corresponding random variable defined through $\ga_\de^{(j)}$. For example, $\ga_\de^{(j)}(n_v^{(j)})=v$.

After reflection, $\omega^{(j)}$ becomes $\omega^{(j)s}$. We will use an indirect argument to show that $\omega^{(1)s}\not=\omega^{(2)s}$. In other words, we suppose $\omega^{(1)s}=\omega^{(2)s}$. Based on the assumption of $\omega^{(1)s}=\omega^{(2)s}$, we will prove that $\omega^{(1)}=\omega^{(2)}$.

We begin with the proof of $\ga_\de^{(1)}[\max(n_1^{(1)},\tau_\de^{(1)l}),n_v^{(1)})=\ga_\de^{(2)}[\max(n_1^{(2)},\tau_\de^{(2)l}),n_v^{(2)})$. If $\ga_\de^{(1)}[\max(n_1^{(1)},\tau_\de^{(1)l}),n_v^{(1)})\not=\ga_\de^{(2)}[\max(n_1^{(2)},\tau_\de^{(2)l}),n_v^{(2)})$, $\ga_\de^{(1)}[\max(n_1^{(1)},\tau_\de^{(1)l}),n_v^{(1)})$ must cross $\ga_\de^{(2)}[\max(n_1^{(2)},\tau_\de^{(2)l}),n_v^{(2)})$ since both exploration paths visit $v$. Without loss of generality, suppose $\ga_\de^{(1)}(t^{(1)})$ is the last crossed point of $\ga_\de^{(1)}[\max(n_1^{(1)},\tau_\de^{(1)l}),n_v^{(1)})$ by $\ga_\de^{(2)}[\max(n_1^{(2)},\tau_\de^{(2)l}),n_v^{(2)})$ such that
$$
\omega^{(1)}(E_{\ga_\de^{(1)}(t^{(1)})})=0, \ \omega^{(2)}(E_{\ga_\de^{(1)}(t^{(1)})})=1.
$$
Then $\ga_\de^{(1)}(t^{(1)})\in \ga_\de^{(1)}(n_v^{(1)},\tau_\de^{(1)r})$, otherwise $\omega^{(1)s}(E^s_{\ga_\de^{(1)}(t^{(1)})})\not=\omega^{(2)s}(E^s_{\ga_\de^{(1)}(t^{(1)})})$.
We also have $\ga_\de^{(1)}(t^{(1)})\notin \ga_\de^{(2)}(n_v^{(2)},\tau_\de^{(2)r})$ since any common point of $\ga_\de^{(2)}[0,n_v^{(2)})$ and $\ga_\de^{(2)}(n_v^{(2)},\infty)$ is the center of a dual-open edge in $\omega^{(2)}$. Note that we don't make use of $E_{\ga_\de^{(1)}(t^{(1)})}$  to construct the dual-open cluster connecting $v-i\de/2$ and $\p B(v,r_\de/2)$ in $\omega^{(2)s}$, and $\omega^{(1)s}=\omega^{(2)s}$.
 Hence it follows from {\bf Step 4} that $\omega^{(1)s}(E^s_{\ga_\de^{(1)}(t^{(1)})})=0$.
However, $\omega^{(2)s}(E^s_{\ga_\de^{(1)}(t^{(1)})})=1$. This is a contradiction. So, we must have $\ga_\de^{(1)}[\max(n_1^{(1)},\tau_\de^{(1)l}),n_v^{(1)})=\ga_\de^{(2)}[\max(n_1^{(2)},\tau_\de^{(2)l}),n_v^{(2)})$.

Now we turn to the proof of
\beqn
\ga_\de^{(1)}[\max(n_2^{(1)},\tau_\de^{(1)l}),\max(n_1^{(1)},\tau_\de^{(1)l}))=\ga_\de^{(2)}[\max(n_2^{(2)},\tau_\de^{(2)l}),\max(n_1^{(2)},\tau_\de^{(2)l})). \label{2ndpart}
\eeqn
If both $\ga_\de^{(1)}(\max(n_2^{(1)},\tau_\de^{(1)l}),\max(n_1^{(1)},\tau_\de^{(1)l}))$ and $\ga_\de^{(2)}(\max(n_2^{(2)},\tau_\de^{(2)l}),\max(n_1^{(2)},\tau_\de^{(2)l}))$ are in $-\Half$, we apply the argument of $\ga_\de^{(1)}[\max(n_1^{(1)},\tau_\de^{(1)l}),n_v^{(1)})=\ga_\de^{(2)}[\max(n_1^{(2)},\tau_\de^{(2)l}),n_v^{(2)})$ to obtain \eqref{2ndpart}.
If one is in $-\Half$ and the other is in $\Half$, then $\omega^{(1)s}\not=\omega^{(2)s}$ by noting the statuses of $E_{\ga_\de^{(j)}(\max(n_1^{(j)},\tau_\de^{(j)l}))}$ for $j=1,2$. So we can suppose both are in $\Half$.
If $\ga_\de^{(1)s}(\max(n_2^{(1)},\tau_\de^{(1)l}),$ $\max(n_1^{(1)},\tau_\de^{(1)l}))$ doesn't cross $\ga_\de^{(1)}(n_v^{(1)},\tau_\de^{(1)r})$ and $\ga_\de^{(2)s}(\max(n_2^{(2)},\tau_\de^{(2)l}),\max(n_1^{(2)},\tau_\de^{(2)l}))$ doesn't cross $\ga_\de^{(2)}(n_v^{(2)},\tau_\de^{(2)r})$, then \eqref{2ndpart} follows from the assumption that $\omega^{(1)s}=\omega^{(2)s}$.

Now suppose $\ga_\de^{(1)s}(\max(n_2^{(1)},\tau_\de^{(1)l}),\max(n_1^{(1)},\tau_\de^{(1)l}))$ crosses $\ga_\de^{(1)}(n_v^{(1)},\tau_\de^{(1)r})$. By the assumption that $\omega^{(1)s}=\omega^{(2)s}$, we suppose $\ga_\de^{(1)}[t_{12}^{(1)l},\max(n_1^{(1)},\tau_\de^{(1)l}))\subseteq \ga_\de^{(2)}[\tau_\de^{(2)l},\max(n_1^{(2)},\tau_\de^{(2)l}))$ without loss of generality. Hence
there exists $\bar t_{12}^{(2)l}$ such that
$$
\ga_\de^{(1)}[t_{12}^{(1)l},\max(n_1^{(1)},\tau_\de^{(1)l}))=\ga_\de^{(2)}[\bar t_{12}^{(2)l},\max(n_1^{(2)},\tau_\de^{(2)l})).
$$
If $\omega^{(2)}(E_{\ga_\de^{(2)}(\bar t_{12}^{(2)l})})=\omega^{(2)}(E^s_{\ga_\de^{(2)}(\bar t_{12}^{(2)l})})$, then
$$
\omega^{(2)s}(E^s_{\ga_\de^{(2)}(\bar t_{12}^{(2)l})})=\omega^{(2)s}(E_{\ga_\de^{(2)}(\bar t_{12}^{(2)l})}), \omega^{(1)s}(E^s_{\ga_\de^{(1)}( t_{12}^{(1)l})}) \not=\omega^{(1)s}(E_{\ga_\de^{(1)}( t_{12}^{(1)l})}).
$$
Hence $\omega^{(1)}\not=\omega^{(2)}$.

 If $\omega^{(2)}(E_{\ga_\de^{(2)}(\bar t_{12}^{(2)l})})=0$ and $\omega^{(2)}(E^s_{\ga_\de^{(2)}(\bar t_{12}^{(2)l})})=1$, then $\bar t_{12}^{(2)l}>t_{12}^{(2)l}$ in the case of $t_{12}^{(2)l}<\infty$, or $\bar t_{12}^{(2)l}>\max(n_2^{(2)},\tau_\de^{(2)l})$ in the case of $t_{12}^{(2)l}=\infty$. This implies that $E^s_{\ga_\de^{(1)}(t_{12}^{(1)l})}$  is not used when we construct the dual-open cluster connecting $v-i\de/2$ and $\p B(v,r_\de/2)$ in $\omega^{(2)s}$. Since $\omega^{(1)s}=\omega^{(2)s}$, it follows from {\bf Step 4} that
 $$
 \omega^{(1)s}(E^s_{\ga_\de^{(1)}(t_{12}^{(1)l})})=1.
 $$
 However,
 $$
  \omega^{(2)s}(E^s_{\ga_\de^{(1)}(t_{12}^{(1)l})})=\omega^{(2)}(E_{\ga_\de^{(2)}(\bar t_{12}^{(2)l})})=0.
 $$
 This is a contradiction. So we only have $\omega^{(2)}(E_{\ga_\de^{(2)}(\bar t_{12}^{(2)l})})=1$ and $\omega^{(2)}(E^s_{\ga_\de^{(2)}(\bar t_{12}^{(2)l})})=0$.

Next, define
\beq
s_{12}^{(1)l}(1)&=&\sup\{j\in[\max(n_2^{(1)},\tau_\de^{(1)l}), t_{12}^{(1)l}): \omega^{(1)}(E_{\ga_\de^{(1)}(j)})=1, \omega^{(1)}(E^s_{\ga_\de^{(1)}(j)})=0,\\
&&\qquad \ga_\de^{(1)s}(j)\in \ga_\de^{(1)}(n_v^{(1)},\tau_\de^{(1)r})\},\\
 s_{12}^{(2)l}(1)&=&\sup\{j\in[\max(n_2^{(2)},\tau_\de^{(2)l}),\bar t_{12}^{(2)l}): \omega^{(2)}(E_{\ga_\de^{(2)}(j)})=1, \omega^{(2)}(E^s_{\ga_\de^{(2)}(j)})=0,\\
&&\qquad \ga_\de^{(2)s}(j)\in \ga_\de^{(2)}(n_v^{(2)},\tau_\de^{(2)r})\}.
\eeq
If the set behind $\sup$ in the definition of $s_{12}^{(k)l}(1)$ is empty for $k=1,2$, then define
$$s_{12}^{(k)l}(1)=\max(n_2^{(k)},\tau_\de^{(k)l}).$$

If $s_{12}^{(k)l}(1)=\max(n_2^{(k)},\tau_\de^{(k)l})$ for $k=1,2$, then \eqref{2ndpart} follows directly from the assumption that $\omega^{(1)s}=\omega^{(2)s}$. Hence in the following, we can suppose $s_{12}^{(k)l}(1)\not=\max(n_2^{(k)},\tau_\de^{(k)l})$ for at least one $k$.

If $\ga^{(1)}[s_{12}^{(1)l}(1), t_{12}^{(1)l})\subseteq \ga^{(2)}[s_{12}^{(2)l}(1), \bar t_{12}^{(2)l})$,
the same approach to obtaining that $$\omega^{(2)}(E_{\ga_\de^{(2)}(\bar t_{12}^{(2)l})})=1, \ \omega^{(2)}(E^s_{\ga_\de^{(2)}(\bar t_{12}^{(2)l})})=0$$ shows that there exists $\bar t_{12}^{(2)l}(1)>s_{12}^{(2)l}(1)$ such that
$$\ga_\de^{(1)}[s_{12}^{(1)l}(1),t_{12}^{(1)l})=\ga_\de^{(2)}[\bar t_{12}^{(2)l}(1),\bar t_{12}^{(2)l}), \ \omega^{(2)}(E_{\ga_\de^{(2)}(\bar t_{12}^{(2)l}(1))})=1, \ \omega^{(2)}(E^s_{\ga_\de^{(2)}(\bar t_{12}^{(2)l}(1))})=0.$$
Similarly, if $\ga^{(1)}[s_{12}^{(1)l}(1), t_{12}^{(1)l})\supseteq \ga^{(2)}[s_{12}^{(2)l}(1), \bar t_{12}^{(2)l})$, there exists $\bar t_{12}^{(1)l}(1)>s_{12}^{(1)l}(1)$ such that
$$\ga_\de^{(1)}[\bar t_{12}^{(1)l}(1),t_{12}^{(1)l})=\ga_\de^{(2)}[s_{12}^{(2)l}(1),\bar t_{12}^{(2)l}), \ \omega^{(1)}(E_{\ga_\de^{(1)}(\bar t_{12}^{(1)l}(1))})=1, \ \omega^{(1)}(E^s_{\ga_\de^{(1)}(\bar t_{12}^{(1)l}(1))})=0.$$
Noting that $\bar t_{12}^{(1)l}-t_{12}^{(1)l}(1)\geq 1$ and $\bar t_{12}^{(2)l}-\bar t_{12}^{(2)l}(1)\geq 1$,  we have \eqref{2ndpart} by iteration.

Note that for any interval $[n_{k+1},n_k)$ with $\tau_\de^l<n_k$, either $\ga_\de(\min(n_{k+1},\tau_\de^l),n_k)\subseteq-\Half$ or $\ga_\de(\min(n_{k+1},\tau_\de^l),n_k)\subseteq\Half$. So the above argument shows that $\ga_\de^{(1)}(\tau_\de^{(1)l},n_v^{(1)})=\ga_\de^{(2)}(\tau_\de^{(2)l},n_v^{(2)})$.

Now suppose $\ga_\de^{(1)}(n_v^{(1)},\tau_\de^{(1)r})\not=\ga_\de^{(2)}(n_v^{(2)},\tau_\de^{(2)r})$. Without loss of generality, suppose
$$
w\in \ga_\de^{(1)}(n_v^{(1)},\tau_\de^{(1)r})\cap\ga_\de^{(2)}(n_v^{(2)},\tau_\de^{(2)r}), \  \omega^{(1)}(E_w)=0, \  \omega^{(2)}(E_w)=1.
$$
Since $\ga_\de^{(1)}(\tau_\de^{(1)l},n_v^{(1)})=\ga_\de^{(2)}(\tau_\de^{(2)l},n_v^{(2)})$, $w\notin \ga_\de^{(1)}(\tau_\de^{(1)l},n_v^{(1)})$. If  $w^s \in \ga_\de^{(1)}(\tau_\de^{(1)l},n_v^{(1)})$, then
$$
\omega^{(1)s}(E_w)\not=\omega^{(2)s}(E_w) \ \text{or} \ \omega^{(1)s}(E_w^s)\not=\omega^{(2)s}(E_w^s).
$$
This contradicts with the assumption that $\omega^{(1)s}=\omega^{(2)s}$. Hence we only have
$$\omega^{(1)s}(E_w)=\omega^{(1)}(E_w)\not=\omega^{(2)s}(E_w)=\omega^{(2)}(E_w),$$
which again contradicts with the assumption that $\omega^{(1)s}=\omega^{(2)s}$.
 So $\ga_\de^{(1)}(n_v^{(1)},\tau_\de^{(1)r})=\ga_\de^{(2)}(n_v^{(2)},\tau_\de^{(2)r})$.
This implies $\omega^{(1)}=\omega^{(2)}$.

The above construction and proof also work for any exploration loop passing $v$. More precisely, suppose that $\omega_\de$ is a configuration on the discrete domain $B(v;r_\de/2)\cap \de \Z^2$. Of course $\omega(E)=1$ for any $E\subseteq \mathrm{L}_{j-1}$. Suppose there is a dual-open cluster which is not contained in $-\Half$ from $v-i\de/2$ to $\p B(v;r_\de/2)$. Applying the above reflection method, there is a configuration $\omega^s$ on $B(v;r_\de/2)\cap \de \Z^2$ which contains a dual-open cluster  in $-\Half$ from $v-i\de/2$ to $\p B(v;r_\de/2)$. The mapping from $\omega_\de$ to $\omega_\de^s$ is one-to-one.

Therefore we can conclude the proof by \eqref{halfone}.
\end{proof}

\noindent
{\bf Case 2}. The angle between $\mathrm{L}_j$ and $\mathrm{L}_{j+1}$ is $3\pi/2$.

\begin{figure}[hp]
 \begin{center}
\scalebox{0.4}{\includegraphics{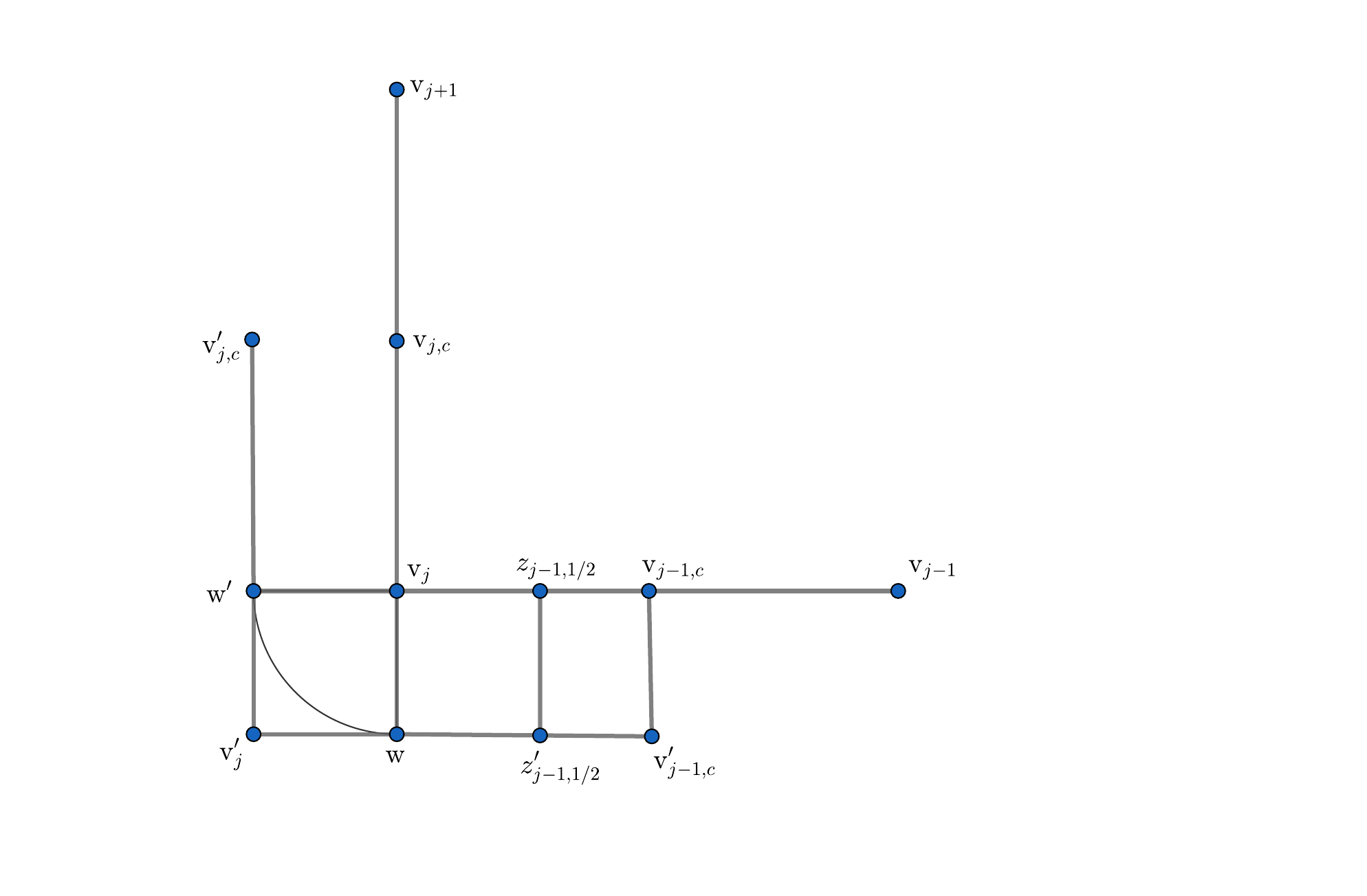}}
 \end{center}
\caption{The angle between $\mathrm{L}_j$ and $\mathrm{L}_{j+1}$ is $3\pi/2$.} \label{boundary1}
\end{figure}

This is illustrated in Figure \ref{boundary1}, where $\mathrm{L}_{j-1}$ is the line segment $\mathrm{v}_{j-1}\mathrm{v}_j$, $\mathrm{L}_{j}$ is the line segment $\mathrm{v}_{j}\mathrm{v}_{j+1}$.  We can assume that $\mathrm{v}_j$ is the origin of $\C$, and $\mathrm{v}_{j-1}$ is on the positive real axis. $\mathrm{v}_{j-1,c}$ is the center of $\mathrm{L}_{j-1}$, $z_{j-1,1/2}=k_0\de$, $z_{j-1,1/2}'=k_0\de(1-i)$, $\mathrm{v}'_{j-1,c}=\Re \mathrm{v}_{j-1,c}-ik_0\de$, $\mathrm{v}'_j=-k_0\de(1+i)$, $\mathrm{v}_{j,c}$ is the center of $\mathrm{L}_j$, $\mathrm{v}'_{j,c}=-k_0\de+i\Im \mathrm{v}_{j,c}$.

For any $v\in \mathrm{L}_{j-1}$ such that $v<r_\de/2$, we have the following result.
\begin{proposition} \label{270}
For each configuration $\omega$ such that in $\omega$ there is a dual-open cluster connecting $v-i\de/2$ and $\p B(v,r_\de/2)$ which is not in $\{z: \Im z<0, \Re z>\Im z\}$, there exists one configuration $\omega^s$ such that in $\omega^s$ there is a dual-open cluster in $\{z: \Im z<0, \Re z>\Im z\}$ connecting $v-i\de/2$ and $\p B(v,r_\de/2)$. Moreover, the mapping from $\omega$ to $\omega^s$ is one-to-one. Hence,
$\Pro\big($there is a dual-open cluster in $-\Half$ connecting $v-i\de/2$ and $\p B(v,r_\de/2)\big)\asymp (\de/r_\de)^{1/3}$.
\end{proposition}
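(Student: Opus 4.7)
The strategy is to adapt the proof of Proposition \ref{2pi} line by line, replacing the reflection across the real axis used there with the reflection $R$ across the line $\{\Re z = \Im z\}$. This line is the unique symmetry axis of the local domain $\C \setminus \overline{\{\Re z > 0, \Im z > 0\}}$ at the reflex corner $\mathrm{v}_j$, and it partitions that local domain into two congruent halves: the sector $\mathcal{S} = \{\Im z < 0, \Re z > \Im z\}$ (the fourth quadrant plus the lower-right part of the third quadrant) and its image $R(\mathcal{S})$ (the second quadrant plus the upper-left part of the third quadrant). Crucially, both boundary segments $\mathrm{L}_{j-1}$ and $\mathrm{L}_j$ lie in $\p_{ba,\de}$, so they carry the same ``primal-open'' boundary condition; hence $R$ swaps them while preserving the Dobrushin boundary data, and the reflected configuration is still admissible.

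Mirroring the notation of Proposition \ref{2pi}, I would define $\gamma_\de^s = R\circ\gamma_\de$, set $\tau_\de^r, \tau_\de^l$ as the first/last exit times of $\gamma_\de$ from $B(v,r_\de/2)$, and let $n_1 > n_2 > \cdots$ be the successive visits of $\gamma_\de$ (read backwards from $n_v$) to the symmetry axis $\{y=x\}$ in the direction pointing away from the sector, i.e.\ to points with $\Re z \le \Im z$. On each interval $[n_{k+1}, n_k)$ with $n_k > \tau_\de^l$, I would then carry out the four-step procedure of Proposition \ref{2pi}: if $\gamma_\de(n_{k+1}, n_k) \subset \mathcal{S}$, apply the direct reflection-and-swap of Steps 1i--1ii; otherwise ($\gamma_\de(n_{k+1}, n_k) \subset R(\mathcal{S})$), branch into Step 3i or 3ii according to whether $\gamma_\de^s$ intersects $\gamma_\de(n_v, \tau_\de^r)$, in which case the auxiliary stopping times $t_{12}^l, t_{12}^r, t_{21}^l, t_{21}^r$ are defined exactly as before. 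Finally, Step 4 (the final swap of isolated edges in $\gamma_\de[n_v,\tau_\de^r) \cap \gamma_\de[n_k,n_{k-1})$ or in the analogous ``crossed'' set) is carried over unchanged, yielding the desired $\omega^s$ with a dual-open cluster in $\mathcal{S}$ joining $v - i\de/2$ and $\p B(v, r_\de/2)$.

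The one-to-one property follows by the same contradiction argument as in Proposition \ref{2pi}: assume $\omega^{(1)s} = \omega^{(2)s}$ with $\omega^{(1)} \neq \omega^{(2)}$, track the first disagreement along the two paths back from $v$, and verify case by case that on the interval containing this disagreement the edge status at or near the crossing point of $\gamma_\de^{(1)}$ and $\gamma_\de^{(2)}$ (respectively at its $R$-image) would have to differ in $\omega^{(1)s}$ and $\omega^{(2)s}$. Once the bijection is established, one has $\Pro(\text{cluster in the full local domain}) \le 2\,\Pro(\text{cluster in } \mathcal{S})$, and since $\mathcal{S}\subset -\Half$, also $\Pro(\text{cluster in } -\Half) \asymp \Pro(\text{cluster in }\mathcal{S})$. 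The upper bound $\preceq (\de/r_\de)^{1/3}$ is then immediate from monotonicity ($\mathcal{S}\subset-\Half$) and \eqref{halfone}, while the matching lower bound comes from restricting to clusters that stay inside the fourth-quadrant portion of $B(v, r_\de/2)$, which is a genuine half-plane one-arm event from $v$ and hence also of order $(\de/r_\de)^{1/3}$ by \eqref{halfone}. The main obstacle will be the bookkeeping in the iterative reflection step when $\gamma_\de$ oscillates many times across $\{y=x\}$ close to the corner $\mathrm{v}_j$, where the path may interact simultaneously with both boundary segments $\mathrm{L}_{j-1}$ and $\mathrm{L}_j$; verifying that Step 3ii's branch still correctly pairs up edge statuses between $\gamma_\de$ and $\gamma_\de^s$ in this geometry---and that the final Step 4 clean-up restores injectivity---requires the same meticulous case analysis as in Proposition \ref{2pi}, made slightly heavier by the presence of two boundary pieces rather than one.
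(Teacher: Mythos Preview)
Your proposal is correct and follows essentially the same approach as the paper. The paper's own proof is a single sentence: it states that the argument is identical to that of Proposition~\ref{2pi} except that the reflection is taken across the line $\{z:\Re z=\Im z\}$, and then omits all details. You have correctly identified this reflection as the key modification, recognized why it preserves the boundary data (both $\mathrm{L}_{j-1}$ and $\mathrm{L}_j$ lie in $\p_{ba,\de}$ and are swapped by the reflection), and spelled out how each step of the construction and the injectivity argument carries over; your write-up is in fact considerably more detailed than what the paper provides.
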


The proof is similar to that of Proposition \ref{2pi}. The only difference is that the reflection is around the line $\{z: \Re z=\Im z\}$. So we omit the details.

\

\noindent
{\bf Case 3}. The angle between $\mathrm{L}_j$ and $\mathrm{L}_{j+1}$ is $\pi/2$.

\begin{figure}[hp]
 \begin{center}
\scalebox{0.4}{\includegraphics{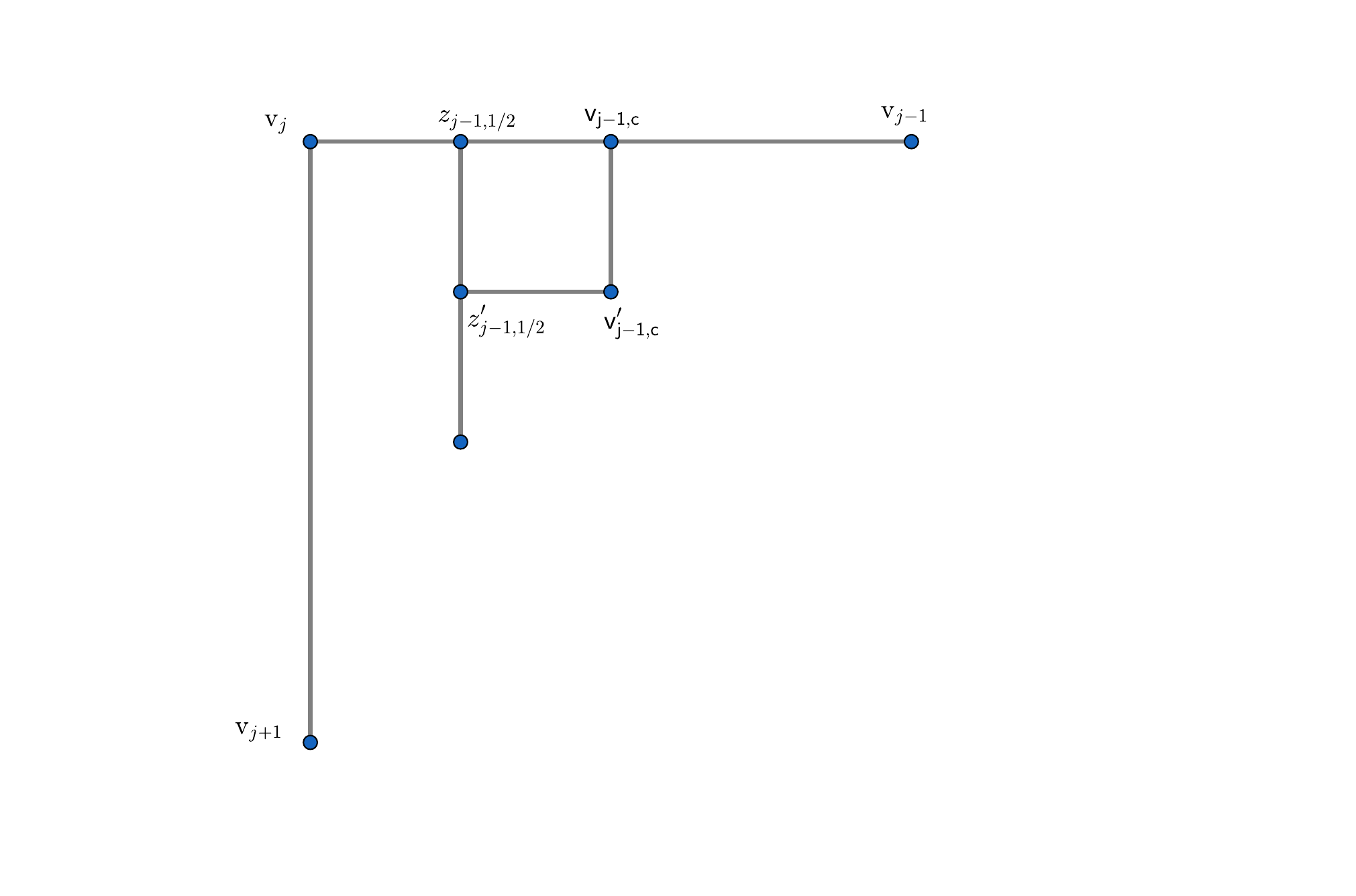}}
 \end{center}
\caption{The angle between $\mathrm{L}_j$ and $\mathrm{L}_{j+1}$ is $\pi/2$.} \label{boundary2}
\end{figure}

This is illustrated in Figure \ref{boundary2}, where $\mathrm{L}_{j-1}$ is the line segment $\mathrm{v}_{j-1}\mathrm{v}_j$, $\mathrm{L}_{j}$ is the line segment $\mathrm{v}_{j}\mathrm{v}_{j+1}$.  We can assume that $\mathrm{v}_j$ is the origin of $\C$, and $\mathrm{v}_{j-1}$ is on the positive real axis. $\mathrm{v}_{j-1,c}$ is the center of $\mathrm{L}_{j-1}$, $z_{j-1,1/2}=k_0\de$, $z'_{j-1,1/2}=k_0\de(1-i)$.

For any $v\in \mathrm{L}_{j-1}$ such that $v<r_\de/2$, we have the following result.
\begin{proposition} \label{90}
$\Pro\big($there is a dual-open cluster in $-\Half$ connecting $v-i\de/2$ and $\p B(v,r_\de/2)\big)\preceq (\de/r_\de)^{1/3}$.
\end{proposition}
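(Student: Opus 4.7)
The plan is to upper bound this probability by a half-plane dual-open one-arm event and invoke \eqref{halfone}. Unlike Propositions \ref{2pi} and \ref{270}, which required an explicit one-to-one reflection construction to establish the matching lower bound, here only the upper bound is claimed, so a direct domination argument suffices.

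First, I would use {\it Condition} $\mathrm{C}$ to verify that within $B(v,r_\de/2)$ no forced dual-open edges of $\p_{ab,\de}^*$ lie in $-\Half$ close enough to $v$ to enhance the event: the perpendicular clearance of length $r_\de$ from $\mathrm{L}_{j-1}$, together with the separation of boundary segments, ensures that any dual-open forcing is at Euclidean distance at least $r_\de$ from $\mathrm{L}_{j-1}$. Consequently the existence of a dual-open cluster in $-\Half$ from $v-i\de/2$ to $\p B(v,r_\de/2)$ cannot be helped by boundary forcings in $B(v,r_\de/2)$.

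Second, I would couple the Dobrushin configuration with critical bond percolation on the full lattice $\de\Z^2$, treating the unforced edges as independent Bernoulli$(1/2)$ edges. The event of interest becomes a sub-event of the event that, in the full-lattice critical percolation, there is a dual-open path in $-\Half$ from $v-i\de/2$ to $\p B(v,r_\de/2)$. By the self-duality of critical bond percolation on $\Z^2$ (dual-open edges are independent Bernoulli$(1/2)$, just like open ones) and after reflecting across the real axis containing $\mathrm{L}_{j-1}$, this is precisely the half-plane one-arm event $A_1^+$ with inner radius of order $\de$ and outer radius of order $r_\de$. Applying \eqref{halfone} together with quasi-multiplicativity of arm events yields
$$
\Pro\big(A_1^+(\de,r_\de/2)\big)\preceq (\de/r_\de)^{1/3},
$$
which is the desired bound.

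The main subtlety is making sure that the convex corner at $\mathrm{v}_j$ (where $\mathrm{L}_j$ meets $\mathrm{L}_{j-1}$ at angle $\pi/2$) does not spoil the coupling. Since this corner only restricts, rather than enlarges, the piece of $\Omega_\de\cap(-\Half)$ available to the cluster, the upper bound is unaffected by its presence; one does not need the delicate reflection construction that was necessary to produce the matching lower bound in {\bf Cases 1} and {\bf 2}. In fact, a sharper bound of order $(\de/r_\de)^{2/3}$ or better would follow from a quarter-plane one-arm estimate, but this is not needed for the applications in the next section.
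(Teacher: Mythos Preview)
Your proposal is correct and takes essentially the same approach as the paper, which simply states that the result ``is a direct consequence of \eqref{halfone}'' and omits the proof. You have merely filled in the (straightforward) details of why the event in question is dominated by the half-plane one-arm event $A_1^+$, which is exactly the intended argument.
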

This is a direct consequence of \eqref{halfone}. So we omit the proof.

The next result provides an estimate for the probability that $\ga_\de$ passes a point close to $\mathrm{L}_{j-1}$ for $2\leq j\leq \mathrm{n}+1$.
\begin{proposition} \label{nearbound10}
 For any $w$ such that  $\dist(w,a_\de^\diamond)>r_\de/2$, $\dist(w,b_\de^\diamond)>r_\de/2$ and $ d:=\dist(w, \mathrm{L}_{j-1})\preceq r_\de$  as $\de\to 0$,
 \beqn
\Pro(w\in \ga_\de) \preceq (d/r_\de)^{1/3}. \label{nearbound1}
\eeqn
If $\min\big(\dist(w,a_\de^\diamond),\dist(w,b_\de^\diamond)\big)\leq r_\de/2$, then
 \beqn
\Pro(w\in \ga_\de) \preceq \Big(\dist(w,\p\Omega_\de)/\min\big(\dist(w,a_\de^\diamond),\dist(w,b_\de^\diamond)\big)\Big)^{1/3}. \label{nearbound11}
\eeqn
\end{proposition}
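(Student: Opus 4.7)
The plan is to reduce $\{w\in\ga_\de\}$ to a dual-open one-arm event in the appropriate angular sector attached to $\mathrm{L}_{j-1}$, and then combine Propositions \ref{2pi}, \ref{270} and \ref{90} with quasi-multiplicativity of the half-plane one-arm event. Since the bound $(d/r_\de)^{1/3}$ is trivially of constant order when $d$ is comparable to $r_\de$, I may assume $d\leq r_\de/100$, so that the point $v\in\mathrm{L}_{j-1}$ closest to $w$ still satisfies $\dist(v,a_\de^\diamond)>r_\de/4$ and $\dist(v,b_\de^\diamond)>r_\de/4$.

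First I would observe that $w\in\ga_\de$ forces one of the medial edges at $w$ to be a dual-open edge touched by $\ga_\de$, whose underlying primal edge is part of a dual-open cluster meeting $\p_{ab,\de}^*$. Under the Dobrushin boundary condition every primal edge of $\mathrm{L}_{j-1}\subseteq\p_{ba,\de}$ is open, so this cluster cannot cross $\mathrm{L}_{j-1}$, and the distance hypotheses force it to exit $B(v,r_\de/4)$. Condition C (the perpendicular segments of length $r_\de$ at boundary points avoid all other boundary pieces) then confines the cluster to one of three angular sectors at $v$ depending on the local geometry of $\mathrm{L}_{j-1}$ and its neighbours: the lower half-plane (Case 1), the $3\pi/2$ sector (Case 2), or the quadrant (Case 3) of Section \ref{bpro}.

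Next I would apply the corresponding Proposition \ref{2pi}, \ref{270} or \ref{90}. Each delivers, for every $\de\leq R\leq r_\de/4$ and inside the relevant sector,
\[
\Pro\big(\text{dual-open cluster from }v-i\de/2\text{ to }\p B(v,R)\big)\asymp(\de/R)^{1/3},
\]
with matching lower bound via the one-to-one reflection argument (Cases 1 and 2) or directly from \eqref{halfone} (Case 3). Quasi-multiplicativity of the half-plane one-arm event, established by the same reflection technique combined with standard RSW-type gluing, then gives
\[
\Pro\big(\text{arm in the sector from }\p B(v,d)\text{ to }\p B(v,r_\de/4)\big)\preceq(d/r_\de)^{1/3}.
\]
Since $\{w\in\ga_\de\}$ implies exactly such an arm (starting within distance $d+\de$ of $v$), this proves \eqref{nearbound1}.

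For \eqref{nearbound11}, with $R:=\min(\dist(w,a_\de^\diamond),\dist(w,b_\de^\diamond))\leq r_\de/2$, the dual-open cluster from $w$ now need only reach $\p B(w,R/2)$, since beyond that scale the nearby endpoint of $\ga_\de$ provides a termination point. Replacing $r_\de/4$ by $R/2$ and $d$ by $\dist(w,\p\Omega_\de)$ in the preceding argument yields the bound $(\dist(w,\p\Omega_\de)/R)^{1/3}$. The main technical obstacle will be verifying the sector confinement uniformly across the three local geometries permitted by Condition C — in particular ensuring that overlapping boundary segments and $3\pi/2$ corners of neighbouring pieces do not create additional dual-open routes evading the chosen sector — and confirming that the one-to-one reflection constructions of Section \ref{bpro} extend to deliver quasi-multiplicativity at every intermediate scale $\de\leq R\leq r_\de$.
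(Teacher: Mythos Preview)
Your proposal is correct and follows essentially the same route as the paper: reduce $\{w\in\ga_\de\}$ to a dual-open arm event, invoke quasi-multiplicativity at scales $d$ and $r_\de/2$, and apply Propositions~\ref{2pi}, \ref{270}, \ref{90} for the exponent $1/3$. The paper is slightly more economical: it writes quasi-multiplicativity for the \emph{unrestricted} arm event $\{v-i\de/2\rightsquigarrow_{\mathrm{dual}}\p B(v;r)\}$ (citing \cite{BD13} rather than re-deriving it from reflections), so no sector confinement needs to be checked at intermediate scales---Propositions~\ref{2pi}--\ref{90} enter only at the outer scale $r_\de/2$, while the inner scale $d$ uses the straight half-plane bound \eqref{halfone} directly (the paper notes that in Case~3 one may assume $\dist(w,\{\mathrm{v}_{j-1},\mathrm{v}_j\})\geq\sqrt2\,d$, so $B(v;d)$ does not see the corner). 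This sidesteps precisely the ``technical obstacle'' you flagged at the end.
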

\begin{proof}
Using the same technique as in the proof of Theorem 3.1 of~\cite{BD13}, especially Figure 8 in~\cite{BD13}, one can obtain that
\beqn
\Pro \big(v-i\de/2\rightsquigarrow_{dual} \p B(v;r_2) \big)&\leq& \Pro \big(v-i\de/2\rightsquigarrow_{dual} \p B(v;r_1\big)\Pro \big(\p B(v;r_1)\rightsquigarrow_{dual} \p B(v;r_2)\big) \nonumber\\
&\preceq& \Pro \big(v-i\de/2\rightsquigarrow_{dual} \p B(v;r_2) \big), \label{multip}
\eeqn
where $v\in \mathrm{L}_{j-1}\cap \de\Z^2$,  $\de\leq r_1\leq r_2$, $v-i\de/2\rightsquigarrow_{dual} \p B(v;r_1) $ represents the event that there is a dual-open cluster from $v-i\de/2$ to $\p B(v;r_1)$, and $\p B(v;r_1)\rightsquigarrow_{dual} \p B(v;r_2)$ represents the event that there is a dual-open cluster from $\p B(v;r_1)$ to $\p B(v;r_2)$.

Therefore, for any $w$ such that $d=\dist(w, \mathrm{L}_{j-1})\to 0$ as $\de\to 0$, \eqref{nearbound1} follows from \eqref{multip} with $r_1=d$, $r_2=r_\de/2$, and Propositions \ref{2pi}, \ref{270} and \ref{90}. Here we remark that in {\bf Case 3}, we can assume that the distance between $w$ and two ends of $\mathrm{L}_{j-1}$ is larger than or equal to $\sqrt 2 d$. \eqref{nearbound11} can be proved in a similar way, so we omit the details.
\end{proof}

In a parallel way, there are also three scenarios for the positions of $\mathrm{L}_{j^*-1}^*$ and $\mathrm{L}_{j^*}^*$. The first scenario is that part of $\mathrm{L}_{j^*}^*$ overlaps with part of $\mathrm{L}_{j^*-1}^*$. This is illustrated in Figure \ref{dualbound}, where the vertices $\mathrm{v}_{j^*}^*$ and $\mathrm{v}_{j^*+1}^*$ are two ends of $\mathrm{L}_{j^*}^*$.  We can assume that $\mathrm{v}_{j^*}^*=\de(1+i)/2$, and $\mathrm{v}_{j^*+1}^*$ is in the first quadrant of $\C$ with $\Im \mathrm{v}_{j^*+1}^*=\de/2$. In Figure \ref{dualbound},  $\mathrm{v}_{j^*,c^*}$ is the center of  $\mathrm{L}_{j^*}^*$, $\mathfrak{z}_{j^*,1/2}=k_0\de+i\de/2$, $\mathfrak{z}_{j^*,1/2}'=k_0\de(1+i)$, $\mathfrak{z}_{1/2}''=k_0\de(-1+i)$, $\mathfrak{z}=-k_0\de+i\de/2$, $\mathfrak{z}'=k_0\de(-1-i)$, $\mathfrak{z}''=k_0\de(1-i)$, $\mathfrak{z}_{10}=\de/2+ik_0\de$, $\mathfrak{z}_{9}=\de/2-ik_0\de$, and the center of the semicircle is $\de/2$ in $\mathrm{L}_{j^*}^*$.

\begin{figure}[hp]
 \begin{center}
\scalebox{0.4}{\includegraphics{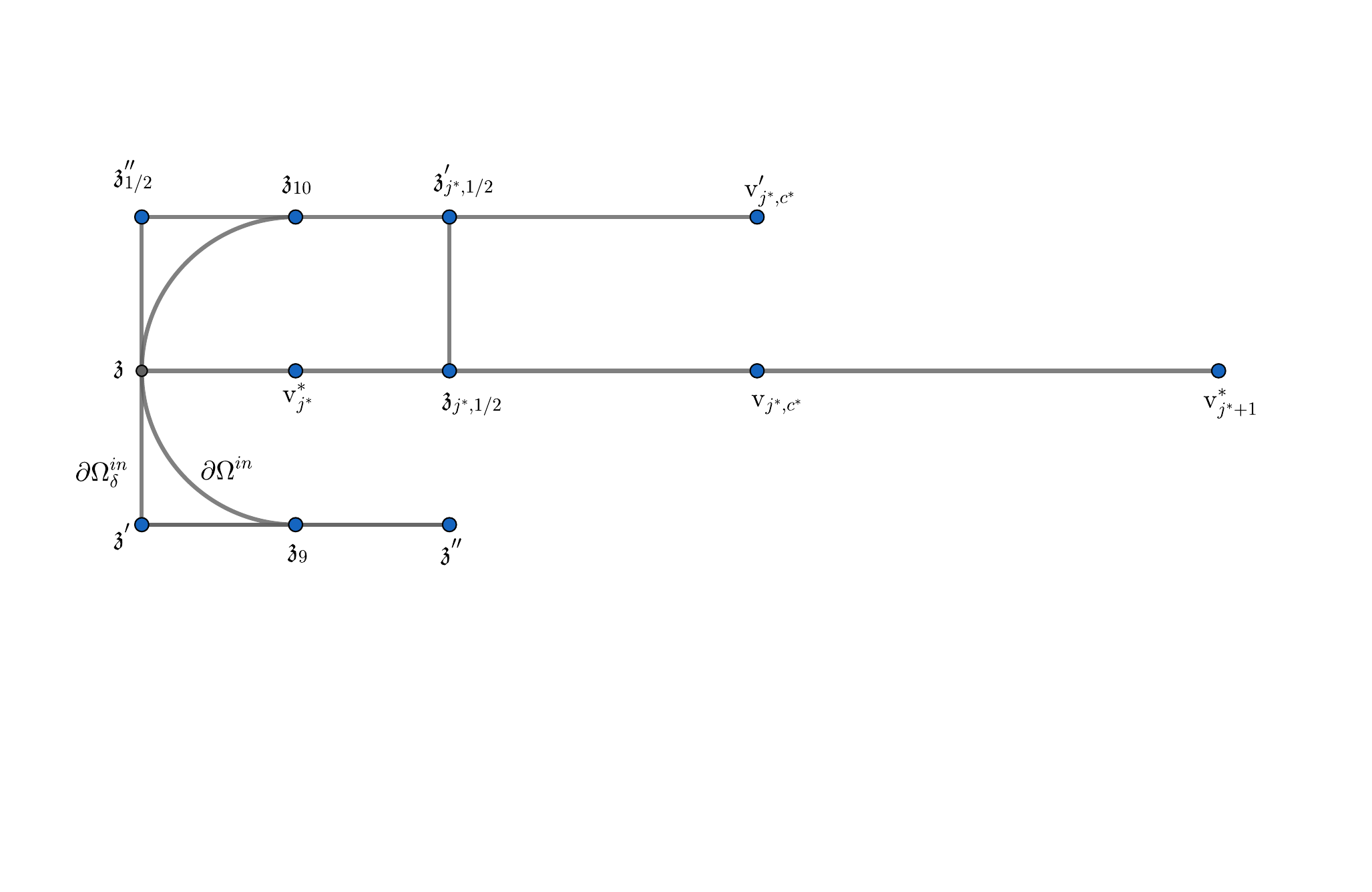}}
 \end{center}
\caption{Part of $\mathrm{L}_{j^*}^*$ overlaps with part of $\mathrm{L}_{j^*-1}^*$.} \label{dualbound}
\end{figure}

The second scenario is illustrated in Figure \ref{dualbound1}, where $\mathrm{L}_{j^*-1}^*$ is the line segment $\mathrm{v}_{j^*-1}^*\mathrm{v}_{j^*}^*$, $\mathrm{L}_{j^*}^*$ is the line segment $\mathrm{v}_{j^*}^*\mathrm{v}_{j^*+1}^*$.  We can assume that $\mathrm{v}_{j^*}=\de(1+i)/2$ and $\mathrm{v}_{j^*-1}^*$ is in the first quadrant of $\C$ with $\Im \mathrm{v}_{j^*-1}^*=\de/2$. $\mathrm{v}_{j^*-1,c^*}$ is the center of $\mathrm{L}_{j^*-1}^*$, $\mathfrak{z}_{j^*-1,1/2}=k_0\de+i\de/2$, $\mathfrak{z}'_{j^*-1,1/2}=k_0\de(1-i)$, $\mathrm{v}'_{j^*-1,c^*}=\Re \mathrm{v}_{j^*-1,c^*}-ik_0\de$, $\mathrm{v}'_{j^*}=-k_0\de(1+i)$, $\mathrm{v}_{j^*,c^*}$ is the center of $\mathrm{L}_{j^*}^*$, $\mathrm{v}'_{j^*,c^*}=-k_0\de+i\Im \mathrm{v}_{j^*,c^*}$.

\begin{figure}[hp]
 \begin{center}
\scalebox{0.4}{\includegraphics{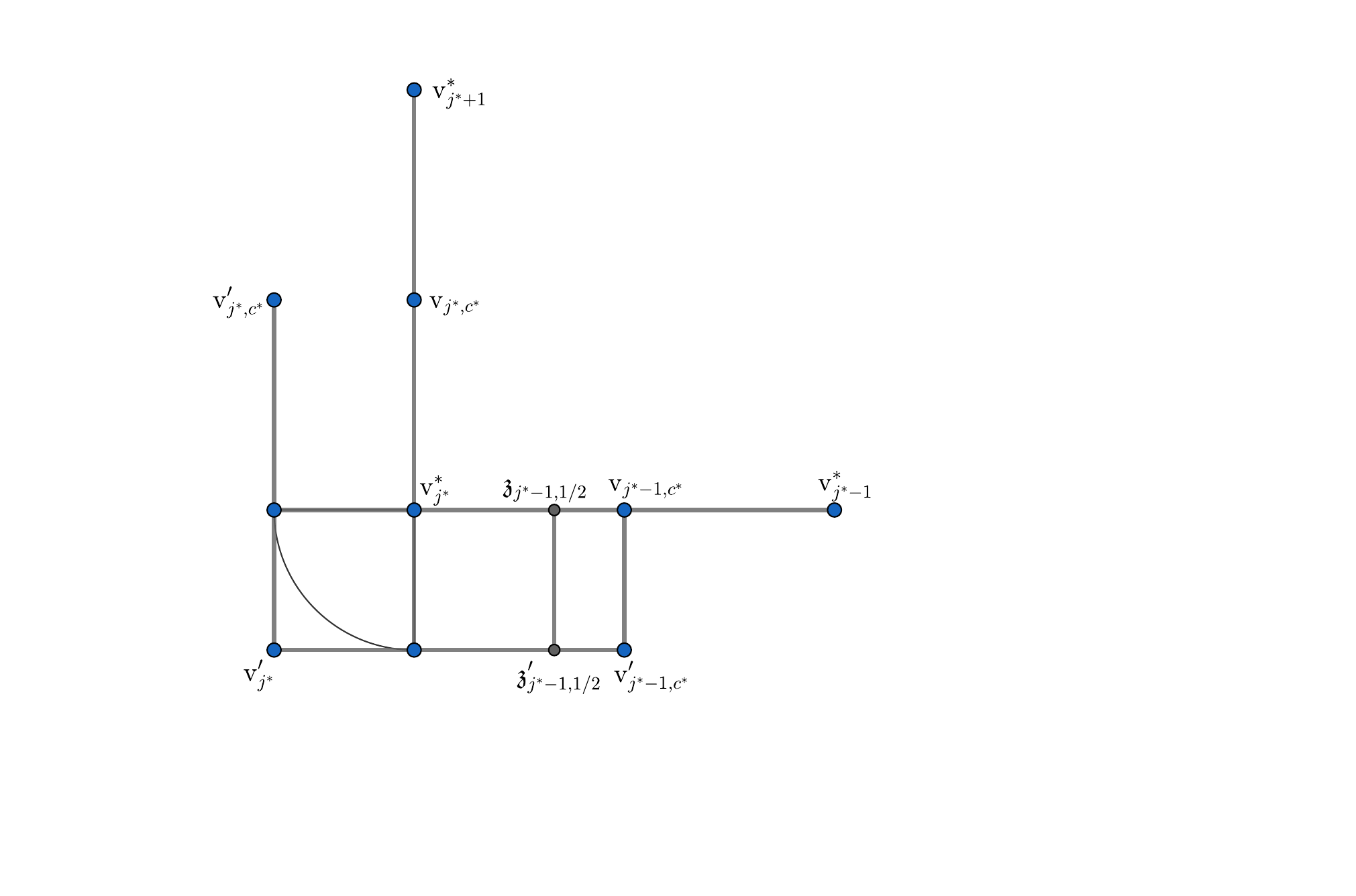}}
 \end{center}
\caption{The angle between $\mathrm{L}_{j^*}^*$ and $\mathrm{L}_{j^*+1}^*$ is $3\pi/2$.} \label{dualbound1}
\end{figure}

The third scenario is illustrated in Figure \ref{dualbound2}, where $\mathrm{L}_{j^*-1}^*$ is the line segment $\mathrm{v}_{j^*-1}^*\mathrm{v}_{j^*}^*$, $\mathrm{L}_{j^*}^*$ is the line segment $\mathrm{v}_{j^*}^*\mathrm{v}_{j^*+1}^*$.  We can assume that $\mathrm{v}_{j^*}=\de(1+i)/2$ and $\mathrm{v}_{j^*-1}^*$ is in the first quadrant of $\C$ with $\Im \mathrm{v}_{j^*-1}^*=\de/2$. $\mathrm{v}_{j^*-1,c^*}$ is the center of $\mathrm{L}_{j^*-1}^*$, $\mathfrak{z}_{j^*-1,1/2}=k_0\de+i\de/2$, $\mathfrak{z}_{j^*-1,1/2}'=k_0\de(1-i)$, $\mathrm{v}_{j^*-1,c^*}'=\Re \mathrm{v}_{j^*-1,c^*}-ik_0\de$.

\begin{figure}[hp]
 \begin{center}
\scalebox{0.4}{\includegraphics{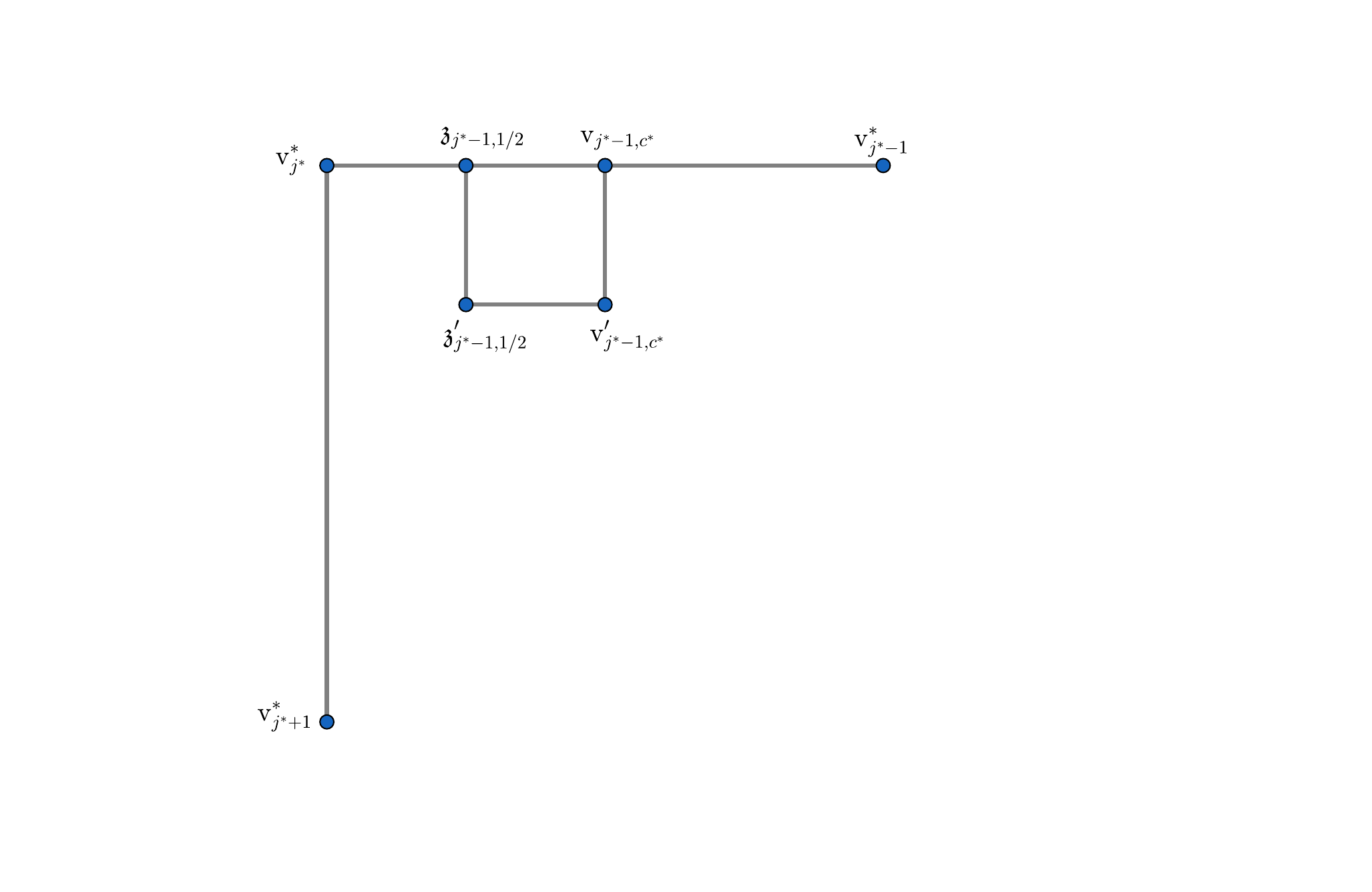}}
 \end{center}
\caption{The angle between $\mathrm{L}_{j^*}^*$ and $\mathrm{L}_{j^*+1}^*$ is $\pi/2$.} \label{dualbound2}
\end{figure}

When $v$ is close to $\p_{ab,\de}^*$, we have the following result, whose proof is the same as that of Proposition \ref{nearbound10}. So we omit its proof.
\begin{proposition} \label{nearbound20}
 For any $w$ such that $\dist(w,a_\de^\diamond)>r_\de/2$, $\dist(w,b_\de^\diamond)>r_\de/2$ and $d:=\dist(w, \mathrm{L}_{j^*}^*)\preceq r_\de$ with $j^*\in[1,\mathrm{n}^*]$, as $\de\to 0$,
 \beqn
\Pro(w\in \ga_\de) \preceq (d/r_\de)^{1/3}. \label{nearbound2}
\eeqn
\end{proposition}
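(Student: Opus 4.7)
The plan is to mirror the proof of Proposition \ref{nearbound10} by exchanging the roles of open and dual-open edges, and of the primal and dual boundaries, throughout. Since $\mathrm{L}_{j^*}^*$ consists of dual-open edges (it lies in $\p_{ab,\de}^*$), the event $\{w\in \ga_\de\}$ for $w$ close to $\mathrm{L}_{j^*}^*$ is governed by the probability that an open primal cluster starting from a primal vertex $v+\de/2$ adjacent to $\mathrm{L}_{j^*}^*$ reaches $\p B(v;r_\de/2)$, whereas in Proposition \ref{nearbound10} the controlling event was a dual-open cluster from $v-i\de/2$ reaching $\p B(v;r_\de/2)$. By the self-duality of critical bond percolation on $\Z^2$, every step of the proof of Proposition \ref{nearbound10} has a direct analog under this swap.

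First I would establish the open analog of \eqref{multip}:
\begin{equation*}
\Pro\big(v+\de/2\rightsquigarrow_{open} \p B(v;r_2)\big) \leq \Pro\big(v+\de/2\rightsquigarrow_{open} \p B(v;r_1)\big)\,\Pro\big(\p B(v;r_1)\rightsquigarrow_{open} \p B(v;r_2)\big),
\end{equation*}
for $\de\leq r_1\leq r_2$, which is proved exactly as \eqref{multip} using the arm-separation technique of Theorem~3.1 of \cite{BD13}. Second, for each of the three local geometries of $\mathrm{L}_{j^*-1}^*$ and $\mathrm{L}_{j^*}^*$ depicted in Figures \ref{dualbound}, \ref{dualbound1}, \ref{dualbound2}, I would establish analogs of Propositions \ref{2pi}, \ref{270}, and \ref{90}. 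For the overlap and $3\pi/2$-angle cases I would use reflection around the horizontal axis through $v+\de/2$ (respectively the line $\{\Re z=\Im z\}$), transcribing the step-by-step construction of Propositions \ref{2pi} and \ref{270} after swapping open and dual-open throughout; this yields that the probability of an open cluster in the appropriate half-plane connecting $v+\de/2$ to $\p B(v;r_\de/2)$ is $\asymp (\de/r_\de)^{1/3}$ by \eqref{halfone}. For the $\pi/2$-angle case, the bound $\preceq (\de/r_\de)^{1/3}$ follows directly from \eqref{halfone} applied to the dual-symmetric half-plane one-arm event. Applying the multiplicative inequality with $r_1=d$ and $r_2=r_\de/2$, with $v$ a primal vertex adjacent to $\mathrm{L}_{j^*}^*$ at distance $\asymp d$ from $w$, then gives
\begin{equation*}
\Pro(w\in \ga_\de) \preceq \Pro\big(v+\de/2\rightsquigarrow_{open} \p B(v;r_\de/2)\big) \preceq (d/r_\de)^{1/3},
\end{equation*}
as required.

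The main obstacle is verifying that the reflection and status-swap constructions in Propositions \ref{2pi} and \ref{270} remain valid after the open/dual-open exchange. In particular, the one-to-one property of the mapping $\omega\mapsto\omega^s$ relies on tracking how the exploration path interacts with specific open and dual-open edges along the boundary; under the duality swap, the detailed steps of Proposition \ref{2pi} (Step~1i through Step~4) must be rewritten so that the resulting configurations still produce a valid open cluster terminating at the correct primal vertex adjacent to $\mathrm{L}_{j^*}^*$. No new probabilistic input is required beyond \eqref{halfone} and the RSW estimates already invoked; the difficulty is purely combinatorial bookkeeping, which is why the author chose to omit the details.
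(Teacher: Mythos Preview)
Your proposal is correct and takes essentially the same approach as the paper, which explicitly omits the proof and states only that it ``is the same as that of Proposition \ref{nearbound10}.'' You have spelled out in more detail than the paper does precisely which duality swap (open $\leftrightarrow$ dual-open, primal $\leftrightarrow$ dual boundary) is needed and correctly identified that no new probabilistic input beyond \eqref{halfone}, \eqref{multip}, and the reflection constructions of Propositions \ref{2pi}--\ref{90} is required.
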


\subsection{Reflectional invariance near boundary}
In this part, we suppose $(\Omega_\de^\diamond,a_\de^\diamond,b_\de^\diamond)$ is a Dobrushin domain satisfying {\it Condition} $\mathrm{C}$. Consider $\mathrm{L}_j$ for $j\not=1$ or $\mathrm{n}$. Assume that $\mathrm{v}_j$ is the origin of $\C$ and $\mathrm{v}_{j+1}$ is a positive real number. Let $v$ be a medial vertex close to $\mathrm{L}_j$ such that $x_v=\Re v>0$, $y_v=\Im v>0$, $2y_v\leq x_v\leq \mathrm{l}_j$ and $E_v$ is a horizontal primal edge. The four medial edges adjacent to $v$ are respectively denoted by $A$, $B$, $C$ and $D$ in the counterclockwise order such that both $A$ and $C$ point towards $v$ and the direction of $C$ is $e^{i\pi/4}$. The line containing the dual edge $E_v^*$ is denoted by $L^v$.

Given the configuration $\omega$, suppose $\gamma_\de$ passes through $v$, and  $A, D\in \gamma_\de$, $B, C\notin \gamma_\de$. The loop attached to $E_v$ is denoted by $\loop_v$. After ignoring a set with probability at most of the order $(\de/x_v)^{1+\al}$, we can assume that $r_v=\max\{\dist(z,v): z\in \loop_v\}<x_v$. So the symmetric image of $\loop_v$ around $L^v$ doesn't cross or touch the boundary $\p_{ba,\de}\cup \p_{ab,\de}^*$. The symmetric image of any point, edge and curve around $L^v$ will be denoted by the same notation with an superscript $r$.

We will construct $\varpi_r$, in which there is one exploration path $\ga_r$ passing though $v$ such that $A,D\in \ga_r$ and $B,C\notin \ga_r$. If $W_{\ga_\de}(A,e_b)=2k\pi+\pi/2$ and $W_{\ga_\de}(D,e_b)=2k\pi+\pi$, then $W_{\ga_r}(A,e_b)=-2k\pi+\pi/2$ and $W_{\ga_r}(D,e_b)=-2k\pi+\pi$.  If $W_{\ga_\de}(A,e_b)=2k\pi+\pi$ and $W_{\ga_\de}(D,e_b)=2k\pi+3\pi/2$, then $W_{\ga_r}(A,e_b)=-2k\pi+\pi$ and $W_{\ga_r}(D,e_b)=-2k\pi+3\pi/2$.

Write $\mathbf{B}=\{z: |z-v|<x_v\}$. Define
\beq
\tau_{\mathbf{B},in}&=&\inf\{j_1>0: \ga_\de[j_1,n_v]\subseteq \mathbf{B}, \ga_\de(j_1-1)\notin \mathbf{B}\},\\
\tau_{\mathbf{B},ou}&=&\sup\{j_1>0: \ga_\de[n_v,j_1]\subseteq \mathbf{B}, \ga_\de(j_1+1)\notin \mathbf{B}\}.
\eeq
If both $\ga_\de[\tau_{\mathbf{B},in},n_v]\cap \mathrm{L}_j=\emptyset$ and $\ga_\de[n_v,\tau_{\mathbf{B},ou}]\cap \mathrm{L}_j=\emptyset$, then in the annulus $B(\Re v;2y_v,x_v)$, there are three disjoint arms in $\Half$ connecting the respective inner and outer boundaries of $B(\Re v;2y_v,x_v)$. So this happens with probability at most of the order $(y_v/x_v)^2$. Considering the configuration in $B(v;y_v)$, we can see that the probability that  $\ga_\de[\tau_{\mathbf{B},in},n_v]\cap \mathrm{L}_j=\emptyset$ and $\ga_\de[n_v,\tau_{\mathbf{B},ou}]\cap \mathrm{L}_j=\emptyset$ is at most of the order
\begin{equation} \label{order}
(\de/y_v)^\al (y_v/x_v)^2.
\end{equation}
So after ignoring a set with probability at most of the order $(\de/y_v)^\al (y_v/x_v)^2$, we can assume that either $\ga_\de[\tau_{\mathbf{B},in},n_v]\cap \mathrm{L}_j\not=\emptyset$ or $\ga_\de[n_v,\tau_{\mathbf{B},ou}]\cap \mathrm{L}_j\not=\emptyset$. Based on this assumption, we can define
\beq
\tau_L&=&\sup\{j_1<n_v: \ga_\de(j_1)\in \mathrm{L}_j\},\\
\tau_R&=&\inf\{j_1>n_v:\ga_\de(j_1)\in \mathrm{L}_j\}.
\eeq
At least one of $\tau_L$ and $\tau_R$ exists. If both $\tau_L$ and $\tau_R$ exist, define
$$
r_m'=\min\big( \max\{|\ga_\de(j_1)-v|:\tau_L\leq j_1\leq n_v|\}, \max\{|\ga_\de(j_1)-v|:n_v\leq j_1\leq \tau_R|\}  \big).
$$
Otherwise $r_m'=\max\{|\ga_\de(j_1)-v|:\tau_L\leq j_1\leq n_v|\}$ or $\max\{|\ga_\de(j_1)-v|:n_v\leq j_1\leq \tau_R|\}$ depending on whether $\tau_L$ or $\tau_R$ exists. Also let $r_m=\max(r_m',r_v)$.

Next, we suppose $x_v/y_v\to \infty$ as $\de\to 0$.
Since
\beqn
\Pro(r_m\geq \varepsilon_1 x_v) \preceq (\de/y_v)^\al (y_v/x_v)^{2-\varepsilon},
\eeqn
when $\varepsilon_1=(y_v/x_v)^{\varepsilon/4}$, we can assume that  $x_v/r_m\to \infty$ as $\de\to 0$ after ignoring an event with probability at most of the order $(\de/y_v)^\al (y_v/x_v)^{2-\varepsilon}$.
Now let $\mathrm{R}_1$ be a constant tending to $\infty$ as $\de\to 0$ such that $M_v=[\log (x_v/ r_m)/\log \mathrm{R_1}] \to \infty$, $\mathfrak{U}_k=B(\Re v;r_m\mathrm{R}_1^k)$, $\mathfrak{\hat U}_k=B(\Re v;2r_m\mathrm{R}_1^k)$, $\mathfrak{V}_k=B(\Re v;r_m\mathrm{R}_1^k/2)$.
Similarly to Section \ref{partII} we define the following crossing times,
\beq
&&\tau_{k,1,r}=\inf\{j_1>n_v: \ga_\de[n_v,j_1]\subseteq \mathfrak{V}_k, \ga_\de(j_1+1)\notin \mathfrak{V}_k\},\\
&&t_{k,1,r}=\sup\{j_1<\tau_{k,1,r}: \ga_\de[j_1,\tau_{k,1,r}]\subseteq \mathfrak{V}_k\setminus\mathfrak{\hat U}_{k-1},\ga_\de(j_1-1)\in\mathfrak{\hat U}_{k-1}\},\\
&&t_{k,1,l}=\inf\{j_1<n_v: \ga_\de[j_1,n_v]\subseteq \mathfrak{U}_k,\ga_\de(j_1-1)\notin\mathfrak{U}_{k}\},\\
&&\tau_{k,1,l}=\sup\{j_1<n_v: \ga_w[t_{k,1,l},j_1]\subseteq \mathfrak{U}_k\setminus\mathfrak{\hat U}_{k-1},\ga_\de(j_1+1)\in \mathfrak{\hat U}_{k-1}\}.
\eeq
So $\ga_w[t_{k,1,r},\tau_{k,1,r}]$ is the first part of $\ga_\de[n_v,\infty)$ connecting $\p\mathfrak{\hat U}_{k-1}$ and $\p\mathfrak{V}_k$, and $\ga_\de[t_{k,1,l},\tau_{k,1,l}]$ is the last part of $\ga_\de[0,n_v]$ connecting $\p\mathfrak{\hat U}_{k-1}$ and $\p\mathfrak{U}_k$. Similarly, we can define $\tau_{k,2,r}$, $t_{k,2,r}$, $\tau_{k.2,l}$ and $t_{k,2,l}$ such that $\tau_{k,2,r}>t_{k,2,r}>\tau_{k,1,r}$, $t_{k,1,l}>\tau_{k,2,l}>t_{k,2,l}$, $\ga_\de(t_{k,2,r})$ and $\ga_\de(\tau_{k,2,l})$ are close to $\p\mathfrak{\hat U}_{k-1}$, $\ga_\de(\tau_{k,2,r})$  is close to $\p\mathfrak{V}_k$ and $\ga_\de(t_{k,2,l})$ is close to $\p\mathfrak{U}_k$,
$\ga_\de[t_{k,2,r},\tau_{k,2,r}]$ is the second part of $\ga_\de[n_v,\infty)$ which connects $\p\mathfrak{\hat U}_{k-1}$ and $\p\mathfrak{V}_k$, and $\ga_\de[t_{k,2,l},\tau_{k,2,l}]$ is the second-to-last part of $\ga_\de[0,n_v]$ which connects $\p\mathfrak{\hat U}_{k-1}$ and $\p\mathfrak{U}_k$. Of course, $\tau_{k,2,r}$, $t_{k,2,r}$, $\tau_{k,2,l}$ or $t_{k,2,l}$ may not exist.

For each $k\geq 0$, we define $\mathfrak{H}_{k,0}=\{$there is only one crossing of $\mathfrak{U}_k\setminus\mathfrak{\hat U}_{k-1}$ by $\ga_\de[0,n_v]$ and there is only one  crossing of $\mathfrak{V}_k\setminus\mathfrak{\hat U}_{k-1}$ by $\ga_\de[n_v,\infty]\}$.
 By \eqref{halfthree}, we know that
\beqn
\Pro(\mathfrak{H}_{k,0}^c)\preceq \mathrm{R}_1^{-2}. \label{H-k0}
\eeqn
Under $\mathfrak{H}_{k,0}$, define $\ga_\de[t_{k,l},\tau_{k,l}]$ as the crossing of $\mathfrak{U}_k\setminus\mathfrak{\hat U}_{k-1}$ by $\ga_\de[0,n_v]$, and define $\ga_w[t_{k,r},\tau_{k,r}]$ as the crossing of $\mathfrak{V}_k\setminus\mathfrak{\hat U}_{k-1}$ by $\ga_\de[n_v,\infty)$.

Define $\mathfrak{H}_{k,1}=\{\ga_\de^r[t_{k,r},\tau_{k,r}]$ crosses $\ga_\de[t_{k,l},\tau_{k,l}]\}$.

\begin{lemma}
If $k\geq 1+8\al^{-1}$, then
\beqn
\Pro(\mathfrak{H}_{k,1}^c) \preceq \mathrm{R}_1^{-2}. \label{H-k1}.
\eeqn
\end{lemma}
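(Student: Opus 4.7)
The plan is to adapt the strategy of Lemma \ref{rotation-event} to the boundary situation. Since $v$ lies in the upper half-plane relative to $\mathrm{L}_j$ (with $\Im v = y_v > 0$ and $\mathrm{L}_j$ taken as part of the real axis), and the annuli $\mathfrak{U}_k \setminus \mathfrak{\hat U}_{k-1}$ are centered at $\Re v$ which sits on $\mathrm{L}_j$, their intersection with $\{\Im z > 0\}$ are genuine half-annuli tangent to the boundary. The reflection in this lemma is around the vertical line $L^v = \{\Re z = x_v\}$, which preserves the upper half-plane.

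First I would establish that under $\mathfrak{H}_{k,0}\cap\mathfrak{H}_{k,1}^c$, after identifying each crossing of $L^v$ by $\ga_\de[t_{k,l},\tau_{k,r}]$ with its reflected image (just as in the proof of Lemma \ref{rotation-event}), one obtains three disjoint arms in the upper half-plane joining $\p B(v;2\de)$ to $\p B(v; r_m\mathrm{R}_1^{k}/2 - 2r_m\mathrm{R}_1^{k-1})$. Two of the arms come from $\ga_\de[t_{k,l},\tau_{k,l}]$ and $\ga_\de[t_{k,r},\tau_{k,r}]$; the third is the reflected path $\ga_\de^r[t_{k,r},\tau_{k,r}]$, which under $\mathfrak{H}_{k,1}^c$ must avoid the original path. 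As in Lemma \ref{rotation-event}, the individual arms are not of pure type because of status changes at the crossings with $L^v$, but the polynomial decay of the half-plane three-arm exponent is insensitive to such alternations (the argument of~\cite{Nolin} applies). Invoking \eqref{halfthree} then gives the bound of order $(r_m\mathrm{R}_1^{k-1}/r_m\mathrm{R}_1^k)^2 = \mathrm{R}_1^{-2}$.

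Next I would account for the event that one of the two crossings required by $\mathfrak{H}_{k,0}$ fails to occur as assumed, or that the path $\ga_\de[\tau_{k,1,r},t_{k,2,r}]$ (resp.\ $\ga_\de[\tau_{k,2,l},t_{k,1,l}]$) re-enters $\mathfrak{\hat U}_{k-1}$ and wanders back: these contribute probabilities bounded through \eqref{one} and \eqref{halfthree} by $\mathrm{R}_1^{-2}$ once $k$ is large enough. The threshold $k\geq 1+8\al^{-1}$ is exactly the budget needed so that the one-arm half-plane estimate (applied inside $\mathfrak{\hat U}_{k-1}\setminus B(\cdot;\de)$) can be absorbed into the $\mathrm{R}_1^{-2}$ factor; this mirrors the analogous bookkeeping that yields the exponent bound $10(\al+3)\al^{-2}$ in Lemma \ref{Hk23}.

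The main obstacle is the careful verification that the three-arm half-plane configuration is genuinely present rather than only a reduced-dimensional configuration. Specifically, one must rule out scenarios in which the non-crossing of $\ga_\de^r[t_{k,r},\tau_{k,r}]$ with $\ga_\de[t_{k,l},\tau_{k,l}]$ is realized trivially (e.g.\ because the two crossings are confined to a small arc of $\p\mathfrak{V}_k$ or $\p\mathfrak{U}_k$ near $\mathrm{L}_j$, so that the reflection fits on one side). These cases impose additional arms or force the path to spend length near $\mathrm{L}_j$, each of which can be bounded by a further factor going to zero via \eqref{halftwo}--\eqref{halfthree}; combined with the main half-plane three-arm estimate the total remains $O(\mathrm{R}_1^{-2})$.
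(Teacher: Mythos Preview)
Your proposal has a genuine gap: the direct folding strategy of Lemma \ref{rotation-event} does not transfer cleanly to this boundary situation, and you have not identified the mechanism that actually produces the threshold $k\ge 1+8\al^{-1}$.

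The geometric mismatch is this. In Lemma \ref{rotation-event} the reflection is around a point (equivalently, the fold is across the horizontal line through $w$), so the lower half--disk is mapped onto the upper half--disk and one legitimately lands in a half--plane picture to which \eqref{halfthree} applies. Here the reflection is across the \emph{vertical} line $L^v$, while the annuli $\mathfrak{U}_k,\mathfrak{V}_k$ are centred on $\Re v\in\mathrm{L}_j$ and the whole configuration already lives in the half--plane above $\mathrm{L}_j$. Folding across $L^v$ therefore lands you in a quarter--plane bounded by $L^v$ and by the \emph{wired} segment $\mathrm{L}_j$. The presence of that open boundary is exactly the obstruction: an ``open arm'' may run along $\mathrm{L}_j$, so after folding you cannot simply read off an $A_{010}^+$ event in the sense of \eqref{halfthree}. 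Your last paragraph alludes to this but does not give a mechanism to control it.

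The paper handles the boundary interaction by a different route. It assumes (after a symmetric reduction) that $\ga_\de[t_{k,l},\tau_{k,l}]$ avoids $\mathrm{L}_j$ while $\ga_\de[t_{k,r},\tau_{k,r}]$ touches it, and then performs a measure--preserving \emph{swap} of the statuses along $\ga_\de[t_{k,r},\tau_{k,r}]$ with those along its $L^v$--mirror. If, in the swapped configuration, an exploration path from $\ga_\de(t_{k,l})$ to $\ga_\de(\tau_{k,l})$ still crosses the annulus, one has three genuine half--plane arms and \eqref{halfthree} gives $\mathrm{R}_1^{-2}$. The substantive case is when the swap destroys that crossing: then there must exist a time $\jmath_k$ and an open arm from $\ga_\de(\jmath_k)$ to $\mathrm{L}_j$, disjoint from the open edges of the left crossing. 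Writing the scale of that arm as $r_m\mathrm{R}_1^{\vartheta(k-1)}$, the paper bounds its conditional probability by the full--plane one--arm estimate \eqref{one}, obtaining $(r_m\mathrm{R}_1^{\vartheta(k-1)}/\de)^{-\al}$, and then splits on $\vartheta\gtrless 1/4$; the constraint $k\ge 1+8\al^{-1}$ is precisely what makes both branches $\preceq\mathrm{R}_1^{-2}$. This is not a ``one--arm half--plane estimate inside $\mathfrak{\hat U}_{k-1}\setminus B(\cdot;\de)$'' as you suggest, and it does not arise from any folding argument.
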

\begin{proof}
Assume $\mathfrak{H}_{k,1}^c$.
Without loss of generality, suppose $\ga_\de[t_{k,l},\tau_{k,l})\cap \mathrm{L}_j=\emptyset$ and $\ga_\de[t_{k,r},\tau_{k,r}]\cap \mathrm{L}_j\not=\emptyset$. Let $\omega_k$ be the configuration constrained in $\mathfrak{U}_k\setminus\mathfrak{U}_{k-1}$ after swapping the statuses of edges touched or crossed by $\ga_\de[t_{k,r},\tau_{k,r}]$ and the statuses of the corresponding edges touched or crossed by $\ga_\de^r[t_{k,r},\tau_{k,r}]$. The mapping from $\omega|_{\mathfrak{U}_k\setminus\mathfrak{U}_{k-1}}$ to $\omega_k$ is one-to-one. If in $\omega_k$, there is still an exploration path from $\ga_\de(t_{k,l})$ to $\ga_\de(\tau_{k,l})$ contained in $\mathfrak{U}_k\setminus\mathfrak{U}_{k-1}$,  there are three disjoint arms connecting $\p B(\Re v; 2r_m\mathrm{R}_1^{k-1})$ and $\p \mathfrak{V}_k$. So we can apply \eqref{halfthree} to obtain that this happens with probability of the order $\mathrm{R}_1^{-2}$. If in $\omega_k$, there is no exploration path from $\ga_\de(t_{k,l})$ to $\ga_\de(\tau_{k,l})$ contained in $\mathfrak{U}_k\setminus\mathfrak{U}_{k-1}$, we can define
$\jmath_k=\min\{j_1>t_{k,r}:\ga_\de(j_1)\in \ga_\de[t_{k,l},\tau_{k,l}]$, and there is an open arm connecting $\ga_\de(\jmath_k)$ and $\mathrm{L}_j$, which is disjoint from the open edges touched by $\ga_\de[t_{k,l},\tau_{k,l}]$,  after swapping the statuses of edges touched or crossed by $\ga_\de[t_{k,r},\jmath_k]$ and the statuses of the corresponding edges touched or crossed by $\ga_\de^r[t_{k,r},\jmath_k]\}$. Let $v_{arm}$ be any point in this arm. Denote the largest value between $v_{arm}$ and $\ga_\de(\jmath_k)$ by $r_m\mathrm{R}_1^{\vartheta (k-1)}$. Conditional on $\ga_\de[t_{k,l},\tau_{k,l}]\cup\ga_\de[t_{k,r},\jmath_k]$, the probability of this event is at most of the order $(r_m\mathrm{R}_1^{\vartheta(k-1)}/\de)^{-\al}\preceq \mathrm{R}_1^{-2}$ if $k\geq 1+8\al^{-1}$ and $\vartheta\geq 1/4$.

 Now we suppose $\vartheta<1/4$ and $(r_m\mathrm{R}_1^{\vartheta(k-1)}/\de)^{-\al}> \mathrm{R}_1^{-2}$. So $r_m/\de <\mathrm{R}_1^{2/\al-\vartheta(k-1)}$.
Note that there are three disjoint arms connecting the respective inner and outer boundaries of $B(\Re \ga_\de(\jmath_k);r_m\mathrm{R}_1^{\vartheta(k-1)},r_m\mathrm{R}^{k-1})$ in $\Half$. Considering the possible positions of $\Re \ga_\de(\jmath_k)$, the probability of this event is at most of the order
$$
(\mathrm{R}_1^{k-1}/\mathrm{R}_1^{\vartheta(k-1)})^{-2} r_m\mathrm{R}^k/\de<\mathrm{R}_1^{1+2/\al-3(k-1)/4} \preceq \mathrm{R}_1^{-2}
$$
if $k\geq 1+8\al^{-1}$.
This completes the proof.
\end{proof}

Under $\mathfrak{H}_{k,1}$, define
\beq
\mathrm{s}_{k,r}&=&\min\{j_1 \in [t_{k,r},\tau_{k,r}]: \ga_\de^r[j_1,j_1+1]\subseteq_c \ga_\de[t_{k,l},\tau_{k,l}]\}, \\
 \mathrm{s} _{k,l}&=&\min\{j_1\in [t_{k,l},\tau_{k,l}]: \ga_\de(j_1)=\ga_\de^r(\mathrm{s}_{k,r})\}.
\eeq
We consider $\mathfrak{H}_{k,2}=\{\ga_\de[0,\mathrm{s}_{k,l}]\cap\ga_\de[n_v,\mathrm{s}_{k,r}]=\emptyset\}$. Similarly to Lemma \ref{Hk23}, we have
\beqn
\Pro(\mathfrak{H}_{k,2}^c)\preceq \mathrm{R}_1^{-2}. \label{H-k2}
\eeqn
Define
\beq
\mathrm{s}_{k,2,r}&=&\min\{j_1 \in [n_v,\mathrm{s}_{k,r}]: \ga_\de^r[j_1,j_1+1]\subseteq_c \ga_\de[0,\mathrm{s}_{k,l}]\}, \\
 \mathrm{s} _{k,2,l}&=&\min\{j_1\in [0,\mathrm{t}_{k,l}): \ga_\de(j_1)=\ga_\de^r(\mathrm{s}_{k,2,r})\}.
\eeq

Now we can put all $\mathfrak{H}_{k,j}$ together to form $\mathfrak{H}_k=\cap_{j=0}^2\mathfrak{H}_{k,j}$. It follows from \eqref{H-k0}, \eqref{H-k1} and \eqref{H-k2} that
$
\Pro(\mathfrak{H}_k^c)\leq C \mathrm{R}_1^{-2}
$
if $k\geq 1+8\al^{-1}$. Thus, by independence, we have
\beq
\Pro(\cap_{1+8\al^{-1}\leq m\leq M_v}  \mathfrak{H}_m^c) \leq (C\mathrm{R}_1^{-2})^{M_v-8\al^{-1}-1}
\preceq \big(r_m/x_v\big)^{2-\varepsilon}.
\eeq
Note that if $r_v\leq 2r_m'$, the configuration in $B(\Re v;2r_m)$ has probability of the order
$(\de/y_v)^{\al}(y_v$ $/r_m)^2$; if $r_v>2r_m'$, the configurations in $B(\Re v;2r_m')$ and $B(\Re v;2r_m',r_m)$ have probabilities of the order $(\de/y_v)^{\al}(y_v/r_m')^2$ and $(r_m'/r_m)^2$ respectively. (Here we require $r_m\geq r_v$ in order to apply $\mathfrak{f}_v$ in the proof of Proposition \ref{ref-inv}.)
So, after ignoring an event with probability of the order $(\de/y_v)^{\al}(y_v/x_v)^{2-\varepsilon}$ which is obtained by combining $ \big(r_m/x_v\big)^{2-\varepsilon}$, $(\de/y_v)^{\al}(y_v/r_m')^2$ and $(r_m'/r_m)^2$, we can suppose $\cup_{1+8\al^{-1}\leq m\leq m_w}  \mathfrak{H}_m$ holds. Let $k$ be the first $m$ such that $\mathfrak{H}_m$ holds. Then under $\mathfrak{H}_k$, we can construct the mapping $\omega \rightarrow\varpi_r$
as follows.
 \beq
\varpi_r(E)&=&\omega(E^r), \ \varpi_r(E^r)=\omega(E), \ \text{if} \ v_E\in \ga_\de[n_v,\mathrm{s}_{k,2,r}]\cup \loop_v;  \\
\varpi_r(E)&=&\omega(E^r), \ \varpi_r(E^r)=\omega(E), \ \text{if} \ v_E\in B(v;r_v) \ \text{and} \ r_v\leq y_v;\\
\varpi_r(E)&=&\omega(E), \ \text{otherwise}. \nonumber
\eeq
Here we remark that the construction step when $ r_v\leq y_v$ is to guarantee the application of $\mathrm{f}_v$ in the proof of Proposition \ref{ref-inv}.
In addition, the one-to-one property of $\omega \rightarrow\varpi_r$ is the same as that of $\omega_w\rightarrow\omega_v$ in Section \ref{partII}. So we omit the details.

If $x_v/y_v$ is bounded, then the probability bound $(\de/y_v)^{\al}(y_v/x_v)^{2-\varepsilon}$ automatically holds.

\

Next, we deduce the relation between $W_{\ga_\de}(A,e_b)$ and $W_{\ga_r}(D,e_b)$. Here we can assume that $\tau_L$ and $\tau_R$ exist,
otherwise we can slightly extend the exploration path without changing the windings.
In $\varpi_r$, there is one exploration path $\ga_r$ passing though $v$ such that $A,D\in \ga_r$ and $B,C\notin \ga_r$. If $W_{\ga_\de}(A,e_b)=2k\pi+\pi/2$ and $W_{\ga_\de}(D,e_b)=2k\pi+\pi$, then $W_{\ga_r}(A,e_b)=-2k\pi+\pi/2$ and $W_{\ga_r}(D,e_b)=-2k\pi+\pi$, noting that the winding from $A_{\ga_\de(\tau_L)}$ to $A$ along $\ga_\de$ is  $-2k\pi$, and the winding from $D$ to $D_{\ga^r_\de(\tau_L)}$ along the reflected path of $\ga_\de[\tau_L,n_v]$ around $L_v$ is $-2k\pi$. This also explains that only $2k\pi$ matters here, and $\pi/2$ in $W_{\ga_\de}(A,e_b)$ is actually $W_{\ga_\de}(A_{\ga_\de(\tau_L)},e_b)$.
If $W_{\ga_\de}(A,e_b)=2k\pi+\pi$ and $W_{\ga_\de}(D,e_b)=2k\pi+3\pi/2$, then $W_{\ga_r}(A,e_b)=-2k\pi+\pi$ and $W_{\ga_r}(D,e_b)=-2k\pi+3\pi/2$. Based on the above argument, we can regard $2k\pi$ in $W_{\ga_\de}(A,e_b)=2k\pi+\pi/2$ or $W_{\ga_\de}(A,e_b)=2k\pi+\pi$ as the winding from $A$ to $A_{\ga_\de(\tau_R)}$ along $\ga_\de$, and $\pi/2$ or $\pi$ as the winding from $A_{\ga_\de(\tau_R)}$ to $e_b$ along $\ga_\de$. This is another explanation of $W_{\ga_\de}(A,e_b)$. We also have the same explanations for $W_{\ga_\de}(B,e_b)$, $W_{\ga_\de}(C,e_b)$ and $W_{\ga_\de}(D,e_b)$. In this subsection, we will follow this convention on $W_{\ga_\de}(e_v,e_b)$, where $e_v=A,B,C,D$.

\

Now we turn to the case where $A,B\in \ga_\de$ and $C,D\notin\ga_\de$. This is the same as the case where $A,D\in \ga_\de$ and $B,C\notin\ga_\de$. Thus, by the same construction in the case where $A,D\in \ga_\de$ and $B,C\notin\ga_\de$,
 after ignoring a set with probability at most of the order $(\de/x_v)^{1+\al}+(\de/y_v)^{\alpha}(y_v/x_v)^{2-\varepsilon}$, we can construct $\varpi_r$ such that
$A,B\notin \ga_r$ and $C,D\in\ga_r$, and the mapping from $\omega$ to $\varpi_r$ is one-to-one.  If $W_{\ga_\de}(A,e_b)=2k\pi+\pi/2$ and $W_{\ga_\de}(B,e_b)=2k\pi$, then $W_{\ga_r}(C,e_b)=-2k\pi+3\pi/2$ and $W_{\ga_r}(D,e_b)=-2k\pi+\pi$. If $W_{\ga_\de}(A,e_b)=2k\pi+\pi$ and $W_{\ga_\de}(B,e_b)=2k\pi+\pi/2$, then $W_{\ga_r}(C,e_b)=-2k\pi+2\pi$ and $W_{\ga_r}(D,e_b)=-2k\pi+3\pi/2$.

 If $A,B\notin \ga_\de$ and $C,D\in\ga_\de$, we are in a similar situation to $A,B\in \ga_\de$ and $C,D\notin\ga_\de$. In other words, after ignoring a set with probability at most of the order $(\de/x_v)^{1+\al}+(\de/y_v)^{\alpha}(y_v/x_v)^{2-\varepsilon}$, we can construct $\varpi_r$ such that
$A,B\in \ga_r$ and $C,D\notin\ga_r$, and the mapping from $\omega$ to $\varpi_r$ is one-to-one. If $W_{\ga_\de}(C,e_b)=2k\pi+3\pi/2$ and $W_{\ga_\de}(D,e_b)=2k\pi+\pi$, then $W_{\ga_r}(A,e_b)=-2k\pi+\pi/2$ and $W_{\ga_r}(B,e_b)=-2k\pi$.
If $W_{\ga_\de}(C,e_b)=2k\pi+2\pi$ and $W_{\ga_\de}(D,e_b)=2k\pi+3\pi/2$, then $W_{\ga_r}(A,e_b)=-2k\pi+\pi$ and $W_{\ga_r}(B,e_b)=-2k\pi+\pi/2$.

If $A,D\notin \ga_\de$ and $B,C\in\ga_\de$, we are in a similar situation to the case where $A,D\in \ga_\de$ and $B,C\notin\ga_\de$.  In other words,  after ignoring a set with probability at most of the order $(\de/x_v)^{1+\al}+(\de/y_v)^{\alpha}(y_v/x_v)^{2-\varepsilon}$, we can construct $\varpi_r$ such that
$A,D\notin \ga_r$ and $B,C\in\ga_r$, and the mapping from $\omega$ to $\varpi_r$ is one-to-one.
If $W_{\ga_\de}(B,e_b)=2k\pi$ and $W_{\ga_\de}(C,e_b)=2k\pi-\pi/2$, then $W_{\ga_r}(B,e_b)=-2k\pi$ and $W_{\ga_r}(C,e_b)=-2k\pi-\pi/2$.
 If $W_{\ga_\de}(B,e_b)=2k\pi+\pi/2$ and $W_{\ga_\de}(C,e_b)=2k\pi$, then $W_{\ga_r}(B,e_b)=-2k\pi+\pi/2$ and $W_{\ga_r}(C,e_b)=-2k\pi$.

The above construction, after swapping the roles of $B$ and $D$, also applies to the case where $E_v$ is a horizontal primal edge.

From now on, the above one-to-one mapping $\omega \rightarrow\varpi_r$ is denoted by $\mathrm{g}_v$, and the ignored set with probability at most of the order $(\de/x_v)^{1+\al}+(\de/y_v)^{\alpha}(y_v/x_v)^{2-\varepsilon}$ is denoted by $\mathcal{E}_v^\mathrm{g}$.

Now we can state and prove the following result.
\begin{proposition} \label{ref-inv}
Assume that one end $\mathrm{v}_j$ of $\mathrm{L}_j$ is the origin of $\C$ and the other end $\mathrm{v}_{j+1}$ is a positive real number, where $j\not=1$ or $\mathrm{n}$. Let $v$ be a medial vertex such that $x_v=\Re v>0$, $y_v=\Im v>\de^{1-\ep}$, $y_v\leq x_v\leq \mathrm{l}_j$, $j\not=1$ or $\mathrm{n}$, $E_v$ is a horizontal primal edge, and $A$ and $C$ point towards $v$. Then
\beqn
\mathrm{F}(v)=r_v+O\Big((\frac{\de}{x_v})^{1+\al}+(\frac{\de}{y_v})^{\alpha}(\frac{y_v}{x_v})^{2-\varepsilon}
+(\frac{\de}{y_v})^{2-\varepsilon}(\frac{y_v}{\mathrm{l}_j})^{1/3}\Big), \label{Fvboundary-re}
\eeqn
where $r_v$ is a complex number such that $\Im r_v^3=0$.
If $E_v$ is a vertical primal edge such that $A$ and $C$ point towards $v$, then
\beqn
\mathrm{F}^*(v)=r_v^*+O\Big((\frac{\de}{x_v})^{1+\al}+(\frac{\de}{y_v})^{\alpha}(\frac{y_v}{x_v})^{2-\varepsilon}
+(\frac{\de}{y_v})^{2-\varepsilon}(\frac{y_v}{\mathrm{l}_j})^{1/3}\Big), \label{Fvstarboundary-re}
\eeqn
where $r_v^*$ is a complex number such that $\Im (r_v^*)^3=0$.
The same conclusion holds if we replace the respective $\mathrm{F}$ and $\mathrm{F}^*$ with $\mathrm{F}_i$ and $\mathrm{F}_i^*$.

When $j=1$ and $x_v\leq \mathrm{l}_1/2$,  \eqref{Fvboundary-re} and \eqref{Fvstarboundary-re} are still correct after $\mathrm{l}_j$ is replaced by $x_v$. When $j=\mathrm{n}$ and $x_v\geq \mathrm{l}_\mathrm{n}/2$,  \eqref{Fvboundary-re} and \eqref{Fvstarboundary-re} are still correct after $\mathrm{l}_j$ is replaced by $\mathrm{l}_\mathrm{n}-x_v$.
  \end{proposition}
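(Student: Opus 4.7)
The plan is to exploit the measure-preserving reflection bijection $\mathrm{g}_v$ constructed in this subsection, which pairs each configuration $\omega$ (with $\ga_\de(\omega)$ passing through $v$) with a reflected configuration $\varpi_r=\mathrm{g}_v(\omega)$ whose exploration path $\ga_r$ also passes through $v$. Outside the exceptional set $\mathcal{E}_v^{\mathrm{g}}$ of probability $\preceq(\de/x_v)^{1+\al}+(\de/y_v)^{\al}(y_v/x_v)^{2-\varepsilon}$, this bijection pairs the four incidence patterns $\{A,D\},\{A,B\},\{B,C\},\{C,D\}\subseteq\ga_\de$ with corresponding incidence patterns in $\ga_r$ and transforms each winding $W_{\ga_\de}(e_v,e_b)=2k\pi+\phi$ into $W_{\ga_r}(e_v',e_b)=-2k\pi+\phi'$ for the explicit label-and-phase shifts $(e_v,\phi)\mapsto(e_v',\phi')$ tabulated in Section \ref{partIII}. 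At the outset I would also absorb the prior one-arm cutoff $\{r_v\ge x_v\}$ at cost $(\de/x_v)^{1+\al}$.

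Next, I will translate the pairing into probability identities of the form $p_k=q_{-k}$, where $p_k,q_k$ are the probabilities of paired winding-indexed events, and then sum the complex weights $e^{iW/3}$ against them. This produces, for each piece of $\mathrm{F}(e_v)$ (namely $F_{one},F_{two},F_{\pi/12},F_{ex}$) restricted to a fixed incidence class, a conjugation identity of the form $X=e^{i\psi}\overline{Y}+O(\text{errors})$ for an explicit phase $\psi$ and a paired piece $Y$. Combining these across all four incidence patterns yields four relations among $\mathrm{F}(A),\mathrm{F}(B),\mathrm{F}(C),\mathrm{F}(D)$. Substituting into
$$
\mathrm{F}(v)=e_b^{-1/3}\Big(e^{-i\pi/4}\tfrac{\mathrm{F}(A)+\mathrm{F}(C)}{2}+e^{i\pi/4}\tfrac{\mathrm{F}(B)+\mathrm{F}(D)}{2}\Big)
$$
and cubing, the factors $e^{\pm i\pi/4}$ and $e_b^{-1/3}$ should interact with the phases $e^{i\psi}$ from the reflection so that $\mathrm{F}(v)^3-\overline{\mathrm{F}(v)^3}=O(\text{errors})$, yielding the claimed $r_v$ with $\Im r_v^3=0$.

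For the error bookkeeping, $\mathcal{E}_v^{\mathrm{g}}$ supplies the first two terms, while the third term $(\de/y_v)^{2-\varepsilon}(y_v/\mathrm{l}_j)^{1/3}$ will arise from the fact that $\mathrm{F}$ is itself defined only up to an additive ambiguity of order $(\de/d_v)^{2-\varepsilon}=(\de/y_v)^{2-\varepsilon}$ per configuration (Proposition \ref{invarianceF}), weighted by the boundary-visit probability $\Pro(v\in\ga_\de)\preceq(y_v/\mathrm{l}_j)^{1/3}$ supplied by Proposition \ref{nearbound10}. The assertion for $\mathrm{F}^*(v)$ with vertical $E_v$ will follow from the same argument after a $\pi/2$ rotation of the geometric setup, and the analogues for $\mathrm{F}_i,\mathrm{F}_i^*$ reduce via $\mathrm{F}_i=F-\mathrm{F}$ to the same reflection symmetry applied to $F$ itself. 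The endpoint cases $j=1$ or $\mathrm{n}$ will only need a restriction of the reflection range so that $\mathrm{g}_v$ stays inside $\mathrm{L}_j$, which replaces $\mathrm{l}_j$ by $x_v$ or $\mathrm{l}_{\mathrm{n}}-x_v$. The hard part will be the phase algebra in the second paragraph: the coefficients $(3\sqrt{3}+5)/(6\sqrt{3}+6),\sqrt{3},(\sqrt{3}-1)/\sqrt{2},1/2$ appearing in $\mathrm{F}(e_v)$ were engineered for rotational invariance (Proposition \ref{invarianceF}), and I must verify that they also balance the reflection-induced phases, whose offsets $\pi,2\pi,3\pi$ vary across the four incidence patterns and their two winding subclasses, so that cubing actually yields reality. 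A single miscomputed offset would destroy the reality of $r_v^3$, so a systematic case enumeration through the Section \ref{partIII} table will be required.
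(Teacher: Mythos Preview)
Your overall strategy—use the reflection bijection $\mathrm{g}_v$ to pair configurations and convert the winding table into conjugation identities for the building blocks of $\mathrm{F}(e_v)$—is the same as the paper's. The gap is in the treatment of $F_{\pi/12}$. The weights $e^{\pm i\pi/12}$ in $F_{\pi/12}(e_v)$ depend on whether $\omega\in\mathcal{E}_{j,1}^{v}$ or $\omega\in\mathcal{E}_{j,2}^{v}$, and that split is \emph{defined} through $\mathrm{f}_v$, not through any reflection-equivariant datum. Your identities $p_k=q_{-k}$ coming from $\mathrm{g}_v$ alone do not see this split: $\mathrm{g}_v$ sends $\mathcal{E}_{1,2}$ to some set $\mathcal{F}_{1,1}$ which there is no a~priori reason to equal $\mathcal{E}_{1,1}$. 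The paper fixes this by composing: it defines $\mathcal{F}_{j,m}:=\mathrm{g}_v(\mathcal{E}_{j',m'})$ for the label permutation forced by the winding table, observes that the conjugated map $\mathrm{f}_v':=\mathrm{g}_v\mathrm{f}_v\mathrm{g}_v^{-1}$ is again a one-to-one, onto, winding-preserving map $\mathcal{F}_{j,2}\to\mathcal{F}_{j+1,1}$, and then \emph{re-runs the linear-algebra argument from the proof of Proposition~\ref{invarianceF}} (the determination of the $\xi_{j,2,(n)}^v$ by the winding-indexed visit probabilities $p_{e_v,(n)}$) to conclude $\mathcal{E}_{j,m}=\mathcal{F}_{j,m}$ up to the stated error. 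Your appeal to Proposition~\ref{invarianceF} as a black-box ``additive ambiguity of order $(\de/y_v)^{2-\varepsilon}$ per configuration, weighted by the visit probability'' does not do this: the error there is already post-expectation (so no further weighting is legitimate), and the proposition is stated only for the specific maps $\mathrm{f}_{j,v},\mathfrak{f}_{j,v}$, not for $\mathrm{f}_v'$. You need its \emph{proof}, not its statement; the third error term then emerges because the exceptional event $\mathcal{E}_v$ (probability $\preceq(\de/y_v)^{2}$) must be intersected with the half-plane one-arm event from $\p B(\Re v;2y_v)$ to $\p B(\Re v;\mathrm{l}_j/2)$.

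Your final worry—that the coefficients $(3\sqrt 3+5)/(6\sqrt 3+6),\sqrt 3,(\sqrt 3-1)/\sqrt 2,1/2$ must be checked to balance the reflection phases—is misplaced. Once $\mathcal{E}_{j,m}=\mathcal{F}_{j,m}$ is known, the paper tracks the windings of $\ga_\de,\ \mathrm{f}_v(\ga_\de),\ \mathrm{g}_v(\ga_\de),\ \mathrm{g}_v(\mathrm{f}_v(\ga_\de))$ (and their $E_v$-flipped partners) on two orbit types to obtain the single clean relation $e^{-i\pi/4}F_\theta(A)=\overline{e^{-i\pi/4}F_\theta(D)}+O(\cdot)$, and the same for $\sqrt 3F_{one}+F_{two}$. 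These pairwise $(A,D)$ and $(B,C)$ conjugations plug linearly into the definition of $\mathrm{F}(v)$ and give $\Im r_v^3=0$ directly; the special coefficients were engineered for \eqref{sumid}, not for this step.
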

\begin{proof}
We only consider the case where $E_v$ is a horizontal primal edge. Suppose $W_{\ga_\de}(A,e_b)$ is of the form $2k\pi+\pi/2$ and the direction of $A$ is $e^{i\pi/4}$, so $e_b=e^{i3\pi/4}$. Also suppose $2y_v\leq x_v$.
Combining the construction of $\mathrm{f}_v$ in Proposition \ref{rot-inv} and the construction in this part, we can obtain that there exists an exceptional event $\mathcal{EX}$ containing $\mathcal{E}_v$ and $\mathcal{E}_v^\mathrm{g}$ such that
$$\Pro(\mathcal{EX}) \preceq (\de/x_v)^{1+\al}+ (\de/y_v)^{\alpha}(y_v/x_v)^{2-\varepsilon}+(\de/y_v)^2(y_v/\mathrm{l}_j)^{1/3},$$
and the mapping
 $$\mathrm{f}_v: \mathcal{E}_{j,2}^v\setminus \mathcal{EX} \mapsto \mathcal{E}_{j+1,1}^v\setminus \mathcal{EX}$$ for $j=1,2,3,4$
  is one-to-one and onto, where $\mathcal{E}_{5,1}^v$ is understood as $\mathcal{E}_{1,1}^v$.
To simplify the notation, let
$\mathcal{E}_{j,m}=\mathcal{E}_{j,m}^v\setminus \mathcal{EX} $
for $j=1,2,3,4$, $m=1,2$.
Apply $\mathrm{g}_v$ to each of $\mathcal{E}_{j,m}$ and write
\beq
\mathcal{F}_{4,2}=\mathrm{g}_v(\mathcal{E}_{2,1}), \mathcal{F}_{1,1}=\mathrm{g}_v(\mathcal{E}_{1,2}), \mathcal{F}_{1,2}=\mathrm{g}_v(\mathcal{E}_{1,1}), \mathcal{F}_{2,1}=\mathrm{g}_v(\mathcal{E}_{4,2}), \\
\mathcal{F}_{2,2}=\mathrm{g}_v(\mathcal{E}_{4,1}), \mathcal{F}_{3,1}=\mathrm{g}_v(\mathcal{E}_{3,2}), \mathcal{F}_{3,2}=\mathrm{g}_v(\mathcal{E}_{3,1}), \mathcal{F}_{4,1}=\mathrm{g}_v(\mathcal{E}_{2,2}).
\eeq

 The property of $\mathrm{g}_v$ and $\mathrm{f}_v$ shows that there exists a one-to-one and onto mapping $\mathrm{f}_v': \mathcal{F}_{j,2}\mapsto \mathcal{F}_{j+1,1}$ with $\mathcal{F}_{5,1}$ being $\mathcal{F}_{1,1}$ such that
 \beq
 W_{\mathrm{f}_v'(\ga_\de)}(A,e_b)&=&W_{\ga_\de}(A,e_b), \ \omega(\ga_\de)\in  \mathcal{E}_{1,2}^v\setminus \mathcal{E}_v;\\
 W_{\mathrm{f}_v'(\ga_\de)}(B,e_b)&=&W_{\ga_\de}(B,e_b), \ \omega(\ga_\de)\in  \mathcal{E}_{2,2}^v\setminus \mathcal{E}_v;\\
  W_{\mathrm{f}_v'(\ga_\de)}(C,e_b)&=&W_{\ga_\de}(C,e_b), \ \omega(\ga_\de)\in  \mathcal{E}_{3,2}^v\setminus \mathcal{E}_v;\\
   W_{\mathrm{f}_v'(\ga_\de)}(D,e_b)&=&W_{\ga_\de}(D,e_b), \ \omega(\ga_\de)\in \mathcal{E}_{4,2}^v\setminus \mathcal{E}_v,
 \eeq
 where  $\mathrm{f}_v'(\ga_\de)$ represents the exploration path in $\mathrm{f}_v'(\omega)$ when $\ga_\de$ is in $\omega$. (Rigorously, we can only define $\mathcal{F}_{j,2}$ for $j=1,2,3,4$ by $\mathrm{g}_v$. $\mathcal{F}_{j,1}$ should be defined by $\mathrm{f}_v'$. However, to simplify the presentation, we just define all $\mathcal{F}_{j,m}$ through $\mathrm{g}_v$. We also remark that we have used the construction of $\mathrm{g}_v$ and $\mathfrak{f}_v$.)

 By the same argument in the proof of Lemma \ref{invarianceF}, we can conclude that $\mathcal{E}_{j,m}=\mathcal{F}_{j,m}$ after ignoring a set with probability of the order \begin{equation} \label{ERROR}
 (\frac{\de}{x_v})^{1+\al}+(\frac{\de}{y_v})^{\alpha}(\frac{y_v}{x_v})^{2-\varepsilon}
+(\frac{\de}{y_v})^{2-\varepsilon}(\frac{y_v}{\mathrm{l}_j})^{1/3}.
\end{equation}

Now suppose $\omega(\ga_\de)\in \mathcal{E}_{1,2}$. Then
\beq
&&W_{\mathrm{f}_v(\ga_\de)}(A,e_b)=2k\pi+\pi/2, \ W_{\mathrm{f}_v(\ga_\de)}(B,e_b)=2k\pi,\\
&&W_{\mathrm{g}_v(\ga_\de)}(A,e_b)=-2k\pi+\pi/2, \ W_{\mathrm{g}_v(\ga_\de)}(D,e_b)=-2k\pi+\pi,\\
&&W_{\mathrm{g}_v(\mathrm{f}_v(\ga_\de))}(C,e_b)=-2k\pi+3\pi/2, \ W_{\mathrm{g}_v(\mathrm{f}_v(\ga_\de))}(D,e_b)=-2k\pi+\pi.
\eeq
(Actually if we consider $\mathrm{f}_v(\mathrm{g}_v(\ga_\de))$ and $\mathrm{g}_v(\mathrm{f}_v(\mathrm{g}_v(\ga_\de)))$ instead of $\mathrm{f}_v(\ga_\de)$ and $\mathrm{g}_v(\mathrm{f}_v(\ga_\de))$, we have the same conclusion.)
Considering the contributions of $\ga_\de$, $\ga_\de(\omega_{\setminus E_v})$ , $\mathrm{f}_v(\ga_\de)$, $\mathrm{f}_v(\ga_\de(\omega_{\setminus E_v}))$, $\mathrm{g}_v(\ga_\de)$, $\mathrm{g}_v(\ga_\de(\omega_{\setminus E_v}))$ , $\mathrm{g}_v(\mathrm{f}_v(\ga_\de))$ and $\mathrm{g}_v(\mathrm{f}_v(\ga_\de(\omega_{\setminus E_v})))$ to $F_\theta(A)$ and $F_\theta(D)$, we can obtain that when $F_\theta$ is constrained to $\mathcal{E}_{4,2}\cup\mathcal{E}_{1,1}\cup\mathcal{E}_{1,2}\cup\mathcal{E}_{2,1}$,
\beqn
e^{-i\frac{\pi}{4}}F_\theta(A)=\overline{e^{-i\frac{\pi}{4}}F_\theta(D)}+O\Big((\frac{\de}{x_v})^{1+\al}+(\frac{\de}{y_v})^{\alpha}(\frac{y_v}{x_v})^{2-\varepsilon}
+(\frac{\de}{y_v})^{2-\varepsilon}(\frac{y_v}{\mathrm{l}_j})^{1/3}\Big). \label{conj1}
\eeqn
The same argument also implies the same relation for $\sqrt 3 F_{one}+F_{two}$.

Now suppose $\omega(\ga_\de')\in \mathcal{E}_{3,1}$ with $W_{\ga_\de'}(B,e_b)=2k\pi$ and $W_{\ga_\de'}(C,e_b)=2k\pi-\pi/2$. Then
\beq
&&W_{\mathrm{g}_v(\ga_\de')}(B,e_b)=-2k\pi+2\pi, \ W_{\mathrm{g}_v(\ga_\de')}(C,e_b)=-2k\pi+3\pi/2,\\
&&W_{\mathrm{f}_v(\mathrm{g}_v(\ga_\de'))}(C,e_b)=-2k\pi+3\pi/2, \ W_{\mathrm{f}_v(\mathrm{g}_v(\ga_\de'))}(D,e_b)=-2k\pi+\pi,\\
&&W_{\mathrm{g}_v(\mathrm{f}_v(\mathrm{g}_v(\ga_\de')))}(A,e_b)=2k\pi+\pi/2, \ W_{\mathrm{g}_v(\mathrm{f}_v(\mathrm{g}_v(\ga_\de')))}(B,e_b)=2k\pi.
\eeq
Considering the contributions of $\ga_\de'$, $\ga_\de'(\omega_{\setminus E_v})$, $\mathrm{g}_v(\ga_\de')$, $\mathrm{g}_v(\ga_\de'(\omega_{\setminus E_v}))$,
$\mathrm{f}_v(\mathrm{g}_v(\ga_\de'))$, $\mathrm{f}_v(\mathrm{g}_v(\ga_\de'(\omega_{\setminus E_v})))$,
$\mathrm{g}_v(\mathrm{f}_v(\mathrm{g}_v(\ga_\de')))$ and $\mathrm{g}_v(\mathrm{f}_v(\mathrm{g}_v(\ga_\de'(\omega_{\setminus E_v}))))$ to $F_\theta(A)$ and $F_\theta(D)$, we can obtain that when $F_\theta$ is constrained to $\mathcal{E}_{2,2}\cup\mathcal{E}_{3,1}\cup\mathcal{E}_{3,2}\cup\mathcal{E}_{4,1}$, \eqref{conj1} still holds. The same argument also implies the same relation for $\sqrt 3 F_{one}+F_{two}$.

So we can complete the proof by the above arguments and the definition of $\mathrm{F}(e)$.

If the direction of $A$ is $e^{-i3\pi/4}$, then $e_b=e^{-i\pi/4}$. Noting that $e^{i3\pi/4}/e^{-i\pi/4}=-1$, we have the same conclusion.

When $x_v/2<y_v\leq x_v$, $\Pro(\mathcal{EX})\preceq (\de/y_v)^\al$. So \eqref{Fvboundary-re} is still correct.

The argument leading to the conclusion for  $\mathrm{F}_i$ and $j=1$ or $\mathrm{n}$ is the same, so we omit the details too.
\end{proof}
\begin{remark}
1. Proposition \ref{ref-inv} makes sense only when $\de/x_v \to 0$ and $\de/y_v \to 0$. 2. The error bound $(\de/y_v)^2(y_v/\mathrm{l}_j)^{1/3}$ in Proposition \ref{ref-inv} comes from the event $\mathcal{E}_v$ which depends on the configuration in $B(v;y_v)$ and the event that there is an exploration path between the inner and outer boundaries of $B(x_v;2y_v, \mathrm{l}_j/2)$. 3. When $v$ is close to $\mathrm{L}^*_{\mathrm{j}^*}$, Proposition \ref{ref-inv} still holds.
\end{remark}

\subsection{Boundary behaviour of line integral}
We can define the integrals of $\mathrm{F}(\cdot)^3$ and $\mathrm{F}^{*}(\cdot)^3$  along a discrete curve in $\Omega_\de$ and $\Omega_\de^*$ respectively as follows,
\beq
\int_{\soup} \mathrm{F}(z)^3dz:=\sum_{j=0}^{n-1} (z_{j+1}-z_j)\mathrm{F}(v_j)^3,\
\int_{\soup^*} \mathrm{F}^{*}(z)^3dz:=\sum_{j=0}^{n-1} (z_{j+1}^*-z_j^*)\mathrm{F}^{*}(v_j^*)^3,
\eeq
where $\soup$ is a discrete curve consisting of complex numbers $z_0\sim z_1\sim \cdots\sim z_n=z_0$ in the lattice $\Omega_\de$ such that $|z_{j+1}-z_j|=\de$, and $v_j$ denotes the center of $[z_j, z_{j+1}]$, $\soup^*$ is a discrete curve consisting of complex numbers $z_0^*\sim z_1^*\sim \cdots\sim z_n^*=z_0^*$ in the lattice $\Omega_\de^*$ such that $|z_{j+1}^*-z_j^*|=\de$, and $v_j^*$ denotes the center of $[z_j^*, z_{j+1}^*]$

Direct calculations show that
\beqn
\Im \mathrm{F}(v)^3(\mathrm{z}_2-\mathrm{z}_1)=0, \label{F3boundary}
\eeqn
where $\mathrm{z}_1$ and $\mathrm{z}_2$ are two ends of $E_v$ which is in $\p_{ba,\de}$. However, when $v$ is close to $\p\Omega_\de^\diamond$, the error term $(\de/d_v)^2$ in \eqref{idstar} is beyond control. On the other hand, when $v$ is far away from the boundary, the boundary condition \eqref{F3boundary} is not correct any more. This consideration makes us study $\mathrm{F}$ on $\Omega_\de^{in}$.
\begin{proposition} \label{boundofin}
For any two points $\mathbf{z}_0$ and $\mathbf{z}_0'$ in $V_{\Omega_\de}\cap\p \Omega_\de^{in}$ such that $\de^\ep \preceq \dist(\mathbf{z}_0,\p_{ab,\de}^*)$ and $\de^\ep \preceq \dist(\mathbf{z}_0',\p_{ab,\de}^*)$, we have
\beq
|\Im \int_{(\mathbf{z}_0\to\mathbf{z}_0')\subseteq\p \Omega_\de^{in} } \de^{-1}\mathrm{F}(z)^3dz| \preceq \de^{\ep-\varepsilon}/r_\de^4,
\eeq
where $(\mathbf{z}_0\to\mathbf{z}_0')\subseteq\p \Omega_\de^{in}$ means that the integration path is from $\mathbf{z}_0$ to $\mathbf{z}_0' $ along $\p \Omega_\de^{in}$. Similarly, for any two points $\mathfrak{z}_0$ and $\mathfrak{z}_0'$ in $V_{\Omega_\de}\cap\p \Omega_\de^{in}$ such that $\de^\ep \preceq \dist(\mathfrak{z}_0,\p_{ba,\de})$ and $\de^\ep \preceq \dist(\mathfrak{z}_0',\p_{ba,\de})$, we have
\beq
|\Im \int_{(\mathfrak{z}_0\to\mathfrak{z}_0')\subseteq\p \Omega_\de^{in} } \de^{-1}\mathrm{F}(z)^3dz| \preceq \de^{\ep-\varepsilon}/r_\de^4.
\eeq
The same conclusion holds if we replace $\mathrm{F}$ with $\mathrm{F}_i$.
\end{proposition}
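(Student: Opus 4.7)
The plan is to decompose the polygonal curve $\p\Omega_\de^{in}$ into ``straight'' sub-paths running parallel, at distance $\de^{1/2+\ep}$, to some line segment $\mathrm{L}_j$ of $\p_{ba,\de}$, together with ``corner'' sub-paths of length $O(\de^{1/2+\ep})$ that replace the semicircle arcs described in Subsection~\ref{bpro}. The hypothesis $\dist(\mathbf{z}_0,\p_{ab,\de}^*)\succeq\de^{\ep}$ confines the integration path away from $a_\de$ and $b_\de$, guaranteeing in particular the prerequisite $\min(\dist(v,a_\de^\diamond),\dist(v,b_\de^\diamond))\succeq\de^{1-\ep}$ required to invoke Proposition~\ref{ref-inv} along the entire path.

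On each straight sub-path paralleling a given $\mathrm{L}_j$, after placing $\mathrm{v}_j$ at the origin with $\mathrm{L}_j$ on the real axis, every medial vertex $v_j$ encountered has $y_{v_j}=\de^{1/2+\ep}$ and $x_{v_j}\in[\de^{1/2+\ep},\mathrm{l}_j-\de^{1/2+\ep}]$. Proposition~\ref{ref-inv} then provides $\mathrm{F}(v_j)=r_{v_j}+O(E_{v_j})$ with $r_{v_j}^3\in\R$, where $E_{v_j}$ is the explicit three-term bound of that proposition. Since $z_{j+1}-z_j$ is parallel to $\mathrm{L}_j$ (real in the local frame), $\Im[r_{v_j}^3(z_{j+1}-z_j)]=0$, so only the correction $\mathrm{F}(v_j)^3-r_{v_j}^3$ contributes to $\Im\int\de^{-1}\mathrm{F}(z)^3\,dz$. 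Combining $|\mathrm{F}(v_j)|\preceq\de^{1/3}$ (from the one-arm estimate \eqref{halfone}) with $|\mathrm{F}(v_j)^3-r_{v_j}^3|\preceq\de^{2/3}E_{v_j}$ yields a per-edge bound of $\de^{2/3}E_{v_j}$; summing the $x_{v_j}$-series for each of the three pieces of $E_{v_j}$ (the first is Dirichlet-convergent in $x_{v_j}$, the second converges with an $x_{v_j}$-boundary factor determined by $\de^{1/2+\ep}$, the third contributes an $\mathrm{l}_j^{2/3}$), then summing over the $O(r_\de^{-2})$ boundary segments subject to $\sum_j\mathrm{l}_j=O(1)$ and $\mathrm{l}_j\ge r_\de$, produces the straight-portion total. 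For each corner sub-path I would not invoke Proposition~\ref{ref-inv} but instead bound $|\mathrm{F}(v_j)|\preceq(\dist(v_j,\p\Omega_\de)/r_\de)^{1/3}\preceq(\de^{1/2+\ep}/r_\de)^{1/3}$ directly from Propositions~\ref{nearbound10} and~\ref{nearbound20}; since each corner carries $O(\de^{-1/2+\ep})$ edges and there are $O(r_\de^{-2})$ corners, the resulting contribution is easily seen to be dominated by the straight one.

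Adding the two contributions and absorbing logarithmic factors into $\de^{-\varepsilon}$ gives the advertised $\de^{\ep-\varepsilon}/r_\de^4$ bound. The dual statement (for $\mathfrak{z}_0,\mathfrak{z}_0'$ away from $\p_{ba,\de}$) follows by the symmetric argument on the opposite side of $\p\Omega_\de^{in}$: Proposition~\ref{ref-inv} also applies near $\mathrm{L}^*_{j^*}\subseteq\p_{ab,\de}^*$ (as remarked after its proof), and we move between $\mathrm{F}$ and $\mathrm{F}^*$ via \eqref{idstar}, which is accurate to $O((\de/d_v)^2)\le O(\de^{1-2\ep})$ on the path and therefore negligible. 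The claim for $\mathrm{F}_i$ is identical because Proposition~\ref{ref-inv} covers $\mathrm{F}_i$ and $\mathrm{F}_i^*$ with the same error bound. The main obstacle will be controlling the third error term $(\de/y_{v_j})^{2-\varepsilon}(y_{v_j}/\mathrm{l}_j)^{1/3}$: when $\mathrm{l}_j$ saturates its minimum allowed size $r_\de$, the $r_\de^{-1/3}$ factor it produces, once summed over all $O(r_\de^{-2})$ boundary segments, is what ultimately forces the power $r_\de^{-4}$ rather than a smaller negative power, and the simultaneous bookkeeping of the three error terms, the $x_{v_j}$-sums within each segment, and the sum over segments of varying lengths $\mathrm{l}_j\ge r_\de$ constrained by $\sum_j\mathrm{l}_j=O(1)$ is the most delicate part of the proof.
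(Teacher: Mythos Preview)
Your overall decomposition into straight and corner pieces matches the paper's, and your treatment of the corners via Propositions~\ref{nearbound10}--\ref{nearbound20} is essentially the paper's \eqref{halfprime}. The gap is in the straight pieces near the corners.

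First, the bound $|\mathrm{F}(v_j)|\preceq\de^{1/3}$ is not available here: \eqref{halfone} is a half-plane one-arm estimate, and at this point in the argument (Lemma~\ref{Fimboun} comes later and only on compact subsets) the only bound you have for $v$ at distance $\de^{1/2+\ep}$ from $\mathrm{L}_j$ is Proposition~\ref{nearbound10}, i.e.\ $|\mathrm{F}(v)|\preceq(\de^{1/2+\ep}/\mathrm{l}_j)^{1/3}$, which is larger. Second, and more seriously, with the correct bound on $|r_v|^2$ the sum of the second error term of Proposition~\ref{ref-inv} over the near-corner range $x_v\in[\de^{1/2+\ep},\de^\ep]$ does not close. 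At $x_v\asymp y_v=\de^{1/2+\ep}$ that term is $(\de/y_v)^{\al}=\de^{\al(1/2-\ep)}$, and summing $(\de^{1/2+\ep}/x_v)^{2-\varepsilon}$ from $x_v=\de^{1/2+\ep}$ contributes a factor $\de^{-1/2+\ep}$; after multiplying by $(\de^{1/2+\ep}/\mathrm{l}_j)^{2/3}$ and summing over the $O(r_\de^{-2})$ segments you get a $\de$-exponent of roughly $-1/6+\al/2$, which is negative since $\al$ is only known to be some small positive constant. So your series for the second term diverges.

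The paper avoids this by splitting each straight piece at $x_v=\de^{\ep}$. For $x_v>\de^{\ep}$ it applies Proposition~\ref{ref-inv} exactly as you propose (this is \eqref{v-largesum}). For $\de^{1/2+\ep}\le x_v\le\de^{\ep}$ it does \emph{not} use Proposition~\ref{ref-inv}; instead it fills the rectangle between $[z_{j,1/2}',z_{j,\ep}']\subset\p\Omega_\de^{in}$ and $[z_{j,1/2},z_{j,\ep}]\subset\mathrm{L}_j$ with unit squares and writes the line integral along the inner boundary as (sum of contour integrals over the squares) $+$ (integral along $\mathrm{L}_j$) $+$ (two short perpendicular segments). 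On $\mathrm{L}_j$ one has $\Im\mathrm{F}(v)^3=0$ exactly, because there $\mathcal{E}_v$ is the full event $\{v\in\ga_\de\}$ and hence $\mathrm{F}(v)=\tfrac12 F(v)$ with $F(v)$ real by \eqref{F3boundary}. Each unit-square contour integral is then controlled by Propositions~\ref{F12} and~\ref{TranF}, yielding $(\de/y_1)^{2-\varepsilon}$-type errors rather than the $(\de/y_v)^{\al}$ of the reflection argument; the resulting double sum over $(x_1,y_1)$ gives \eqref{v-smallsum}. This square-filling step is the missing idea in your proposal.
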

\begin{proof}
When $\p\Omega_\de^{in}$ is close to $\p_{ba,\de}$, we have to consider three cases. This is parallel to Section \ref{bpro}.

\noindent
{\bf Case 1}. Part of $\mathrm{L}_j$ overlaps with part of $\mathrm{L}_{j-1}$. One can refer to Figure \ref{boundary}.

Assume that $\mathrm{v}_j$ is the origin of $\C$. The integral of $\de^{-1}|\mathrm{F}(z)|^3$ along the line segments $[z_{1/2}'',z_{j,1/2}']$ and $[z',z_{1/2}'']$ is bounded above by
\beqn
\frac{\de^{1/2+\ep}}{r_\de}\frac{\de^{1/2+\ep}}{\de}=\de^{2\ep}r_\de^{-1} \label{halfprime}
\eeqn
up to a multiplicative absolute constant, where we used \eqref{nearbound1} to bound $|\mathrm{F}(z)|$.

The integral of $\de^{-1}\mathrm{F}(z)^3$ along the line segment $[z_{j,1/2}',z_{j,\ep}']$ can be obtained by considering the sum of integrals of $\de^{-1}\mathrm{F}(z)^3$ along all squares inside the area surrounded by line segments $[z_{j,1/2}',z_{j,\ep}']$, $[z_{j,\ep}',z_{j,\ep}]$, $[z_{j,\ep},z_{j,1/2}]$, $[z_{j,1/2},z_{j,1/2}']$. Here we can suppose that $z_{j,\ep}$ is a multiple of $\de$.  More specifically, let $\mathbf{z}_1\mathbf{z}_2\mathbf{z}_3\mathbf{z}_4$ be a quare inside the area surrounded by line segments $[z_{j,1/2}',z_{j,\ep}']$, $[z_{j,\ep}',z_{j,\ep}]$, $[z_{j,\ep},z_{j,1/2}]$, $[z_{j,1/2},z_{j,1/2}']$ such that $\mathbf{z}_1\in V_{\Omega_\de}$, $\mathbf{z}_2=\mathbf{z}_1-\de$, $\mathbf{z}_3=\mathbf{z}_2-i\de$, $\mathbf{z}_4=\mathbf{z}_3+\de$.
Write $x_1=\Re \mathbf{z}_1$, $y_1=\Im \mathbf{z}_1$.
Based on \eqref{Fev2F} in Proposition \ref{F12}
\beq
|\mathrm{F}(v_{<\mathbf{z}_1,\mathbf{z}_2>})-\mathrm{F}(v_{<\mathbf{z}_3,\mathbf{z}_4>})
+i\big(\mathrm{F}(v_{<\mathbf{z}_2,\mathbf{z}_3>}-\mathrm{F}(v_{<\mathbf{z}_1,\mathbf{z}_4>})\big)| \preceq (\de/y_1)^{2-\varepsilon}(y_1/\mathrm{l}_j)^{1/3},
\eeq
where the factor $(y_1/\mathrm{l}_j)^{1/3}$ is the order of the probability that there is one dual-open cluster connecting the respective inner and outer boundaries of the annulus $B\big((x_1,0);2y_1,\mathrm{l}_j\big)$ by \eqref{nearbound1}. In addition,
\beq
&&|\mathrm{F}(v_{<\mathbf{z}_2,\mathbf{z}_3>})-\mathrm{F}(v_{<\mathbf{z}_1,\mathbf{z}_4>})|\preceq (\de/x_1)^{1+\al-\ep},\\
&&|\mathrm{F}(v_{<\mathbf{z}_3,\mathbf{z}_4>})-\mathrm{F}(v_{<\mathbf{z}_1,\mathbf{z}_2>})|\preceq (\de/x_1)^{1+\al-\ep}+(\de/y_1)^{2-\varepsilon}(y_1/\mathrm{l}_j)^{1/3}.
\eeq
So we have
\beq
&&-\int_{\mathbf{z}_1\mathbf{z}_2\mathbf{z}_3\mathbf{z}_4}\de^{-1}\mathrm{F}(z)^3dz \\
&=&\mathrm{F}(v_{<\mathbf{z}_1,\mathbf{z}_2>})^3-\mathrm{F}(v_{<\mathbf{z}_3,\mathbf{z}_4>})^3
 +i\big(\mathrm{F}(v_{<\mathbf{z}_2,\mathbf{z}_3>})^3-\mathrm{F}(v_{<\mathbf{z}_1,\mathbf{z}_4>})^3)^3\\
 &\preceq &(\de/x_1)^{1+\al-\ep}\big((\de/x_1)^{1+\al-\ep}+(\de/y_1)^{2-\varepsilon}(y_1/\mathrm{l}_j)^{1/3}\big)(y_1/\mathrm{l}_j)^{1/3}+(\de/y_1)^{2-\varepsilon}(y_1/\mathrm{l}_j),
\eeq
where in the above relation we have applied Proposition \ref{TranF}, \eqref{nearbound1}, and the relation
$$
|\mathrm{F}(v_{<\mathbf{z}_1,\mathbf{z}_2>})-\mathrm{F}^*(v_{<\mathbf{z}_2,\mathbf{z}_3>}| \preceq (\de/x_1)^{1+\al-\ep}+(\de/y_1)^{2-\varepsilon}(y_1/\mathrm{l}_j)^{1/3}
$$
which follows from  Proposition \ref{TranF} and \eqref{idstar} of Proposition \ref{F12}.
Taking sum over all possible $x_1$ and $y_1$ such that $\de^{1/2+\ep}\leq x_1\leq \de^\ep$, $\de \leq y_1\leq \de^{1/2+\ep}$, and both $x_1$ and $y_1$ are multiples of $\de$, we have
\beq
\sum_{\de^{1/2+\ep}\leq x_1\leq \de^\ep}\sum_{\de \leq y_1\leq \de^{1/2+\ep}} (\de/x_1)^{1+\al-\ep}\big((\de/x_1)^{1+\al-\ep}+(\de/y_1)^{2-\varepsilon}(y_1/\mathrm{l}_j)^{1/3}\big)(y_1/\mathrm{l}_j)^{1/3}\\
+(\de/y_1)^{2-\varepsilon}(y_1/\mathrm{l}_j)
\preceq \de^{\ep-\varepsilon}/r_\de^2.
\eeq
Note that the integrals of $\de^{-1}\mathrm{F}(z)^3$ along the respective line segments $[z_{j,\ep},z_{j,\ep}']$ and $[z_{j,1/2},z_{j,1/2}']$ are bounded above by $\de^{2\ep}r_\de^{-1}$ up to a multiplicative absolute constant, which can be derived in the same was as \eqref{halfprime}. In addition,
$$
\Im \int_{[z_{j,1/2},z_{j,\ep}]} \mathrm{F}(z)^3 dz=0
$$
since $\mathrm{F}(v)=(1/2)F(v)$ due to the fact that $\mathcal{E}_v$ is the set of all configurations in which the exploration paths pass though $v$ when $v\in [z_{j,1/2},z_{j,\ep}]$.
So
\beqn
\Im\int_{[z_{j,1/2}',z_{j,\ep}']}\de^{-1}\mathrm{F}(z)^3dz \preceq \de^{\ep-\varepsilon}/r_\de^2. \label{v-smallsum}
\eeqn

When $v\in[z_{j,\ep}',\mathrm{v}_{j,c}']$ and $\Re v>\de^{\ep}$, it follows from Proposition \ref{ref-inv} that $\mathrm{F}(v)$ is a sum of $r_v$ with $\Im r_v^3=0$ and an error term with the order of
$$
(\de/\Re v)^{1+\al}+(\de/\de^{1/2+\ep})^{\alpha}(\de^{1/2+\ep}/\Re v)^{2-\varepsilon}+(\de/\de^{1/2+\ep})^{2-\varepsilon}(\de^{1/2+\ep}/\mathrm{l}_j)^{1/3}.
$$
By \eqref{nearbound1}, we can see that $r_v$ is at most of the order $(\de^{1/2+\ep}/\mathrm{l}_j)^{1/3}$.

Hence
\beqn
| \sum_{\Re v>\de^{\ep}} \Im \mathrm{F}(v)^3| \preceq (\de^{1/2+\ep}/\mathrm{l}_j)^{2/3}(\de^{(1-\ep)\al}+\de^{(1/2-\ep)\al+\ep-\varepsilon/2}+\de^{1/6-2\ep/3-\varepsilon/2}/\mathrm{l}_j^{1/3}), \label{v-largesum}
\eeqn
where the above sum is taken over all $v\in[z_{j,1/2}',\mathrm{v}_{j,c}']$ such that $\Re v> \de^{\ep}$ and $\Re v$ is a sum of $\de/2$ and a multiple of $\de$.

Hence combining \eqref{v-smallsum} and \eqref{v-largesum}, we have
\beqn
|\Im \int_{[z_{j,1/2}',\mathrm{v}_{j,c}']}\de^{-1}\mathrm{F}(z)^3dz| \preceq \de^{\ep-\varepsilon}r_\de^{-2}. \label{halfc}
\eeqn

\

\noindent
{\bf Case 2}. The angle between $\mathrm{L}_j$ and $\mathrm{L}_{j-1}$ is $3\pi/2$. One can refer to Figure \ref{boundary1}.

Similarly to \eqref{halfprime}, one can obtain that the integral of $\de^{-1}|\mathrm{F}(z)|^3$ along the line segments $[z_{j-1,1/2}',\mathrm{v}_j']$ is bounded above by
\beqn
\frac{\de^{1/2+\ep}}{r_\de}\frac{\de^{1/2+\ep}}{\de}=\de^{2\ep}r_\de^{-1} \label{halfprime1}
\eeqn
up to a multiplicative absolute constant. Similarly to \eqref{halfc}, one can obtain that
\beqn
|\Im\int_{[\mathrm{v}_{j-1,c}',z_{j-1,1/2}']}\de^{-1}\mathrm{F}(z)^3dz| \preceq \de^{\ep-\varepsilon}r_\de^{-2}. \label{halfc1}
\eeqn

\noindent
{\bf Case 3}. The angle between $\mathrm{L}_j$ and $\mathrm{L}_{j+1}$ is $\pi/2$. One can refer to Figure \ref{boundary2}.

For this case, we also have \eqref{halfc1}.

When $\p\Omega_\de^{in}$ is close to $\p_{ab,\de}^*$, we have to consider three scenarios. This is also parallel to Section \ref{bpro}.

\noindent
{\bf Scenario 1}. Part of $\mathrm{L}_{j^*}^*$ overlaps with part of $\mathrm{L}_{j^*-1}^*$. One can refer to Figure \ref{dualbound}.

Similarly to \eqref{halfprime} and \eqref{halfc}, one has
\beqn
\int_{[\mathfrak{z},\mathfrak{z}_{1/2}'']\cup [\mathfrak{z}_{1/2}'',\mathfrak{z}_{j^*,1/2}']}\de^{-1}|\mathrm{F}(z)^3|dz \preceq \de^{2\ep}r_\de^{-1}, \label{halfprimed}\\
|\Im \int_{[\mathfrak{z}_{j^*,1/2}',\mathrm{v}_{j^*,c^*}']+\de(1+i)/2}\de^{-1}\mathrm{F}^{*}(z)^3dz| \preceq \de^{\ep-\varepsilon}r_\de^{-2}. \nonumber
\eeqn
Here we can suppose that $\mathfrak{z}_{j^*,1/2}'-\mathrm{v}_{j^*,c^*}'$ is a multiple of $\de$.
If $E^*=<z_1^*,z_2^*>$ is a dual edge in $[\mathfrak{z}_{j^*,1/2}',\mathrm{v}_{j^*,c^*}']+\de(1+i)/2$ with center $v^*$, it follows from  Proposition \ref{TranF} and Proposition \ref{F12} that $|\mathrm{F}^*(v^*)-\mathrm{F}(v^*-\de(1+i)/2)| \preceq (\de/\dist(v^*,\mathrm{v}_{j^*}^*))^{1+\al-\ep}+(\de/\de^{1/2+\ep})^{2-\varepsilon}$. Therefore, by Proposition \ref{nearbound20},
\beq
&&|\Im \int_{[\mathfrak{z}_{j^*,1/2}',\mathrm{v}_{j^*,c^*}']+\de(1+i)/2}\de^{-1}\mathrm{F}^{*}(z)^3dz-\Im \int_{[\mathfrak{z}_{j^*,1/2}',\mathrm{v}_{j^*,c^*}']}\de^{-1}\mathrm{F}(z)^3dz|\\
&\preceq& \sum_{k=[\de^{-1/2+\ep}]}^{[1/\de]}  (\de^{1/2+\ep}/r_\de)^{2/3}(k^{-(1+\al-\ep)}+\de^{1-2\ep-\varepsilon}) \\
&=&o(\de^{\ep-\varepsilon}r_\de^{-2}),
\eeq
which implies that
\beqn
|\Im \int_{[\mathfrak{z}_{j^*,1/2}',\mathrm{v}_{j^*,c^*}']}\de^{-1}\mathrm{F}(z)^3dz| \preceq \de^{\ep-\varepsilon}r_\de^{-2}. \label{halfcd}
\eeqn

\noindent
{\bf Scenario 2}. The angle between $\mathrm{L}_{j^*}^*$ and  $\mathrm{L}_{j^*-1}^*$ is $3\pi/2$. One can refer to Figure \ref{dualbound1}.

Similarly to \eqref{halfprimed} and \eqref{halfcd}, one can obtain that
\beqn
\int_{[\mathfrak{z}_{j^*-1,1/2}',\mathrm{v}_{j^*}']}\de^{-1}|\mathrm{F}(z)^3|dz \preceq \de^{2\ep}r_\de^{-1}, \label{halfprimed1} \\
|\Im \int_{[\mathrm{v}_{j^*-1,c^*}',\mathfrak{z}_{j^*-1,1/2}']}\de^{-1}\mathrm{F}(z)^3dz| \preceq \de^{\ep-\varepsilon}r_\de^{-2}. \label{halfcd1}
\eeqn

\noindent
{\bf Scenario 3}. The angle between $\mathrm{L}_{j^*}^*$ and  $\mathrm{L}_{j^*-1}^*$ is $\pi/2$. One can refer to Figure \ref{dualbound2}.

For this scenario, we also have \eqref{halfcd1}.

\begin{figure}[hp]
 \begin{center}
\scalebox{0.4}{\includegraphics{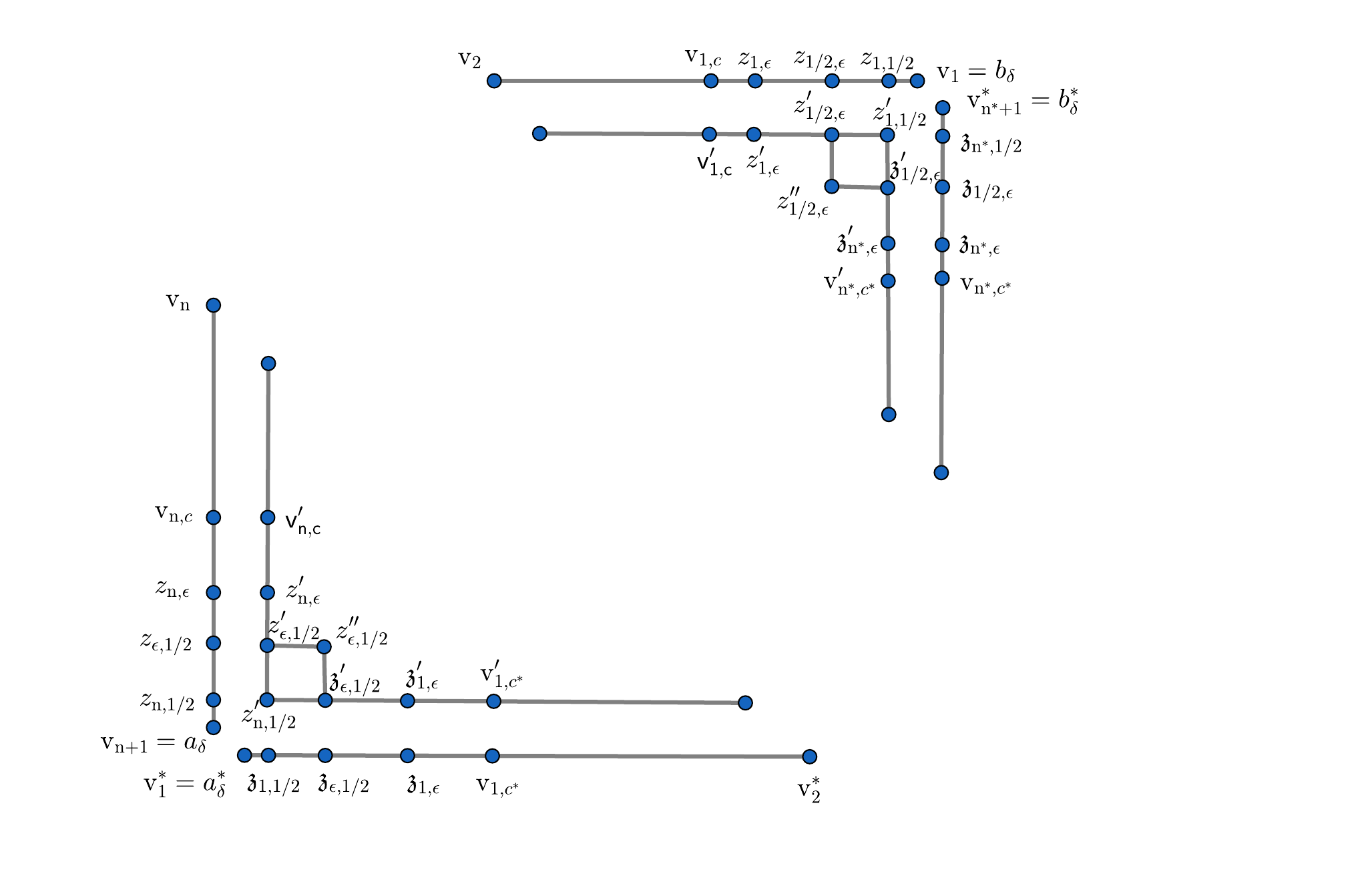}}
 \end{center}
\caption{Boundary near $a_\de^\diamond$ and $b_\de^\diamond$.} \label{boundab}
\end{figure}

Now we turn to the case where  $\p\Omega_\de^{in}$ is close to $a_\de$ or $b_\de$.  One can refer to Figure \ref{boundab}, where we assume that $\mathrm{L}_1$ is parallel to the real axis in $\C$.. In the upper-right corner of this figure, $z_{1,1/2}=b_\de-k_0\de$, $z_{1,\ep}=b_\de-[\de^{\ep-1}]\de$, $z_{1/2,\ep}=b_\de-[\de^{-1/2-\ep}]\de$, $\mathrm{v}_{1,c}$ is the center of $\mathrm{L}_1$, $z'_{1,\ep}=z_{1,\ep}-ik_0\de$, $z'_{1/2,\ep}=z_{1/2,\ep}-ik_0\de$, $\mathfrak{z}_{\mathrm{n}^*,1/2}=b_\de^*-ik_0\de+i\de/2$, $\mathfrak{z}_{\mathrm{n}^*,\ep}=b_\de^*-i[\de^{\ep-1}]\de+i\de/2$,
$\mathfrak{z}_{1/2,\ep}=b_\de^*-i[\de^{-1/2-\ep}]\de+i\de/2$, $\mathfrak{z}'_{\mathrm{n}^*,\ep}=\mathfrak{z}_{\mathrm{n}^*,\ep}-k_0\de-\de/2$,
$\mathfrak{z}_{1/2,\ep}'=\mathfrak{z}_{1/2,\ep}-k_0\de-\de/2$, $z_{1/2,\ep}''=z_{1/2,\ep}-i[\de^{-1/2-\ep}]\de$,
  $\mathrm{v}_{\mathrm{n}^*,c^*}$ is the center of $\mathrm{L}_{\mathrm{n}^*}^*$, $\mathrm{v}'_{\mathrm{n}^*,c^*}=\mathrm{v}_{\mathrm{n}^*,c^*}-k_0\de-\de/2$.  In the lower-left corner of this figure, $z_{\mathrm{n},1/2}=a_\de+ik_0\de$, $z_{\mathrm{n},\ep}=a_\de+i[\de^{\ep-1}]\de$, $z_{\ep,1/2}=a_\de+i[\de^{-1/2-\ep}]\de$,
   $z_{\mathrm{n},\ep}'=z_{\mathrm{n},\ep}+k_0\de$, $z_{\ep,1/2}'=z_{\ep,1/2}+k_0\de$,
   $\mathrm{v}_{\mathrm{n},c}$ is the center of $\mathrm{L}_{\mathrm{n}}$, $\mathrm{v}_{\mathrm{n},c}'=\mathrm{v}_{\mathrm{n},c}+k_0\de$, $\mathfrak{z}_{1,1/2}=a_\de^*+k_0\de-\de/2$, $\mathfrak{z}_{1,\ep}=a_\de^*+[\de^{\ep-1}]\de-\de/2$, $\mathfrak{z}_{\ep,1/2}=a_\de^*+[\de^{-1/2-\ep}]\de-\de/2$, $\mathfrak{z}_{1,\ep}'=\mathfrak{z}_{1,\ep}+ik_0\de+i\de/2$, $\mathfrak{z}_{\ep,1/2}'=\mathfrak{z}_{\ep,1/2}+ik_0\de+i\de/2$,
    $z_{\ep,1/2}''=z_{\ep,1/2}+[\de^{-1/2-\ep}]\de$,
    $\mathrm{v}_{1,c^*}$ is the center of $\mathrm{L}_1^*$, $\mathrm{v}_{1,c^*}'=\mathrm{v}_{1,c^*}+ik_0\de+i\de/2$.

Now we consider the integral of $\de^{-1}\mathrm{F}(z)^3$ along the line segment $[z_{\mathrm{n},\epsilon}',\mathrm{v}_{\mathrm{n},c}']$.  Applying the argument in the derivation of \eqref{halfc},
we have
\beqn
|\Im \int_{[z'_{\mathrm{n},\ep},\mathrm{v}'_{\mathrm{n},c}]} \de^{-1}\mathrm{F}(z)^3dz| \preceq \de^{\ep-\varepsilon}r_\de^{-2}. \label{halfa}
\eeqn
Similarly,
\beqn
|\Im \int_{[z'_{1,\ep},\mathrm{v}'_{1,c}]} \de^{-1}\mathrm{F}(z)^3dz| \preceq \de^{\ep-\varepsilon}r_\de^{-2}, \label{halfb}\\
|\Im \int_{[\mathfrak{z}'_{\mathrm{n}^*,\ep},\mathrm{v}'_{\mathrm{n}^*,c^*}]} \de^{-1}\mathrm{F}(z)^3dz| \preceq \de^{\ep-\varepsilon}r_\de^{-2}, \label{halfdb}\\
|\Im \int_{[\mathfrak{z}'_{1,\ep},\mathrm{v}'_{1,c^*}]} \de^{-1}\mathrm{F}(z)^3dz| \preceq \de^{\ep-\varepsilon}r_\de^{-2}. \label{halfda}
\eeqn
Combining the estimates in three cases, three scenarios and \eqref{halfa}-\eqref{halfda} and noting that $\mathrm{n}+\mathrm{n}^*$ is at most of the order $r_\de^{-2}$, we can complete the proof for $\mathrm{F}$.

The argument leading to the conclusion for $\mathrm{F}_i$ is the same, so we omit the details.
\end{proof}

From now one, we will denote by $\hat \Omega_\de$ the domain $\Omega_\de^{in}\setminus(\mathrm{SQ}_1\cup\mathrm{SQ}_2)$, where $\mathrm{SQ}_1$ is the square whose four vertices are $z_{\mathrm{n},1/2}'=z_{\mathrm{n},1/2}+k_0\de$, $ \mathfrak{z}_{\ep,1/2}'$, $z_{\ep,1/2}''$ and $z_{\ep,1/2}'$ respectively, $\mathrm{SQ}_2$ is the square whose four vertices are $z_{1,1/2}'=\mathfrak{z}_{\mathrm{n}^*,1/2}-k_0\de-\de/2$, $z_{1/2,\ep}'$, $z_{1/2,\ep}''$ and $\mathfrak{z}_{1/2,\ep}'$ respectively. Let $\p^{lo}$ be  the part of $\p\hat\Omega_\de$ which is the path from $\mathfrak{z}_{1,\ep}'$ to $\mathfrak{z}_{\mathrm{n}^*,\ep}'$  in the counterclockwise order, $\p^{up}$ the part of $\p\hat\Omega_\de$ which is the path from $z_{1,\ep}'$ to $z_{\mathrm{n},\ep}'$ in the counterclockwise order. We also let $\hat \Omega_\de^\diamond$ denote the medial Dobrushin domain corresponding to $\hat \Omega_\de$.

\subsection{Preliminary estimate on observables} \label{Pre-est-ob}

Since $F(A)-F(C)=i(F(B)-F(D))$, we can define the integral of $F(\cdot)$ along a discrete  contour as follows,
\begin{equation} \label{DC}
\oint_\soup F(z)dz:=\sum_{j=0}^{n-1} (z_{j+1}-z_j)F(v_j)=0,
\end{equation}
where $\soup$ is a discrete contour consisting of complex numbers $z_0\sim z_1\sim \cdots\sim z_n=z_0$ in the lattice $\Omega_\de$ such that the path $\{z_0, \cdots,z_n\}$ is edge-avoiding, $|z_{j+1}-z_j|=\de$, and $v_j$ denotes the center of $[z_j, z_{j+1}]$.

Based on \eqref{DC}, we can define
$$
\mathbb{F}(z)=\int_{a_\de\to z}\de^{-1/3}F(x)dx, \ z\in \Omega_\de,
$$
where $a_\de\to z$ is any path from $a_\de$ to $z$ consisting of edges in $E_{\Omega_\de}$. For example, if $a_\de=z_0\sim z_1\sim \cdots\sim z_n=z$ is one path from $a_\de$ to $z$, then $\int_{a_\de\to z}F(x)dx=\sum_{j=0}^{n-1} (z_{j+1}-z_j)F(v_j)$, where $v_j$ is the center of $[z_j, z_{j+1}]$.

Through $\mathbb{F}$, we can derive the following initial bound for $\de^{-1/3}F(v)$ when $v\in \hat\Omega_\de^\diamond$.
\begin{lemma} \label{initialb}
Under {\it Condition} $\mathrm{C}$,
\beqn
&&|F(v)| \preceq \de^{1/3}(\log\de^{-1})^2,  \label{Finboun}\\
&&|F(v+\de\be)-F(v)| \preceq \de^{1+1/3}(\log\de^{-1})^2, \label{Fdiffinboun}
\eeqn
uniformly over $v\in V_{\hat\Omega_\de^\diamond}\cap \mathcal{K}$ for any compact subset $\mathcal{K}$ of $\Omega_0$, where $\be=\be_1$ or $\be_2$. (In terms of complex number, $\be=1$ or $i$.) In addition, if $d_v=\dist(v, \p_{ba,\de}\cup\p_{ab,\de}^*) \to 0$ as $\de\to 0$,
\beqn
&&|F(v)| \preceq (\de/d_v)^{1/3}(\log\de^{-1})^2.  \label{Finboun-b}\\
&&|F(v+\de\be)-F(v)| \preceq (\de/d_v)^{1+1/3}(\log\de^{-1})^2, \label{Fdiffinboun-b}
\eeqn
The same conclusion holds for $F^*(v)$ too.
\end{lemma}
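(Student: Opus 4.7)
The plan is to control $\mathrm{F}$ (and, in parallel, $\mathrm{F}_i$, so that $F=\mathrm{F}+\mathrm{F}_i$ inherits the bound) by the discrete primitive
\[
H(z)=\int_{z_0\to z}\delta^{-1}\mathrm{F}(w)^3\,dw,\qquad z\in\hat\Omega_\delta,
\]
whose increments across a single primal edge are precisely $\mathrm{F}(v)^3$ where $v$ is the edge center. Combining \eqref{CRF} with \eqref{sumid}, the contour integral of $\delta^{-1}\mathrm{F}^3$ around a single unit primal face is $O((\delta/d_v)^2)$, so summing over all faces contained in $\hat\Omega_\delta$ produces a global discrepancy of $O(1)$ only, and $H$ is well defined up to an additive constant once the reference vertex $z_0\in\hat\Omega_\delta$ is fixed. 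The same face-by-face analysis shows that the discrete Laplacian of $V=\Im H$ satisfies $|\Delta_\delta V(v)|\preceq(\delta/d_v)^2$ in the bulk, and the analogous bound holds for $U=\Re H$ upon invoking \eqref{idstar}.

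The boundary behavior of $V$ on $\partial\hat\Omega_\delta$ is governed by Proposition \ref{boundofin}: away from the two endpoint neighborhoods of $a_\delta,b_\delta$, the oscillation of $V$ is at most $\delta^{\epsilon-\varepsilon}r_\delta^{-4}=o(1)$. Inside the short excised boundary arcs near $a_\delta$ and $b_\delta$, I would use the probabilistic bound $|\mathrm{F}|\preceq \Pro(v\in\gamma_\delta)$ together with Propositions \ref{nearbound10}--\ref{nearbound20} to control the local oscillation of $V$; because $\Omega_0$ is mapped conformally to the strip $\mathbb{R}\times(0,1)$ with $a,b$ mapped to $\mp\infty$, the discrete Green's function at each endpoint contributes one logarithmic factor, yielding the total oscillation bound $(\log\delta^{-1})^2$ from the two endpoints combined. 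The corresponding boundary control for $U=\Re H$ is obtained by running the same argument on $\mathrm{F}^*$ and invoking \eqref{idstar}.

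Given the two-sided $\partial\hat\Omega_\delta$ bound together with the interior Laplacian estimate, the discrete maximum principle (applied to $V-V_{\mathrm{err}}$, where $V_{\mathrm{err}}$ solves the discrete Poisson equation with the Laplacian error as source and is itself $O(1)$) gives $\|V\|_{L^\infty(\hat\Omega_\delta)}\preceq(\log\delta^{-1})^2$, and similarly for $U$, so $\|H\|_{L^\infty(\hat\Omega_\delta)}\preceq(\log\delta^{-1})^2$. A standard Harnack-type gradient bound for approximately discretely harmonic functions then yields, on any compact $\mathcal{K}\Subset\Omega_0$,
\[
\bigl|H(z+\delta\be)-H(z)\bigr|\preceq \frac{\delta\,(\log\delta^{-1})^2}{\dist(\mathcal{K},\partial\Omega_0)},\qquad \be\in\{1,i\}.
\]
Since the left-hand side equals $|\mathrm{F}(v)|^3$, this implies $|\mathrm{F}(v)|\preceq\delta^{1/3}(\log\delta^{-1})^{2/3}$. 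Combined with the parallel bound for $\mathrm{F}_i$ (obtained by the same scheme using \eqref{CRFi}, \eqref{sumidi}, \eqref{idstari}) and the decomposition $F=\mathrm{F}+\mathrm{F}_i$, this is stronger than \eqref{Finboun}. The difference estimate \eqref{Fdiffinboun} follows by applying the same Harnack bound to second-order differences of $H$ and using Proposition \ref{TranF}; the near-boundary versions \eqref{Finboun-b}, \eqref{Fdiffinboun-b} are obtained by localizing the entire construction to a mesoscopic disc of radius $d_v/2$ centered at $v$; and the statement for $F^*$ follows from \eqref{idstar}.

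The main obstacle will be the careful bookkeeping of the logarithmic factor near the endpoints $a_\delta$ and $b_\delta$: the short excised boundary arcs must be shown to produce only an $O((\log\delta^{-1})^2)$ oscillation in $V$ rather than a true power divergence in $\delta^{-1}$, which requires matching the fine near-endpoint probability decay of Subsection \ref{bpro} against the discrete Beurling/Green's function on $\hat\Omega_\delta$ in a fully quantitative manner.
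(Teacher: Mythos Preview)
Your approach has a genuine circularity problem. You work with the cubic primitive $H=\int\delta^{-1}\mathrm{F}^3\,dz$, but controlling its path-dependence and its discrete Laplacian well enough to run the maximum principle requires an a priori size bound on $|\mathrm{F}|$ of order $\delta^{1/3}$ --- which is exactly what you are trying to prove. Concretely: from \eqref{CRF}, \eqref{sumid}, \eqref{Fev2F} alone, the contour integral of $\delta^{-1}\mathrm{F}^3$ around a unit face picks up a term $O((\delta/d_v)^{2-\varepsilon})\cdot|\mathrm{F}|^2$; with only the one-arm bound $|\mathrm{F}|\preceq(\delta/d_v)^\alpha$ you recover \eqref{unitsq}, but summing $(\delta/d_v)^{2+2\alpha-\varepsilon}$ over all faces of $\hat\Omega_\delta$ gives a near-boundary contribution of order $\delta^{(1/2-\epsilon)(1+2\alpha-\varepsilon)-1}$, which \emph{diverges} whenever $\alpha<1/2$. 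The sharper face estimate that would rescue this (Proposition~\ref{sum-int-lap}) is proved in the paper using \eqref{Finboun-bM} and \eqref{Finboun-imp}, themselves consequences of the present lemma.

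The paper sidesteps this by using the \emph{linear} primitive $\mathbb{F}(z)=\int_{a_\delta\to z}\delta^{-1/3}F(x)\,dx$ of Smirnov's original observable $F$, which is \emph{exactly} path-independent because $F$ satisfies the exact discrete Cauchy--Riemann relation \eqref{CR} (so \eqref{DC} holds with zero error, not $O((\delta/d_v)^2)$). The boundary bound $|\mathbb{F}|\preceq(\log\delta^{-1})^2$ is then obtained directly and elementarily: on each straight segment $\mathrm{L}_j$ the half-plane one-arm estimate \eqref{halfone} gives $\int_{\mathrm{L}_j}\delta^{-1/3}|F|\,|dz|\preceq 1$, and under Condition~$\mathrm{C}$ there are $O(r_\delta^{-2})=O((\log\delta^{-1})^2)$ such segments. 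The logarithmic factor therefore counts boundary segments, not endpoint Green's-function contributions. The interior Laplacian of $\mathbb{F}$ is then controlled via the decomposition $F=\mathrm{F}+\mathrm{F}_i$ and Proposition~\ref{F12}, whose $\delta^{2/3}(\delta/d_v)^{2-\varepsilon}$ scale (note the extra $\delta^{2/3}$ from linearity) sums to $o(1)$ over $\hat\Omega_\delta$ without any a priori input on $|\mathrm{F}|$.
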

\begin{proof}
For any line segment $\mathrm{L}_j$, it follows from \eqref{halfone} that
$$
|\int_{\mathrm{L}_j} |\de^{-1/3}F(z)|dz| \preceq \de^{2/3}\sum_{j=1}^{[\de^{-1}]} j^{-1/3}\preceq 1.
$$
So when $z\in \p\Omega_{ba,\de}$,
\beqn
|\mathbb{F}(z)| \preceq (\log \de^{-1})^2, \label{Fboun1}
\eeqn
by considering the possible number of line segments from $a_\de$ to $v$ along $\p\Omega_\de$.
Similarly, for line segment $\mathrm{L}^*_{j^*}$
$$
|\int_{\mathrm{L}^*_{j^*}} |\de^{-1/3}F^*(z)|dz| \preceq 1.
$$
By Proposition \ref{Tranimp}, when $ z\in \p\Omega_{ab,\de}$,
\beqn
|\mathbb{F}(z)| \preceq (\log \de^{-1})^2. \label{Fboun2}
\eeqn
Now let $z\in\p\hat\Omega_\de$. It suffices to consider $z$ in the line segment with two ends $z_{\ep,1/2}'$ and $z_{\ep,1/2}''$ since this line segment  contains points which are closest to $a_\de$. One can refer to Figure \ref{boundab} for illustration. The situation for the other three line segments $\mathfrak{z}_{\ep,1/2}'z_{\ep,1/2}''$, $z_{1/2,\ep}'z_{1/2,\ep}''$, $\mathfrak{z}_{1/2,\ep}'z_{1/2,\ep}''$ is the same. So we only consider $z_{\ep,1/2}'z_{\ep,1/2}''$. Applying \eqref{nearbound11}, we obtain that, for $v$ in the line segment $z_{\ep,1/2}'z_{\ep,1/2}''$,
$$
|F(v)| \preceq (\Re v)^{1/3}/(\de^{1/2-\ep})^{1/3}.
$$
Hence
$$
|\int_{[z_{\ep,1/2},z]}\de^{-1/3}F(z_1)dz_1| \preceq \de^{2/3}\sum_{k=1}^{[\de^{-1/2-\ep}]}\big( \frac{k\de}{\de^{1/2-\ep}}\big)^{1/3}\preceq \de^{1/6-\ep},
$$
where $[z_{\ep,1/2},z]$ is the line segment with two respective ends $z_{\ep,1/2}$ and $z$. This combined with \eqref{Fboun2} shows that
\beqn
|\mathbb{F}(z)| \preceq (\log \de^{-1})^2. \label{Fboun3}
\eeqn
when $z$ is in $z_{\ep,1/2}'z_{\ep,1/2}''$. Actually for any $z\in\p\hat\Omega_\de$, there exists $z'\in \p\Omega_\de$ such that the line segment $zz'$ is perpendicular to $\p\Omega_\de$ and
$$
|\mathbb{F}(z)-\mathbb{F}(z')| \preceq \de^{1/6-\ep}.
$$
So \eqref{Fboun3} holds for all $z\in \p\hat\Omega_\de$.

We will estimate $\sum_{z\in V_{\hat\Omega_\de}}|\Delta \mathbb{F}(z)|$. To achieve this target, we categorize all vertices $z$ into two groups. The first group consists of $z$ such that $d_z:=\dist(z,\p_{ba,\de}\cup\p_{ab,\de}^*)>r_\de/2$. The other vertices consist of the second group. For the first group, by the fact that $F(v)=\mathrm{F}(v)+\mathrm{F}_i(v)$ and Proposition \ref{F12} we have
\beq
\sum_{z\in V_{\hat\Omega_\de},d_z>r_\de/2}|\Delta \mathbb{F}(z)| \preceq \de^{2/3-\varepsilon}r_\de^{-2+\varepsilon} |\Omega_0|.
\eeq
For  the second group, we have three cases and three scenarios which are defined in the proof of Proposition \ref{boundofin}, and one extra case where $z$ is close to $a_\de$ or $b_\de$. For  three cases and three scenarios, we analyze {\bf Case 1} where part of $\mathrm{L}_j$ overlaps with part of $\mathrm{L}_{j-1}$ in details. One can refer to Figure \ref{boundary}. In Figure \ref{boundary}, we assume that $\mathrm{v}_j$ is the origin of $\C$, and $\mathrm{v}_{j+1}$ is on the positive real axis.  Consider $z$ in $REC:=\{z: 0\leq \Re z\leq 1, \de^{1/2+\ep}\leq \Im z\leq r_\de/2\}$. One has
\beq
\sum_{z\in REC}|\Delta \mathbb{F}(z)|
\preceq  \sum_{k=[\de^{-1/2+\ep}]}^{[1/\de]}\de^{-1} \de^{2/3}k^{-2+\varepsilon}
\preceq \de^{1/6-\ep-\varepsilon}.
\eeq
Noting that the upper bound for $\mathrm{n}+\mathrm{n}^*$ is of the order $r_\de^{-2}$, we obtain that the contribution to $\sum_{z\in V_{\hat\Omega_\de}}|\Delta \mathbb{F}_\de(z)|$ from the three cases and three scenarios is of the order
\beq
 r_\de^{-2}\de^{1/6-\ep-\varepsilon}.
\eeq
For the extra case where $z$ is close to $a_\de$ or $b_\de$, let $\mathrm{sq}_1$ be the square with four vertices $z_{\mathrm{n},\ep}+[\de^{\ep-1}]\de$, $z_{\mathrm{n},\ep}'$, $z_{\mathrm{n},1/2}'$ and $\mathfrak{z}_{1,\ep}'$,  $\mathrm{sq}_2$ the square with four vertices $z_{1,\ep}-i[\de^{\ep-1}]\de$, $\mathfrak{z}_{\mathrm{n}^*,\ep}'$, $z_{1,1/2}'$ and $z_{1,\ep}'$. Applying Proposition \ref{F12}, we have
\beq
 \sum_{z\in (\mathrm{sq}_1\setminus\mathrm{SQ}_1)\cup(\mathrm{sq}_2\setminus\mathrm{SQ}_2)}|\Delta \mathbb{F}(z)|
 &\preceq& \sum_{\de^{1/2+\ep}\leq x\leq \de^\ep}\sum_{\de^{1/2+\ep}\leq y\leq x} \de^{2/3}(\de/y)^{2-\varepsilon}\\
 &\preceq& \de^{1/6-\ep-\varepsilon}
 \eeq
where the above sum is taken over all $x\in [\de^{1/2+\ep},\de^\ep]$ and $y\in [\de^{1/2+\ep},x]$ such that both $x$ and $y$ are multiples of $\de$.

Therefore, we obtain that
$$
\sum_{z\in V_{\hat\Omega_\de}}|\Delta \mathbb{F}(z)| \to 0
$$
as $\de\to 0$. It follows from Theorem 1.4.6 in~\cite{Law91} that \eqref{Fboun3} holds uniformly on $z\in V_{\hat\Omega_\de}$.
Now in the disk $B(v;\dist(v,\p\Omega_\de)/2)$, applying Lemma \ref{asyhar} in the appendix to $\mathbb{F}$, we can complete the proof of \eqref{Finboun} and \eqref{Fdiffinboun}.

If $d_v\to 0$, let $\de_0=\de/d_v$ and
$$
\mathbb{F}_0(z)=\int_{a_\de\to z}\de_0^{-1/3}F_0(x)dx, \ z\in d_v^{-1}\Omega_\de,
$$
where $F_0(x)=F(xd_v)$, $a_\de\to z$ is any path from $a_\de$ to $z$ consisting of edges in $E_{d_v^{-1}\Omega_\de}$. For example, if $a_\de=z_0\sim z_1\sim \cdots\sim z_n=z$ is one path from $a_\de$ to $z$ in $E_{d_v^{-1}\Omega_\de}$, then $\int_{a_\de\to z}F_0(x)dx=\sum_{j=0}^{n-1} (z_{j+1}-z_j)F_0(v_j)$, where $v_j$ is the center of $[z_j, z_{j+1}]$. In addition, $a_\de=z_0\sim d_vz_1\sim \cdots\sim d_vz_n=d_vz$ is one path from $a_\de$ to $d_vz$ in $E_{\Omega_\de}$. So the distance of $v/d_v$ to the boundary of $d_v^{-1}\Omega_\de$ is one. Now in the disk $B(v/d_v;1/2)$, applying Lemma \ref{asyhar} in the appendix to $\mathbb{F}_0$, we can complete the proof of \eqref{Finboun-b} and \eqref{Fdiffinboun-b}.

Since the same proof works for $F^*$, we can omit the details.
\end{proof}

The bound \eqref{Finboun-b} can be further improved if $\mathrm{d}_1\geq \de^{3\ep}$, where
$$\mathrm{d}_1=\min_{1\leq j\leq \mathrm{n}+1,1\leq j^*\leq \mathrm{n}^*+1}\big(\dist(z_1,\mathrm{v}_j),\dist(z_1,\mathrm{v}_{j^*}^*)\big).$$

\begin{lemma} \label{imitialb-imp}
Under {\it Condition} $\mathrm{C}$, if $\mathrm{d}_1\geq \de^{3\ep}$, $d_v\geq \de^{1/3-\ep}$, then
$$
|F(v)| \preceq (\de/\de^{3\ep})^{1/3} \log^2\de^{-1}, \ |F^*(v)| \preceq (\de/\de^{3\ep})^{1/3} \log^2\de^{-1};
$$
if $\mathrm{d}_1\geq \de^{3\ep}$, $d_v\geq \de^{1/2+\ep}$ and $E_v$ is parallel to $\mathrm{L}_j$ or $\mathrm{L}^*_{\mathrm{j}^*}$ which is the closest line segment of $\p_{ba,\de}\cup\p_{ab,\de}^*$ to $v$, then
$$
|F(v)| \preceq (\de/\de^{3\ep})^{1/3} \log^2\de^{-1};
$$
if $\mathrm{d}_1\geq \de^{3\ep}$, $d_v\geq \de^{1/2+\ep}$ and $E^*_v$ is parallel to $\mathrm{L}_j$ or $\mathrm{L}^*_{\mathrm{j}^*}$ which is the closest line segment of $\p_{ba,\de}\cup\p_{ab,\de}^*$ to $v$, then
$$
 |F^*(v)| \preceq (\de/\de^{3\ep})^{1/3} \log^2\de^{-1}.
$$
\end{lemma}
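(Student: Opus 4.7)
The plan is to repeat the scaling argument used at the end of Lemma~\ref{initialb} on a disk of radius $\de^{3\ep}$ centered at the boundary projection of $v$, and to use Proposition~\ref{ref-inv} to reflect across the nearby straight boundary segment so as to turn $v$ into an interior point of a doubled rescaled domain.

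Let $v_0$ denote the closest point of $\p_{ba,\de}\cup\p_{ab,\de}^*$ to $v$. The hypothesis $\mathrm{d}_1\geq \de^{3\ep}$ places $v_0$ on a straight segment $\mathrm{L}_j$ (or $\mathrm{L}^*_{j^*}$) with at least $\de^{3\ep}/2$ of segment on either side. The plan is to rescale by $\de^{-3\ep}$ about $v_0$: with $\de_0:=\de^{1-3\ep}$, set $F_0(x):=F(v_0+x\de^{3\ep})$ and $\mathbb{F}_0(z):=\mathbb{F}(v_0+z\de^{3\ep})$, where $\mathbb{F}$ is the integral introduced in the proof of Lemma~\ref{initialb}. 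Two properties transfer to the rescaled picture: $|\mathbb{F}_0|\preceq (\log\de^{-1})^2$ on the rescaled boundary, and $\sum|\Delta\mathbb{F}_0|=o(1)$, since both are dilation-invariant. In the rescaled coordinates the nearby boundary is a single straight segment of length at least $1$ with no corner within distance $1/2$, and $v$ lies at distance $d_v/\de^{3\ep}$ from it.

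Next, the plan is to extend $\mathbb{F}_0$ across this straight segment by reflection. Proposition~\ref{ref-inv} asserts that $\mathrm{F}^3$ (resp.\ $\mathrm{F}^{*3}$, for the edge orientation specified in the statement) has vanishing imaginary part on the segment up to the error terms displayed in \eqref{Fvboundary-re}--\eqref{Fvstarboundary-re}, whose summation along the rescaled segment is controlled as in Proposition~\ref{boundofin}. Writing $F=\mathrm{F}+\mathrm{F}_i$ and using the antisymmetric identity $\mathrm{F}_i=-\mathrm{F}_i^*+O((\de/d_v)^2)$ from \eqref{idstari}, one obtains an extension of $\mathbb{F}_0$ to a disk $B(0;1/4)$ (in rescaled coordinates) whose boundary values remain $O((\log\de^{-1})^2)$ and whose total discrete Laplacian is still $o(1)$. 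Since $v$ is now at positive distance from every remaining boundary of the doubled domain, an application of Lemma~\ref{asyhar} from the appendix yields the Cauchy-type gradient bound $|\nabla\mathbb{F}_0(v)|\preceq (\log\de^{-1})^2$; equivalently $|\de_0^{-1/3}F_0(v/\de^{3\ep})|\preceq (\log\de^{-1})^2$, which unpacks to the desired $|F(v)|\preceq (\de/\de^{3\ep})^{1/3}(\log\de^{-1})^2$. The bound on $F^*(v)$ follows from the same argument applied to $\mathbb{F}^*(z):=\int_{a_\de^*\to z}\de^{-1/3}F^*(x)dx$.

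The main obstacle will be making the reflection step rigorous: Proposition~\ref{ref-inv} controls $\mathrm{F}^3$ only up to the errors $(\de/x_v)^{1+\al}+(\de/y_v)^{\alpha}(y_v/x_v)^{2-\varepsilon}+(\de/y_v)^{2-\varepsilon}(y_v/\mathrm{l}_j)^{1/3}$, and after the $\de^{-3\ep}$-rescaling (so that $x_v\asymp 1$, $y_v\asymp d_v/\de^{3\ep}$ and $\mathrm{l}_j\asymp 1$) these must be summed along the segment and absorbed into a factor of $(\log\de^{-1})^2$. This is precisely the source of the two hypotheses on $d_v$ in the statement: the generic bound $d_v\geq \de^{1/3-\ep}$ controls the middle error term $(\de/y_v)^{\alpha}(y_v/x_v)^{2-\varepsilon}$, while the weaker $d_v\geq \de^{1/2+\ep}$ is enough in the orientation-specific cases because only one of $\mathrm{F}$ or $\mathrm{F}^*$ needs its reflection formula invoked, reducing the error terms that must be summed. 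The bookkeeping between $\mathrm{F}$, $\mathrm{F}^*$, and $\mathrm{F}_i$ under reflection, via \eqref{idstar}, \eqref{idstari} and Proposition~\ref{F12}, is the technical heart of the argument.
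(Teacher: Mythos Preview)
Your high-level plan (reflection to a doubled domain, then the harmonic gradient bound of Lemma~\ref{asyhar}) is the right one, but two choices diverge from the paper's proof and one of them creates a real gap.

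\textbf{Where to reflect.} The paper does not reflect across the boundary segment $\mathrm{L}_j$. It fixes the worst case $d_v=\de^{1/3-\ep}$, puts $v$ at the origin with the real axis through $v$ parallel to $\mathrm{L}_j$, and sets $F_{exten}(w)=\overline{F(\bar w)}$ for $\Im w<0$. Both halves of $B(v;\mathrm{d}_1/2)$ therefore use $F$-values taken at distance at least $d_v$ from the boundary; the strip between $v$ and $\mathrm{L}_j$ is simply overwritten. The only thing to check is that the discrete Laplacian of the extended $\mathbb{F}_{exten}$ is small on the reflection line $\Im z=0$, and this reduces to $\Im F$ being small there. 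Since $F$ is exactly real on $\mathrm{L}_j$, Proposition~\ref{Tranimp} (translation by $[\de^{-2/3-\ep}]$ steps, each contributing $(\de/\mathrm{d}_1)^{1+\al-\ep}$) gives the needed bound; this is the same mechanism as in Lemma~\ref{initialbb}, and it is the actual source of the threshold $d_v\ge\de^{1/3-\ep}$. Your proposal to reflect across $\mathrm{L}_j$ itself would instead need $\Im F(z+i\de/2)$ small for $z\in\mathrm{L}_j$, where $E_{z+i\de/2}$ is the vertical edge just above the boundary. Proposition~\ref{ref-inv} only tells you $\Im r_v^3=0$ for $\mathrm{F}$ and $\mathrm{F}_i$ separately, up to the displayed errors; it does not give $\Im F\approx 0$ at that vertex, and the passage from ``cube is real'' to ``function is real'' requires a continuity argument you have not supplied.

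\textbf{The orientation-specific cases.} Here the paper's argument is much simpler than what you sketch. Having established the bound at height $d_v=\de^{1/3-\ep}$, it invokes Proposition~\ref{Tranimp} to translate $F$ perpendicularly down to any $d_v\ge\de^{1/2+\ep}$, accumulating at most $\de^{-2/3-\ep}(\de/\de^{3\ep})^{1+\al-\ep}$, which is dominated by $(\de/\de^{3\ep})^{1/3}$ since $\ep\le\al/100$. The hypothesis that $E_v$ (resp.\ $E_v^*$) be parallel to $\mathrm{L}_j$ enters only because the translation compares observables at edges of the same orientation. Your attribution of the two $d_v$ thresholds to the error hierarchy in \eqref{Fvboundary-re} is not the paper's reasoning and would be substantially harder to make rigorous.
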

\begin{proof}
Without loss of generality, suppose $\mathrm{d}_1=\dist(v,\mathrm{v}_j)$. We first consider $d_v=\dist(v,\mathrm{L}_j)$ $=\de^{1/3-\ep}$. Suppose $v$ is the origin of $\C^2$, the real axis is above and  parallel to $\mathrm{L}_j$. In $B(v;\mathrm{d}_1/2)$ and the imaginary axis points outwards from $\mathrm{L}_j$. Define
\begin{equation} \label{extension}
F_{exten}(w)= \begin{cases}
     F(w)& \Im w\geq 0,\\
    \overline{ F(\bar w)}& \Im w<0,
    \end{cases}
\end{equation}
where $\bar w$ is the complex conjugate of $w$. Also for $z\in \de \Z^2\cap B(v;\mathrm{d}_1/2)$, define
$$
\mathbb{F}_{exten}(z)=\int_{v\to z}\de^{-1/3}F(x)dx, \ z\in \Omega_\de,
$$
where $v\to z$ is any path from $v$ to $z$ consisting of edges in $B(v;\mathrm{d}_1/2)\cap \{z_1: \Im z_1\geq 0\}$ if $\Im z\geq 0$, and $v\to z$ is any path from $v$ to $z$ consisting of edges in $B(v;\mathrm{d}_1/2)\cap \{z_1: \Im z_1< 0\}$ if $\Im z< 0$. Note that $\mathrm{d}_1\geq \de^{3\ep}$. We can apply  the same argument leading to Lemma \ref{initialbb} to
 complete the proof when $d_v\geq\de^{1/3-\ep}$. If $\de^{1/2+\ep}\leq d_v<\de^{1/3-\ep}$ and $E_v$ is parallel to $\mathrm{L}_j$, we can apply Proposition \ref{Tranimp} to obtain that
$$
F(v) \preceq  (\de/\de^{3\ep})^{1/3} \log^2\de^{-1}+\de^{-2/3-\ep} (\de/\de^{3\ep})^{1+\al-\ep} \preceq  (\de/\de^{3\ep})^{1/3} \log^2\de^{-1}
$$
since $\ep \leq \al/100$.
If $E^*_v$ is parallel to $\mathrm{L}_j$, we have the same estimate for $F^*(v)$ as above.
\end{proof}

From Lemma \ref{initialb} and Lemma \ref{imitialb-imp}, we have the following initial estimate on $\mathrm{F}(v)$.

\begin{lemma} \label{initial-b}
Under {\it Condition} $\mathrm{C}$,
\beqn
&&|\mathrm{F}(v)| \preceq \de^{1/3}(\log\de^{-1})^2,  \label{Finboun-M}\\
&&|\mathrm{F}(v+\de\be)-\mathrm{F}(v)| \preceq \de^{1+1/3}(\log\de^{-1})^2, \label{Fdiffinboun-M}
\eeqn
uniformly over $v\in V_{\hat\Omega_\de^\diamond}\cap \mathcal{K}$ for any compact subset $\mathcal{K}$ of $\Omega_0$, where $\be=\be_1$ or $\be_2$. (In terms of complex number, $\be=1$ or $i$.) In addition, if $d_v=\dist(v, \p_{ba,\de}\cup\p_{ab,\de}^*) \to 0$, as $\de\to 0$,
\beqn
&&|\mathrm{F}(v)| \preceq (\de/d_v)^{1/3}(\log\de^{-1})^2+(\de/d_v)^2(d_v/\sqrt{\mathrm{d}_1^2-d_v^2})^{1/3},  \label{Finboun-bM}\\
&&|\mathrm{F}(v+\de\be)-\mathrm{F}(v)| \preceq (\de/d_v)^{1+1/3}(\log\de^{-1})^2+(\de/d_v)^2(d_v/\sqrt{\mathrm{d}_1^2-d_v^2})^{1/3}. \label{Fdiffinboun-bM}
\eeqn
 If $\mathrm{d}_1\geq \de^{3\ep}$, $d_v\geq \de^{1/2+\ep}$ and $E_v$ is parallel to $\mathrm{L}_j$ or $\mathrm{L}^*_{\mathrm{j}^*}$ which is the closest line segment of $\p_{ba,\de}\cup\p_{ab,\de}^*$ to $v$, then
\beqn
|\mathrm{F}(v)| \preceq (\de/\de^{3\ep})^{1/3} \log^2\de^{-1}. \label{Finboun-imp}
\eeqn
The same conclusion holds for $\mathrm{F}_i(v)$ too.
\end{lemma}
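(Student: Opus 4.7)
The plan is to mimic the proofs of Lemma~\ref{initialb} and Lemma~\ref{imitialb-imp}, now applied to $\mathrm{F}$ (and $\mathrm{F}_i$), exploiting the three structural properties assembled in Propositions~\ref{F12} and \ref{TranF}: the exact discrete Cauchy--Riemann identity \eqref{CRF}, the approximate sum identity \eqref{sumid}, and the translational estimate of Proposition~\ref{TranF}. Two elementary inputs I would use freely are the pointwise bound $|\mathrm{F}(e)|\preceq\Pro(e\in\ga_\de)$ (each of $F_{one}$, $F_{two}$, $F_{\pi/12}$, $F_{ex}$ is a restricted expectation of a unit-modulus random variable) and the identity \eqref{idstar}, which lets us swap $\mathrm{F}(v)$ with $\mathrm{F}^*(v)$ up to $O((\de/d_v)^2)$.

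First I would define the discrete antiderivative
\[
\tilde{\mathbb{F}}(z):=\int_{a_\de\to z}\de^{-1/3}\mathrm{F}(x)\,dx,\qquad z\in V_{\Omega_\de},
\]
well-defined by \eqref{CRF}. Boundary values of $\tilde{\mathbb{F}}$ on $\p\hat\Omega_\de$ are estimated as in Step~2 of the proof of Lemma~\ref{initialb}: the one-arm bound \eqref{halfone} together with Propositions~\ref{nearbound10}--\ref{nearbound20} yield $|\mathrm{F}(v)|\preceq(\de/\mathrm{l}_j)^{1/3}$ along each boundary line segment, which sums to $O(1)$ along a single segment; switching between $\p_{ba,\de}$ and $\p_{ab,\de}^*$ uses \eqref{idstar}; the short caps $[z_{\ep,1/2}',z_{\ep,1/2}'']$ near $a_\de,b_\de$ contribute $O(\de^{1/6-\ep})$. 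Summing over at most $O(r_\de^{-2})$ segments gives $|\tilde{\mathbb{F}}(z)|\preceq(\log\de^{-1})^2$ on $\p\hat\Omega_\de$.

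Second, I would bound the discrete Laplacian $\Delta\tilde{\mathbb{F}}$. For interior vertices $z$ with $d_z>r_\de/2$, \eqref{sumid} and Proposition~\ref{TranF} yield $|\Delta\tilde{\mathbb{F}}(z)|\preceq\de^{2/3}(\de/d_z)^{2-\varepsilon}$, so the interior contribution is $\preceq\de^{2/3-\varepsilon}r_\de^{-2+\varepsilon}|\Omega_0|$. For the three boundary cases and three dual scenarios of Section~\ref{bpro}, together with the corner boxes near $a_\de,b_\de$, I would repeat the dyadic book-keeping from the end of the proof of Lemma~\ref{initialb}: each set contributes $\preceq\de^{1/6-\ep-\varepsilon}$, multiplied by the $O(r_\de^{-2})$ count of boundary segments. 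Hence $\sum_z|\Delta\tilde{\mathbb{F}}(z)|\to 0$. An application of Lemma~\ref{asyhar} in $B(v,\dist(v,\p\Omega_\de)/2)$ yields \eqref{Finboun-M}--\eqref{Fdiffinboun-M} on compact subsets, and rescaling by $d_v$ (working in $d_v^{-1}\Omega_\de$, exactly as in the second half of Lemma~\ref{initialb}) produces \eqref{Finboun-bM}--\eqref{Fdiffinboun-bM}. The supplementary term $(\de/d_v)^2(d_v/\sqrt{\mathrm{d}_1^2-d_v^2})^{1/3}$ is where the $(\de/d_v)^2$ slack in \eqref{sumid} enters the boundary values of $\tilde{\mathbb{F}}$, combined with the one-arm probability $(d_v/\mathrm{l}_j)^{1/3}\asymp (d_v/\sqrt{\mathrm{d}_1^2-d_v^2})^{1/3}$ that $\ga_\de$ actually reaches a neighbourhood of $v$ along the nearest line segment.

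Third, the improved bound \eqref{Finboun-imp} follows from the Schwarz reflection device of Lemma~\ref{imitialb-imp}: extend $\mathrm{F}$ across the nearest straight segment $\mathrm{L}_j$ (or $\mathrm{L}^*_{\mathrm{j}^*}$) by the rule \eqref{extension}, using Proposition~\ref{ref-inv} to justify that the reflected values differ from the true $\mathrm{F}$ by an admissible error, integrate to obtain $\tilde{\mathbb{F}}_{\text{exten}}$ on $B(v,\mathrm{d}_1/2)$, and rerun the boundary and Laplacian estimates on this ball before invoking Lemma~\ref{asyhar} once more. The argument for $\mathrm{F}_i$ is identical after substituting \eqref{CRFi}, \eqref{sumidi} and \eqref{idstari} for their $\mathrm{F}$-counterparts (the sign flip in \eqref{sumidi} does not affect the estimate, since the Laplacian is controlled by the modulus of the defect). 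The main obstacle I anticipate is the near-boundary Laplacian bookkeeping in Step~3: one has to track carefully, in each of the three cases and three dual scenarios of Section~\ref{bpro}, which $(\de/d_z)^{2-\varepsilon}$ and $(d_z/\mathrm{l}_j)^{1/3}$ factors appear, ensuring both that the total still tends to zero and that the extra term in \eqref{Finboun-bM} emerges with the correct exponents. No genuinely new probabilistic input is required beyond what is already available in Sections~\ref{TranInv}--\ref{modi}.
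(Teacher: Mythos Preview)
Your approach is correct but takes a longer route than the paper. You propose to rerun the entire machinery of Lemma~\ref{initialb} with $\mathrm{F}$ in place of $F$: define a new antiderivative $\tilde{\mathbb F}$, bound its boundary values along $\p\hat\Omega_\de$, control $\sum_z|\Delta\tilde{\mathbb F}(z)|$ via \eqref{sumid} and Proposition~\ref{TranF}, and finish with Lemma~\ref{asyhar}. This works, but it duplicates effort already invested in Lemma~\ref{initialb}.

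The paper instead exploits the algebraic identity
\[
2\,\mathrm{F}(v)=F(v)+F^*(v)+O\bigl(\Pro(\mathcal{E}_v)\bigr),\qquad 2\,\mathrm{F}_i(v)=F(v)-F^*(v)+O\bigl(\Pro(\mathcal{E}_v)\bigr),
\]
which follows from \eqref{idstar}, \eqref{idstari} and the decomposition $F=\mathrm{F}+\mathrm{F}_i$. Since Lemma~\ref{initialb} and Lemma~\ref{imitialb-imp} already bound $F(v)$ and $F^*(v)$, the bounds \eqref{Finboun-M}, \eqref{Fdiffinboun-M} and \eqref{Finboun-imp} (for $d_v\ge\de^{1/3-\ep}$) follow immediately. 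The extra term $(\de/d_v)^2(d_v/\sqrt{\mathrm d_1^2-d_v^2})^{1/3}$ in \eqref{Finboun-bM}--\eqref{Fdiffinboun-bM} is precisely the size of the correction $\Pro(\mathcal{E}_v)$, sharpened near the boundary by noting that $\mathcal{E}_v$ requires both the local configuration in $B(v;d_v)$ (probability $\preceq(\de/d_v)^2$) and an exploration path connecting the inner and outer boundaries of $B(x_v;2d_v,\sqrt{\mathrm d_1^2-d_v^2})$ (probability $\preceq(d_v/\sqrt{\mathrm d_1^2-d_v^2})^{1/3}$). For the remaining range $\de^{1/2+\ep}\le d_v<\de^{1/3-\ep}$ in \eqref{Finboun-imp}, the paper simply translates $\mathrm{F}$ from distance $\de^{1/3-\ep}$ down to $d_v$ using Proposition~\ref{TranF} and \eqref{Fev2F}, summing the $(\de/d_v)^{2-\varepsilon}$ errors. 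This is considerably shorter than redoing the reflection argument for $\mathrm{F}$ directly, and it gives a more transparent origin for the extra boundary term than your interpretation via ``slack in \eqref{sumid} entering the boundary values of $\tilde{\mathbb F}$''.
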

\begin{proof}
By considering $F(v)+F^*(v)$, we can conclude \eqref{Finboun-M}, \eqref{Fdiffinboun-M} and \eqref{Finboun-imp} with $d_v\geq \de^{1/3-\ep}$  from \eqref{Fev2F}, \eqref{F-relation}, \eqref{idstar}, \eqref{idstari}, \eqref{Finboun}, \eqref{Fdiffinboun} and Lemma \ref{imitialb-imp}.

 When $v$ is close to the boundary, the error bound $(\de/d_v)^2(d_v/\sqrt{\mathrm{d}_1^2-d_v^2})^{1/3}$ in \eqref{Finboun-bM} comes from the event $\mathcal{E}_v$ which depends on the configuration in $B(v;d_v)$ and the event that there is an exploration path between the inner and outer boundaries of $B(x_v;2d_v, \sqrt{\mathrm{d}_1^2-d_v^2} )$, where $x_v$ is the projection of $v$ into the boundary $\p_{ba,\de}\cup \p_{ab,\de}^*$.

  If $\de^{1/2+\ep}\leq d_v<\de^{1/3-\ep}$ and $E_v$ is parallel to $\mathrm{L}_j$, we can apply Proposition \ref{TranF} and \eqref{Fev2F} in Proposition \ref{F12} to obtain that
$$
\mathrm{F}(v) \preceq  (\de/\de^{3\ep})^{1/3} \log^2\de^{-1}+\de^{-2/3-\ep} (\de/\de^{3\ep})^{1+\al-\ep}+\sum_{\de^{1/2+\ep}\leq d_v<\de^{1/3-\ep}}(\de/d_v)^{2-\varepsilon} \preceq  (\de/\de^{3\ep})^{1/3} \log^2\de^{-1},
$$
where the sum is over all $d_v\in [\de^{1/2+\ep},\de^{1/3-\ep}]$ which are multiples of $\de$. This completes the derivation of \eqref{Finboun-imp} with $\de^{1/2+\ep}\leq d_v<\de^{1/3-\ep}$..
If $E^*_v$ is parallel to $\mathrm{L}_j$, we have the same estimate for $\mathrm{F}^*(v)=\mathrm{F}(v)+O\big((\de/d_v)^{2-\varepsilon}\big)$ as above.

Since the same proof works for $\mathrm{F}_i$, we can omit the details.
\end{proof}

\subsection{Estimate of line integral}
Here is one important properties of $\mathrm{F}$ and $\mathrm{F}_i$.
\begin{proposition}
Let $z_1\in V_{\Omega_\de}$, $\mathrm{d}_0=\min(\dist(z_1,\p\Omega_{ba,\de}),\dist(z_1,\p\Omega_{ab,\de}^*))$.
Suppose $z_2=z_1+\de \be_1\in V_{\Omega_\de}$, $z_3=z_2+\de \be_2\in V_{\Omega_\de}$ and $z_4=z_3-\de \be_1$. Denote the center of each edge with ends $z_j$ and $z_{j+1}$ by $v_j$. (Here we suppose $z_5=z_1$. One can refer to Figure \ref{Prop51} for illustration.) Then for any $\ep>0$,
\beqn
\de^{-1}\sum_{j=1}^4\mathrm{F}(v_j)^3(z_{j+1}-z_j) =O\Big((\frac{\de}{\mathrm{d}_0})^{2+2\al-\varepsilon}\Big). \label{unitsq}
\eeqn
Suppose $z_2'=z_1-\de\be_1\in V_{\Omega_\de}$, $z_4'=z_1-\de\be_2\in V_{\Omega_\de}$. Write $v_1'=z_1-\de\be_1/2$ and $v_4'=z_1-\de\be_2/2$.
Then for any $\ep>0$,
\beqn
\big(\mathrm{F}(v_1)^3-\mathrm{F}(v_1')^3\big)+i\big(\mathrm{F}(v_4)^3-\mathrm{F}(v_4')^3\big)= O\Big((\frac{\de}{\mathrm{d}_0})^{2+2\al-\varepsilon}\Big). \label{Lap}
\eeqn
The same conclusion holds for $\mathrm{F}_i(v)$ too.
\end{proposition}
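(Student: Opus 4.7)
The plan is to combine the discrete Cauchy--Riemann identity \eqref{CRF}, the approximate sum identity \eqref{sumid}, the translational invariance bound of Proposition~\ref{TranF}, and the a priori size estimate $|\mathrm{F}|\preceq(\de/\mathrm{d}_0)^{1/3}\log^2\de^{-1}$ from Lemma~\ref{initial-b}. Together, \eqref{CRF}--\eqref{sumid} express that $\mathrm{F}$ is approximately s-holomorphic, so $\mathrm{F}^3$ is approximately discretely holomorphic, and this is what is quantified by \eqref{unitsq} and \eqref{Lap}.

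For the contour identity \eqref{unitsq}, I would fix $v_1$ as a reference and, by Proposition~\ref{TranF}, write $\mathrm{F}(v_j)=\mathrm{F}(v_1)+\Delta_j$ with $|\Delta_j|\preceq(\de/\mathrm{d}_0)^{1+\al-\ep}$. Expanding $\mathrm{F}(v_j)^3=\mathrm{F}(v_1)^3+3\mathrm{F}(v_1)^2\Delta_j+3\mathrm{F}(v_1)\Delta_j^2+\Delta_j^3$, the zeroth-order contribution $\mathrm{F}(v_1)^3\sum_j(z_{j+1}-z_j)$ vanishes, and the quadratic-plus-cubic remainders combine with the a priori bound on $|\mathrm{F}|$ to give, after division by $\de$, a contribution of size $(\de/\mathrm{d}_0)^{7/3+2\al-2\ep}\log^2\de^{-1}$, safely inside the target. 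The surviving first-order piece equals $3\mathrm{F}(v_1)^2\cdot\de^{-1}\sum_j\mathrm{F}(v_j)(z_{j+1}-z_j)$, so the real task reduces to showing that the discrete contour integral of $\mathrm{F}$ itself around the primal square, which naively is only $O((\de/\mathrm{d}_0)^{1+\al-\ep})$, admits an extra factor of $(\de/\mathrm{d}_0)^{1+\al-\ep}$ from cancellation, i.e.\ is bounded by $(\de/\mathrm{d}_0)^{2+2\al-\varepsilon}$. For this I would substitute \eqref{hor} at each $v_j$, use \eqref{sumid} to replace each $\mathrm{F}(v_j)$ by its symmetric form $\tfrac{1}{\sqrt 2}e_b^{-1/3}(\mathrm{F}(A_{v_j})+\mathrm{F}(C_{v_j}))$ modulo an $O((\de/\mathrm{d}_0)^2)$ slack, group the resulting sum by medial edges on the boundary of the medial face around $(z_1+z_3)/2$, use Proposition~\ref{TranF} to identify translation-matched edge observables at the four $v_j$, and finally apply \eqref{CRF} at $v_1$ to kill the leading combination of edge values.

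For the Laplacian-type identity \eqref{Lap}, I would pivot around the primal vertex $z_1$ and view the expression as a discrete $\bar\partial$-operator applied to $\mathrm{F}^3$. Using the factorisation $\mathrm{F}(v_j)^3-\mathrm{F}(v_j')^3=\bigl(\mathrm{F}(v_j)-\mathrm{F}(v_j')\bigr)\bigl(\mathrm{F}(v_j)^2+\mathrm{F}(v_j)\mathrm{F}(v_j')+\mathrm{F}(v_j')^2\bigr)$ and Proposition~\ref{TranF} for each first-order difference gives per-pair contributions of size $(\de/\mathrm{d}_0)^{5/3+\al-\ep}$, already larger than the target, so the crux is the cancellation between the horizontal $(v_1,v_1')$ and vertical $(v_4,v_4')$ pieces. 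That cancellation is realised by combining the vertex observable formula at $z_1$, the sum identity \eqref{sumid}, and the CR relation \eqref{CRF} at $v_1$: the combination $\bigl(\mathrm{F}(v_1)-\mathrm{F}(v_1')\bigr)+i\bigl(\mathrm{F}(v_4)-\mathrm{F}(v_4')\bigr)$ reduces to a second-order translational increment of edge observables at $v_1$, bounded by $(\de/\mathrm{d}_0)^{2+2\al-2\ep}$ via a second application of Proposition~\ref{TranF}. Combined with the $(\de/\mathrm{d}_0)^{2/3}\log^4\de^{-1}$ bound on the quadratic polynomial factor, this yields the claim.

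The hard part will be the first-order cancellation in both identities. To improve the per-term bound $(\de/\mathrm{d}_0)^{1+\al-\ep}$ to $(\de/\mathrm{d}_0)^{2+2\al-\varepsilon}$ requires careful bookkeeping of how the counterclockwise labelling $A,B,C,D$ rotates at each medial vertex $v_j$, how the $O((\de/\mathrm{d}_0)^2)$ slack in \eqref{sumid} aligns with the phases $z_{j+1}-z_j=\de\cdot i^{j-1}$, and a precise identification (via Proposition~\ref{TranF}) of which edge observables at different $v_j$ are matched under translation; only then does the CR identity \eqref{CRF} at $v_1$ trigger to remove the leading combination. The Laplacian identity is structurally analogous but less symmetric, because the cross of four primal edges at $z_1$ is centred at a primal rather than a dual vertex.
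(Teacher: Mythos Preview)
Your expansion scheme is structurally fine, but the numbers do not close as written. The crux you identify---bounding the first-order piece---relies on two claims that are not available: (i) that $\de^{-1}\sum_j\mathrm{F}(v_j)(z_{j+1}-z_j) = (\mathrm{F}(v_1)-\mathrm{F}(v_3))+i(\mathrm{F}(v_2)-\mathrm{F}(v_4))$ is $O((\de/\mathrm{d}_0)^{2+2\al-\varepsilon})$, and (ii) that $\big(\mathrm{F}(v_1)-\mathrm{F}(v_1')\big)+i\big(\mathrm{F}(v_4)-\mathrm{F}(v_4')\big)$ is a ``second-order translational increment'' of size $(\de/\mathrm{d}_0)^{2+2\al-2\ep}$. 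Neither holds: Proposition~\ref{F12} only yields $O((\de/\mathrm{d}_0)^{2-\varepsilon})$ for (i), and Proposition~\ref{TranF} controls only first differences, so a second application does not square the smallness. You were pushed into these overclaims by using the size bound $|\mathrm{F}|\preceq(\de/\mathrm{d}_0)^{1/3}\log^2\de^{-1}$ from Lemma~\ref{initial-b}; with that bound, $|\mathrm{F}|^2\cdot(\de/\mathrm{d}_0)^{2-\varepsilon}$ only gives $(\de/\mathrm{d}_0)^{8/3-\varepsilon}$, which matches the target exponent $2+2\al$ only if $\al\le 1/3$, an inequality the paper never establishes.

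The repair is to use instead the elementary one-arm bound $|\mathrm{F}(v)|\preceq \Pro(A_1(\de,\mathrm{d}_0))\preceq(\de/\mathrm{d}_0)^{\al}$. Then $|\mathrm{F}|^2\preceq(\de/\mathrm{d}_0)^{2\al}$, and together with the $O((\de/\mathrm{d}_0)^{2-\varepsilon})$ bound on (i) from Proposition~\ref{F12} the first-order piece is exactly $(\de/\mathrm{d}_0)^{2+2\al-\varepsilon}$. This is how the paper proceeds, though it organises the algebra differently: it pairs opposite edges and factors $a^3-b^3=(a-b)(a^2+ab+b^2)$, so that the combination in (i) appears immediately, and the residual term $(\mathrm{F}(v_1)-\mathrm{F}(v_3))\cdot[\text{difference of the two quadratic polynomials}]$ is bounded using Proposition~\ref{TranF}, \eqref{sumid} and the one-arm estimate. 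For \eqref{Lap} the paper avoids a fresh computation altogether: the four points $v_1,v_4,v_1',v_4'$ are precisely the edge-midpoints of the \emph{dual} unit square centred at $z_1$, so after substituting $\mathrm{F}\mapsto\mathrm{F}^*$ via \eqref{idstar} (at cost $O((\de/\mathrm{d}_0)^{2-\varepsilon})\cdot|\mathrm{F}|^2\preceq(\de/\mathrm{d}_0)^{2+2\al-\varepsilon}$) the expression becomes the dual-lattice analogue of \eqref{unitsq}.
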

\begin{figure}[hp]
 \begin{center}
\scalebox{0.4}{\includegraphics{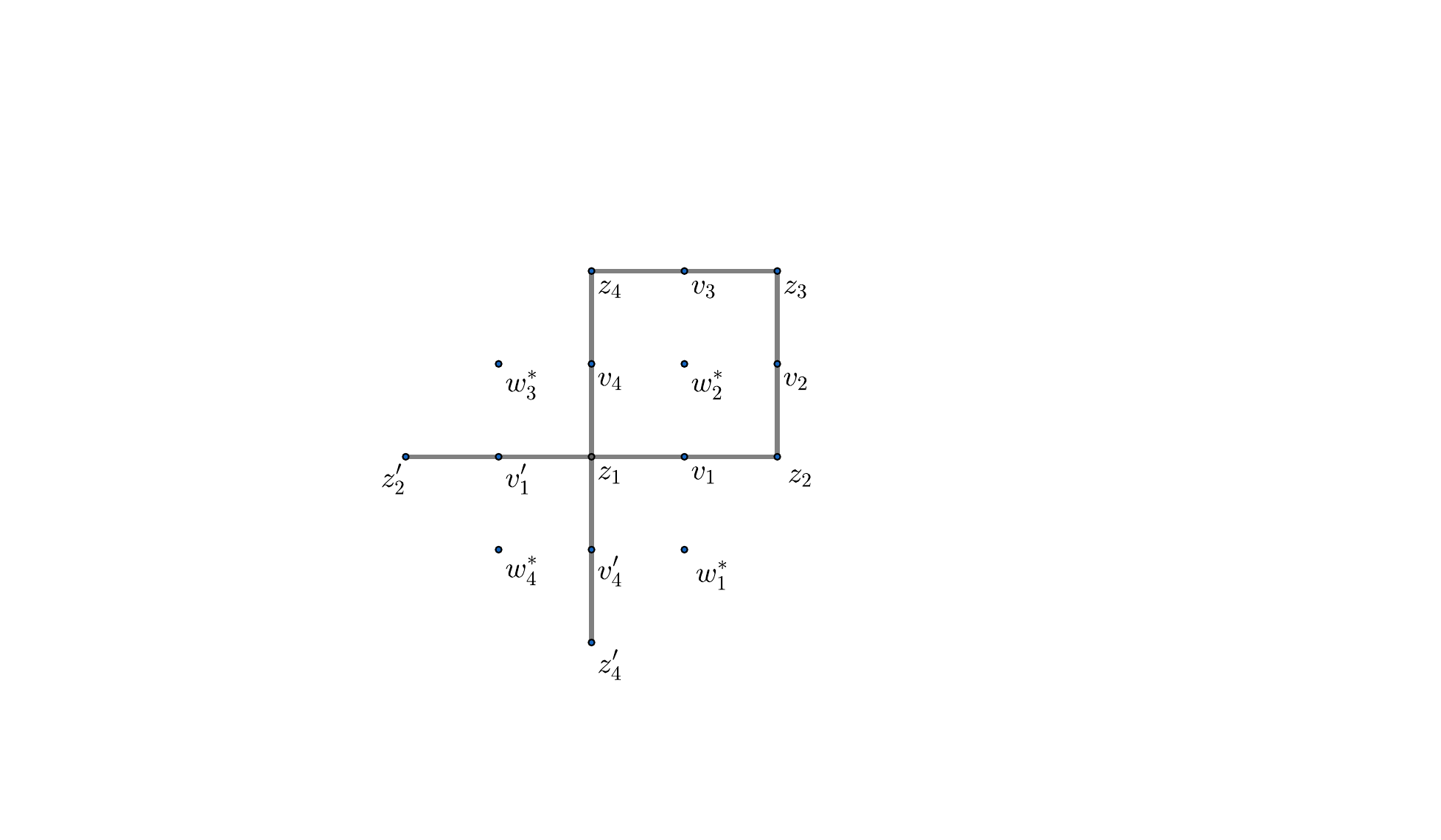}}
 \end{center}
\caption{Square $z_1z_2z_3z_4$.} \label{Prop51}
\end{figure}
\begin{proof}
Note that
\beqn
&&\de^{-1}\sum_{j=1}^4\mathrm{F}(v_j)^3(z_{j+1}-z_j)\nonumber\\
&=&\big(\mathrm{F}(v_1)-\mathrm{F}(v_3)\big)
\big(\mathrm{F}(v_1)^2+\mathrm{F}(v_3)^2+\mathrm{F}(v_1)\mathrm{F}(v_3)\big)\nonumber\\
&& \qquad +i\big(\mathrm{F}(v_2)-\mathrm{F}(v_4)\big)\big(\mathrm{F}(v_2)^2+\mathrm{F}(v_4)^2+\mathrm{F}(v_2)\mathrm{F}(v_4)\big). \label{int-square}
\eeqn
Let $\mathrm{d}_1=\min_{1\leq j\leq \mathrm{n}+1,1\leq j^*\leq \mathrm{n}^*+1}\big(\dist(z_1,\mathrm{v}_j),\dist(z_1,\mathrm{v}_{j^*}^*)\big)$. By Proposition \ref{TranF}, we can assume that
$$
 \mathrm{F}(v_1)-\mathrm{F}(v_3)=O\Big((\frac{\de}{\mathrm{d}_1})^{1+\al-\ep}\Big).
$$
On the other hand, by Proposition \ref{F12}, we have
$$\mathrm{F}(v_1)-\mathrm{F}(v_3)+i\big(\mathrm{F}(v_2)-\mathrm{F}(v_4)\big)=O\Big((\frac{\de}{\mathrm{d}_0})^{2-\varepsilon}\Big).$$
Hence, by the fact that $|\mathrm{F}(v_1)|\preceq \Pro(A_1(\de,d_1))\preceq (\de/\mathrm{d}_0)^\al$ one has
\beqn
&&\de^{-1}\sum_{j=1}^4\mathrm{F}(v_j)^3(z_{j+1}-z_j)\nonumber\\
&=&\big(\mathrm{F}(v_1)-\mathrm{F}(v_3)\big)
\big(\mathrm{F}(v_1)^2+\mathrm{F}(v_3)^2+\mathrm{F}(v_1)\mathrm{F}(v_3)-\mathrm{F}(v_2)^2-\mathrm{F}(v_4)^2-\mathrm{F}(v_2)\mathrm{F}(v_4)\big)\nonumber\\
&& \qquad +O\Big((\frac{\de}{\mathrm{d}_0})^{2+2\al-\varepsilon}\Big).  \quad \label{1ststep}
\eeqn
It follows from \eqref{sumid} and Proposition \ref{TranF} that
\beq
&&\mathrm{F}(v_1)-\mathrm{F}(v_2)=O\Big((\frac{\de}{\mathrm{d}_1})^{1+\al-\ep}+(\frac{\de}{\mathrm{d}_0})^{2-\varepsilon}\Big), \\
 && \mathrm{F}(v_2)-\mathrm{F}(v_4)=O\Big((\frac{\de}{\mathrm{d}_1})^{1+\al-\ep}+(\frac{\de}{\mathrm{d}_0})^{2-\varepsilon}\Big),
\eeq
which combined with \eqref{1ststep} and the fact that $|\mathrm{F}(v_1)|\preceq \Pro(A_1(\de,d_1))\preceq (\de/\mathrm{d}_0)^\al$ completes the proof of \eqref{unitsq}.

Now we turn to the proof of \eqref{Lap}. Let $w_1^*=v_1-\de\be_2/2$, $w_2^*=v_2+\de\be_2/2$, $w_3^*=v_1'+\de\be_2/2$, $w_4^*=v_1'-\de\be_2/2$. Similarly to \eqref{unitsq}, we have
\beq
&&\de^{-1}\Big( \mathrm{F}^*(v_1)^3(w_2^*-w_1^*)+ \mathrm{F}^*(v_4)^3(w_3^*-w_2^*)+ \mathrm{F}^*(v_1')^3(w_4^*-w_3^*)+ \mathrm{F}^*(v_4')^3(w_1^*-w_4^*)\Big)\\
&=&O\Big((\frac{\de}{\mathrm{d}_0})^{2+2\al-\varepsilon}\Big),
\eeq
which combined with the fact that $\mathrm{F}(v_1)=\mathrm{F}^*(v_1)+O\big((\de/\mathrm{d}_0)^{2-\varepsilon}\big)$ completes the proof of \eqref{Lap}.

Since the same proof works for $\mathrm{F}_i$, we can omit the detailed proof for $\mathrm{F}_i$.
\end{proof}

\begin{remark}
When $\mathrm{d}_0\to 0$ as $\de\to 0$, one can apply \eqref{Finboun-bM} and \eqref{Fdiffinboun-bM} to improve the error bounds in \eqref{unitsq} and \eqref{Lap}.
\end{remark}

The following result illustrates another important property of $\mathrm{F}$ and $\mathrm{F}_i$.
\begin{proposition} \label{sum-int-lap}
Let $z_1\in V_{\hat\Omega_\de}$.
Suppose $z_2=z_1+\de \be_1\in V_{\Omega_\de}$, $z_3=z_2+\de \be_2\in V_{\Omega_\de}$ and $z_4=z_3-\de \be_1$. Denote the center of each edge with ends $z_j$ and $z_{j+1}$ by $v_j$. (Here we suppose $z_5=z_1$.) Then
\beqn
\sum_{z_1z_2z_3z_4 \subseteq \hat \Omega_\de} | \int_{z_1z_2z_3z_4} \de^{-1}\mathrm{F}(z)^3dz| \preceq \de^{2\ep/3-\varepsilon} r_\de^{-6}. \label{sum-of-int}
\eeqn
Write $v_1'=z_1-\de\be_1/2$ and $v_4'=z_1-\de\be_2/2$. Then
\beqn
\sum_{z_1\in V_{\hat\Omega_\de}} |\Delta \mathrm{F}^3(z_1)| \preceq \de^{2\ep/3-\varepsilon} r_\de^{-6}, \label{sum-of-lap}
\eeqn
where $\Delta \mathrm{F}^3(z_1)=\mathrm{F}(v_1)^3-\mathrm{F}(v_1')^3+i\big(\mathrm{F}(v_4)^3-\mathrm{F}(v_4')^3\big)$.

The same conclusion holds for $\mathrm{F}_i(v)$ too.
\end{proposition}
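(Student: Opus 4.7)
I treat the two claims \eqref{sum-of-int} and \eqref{sum-of-lap} in parallel, since the preceding proposition (see \eqref{unitsq} and \eqref{Lap}) gives per-term bounds of the identical shape $O((\de/\mathrm{d}_0)^{2+2\al-\varepsilon})$, and for every $z_1\in V_{\hat\Omega_\de}$ one has $\mathrm{d}_0(z_1)\geq \de^{1/2+\ep}$ by the construction of $\hat\Omega_\de\subseteq\Omega_\de^{in}$. Throughout, $\mathrm{d}_1$ denotes the distance from $z_1$ to the corner set $\{\mathrm{v}_j\}\cup\{\mathrm{v}_{j^*}^*\}$.

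The strategy is a three-piece decomposition of $V_{\hat\Omega_\de}$:
\begin{equation*}
R_1=\{\mathrm{d}_0\geq r_\de\},\qquad R_2=\{\de^{1/2+\ep}\leq \mathrm{d}_0<r_\de,\ \mathrm{d}_1\geq \de^{3\ep}\},\qquad R_3=\{\mathrm{d}_1<\de^{3\ep}\}.
\end{equation*}
On $R_1$ (bulk), I apply \eqref{unitsq}/\eqref{Lap} directly: the number of squares is $O(\de^{-2})$ and each contributes at most $(\de/r_\de)^{2+2\al-\varepsilon}$, so the $R_1$-sum is $O(\de^{2\al-\varepsilon} r_\de^{-2-2\al+\varepsilon})$, comfortably inside the target. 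On $R_2$ (near a straight boundary segment but away from corners), the crude bound $|\mathrm{F}(v)|\preceq(\de/\mathrm{d}_0)^\al$ that was used to derive \eqref{unitsq} is replaced by the refined pointwise estimate \eqref{Finboun-imp} of Lemma \ref{initial-b}: whenever $E_v$ is parallel to the closest boundary segment one has $|\mathrm{F}(v)|\preceq (\de/\de^{3\ep})^{1/3}\log^2\de^{-1}$. Combining this with the translational estimate of Proposition \ref{TranF} and the discrete-Cauchy-Riemann-type identities \eqref{CRF}-\eqref{sumid} gives a per-square bound substantially smaller than $(\de/\mathrm{d}_0)^{2+2\al-\varepsilon}$; summing dyadically over $\mathrm{d}_0\in[\de^{1/2+\ep},r_\de]$ and using that the boundary $\p_{ba,\de}\cup\p_{ab,\de}^*$ consists of at most $r_\de^{-2}$ line segments $\mathrm{L}_j,\mathrm{L}_{j^*}^*$ produces a contribution of order $\de^{2\ep/3-\varepsilon} r_\de^{-c}$ for a modest power $c\leq 6$. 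On $R_3$ (near a corner), I use that $\mathrm{n}+\mathrm{n}^*\preceq r_\de^{-2}$ and that each corner neighborhood $\{\mathrm{d}_1<\de^{3\ep}\}$ contains at most $O(\de^{6\ep-2})$ lattice squares; combining with the pointwise bounds \eqref{Finboun-bM}, \eqref{Fdiffinboun-bM} of Lemma \ref{initial-b} for $\mathrm{F}$ and the analogous bounds for $\mathrm{F}_i$, the corner contribution is $O\bigl(r_\de^{-2}\de^{6\ep-2}\cdot(\max|\mathrm{F}|^3)\bigr)$, which also fits inside $\de^{2\ep/3-\varepsilon}r_\de^{-6}$.

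For the Laplacian sum \eqref{sum-of-lap} I run precisely the same partition; the per-vertex estimate \eqref{Lap} is of the same form $O((\de/\mathrm{d}_0)^{2+2\al-\varepsilon})$ as \eqref{unitsq}, and the combinatorics of the sum are identical. The identities \eqref{sumidi}-\eqref{idstari} and Proposition \ref{TranF} (applied to $\mathrm{F}_i$) ensure that every ingredient carries over from $\mathrm{F}$ to $\mathrm{F}_i$ with the same error exponents.

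The main obstacle will be the near-corner region $R_3$: the pointwise estimate \eqref{Finboun-bM} contains the factor $(d_v/\sqrt{\mathrm{d}_1^2-d_v^2})^{1/3}$, which is singular as $d_v\uparrow \mathrm{d}_1$, so within $R_3$ one must further sub-decompose dyadically in both $\mathrm{d}_1$ and $d_v$, apply the cancellation from \eqref{unitsq}/\eqref{Lap} on each scale separately, and then sum the dyadic contributions. The bookkeeping here, together with verifying that each of the three error scales $(\de/\mathrm{d}_1)^{1+\al-\ep}$, $(\de/\mathrm{d}_0)^{2-\varepsilon}$, and the corner singularity from \eqref{Finboun-bM}, combine to give exactly the exponent $2\ep/3$ in the final bound, is the most delicate part of the argument.
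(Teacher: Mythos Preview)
Your three–region decomposition $R_1\cup R_2\cup R_3$ is exactly the paper's strategy, and your treatment of the bulk $R_1$ and of the near-straight-boundary region $R_2$ (via \eqref{Finboun-imp}) is correct. The gap is in the near-corner region $R_3$.

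The bound you write, $O(r_\de^{-2}\de^{6\ep-2}\cdot\max|\mathrm{F}|^3)$, has a negative $\de$-exponent: with $\max|\mathrm{F}|^3\preceq(\de/\de^{1/2+\ep})\log^6\de^{-1}$ from \eqref{Finboun-bM} you get $\de^{5\ep-3/2}$. Your fallback, applying \eqref{unitsq}/\eqref{Lap} dyadically on $R_3$, also fails: combining the pointwise bound $|\mathrm{F}(v)|^2\preceq(\de/d_v)^{2/3}\log^4\de^{-1}$ with the discrete-CR error $(\de/d_v)^{2-\varepsilon}$ from \eqref{Fev2F}/\eqref{sumid} gives a per-square contribution $\preceq(\de/d_v)^{8/3-\varepsilon}\log^4\de^{-1}$; summing this over a single corner region ($d_v\in[\de^{1/2+\ep},\de^{3\ep}]$, width $\de^{3\ep}$ along the segment) produces $\de^{-1/6+4\ep/3-\varepsilon}\log^4\de^{-1}$, which diverges for the admissible range $\ep\le\al/100$. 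No dyadic reorganisation in $\mathrm{d}_1,d_v$ repairs this, because the integrand itself lacks decay.

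What the paper does differently---and what you are missing---is to sharpen the \emph{error term} in \eqref{Fev2F}/\eqref{sumid} near the boundary, not merely the pointwise size of $\mathrm{F}$. The exceptional event $\mathcal{E}_v$ (resp.\ $\mathfrak{E}_v$) that generates the error $(\de/d_v)^2$ depends only on the configuration inside $B(v;d_v)$; by independence one may therefore multiply by the probability that $\ga_\de$ reaches $B(v;d_v)$ at all, which by Proposition~\ref{nearbound10} is $\preceq(d_v/x_1)^{1/3}$ with $x_1$ the distance along the segment to the corner. This upgrades the discrete-CR error to $(\de/d_v)^{2-\varepsilon}(d_v/x_1)^{1/3}$, and it is exactly this extra one-arm factor of exponent $1/3$ that turns the divergent $\de^{-1/6+4\ep/3}$ into the claimed $\de^{2\ep/3}$; see \eqref{square-boundary}--\eqref{error-sum2} and the remark after Proposition~\ref{ref-inv}. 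Without invoking this arm-event refinement of the error, the $R_3$ contribution cannot be controlled.
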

\begin{proof}
We categorize all squares $z_1z_2z_3z_4$ which are inside $\hat\Omega_\de$ into two classes. The first class consists of squares $z_1z_2z_3z_4$ such that $\mathrm{d}_0\geq r_\de/2$. The other squares are in the second class. Applying \eqref{unitsq}, one has
\beqn
\sum_{\mathrm{d}_0\geq r_\de/2}(\frac{\de}{\mathrm{d}_0})^{2+2\al-\varepsilon}\preceq \de^{2\al-\varepsilon}r_\de^{-2-2\al}|\Omega_0|, \label{class1}
\eeqn
where $|\Omega_0|$ represents the area of $\Omega_0$. For the squares in the second class, we have three cases and three scenarios which are defined in the proof of Proposition \ref{boundofin}, and one extra case where $z_1$ is close to $a_\de$ or $b_\de$. We begin with {\bf Case 1} where part of $\mathrm{L}_j$ overlaps with part of $\mathrm{L}_{j-1}$. One can refer to Figure \ref{boundary}. In Figure \ref{boundary}, we assume that $\mathrm{v}_j$ is the origin of $\C$, and $\mathrm{v}_{j+1}$ is on the positive real axis.  Consider the squares with side length $\de$ in $REC:=\{z: 0\leq \Re z\leq \mathrm{l}_j/2, \de^{1/2+\ep}\leq \Im z\leq r_\de/2\}$.  We decompose $REC$ into two parts, $REC_1$ and $REC_2$, where $REC_1:=\{z:  \de^{3\ep}\leq \Re z\leq \mathrm{l}_j/2, \de^{1/2+\ep}\leq \Im z\leq r_\de/2\}$, $REC_2=REC\setminus REC_1$.

For square $z_1z_2z_3z_4$ with side length $\de$ such that $x_1=\Re z_1 \geq \de^{3\ep}$, it follows from \eqref{int-square} that
\beq
&&|\de^{-1}\sum_{j=1}^4\mathrm{F}(v_j)^3(z_{j+1}-z_j)| \\
&\preceq& (\de/x_1)^{1+\al-\ep}\big((\de/x_1)^{1+\al-\ep}+(\de/y_1)^{2-\varepsilon}(y_1/x_1)^{1/3}\big)(\de/\de^{3\ep})^{1/3}\log^2 \de^{-1}\\
&& \qquad +(\de/y_1)^{2-\varepsilon}(y_1/x_1)^{1/3}(\de/\de^{3\ep})^{2/3}\log^4 \de^{-1},
\eeq
where $y_1=\Im z_1$, and we used \eqref{Finboun-bM}, \eqref{Finboun-imp} and Proposition \ref{F12}. Note that
\beqn
&&\sum_{z_1z_2z_3z_4\subseteq REC_1} (\de/y_1)^{2-\varepsilon}(y_1/x_1)^{1/3}(\de/\de^{3\ep})^{2/3}\log^4 \de^{-1} \nonumber\\
&=& \sum_{\de^{1/2+\ep} \leq y_1\leq r_\de/2} \sum_{\de^{3\ep} \leq x_1\leq \mathrm{l}_j/2} (\de/y_1)^{5/3-\varepsilon} (\de/x_1)^{1/3} (\de/\de^{3\ep})^{2/3}\log^4 \de^{-1}\nonumber\\
&\preceq& \de^{1/3-8\ep/3-\varepsilon/2} \log^4 \de^{-1}, \label{error-sum1}
\eeqn
and
\beq
&&\sum_{z_1z_2z_3z_4\subseteq REC_1}(\de/x_1)^{1+\al-\ep}\big((\de/x_1)^{1+\al-\ep}+(\de/y_1)^{2-\varepsilon}(y_1/x_1)^{1/3}\big)(\de/\de^{3\ep})^{1/3}\log^2 \de^{-1}\\
&=& \sum_{\de^{1/2+\ep} \leq y_1\leq r_\de/2} \sum_{\de^{3\ep} \leq x_1\leq \mathrm{l}_j/2}(\de/x_1)^{1+\al-\ep}\big((\de/x_1)^{1+\al-\ep}+(\de/y_1)^{2-\varepsilon}(y_1/x_1)^{1/3}\big)(\de/\de^{3\ep})^{1/3}\log^2 \de^{-1}\\
&=& o\big( \de^{1/3-8\ep/3} \log^5 \de^{-1}\big).
\eeq
Hence
\beqn
\sum_{z_1z_2z_3z_4\subseteq REC_1}|\de^{-1}\sum_{j=1}^4\mathrm{F}(v_j)^3(z_{j+1}-z_j)| \preceq \de^{1/3-8\ep/3-\varepsilon} \log^5 \de^{-1}. \label{sum-REC1}
\eeqn

For square $z_1z_2z_3z_4$ with side length $\de$ such that $x_1 \leq \de^{3\ep}$, it follows from \eqref{int-square} that
\beqn
&&|\de^{-1}\sum_{j=1}^4\mathrm{F}(v_j)^3(z_{j+1}-z_j)| \nonumber\\
&\preceq& (\de/\max(x_1,y_1))^{1+\al-\ep}\big((\de/\max(x_1,y_1))^{1+\al-\ep}+(\de/y_1)^{2-\varepsilon}(y_1/x_1)^{1/3}\big)(\de/y_1)^{1/3}\log^2 \de^{-1}\nonumber\\
&& \qquad +(\de/y_1)^{2-\varepsilon}(y_1/x_1)^{1/3}(\de/y_1)^{2/3}\log^4 \de^{-1}, \label{square-boundary}
\eeqn
where  we used \eqref{Finboun-bM} and Proposition \ref{F12}. Note that
\beqn
\sum_{z_1z_2z_3z_4\subseteq REC_2} (\de/y_1)^{2-\varepsilon}(y_1/x_1)^{1/3}(\de/y_1)^{2/3}\log^4 \de^{-1}=\de^{2\ep/3-\varepsilon}\log^4 \de^{-1}  \label{error-sum2}
\eeqn
and
\beq
&&\sum_{z_1z_2z_3z_4\subseteq REC_2}(\de/\max(x_1,y_1))^{1+\al-\ep}\big((\de/\max(x_1,y_1))^{1+\al-\ep}\\
&&\qquad \qquad+(\de/y_1)^{2-\varepsilon}(y_1/x_1)^{1/3}\big)(\de/y_1)^{1/3}\log^2 \de^{-1}
=o\big(\de^{2\ep/3-\varepsilon}\log^4 \de^{-1}\big).
\eeq
Hence
\beqn
\sum_{z_1z_2z_3z_4\subseteq REC_2}|\de^{-1}\sum_{j=1}^4\mathrm{F}(v_j)^3(z_{j+1}-z_j)| \preceq \de^{2\ep/3-\varepsilon}\log^4 \de^{-1}. \label{sum-REC2}
\eeqn
Combining \eqref{sum-REC1} and \eqref{sum-REC2} gives that
\beqn
\sum_{z_1z_2z_3z_4\subseteq REC}|\de^{-1}\sum_{j=1}^4\mathrm{F}(v_j)^3(z_{j+1}-z_j)| \preceq \de^{2\ep/3-\varepsilon}\log^4 \de^{-1}. \label{sum-REC}
\eeqn
Noting that the upper bound for $\mathrm{n}+\mathrm{n}^*$ is of the order $r_\de^{-2}$, we have the order of $\de^{2\ep/3-\varepsilon}r_\de^{-6}$ for the sum of integrals of $\de^{-1}\mathrm{F}(z)^3$ along squares belonging to the second class in {\bf Case 1}. For {\bf Case 2}-{\bf Case 3} and {\bf Scenario 1}-{\bf Scenario 3}, we have the same  bound
 \beqn
\de^{2\ep/3-\varepsilon}r_\de^{-6}. \label{class2}
 \eeqn

For the extra case where $z$ is close to $a_\de$ or $b_\de$
 let $\mathrm{Sq}_1$ be the square with four vertices $z_{\mathrm{n},3\ep}+k_0\de$, $z_{\mathrm{n},3\ep}'$, $\mathfrak{z}_{1,3\ep}'$ and $z_{\mathrm{n},1/2}'$, where
$z_{\mathrm{n},3\ep}=a_\de+i[\de^{3\ep-1}]\de$,
$z_{\mathrm{n},3\ep}'=z_{\mathrm{n},3\ep}+[\de^{3\ep-1}]\de$,
 $\mathfrak{z}_{1,3\ep}'=a_\de^*+[\de^{3\ep-1}]\de-\de/2+ik_0\de+i\de/2$; let  $\mathrm{Sq}_2$ be the square with four vertices $z_{1,3\ep}-ik_0\de$, $z_{1,1/2}'$, $\mathfrak{z}_{\mathrm{n}^*,3\ep}'$ and $z_{1,3\ep}'$,
where
$z_{1,3\ep}=b_\de-[\de^{3\ep-1}]\de$,
 $\mathfrak{z}'_{\mathrm{n}^*,3\ep}=b_\de^*-i[\de^{3\ep-1}]\de+i\de/2-k_0\de-\de/2$,
 $z'_{1,3\ep}=b_\de-[\de^{3\ep-1}]\de-i[\de^{3\ep-1}]\de$. One can refer to Figure \ref{boundab}, where we assume that $\mathrm{L}_1$ is parallel to the real axis in $\C$.

 Applying \eqref{square-boundary}, one has
 \beqn
 &&\sum_{z_1z_2z_3z_4\subseteq (\mathrm{Sq}_1\setminus\mathrm{SQ}_1)\cup(\mathrm{Sq}_2\setminus\mathrm{SQ}_2), x_1\geq y_1}\Big((\de/x_1)^{1+\al-\ep}\big((\de/x_1)^{1+\al-\ep}\nonumber \\
&&\qquad +(\de/y_1)^{2-\varepsilon}(y_1/x_1)^{1/3}\big)(\de/y_1)^{1/3}\log^2 \de^{-1} +(\de/y_1)^{2-\varepsilon}(y_1/x_1)^{1/3}(\de/y_1)^{2/3}\log^4 \de^{-1}\Big) \nonumber \\
&\preceq & \sum_{\de^{1/2-\ep}\leq x_1\leq \de^{3\ep}, \de^{1/2+\ep}\leq y_1\leq \de^{1/2-\ep}} (\de/y_1)^{2-\varepsilon}(y_1/x_1)^{1/3}(\de/y_1)^{2/3}\log^4 \de^{-1}\I(\frac{x_1}\de,\frac{y_1}\de \ \text{are integers}) \nonumber  \\
&\preceq&\de^{2\ep/3-\varepsilon}r_\de^{-4}. \label{extraclass}
\eeqn
This combined with \eqref{sum-REC} and  \eqref{class2}  implies \eqref{sum-of-int}.

Since we can express $\Delta\mathrm{F}^3(z_1)$ as a sum of line integral of $\mathrm{F}^*(z)^3$ along a square on the dual lattice, and an error term which can handled in the same way as in \eqref{error-sum1} and \eqref{error-sum2}, we can obtain \eqref{sum-of-lap} from \eqref{sum-of-int}.

Since the same proof works for $\mathrm{F}_i$, we can omit the detailed proof for $\mathrm{F}_i$.
\end{proof}

\subsection{Convergence of integral} \label{con-of-int}
Let $\mathrm{S}=\{\mathrm{S}_n: n\geq 0\}$ be a simple random walk in $\de \Z^2$ such that $\mathrm{S}_0=z$,  and $\mathrm{X}_1=\mathrm{S}_1-\mathrm{S}_0$, $\mathrm{X}_2=\mathrm{S}_2-\mathrm{S}_1$, $\cdots$ are independent and identically distributed two dimensional random vectors with
\beq
\Pro(\mathrm{X}_1=\be_1)=1/4, \ \Pro(\mathrm{X}_1=-\be_1)=1/4,  \\
 \Pro(\mathrm{X}_1=\be_2)=1/4, \ \Pro(\mathrm{X}_1=-\be_2)=1/4.
\eeq
Define
\beq
\tau=\inf\big\{j\geq 0: \mathrm{S}_j\in \p\hat\Omega_\de\big\}.
\eeq
Introduce
$$
\mathbf{F}_\de(z)=\Ex \int_{z\to \mathrm{S}_{\tau}} \de^{-1} \mathrm{F}(z_1)^3dz_1,  \ z\in V_{\hat\Omega_\de}, \
\mathbf{F}_{i\de}(z)=\Ex \int_{z\to \mathrm{S}_{\tau}} \de^{-1} \mathrm{F}_i(z_1)^3dz_1,  \ z\in V_{\hat\Omega_\de},
$$
where $z\to \mathrm{S}_{\tau}$ is the path of $\{\mathrm{S}_n\}$ from $\mathrm{S}_0=z$ to $\mathrm{S}_{\tau}$. If $z\in \p \hat\Omega_\de$, $\mathbf{F}_\de(z)=0$ and $\mathbf{F}_{i\de}(z)=0$.

\begin{lemma} \label{zeroint}
Suppose $\Omega_0$ is a bounded simply connected domain in $\C$ with two points $a$ and $b$ on its boundary. Assume that $(\Omega_\de^\diamond,a_\de^\diamond,b_\de^\diamond)$ is a family of Dobrushin domains converging to $(\Omega_0,a,b)$ in the Carath\'eodory sense as $\de\to 0$. Under {\it Condition} $\mathrm{C}$,
\beq
\lim_{\de \to 0}\mathbf{F}_\de(z)=0, \ \lim_{\de \to 0}\mathbf{F}_{i\de}(z)=0
\eeq
uniformly over $z\in V_{\hat\Omega_\de}$.
\end{lemma}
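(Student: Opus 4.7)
The plan is to recognize $\mathbf{F}_\de$ as the solution of a discrete Poisson equation with Dirichlet boundary data and then invoke Proposition~\ref{sum-int-lap}. First I would condition on the first step of the random walk: writing $v$ for the center of the edge $[z,z']$,
$$\mathbf{F}_\de(z)=\frac14\sum_{z'\sim z}\big[\de^{-1}\mathrm{F}(v)^3(z'-z)+\mathbf{F}_\de(z')\big].$$
Identifying $\be_1=1$ and $\be_2=i$ as elements of $\C$, the inhomogeneous term on the right is precisely $\tfrac14\Delta\mathrm{F}^3(z)$ in the notation of Proposition~\ref{sum-int-lap}, so $\mathbf{F}_\de$ satisfies the discrete Poisson equation
$$\frac14\sum_{z'\sim z}\mathbf{F}_\de(z')-\mathbf{F}_\de(z)=-\tfrac14\Delta\mathrm{F}^3(z),\qquad z\in V_{\hat\Omega_\de}\setminus\p\hat\Omega_\de,$$
together with the Dirichlet boundary condition $\mathbf{F}_\de\equiv 0$ on $\p\hat\Omega_\de$.

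Optional stopping applied to the martingale $\mathbf{F}_\de(\mathrm{S}_k)+\frac14\sum_{j<k}\Delta\mathrm{F}^3(\mathrm{S}_j)$ at time $\tau$ then yields the Green's function representation
$$\mathbf{F}_\de(z)=\frac14\sum_{w\in V_{\hat\Omega_\de}\setminus\p\hat\Omega_\de}G_{\hat\Omega_\de}(z,w)\,\Delta\mathrm{F}^3(w),$$
where $G_{\hat\Omega_\de}(z,w)=\Ex_z[\#\{0\leq j<\tau:\mathrm{S}_j=w\}]$. Since $\Omega_0$ is bounded, standard estimates for two-dimensional simple random walk killed on exiting a bounded domain (as recorded in the appendix) give $G_{\hat\Omega_\de}(z,w)\preceq \log\de^{-1}$ uniformly in $z,w\in V_{\hat\Omega_\de}$. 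Combined with the bound $\sum_w|\Delta\mathrm{F}^3(w)|\preceq \de^{2\ep/3-\varepsilon}r_\de^{-6}$ from Proposition~\ref{sum-int-lap} and the relation $r_\de=(\log\de^{-1})^{-1}$, I obtain
$$|\mathbf{F}_\de(z)|\preceq \log\de^{-1}\cdot\de^{2\ep/3-\varepsilon}(\log\de^{-1})^6=\de^{2\ep/3-\varepsilon}(\log\de^{-1})^7\to 0,$$
uniformly in $z\in V_{\hat\Omega_\de}$, because $2\ep/3-\varepsilon>0$ thanks to the standing assumption $\varepsilon<\ep/100$. The identical argument applied to $\mathrm{F}_i$, for which Proposition~\ref{sum-int-lap} yields the same bound on $\sum|\Delta\mathrm{F}_i^3|$, handles $\mathbf{F}_{i\de}$.

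The hard part will already have been completed by Proposition~\ref{sum-int-lap}; once the Poisson equation is written down, what remains is the mechanical step of inverting the discrete Laplacian against a small source via the Green's function. The only point requiring mild care is the uniform logarithmic bound on $G_{\hat\Omega_\de}$ near the two excised squares $\mathrm{SQ}_1,\mathrm{SQ}_2$ that were removed from $\Omega_\de^{in}$, but since these are microscopic perturbations the bound follows from a direct domination argument or an application of the appendix estimates for two-dimensional random walks.
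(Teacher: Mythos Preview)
Your proof is correct and follows essentially the same approach as the paper: both condition on the first step of the walk to identify $\Delta\mathbf{F}_\de(z)$ with $\tfrac14\Delta\mathrm{F}^3(z)$, invoke the martingale/Green's function representation (the paper cites Theorem~1.4.6 of \cite{Law91}), bound $G_{\hat\Omega_\de}\preceq\log\de^{-1}$, and apply Proposition~\ref{sum-int-lap}. Your concern about the excised squares $\mathrm{SQ}_1,\mathrm{SQ}_2$ is unnecessary, since removing regions from the domain only decreases the Green's function by monotonicity.
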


\begin{proof}
Let $z_1=z+\de$, $z_2=z+i\de$, $z_3=z-\de$, $z_4=z-i\de$. Conditioned on $\mathrm{S}_1$, one has
\beq
\Delta \mathbf{F}_\de(z)&=&\frac 14\sum_{j=1}^4 \big(\mathbf{F}_\de(z_j)-\mathbf{F}_\de(z)\big)\\
&=&\frac 14\sum_{j=1}^4 \mathrm{F}(v_j)^3(z_j-z)\de^{-1},
\eeq
where $v_j=(z+z_j)/2$. Using \eqref{sum-of-lap}, one has
\beqn
\sum_{z\in V_{\hat \Omega_\de}}|\Delta \mathbf{F}_\de(z)|\preceq \de^{2\ep/3-\varepsilon} r_\de^{-6}. \label{Lapest}
\eeqn

This   implies that $\mathbf{F}_\de(z)\to 0$ uniformly for $z\in V_{\hat\Omega_\de}$ since
$$\mathbf{F}_\de(z)=\Ex\big(\mathbf{F}_\de(\mathrm{S}_{\tau})-\sum_{j=0}^{\tau-1}\Delta \mathbf{F}_\de(\mathrm{S}_j)|\mathrm{S}_0=z\big)$$ by Theorem 1.4.6 in~\cite{Law91}, and the expected number of visits to any point in $V_{\hat\Omega}$ by a simple random walk starting from $z$ is of the order $\log \de^{-1}$ before the random walk leaves the disk with center $z$ and radius any fixed positive constant.

The proof for $\mathbf{F}_{i\de}(z)$ is the same, so we omit the details.
\end{proof}

Now let $\mathcal{C}$ be a path consisting of complex numbers $z_{0}\sim z_{1}\sim \cdots\sim z_{n}$ in the lattice $\Omega_{\de}$ such that the path $\{z_{0}, \cdots,z_{n}\}$ is edge-avoiding, $|z_{j+1}-z_{j}|=\de$,  $v_{j}$ denotes the center of $[z_{j}, z_{j+1}]$, $\dist(z_{0},\mathrm{L}_{\mathrm{n}^*}^*)=\de/2$, $z_{n}\in \mathrm{L}_{1}$ and $\dist(\mathcal{C}, \p B(b_\de;\de^{1/3}))\leq 5\de$.  Define
\beqn
\mathrm{c}_\de=\Im \Big(\int_{\mathcal{C}\cap\hat\Omega_\de}\de^{-1}\mathrm{F}(z)^3dz\Big), \ \mathrm{c}_{i\de}=\Im \Big(\int_{\mathcal{C}\cap\hat\Omega_\de}\de^{-1}\mathrm{F}_i(z)^3dz\Big), \label{c-de-ci}
\eeqn
where $\int_{\mathcal{C}\cap\hat\Omega_\de}$ means that the integration path consists of $\{z_{j_1}, \cdots,z_{j_2}\}\subseteq \mathcal{C}\cap\hat\Omega_\de$ such that $z_{j_1-1}\notin \mathcal{C}\setminus\hat\Omega_\de$ and $z_{j_2+1}\notin \mathcal{C}\setminus\hat\Omega_\de$.

We assume that
$$\lim_{\de_n\to 0} \mathrm{c}_{\de_n}=\mathrm{c}, \ \lim_{\de_n\to 0} \mathrm{c}_{i\de_n}=\mathrm{c}_i,
$$ where $\{\de_n, n\geq 1\}$ is a sequence tending to zero. To simplify the notation, we will still use $\de$ to denote $\de_n$. Now we assume that
$$|\mathrm{c}|+|\mathrm{c}_i|\not=\infty.$$ We will prove that  $|\mathrm{c}|+|\mathrm{c}_i|\not=\infty$ later.

Now we can state our first result on conformal invariance of the $2-d$ critical bond percolation on the square lattice.

\begin{proposition} \label{conformal1}
Suppose $\Omega_0$ is a bounded simply connected domain in $\C$ with two points $a$ and $b$ on its boundary. Assume that $(\Omega_\de^\diamond,a_\de^\diamond,b_\de^\diamond)$ is a family of Dobrushin domains converging to $(\Omega_0,a,b)$ in the Carath\'eodory sense as $\de\to 0$.
Let $\mathcal{P}$ be any discrete path consisting of complex numbers $z_0\sim z_1\sim z_2\cdots\sim z_n$ in the lattice $\hat\Omega_\de$ such that the path $\{z_0,z_1,\cdots,z_n\}$ is edge-avoiding, $z_0=\mathrm{z}$, $|z_j-z_{j-1}|=\de$, $z_j\not\in \p\hat\Omega_\de$ for $1\leq j\leq n-1$, $z_n\in \p^{up}$. Then under {\it Condition} $\mathrm{C}$ and the assumption that $|\mathrm{c}|+|\mathrm{c}_i|\not=\infty$, as $\de\to 0$,
\beq
\Im \int_{\mathcal{P}}\de^{-1}\mathrm{F}(z)^3dz \to \Im \mathrm{c}\big(1- \mathbf{F}(\mathrm{z})\big),\\
\Im \int_{\mathcal{P}}\de^{-1}\mathrm{F}_i(z)^3dz \to \Im \mathrm{c}_i\big(1- \mathbf{F}(\mathrm{z})\big)
\eeq
uniformly on any compact subset of $\Omega_0$, where $\mathbf{F}$ is any conformal map from $\Omega_0$ to $\R\times (0,1)$ mapping $a$ to $-\infty$ and $b$ to $\infty$.
\end{proposition}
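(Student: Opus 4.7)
The plan is to study a discrete primitive of $\de^{-1}\mathrm{F}(z)^3\,dz$ and identify it as a discrete harmonic function whose boundary data encode the constant $\mathrm{c}$. Fix a reference vertex $\mathrm{z}_0\in \p^{lo}$ and define, for $z\in V_{\hat\Omega_\de}$,
\begin{equation*}
H_\de(z):=\Im\int_{\mathrm{z}_0\to z}\de^{-1}\mathrm{F}(w)^3\,dw,
\end{equation*}
where the integration path is any edge-avoiding path in $\hat\Omega_\de$ from $\mathrm{z}_0$ to $z$. Any two such paths differ by a union of elementary unit squares inside $\hat\Omega_\de$; by \eqref{sum-of-int} the total signed contribution of these squares is $O(\de^{2\ep/3-\varepsilon}r_\de^{-6})=o(1)$, so $H_\de$ is unambiguous up to a vanishing error. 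The same argument shows that the full integral along $\mathcal{C}$ in the definition of $\mathrm{c}_\de$ may be freely deformed to end at any point of $\p^{up}$ with error $o(1)$, provided the near-corner pieces near $a_\de,b_\de$ are routed through the rectangles $\mathrm{SQ}_1,\mathrm{SQ}_2$ that were excised from $\Omega_\de^{in}$ to form $\hat\Omega_\de$.

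Next I show $H_\de$ is approximately discrete harmonic with controlled boundary values. The discrete Laplacian $\Delta H_\de(z)$ is $\Im$ of the contour integral $\frac14\sum_{j=1}^4\mathrm{F}(v_j)^3(z_j-z)\de^{-1}$ around a unit square at $z$, so \eqref{sum-of-lap} gives
\begin{equation*}
\sum_{z\in V_{\hat\Omega_\de}}|\Delta H_\de(z)|\preceq \de^{2\ep/3-\varepsilon}r_\de^{-6}\longrightarrow 0.
\end{equation*}
On $\p^{lo}$, Proposition \ref{boundofin} yields $|H_\de|\preceq\de^{\ep-\varepsilon}/r_\de^{4}\to 0$. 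On $\p^{up}$, combining Proposition \ref{boundofin} with the path-deformation observation above yields $H_\de=\mathrm{c}_\de+o(1)$. (The last piece is where the hypothesis $|\mathrm{c}|+|\mathrm{c}_i|\neq\infty$ enters: it guarantees that $\mathrm{c}_\de$ stays bounded along the chosen subsequence so the boundary data of $H_\de$ are themselves bounded.)

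By the optional-stopping representation $H_\de(z)=\Ex[H_\de(\mathrm{S}_\tau)\mid \mathrm{S}_0=z]-\Ex\big[\sum_{j=0}^{\tau-1}\Delta H_\de(\mathrm{S}_j)\mid \mathrm{S}_0=z\big]$ together with the $O(\log\de^{-1})$ bound on the expected number of visits of a simple random walk to any interior vertex before leaving a macroscopic neighbourhood (cf.\ Theorem~1.4.6 of \cite{Law91}), the last sum is $o(1)$ uniformly in $z$. Hence the boundary-value problem is asymptotically standard: $H_\de$ converges uniformly on compact subsets of $\Omega_0$ to the bounded harmonic function $\widetilde H$ on $\Omega_0$ with boundary value $0$ on the $\p^{lo}$-side of $\p\Omega_0$ and $\mathrm{c}$ on the $\p^{up}$-side. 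Conformal invariance of harmonic measure, together with the normalization $\mathbf{F}\colon\Omega_0\to\R\times(0,1)$ sending $a\to-\infty$, $b\to\infty$, identifies $\widetilde H(z)$ with $\Im[\mathrm{c}\,\mathbf{F}(z)]$ up to the additive constant fixed by the boundary values.

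To conclude, the path $\mathcal{P}$ joins $\mathrm{z}$ to $\p^{up}$, so concatenating $\mathcal{P}$ with a path from $\mathrm{z}_0$ to $\mathrm{z}$ produces a path from $\p^{lo}$ to $\p^{up}$ whose integral is $\mathrm{c}_\de+o(1)$. Consequently
\begin{equation*}
\Im\int_{\mathcal{P}}\de^{-1}\mathrm{F}(z)^3\,dz=\mathrm{c}_\de-H_\de(\mathrm{z})+o(1)\longrightarrow \Im\big[\mathrm{c}\bigl(1-\mathbf{F}(\mathrm{z})\bigr)\big],
\end{equation*}
uniformly on compact subsets of $\Omega_0$. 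The $\mathrm{F}_i$ statement follows from the identical argument applied to $\mathrm{F}_i$, using the analogues in Propositions \ref{F12}, \ref{TranF}, \ref{sum-int-lap}, Lemma \ref{initial-b} and Proposition \ref{boundofin} that were stated simultaneously for $\mathrm{F}$ and $\mathrm{F}_i$. The main obstacle is the propagation of boundary estimates: the near-boundary bounds in Lemma \ref{initial-b} degrade as $d_v\to 0$, so the harmonicity, the boundary-value identification on $\p^{lo}\cup\p^{up}$, and the path-deformation argument all rely essentially on having already cut out the thin collar $\Omega_\de\setminus\hat\Omega_\de$ and having the refined control of the square-by-square error provided by \eqref{sum-of-int}--\eqref{sum-of-lap}.
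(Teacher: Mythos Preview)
Your strategy matches the paper's: build a near-harmonic primitive, pin down its boundary values on the two arcs, and read off the limit via harmonic measure. Two steps, however, are where the paper invests its real effort, and your sketch skips them.

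The more serious omission is the exceptional boundary $\p^{ex}=\p\hat\Omega_\de\setminus(\p^{up}\cup\p^{lo})$, i.e.\ the sides of the excised squares $\mathrm{SQ}_1,\mathrm{SQ}_2$ near $a_\de,b_\de$. You determine $H_\de$ only on $\p^{lo}$ and $\p^{up}$ and then assert that the boundary data are bounded, invoking $|\mathrm{c}|<\infty$. But the paper's own estimate \eqref{int-lk-1} shows that along the innermost segments $\mathbf{l}_k$ of $\p^{ex}$ the quantity $\int\de^{-1}|\Im\mathrm{F}^3|$ accumulates to order $\de^{-1/6-\ep}\log^6\de^{-1}$, so $H_\de$ is \emph{not} uniformly bounded on $\p^{ex}$ and your optional-stopping reduction to a bounded Dirichlet problem breaks down. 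The paper resolves this not by bounding the primitive on $\p^{ex}$ but by pairing the possibly large integral there against the small hitting probability of $\p^{ex}$: this is precisely the content of \eqref{int-lk-1}--\eqref{exboundary} together with the harmonic-measure bounds of Lemma~\ref{dhm}, and it has no counterpart in your argument.

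A second, more technical, point is the well-definedness of $H_\de$. You claim any two edge-avoiding paths differ by ``a union of elementary unit squares'' and invoke \eqref{sum-of-int}. But the closed curve $P_1*\bar P_2$ contributes $\sum_\square w_\square\int_\square$ with $w_\square$ the winding number, and two simple arcs can interlace so that $w_\square$ is large (a lattice spiral gives winding of order $\de^{-1}$), swamping the $\de^{2\ep/3-\varepsilon}r_\de^{-6}$ saving from \eqref{sum-of-int}. The paper sidesteps this by not defining a primitive at all: it works with $\mathbf{F}_\de(z)=\Ex\int_{z\to\mathrm{S}_\tau}\de^{-1}\mathrm{F}^3\,dz$ along the simple random walk (well-defined by construction, with Laplacian exactly the unit-square contour), and passes to loop-erased paths only through the random-walk winding bound of Lemma~\ref{winding}; see \eqref{looperase}--\eqref{difloop}.
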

\begin{proof}
Let $\mathrm{S}=\{\mathrm{S}_0,\cdots,\mathrm{S}_\tau\}$ be the simple random walk path from $\mathrm{S}_0=\mathrm{z}$ to $\p\hat\Omega_\de$, and $L\mathrm{S}$ the loop erased random walk path derived from $\mathrm{S}$. One can refer to Section 7.2 in~\cite{Law91} for the construction of $L\mathrm{S}$. So $L\mathrm{S}$ is an edge-avoiding path form $\mathrm{z}$ to $\mathrm{S}_\tau$. Now we replace every $\mathrm{S}$ in the definition of $\mathbf{F}_\de(\mathrm{z})$ by $L\mathrm{S}$. We have the following estimate for the resulted difference after changing $\mathrm{S}$ to $L\mathrm{S}$,
\beqn
&&|\mathbf{F}_\de(\mathrm{z})-\Ex \int_{L\mathrm{S}}\de^{-1}\mathrm{F}(z)^3dz| \nonumber\\
&\preceq& \sup_{z_1\in V_{\hat\Omega_\de}}\Ex \sup_{0\leq n\leq \tau} |\Theta(n,z_1;\mathrm{S})| \sum_{\mathbf{z}_1\mathbf{z}_2\mathbf{z}_3\mathbf{z}_4\subseteq \hat\Omega_\de}|\int_{\mathbf{z}_1\mathbf{z}_2\mathbf{z}_3\mathbf{z}_4}\de^{-1}\mathrm{F}(z)^3dz|, \label{looperase}
\eeqn
where $\mathbf{z}_1\mathbf{z}_2\mathbf{z}_3\mathbf{z}_4$ is any quare with side length $\de$ inside $\hat\Omega_\de$, $\mathbf{z}_j\in V_{\hat\Omega_\de}$ for $1\leq j\leq 4$,
$\Theta(n,z_1;\mathrm{S})$ is  the total angle wound by $\mathrm{S}$ around $z_1$ up to time $n$. By Lemma \ref{winding} in the appendix, we have
\beqn
\Ex \sup_{0\leq n\leq \tau} |\Theta(n,z_1;\mathrm{S})|&=&\sum_{k=0}^\infty \Ex \sup_{0\leq n\leq \tau} |\Theta(n,z_1;\mathrm{S})|\I(k^5\de^{-5}\leq \tau <(k+1)^5\de^{-5}) \nonumber\\
&\leq&\sum_{k=0}^\infty\Big(\Ex \sup_{0\leq n\leq (k+1)^5\de^{-5}} |\Theta(n,z_1;\mathrm{S})|^2\Big)^{1/2}\Pro^{1/2}(\tau\geq k^5\de^{-5})\nonumber\\
&\preceq&\log \de^{-1}+\sum_{k=1}^\infty \log\big((k+1)\de^{-1}\big)\big(\frac{\Ex \tau}{k^5\de^{-5}}\big)^{1/2} \nonumber\\
&\preceq&\log \de^{-1}, \label{windexp}
\eeqn
where in the last step we have used the fact that $\Ex\tau\preceq \de^{-2}$. By \eqref{sum-of-int}, we have
\beqn
\sum_{\mathbf{z}_1\mathbf{z}_2\mathbf{z}_3\mathbf{z}_4\subseteq \hat\Omega_\de}|\int_{\mathbf{z}_1\mathbf{z}_2\mathbf{z}_3\mathbf{z}_4}\de^{-1}\mathrm{F}(z)^3dz| \preceq  \de^{2\ep/3-\varepsilon}r_\de^{-6}. \label{sqsum}
\eeqn
It follows from \eqref{looperase}, \eqref{windexp} and \eqref{sqsum}  that
\beqn
|\mathbf{F}_\de(\mathrm{z})-\Ex \int_{L\mathrm{S}}\de^{-1}\mathrm{F}(z)^3dz| \preceq \de^{2\ep/3-\varepsilon}r_\de^{-7}.  \label{difloop}
\eeqn
Now let $\mathrm{S}^{up}=\{\mathrm{S}^{up}_n, n\geq 0\}$ be a simple random walk path in $\hat\Omega_\de$ such that $\mathrm{S}^{up}_0=\mathrm{z}$, $\mathrm{S}^{up}_{\tau^{up}}\in \p^{up}$, where $\tau^{up}=\inf\{j\geq 0: \mathrm{S}^{up}_j\in\p\hat\Omega_\de\}$. And let $\mathrm{S}^{lo}=\{\mathrm{S}^{lo}_n, n\geq 0\}$ be a simple random walk path in $\hat\Omega_\de$ such that $\mathrm{S}^{lo}_0=\mathrm{z}$, $\mathrm{S}^{lo}_{\tau^{lo}}\in\p^{lo}$, where $\tau^{lo}=\inf\{j\geq 0: \mathrm{S}^{lo}_j\in\p\hat\Omega_\de\}$. It follows from Proposition \ref{boundofin} and Proposition \ref{sum-int-lap} that
\beq
\Im\big(\int_{L\mathrm{S}^{up}}\de^{-1}\mathrm{F}(z)^3dz-\int_{L\mathrm{S}^{lo}}\de^{-1}\mathrm{F}(z)^3dz\big) \rightarrow \mathrm{c}
\eeq
as $\de \to 0$.
Now let $\mathrm{S}^{ex}=\{\mathrm{S}^{ex}_n, n\geq 0\}$ be a simple random walk path in $\hat\Omega_\de$ such that $\mathrm{S}^{ex}_0=\mathrm{z}$, $\mathrm{S}^{ex}_{\tau^{ex}}\in \p^{ex}:=\p\hat\Omega_\de\setminus(\p^{up}\cup\p^{lo})$, where $\tau^{ex}=\inf\{j\geq 0: \mathrm{S}^{ex}_j\in\p\hat\Omega_\de\}$. Without loss of generality, assume that $\mathrm{S}^{ex}_{\tau^{ex}}\in \mathbf{l}_k$, where $\mathbf{l}_k$ is the line segment with two ends $ z_{\ep,1/2}'+ik[\de^{-1/2-\ep}]\de$ and $z_{\ep,1/2}'+i(k+1)[\de^{-1/2-\ep}]\de$ for $0\leq k\leq [\de^{2\ep-1/2}]$. One can refer to Figure \ref{boundab}.

Now we set $a_\de$ as the origin of $\C^2$, and let $\mathrm{v}_\mathrm{n}$, the other end of $\mathrm{L}_\mathrm{n}$, be in the positive imaginary axis.
From Proposition \ref{ref-inv}, we know that for $v\in \cup_{k=0}^{[\de^{2\ep-1/2}]}\mathbf{l}_k $, $\mathrm{F}(v)$ is the sum of $r_v$ such that $\Im r_v^3=0$ and $r_v\preceq (\de/x_v)^{1/3}\log^2 \de$ by \eqref{Finboun-bM}, and one error bound with the order $(\de/y_v)^{1+\al}+(\de/x_v)^\al(x_v/y_v)^{2-\varepsilon}+(\de/x_v)^{2-\varepsilon}(x_v/y_v)^{1/3}$, where $x_v=\Re v$, $y_v=\Im v$. We will split $\cup_{k=0}^{[\de^{2\ep-1/2}]}\mathbf{l}_k $ into two parts, $\cup_{k=0}^{[\de^{\ep-1/6}]}\mathbf{l}_k $ and $\cup_{k=\de^{\ep-1/6}}^{[\de^{2\ep-1/2}]}\mathbf{l}_k $ since the line integrals over these two parts have different estimates. For the first part, when $v\in \cup_{k=0}^{[\de^{\ep-1/6}]}\mathbf{l}_k$, it follows from \eqref{Finboun-bM} that $|\mathrm{F}(v)| \preceq (\de/x_v)^{1/3}\log^2\de$. Hence
\beqn
&&\int_{\cup_{k=0}^{[\de^{\ep-1/6}]}\mathbf{l}_k} \de^{-1} |\Im \mathrm{F}(z)^3| dz \nonumber\\
&\preceq & \sum_{ \de^{1/2-\ep}\leq y_v\leq \de^{1/3}}  \de^{1/2-\ep}\log^6\de\nonumber\\
&\preceq& \de^{-1/6-\ep}\log^6 \de. \label{int-lk-1}
\eeqn
For the second part,
\beqn
&&\int_{\cup_{k=[\de^{\ep-1/6}]}^{[\de^{2\ep-1/2}]}\mathbf{l}_k} \de^{-1} |\Im \mathrm{F}(z)^3| dz \nonumber\\
&\preceq & \sum_{ \de^{1/3}\leq y_v\leq \de^{\ep}}  \big((\de/y_v)^{1+\al}+(\de/x_v)^\al(x_v/y_v)^{2-\varepsilon}+(\de/\de^{1/2+\ep})^{2-\varepsilon}(\de^{1/2+\ep}/y_v)^{1/3}\big)(\de^{1/2-\ep})^{2/3}\log^4\de\nonumber\\
&\preceq& \de^{\al/2}, \label{int-lk-2}
\eeqn
which combined with Proposition \ref{sum-int-lap} also shows that
\beqn
\mathrm{c}_\de-\Im \int_{\mathcal{P}}\de^{-1}\mathrm{F}_\de(z)^3dz \to 0 \label{same-integral}
\eeqn
as $\de\to 0$ if $z_n\in\cup_{k=[\de^{\ep-1/6}]}^{[\de^{2\ep-1/2}]}\mathbf{l}_k$ .
It follows from \eqref{sum-of-int}, \eqref{int-lk-1}, \eqref{int-lk-2}, and \eqref{hm4} of Lemma \ref{dhm} in the appendix
\beqn
&&|\Ex\Im \int_{L\mathrm{S}^{ex}} \de^{-1}\mathrm{F}(z)^3\I(\mathrm{S}^{ex}_{\tau^{ex}}\in \cup_{k=0}^{[\de^{\ep-1/6}]}\mathbf{l}_k) dz-\Im \int_{\mathcal{P}_k}\de^{-1}\mathrm{F}(z)^3 dz\Pro(\mathrm{S}^{ex}_{\tau^{ex}}\in \cup_{k=0}^{[\de^{\ep-1/6}]}\mathbf{l}_k)|\nonumber\\
&\preceq&\de^{2\ep/3-\varepsilon}r_\de^{-7}. \label{El-1}
\eeqn
where $\mathcal{P}_k$ is any self-avoiding path in $\Omega_\de^{in}\setminus(\mathrm{sq}_1\cup\mathrm{sq}_2)$ from $\mathrm{z}$ to $z_{\mathrm{n},\ep}'$.

In addition, if $\mathrm{S}^{ex}_{\tau^{ex}}\in \mathfrak{l}_1=\{z: \Im z=[\de^{-1/2-\ep}]\de, [\de^{-1/2+\ep}]\de\leq \Re z\leq [\de^{-1/2-\ep}]\de\}$, we begin with the estimation of  $\int_{\mathfrak{l}_1}\de^{-1}F(z)^3dz$.
By \eqref{Finboun-b}, we have
\beq
| \int_{\mathfrak{l}_1}\de^{-1}F(z)^3dz|
&\preceq&  \sum_{x=[\de^{-1/2+\ep/2}]\de}^{[\de^{-1/2-\ep}]\de}(\frac{\de}{x})r_\de^{-6}\\
&\preceq&r_\de^{-7}.
\eeq
Similarly,
\beq
| \int_{\mathfrak{l}_1}\de^{-1} F^*(z)^3dz|
\preceq r_\de^{-7}.
\eeq
Hence by Proposition \ref{F12},
\beq
| \int_{\mathfrak{l}_1}\de^{-1}\mathrm{F}(z)^3dz| \preceq | \int_{\mathfrak{l}_1}\de^{-1}\big(F(z)+F^*(z)+(\de/\Re z)^{2-\varepsilon}\big)^3dz|\preceq r_\de^{-7},
\eeq
which combined with \eqref{sum-of-int}, \eqref{int-lk-1}, \eqref{int-lk-2}, and Lemma \ref{dhm} in the appendix,
\beqn
&&|\Ex\Im \int_{L\mathrm{S}^{ex}} \de^{-1}\mathrm{F}(z)^3\I(\mathrm{S}^{ex}_{\tau^{ex}}\in \mathfrak{l}_1) dz-\Im \int_{\mathcal{P}_k}\de^{-1}\mathrm{F}(z)^3 dz\Pro(\mathrm{S}^{ex}_{\tau^{ex}}\in \mathfrak{l}_1)|\nonumber\\
&\preceq&\de^{2\ep/3-\varepsilon}r_\de^{-7}. \label{El1}
\eeqn
where $\mathcal{P}_k$ any self-avoiding path in $\Omega_\de^{in}\setminus(\mathrm{sq}_1\cup\mathrm{sq}_2)$ from $\mathrm{z}$ to $z_{\mathrm{n},\ep}'$.
After putting \eqref{El-1} and \eqref{El1} together, we have
\beqn
&&|\Ex\Im \int_{L\mathrm{S}^{ex}} \de^{-1}\mathrm{F}(z)^3\I\big(\mathrm{S}^{ex}_{\tau^{ex}}\in \mathfrak{l}_1\cup (\cup_{k=1}^{[\de^{2\ep-1/2}]}\mathbf{l}_k  \big) dz\nonumber\\
&& \qquad -\Im \int_{\mathcal{P}_k}\de^{-1}\mathrm{F}(z)^3 dz\Pro\big(\mathrm{S}^{ex}_{\tau^{ex}}\in \mathfrak{l}_1\cup (\cup_{k=1}^{[\de^{2\ep-1/2}]}\mathbf{l}_k)\big)|\nonumber\\
&\preceq&\de^{2\ep/3-\varepsilon}r_\de^{-7}. \label{exboundary1}
\eeqn
For other part of $\p^{ex}$, we have the same estimate as in \eqref{exboundary1}. Hence
\begin{equation}  \label{exboundary}
|\Ex\Im \int_{L\mathrm{S}^{ex}} \de^{-1}\mathrm{F}(z)^3\I(\mathrm{S}^{ex}_{\tau^{ex}}\in\p^{ex})-\Im \int_{\mathcal{P}_k}\de^{-1}\mathrm{F}(z)^3 dz\Pro(\mathrm{S}^{ex}_{\tau^{ex}}\in\p^{ex})|\preceq \de^{2\ep/3-\varepsilon}r_\de^{-7}.
\end{equation}
Combining Lemma \ref{zeroint}, \eqref{difloop}, \eqref{same-integral}, \eqref{exboundary} and the fact that
$$
\Im \int_{\mathcal{P}}\de^{-1}\mathrm{F}(z)^3dz-\Im \int_{\mathcal{P}_k}\de^{-1}\mathrm{F}(z)^3 dz\to 0
$$
as $\de\to 0$, we have
\beq
&&\big(\Im \int_{\mathcal{P}}\de^{-1}\mathrm{F}(z)^3dz+o(1)\big)\Pro(\mathrm{S}_\tau\in\p^{up})+\big(-\mathrm{c}+\Im \int_{\mathcal{P}}\de^{-1}\mathrm{F}(z)^3dz+o(1)\big)\Pro(\mathrm{S}_\tau\in\p^{lo})\\
&&\qquad \qquad+\Im \int_{\mathcal{P}}\de^{-1}\mathrm{F}(z)^3dz\cdot o(1)=o(1)
\eeq
as $\de\to 0$. Since $\Pro(\mathrm{S}_\tau\in\p^{lo})$ converges to the harmonic measure of the real line from $\mathbf{F}(\mathrm{z})$ in $\R\times (0,1)$, which is just equal to $\Im \big(1-\mathbf{F}(\mathrm{z})\big)$ by optional sampling theorem, we can conclude the proof for $\mathrm{F}$. Note that the same proof works for $\mathrm{F}_i$, so we can omit the details.
\end{proof}

\subsection{Convergence of $\mathrm{F}$ inside $\Omega_0$}
Our next target is to remove the logarithmic term in Lemma \ref{initial-b}.
Let $z_1\in V_{\Omega_\de}$ such that $\liminf_{\de\to 0}\dist(z_1,\p\Omega_\de)/\de>0$, $z_2=z_1+\de \be$ with $\be=\be_1$ or $\be_2$ (in terms of complex number, $\be=1$ or $i$), and $v_1=(z_1+z_2)/2$. Define
 $$d_1=\dist(z_1,\p\Omega_\de), \ \mathbf{n}_1=[\log(0.5d_1\de^{-1})/\log 2].$$
We first prove that $\de^{-1/3}\mathrm{F}(v)$ and $\de^{-1/3}\mathrm{F}_i(v)$ are uniformly bounded.
 \begin{lemma} \label{Fimboun}
 Under {\it Condition} $\mathrm{C}$ and the assumption that $|\mathrm{c}|+|\mathrm{c}_i|\not=\infty$,
 $$
 |\de^{-1/3}\mathrm{F}(v)|\preceq 1, \ |\de^{-1/3}\mathrm{F}_i(v)|\preceq 1
 $$
 uniformly over $v\in V_{\Omega_\de^\diamond}\cap \mathcal{K}$ for any compact subset $\mathcal{K}$ of $\Omega_0$.
 \end{lemma}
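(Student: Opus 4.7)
The plan is to upgrade the $(\log \de^{-1})^2$-weighted bound of Lemma \ref{initial-b} to a sharp pointwise bound by replacing the primitive $\mathbb{F}$ of $\de^{-1/3}F$ (used in the proof of Lemma \ref{initialb}) with the primitive of the one-form $\de^{-1}\mathrm{F}^3\,dz$. The crucial gain is that, by Proposition \ref{conformal1} together with the subsequence hypothesis $|\mathrm{c}|+|\mathrm{c}_i|\neq\infty$, this new primitive has boundary values bounded by an absolute constant, not by $(\log\de^{-1})^2$.

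Concretely, fix a base vertex $\mathrm{z}_0\in V_{\hat\Omega_\de}$ and define
\[
H_\de(z) := \int_{\mathrm{z}_0 \to z} \de^{-1}\mathrm{F}(w)^3\,dw, \qquad z \in V_{\hat\Omega_\de},
\]
along any lattice path. By Proposition \ref{sum-int-lap}, $H_\de$ is path-independent up to an error of order $\de^{2\ep/3-\varepsilon}r_\de^{-6} = o(1)$, and its discrete Laplacian satisfies $\sum_{z}|\Delta H_\de(z)| = o(1)$. Next I show $\|H_\de\|_\infty \preceq 1$ on $V_{\hat\Omega_\de}$. Proposition \ref{boundofin} gives $|\Im H_\de| = o(1)$ along each arc of $\p\hat\Omega_\de$, and Proposition \ref{conformal1} pins the resulting constants on $\p^{up}$ and $\p^{lo}$ to the bounded values $0$ and $-\mathrm{c}$ respectively; so $\Im H_\de$ is bounded on the boundary, and then (via the simple-random-walk representation combined with the Laplacian estimate) on all of $V_{\hat\Omega_\de}$. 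For $\Re H_\de$ the same argument is rerun, either by integrating $i\mathrm{F}^3$ in place of $\mathrm{F}^3$ or by applying the analogue of Proposition \ref{conformal1} to paths terminating on the vertical and corner arcs of $\p\hat\Omega_\de$ (the segments $\mathbf{l}_k$ and $\mathfrak{l}_k$ near $a_\de, b_\de$), combined with the parallel boundary-integral estimates from the proof of Proposition \ref{boundofin}.

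Finally, for $v \in V_{\Omega_\de^\diamond}\cap\mathcal{K}$ let $z_1, z_2 = z_1+\de\be$ be the two endpoints of $E_v$ and set $d := \dist(v,\p\hat\Omega_\de)$, which is bounded below by some $\eta > 0$ on $\mathcal{K}$. By construction $H_\de(z_2) - H_\de(z_1) = \mathrm{F}(v)^3\,\be$. Applying the discrete gradient estimate of Lemma \ref{asyhar} in the appendix to the uniformly bounded, almost-harmonic function $H_\de$ on the disk $B(v,d/2)$ yields
\[
|H_\de(z_2)-H_\de(z_1)| \;\preceq\; \de\bigl(\|H_\de\|_\infty/d + \textstyle\sum_z|\Delta H_\de(z)|\bigr) \;\preceq\; \de,
\]
whence $|\mathrm{F}(v)|^3 \preceq \de$, i.e. $|\de^{-1/3}\mathrm{F}(v)| \preceq 1$. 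The identical three-step argument carried out with $\mathrm{F}_i$ and $\mathrm{c}_i$ in place of $\mathrm{F}$ and $\mathrm{c}$---using the $\mathrm{F}_i$-versions of Propositions \ref{F12}, \ref{boundofin}, \ref{sum-int-lap}, and \ref{conformal1} that are all asserted in the text---yields the corresponding bound for $\mathrm{F}_i$.

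The main obstacle is the step controlling $\Re H_\de$, since Proposition \ref{conformal1} only addresses $\Im H_\de$ along paths hitting $\p^{up}$. I expect the cleanest resolution is to repeat the loop-erased-random-walk argument of Proposition \ref{conformal1} verbatim with $\mathrm{F}^3$ replaced by $i\mathrm{F}^3$; this reduces the question to showing that the real part of the boundary integrals in Proposition \ref{boundofin} is also $o(1)$ along each arc, which in turn follows from the same trichotomy of near-boundary cases worked out in Section \ref{bpro} together with the observable bound \eqref{Finboun-imp}.
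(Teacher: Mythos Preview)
Your proposal has a genuine gap, and it is not the one you flag. The application of Lemma~\ref{asyhar} to $H_\de$ is not justified: since $\de^{-1}\mathrm{F}^3\,dz$ is not a closed discrete one-form, $H_\de$ is only well-defined up to $o(1)$, and for \emph{any} fixed choice of defining paths the Laplacian $\Delta H_\de(z)$ picks up fundamental-cycle contributions $\oint_{C_{z,j}}\de^{-1}\mathrm{F}^3$ that are \emph{not} $O(\de^{2+2\al})$ pointwise. Concretely, even on a disk inside $\mathcal{K}$---where each unit square contributes at most $\de^{8/3-\varepsilon}(\log\de^{-1})^4$ by \eqref{impsq}---a reasonable path tree has fundamental cycles enclosing $O(\de^{-1})$ squares, so $|\Delta H_\de|\preceq \de^{5/3-\varepsilon}(\log\de^{-1})^4$; the Green's-function sum in the proof of Lemma~\ref{asyhar} then yields only $|\nabla H_\de|\preceq\de^{2/3-\varepsilon}(\log\de^{-1})^4$, which is useless. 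For the same reason your claim $\sum_z|\Delta H_\de(z)|=o(1)$ is false: a single square lies in $O(\de^{-1})$ fundamental cycles, so the loop contributions blow the sum up. As for $\Re H_\de$: your proposed fix cannot work because Proposition~\ref{boundofin} rests on the exact cancellation \eqref{F3boundary}, which has no real-part analogue (in the limit $\Re H_\de\sim\mathrm{c}\,\Re\mathbf{F}$, which diverges at $a,b$). This second issue is, however, harmless: controlling only $\Im H_\de$ would already give $|\Im(\mathrm{F}(v)^3\be)|\preceq\de$ for $\be=1$ and $\be=i$, hence both $\Im\mathrm{F}^3$ (horizontal edges) and $\Re\mathrm{F}^3$ (vertical edges), and Proposition~\ref{TranF} links the two edge types.

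The paper sidesteps the main gap by replacing your path integral with the random-walk average $\mathbf{F}_\de(z)=\Ex\int_{z\to\mathrm{S}_\tau}\de^{-1}\mathrm{F}^3$, which \emph{is} single-valued with $\Delta\mathbf{F}_\de(z)=\tfrac14\Delta\mathrm{F}^3(z)$ exactly; Lemma~\ref{zeroint} gives $\mathbf{F}_\de\to0$, so Lemma~\ref{asyhar} yields $|\Im(\mathbf{F}_\de(z_1)-\mathbf{F}_\de(z_2))|=O(\de)$. The price is that $\mathbf{F}_\de(z_1)-\mathbf{F}_\de(z_2)\neq\mathrm{F}(v)^3\be$, and the real content of the paper's proof is a reflection coupling of the walks from $z_1$ and $z_2$: after the coupling time $\sigma$ the two integrals coincide, the rare event $\{\sigma>\phi_{\mathbf{n}_1}\}$ is handled via Lemma~\ref{zeroint} and Proposition~\ref{conformal1} (through loop-erasure), and on $\{\sigma\le\phi_{\mathbf{n}_1}\}$ the pre-$\sigma$ discrepancy is bounded dyadically in the coupling scale using the per-square estimate \eqref{impsq} together with the winding bound of Lemma~\ref{winding}. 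This coupling argument, establishing \eqref{bfF}, is the missing idea in your sketch.
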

 \begin{proof}
 Let $\mathrm{S}=\{\mathrm{S}_n:n\geq 0\}$ and $\mathrm{S}'=\{\mathrm{S}_n':n\geq 0\}$ be simple random walks in $\de \Z^2$ such that $\mathrm{S}_0=z_1$ and $\mathrm{S}_0'=z_2$. If $\mathrm{S}_1=z_2$, let $\mathrm{S}_n'=\mathrm{S}_{n+1}$. Now suppose $\mathrm{S}_1=z_1'\not=z_2$. Define $\sigma$ to be the first time that $\mathrm{S}$ hits the bisector of line segment $z_1'z_2$. If $\sigma>\phi_{\mathbf{n_1}}=\inf\{n\geq 0: \mathrm{S}_n\not\in B(z_1;2^{\mathbf{n_1}}\de)\}$, we just let $\mathrm{S}'$ be one independent random walk starting from $z_2$. Otherwise, define $\mathrm{S}'[0,\sigma-1]$ to be the symmetric image of $\mathrm{S}[1,\sigma]$ around the bisector of line segment $z_1'z_2$, and $\mathrm{S}'_n=\mathrm{S}_{n+1}$ if $n\geq \sigma$. Define
 \beq
   \tau'=\inf\{n\geq 0: \mathrm{S}_n'\in \p\hat\Omega_\de\}, \
 \phi_m=\inf\{n\geq 0: \mathrm{S}_n\not\in B(z_1;2^{m}\de)\}.
 \eeq
Now we consider the difference $\mathbf{F}_\de(z_1)-\mathbf{F}_\de(z_2)$. Note that
\beqn
&&\mathbf{F}_\de(z_1)-\mathbf{F}_\de(z_2)-\mathrm{F}(v_1)^3\be\nonumber\\
&=&\Ex\Big( \big(\int_{z_1\to\mathrm{S}_\tau}\de^{-1}\mathrm{F}(z)^3dz- \int_{z_1\to z_2\to\mathrm{S}'_{\tau'}}\de^{-1}\mathrm{F}(z)^3dz\big)\I(\sigma>\phi_{\mathbf{n_1}})\Big)\label{large}\\
&&\qquad + \Ex \Big(\big(\int_{z_1\to\mathrm{S}_\tau}\de^{-1}\mathrm{F}(z)^3dz- \int_{z_1\to z_2\to\mathrm{S}'_{\tau'}}\de^{-1}\mathrm{F}(z)^3dz\big)\I(\sigma\leq \phi_{\mathbf{n_1}})\Big). \label{small}
\eeqn
Let us analyze the first expectation in the right hand side of the above identity. By Lemma \ref{zeroint} and  the facts that $\Pro(\sigma>\phi_{\mathbf{n_1}})\preceq \de$ and $\mathrm{F}(v_1)=o(1)$ as $\de\to 0$, one has
$$
\Im\Ex\Big( \int_{z_1\to z_2\to\mathrm{S}'_{\tau'}}\de^{-1}\mathrm{F}(z)^3dz\I(\sigma>\phi_{\mathbf{n_1}})\Big)=o\big(\Pro(\sigma>\phi_{\mathbf{n_1}})\big)=o(\de).
$$
For the other expectation in \eqref{large}, noting that
$$| \Ex \big(\int_{\mathrm{S}_{\phi_{\mathbf{n}_1}}\to\mathrm{S}_\tau}\de^{-1}\mathrm{F}(z)^3dz|\mathrm{S}_{\phi_{\mathbf{n}_1}}\big)| \to 0$$
by Lemma \ref{zeroint}, we obtain that
\beqn
&&\Ex\big( \int_{z_1\to\mathrm{S}_\tau}\de^{-1}\mathrm{F}(z)^3dz\I(\sigma>\phi_{\mathbf{n_1}})\big)\nonumber\\
&=&\Ex\big( \int_{z_1\to\mathrm{S}_{\phi_{\mathbf{n}_1}}}\de^{-1}\mathrm{F}(z)^3dz\I(\sigma>\phi_{\mathbf{n_1}})\big)+o(\de). \label{large1}
\eeqn
Let $L(z_1\to\mathrm{S}_{\phi_{\mathbf{n}_1}})$ be the loop erased random walk path derived from $z_1\to\mathrm{S}_{\phi_{\mathbf{n}_1}}$. It follows from Proposition \ref{conformal1} that
$$
\Ex\big(\Im \int_{L(z_1\to\mathrm{S}_{\phi_{\mathbf{n}_1}})}\de^{-1}\mathrm{F}(z)^3dz\I(\sigma>\phi_{\mathbf{n_1}})\big)=O(\de),
$$
which combined with \eqref{large1} shows that
\beqn
&&\Ex\big(\Im \int_{z_1\to\mathrm{S}_\tau}\de^{-1}\mathrm{F}(z)^3dz\I(\sigma>\phi_{\mathbf{n_1}})\big)+O(\de)\nonumber\\
&=&\Ex\big( \Im\int_{z_1\to\mathrm{S}_{\phi_{\mathbf{n}_1}}}\de^{-1}\mathrm{F}(z)^3dz\I(\sigma>\phi_{\mathbf{n_1}})\big)-\Ex\big( \Im\int_{L(z_1\to\mathrm{S}_{\phi_{\mathbf{n}_1}})}\de^{-1}\mathrm{F}(z)^3dz\I(\sigma>\phi_{\mathbf{n_1}})\big). \quad\quad \label{large2}
\eeqn

For \eqref{small},
\beq
&&|\Ex \Big(\big(\int_{z_1\to\mathrm{S}_\tau}\de^{-1}\mathrm{F}(z)^3dz- \int_{z_1\to z_2\to\mathrm{S}'_{\tau'}}\de^{-1}\mathrm{F}(z)^3dz\big)\I(\sigma\leq \phi_{\mathbf{n_1}})\Big)|\\
&\leq&\sum_{j=0}^{\mathbf{n}_1-1} |\Ex \Big(\big(\int_{z_1\to\mathrm{S}_\tau}\de^{-1}\mathrm{F}(z)^3dz- \int_{z_1\to z_2\to\mathrm{S}'_{\tau'}}\de^{-1}\mathrm{F}(z)^3dz\big)\I(\phi_{2^j\de}<\sigma\leq \phi_{2^{j+1}\de})\Big)|\\
&&\quad +|\Ex \Big(\big(\int_{z_1\to\mathrm{S}_\tau}\de^{-1}\mathrm{F}(z)^3dz- \int_{z_1\to z_2\to\mathrm{S}'_{\tau'}}\de^{-1}\mathrm{F}(z)^3dz\big)\I(\sigma\leq \phi_{\de})\Big)|.
\eeq
When $\sigma\leq \phi_{2^{j+1}\de}$, consider any square $\mathbf{z}_1\mathbf{z}_2\mathbf{z}_3\mathbf{z}_4$ which is surrounded by $\mathrm{S}[0,\sigma]$. Suppose $\mathbf{z}_1\in B(z;2^{j+1}\de)$, $\mathbf{z}_2=\mathbf{z}_1-\de$, $\mathbf{z}_3=\mathbf{z}_2-i\de$, $\mathbf{z}_4=\mathbf{z}_3+\de$. Then by \eqref{sumid} and Lemma \ref{initial-b},
\beqn
|\int_{\mathbf{z}_1\mathbf{z}_2\mathbf{z}_3\mathbf{z}_4}\de^{-1}\mathrm{F}(z)^3dz|
\preceq \de^{8/3-\varepsilon} (\log\de^{-1})^4. \label{impsq}
\eeqn
Now let $\Theta(n;z^*)$ be the angle wound by $\mathrm{S}$ around $z^*$ starting from $z_1$. Then by \eqref{impsq},
\beq
&&\Ex |\big(\int_{z_1\to\mathrm{S}_\sigma}\de^{-1}\mathrm{F}(z)^3dz-\int_{z_1\to z_2\to\mathrm{S}'_{\tau'}}\de^{-1}\mathrm{F}(z)^3dz
\big)|\I(\sigma\leq\phi_{2^{j+1}\de})\I(\phi_{2^j\de}<\sigma)\\
&\preceq&  \de^{8/3-\varepsilon} (\log\de^{-1})^4 \Ex \big(\sum_{\mathbf{z}_1\in B(z;2^{j+1}\de)}\max_{0\leq n\leq \phi_{2^{j+1}\de}}\Theta(n,\mathbf{z}_1)\big)\I(\phi_{2^j\de}<\sigma).
\eeq
Now applying H\"older's inequality twice, we can get that
\beq
&&\Ex \max_{0\leq n\leq \phi_{2^{j+1}\de}}\Theta(n,\mathbf{z}_1)\I(\phi_{2^j\de}<\sigma)\\
&=&\sum_{k=0}^\infty \Ex \max_{0\leq n\leq (k+1)^{10}2^{2(j+1)}}\Theta(n,\mathbf{z}_1)\I(k^{10}2^{2(j+1)}< \phi_{2^{j+1}\de}\leq (k+1)^{10}2^{2(j+1)})\I(\phi_{2^j\de}<\sigma)\\
&\leq&\sum_{k=0}^\infty \Ex^{1/2} \max_{0\leq n\leq (k+1)^{10}2^{2(j+1)}}\Theta^2(n,\mathbf{z}_1)\Ex^{1/2}\I(k^{10}2^{2(j+1)}< \phi_{2^{j+1}\de}\leq (k+1)^{10}2^{2(j+1)})\I(\phi_{2^j\de}<\sigma)\\
&\preceq& (\log 2^{j+1})\Pro^{1/(2q)}(\phi_{2^j\de}<\sigma),
\eeq
where $q$ could be any number which is larger than $5/4$.
Hence we have
\beq
&&|\Ex \Big(\big(\int_{z_1\to\mathrm{S}_\tau}\de^{-1}\mathrm{F}(z)^3dz- \int_{z_1\to z_2\to\mathrm{S}'_{\tau'}}\de^{-1}\mathrm{F}(z)^3dz\big)\I(\phi_{2^j\de}<\sigma\leq \phi_{2^{j+1}\de})\Big)| \\
&\preceq& \de^{8/3-\varepsilon} (\log\de^{-1})^4 (\log 2^{j+1})2^{2(j+1)}\Pro^{1/(2q)}(\phi_{2^j\de}<\sigma)\\
&\preceq&\de^{8/3-\varepsilon} (\log\de^{-1})^4 (\log 2^{j+1})2^{2(j+1)-j/(2q)},
\eeq
where in the last step we used the fact that $\Pro(\phi_{2^j\de}<\sigma) \preceq 2^{-j}$. The same calculation also gives
$$
|\eqref{large2}|\preceq \de^{8/3-\varepsilon} (\log\de^{-1})^4 \log 2^{\mathbf{n}_1} 2^{(2-1/(2q))\mathbf{n}_1}.
$$

Therefore, letting $q=11/8$, we obtain that
\beq
&&|\Ex \Big(\Im \big(\int_{z_1\to\mathrm{S}_\tau}\de^{-1}\mathrm{F}(z)^3dz- \int_{z_1\to z_2\to\mathrm{S}'_{\tau'}}\de^{-1}\mathrm{F}(z)^3dz\big)\I(\sigma\leq \phi_{\mathbf{n_1}})\Big)+\eqref{large2}|\\
&\preceq& \sum_{j=0}^{\mathbf{n}_1}\de^{8/3-\varepsilon} (\log\de^{-1})^4 (\log 2^{j+1})2^{2(j+1)-j/(2q)}\\
&\preceq&\de^{34/33-\varepsilon}\log^5\de^{-1},
\eeq
which combined with \eqref{large}, \eqref{large2} and \eqref{small} shows that
\beqn
\Im\big(\mathbf{F}_\de(z_1)-\mathbf{F}_\de(z_2)-\mathrm{F}(v_1)^3\be\big)=O(\de) \label{bfF}
\eeqn
as $\de\to 0$. Now applying Lemma \ref{asyhar} in the appendix to $\mathbf{F}_\de(z)$ in $B(z_1;2^{\mathbf{n}_1}\de)$, we can see that
$$|\Im\big(\mathbf{F}_\de(z_1)-\mathbf{F}_\de(z_2)\big)|=O(\de)$$
uniformly over $z_1\in \de\Z^2\cap \mathcal{K}$. This combined with \eqref{bfF} shows that $|\Im \big(\mathrm{F}(v_1)^3\be\big)|\preceq \de$.
Note that $|\mathrm{F}(v_1)-\mathrm{F}(v_1+\de e^{i\pi/4}/\sqrt 2)|=O(\de^{1+\al-\ep})$  by Proposition \ref{TranF}. We can complete the proof for $\mathrm{F}$. The same proof works for $\mathrm{F}_i$ too, so we can omit the details.
\end{proof}

Now with the help of Lemma \ref{Fimboun}, we can remove the log term in \eqref{Fdiffinboun-M}. This is presented in the following result.

\begin{proposition} \label{improveb}
Under {\it Condition} $\mathrm{C}$ and the assumption that $|\mathrm{c}|+|\mathrm{c}_i|\not=\infty$,,
\beqn
&&|\mathrm{F}(v+\de\be)-\mathrm{F}(v)| \preceq \de^{1+1/3}, \label{Fdiffimboun}\\
&&|\mathrm{F}_i(v+\de\be)-\mathrm{F}_i(v)| \preceq \de^{1+1/3}, \label{Fdiffimbouni}\\
&&|\mathrm{F}_{+}(v+ \de e^{\pm i\pi/4}/\sqrt 2)-\mathrm{F}_{+}(v)| \preceq \de^{1+1/3}, \label{Fpi4}
\eeqn
uniformly over $v\in V_{\Omega_\de^\diamond}\cap \mathcal{K}$ for any compact subset $\mathcal{K}$ of $\Omega_0$, where $\be=\be_1$ or $\be_2$, (in terms of complex number, $\be=1$ or $i$), and
\beq
\mathrm{F}_{+}(v)&=&\mathrm{F}(v)+\mathrm{F}^*(v+e^{-i\pi/4}\de/\sqrt 2).
\eeq
(The idea behind $\mathbb{F}_{+}(z)$  is to merge the Dobrushin domain $\Omega_\de^\diamond$ and the shifted Dobrushin domain $\Omega_\de^\diamond+e^{-i\pi/4}\de/\sqrt 2$ so that the modified edge parafermionic observable becomes $\mathrm{F}(e)+\mathrm{F}(e+e^{-i\pi/4}\de/\sqrt 2)$ for any medial edge $e$.)
\end{proposition}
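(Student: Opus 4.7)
The strategy is to parallel the proof of Lemma~\ref{Fimboun}, using its conclusion as a new input to remove the logarithmic factors that previously limited the analysis of second-order differences.

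First I would revisit the square-integral bound \eqref{impsq} in the proof of Lemma~\ref{Fimboun}: replacing the log-bounded estimate of Lemma~\ref{initial-b} by the log-free $|\mathrm{F}(v)|\preceq\de^{1/3}$ supplied by Lemma~\ref{Fimboun} gives
$$
\Big|\int_{\mathbf{z}_1\mathbf{z}_2\mathbf{z}_3\mathbf{z}_4}\de^{-1}\mathrm{F}(z)^3\,dz\Big|\preceq\de^{8/3-\varepsilon}
$$
on compact subsets of $\Omega_0$, free of logarithmic corrections. Tracking this through the $(\mathrm{S},\mathrm{S}')$-coupling of Lemma~\ref{Fimboun} then sharpens the defect identity \eqref{bfF} to
$$
\Im\bigl(\mathbf{F}_\de(z_1)-\mathbf{F}_\de(z_2)-\mathrm{F}(v_1)^3\be\bigr)=O(\de^{34/33-\varepsilon}).
$$

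Next, by Lemma~\ref{zeroint} and Proposition~\ref{sum-int-lap} the function $\mathbf{F}_\de$ is approximately discrete harmonic and tends to zero uniformly on compact subsets, so Lemma~\ref{asyhar} applied on a ball of radius $\asymp\dist(z,\p\Omega_\de)$ delivers the second-difference regularity
$$
|\mathbf{F}_\de(z_1)-2\mathbf{F}_\de(z_2)+\mathbf{F}_\de(z_3)|=o(\de^2)
$$
for collinear triples $z_1$, $z_2=z_1+\de\be'$, $z_3=z_1+2\de\be'$. Combining this with two copies of the sharpened defect identity gives
$$
\bigl|\Im\bigl[(\mathrm{F}(v_1+\de\be')^3-\mathrm{F}(v_1)^3)\be\bigr]\bigr|=O(\de^{34/33-\varepsilon}),
$$
and letting $\be$ range over $\{1,i\}$ controls the full complex third-power difference between neighbouring edge-centres.

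The main obstacle is that passing from a bound on $\mathrm{F}(v_1+\de\be')^3-\mathrm{F}(v_1)^3$ to one on $\mathrm{F}(v_1+\de\be')-\mathrm{F}(v_1)$ via the factorisation $a^3-b^3=(a-b)(a^2+ab+b^2)$ is singular when $|\mathrm{F}|$ happens to be sub-$\de^{1/3}$. To bypass this I would work with the merged observable $\mathrm{F}_+$: because $\mathrm{F}_+(v)=\mathrm{F}(v)+\mathrm{F}^*(v+e^{-i\pi/4}\de/\sqrt 2)$ is defined on all medial edges of the refined lattice, the $O((\de/d_v)^2)$ defect in \eqref{sumid} cancels with its counterpart on the shifted copy, and an unambiguous discrete primitive $\mathbb{F}_{+,\de}$ exists which is approximately discrete harmonic with a much smaller defect than $\mathbf{F}_\de$. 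Lemma~\ref{asyhar} applied to $\mathbb{F}_{+,\de}$ directly produces \eqref{Fpi4} in the $e^{\pm i\pi/4}/\sqrt 2$-directions. The estimates \eqref{Fdiffimboun} and \eqref{Fdiffimbouni} are then extracted from \eqref{Fpi4} by writing $\be_1$- and $\be_2$-translations as sums of $e^{\pm i\pi/4}/\sqrt 2$-translations, using the relation $\mathrm{F}^*(v)=\mathrm{F}(v)+O((\de/d_v)^2)$ from \eqref{idstar}, and applying the translational estimates of Proposition~\ref{TranF}; the proof for $\mathrm{F}_i$ is identical after substituting \eqref{sumidi} and \eqref{idstari} in place of \eqref{sumid} and \eqref{idstar}.
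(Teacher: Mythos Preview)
Your approach via $\mathbf{F}_\de$ and the cube $\mathrm{F}^3$ does not reach the target. The defect identity \eqref{bfF} in the proof of Lemma~\ref{Fimboun} is genuinely $O(\de)$, not $O(\de^{34/33-\varepsilon})$: the $O(\de)$ contributions come from the event $\{\sigma>\phi_{\mathbf n_1}\}$, whose probability is $\asymp\de$ by a gambler's-ruin estimate, multiplied by an integral that is merely bounded (via Proposition~\ref{conformal1}) rather than small. Removing the logarithmic factors in \eqref{impsq} only sharpens the ``small-$\sigma$'' term and leaves these $O(\de)$ pieces intact. Even granting your bound on $\mathrm{F}(v_1+\de\be')^3-\mathrm{F}(v_1)^3$, dividing by $|\mathrm{F}|^2\asymp\de^{2/3}$ would yield only $O(\de^{12/33})$, nowhere near $\de^{4/3}$; the obstruction is not just the possible vanishing of $\mathrm F$ but the exponent itself. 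Your fallback through $\mathrm{F}_+$ and an ``unambiguous primitive $\mathbb F_{+,\de}$'' rests on an unjustified cancellation of the $O((\de/d_v)^2)$ defects in \eqref{sumid} between the original and shifted copies; nothing in the construction forces those error terms to have opposite signs.

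The paper's route avoids the cube entirely and is much shorter. It works with $\mathbb F_c(z_1)=\mathbb F(z_1)-\mathbb F(z)$, the primitive of $\de^{-1/3}F$ (linear in $F$, not $\mathrm F^3$), on a ball $B(z;d_z/2)$. Lemma~\ref{Fimboun} together with $F=\mathrm F+\mathrm F_i$ gives $|F|\preceq\de^{1/3}$, hence $|\mathbb F_c|\preceq 1$; Lemma~\ref{asyhar} then yields $|F(v+\de\be)-F(v)|\preceq\de^{4/3}$ directly, with no cube-root to invert. From this \eqref{Fdiffimboun} follows via Proposition~\ref{F12}, and \eqref{Fdiffimbouni} via \eqref{F-relation}. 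Finally \eqref{Fpi4} is \emph{derived from} \eqref{Fdiffimboun}, not the other way round: one writes $\mathrm F_+(v)-\mathrm F_+^*(v+e^{-i\pi/4}\de/\sqrt2)=\mathrm F(v)-\mathrm F(v+\sqrt2 e^{-i\pi/4}\de)$, splits this into two $\de\be$-steps each bounded by \eqref{Fdiffimboun}, and uses $\mathrm F_+^*=\mathrm F_++O(\de^2)$ from Proposition~\ref{F12}. The logical order is thus opposite to what you propose.
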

\begin{proof}
For any $z\in V_{\Omega_\de}$ such that $\liminf_{\de\to 0} \dist(z,\p\Omega_\de)/\de>0$, consider the function $\mathbb{F}_c(z_1)=\mathbb{F}(z_1)-\mathbb{F}(z)$ of $z_1\in B(z;d_z/2)$ with $d_z=\dist(z,\p\Omega_\de)$.
 First, it follows from Proposition \ref{F12} and Lemma \ref{Fimboun} that $|F(v)| \preceq \de^{1/3}$, which implies that $|\mathbb{F}_c(z_1)|\preceq 1$. Now applying  Lemma \ref{asyhar} in the appendix to  $\mathbb{F}_c(z_1)$, we can derive
\beqn
|F(v+\de\be)-F(v)| \preceq \de^{1+1/3} \label{Diff-F}
\eeqn
uniformly over $v\in V_{\Omega_\de^\diamond}\cap \mathcal{K}$. By Proposition \ref{F12} again,
 \eqref{Fdiffimboun} follows immediately. It follows from \eqref{F-relation} and \eqref{Fdiffimboun} that we have \eqref{Fdiffimbouni}.

 In order to derive \eqref{Fpi4}, we introduce $\mathrm{F}_{+}^*(v)$, which is dual of $\mathrm{F}_{+}(v)$,
 $$
 \mathrm{F}_{+}^*(v)=\mathrm{F}^*(v)+\mathrm{F}(v+e^{-i\pi/4}\de/\sqrt 2).
 $$
 Then
 \beqn
 \mathrm{F}_{+}(v) -\mathrm{F}_{+}^*(v+e^{-i\pi/4}\de/\sqrt 2)&=&\mathrm{F}(v)-\mathrm{F}(v+\sqrt 2e^{-i\pi/4}\de) \nonumber\\
 &=&\mathrm{F}(v)-\mathrm{F}(v+\de)+\mathrm{F}(v+\de)-\mathrm{F}(v+\sqrt 2e^{-i\pi/4}\de) \label{sum-of-diff}\\
 &\preceq & \de^{1+1/3}, \nonumber
 \eeqn
 where we applied \eqref{Fdiffimboun} to each difference in \eqref{sum-of-diff}. We also know that $\mathrm{F}_{+}^*(v+e^{-i\pi/4}\de/\sqrt 2)=\mathrm{F}_{+}(v+e^{-i\pi/4}\de/\sqrt 2)+O(\de^2)$ by Proposition \ref{F12}. So we can complete the proof of \eqref{Fpi4}.
\end{proof}

Finally, we can present the convergence of $\mathrm{F}$ in any compact subset $\mathcal{K}$ of $\Omega_0$,

\begin{proposition} \label{convF}
Suppose $\Omega_0$ is a bounded simply connected domain in $\C$ with two points $a$ and $b$ on its boundary. Assume that $(\Omega_\de^\diamond,a_\de^\diamond,b_\de^\diamond)$ is a family of Dobrushin domains converging to $(\Omega_0,a,b)$ in the Carath\'eodory sense as $\de\to 0$.
Under {\it Condition} $\mathrm{C}$ and the assumption that $|\mathrm{c}|\not=\infty$,
as $\de\to 0$,
$$
\de^{-1/3}\mathrm{F}(v) \to \mathrm{c}^{1/3}\big(\mathbf{F}'(v)\big)^{1/3}, \ \  \de^{-1/3}\mathrm{F}_i(v) \to 0
$$
uniformly over $v\in  \mathcal{K}$ for any compact subset $\mathcal{K}$ of $\Omega_0$.
\end{proposition}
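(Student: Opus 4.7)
The plan is to combine the \emph{a priori} regularity already established with the line-integral asymptotics of Proposition \ref{conformal1} to identify the subsequential limits via a classical boundary value problem. Lemma \ref{Fimboun} and Proposition \ref{improveb} say that $|\de^{-1/3}\mathrm{F}(v)|,\,|\de^{-1/3}\mathrm{F}_i(v)|\preceq 1$ on any compact $\mathcal{K}\subset\Omega_0$ and that their nearest-neighbour differences are $O(\de)$, so after piecewise linear interpolation the families are uniformly bounded and equicontinuous on $\mathcal{K}$. By Arzela--Ascoli, from any $\de_n\to 0$ I can extract a subsequence along which $\de^{-1/3}\mathrm{F}\to f$ and $\de^{-1/3}\mathrm{F}_i\to f_i$ uniformly on compact subsets of $\Omega_0$, with $f,f_i$ continuous.

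To identify $f_i\equiv 0$ I read off the sum relation \eqref{sumidi}: at each interior $v$ with incident edges $A,B,C,D$,
\begin{equation*}
\mathrm{F}_i(A)+\mathrm{F}_i(B)+\mathrm{F}_i(C)+\mathrm{F}_i(D)=O(\de^2),
\end{equation*}
and multiplying by $\de^{-1/3}$ and using equicontinuity (so that $\de^{-1/3}\mathrm{F}_i(e)\to f_i(v)$ for every edge $e$ incident to $v$) yields $4f_i(v)=0$. For $f$, I will first establish holomorphicity of $f^3$ by a Morera argument: \eqref{unitsq}, summed over the $O(\de^{-2})$ unit squares tiling any smooth closed loop $\Gamma$ strictly interior to $\Omega_0$, gives $\oint_\Gamma \de^{-1}\mathrm{F}(z)^3\,dz=O(\de^{2\al-\varepsilon})\to 0$, so in the limit $\oint_\Gamma f(z)^3\,dz=0$ and $f^3$ is holomorphic in $\Omega_0$. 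Let $G$ be a holomorphic primitive of $f^3$. Since $\mathrm{c}$ is real and the endpoint of the discrete path $\mathcal{P}$ in Proposition \ref{conformal1} lies on $\p^{up}$ (where $\Im\mathbf{F}=1$), that proposition rearranges to $\Im G(z)-\mathrm{c}\,\Im\mathbf{F}(z)=\text{const on }\Omega_0$. Hence the holomorphic function $G(z)-\mathrm{c}\,\mathbf{F}(z)$ has constant imaginary part and is therefore a real constant, and differentiating yields $f(z)^3=\mathrm{c}\,\mathbf{F}'(z)$. The correct continuous branch of the cube root is fixed by the sign $\mathrm{c}>0$ and the near-boundary relation $\Im\mathrm{F}^3\to 0$ along $\p_{ba}$ from Proposition \ref{ref-inv}, which matches the tangential condition $\Im(\mathbf{F}'\,dz)=0$. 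Uniqueness of the subsequential limit then forces convergence of the full family.

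The principal obstacle is the Morera step: while \eqref{unitsq} gives a pleasant per-square bound in the interior, it degrades to $(\de/d_0)^{2+2\al-\varepsilon}$ near the boundary, and to obtain uniform convergence on compacts I expect to need contours $\Gamma$ that may drift close to $\p\Omega_0$. Shielding this accumulation over the $O(r_\de^{-2})$ boundary segments will require the aggregated estimate \eqref{sum-of-int} together with the near-boundary line-integral control of Proposition \ref{boundofin}. Commuting the limit with the contour integral when $\Gamma$ approaches $\p\Omega_0$ will be handled by exhausting $\Omega_0$ from the inside and invoking \eqref{Finboun-bM} and \eqref{Finboun-imp} to control what is lost on the thin boundary strip.
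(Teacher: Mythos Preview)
Your treatment of $\mathrm{F}$ is essentially the paper's argument: precompactness from Lemma~\ref{Fimboun} and Proposition~\ref{improveb}, analyticity of the limit via Morera applied to $\de^{-1}\mathrm{F}^3$, and identification of the primitive through Proposition~\ref{conformal1}. (Your discussion of contours approaching $\p\Omega_0$ is unnecessary: for convergence on a fixed compact $\mathcal{K}$ you only need contours that stay a fixed distance from the boundary, and there \eqref{unitsq} already gives the required decay. Also note that $\mathrm{c}>0$ is not yet available at this point --- it is established later in Proposition~\ref{convFb} --- so you cannot use it to fix the branch here; the branch is pinned simply because $\mathrm{F}_0$ is the limit of the $\de^{-1/3}\mathrm{F}$ themselves.)

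Your argument for $\de^{-1/3}\mathrm{F}_i\to 0$, however, has a genuine gap. The equicontinuity you are invoking does not say what you need. The estimate \eqref{Fdiffimbouni} controls $\mathrm{F}_i(v+\de\be)-\mathrm{F}_i(v)$ for $\be=\be_1,\be_2$, i.e.\ translations between medial vertices of the \emph{same} type; the diagonal estimate \eqref{Fpi4} is for $\mathrm{F}_{+}$, not for $\mathrm{F}_i$. There is no bound forcing $\de^{-1/3}\mathrm{F}_i(A),\dots,\de^{-1/3}\mathrm{F}_i(D)$ to share a common limit at $v$, and in fact the paper shows they do not in general: extracting subsequential limits $\mathrm{F}_{iH}$ and $\mathrm{F}_{iV}$ along the horizontal and vertical medial vertices separately, one finds (via \eqref{idstari} and Proposition~\ref{TranF}) that $\mathrm{F}_{iH}=-\mathrm{F}_{iV}$. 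Feeding this back into \eqref{sumidi} gives only the tautology $\mathrm{F}_{iH}+\mathrm{F}_{iV}=0$, not $\mathrm{F}_{iH}=0$.

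The paper's route to $\mathrm{F}_i\to 0$ is substantially more delicate. It first uses the line-integral asymptotics (Proposition~\ref{conformal1}) along horizontal and vertical lines separately to obtain $\Im(\mathrm{F}_{iH}^3)=\Im(\mathrm{c}_i\mathbf{F}')$ and $\Re(\mathrm{F}_{iV}^3)=\Re(\mathrm{c}_i\mathbf{F}')$, whence $\mathrm{F}_{iH}=-\mathrm{c}_i^{1/3}\overline{\mathbf{F}'}^{\,1/3}$. It then exploits the domain-independence of $\mathrm{c}_i$ (from \eqref{deres}) to specialise to the unit square with $a,b$ on the $e^{i\pi/4}$ diagonal, where the reflection identity $F(v)e_b^{1/3}=\overline{F^*(v)e_b^{1/3}}$ and the decomposition $F=\mathrm{F}+\mathrm{F}_i$, $F^*=\mathrm{F}-\mathrm{F}_i+O(\de^{2-\varepsilon})$ combine with the explicit forms of the limits to force an equation whose two sides are purely imaginary and purely real respectively; this yields $\mathrm{c}_i=0$. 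Your four-edge averaging shortcut bypasses all of this structure and is not valid.
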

The following proof is based on the argument in~\cite{Smi10}. See also the proof of Theorem 8.29 in~\cite{DC13}.
\begin{proof}
By linear interpolation, we can define $\mathrm{F}(v)$ for any interior point $v$ of $\Omega_0$. By Lemma \ref{Fimboun} and Proposition \ref{improveb}, we can see that $\{\de^{-1/3}\mathrm{F}_{+}\}$ is a precompact family of discrete functions indexed by $\de>0$. It follows from Proposition \ref{F12} and  Proposition \ref{TranF} that $\{\de^{-1/3}\mathrm{F}\}$ is a precompact family of discrete functions indexed by $\de>0$. Now suppose $\{\de_n^{-1/3}\mathrm{F}\}_{n\in \mathbb{N}}$ is a convergent subsequence whose limit is an analytic function denoted by $\mathrm{F}_0$. (One can show the analyticity of $\mathrm{F}_0$ by considering the discrete contour integrals of $\de_n^{-1}\mathrm{F}^3$ and applying Morera's Theorem.) For any two points $v$ and $w$ in $\mathcal{K}$, let $v_{\de_n}$ and $w_{\de_n}$ be the closest points to $v$ and $w$ in $V_{\hat\Omega_\de}$ respectively. Hence,
\beq
\Im \mathbf{F}_{\de_n}(v_{\de_n})-\Im \mathbf{F}_{\de_n}(w_{\de_n})=\Im \int_{w_{\de_n}}^{v_{\de_n}} \de_n^{-1} \mathrm{F}^3(z)dz.
\eeq
By Proposition \ref{conformal1}, the left hand side of the above identity converges to $\Im \mathrm{c}\big(\mathbf{F}(v)-\mathbf{F}(w)\big)$. On the other hand, the right  hand side of the above identity converges to $\Im \int_w^v \mathrm{F}_0^3(z)dz$. Since $\Im \mathrm{c}\big(\mathbf{F}(v)-\mathbf{F}(w)\big)$ and $\Im \int_w^v \mathrm{F}_0^3(z)dz$ are harmonic functions of $v$, there exists $\mathrm{C}\in \R$ such that $\mathrm{c}\big(\mathbf{F}(v)-\mathbf{F}(w)\big)=\mathrm{C}+ \int_w^v \mathrm{F}_0^3(z)dz$. This implies the first assertion.

Now let
$V_\mathrm{h}=\{v\in V_{\hat \Omega_\de^\diamond}: E_v$ is a  primal horizontal edge$\}$, and $V_\mathrm{v}=\{v\in V_{\hat \Omega_\de^\diamond}: E_v$ is a  primal vertical edge$\}$.
Suppose $\mathrm{c}_i\not=\infty$. By Lemma \ref{Fimboun} and \eqref{Fdiffimbouni}, we obtain that $\{\de^{-1/3}\mathrm{F}_i(v): v\in V_\mathrm{h}\}$ and $\{\de^{-1/3}\mathrm{F}_i(v): v\in V_\mathrm{v}\}$ are two precompact families of discrete analytic functions indexed by $\de>0$. Now we suppose $\{\de_m^{-1/3}\mathrm{F}_i(v): v\in V_\mathrm{h}\}_{m\in \mathbb{N}}$ is a convergent subsequence, whose limit is denoted by $\mathrm{F}_{iH}$. Since $\{\de_m^{-1/3}\mathrm{F}_i(v): v\in V_\mathrm{v}\}$ is also precompact, there exists a subsequence $\{\de_{m_k}\}$ of $\{\de_m\}$ such that $\{\de_{m_k}^{-1/3}\mathrm{F}_i(v): v\in V_\mathrm{v}\}$ is a convergent subsequence as $\de_{m_k}$ tends to zero, whose limit is denoted by $\mathrm{F}_{iV}$. It follows from Proposition \ref{F12} and Proposition \ref{TranF} that $\mathrm{F}_{iH}=-\mathrm{F}_{iV}$.
Now considering the respective line integrals of $\{\de_{m_k}^{-1}\mathrm{F}_i(v)^3:v\in V_\mathrm{h}\}$ along horizontal line and $\{\de_{m_k}^{-1}\mathrm{F}_i(v)^3:v\in V_\mathrm{v}\}$ along vertical line, we obtain that
 \beqn
 \Im \big(\mathrm{F}_{iH}(v)^3\big)=\Im \big(\mathrm{c}_i\mathbf{F}'(v)\big), \  \Re \big(\mathrm{F}_{iV}(v)^3\big)=\Re \big(\mathrm{c}_i\mathbf{F}'(v)\big) \label{imag-real}
 \eeqn

Next we turn to the proof of $\mathrm{c}_i=0$ if $|\mathrm{c}_i|\not=\infty$.
From the proof of \eqref{deres}, we can see that both $\mathrm{c}$ and $\mathrm{c}_i$ are invariant to any Dobrushin domain satisfying Condition $\mathrm{C}$. So, we let $\Omega_\de$ be a square with side length $1$, and suppose the direction of the line segment $L_{a_\de b_\de}$ with respective ends $a_\de^\diamond$ and $b_\de^\diamond$ is $e^{i\pi/4}$. Hence $e_b=e^{i\pi/4}$.
 Through reflecting $\ga_\de$ around $L_{a_\de b_\de}$, we can obtain that for any medial vertex $v$ on $L_{a_\de b_\de}$,
 \beqn
 F(v)e_b^{1/3}=\overline{F^*(v)e_b^{1/3}}. \label{conjugate}
 \eeqn
 It follows from \eqref{imag-real} and the relation $\mathrm{F}_{iH}=-\mathrm{F}_{iV}$ that
 \beqn
 \mathrm{F}_{iH}(v)=-\mathrm{c}_i^{1/3}\big(\overline{\mathbf{F}'(v)}\big)^{1/3}. \label{formula-FiH}
 \eeqn
  Let $v\in V_\mathrm{h}$ be one medial vertex on $L_{a_\de b_\de}$ such that  the distance from $v$ to $\p \Omega_\de$ is at least $1/4$. Since
 $F(v)=\mathrm{F}(v)+\mathrm{F}_i(v)$ and $F^*(v)=\mathrm{F}(v)-\mathrm{F}_i(v)+O(\de^{2-\varepsilon})$, it follows from \eqref{conjugate} and \eqref{formula-FiH}  that
  \beq
 \big(\mathrm{c}\mathbf{F}'(v)e_b\big)^{1/3}-\big(\mathrm{c}_i\overline{\mathbf{F}'(v)}e_b\big)^{1/3}=\overline{ \big(\mathrm{c}\mathbf{F}'(v)e_b\big)^{1/3}+\big(\mathrm{c}_i\overline{\mathbf{F}'(v)}e_b\big)^{1/3}}.
\eeq
This is equivalent to
\beqn
 \big(\mathrm{c}\mathbf{F}'(v)e_b\big)^{1/3}-\overline{ \big(\mathrm{c}\mathbf{F}'(v)e_b\big)^{1/3}}=\big(\mathrm{c}_i\overline{\mathbf{F}'(v)}e_b\big)^{1/3}+\overline{\big(\mathrm{c}_i\overline{\mathbf{F}'(v)}e_b\big)^{1/3}}. \label{ci-relation1}
\eeqn
The left hand side of the identity in \eqref{ci-relation1} is an imaginary number and the right hand side is a real number. Since $\mathbf{F}'(v)$ is non-zero,
 $\mathrm{c}_i=0$ if $|\mathrm{c}_i|\not=\infty$.

If $|\mathrm{c}_i|=\infty$, we re-scale $\mathrm{F}_i$ by $\mathrm{c}_{i\de}$. So after re-scaling, $\mathrm{c}_i=1$. By the argument for $|\mathrm{c}_i|\not=\infty$, we can see that this results in a contradiction. So $|\mathrm{c}_i|\not=\infty$. And we can complete the proof by \eqref{imag-real}.
\end{proof}

\subsection{Convergence of $\mathrm{F}$ near $\mathrm{L}_1$} \label{Con-F-boudary}
We need the following stronger condition in this subsection.

\

\noindent{\it Condition} $\mathrm{C}'$.  The length of $\mathrm{L}_1$ is at least $\mathrm{l}_1$, which is a fixed positive constant.  The line segment $[v_1,v_1']$ with length at least $\mathrm{l}_1/4$ doesn't share any point with $\cup_{k=2}^{\mathrm{n}}\mathrm{L}_j$ or $\p_{ab,\de}^*$, where $v_1$ is any point between $b_\de$ and the center of $\mathrm{L}_1$, and $v_1'\in \Omega_\de^\diamond$ such that  the line segment $[v_1,v_1']$ is perpendicular to $\mathrm{L}_1$ and contained in $\Omega_\de$.

\begin{remark}
The constant $1/4$ in the length of the line segment $[v_1,v_1']$ can be any other fixed positive constant. This can be accomplished by modifying $\mathrm{l}_1$. For example, if $\mathrm{l}_1/4$ is changed to $\mathrm{a}\mathrm{l}_1$ for fixed positive constant $\mathrm{a}<1/4$, then we can change $\mathrm{l}_1$ to a smaller number $(4\mathrm{a})\mathrm{l}_1 $ since
$$\mathrm{a}\mathrm{l}_1=\frac 14 (4\mathrm{a})\mathrm{l}_1.$$
\end{remark}

Now we turn to the extension of $\mathrm{F}$ near $\mathrm{L}_1$.
In Figure \ref{boundab}, suppose $\mathrm{L}_1$ is horizontal in $\C$. Define $\mathrm{v}_{1/3}=\mathrm{v}_{1,c}-i[\de^{-2/3-\ep}]\de$ and $z_{1/3}=z_{1,\ep}-i[\de^{-2/3-\ep}]\de$. Suppose $\mathrm{v}_{1/3}$ is the origin of $\C^2$, the horizontal line passing through  $\mathrm{v}_{1/3}$ and $z_{1/3}$ is the real axis and $\Im \mathrm{v}_{1,c}>0$. In $B(0;\mathrm{l}_1/5)$, define
\begin{equation} \label{extension}
\mathrm{F}_{ext}(v)= \begin{cases}
      \mathrm{F}(v)& \Im v\leq 0,\\
      \overline{\mathrm{F}(\bar v)}& \Im v>0,
    \end{cases}
\end{equation}
and
\begin{equation} \label{extensionstar}
\mathrm{F}_{ext}^*(v)= \begin{cases}
      \mathrm{F}^*(v)& \Im v\leq \de/2,\\
      \overline{ \mathrm{F}^*(\bar{v}+i\de)}& \Im v>\de/2,
    \end{cases}
\end{equation}
where $\bar v$ is the conjugate of $v$. The line $\Im v=\de/2$ in $\mathrm{F}_{ext}^*$  plays the role of $\Im v=0$ in $\mathrm{F}_{ext}$.

Similarly, in \eqref{extension} after replacing $ \mathrm{F}$ by $ \mathrm{F}_i$ and $F$ respectively, we can obtain the respective $ \mathrm{F}_{i,ext}$ and $F_{ext}$. Also in \eqref{extensionstar}, after replacing $ \mathrm{F}^*$ by $ \mathrm{F}_i^*$ and $F^*$ respectively, we can obtain the respective $ \mathrm{F}^*_{i,ext}$ and $F^*_{ext}$.

In order to obtain a parallel result of Lemma \ref{Fimboun}, we introduce
$$
\mathbb{F}_{ext}(z)= \begin{cases}
      \int_{(0 \to z)\subseteq B(0;\mathrm{l}_1/5)\cap (-\Half^{-}) }\de^{-1/3}F_{ext}(x)dx& \Im z\leq 0,\\
      \int_{(0 \to z)\subseteq B(0;\mathrm{l}_1/5)\cap \Half }\de^{-1/3}F_{ext}(x)dx& \Im z> 0,
    \end{cases}
$$
where $(0 \to z)\subseteq B(0;\mathrm{l}_1/5)\cap (-\Half^{-})$ is any path from $0$ to $z$ consisting of primal edges in $B(0;\mathrm{l}_1/5)\cap (-\Half^{-})\cap \de\E$,  $(0 \to z)\subseteq B(0;\mathrm{l}_1/5)\cap \Half$ is any path from $0$ to $z$ consisting of primal edges in $B(0;\mathrm{l}_1/5)\cap \Half\cap \de\E$.

 Corresponding to $\mathbb{F}_{ext}(z)$, we introduce
$$
\mathbb{F}^*_{ext}(z^*)= \begin{cases}
      \int_{(\de/2+i\de/2 \to z^*)\subseteq B(0;\mathrm{l}_1/5)\cap (-\Half^{-}+i\de/2) }\de^{-1/3}F^*_{ext}(x)dx& \Im z^*\leq \de/2,\\
      \int_{(\de/2+i\de/2 \to z^*)\subseteq B(0;\mathrm{l}_1/5)\cap (\Half+i\de/2) }\de^{-1/3}F^*_{ext}(x)dx& \Im z^*> \de/2,
    \end{cases}
$$
where $(\de/2+i\de/2 \to z^*)\subseteq B(0;\mathrm{l}_1/5)\cap (-\Half^{-}+i\de/2)$ is any path from $\de/2+i\de/2$ to $z^*$ consisting of dual edges in $B(0;\mathrm{l}_1/5)\cap (-\Half^{-} +i\de/2)\cap \de\E^*$, $(\de/2+i\de/2 \to z^*)\subseteq B(0;\mathrm{l}_1/5)\cap (\Half+i\de/2)$ is any path from $\de/2+i\de/2$ to $z^*$ consisting of dual edges in $B(0;\mathrm{l}_1/5)\cap (\Half+i\de/2)\cap \de\E^*$.

Next, let $H_{im}$ be the discrete harmonic function on $B(0;\mathrm{l}_1/5)\cap (-\Half^{-})\cap V_{\Omega_\de}$ such that
\begin{equation} \label{Himboundary}
H_{im}(z)= \begin{cases}
     \Im \mathbb{F}_{ext}(z)& \Im z=0,\\
       0& z\in \p B(0;\mathrm{l}_1/5)\cap (-\Half)\cap V_{\Omega_\de}.
    \end{cases}
\end{equation}

Now we extend $H_{im}(z) $ to $\Half$ as follows. For this purpose, define
\begin{equation} \label{himv}
h_{im}(v)= \begin{cases}
     \de^{1/3}\big(H_{im}(z+\de)-H_{im}(z)\big)/\de&  v=z+\de/2,\\
      \de^{1/3}\big(H_{im}(z+i\de)-H_{im}(z)\big)/(i\de)& v=z+i\de/2,
    \end{cases}
\end{equation}
and
\begin{equation} \label{himvexten}
h_{im,ext}(v)= \begin{cases}
      h_{im}(v)& \Im v\leq 0,\\
      \overline{h_{im}(\bar v)}& \Im v>0.
    \end{cases}
\end{equation}
Note that $\int_{z_1z_2z_3z_4} h_{im,ext}(z)dz=0$ for any square with four corners $z_1,z_2,z_3,z_4\in B(0;\mathrm{l}_1/5)\cap\de\Z^2$ and side length $\de$. Based on $h_{im,ext}(v)$, we can define the extension $H_{im,ext}(z) $ as follows,
\begin{equation} \label{Himextension}
H_{im,ext}(z)= \begin{cases}
      H_{im}(z)& \Im z\leq 0,\\
       \int_{(0 \to z)\subseteq B(0;\mathrm{l}_1/5)\cap \Half }h_{im,ext}(x)dx& \Im z> 0.
    \end{cases}
\end{equation}

$h_{im,ext}$ satisfies the following property.
\begin{lemma} \label{Himdiff}
If $|z|\leq \mathrm{l}_1(6^{-1}+60^{-1})$, then $h_{im,ext}(z) \preceq \de^{\al/2}$.
\end{lemma}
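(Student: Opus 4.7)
The plan is to bound $H_{im}$ on the boundary of the lower half-ball and then invoke the discrete harmonic gradient estimate of Lemma~\ref{asyhar}. First I would estimate $|\mathbb{F}_{ext}(z)|$ for $z$ on the real axis inside $B(0;\mathrm{l}_1/5)$. In the new coordinates the origin $\mathrm{v}_{1/3}$ lies at distance $[\de^{-2/3-\ep}]\de$, which is of order $\de^{1/3-\ep}$, below the segment $\mathrm{L}_1$, and by Condition~$\mathrm{C}'$ every other line segment of $\p_{ba,\de}\cup\p_{ab,\de}^*$ is at distance at least $c\mathrm{l}_1$ from $B(0;\mathrm{l}_1/5)$ for some absolute constant $c>0$. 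Consequently every edge center $v$ along the horizontal segment $[0,z]\subset\R\cap B(0;\mathrm{l}_1/5)$ satisfies $d_v\geq c'\de^{1/3-\ep}$. Applying \eqref{Finboun-b} of Lemma~\ref{initialb} gives $|F(v)|\preceq\de^{2/9+\ep/3}\log^2\de^{-1}$, and summing $\de^{-1/3}|F_{ext}|\cdot\de$ along a horizontal path of length at most $\mathrm{l}_1/5$ yields
\[
|\mathbb{F}_{ext}(z)|\preceq\de^{-1/9+\ep/3}\log^2\de^{-1},\qquad z\in\R\cap B(0;\mathrm{l}_1/5).
\]

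Second, the discrete maximum principle applied to the discrete harmonic function $H_{im}$ on $B(0;\mathrm{l}_1/5)\cap(-\Half^{-})$ with the boundary data prescribed by \eqref{Himboundary} (equal to $\Im\mathbb{F}_{ext}$ on the diameter and $0$ on the lower semicircle) gives $\sup|H_{im}|\preceq\de^{-1/9+\ep/3}\log^2\de^{-1}$ throughout the lower half-ball.

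Third, for any $z$ with $|z|\leq\mathrm{l}_1(6^{-1}+60^{-1})$, the Euclidean distance from $z$ to $\p\bigl(B(0;\mathrm{l}_1/5)\cap(-\Half^{-})\bigr)$ is at least $\mathrm{l}_1/60$. Applying Lemma~\ref{asyhar} to $H_{im}$ on the discrete ball $B(z;\mathrm{l}_1/60)$ produces the first-order difference bound
\[
|H_{im}(z+\de\be)-H_{im}(z)|\preceq(\de/\mathrm{l}_1)\sup|H_{im}|\preceq\de^{8/9+\ep/3}\log^2\de^{-1},\qquad\be=\be_1,\be_2,
\]
and the definition \eqref{himv} of $h_{im}$ then yields
\[
|h_{im}(v)|=\de^{-2/3}\bigl|H_{im}(z+\de\be)-H_{im}(z)\bigr|\preceq\de^{2/9+\ep/3}\log^2\de^{-1}.
\]
Since $\ep\leq\al/100$ and the critical bond-percolation arm exponents force $\al<4/9$, the right-hand side is $\preceq\de^{\al/2}$ for all sufficiently small $\de$. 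Finally, for $v$ in the upper half-ball the reflection prescription \eqref{himvexten} gives $h_{im,ext}(v)=\overline{h_{im}(\bar v)}$ with $|\bar v|=|v|$, so the bound transfers verbatim.

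The only substantive input is the near-boundary estimate \eqref{Finboun-b}; the rest is a routine application of the discrete maximum principle together with Lemma~\ref{asyhar}. The main thing to verify carefully is that Condition~$\mathrm{C}'$ really does keep all other boundary segments at distance of order $\mathrm{l}_1$ from the entire ball $B(0;\mathrm{l}_1/5)$, so that $d_v$ on the integration path is controlled purely by the distance to $\mathrm{L}_1$; this is the only place where the quantitative separation built into Condition~$\mathrm{C}'$ enters. I do not anticipate any other serious obstacle.
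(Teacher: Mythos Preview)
There is a genuine gap in Step~3. The function $H_{im}$ lives on the \emph{half}-ball $B(0;\mathrm{l}_1/5)\cap\{\Im z\le 0\}$, whose boundary consists of the semicircle \emph{and} the diameter $[-\mathrm{l}_1/5,\mathrm{l}_1/5]$ on the real axis. Your claim that every $z$ with $|z|\le\mathrm{l}_1(6^{-1}+60^{-1})$ lies at distance $\ge\mathrm{l}_1/60$ from this boundary is false: any such $z$ on (or near) the real axis sits on the boundary itself, so the interior gradient estimate of Lemma~\ref{asyhar} is unavailable there. If instead you intended to apply Lemma~\ref{asyhar} to the extension $H_{im,ext}$ on the full ball, then the problem shifts: $H_{im,ext}$ is \emph{not} discrete harmonic across the real axis, and you have not controlled its Laplacian there. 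Either way, the crucial case $\Im z$ small is not handled.

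The paper closes this gap by a different, sharper route. Because $F(v)$ is \emph{real} for $v\in\mathrm{L}_1$, Proposition~\ref{Tranimp} gives $|\Im F(v)|\preceq\de^{1/3+\al-2\ep}$ along the axis $\Im v=0$ (which sits at distance $\asymp\de^{1/3-\ep}$ from $\mathrm{L}_1$). Integrating, $H_{im}(z)=\Im\mathbb{F}_{ext}(z)\preceq\de^{\al-2\ep}$ on the diameter---a positive power of $\de$, as opposed to your $\de^{-1/9+\ep/3}\log^2\de^{-1}$ which diverges. The paper then uses the half-plane hitting distribution (Uchiyama) together with the horizontal translational invariance of the boundary data to show $H_{im}(z)-H_{im}(z-i\de)\preceq\de^{1+\al/2}$ at the diameter. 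This is exactly the weak Laplacian bound \eqref{weakLap} for $H_{im,ext}$ at $\Im z=0$, after which Lemma~\ref{asyhar} applies on the full ball and yields $h_{im,ext}\preceq\de^{\al/2}$.

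A secondary problem: your concluding inequality $\de^{2/9+\ep/3}\log^2\de^{-1}\preceq\de^{\al/2}$ requires $\al<4/9$, but the paper never establishes any upper bound on $\al$; it is merely some positive constant coming from \eqref{one} and \eqref{four}. So even if the gradient estimate were valid, the final step would be unjustified.
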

\begin{proof}
Note that $F(v)$ is real when $v\in \mathrm{L}_1$. It follows from Proposition \ref{Tranimp} that
$$
|F(v)-F(v+i[\de^{-2/3-\ep}]\de)|\preceq \de^{1/3+\al-2\ep},
$$
where $v\in B(0;\mathrm{l}_1/5)$ and $\Im v=0$. This implies
\beqn
H_{im}(z) \preceq \de^{\al-2\ep} \  \text{when} \ \Im z=0. \label{Himbound}
\eeqn
Let $z$ be any primal vertex such that $|z|\leq \mathrm{l}_1(6^{-1}+60^{-1})$ and $\Im z=0$. We consider the simple random walk $\mathrm{S}_{H_{im}}[0,\tau_{H_{im}}]$, where $\mathrm{S}_{H_{im}}(0)=z-i\de$ and $\tau_{H_{im}}$ is the smallest $n$ such that $\mathrm{S}_{H_{im}}(n)\in \p \big(B(0;\mathrm{l}_1/5)\cap (-\Half^{-})\cap V_{\Omega_\de}\big)$.
The probability that $\mathrm{S}_{H_{im}}(\tau_{H_{im}})=z+k\de$ is at most of the order $k^{-2}$ by Theorem 1.2 in~\cite{Uch10}.
Since $\Pro\big(\Im \mathrm{S}_{H_{im}}(\tau_{H_{im}}) \not=0\big) \preceq \de$, it follows from \eqref{Himbound}, Proposition \ref{Tranimp} and the above probability estimate that
\beq
H_{im}(z)-H_{im}(z-i\de) \preceq \de H_{im}(z)+\de^{2/3}\sum_{k=0}^{[\de^{-1}]} k^{-2}k\de^{1/3+\al-2\ep}\preceq \de^{1+\al/2}.
\eeq
So $H_{im,ext}$ satisfies \eqref{weakLap} of Lemma \ref{asyhar} in the appendix. Now we can apply Lemma \ref{asyhar} to complete the proof.
\end{proof}

However, we may have $\int_{z_1z_2z_3z_4} \big(F_{ext}(z)-ih_{im,ext}(z)\big)dz\not=0$ when $\Im z_1=0$, $z_2=z_1+\de$, $z_3=z_2+i\de$ and $z_4=z_1+i\de$. To overcome this difficulty, we extend $iH_{im}(z) $ to $\Half$ by the following method,
\begin{equation} \label{HIMEXT}
H_{im,EXT}(z)= \begin{cases}
     i H_{im}(z)& \Im z\leq 0,\\
       \int_{(0 \to z)\subseteq B(0;\mathrm{l}_1/5)\cap \Half }h_{im,EXT}(x)dx& \Im z> 0,
    \end{cases}
\end{equation}
where
\begin{equation} \label{himvEXT}
h_{im,EXT}(v)= \begin{cases}
      h_{Im}(v)& \Im v\leq 0,\\
    -h_{Im}(\bar v)& \Im v>0, \ v \ \text{is the center of horizontal edge},\\
    h_{Im}(\bar v)& \Im v>0, \ v \ \text{is the center of vertical edge}
    \end{cases}
\end{equation}
with
$$
h_{Im}(v)= \begin{cases}
     i\de^{1/3}\big(H_{im}(z+\de)-H_{im}(z)\big)/\de&  v=z+\de/2,\\
      \de^{1/3}\big(H_{im}(z+i\de)-H_{im}(z)\big)/\de& v=z+i\de/2.
    \end{cases}
$$

Now we can make use of $\mathbb{F}_{ext}(z)-H_{im,EXT}(z)$, whose imaginary part is zero when $\Im z=0$, to obtain the following lemma.
\begin{lemma} \label{initialbb}
There exist functions $r_F$ and $r_{F^*}$ such that
\beq
&&|r_F| \preceq \de^{1+\al/2}, \ |r_{F^*}| \preceq \de^{1+\al/2},\\
&&|F_{ext}(v)| \preceq \de^{1/3}(\log\de^{-1})^2, \ |F_{ext}^*(v)| \preceq \de^{1/3}(\log\de^{-1})^2 \\
&&|F_{ext}(v+\de\be)+r_F(v+\de\be)-F_{ext}(v)-r_F(v)| \preceq \de^{1+1/3}(\log\de^{-1})^2,\\
&&|F_{ext}^*(v+\de\be)+r_{F^*}(v+\de\be)-F_{ext}^*(v)-r_{F^*}(v)| \preceq \de^{1+1/3}(\log\de^{-1})^2,
\eeq
uniformly over  $v\in \big(B(0;\mathrm{l}_1/6)\cup \p B(0;\mathrm{l}_1/6)\big)$. In addition, $\Delta \big(\mathbb{F}_{ext}(z)-R_F(z)\big)=0$ and $\Delta \big(\mathbb{F}_{ext}^*(z^*)-R_{F^*}^*(z^*)\big)=0$ when $z\in B(0;\mathrm{l}_1/5)\cap \de\Z^2$ and $z^*\in B(\de/2+i\de/2;\mathrm{l}_1/5)\cap \big(\de\Z^2+(\de/2,\de/2)\big)$, where $R_F(z)$ and $R_{F^*}^*(z^*)$ are defined in the same way as $\mathbb{F}_{ext}(z)$ and $\mathbb{F}_{ext}^*(z^*)$ except that $F_{ext}(v)$ and $F_{ext}^*(v)$ are replaced by $r_F$ and $r_{F^*}$ respectively.
\end{lemma}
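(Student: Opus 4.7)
The plan is to exploit that $F_{ext}$ satisfies the discrete Cauchy--Riemann relation \eqref{CR} separately on $\{\Im z>0\}$ and $\{\Im z<0\}$: in the lower half by the defining identity for $F$, and in the upper half because the reflection $F_{ext}(v)=\overline{F(\bar v)}$ combined with the top/bottom swap of medial edges preserves the identity. Consequently $\mathbb{F}_{ext}$ has vanishing discrete Laplacian at every primal vertex $z$ with $\Im z\neq 0$; the only defect sits on the line $\Im z=0$ and is governed by $\Im F_{ext}(v)$ at medial $v$ adjacent to this line. By Proposition \ref{Tranimp} together with the reality of $F(v)$ at medial vertices on $\mathrm{L}_1$, this defect is $O(\de^{1/3+\al-2\ep})$ per straddling square.

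The correction $R_F$ is built to absorb exactly this line-concentrated defect. The definitions \eqref{himvEXT}--\eqref{HIMEXT} are engineered so that $H_{im,EXT}$ is discrete harmonic in $B(0;\mathrm{l}_1/5)\cap\de\Z^2$ away from $\Im z=0$ and carries on the line the same Laplacian as $\mathbb{F}_{ext}$. Taking $R_F$ to be (a suitable normalisation of) $H_{im,EXT}$ and letting $r_F$ be its associated edge function under the discrete antiderivative correspondence used to pass from $F_{ext}$ to $\mathbb{F}_{ext}$, one obtains $\Delta(\mathbb{F}_{ext}-R_F)=0$ throughout $B(0;\mathrm{l}_1/5)\cap\de\Z^2$ and analogously for the dual pair $\mathbb{F}_{ext}^*-R_{F^*}^*$. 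Lemma \ref{Himdiff} then yields $|r_F|\preceq\de^{1+\al/2}$ inside $B(0;\mathrm{l}_1/6+\mathrm{l}_1/60)$, provided $\ep$ is chosen so that $\al-2\ep>\al/2$; the same bound holds for $r_{F^*}$.

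With harmonicity in place, one applies Lemma \ref{asyhar} to $\mathbb{F}_{ext}-R_F$ on the smaller disk $B(0;\mathrm{l}_1/6)$. Its boundary values on $\partial B(0;\mathrm{l}_1/5)\cap\de\Z^2$ are $O((\log\de^{-1})^2)$ by Lemma \ref{initialb} applied in the lower half-plane and transferred to the upper half via the reflection definition of $F_{ext}$, while $R_F$ contributes only at the strictly smaller order $\de^{\al/2}$. The discrete gradient estimate in Lemma \ref{asyhar} then furnishes the claimed $O(\de^{1+1/3}(\log\de^{-1})^2)$ bound on the first difference of $F_{ext}+r_F$, and the pointwise bound $|F_{ext}(v)|\preceq\de^{1/3}(\log\de^{-1})^2$ follows from Lemma \ref{initialb} applied in whichever half-plane contains $v$. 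The parallel argument on the dual lattice delivers the corresponding conclusions for $F_{ext}^*$, $r_{F^*}$ and $R_{F^*}^*$.

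The main obstacle will be verifying that the specific sign convention in \eqref{himvEXT}---a sign flip on horizontal edges but not vertical ones---is precisely what cancels (rather than merely reduces) the defect. Unpacking the discrete CR equation on a lattice square straddling $\Im z=0$ shows that the contribution of $\Im F_{ext}$ across reflected medial edges enters with opposite signs depending on edge orientation, exactly matching the prescription \eqref{himvEXT}; getting the book-keeping right so that $\Delta(\mathbb{F}_{ext}-R_F)$ vanishes identically (and not merely up to $o(\de^{2})$) is the delicate step that determines whether Lemma \ref{asyhar} can be applied without loss.
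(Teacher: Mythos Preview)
Your overall strategy matches the paper's, but there are two genuine gaps.

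First, the paper does \emph{not} achieve exact cancellation with $H_{im,EXT}$. What it shows is only
\[
|\Delta(\mathbb{F}_{ext}-H_{im,EXT})(z)|\preceq \de^{1+\al/2}\quad\text{when }\Im z=0,
\]
i.e.\ the weak Laplacian condition \eqref{weakLap} of Lemma~\ref{asyhar}. Your worry in the last paragraph---that Lemma~\ref{asyhar} cannot be applied unless the Laplacian vanishes identically---is unfounded: the Remark following Lemma~\ref{asyhar} says that under \eqref{weakLap} the decomposition $G=H_1+R$ still gives $R\preceq\de^{\al}$ and $\nabla_u R\preceq\de^{1+\al}$. The function $R_F$ that makes $\mathbb{F}_{ext}-R_F$ \emph{exactly} harmonic is obtained by absorbing this residual $R$ (in addition to $H_{im,EXT}$), and $r_F$ is its edge version; exact harmonicity is thus by construction, not a miraculous cancellation from the sign convention in \eqref{himvEXT}.

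Second, and more substantively, your control of the line defect is incomplete. The Laplacian at $z$ with $\Im z=0$ involves $\Im F$ at the \emph{vertical} medial edge $z-i\de/2$, not only at horizontal edges on the line. Reality of $F$ on $\mathrm{L}_1$ plus Proposition~\ref{Tranimp} handles the horizontal edges, but gives nothing for vertical ones. The paper closes this gap by invoking the reflectional argument behind Proposition~\ref{ref-inv} (which yields $\Im\mathrm{F}(v)=o(\de^{1/3+\al-2\ep})$) together with the identities $F=\mathrm{F}+\mathrm{F}_i$ and $F^*=\mathrm{F}-\mathrm{F}_i+O(\de^{(2/3+\ep)(2-\varepsilon)})$ to deduce $\Im F(v+\de e^{-i\pi/4}/\sqrt 2)\preceq\de^{1/3+\al-2\ep}$ for $v$ with $\Im v=0$. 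This step is not mentioned in your proposal and cannot be recovered from Proposition~\ref{Tranimp} and reality on $\mathrm{L}_1$ alone. Combined with Lemma~\ref{Himdiff}, it is precisely what yields the $\de^{1+\al/2}$ bound on the residual Laplacian along the line.
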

\begin{proof}
The proof about the properties of $F$ is similar to that of \eqref{Finboun} and \eqref{Fdiffinboun} by considering $\mathbb{F}_{ext}(z)-H_{im,EXT}$. However we have to point out two things. The first thing is that for any point $z\in \big(\p B(0;\mathrm{l}_1/5)\cap(-\Half)\big) \cup [-\mathrm{l}_1/5,\mathrm{l}_1/5]$, there exists $z_0\in \Omega_0$ whose distance to $\p \Omega_0$ is exactly of constant order such that
$\mathbb{F}_{ext}(z)-\mathbb{F}_{ext}(z)\preceq \de^{2/3}\sum_{k=[\de^{-2/3-\ep}]}^{[\de^{-1}]} k^{-1/3}\log^2\de $ by \eqref{Finboun-b}. This combined with the definition of $H_{im,EXT}(z)$ shows that $\mathbb{F}_{ext}(z)-H_{im,EXT}(z)\preceq \log^2 \de$ for $z\in \big(\p B(0;\mathrm{l}_1/5)\cap(-\Half)\big) \cup [-\mathrm{l}_1/5,\mathrm{l}_1/5]$.
 The second one is to check the condition in Lemma \ref{asyhar}, especially $|\Delta \big(\mathbb{F}_{ext}(z)-H_{im,EXT}(z)\big)| \preceq \de^{1+\al/2}$  when $\Im z=0$. Note that $F(v)$ is real when $v\in \mathrm{L}_1$. It follows from Proposition \ref{Tranimp} that
$$
|F(v)-F(v+i[\de^{-2/3-\ep}]\de)|\preceq \de^{1/3+\al-2\ep},
$$
where $v\in B(0;\mathrm{l}_1/6)\cap V_{\Omega_\de^\diamond}$ and $\Im v=0$. On the other hand, the proof of Proposition \ref{ref-inv} shows that $\Im \mathrm{F}(v)=o(\de^{1/3+\al-2\ep})$. Using $F(v)=\mathrm{F}(v)+\mathrm{F}_i(v)$ and $F^*(v)=\mathrm{F}(v)-\mathrm{F}_i(v)+O(\de^{(2/3+\ep)(2-\varepsilon)})$, we can obtain that when $\Im v=0$,
$
\Im F(v+\de e^{-i\pi/4}/\sqrt 2) \preceq \de^{1/3+\al-2\ep}
$
by Proposition \ref{Tranimp}. This combined with Lemma \ref{Himdiff} implies $|\Delta \big(\mathbb{F}_{ext}(z)-H_{im,EXT}(z)\big)| \preceq \de^{1+\al/2}$ when $\Im z=0$. Applying the same argument leading to Lemma \ref{asyhar} in the appendix, we can complete the proof about the properties of $F$. Similarly, we can derive the properties of $F^*$.
\end{proof}

Through $\phi=\inf\{j\geq 0: \mathrm{S}_j\not\in B(0;\mathrm{l}_1/5)\}$ and
$$
\mathbf{F}_{ext}(z)=\Ex \int_{z\to \mathrm{S}_{\phi}} \de^{-1}\big( \mathrm{F}_{ext}(z_1)+2^{-1}r_F(z_1)+2^{-1}r_{F^*}(z_1)+r_{FF^*}(z_1)\big)^3dz_1
$$
when $z\in \de \Z^2\cap \big(B(0;\mathrm{l}_1/5)\cup \p B(0;\mathrm{l}_1/5)\big)$, where $r_{FF^*}(v)=\big(F_{ext}(v)+F_{ext}^*(v)\big)/2- \mathrm{F}_{ext}(v)$ and $z\to \mathrm{S}_\phi$ is the path of $\{\mathrm{S}_n\}$ from $\mathrm{S}_0=z$ to $\mathrm{S}_{\phi}$, we can obtain the following result.

\begin{proposition} \label{Fboundary}
Under {\it Condition} $\mathrm{C}$ and {\it Condition} $\mathrm{C}'$, and the assumption that $|\mathrm{c}|+|\mathrm{c}_i|\not=\infty$,
$$|\de^{-1/3}\mathrm{F}_{ext}(v)| \preceq 1, \  |\de^{-1/3}\mathrm{F}_{i,ext}(v)| \preceq 1$$
uniformly over $v\in \big(B(0;\mathrm{l}_1/6)\cup \p B(0;\mathrm{l}_1/6)\big)$.
\end{proposition}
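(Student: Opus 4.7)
The plan is to mirror the proof of Lemma~\ref{Fimboun}, but with the random walk stopped at the exit of the disk $B(0;\mathrm{l}_1/5)$ and with the observable replaced by its reflected extension. The reflection in \eqref{extension}--\eqref{extensionstar} together with the corrections $r_F,r_{F^*},r_{FF^*}$ from Lemma~\ref{initialbb} is engineered so that the integrand inside $\mathbf{F}_{ext}$ has an essentially vanishing discrete Laplacian: for any unit square $z_1z_2z_3z_4$ that does not straddle the real axis, the arguments of Proposition~\ref{sum-int-lap} applied to $\mathrm{F}_{ext}+2^{-1}r_F+2^{-1}r_{F^*}+r_{FF^*}$ give a square integral of order $(\de/\mathrm{d}_0)^{2+2\al-\varepsilon}$, while for a square straddling $\{\Im z=0\}$ the reflection symmetry plus the boundary identity $\Im F_{ext}(v)\preceq \de^{1/3+\al-2\ep}$ verified inside Lemma~\ref{initialbb} keeps the integral of comparable size. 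Summing as in \eqref{sum-of-lap}, one obtains $\sum_{z}|\Delta\mathbf{F}_{ext}(z)|\preceq \de^{2\ep/3-\varepsilon}r_\de^{-6}$.

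Next I would establish an analogue of Lemma~\ref{zeroint} for $\mathbf{F}_{ext}$ on $\p B(0;\mathrm{l}_1/5)$. Given $z\in \p B(0;\mathrm{l}_1/5)\cap\de\Z^2$, the harmonic measure from $z$ on $\p B(0;\mathrm{l}_1/5)$ may be computed in both the original and the reflected halves; using that $\Im \mathrm{F}(v)^3\,dv$ along $\mathrm{L}_1$ vanishes (Proposition~\ref{ref-inv}) together with Proposition~\ref{boundofin} and the estimates \eqref{int-lk-1}--\eqref{int-lk-2} recycled for the arcs of $\p B(0;\mathrm{l}_1/5)$ meeting $\p\hat\Omega_\de$, one converts $\mathbf{F}_{ext}(z)$ into a line integral that is bounded by Proposition~\ref{conformal1} together with the assumption $|\mathrm{c}|+|\mathrm{c}_i|<\infty$. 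This yields $|\mathbf{F}_{ext}(z)|\preceq 1$ on the boundary disk, and by the Laplacian estimate above, throughout $B(0;\mathrm{l}_1/5)$.

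With $\mathbf{F}_{ext}$ uniformly bounded in $B(0;\mathrm{l}_1/5)$, I would repeat the two-walk coupling argument of Lemma~\ref{Fimboun} for adjacent vertices $z_1,z_2\in B(0;\mathrm{l}_1/6)\cap \de\Z^2$: couple $\mathrm{S},\mathrm{S}'$ by reflection across the bisector of $[\mathrm{S}_1,z_2]$ until they meet, and split the expectation according to whether the coupling time $\sigma$ is smaller or larger than $\phi_{\mathbf{n}_1}$ with $\mathbf{n}_1\asymp\log(\mathrm{l}_1/\de)$. The large-$\sigma$ contribution uses Proposition~\ref{conformal1} (in the extended domain) plus the logarithmic winding bound \eqref{windexp}, while the small-$\sigma$ contribution is estimated by \eqref{impsq} applied to $\mathrm{F}_{ext}$ in place of $\mathrm{F}$, valid because Lemma~\ref{initialbb} provides exactly the log-bounds required. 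Combining these shows $|\mathbf{F}_{ext}(z_1)-\mathbf{F}_{ext}(z_2)-\mathrm{F}_{ext}(v_1)^3\be|=o(\de)$. Invoking Lemma~\ref{asyhar} on the bounded, nearly harmonic function $\mathbf{F}_{ext}$ on a disk of fixed radius inside $B(0;\mathrm{l}_1/5)$ gives $|\mathbf{F}_{ext}(z_1)-\mathbf{F}_{ext}(z_2)|\preceq \de$, hence $|\Im\mathrm{F}_{ext}(v_1)^3\be|\preceq \de$. Choosing $\be\in\{1,i\}$ and using Proposition~\ref{TranF} (to replace $\de e^{i\pi/4}/\sqrt2$ increments) yields $|\mathrm{F}_{ext}(v_1)|\preceq \de^{1/3}$; the same argument with $\mathrm{F}_i$ (where \eqref{sumidi} replaces \eqref{sumid}) gives $|\mathrm{F}_{i,ext}(v_1)|\preceq \de^{1/3}$.

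The main obstacle will be the boundary-value analysis on $\p B(0;\mathrm{l}_1/5)$: unlike in the interior situation of Lemma~\ref{Fimboun} where $\mathbf{F}_\de$ vanishes exactly on $\p\hat\Omega_\de$, here $\mathbf{F}_{ext}$ is merely a stopping-disc expectation, and bounding it requires threading Proposition~\ref{conformal1} through a path that may cross the reflected copy of $\p\hat\Omega_\de$. Controlling the contribution of the small semi-disk near $\mathrm{L}_1$, where $\mathrm{F}_{ext}$ is a reflection rather than a genuinely-defined observable, and verifying that the cross-axis discrete Laplacian of $\mathbf{F}_{ext}$ remains small (this is exactly where the correction terms $r_F, r_{F^*}$ and the combined correction $r_{FF^*}$ in the definition of $\mathbf{F}_{ext}$ are essential), will be the most delicate parts.
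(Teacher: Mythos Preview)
Your overall plan---Laplacian estimate, an analogue of Lemma~\ref{zeroint}, then the coupling argument of Lemma~\ref{Fimboun}---is exactly the route the paper takes. However, you have misidentified the difficult step. By definition $\phi=\inf\{j\ge 0:\mathrm{S}_j\notin B(0;\mathrm{l}_1/5)\}$, so for $z$ on the stopping circle $\p B(0;\mathrm{l}_1/5)$ one has $\phi=0$ and hence $\mathbf{F}_{ext}(z)=0$ identically. The situation is therefore completely parallel to Lemma~\ref{zeroint}: $\mathbf{F}_{ext}$ vanishes on the boundary of the stopping disk, the Laplacian sum $\sum_z|\Delta\mathbf{F}_{ext}(z)|$ is $o(1)$ (this is where Lemma~\ref{initialbb} and the correction terms $r_F,r_{F^*},r_{FF^*}$ enter, as you correctly observe), and Theorem~1.4.6 in \cite{Law91} then gives $\mathbf{F}_{ext}(z)\to 0$ uniformly on the whole disk. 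Your elaborate boundary-value analysis invoking Proposition~\ref{boundofin} and \eqref{int-lk-1}--\eqref{int-lk-2} is unnecessary, and the ``main obstacle'' you flag at the end does not exist.

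Note that you need the full conclusion $\mathbf{F}_{ext}\to 0$, not just $|\mathbf{F}_{ext}|\preceq 1$: in the coupling step, the analogue of \eqref{large1} requires $\Ex\big(\int_{\mathrm{S}_{\phi_{\mathbf{n}_1}}\to\mathrm{S}_\phi}\de^{-1}(\mathrm{F}_{ext}+\cdots)^3\,dz\,\big|\,\mathrm{S}_{\phi_{\mathbf{n}_1}}\big)\to 0$, which is precisely $\mathbf{F}_{ext}(\mathrm{S}_{\phi_{\mathbf{n}_1}})\to 0$. The one place that still needs a word is the analogue of the step in \eqref{large2} invoking Proposition~\ref{conformal1}: you need loop-erased line integrals of $\de^{-1}(\mathrm{F}_{ext}+\cdots)^3$ inside $B(0;\mathrm{l}_1/6)$ to be $O(1)$. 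In the lower half-disk this is Proposition~\ref{conformal1} itself; in the reflected upper half, the identity $\mathrm{F}_{ext}(v)=\overline{\mathrm{F}(\bar v)}$ reduces the integral to one in the lower half, with the cross-axis discrepancy absorbed by the $O(\de^{1+\al/2})$ corrections of Lemma~\ref{initialbb}.
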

\begin{proof}
Following the proof of Lemma \ref{zeroint} and making use of Lemma \ref{initialbb}, we can show that $\lim_{\de\to 0}\mathbf{F}_{ext}(z)=0$ uniformly over $z\in \de \Z^2\cap \big(B(0;\mathrm{l}_1/5)\cup \p B(0;\mathrm{l}_1/5)\big)$. Then by Lemma \ref{initialbb} and the same argument in the proof of Lemma \ref{Fimboun} we can complete the proof.
\end{proof}

 So based on Proposition \ref{Fboundary} we have the following result whose proof is omitted since it is exactly the same as that of Proposition \ref{improveb}.

\begin{proposition} \label{difFboundary}
Under {\it Condition} $\mathrm{C}$, {\it Condition} $\mathrm{C}'$ and the assumption that $|\mathrm{c}|+|\mathrm{c}_i|\not=\infty$,
\beqn
&&|\mathrm{F}(v+\de\be)-\mathrm{F}(v)| \preceq \de^{1+1/3}, \label{difFboundaryf1}\\
&&|\mathrm{F}_i(v+\de\be)-\mathrm{F}_i(v)| \preceq \de^{1+1/3}, \label{difFboundaryf1i}\\
&&|\mathrm{F}_{+}(v+ \de e^{\pm i\pi/4}/\sqrt 2)-\mathrm{F}_{+}(v)| \preceq \de^{1+1/3}, \label{difFboundary2}
\eeqn
uniformly over $v\in \{z:|z-\mathrm{v}_{1/3}|<\mathrm{l}_1/6,\Im(z-\mathrm{v}_{1/3})\leq 0\}\cap V_{\Omega_\de^\diamond}\cap \mathcal{K}_0$, where $\be=\be_1$ or $\be_2$, (in terms of complex number, $\be=1$ or $i$), and $\mathcal{K}_0$ is any compact subset of $B(\mathrm{v}_{1/3};\mathrm{l}_1/6)$.
\end{proposition}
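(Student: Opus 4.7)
The plan is to run the argument of Proposition \ref{improveb} on the reflected/extended setup constructed in Subsection \ref{Con-F-boudary}, replacing the two inputs from the interior case (Lemma \ref{Fimboun} and the standard discrete harmonic integral $\mathbb{F}$) by their boundary analogues (Proposition \ref{Fboundary} and the corrected extensions $\mathbb{F}_{ext}$, $\mathbb{F}_{ext}^*$ from Lemma \ref{initialbb}). Fix $v$ in the half-disk $\{z: |z-\mathrm{v}_{1/3}|<\mathrm{l}_1/6,\ \Im(z-\mathrm{v}_{1/3})\leq 0\}$, let $z$ be a primal endpoint of $E_v$, and let $d_z$ be its distance to $\p B(\mathrm{v}_{1/3};\mathrm{l}_1/5)\cup\mathrm{L}_1^{refl}$, where $\mathrm{L}_1^{refl}$ is the reflected real axis used to define $F_{ext}$. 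By Condition~$\mathrm{C}'$ we have $\liminf_{\de\to 0}d_z>0$.

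First I would form the centered function
\[
\mathbb{F}_{c}(z_1)\;:=\;\bigl(\mathbb{F}_{ext}(z_1)+\tfrac{1}{2}R_F(z_1)+\tfrac{1}{2}R_{F^*}^*(z_1)\bigr)\;-\;\bigl(\mathbb{F}_{ext}(z)+\tfrac{1}{2}R_F(z)+\tfrac{1}{2}R_{F^*}^*(z)\bigr),
\]
viewed on $B(z;d_z/2)\cap\de\Z^2$. By Lemma \ref{initialbb}, $\mathbb{F}_{c}$ is exactly discrete harmonic on this ball (the correction terms $R_F,R_{F^*}^*$ were introduced precisely for this). Proposition \ref{Fboundary} and the equivalence of $F_{ext}$, $F_{ext}^*$ with $\mathrm{F}_{ext}$, $\mathrm{F}_{ext}^*$ modulo an $O(\de^{2-\varepsilon})$ error (cf.\ Proposition \ref{F12}), together with the pointwise bounds $|r_F|,|r_{F^*}|\preceq \de^{1+\al/2}$, give $|\mathbb{F}_{c}(z_1)|\preceq 1$ uniformly on $B(z;d_z/2)$. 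Applying Lemma \ref{asyhar} from the appendix then upgrades this sup-norm bound to the derivative estimate
\[
|F_{ext}(v+\de\be)-F_{ext}(v)|\;\preceq\;\de^{1+1/3},
\]
with $\be\in\{1,i\}$, and the analogous bound for $F_{ext}^*$.

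Next I would transfer these estimates to $\mathrm{F}$ and $\mathrm{F}_i$. By the identities $F=\mathrm{F}+\mathrm{F}_i$ and $F^*=\mathrm{F}-\mathrm{F}_i+O(\de^{2-\varepsilon})$ of Proposition \ref{F12} (which carry over to the reflected domain by the very construction \eqref{extension}--\eqref{extensionstar}), one obtains
\[
|\mathrm{F}(v+\de\be)-\mathrm{F}(v)|\preceq \de^{1+1/3},\qquad |\mathrm{F}_i(v+\de\be)-\mathrm{F}_i(v)|\preceq \de^{1+1/3},
\]
which are \eqref{difFboundaryf1} and \eqref{difFboundaryf1i}. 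For the diagonal increment of $\mathrm{F}_{+}$ in \eqref{difFboundary2}, I would copy verbatim the duality computation at the end of Proposition \ref{improveb}: write
\[
\mathrm{F}_{+}(v)-\mathrm{F}_{+}^{*}\!\bigl(v+e^{-i\pi/4}\de/\sqrt{2}\bigr)\;=\;\mathrm{F}(v)-\mathrm{F}\!\bigl(v+\sqrt{2}e^{-i\pi/4}\de\bigr),
\]
telescope the right-hand side into two horizontal/vertical increments each controlled by \eqref{difFboundaryf1}, and then use $\mathrm{F}_{+}^{*}(w)=\mathrm{F}_{+}(w)+O(\de^2)$ from Proposition \ref{F12}. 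This yields the $\de^{1+1/3}$ bound on the $e^{\pm i\pi/4}$-increment of $\mathrm{F}_{+}$.

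The main technical obstacle, as in Subsection \ref{Con-F-boudary}, is verifying that the reflected function satisfies the weak Laplacian condition of Lemma \ref{asyhar} \emph{across the reflection line} $\mathrm{L}_1^{refl}$: the discrete square integrals $\int_{z_1z_2z_3z_4}F_{ext}(z)\,dz$ straddling the boundary do not vanish a priori and must be absorbed into the $r_F$, $r_{F^*}$ corrections. This is precisely the role of the functions $H_{im,EXT}$ and $h_{im,EXT}$ constructed in \eqref{HIMEXT}--\eqref{himvEXT}, and of Lemma \ref{Himdiff}; once their size is controlled by $\de^{\al/2}$, the rest of the argument is essentially the interior proof of Proposition \ref{improveb}. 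No new ingredient beyond Propositions \ref{Fboundary}, \ref{F12}, \ref{TranF} and Lemmas \ref{initialbb}, \ref{Himdiff}, \ref{asyhar} is needed.
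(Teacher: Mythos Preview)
Your proposal is correct and follows the same route as the paper, which in fact omits the proof entirely and simply states that it is identical to that of Proposition~\ref{improveb}, with Proposition~\ref{Fboundary} replacing Lemma~\ref{Fimboun} as the uniform bound on $\de^{-1/3}\mathrm{F}_{ext}$. One small slip: the centered function you form should be $\mathbb{F}_{ext}(z_1)-R_F(z_1)$ (with a minus sign, and without mixing in the dual-lattice object $R_{F^*}^*$), since Lemma~\ref{initialbb} asserts $\Delta(\mathbb{F}_{ext}-R_F)=0$ on the primal lattice; your sign and the inclusion of $R_{F^*}^*$ are off, but this does not affect the argument once corrected.
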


\begin{remark}
Rigorously, in Proposition \ref{difFboundary}, all $\mathrm{F}$, $\mathrm{F}_i$ and $\mathrm{F}_{+}$ should be replaced with the sum of the respective $\mathrm{F}$, $\mathrm{F}_i$ and $\mathrm{F}_{+}$ and a remainder function with order $\de^{1/3+\al/2}$. However, we only apply Proposition \ref{difFboundary} to derive the precompactness of $\mathrm{F}$ and $\mathrm{F}_i$, so we can ignore this remainder function.
\end{remark}

Now we can present the convergence of $\mathrm{F}$ in any compact subset $\mathcal{K}$ of $\Omega_0$ whose boundary contains part of $\mathrm{L}_1$,

\begin{proposition} \label{convFb}
Suppose $\Omega_0$ is a bounded simply connected domain in $\C$ with two points $a$ and $b$ on its boundary. Assume that $(\Omega_\de^\diamond,a_\de^\diamond,b_\de^\diamond)$ is a family of Dobrushin domains converging to $(\Omega_0,a,b)$ in the Carath\'eodory sense as $\de\to 0$. Suppose $\mathrm{v}_{1/3}$ is the origin of $\C^2$, the horizontal line passing through  $\mathrm{v}_{1/3}$ and $z_{1/3}$ is the real axis and $\Im \mathrm{v}_{1,c}>0$. Under {\it Condition} $\mathrm{C}$ and {\it Condition} $\mathrm{C}'$,
as $\de\to 0$,
\beqn
\de^{-1/3}\mathrm{F}(v) - \mathrm{c}^{1/3}\big(\mathbf{F}'(v)\big)^{1/3}\to 0, \label{convFb1} \\
\de^{-1/3}\mathrm{F}_i(v)\to 0, \label{convFb1i}\\
\de^{-1/3}F(v) - \mathrm{c}^{1/3}\big(\mathbf{F}'(v)\big)^{1/3} \to 0, \label{convFb2}
\eeqn
uniformly over $v\in  \{z:|z-\mathrm{v}_{1/3}|<\mathrm{l}_1/6,\Im(z-\mathrm{v}_{1/3})\leq 0\}\cap \overline{\Omega_0}$. In addition,
$$0<\mathrm{c}<\infty,\ \mathrm{c}_i=0$$ and
$$ \de^{-1/3}F(v) \to \mathrm{c}^{1/3}\big(\mathbf{F}'(v)\big)^{1/3}$$
uniformly over any $v$ in the line segment $\mathrm{v}_{1,c}b_\de$ such that $ \liminf_{\de\to 0}|v-b_\de|>0$.
\end{proposition}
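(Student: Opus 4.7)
The plan is to bootstrap from the interior convergence in Proposition~\ref{convF} by using the reflection extension and the precompactness already set up in Subsection~\ref{Con-F-boudary}. First, I would note that Propositions~\ref{Fboundary} and~\ref{difFboundary} together with Proposition~\ref{F12} give that $\{\de^{-1/3}\mathrm{F}_{ext}\}$, $\{\de^{-1/3}\mathrm{F}_{i,ext}\}$, and the corresponding $\de^{-1/3}\mathrm{F}_{+}$-type family are uniformly bounded and uniformly Lipschitz on $\{z:|z-\mathrm{v}_{1/3}|\le \mathrm{l}_1/6,\ \Im(z-\mathrm{v}_{1/3})\le 0\}\cap V_{\Omega_\de^\diamond}$. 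By Arzel\`a--Ascoli, from any sequence $\de_n\to 0$ we can extract a subsequence along which $\de_n^{-1/3}\mathrm{F}$ and $\de_n^{-1/3}\mathrm{F}_i$ converge uniformly on that closed region to continuous functions $\mathrm{G}$ and $\mathrm{G}_i$. Inside $\Omega_0$, Proposition~\ref{convF} forces $\mathrm{G}(v)=\mathrm{c}^{1/3}(\mathbf{F}'(v))^{1/3}$ and $\mathrm{G}_i\equiv 0$; by continuity both identifications propagate to the boundary piece $\{\Im(z-\mathrm{v}_{1/3})=0\}$ inside the half-disk. Since every subsequential limit is the same, the full families converge, giving \eqref{convFb1} and \eqref{convFb1i}.

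Next, to obtain \eqref{convFb2}, I would combine \eqref{decom-F} with the a priori bound $|F_{ex}(e_v)|\preceq \Pro(\mathcal{E}_v)\preceq (\de/d_v)^{2-\varepsilon}=o(\de^{1/3})$, so that $\de^{-1/3}F(v)=\de^{-1/3}\mathrm{F}(v)+\de^{-1/3}\mathrm{F}_i(v)+o(1)$, and apply \eqref{convFb1}--\eqref{convFb1i}. For $\mathrm{c}_i=0$ I would simply invoke the reflection/conjugation identity \eqref{conjugate} already used inside the proof of Proposition~\ref{convF}; the argument does not require interior compactness and remains valid here (it was actually established under the assumption $|\mathrm{c}_i|\neq\infty$, which is part of the running hypothesis). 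This also closes the loop so that the two proposition statements in Section~\ref{Con-F-boudary} are now unconditional, since having proved $\mathrm{c}_i=0$ removes that side-condition.

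The main obstacle is showing $0<\mathrm{c}<\infty$; precompactness alone only tells us $|\mathrm{c}|\le\infty$, and one must rule out both the trivial limit $\mathrm{c}=0$ and the degenerate case $|\mathrm{c}|=\infty$. For finiteness, I would re-scale by $|\mathrm{c}_\de|$ (as already done once for $\mathrm{c}_i$ in the proof of Proposition~\ref{convF}) and observe that otherwise the rescaled $\mathrm{F}/|\mathrm{c}_\de|^{1/3}$ would have to converge uniformly to the identically zero function on every compact subset, which contradicts the definition \eqref{c-de-ci} of $\mathrm{c}_\de$ as a non-degenerate discrete line integral. For positivity, the cleanest route is to use the one-arm half-plane estimate \eqref{halfone}, which gives $F(v)\asymp \de^{1/3}(\mathrm{l}_1)^{-1/3}$ on primal edges adjacent to $\mathrm{L}_1$ away from $b_\de$ and $\mathrm{v}_{1,c}$, with a real positive value because $W_{\ga_\de}(e_v,e_b)$ is determined (up to $2\pi k$) by the boundary winding; plugging this into \eqref{c-de-ci} yields $\liminf_{\de\to 0}\mathrm{c}_\de>0$ since the contribution on the segment near $\mathrm{L}_1$ is a Riemann sum for a positive integrand.

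Finally, for uniform convergence of $\de^{-1/3}F(v)$ along the line segment $\mathrm{v}_{1,c}b_\de$, I would use the extension $F_{ext}$ defined in \eqref{extension}: Lemma~\ref{initialbb} and Proposition~\ref{Fboundary} give that $\de^{-1/3}F_{ext}$ is uniformly bounded and equicontinuous in a neighbourhood of $\mathrm{L}_1$; the two subsequential limits coming from $\Im v\ge 0$ and $\Im v\le 0$ are conjugates, and by \eqref{convFb2} both agree with $\mathrm{c}^{1/3}(\mathbf{F}'(v))^{1/3}$ on the open half-disk, so they agree with $\mathrm{c}^{1/3}(\mathbf{F}'(v))^{1/3}$ on $\mathrm{L}_1$ itself, which is real and positive there by the mapping properties of $\mathbf{F}$ onto $\R\times(0,1)$. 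This delivers the uniform convergence of $\de^{-1/3}F(v)$ on the required subsegment of $\mathrm{L}_1$.
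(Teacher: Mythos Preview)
Your treatment of \eqref{convFb1}--\eqref{convFb2} and of $\mathrm{c}_i=0$ is fine and matches the paper: precompactness from Propositions~\ref{Fboundary}--\ref{difFboundary}, identification of subsequential limits via Proposition~\ref{convF} on interior compacts, and continuous extension to the flat boundary piece. (A small inaccuracy: $F=\mathrm{F}+\mathrm{F}_i$ exactly by \eqref{Fie}, so you do not need to peel off $F_{ex}$.)

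The genuine gap is in your argument for $0<\mathrm{c}<\infty$. For finiteness, your claim that ``the rescaled $\mathrm{F}/|\mathrm{c}_\de|^{1/3}$ would have to converge to the identically zero function'' is false: after rescaling, the new line integral equals $1$, so the rescaled family converges to $(\mathbf{F}')^{1/3}$, not zero. No contradiction arises from the definition of $\mathrm{c}_\de$ alone. The paper closes this by bringing in a \emph{boundary} input you are missing, namely Lemma~\ref{boundarm}: for $v\in\mathrm{L}_1$ away from the endpoints, $\Pro(v\in\ga_\de)\asymp\de^{1/3}$. One first transfers the observable from $\mathrm{v}_{1/3}$ to $\mathrm{v}_{1,c}$ via Proposition~\ref{Tranimp} (the estimate $|\de^{-1/3}F(\mathrm{v}_{1/3})-\de^{-1/3}F(\mathrm{v}_{1,c})|\preceq\de^{\al-2\ep}$), so that $\de^{-1/3}F(\mathrm{v}_{1,c})\to\mathrm{c}^{1/3}(\mathbf{F}'(\mathrm{v}_{1,c}))^{1/3}$; Lemma~\ref{boundarm} then gives $\mathrm{c}>0$. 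Conversely, if $|\mathrm{c}_\de|\to\infty$, one rescales by $\mathrm{c}_\de$ (after first disposing of the case $|\mathrm{c}_{i\de}|/|\mathrm{c}_\de|\to\infty$), reruns the same convergence at $\mathrm{v}_{1,c}$ for the rescaled observable, and deduces $\de^{-1/3}\Pro(\mathrm{v}_{1,c}\in\ga_\de)\to\infty$, contradicting Lemma~\ref{boundarm}. Your attempt to read positivity directly out of \eqref{c-de-ci} also does not work as written: $\mathrm{c}_\de$ is an integral of $\mathrm{F}^3$ (not $F^3$) along a specific curve near $\p B(b_\de;\de^{1/3})$, not along $\mathrm{L}_1$, so the half-plane one-arm estimate on $\mathrm{L}_1$ does not feed in directly.

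Finally, for the uniform convergence along $\mathrm{v}_{1,c}b_\de$, your $F_{ext}$-equicontinuity route can be made to work, but the paper's argument is shorter: once convergence at $\mathrm{v}_{1/3}$ is known, Proposition~\ref{Tranimp} immediately carries it to $\mathrm{v}_{1,c}$ (and by the same token to any $v$ on the segment with $\liminf|v-b_\de|>0$).
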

\begin{proof}
First, we assume that  $|\mathrm{c}|\not=\infty$.
The proof of \eqref{convFb1} and \eqref{convFb1i}   is the same as that of Proposition \ref{convF}, so we omit it. The proof of \eqref{convFb2} follows from the identity $F(v)=\mathrm{F}(v) +\mathrm{F}_i(v) $. We focus on the proof of the second part. Since
\beqn
|\de^{-1/3}F(\mathrm{v}_{1/3})-\de^{-1/3}F(\mathrm{v}_{1,c})| \preceq \de^{-1/3-2/3-\ep}\de^{1+\al-\ep}=\de^{\al-2\ep}  \label{estimateFv}
\eeqn
 by Proposition \ref{Tranimp}, we obtain that
$$
\de^{-1/3}F(\mathrm{v}_{1,c}) \to \mathrm{c}^{1/3}\big(\mathbf{F}'(\mathbf{v}_{1,c})\big)^{1/3}.
$$
By \eqref{halfone} and the following Lemma \ref{boundarm}, we can get that $\mathrm{c}>0$. Since $ \liminf_{\de\to 0}|v-b_\de|>0$, we can get that $\de^{-1/3}F(v) \to \mathrm{c}^{1/3}\big(\mathbf{F}'(v)\big)^{1/3}$ by the same argument.

Next, we assume that  $|\mathrm{c}|=\infty$. If $\limsup_{\de\to 0} |\mathrm{c}_{i\de}|/|\mathrm{c}_\de|=\infty$, by considering a subsequence we can suppose $\lim_{\de\to 0} |\mathrm{c}_{i\de}|/|\mathrm{c}_\de|=\infty$. We re-scale $\mathrm{F}$ and $\mathrm{F}_i$ by $\mathrm{c}_{i\de}$. So after re-scaling, $\mathrm{c}=0$ and $\mathrm{c}_i=1$. By the argument for $|\mathrm{c}|\not=\infty$ and $|\mathrm{c}_i|\not=\infty$, we can see that this is impossible. So $\limsup_{\de\to 0} |\mathrm{c}_{i\de}|/|\mathrm{c}_\de|<\infty$. Now we re-scale all observables by $\mathrm{c}_\de$. So after re-scaling, $\mathrm{c}=1$ and $\mathrm{c}_i=0$. This implies that $\de^{-1/3}\Prob(\mathrm{v}_{1,c}\in \ga_\de)\to \infty$ as $\de\to 0$,, which contradicts with the following Lemma \ref{boundarm}. In other words, we always have
$$0<\liminf_{\de\to 0} \mathrm{c}_\de \leq \limsup_{\de\to 0}\mathrm{c}_\de<\infty, \ \lim_{\de\to 0} \mathrm{c}_{i\de}=0.$$
\end{proof}

\begin{lemma} \label{boundarm}
Suppose $v\in \mathrm{L}_1$ and $\min(\dist(v,\mathrm{v}_1),\dist(v,\mathrm{v}_2))>\ep \mathrm{l}_1$. Then
$$
\Pro(v\in \ga_\de) \asymp \de^{1/3}.
$$
\end{lemma}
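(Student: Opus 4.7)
The plan is to identify $\{v\in\ga_\de\}$ with the existence of a dual-open cluster inside $\Omega_\de^*$ connecting the dual vertex $v^\bullet$ adjacent to $v$ on the interior side of $\mathrm{L}_1$ to the dual-open boundary $\p_{ab,\de}^*$. This is the standard Dobrushin interface characterization: since every primal edge of $\mathrm{L}_1\subseteq\p_{ba,\de}$ is open by the Dobrushin boundary condition, the exploration path visits the medial vertex $v$ iff the interface separating the unique open cluster containing $\p_{ba,\de}$ from the unique dual-open cluster containing $\p_{ab,\de}^*$ reaches $v$, which is equivalent to $v^\bullet\rightsquigarrow_{\mathrm{dual}}\p_{ab,\de}^*$ in $\Omega_\de^*$.

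For the upper bound, such a dual-open cluster must in particular reach $\p B(v;\ep\mathrm{l}_1/2)$ while remaining in the half-plane on the interior side of the line containing $\mathrm{L}_1$, because $v$ lies at distance at least $\ep\mathrm{l}_1$ from both endpoints of $\mathrm{L}_1$. Under \textit{Condition} $\mathrm{C}'$, $\mathrm{l}_1$ is a fixed positive constant, so the half-plane one-arm estimate \eqref{halfone}, applied to dual arms by self-duality, with $R=\ep\mathrm{l}_1/2$ yields $\Pro(v\in\ga_\de)\leq c_0(2\de/(\ep\mathrm{l}_1))^{1/3}\preceq\de^{1/3}$.

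For the lower bound, the approach is to combine the same half-plane one-arm estimate with a bulk Russo-Seymour-Welsh step via the FKG inequality. The lower bound in \eqref{halfone} (applied to dual arms) gives
$$\Pro\big(v^\bullet\rightsquigarrow_{\mathrm{dual}}\p B(v;\ep\mathrm{l}_1/2)\ \text{on the interior side of }\mathrm{L}_1\big)\succeq\de^{1/3}.$$
Next, by the standard RSW theory for critical bond percolation on $\Z^2$ (see Chapter 7 of~\cite{BR12}), one constructs a fixed chain of overlapping rectangles of diameter comparable to $\mathrm{l}_1$ inside $\Omega_0$ joining a neighborhood of $v$ to a neighborhood of $\p_{ab,\de}^*$, so that in each rectangle a dual-open crossing has probability bounded below by an absolute constant; by FKG the intersection of these crossings has probability bounded below by $c_1=c_1(\Omega_0,a,b,\ep,\mathrm{l}_1)>0$, and on this intersection $\p B(v;\ep\mathrm{l}_1/2)$ is connected to $\p_{ab,\de}^*$ by a dual-open cluster in $\Omega_\de^*$. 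Both the local and global connection events are increasing in the dual-open configuration, so FKG gives their joint probability $\succeq\de^{1/3}$; since this joint event implies $v^\bullet\rightsquigarrow_{\mathrm{dual}}\p_{ab,\de}^*$, the lower bound follows.

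The main obstacle is the bulk RSW step: one needs to exhibit, uniformly in small $\de$, a fixed macroscopic corridor of overlapping rectangles inside $\Omega_0$ joining the vicinity of $v$ to a neighborhood of $\p_{ab,\de}^*$ in which every crossing probability is bounded below by a $\de$-independent constant. This is routine since $\Omega_0$, $a$, $b$ and $\mathrm{l}_1$ are fixed and the corridor is macroscopic, but care is required near $\mathrm{L}_1$ to remain on the interior side of $\Omega_\de$ and to avoid spurious interference with the forced-open boundary $\p_{ba,\de}$.
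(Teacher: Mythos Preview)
Your proposal is correct and follows essentially the same approach as the paper: identify $\{v\in\ga_\de\}$ with a dual-open connection from the dual vertex adjacent to $v$ to $\p_{ab,\de}^*$, get the upper bound directly from the half-plane one-arm estimate \eqref{halfone}, and get the lower bound by combining \eqref{halfone} for the local half-plane arm with an RSW/FKG gluing argument to reach $\p_{ab,\de}^*$. The only cosmetic difference is that the paper glues via a dual-open half-annulus crossing in $B(v;\ep\mathrm{l}_1/3,2\ep\mathrm{l}_1/3)\cap\Omega_\de$ together with a single macroscopic dual connection to $\p_{ab,\de}^*$, whereas you describe a chain of overlapping rectangles; both are standard equivalent RSW constructions.
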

\begin{proof}
Suppose $\mathrm{v}_2=0$, $\mathrm{L}_{1}$ is in the positive horizontal axis, and $\mathrm{L}_1^*$ is in $-\Half$.
Consider the event $\{v-\de/2\rightsquigarrow_{dual} \p B(v;\ep\mathrm{l}_1/3)\}\cap \{\p B(v;\ep\mathrm{l}_1/3)\rightsquigarrow_{dual}\p\Omega_{ab,\de}^* \}\cap\{$ there is a dual-open cluster in $B(v;\ep\mathrm{l}_1/3,2\ep\mathrm{l}_1/3)\cap \Omega_\de$ connecting $\{z: \ep\mathrm{l}_1/3<\Re z-v< 2\ep\mathrm{l}_1/3, \Im z=-\de/2\}$ and $\{z: \ep\mathrm{l}_1/3<v-\Re z< 2\ep\mathrm{l}_1/3, \Im z=-\de/2\}\}$. The RSW-theorem shows that both $\Pro\big(\p B(v,\ep\mathrm{l}_1/3)\rightsquigarrow_{dual}\p\Omega_{ab,\de}^* \big)$ and $\Pro\big($there is a dual-open cluster in $B(v;\ep\mathrm{l}_1/3,2\ep\mathrm{l}_1/3)\cap \Omega_\de$ connecting $\{z: \ep\mathrm{l}_1/3<\Re z-v< 2\ep\mathrm{l}_1/3, \Im z=-\de/2\}$ and $\{z: \ep\mathrm{l}_1/3<v-\Re z< 2\ep\mathrm{l}_1/3, \Im z=-\de/2\}\big)$ are bounded away from zero. This combined with \eqref{halfone} implies $\de^{1/3}\preceq \Pro(v\in \ga_\de)$. The other relation $\Pro(v\in \ga_\de) \preceq \de^{1/3}$ is straightforward. So we complete the proof.
\end{proof}

\section{Convergence to $SLE_6$} \label{ConSLE6}
Let $\mathcal{E}_n$ be the set of primal edges which $\ga_\de[0,n]$ has touched or crossed.  The slit domain $\Omega_{\de,n}^\diamond$ is defined as the simply connected sub-domain of $\Omega_\de^\diamond\setminus\mathcal{E}_n$ which contains $b_\de$,    and the slit domain $\Omega_{\de,n}$ is defined as the simply connected sub-domain of $\Omega_\de\setminus\mathcal{E}_n$ which contains $b_\de$. We can define the boundaries $\p^n_{ba,\de}$ and $\p^n_{ab,\de}$ respectively by letting $\Omega_\de=\Omega_{\de,n}$, $a_\de^\diamond=\ga_\de(n)$ in the definition of $\p_{ba,\de}$ and $\p_{ab,\de}$. The bounded domain in $\R^2$ enclosed by $\p^n_{ba,\de}$ and $\p^n_{ab,\de}$ is denoted by $\Omega_n$. Similarly, we can define $\Omega_n^*$.  Let
$$
 \Omega^n=\{z\in \Omega_n: \dist\big(z, (\p^n_{ba,\de}\setminus \p_{ba,\de})\cup(\p^n_{ab,\de}\setminus\p_{ab,\de})\big)\geq 100(\log \de^{-1})^{-1}\},
$$
and $\hat \Omega^n$ the simply connected sub-domain of $\Omega^n$ which contains $b_\de$. By considering $\de_1\Z^2 \cap\hat{ \Omega^n}$, where $\de_1=\big([\de^{-1}\log\de^{-1}]\de\big)^{-1}$, we can obtain the approximation of the boundary $\p \hat \Omega^n$ by a curve consisting of horizontal or vertical edges with length $\de_1$. This curve is denoted by $\p_1$, which starts from a point in $\p_{ba,\de}$ and ends at a point in $\p_{ab,\de}$.

Let $\Omega_{(n)}$ be the simply connected domain in $\R^2$ enclosed by $\p_1$, $\p_{ba,\de}$ and $\p_{ab,\de}$, and $\hat\Omega_n=\de \Z^2\cap \Omega_{(n)}$.
Define
$$a_n=\arg\inf \{|z-\ga_\de(n)|:z-0.5\de\be_1\in V_{\hat\Omega_{n}}, \ \text{or} \ z-0.5\de\be_2\in V_{\hat\Omega_{n}}\},$$
and $\hat\Omega_{\de,n}^\diamond$ as the Dobrushin domain by letting $\Omega=\hat\Omega_n$, $a^\diamond=a_n$, $b^\diamond=b_\de^\diamond$ in the definition of  Dobrushin domain.  The primal and dual Dobrushin domains are denoted by $\hat\Omega_{\de,n}$ and $\hat\Omega_{\de,n}^*$ respectively. Let $a_{\de,n}$ be the end of the primal edge with center $a_n$ which is also the end of another primal edge with center in $\p\hat\Omega_{\de,n}^\diamond$, $b_{\de,n}=b_\de$,  $a_{\de,n}^*$ the end of a dual edge with center $a_n$ which is also the end of another dual edge with center in $\p\hat\Omega_{\de,n}^\diamond$, and $b_{\de,n}^*=b_\de^*$. The boundary of $\hat\Omega_{\de,n}$ from $b_{\de,n}$ to $a_{\de,n}$ in the counterclockwise order is denoted by $\hat\p_{ba,\de}^n$, and the boundary of $\hat\Omega_{\de,n}$ from $a_{\de,n}$ to $b_{\de,n}$ in the counterclockwise order is denoted by $\hat\p_{ab,\de}^n$. The boundary of $\hat\Omega_{\de,n}^*$ from $b_{\de,n}^*$ to $a_{\de,n}^*$ in the counterclockwise order is denoted by $\hat\p_{ba,\de}^{*,n}$, and the boundary of $\hat\Omega_{\de,n}^*$ from $a_{\de,n}^*$ to $b_{\de,n}^*$ in the counterclockwise order is denoted by $\hat\p_{ab,\de}^{*,n}$.
So $\hat\p_{ba,\de}^n$ consists of part of $\p_{ba,\de}$, and a simple polygonal chain $\{w_{1,n},w_{2,n},\cdots,w_{m_n,n}\}$ such that $w_{1,n}=a_{\de,n}$, $w_{j,n}\in\Omega_{n}\cap\de\Z^2$ for $j\in[2.m_n-1]$, $w_{m_n,n}\in\p_{ba,\de}$. Similarly, $\hat\p_{ab,\de}^{*,n}$ consists of part of $\p_{ab,\de}^*$, and a polygonal chain $\{w_{1,n}^*,w_{2,n}^*,\cdots,w^*_{m_n^*,n}\}$ such that $w_{1,n}^*=a_{\de,n}^*$, $w_{j,n}^*\in\Omega_{n}^*\cap\de(\Z^*)^2$ for $j\in[2.m_n^*-1]$, $w^*_{m_n^*,n}\in\p_{ab,\de}^*$.

After some modifications to  $\hat\p_{ba,\de}^n$ and $\hat\p_{ab,\de}^{*,n}$ near the two ends of $\p_1$,  we can assume that $\hat\p_{ba,\de}^n$ and $\hat\p_{ab,\de}^{*,n}$ satisfy Condition $\mathrm{C}$.

In $\Omega_{\de,n}^\diamond$, the exploration path from $\ga_\de(n)$ to $b_\de^\diamond$ is denoted by $\ga_{\de,n}$ such that $\ga_{\de,n}(j)=\ga_\de(j+n)$. In $\hat\Omega_{\de,n}^\diamond$, the exploration path from $a_n$ to $b_\de^\diamond$ is denoted by $\hat \ga_{\de,n}$. Define
$$
F_{\de,n}(e)=\Ex\exp\big(\frac i3 W_{\gamma_{\de,n}}(e,e_b)\I(e\in \gamma_{\de,n})\big), \ \hat F_{\de,n}(e)=\Ex\exp\big(\frac i3 W_{\hat \gamma_{\de,n}}(e,e_b)\I(e\in \hat\gamma_{\de,n})\big).
$$
Hence
\beqn
F_{\de,n}(e)=\Ex\Big(\exp\big(\frac i3 W_{\gamma_{\de}}(e,e_b)\I(e\in \gamma_{\de})\big)|\ga_\de[0,n]\Big). \label{marting}
\eeqn
Let $v$ be the center of a primal horizontal edge $E_v$ which  is in the line segment $\mathrm{v}_{1,c}b_\de$ such that $ \liminf_{\de\to 0}|v-b_\de|>0$. Suppose neither $B_v$ nor $C_v$ is in $E_{\Omega_{\de,n}^\diamond}$ or $E_{\hat\Omega_{\de,n}^\diamond}$.  Let
\beq
F_{\de,n}(v)=e_b^{-1/3}\big(e^{-i\pi/4}F_{\de,n}(A_v)+e^{i\pi/4}F_{\de,n}(D_v)\big), \\
\hat F_{\de,n}(v)=e_b^{-1/3}\big(e^{-i\pi/4}\hat F_{\de,n}(A_v)+e^{i\pi/4}\hat F_{\de,n}(D_v)\big).
\eeq
The following result illustrates that $\de^{-1/3}F_{\de,n}(v)$ and $\de^{-1/3}\hat F_{\de,n}(v)$ have the same limit as $\de\to 0$.

\begin{lemma} \label{samelimit}
Suppose $\hat\p_{ba,\de}^n$ and $\hat\p_{ab,\de}^{*,n}$ satisfy {\it Condition} $\mathrm{C}$ and {\it Condition} $\mathrm{C}'$. Then, uniformly over any $v$ in the line segment $\mathrm{v}_{1,c}b_\de$ such that $ \liminf_{\de\to 0}|v-b_\de|>0$, $\liminf_{\de\to 0} d_n>0$ and $\liminf_{\de\to 0} d_n^*>0$ with $d_n=\dist(v, \cup_{j=1}^{m_n-1}[w_{j,n},w_{j+1,n}])$ and $d_n^*=\dist(v, \cup_{j=1}^{m_n^*-1}[w^*_{j,n},w^*_{j+1,n}])$
$$\lim_{\de\to 0} \big(\de^{-1/3}F_{\de,n}(v)-\de^{-1/3}\hat F_{\de,n}(v)\big)=0$$
as $\de\to 0$.
\end{lemma}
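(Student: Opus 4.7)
The plan is to apply Proposition \ref{convFb} directly to the regularized Dobrushin domain $\hat\Omega_{\de,n}^\diamond$, and then to compare $F_{\de,n}(v)$ with $\hat F_{\de,n}(v)$ through a coupling argument whose discrepancy is absorbed by an arm-event estimate.

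First, since by hypothesis $\hat\p_{ba,\de}^{n}$ and $\hat\p_{ab,\de}^{*,n}$ satisfy {\it Condition} $\mathrm{C}$ and {\it Condition} $\mathrm{C}'$, and since $v\in\mathrm{L}_1$ is uniformly bounded away from $b_\de$, Proposition \ref{convFb} yields
$$
\de^{-1/3}\hat F_{\de,n}(v)\ \longrightarrow\ \hat{\mathrm{c}}_n^{1/3}\bigl(\hat{\mathbf{F}}_n'(v)\bigr)^{1/3},
$$
where $\hat{\mathbf{F}}_n$ is a conformal map from the Carath\'eodory limit of $\hat\Omega_{\de,n}$ onto $\mathbb{R}\times(0,1)$. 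It therefore suffices to show that $\de^{-1/3}\bigl(F_{\de,n}(v)-\hat F_{\de,n}(v)\bigr)\to 0$.

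Next, I would couple $\ga_{\de,n}$ and $\hat\ga_{\de,n}$ by using a single Bernoulli$(1/2)$ configuration on the shared interior edges of $\hat\Omega_n$ (those not adjacent to $\p_1$); the remaining edges of $\Omega_{\de,n}$ lying in the tube $\mathcal{T}:=\Omega_{(n)}\setminus\hat\Omega_n$ are sampled independently to yield $\ga_{\de,n}$, while for $\hat\ga_{\de,n}$ the deterministic Dobrushin statuses are imposed on $\p_1$. Define the good event $\mathcal{G}$ to be: the medial-edge path from $v$ to $b_\de$ traced by $\ga_{\de,n}$, and likewise the corresponding path for $\hat\ga_{\de,n}$, both lie entirely in $\hat\Omega_n\setminus\mathcal{T}$. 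Since the windings $W_{\ga_{\de,n}}(A,e_b)$ and $W_{\hat\ga_{\de,n}}(A,e_b)$ depend only on the exploration from $v$ to $b_\de$, and on $\mathcal{G}$ both such segments are determined by the same shared randomness and the common boundary conditions on $\mathrm{L}_1$, the contributions to $F_{\de,n}(v)$ and $\hat F_{\de,n}(v)$ coming from $\mathcal{G}$ are identical.

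Third, I would bound $\Pro(\mathcal{G}^{c}\cap\{v\in\ga_{\de,n}\})$. The event $\{v\in\ga_{\de,n}\}$ has probability of order $\de^{1/3}$ by Lemma \ref{boundarm}. The additional requirement that the path from $v$ to $b_\de$ touches $\mathcal{T}$ forces a half-plane $2$-arm crossing of the annulus $B(v;(\log\de^{-1})^{-1},d_n/2)\cap\Half$, contributing a factor of $(\log\de^{-1})^{-1}$ by \eqref{halftwo}. Combining these, $\Pro(\mathcal{G}^{c}\cap\{v\in\ga_{\de,n}\})\preceq\de^{1/3}(\log\de^{-1})^{-1}=o(\de^{1/3})$, with the analogous bound for $\hat\ga_{\de,n}$. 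The $\mathcal{G}^c$-contributions to the two observables are therefore $o(\de^{1/3})$, completing the proof.

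The hard part will be Step 3: the tube $\mathcal{T}$ meets $\hat\p_{ba,\de}^{n}$ on one side and $\hat\p_{ab,\de}^{*,n}$ on the other, so the $2$-arm crossing must be set up on a half-annulus that respects these mixed boundary conditions rather than those of the original $\Omega_\de^\diamond$, and the inner radius $(\log\de^{-1})^{-1}$ must be tied to the tube width, not to $\de$. A subordinate subtlety is that the starting vertices $\ga_\de(n)$ and $a_n$ differ by $O((\log\de^{-1})^{-1})$; however, this only affects the portion of the explorations strictly before they reach $v$, and does not enter the winding $W(A,e_b)$ which is measured from $v$ to $b_\de$, so it is irrelevant for the comparison.
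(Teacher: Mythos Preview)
Your coupling strategy in Step~2 is the same as the paper's and is the right idea: share the configuration on $\hat\Omega_{\de,n}$, and observe that the reversed explorations from $b_\de^\diamond$ coincide until the first time they touch $\p_1$, so on the good event the windings agree. (Step~1 is superfluous: the lemma only asks that the \emph{difference} vanish.) Also, your tube should be $\Omega_{\de,n}\setminus\hat\Omega_{\de,n}$, not $\Omega_{(n)}\setminus\hat\Omega_n$; the latter is essentially empty since $\hat\Omega_n=\de\Z^2\cap\Omega_{(n)}$.

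The genuine gap is in Step~3. The annulus you write, $B(v;(\log\de^{-1})^{-1},d_n/2)\cap\Half$, is centered at $v$, but nothing at scale $(\log\de^{-1})^{-1}$ happens near $v$: the tube sits at macroscopic distance $\ge\min(d_n,d_n^*)$ from $v$. The bad event (reverse exploration touches $\p_1$ before $v$, yet $v$ is still visited) does not force a half-plane $2$-arm at $v$ with those radii; at $v$ you only ever get the half-plane one-arm contributing $\de^{1/3}$. The correct geometry, and what the paper does, is to center the arm event at the first touching point $p\in\p_1$: there the distance from $p$ to the genuine boundary $\p_{ba,\de}^n\cup\p_{ab,\de}^{*,n}$ is $O((\log\de^{-1})^{-1})$, while $|p-v|$ is bounded below by a constant, so one obtains a \emph{bulk} one-arm crossing of $B\bigl(p;\,O((\log\de^{-1})^{-1}),\,c\bigr)$ and invokes \eqref{one} to get a factor $((\log\de^{-1})^{-1})^{\al}$. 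Combined with the $\de^{1/3}$ from \eqref{halfone} at $v$, this yields $o(\de^{1/3})$. Replace your half-plane $2$-arm estimate at $v$ by this one-arm estimate at $p$ and the argument goes through.
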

\begin{proof}
For any configuration $\hat \omega_{\de,n}$ on $\hat \Omega_{\de,n}$ and any  configuration $\tilde \omega_{\de,n}$ on $$(\Omega_{\de,n}\setminus \hat\Omega_{\de,n})\bigcup (\cup_{j=1}^{m_n-1}[w_{j,n},w_{j+1,n}]),$$ define $\omega_{\de,n}=\hat \omega_{\de,n}\times \tilde\omega_{\de,n}$ on $\Omega_{\de,n}$ by
$$
\omega_{\de,n}(E)= \begin{cases}
     \hat \omega_{\de,n}(E) & E\in E_{\hat \Omega_{\de,n}},\\
     \tilde\omega_{\de,n}(E)& E\in E_{(\Omega_{\de,n}\setminus \hat\Omega_{\de,n})\bigcup (\cup_{j=1}^{m_n-1}[w_{j,n},w_{j+1,n}])}.
    \end{cases}
$$
On the other hand, any configuration on $\Omega_{\de,n}$ can be written as such a product of configurations.

First, note that if $\hat \omega_{\de,n}$ contains an exploration path $\hat\ga_{\de,n}$ in $\hat \Omega_{\de,n}$ visiting $v$, and $\hat \omega_{\de,n}\times \tilde\omega_{\de,n}$  doesn't contain an exploration path $\ga_{\de,n}$ in $\Omega_{\de,n}$ visiting $v$ for some $\tilde\omega_{\de,n}$ then $\hat\ga_{\de,n}^{-1}$, the reversed exploration path of $\hat\ga_{\de,n}$ from $b_\de^\diamond$ to $a_n$, visits $v$ after it touches $ \cup_{j=1}^{m_n^*-1}[w^*_{j,n},w^*_{j+1,n}]$, and $v$ is on a loop $\hat\loop$ in $\hat \omega_{\de,n}\times \tilde\omega_{\de,n}$. Define
$$
\hat s_n=\inf\{j>0: \hat\ga_{\de,n}^{-1}(j)\in \hat\loop \cap ( \cup_{j=1}^{m_n^*-1}[w^*_{j,n},w^*_{j+1,n}])\}.
$$
Given $ \hat\ga_{\de,n}^{-1}[0,\hat s_n]$, there is a dual-open cluster surrounded by $\hat\loop$, which connects the respective inner and outer boundaries of the annulus $B\big(\hat\ga_{\de,n}^{-1}(\hat s_n); \hat d_n^*,2^{-1}|v-\hat\ga_{\de,n}^{-1}(\hat s_n)|\big)$ in $\hat \omega_{\de,n}\times \tilde\omega_{\de,n}$, where
$$\hat d_n^*=\dist\big( \hat\ga_{\de,n}^{-1}(\hat s_n),\p_{ab,\de}^n\big)-\de/2.$$
It follows from \eqref{one} and \eqref{halfone} that this event has probability of the order
$$\de^{1/3}\big(\frac{\hat d_n^*}{|v-\hat\ga_{\de,n}^{-1}(\hat s_n)|}\big)^{\al},$$
Since $\hat d_n^*$ is of  the order $(\log\de^{-1})^{-1}$, the event  has probability of the order $o(\de^{1/3})$.

Next, suppose $\hat \omega_{\de,n}\times \tilde\omega_{\de,n}$  contains an exploration path $\ga_{\de,n}$ in $\Omega_{\de,n}$ visiting $v$, and the exploration path in $\hat \omega_{\de,n}$ doesn't visit $v$. This implies that $\ga_{\de,n}^{-1}$, the reversed exploration path of $\ga_{\de,n}$ from $b_\de^\diamond$ to $\ga_\de(n)$ in $\Omega_{\de,n}$, visits $v$ after it touches or crosses $ \cup_{j=1}^{m_n-1}[w_{j,n},w_{j+1,n}]$. Define
$$
s_n=\inf\{j\geq 0: \ga_{\de,n}^{-1}(j)\in  \cup_{j=1}^{m_n-1}[w_{j,n},w_{j+1,n}]\},
$$
Since $\ga_{\de,n}^{-1}$ must go to $v$ after leaving $\ga_{\de,n}^{-1}(s_n)$, there
is  one dual-open cluster connecting $\p B(\ga_\de^{-1}(s_n); d_n')$ and  $\p B(\ga_\de^{-1}(s_n);0.5|v-\ga_\de^{-1}(s_n)|)$, where $d_n'=\dist(\ga_{\de,n}^{-1}(s_n),\p_{ba,\de}^n)$ with the order of $r_\de$. It follows from \eqref{one} and \eqref{halfone} that this event has probability of the order
$$\de^{1/3}\big(\frac{d_n'}{|v-\ga_\de^{-1}(s_n)|}\big)^{\al},$$
which is $o(\de^{1/3})$ by the assumption that $d_n'=O\big((\log\de^{-1})^{-1}\big)$ and $\liminf_{\de\to 0} d_n>0$.  Hence
$$
|\Pro(v\in \ga_{\de,n})-\Pro(v\in\hat\ga_{\de,n})| =o( \de^{1/3}).
$$
This implies the assertion by the definition of $F_{\de,n}(v)$ and $\hat F_{\de,n}(v)$.

\end{proof}

Based on the convergence in Lemma \ref{samelimit}, we can prove that any subsequential limit of the exploration paths $\{\ga_\de\}$ is $SLE_6$. We will use the martingale method, proposed in~\cite{LSWlerw}.
This method is also developed in~\cite{CDHKS} to prove convergence of Ising interlaces. The following argument is an adapted version of the proof of Proposition 9.20 in~\cite{DC13}.

\begin{proposition} \label{subseq}
Suppose $\p_{ba,\de}$ and $\p_{ab,\de}^*$ satisfy Condition $\mathrm{C}$ and $\mathrm{C}'$. Then any subsequential limit of the exploration paths $\{\ga_\de\}$ is $SLE_6$
\end{proposition}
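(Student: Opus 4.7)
The plan is to apply the Lawler--Schramm--Werner martingale method, with the parafermionic observable $F_{\de,n}(v)$ playing the role of the discrete martingale and its continuum limit (supplied by Proposition \ref{convFb} together with Lemma \ref{samelimit}) serving as the corresponding martingale for the limit curve. First, by conformal invariance it suffices to treat a normalized reference domain -- say $\Omega_0=\Half$ with $a=0$ and $b=\infty$; Condition $\mathrm{C}'$ guarantees that a neighborhood of the target consists of an honest straight arc, which is what Proposition \ref{convFb} requires. Extract a subsequential limit $\ga$ of $\{\ga_\de\}$ in the metric space $(\mathcal{M},\mathrm{d})$ -- precompactness being a standard consequence of the RSW-type inputs \eqref{one}--\eqref{quarter} via a Kemppainen--Smirnov style criterion -- and parametrize $\ga$ by half-plane capacity, producing a Loewner chain $(g_t)_{t\ge 0}$ with driving function $W_t$ that is a.s.\ continuous.

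Second, I would convert the discrete martingale into a continuum one. Fix a point $v$ in the segment $\mathrm{v}_{1,c}b_\de$ bounded away from $b_\de$. Equation \eqref{marting} shows that $F_{\de,n}(v)$ is a bounded martingale in $n$ with respect to the filtration generated by $\ga_\de[0,n]$. Lemma \ref{samelimit} replaces $F_{\de,n}$ by the observable $\hat F_{\de,n}$ on the modified slit Dobrushin domain $\hat\Omega_{\de,n}^\diamond$, and Proposition \ref{convFb} applied to $\hat\Omega_{\de,n}^\diamond$ gives
\[
\de^{-1/3}\hat F_{\de,n}(v)\;\longrightarrow\;\mathrm{c}_n^{1/3}\bigl(\mathbf{F}_n'(v)\bigr)^{1/3},
\]
where $\mathbf{F}_n$ is the conformal map of $\hat\Omega_{(n)}$ onto $\R\times(0,1)$ normalized as in Theorem \ref{paraconverge} and the constants $\mathrm{c}_n$ are bounded away from $0$ and $\infty$ uniformly in $n$. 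Combining the $L^\infty$ control from Lemma \ref{Fimboun} with dominated convergence yields that $M_t:=\bigl(\mathbf{F}_t'(v)\bigr)^{1/3}$ (up to the overall constant absorbed into $\mathrm{c}_n$) is a continuous martingale with respect to the filtration of the limit curve $\ga$.

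Third, apply It\^o's formula. Since $\mathbf{F}_t$ factors through $g_t$ composed with a fixed conformal map of the slit half-plane onto the strip, one has an explicit expression for $\mathbf{F}_t'(v)$ in terms of $g_t(v)$ and $W_t$, and $\partial_t g_t(v)=2/(g_t(v)-W_t)$ gives the semimartingale decomposition of $M_t$ in terms of $W_t$ and $\langle W\rangle_t$. Setting the drift of $M_t$ to zero and then varying $v$ (or comparing with the martingale for a second test point) forces $d\langle W\rangle_t=6\,dt$ and identifies $W_t$ as a continuous local martingale; L\'evy's characterization then gives $W_t=\sqrt 6\,B_t$, so $\ga$ is the trace of $SLE_6$.

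The principal obstacle will be to verify that the slit Dobrushin domains $\hat\Omega_{\de,n}^\diamond$ satisfy Condition $\mathrm{C}$ and Condition $\mathrm{C}'$ with parameters and error bounds that are uniform in $n$ as $\de\to 0$, so that Proposition \ref{convFb} and Lemma \ref{samelimit} can be applied with convergence rates that can be integrated along the exploration. This requires controlling the geometric regularity of $\hat\p^n_{ba,\de}$ and $\hat\p^{*,n}_{ab,\de}$ along the exploration -- in particular the tip-to-prior-boundary distance and the minimal segment length $r_\de$ -- an analysis driven by the arm estimates \eqref{four}--\eqref{halfthree} and the Carath\'eodory continuity of the slit domains $\hat\Omega_{(n)}$ built up in Section \ref{ConSLE6}.
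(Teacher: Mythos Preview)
Your overall architecture matches the paper's proof: pass to a subsequential limit, identify $F_{\de,n}(v)$ as a bounded discrete martingale via \eqref{marting}, transfer to the slit domain via Lemma \ref{samelimit}, take the limit via Proposition \ref{convFb}, and then identify the driving function. Two points, however, diverge from the paper and the first is a genuine gap.

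\textbf{Identification of the driving process.} You propose applying It\^o's formula to $M_t=(\mathbf{F}_t'(v))^{1/3}$ and reading off the drift; but It\^o's formula presupposes that $W_t$ is a semimartingale, which you do not know at this stage (all that comes for free from \cite{DST17} is continuity and moment bounds). The paper avoids this circularity by exploiting the special location of $v$: since $v$ lies on the boundary segment $\mathrm{L}_1$ near $b$, the image $w=\varphi(v)$ is a large \emph{real} number, and the limit observable has the explicit form
\[
\pi^{1/3}F_{t\wedge\xi}^w=\Big(\frac{g_{t\wedge\xi}'(w)}{g_{t\wedge\xi}(w)-W_{t\wedge\xi}}\Big)^{1/3}.
\]
One then Laurent--expands in $1/w$ using $g_t(w)=w+2t/w+O(w^{-2})$ and $g_t'(w)=1-2t/w^2+O(w^{-3})$, and compares coefficients in the martingale identity $\Ex(F_{t\wedge\xi}^w\mid\mathcal{F}_{s\wedge\xi})=F_{s\wedge\xi}^w$. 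The $w^{-1}$ and $w^{-2}$ coefficients give directly that $W_t$ and $W_t^2-6t$ are martingales, whence L\'evy's theorem yields $W_t=\sqrt6\,B_t$. This coefficient-matching step replaces your It\^o computation and does not require any a priori semimartingale structure.

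\textbf{The normalizing constants $\mathrm{c}_n$.} Saying the $\mathrm{c}_n$ are ``bounded away from $0$ and $\infty$'' and can be ``absorbed'' is not enough: if $\mathrm{c}_n$ genuinely depended on the slit domain, the limit $\mathrm{c}_n^{1/3}(\mathbf{F}_n'(v))^{1/3}$ would not be a deterministic function of $\ga[0,t]$, and the martingale property would not transfer cleanly. The paper handles this explicitly: the estimate \eqref{deres} (localizing the integral defining $\mathrm{c}_\de$ near $b_\de^\diamond$) shows that $\mathrm{c}_\de/\mathrm{\hat c}_\de\to 1$ for every slit domain $\hat\Omega_{\de,\tau_t\wedge\xi_\de}$, so after a single global rescaling one may take $\lim\mathrm{c}_\de=1$ uniformly along the exploration.

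Your remark about verifying Conditions $\mathrm{C}$ and $\mathrm{C}'$ for the slit domains is correct and corresponds to the construction of $\hat\Omega_{\de,n}^\diamond$ at the start of Section \ref{ConSLE6}.
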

\begin{proof}
Let $\ga$ be a subsequential limit of $\{\ga_\de\}$ in $\Omega_0$.
From~\cite{DST17}, we know that $\ga$ is a Loewner chain which has a continuous driving process.
 Let $\varphi$ be a map from $\Omega_0$ to $\Half$ such that $\varphi(a)=0$, $\varphi(b)=\infty$.  We aim to show that $\tilde\ga=\varphi(\ga)$ is one $SLE_6$ curve in $\Half$.

 Let $\mathcal{C}$ be a path consisting of complex numbers $z_{0}\sim z_{1}\sim \cdots\sim z_{n}$ in the lattice $\Omega_{\de}$ such that the path $\{z_{0}, \cdots,z_{n}\}$ is edge-avoiding, $|z_{j+1}-z_{j}|=\de$,  $v_{j}$ denotes the center of $[z_{j}, z_{j+1}]$, $\dist(z_{0},\mathrm{L}_{\mathrm{n}^*}^*)=\de/2$, $z_{n}\in \mathrm{L}_{1}$ and $\dist(\mathcal{C}, \p B(b_\de;\de^{1/3}))\leq 5\de$.
We consider the reversed exploration path $\ga_\de^{-1}$ from $b_\de^\diamond$ to $a_\de^\diamond$. Since the argument of the complex number $(\mathrm{v}_{\mathrm{n^*+1}}^*-\mathrm{v}_{\mathrm{n^*}}^*)/(\mathrm{v}_1-\mathrm{v}_2)$ is $\pi/2$, we obtain that for any $v_j$,  $\Pro(\ga_\de^{-1}$ crosses $\p B(b_\de;\de^\ep)$ before visiting $v_j)\preceq \de^{(1+\al)(1/3-\ep)}$. Hence
\beq
|\mathrm{F}(v_j)-\mathrm{F}_{res}(v_j)|\preceq \de^{(1+\al)(1/3-\ep)},
\eeq
where $\mathrm{F}_{res}(v_j)$ is defined in the same way as $\mathrm{F}(v_j)$ except that the expectation in the definition of $\mathrm{F}_{res}(v_j)$ is now taken over the event that $\ga_\de^{-1}$ crosses $\p B(b_\de;\de^\ep)$ after visiting $v_j$. Note that  $\mathrm{F}(v_j)$ is of the order $(\de/d_{v_j})^{1/3}\log^2\de$ by \eqref{Finboun-bM} in Lemma \ref{initial-b}, where $d_{v_j}$ is the distance from $v_j$ to $\p_{ba,\de}\cup\p_{ab,\de}^*$. Hence
\beqn
&&\de^{-1}|\sum_{z_j\in \mathcal{C}\cap\hat\Omega_\de} (z_{j+1}-z_j)\mathrm{F}(v_j)^3-\sum_{z_j\in \mathcal{C}\cap\hat\Omega_\de} (z_{j+1}-z_j)\mathrm{F}_{res}(v_j)^3|\nonumber\\
&\preceq& \sum_{\de^{1/2+\ep}\leq d_{v_j}\leq \de^{1/3}}(\de/d_{v_j})^{2/3}\de^{(1+\al)(1/3-\ep)}\log^4\de\nonumber\\
&\preceq&\de^{\al/3-(1+\al)\ep}\log^4\de, \label{deres}
\eeqn
which shows that the ratio of $\mathrm{c}_\de$ and its corresponding quantity $\mathrm{\hat c}_\de$ in $\hat \Omega_{\de,\tau_t\wedge\xi_\de}$ tends to one as $\de\to0$ since for the exploration path $\ga_\de$ which contributes to $\mathrm{c}_\de$ and doesn't contribute to $\mathrm{\hat c}_\de$, $\ga_\de^{-1}$ must cross $\p B(b_\de;\de^\ep)$ before visiting $v_j$.
 This property guarantees that in the following we can assume that $\lim_{\de\to 0} \mathrm{c}_\de=1$ through scaling each vertex observable by $\mathrm{c}_\de$.

We parametrize $\ga$ by the half-plane capacity. Let $W_t$ be the continuous driving process, and $g_t$ the conformal mapping from $\Half\setminus \tilde\ga[0,t]$ to $\Half$ such that $g_t(z)=a+2t/z+O(z^{-2})$ as $z\to \infty$. Note that $F_{\de,n}(v)$ is a bounded martingale with respect to the $\sigma$-filed generated by $\ga_\de[0,n]$ for any $v$ in the line segment $\mathrm{v}_{1,c}b_\de$ such that $ \liminf_{\de\to 0}|v-b_\de|>0$.  Then $(F_{\de,\tau_t\wedge\xi_\de}(v),\mathcal{F}_{\de,\tau_t\wedge\xi_\de})$ is also a martingale, where $\mathcal{F}_{\de,\tau_t\wedge\xi_\de}$ is the $\sigma$-filed generated by $\ga_\de[0,\tau_t\wedge\xi_\de]$, $\tau_t$ is the first time at which $\varphi(\ga_\de)$ has the half-plane capacity at leat $t$, and $\xi_\de$ is the first time at which the hull of $\varphi(\ga_\de)$ contains $w=\varphi(v)$ or touches $\p B(b_\de^\diamond, \de^\ep)$. By Lemma \ref{samelimit} and Proposition \ref{convFb}, $F_{t\wedge\xi}(v)=\lim_{\de\to 0} \de^{-1/3}F_{\de,\tau_t\wedge\xi_\de}(v)$ is a martingale with respect to the $\sigma$-field $\mathcal{F}_{t\wedge\xi}$ generated by  $\tilde\ga[0,t\wedge\xi]$, where $\xi$ is the first time at which the hull of $\tilde\ga$ contains $w$.

Since the conformal mapping from $\Omega_0$ to $\R\times (0,1)$ which maps $\ga(\tau_t)$ to $-\infty$ and $\infty$ to $\infty$ is unique, it follows from Proposition \ref{convFb} that
\beq
\pi^{1/3}F_{t\wedge\xi}^w&:=&\pi^{1/3}F_{t\wedge\xi}(v)\\
&=&\Big(\frac{d}{dw} \log (g_{t\wedge\xi}(w)-W_{t\wedge\xi})\Big)^{1/3}=\Big(\frac{g_{t\wedge\xi}'(w)}{g_{t\wedge\xi}(w)-W_{t\wedge\xi}} \Big)^{1/3}.
\eeq
Using the expansions $g_t(w)=w+2tw^{-1}+O(w^{-2})$ and $g_t(w)'=1-2tw^{-2}+O(w^{-3})$ as $w\to \infty$ along the real axis, we have for $s\leq t$,
\beq
&&\pi^{1/3}\Ex\big(F_{t\wedge\xi}^w|\mathcal{F}_{s\wedge\xi}\big)\\
 &=&\Ex \Big[  \Big(\frac{1-2(t\wedge\xi) w^{-2}+O(w^{-3})}{w-W_{t\wedge\xi}+2(t\wedge\xi) w^{-1}+O(w^{-2})} \Big)^{1/3}  |   \mathcal{F}_{s\wedge\xi}    \Big]\\
 &=&w^{-1/3}\Big(1+\frac 13 \Ex (W_{t\wedge\xi}|\mathcal{F}_{s\wedge\xi} )w^{-1}+\frac 29\Ex\big(W_{t\wedge\xi}^2-6(t\wedge\xi) |   \mathcal{F}_{s\wedge\xi} \big)w^{-2}+O(w^{-3})\Big).
 \eeq
 The existence of moments of $W_t$ is guaranteed by~\cite{DST17}.
 Taking $s=t$ gives that
 \beq
 \pi^{1/3}F_{s\wedge\xi}^w=w^{-1/3}\Big(1+\frac 13 W_{s\wedge\xi} w^{-1}+\frac 29 \big(W_{s\wedge\xi}^2-6(s\wedge\xi)  \big)w^{-2}+O(w^{-3})\Big).
\eeq
Therefore, combining the above two identities yields that
\beqn
&&w^{-1/3}\Big(1+\frac 13 W_{s\wedge\xi} w^{-1}+\frac 29 \big(W_{s\wedge\xi}^2-6(s\wedge\xi)  \big)w^{-2}+O(w^{-3})\Big)\nonumber\\
&=&w^{-1/3}\Big(1+\frac 13 \Ex (W_{t\wedge\xi}|\mathcal{F}_{s\wedge\xi} )w^{-1}+\frac 29\Ex\big(W_{t\wedge\xi}^2-6(t\wedge\xi) |   \mathcal{F}_{s\wedge\xi} \big)w^{-2}+O(w^{-3})\Big). \label{equalmartigale}
\eeqn
Now letting $w\to \infty$ (hence $\xi\to \infty$) in \eqref{equalmartigale}, we obtain that
$$
\Ex(W_t|\mathcal{F}_s)=W_s, \ \Ex(W_t^2-6t|\mathcal{F}_s)=W_s^2-6s.
$$
Levy's theorem implies that $W_t=\sqrt 6 B_t$, where $B_t$ is a one-dimensional standard Brownian motion. We can complete the proof now.
\end{proof}

Now we can state the convergence of $\ga_\de$ to $SLE_6$ under {\it Condition} $\mathrm{C}$ and {\it Condition} $\mathrm{C}'$.

\begin{proposition} \label{pathconvergeC'}
Suppose $\Omega_0$ is a bounded simply connected domain in $\C$ with two points $a$ and $b$ on its boundary. Assume that $(\Omega_\de^\diamond,a_\de^\diamond,b_\de^\diamond)$ is a family of Dobrushin domains converging to $(\Omega_0,a,b)$ in the Carath\'eodory sense as $\de\to 0$. Under {\it Condition} $\mathrm{C}$ and {\it Condition} $\mathrm{C}'$, $\ga_\de$ converges weakly to $SLE_6$ in the metric space $(\mathcal{M},\mathrm{d})$ as $\de\to 0$.
\end{proposition}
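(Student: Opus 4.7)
The strategy is a classical tightness-plus-identification argument: Proposition \ref{subseq} already shows that \emph{any} subsequential limit of $\{\ga_\de\}$ is $SLE_6$, so the task reduces to establishing precompactness of the laws of $\ga_\de$ in the metric space $(\mathcal{M},\mathrm{d})$. Once precompactness is in hand, any weak subsequential limit is $SLE_6$ (in the chordal sense from $a$ to $b$ in $\Omega_0$); since the law of $SLE_6$ is unique, the full family $\ga_\de$ must converge weakly to $SLE_6$ as $\de\to 0$.

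For tightness I would appeal to the Kemppainen--Smirnov framework (the precompactness criterion for random curves used in~\cite{DST17}, already cited in the proof of Proposition \ref{subseq}). What is needed is a uniform annulus crossing estimate: there exist constants $C,\beta>0$ such that for every $z\in \Omega_0$ and every $0<r<R$,
\[
\Pro\bigl(\ga_\de \text{ makes an unforced crossing of } B(z;r,R)\bigr)\le C(r/R)^\beta,
\]
where ``unforced'' is understood in the sense of Kemppainen--Smirnov (the crossing is not imposed by the boundary geometry and the already-drawn part of the curve). For percolation exploration paths, such unforced crossings translate into a six-arm event (or a boundary four/three-arm event near $\p\Omega_\de$), and both are controlled by the estimates \eqref{six}, \eqref{halfthree}, \eqref{quarter} already proved for two-dimensional critical bond percolation on the square lattice. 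Condition $\mathrm{C}$ on the boundary is used to guarantee that boundary-touching crossings receive the correct boundary arm exponent on every piece of $\p\Omega_\de^\diamond$: because each line segment $\mathrm{L}_j$ or $\mathrm{L}_{j^*}^*$ has length at least $r_\de$ with the controlled perpendicular-clearance property, near any boundary point the situation reduces (up to the scale $r_\de$) to the half-plane or quarter-plane arm estimates \eqref{halftwo}--\eqref{halfone}, \eqref{quarter}, which are the required boundary ingredients of the Kemppainen--Smirnov criterion.

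Assuming the annulus estimate, the Kemppainen--Smirnov theorem gives both tightness of the laws of $\ga_\de$ in $(\mathcal{M},\mathrm{d})$ and the fact that every subsequential limit is almost surely supported on curves that admit a continuous Loewner driving process in the uniformizing half-plane $\varphi(\Omega_0)$; this is exactly the input already used in the proof of Proposition \ref{subseq} to pass from the martingale identity for $F_{t\wedge\xi}^w$ to the SDE for $W_t$. Extracting any weakly convergent subsequence along $\de_n\to 0$, Proposition \ref{subseq} identifies its driving process as $\sqrt{6}\,B_t$, so the limit is the chordal $SLE_6$ in $\Omega_0$ from $a$ to $b$. By uniqueness of the $SLE_6$ law and the standard subsequence argument, $\ga_\de \Rightarrow SLE_6$ in $(\mathcal{M},\mathrm{d})$.

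The main obstacle is verifying the unforced-crossing estimate uniformly in $\de$ near the boundary \emph{when} the piece of $\p\Omega_\de^\diamond$ close to the annulus has length only of order $r_\de=(\log\de^{-1})^{-1}$. On scales $R\ll r_\de$ the boundary looks like a straight line and the half-plane estimates apply directly; on scales $R\gg r_\de$ one must iterate across the possibly irregular corner pattern allowed by Condition $\mathrm{C}$ (angles $\pi/2$, $3\pi/2$, $2\pi$, and possible overlaps of successive line segments). I would handle this by decomposing the crossing event through dyadic annuli of radii between $r_\de$ and $R$ and, in each annulus, using the angle restriction in Condition $\mathrm{C}$ together with the RSW estimates (as already done at the boundary in Propositions \ref{2pi}, \ref{270}, \ref{90} and Proposition \ref{nearbound10}) to absorb the boundary contributions into the bulk six-arm bound \eqref{six}. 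The resulting cost per scale is uniformly $(r/R)^\beta$ with $\beta=\min(\al_1,\al_2)/2$, say, which is exactly what the Kemppainen--Smirnov criterion requires.
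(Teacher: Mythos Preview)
Your proposal is correct and follows exactly the paper's approach: tightness plus identification of subsequential limits via Proposition~\ref{subseq}. The paper's actual proof is a one-liner that invokes Theorem~6 with $q=1$ in~\cite{DST17} for tightness of $\{\ga_\de\}$ in $(\mathcal{M},\mathrm{d})$, so your elaboration of the Kemppainen--Smirnov crossing criterion and your boundary-regularity discussion are not needed---the cited result already covers general Dobrushin domains without any recourse to Condition~$\mathrm{C}$ or $\mathrm{C}'$ at the tightness step.
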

\begin{proof}
Theorem 6 with $q=1$ in~\cite{DST17} shows that $\{\ga_\de, \de>0\}$ is a tight family for the week convergence in $(X,d)$. So, the conclusion follows from Proposition \ref{subseq}.
\end{proof}

\section{Proofs of main results} \label{ProofMain}
\noindent
{\bf Proof of Theorem \ref{paraconverge}}. From \eqref{deres}, we see that $\mathrm{c}$ is the same for Dobrushin domains satisfying {\it Condition} $\mathrm{C}$ and Dobrushin domains satisfying {\it Condition} $\mathrm{C}$ and {\it Condition} $\mathrm{C}'$.  So the assertion that $\mathrm{c}>0$ follows from Proposition \ref{convFb}. The uniform convergence of $\de^{-1/3}F(v)$ is from Proposition \ref{convF}. \qed

\

\noindent
{\bf Proof of Theorem \ref{pathconverge}}.
Let $\big((\Omega_\de^\diamond)',a_\de^\diamond,(b_\de^\diamond)'\big)$ be a sequence of Dobrushin domains satisfying {\it Condition} $\mathrm{C}$ and  {\it Condition} $\mathrm{C}'$. The length of the line segment $\mathrm{L}_1'$, corresponding to $\mathrm{L}_1\subseteq \p_{ba,\delta}$,  is a positive constant $\mathrm{l}_1'$ which is smaller than $\min\big(2^{-1},10^{-2}\dist(b,a)\big)$. Suppose $\big((\Omega_\de^\diamond)',a_\de^\diamond,(b_\de^\diamond)'\big)$ converges to the bounded simply connected domain $(\Omega_0',a,b')$ in the Carath\'eodory sense as $\de\to 0$. We can assume that $\Omega_0\setminus B(b';2\mathrm{l}_1')=\Omega_0'\setminus B(b';2\mathrm{l}_1')$. Hence, we can assume that $\Omega_\de\setminus B(b';2\mathrm{l}_1')=\Omega_\de'\setminus B(b';2\mathrm{l}_1')$, where $\Omega_\de'$ is the primal Dobrushin domain associate with $\big((\Omega_\de^\diamond)',a_\de^\diamond,(b_\de^\diamond)'\big)$. This motivates one to couple the configurations on $\Omega_\de$ and $\Omega_\de'$ by the following relations,
$$
\omega|_{\Omega_\de}=\omega_0\times \omega_1, \ \omega|_{\Omega'_\de}=\omega_0\times \omega'_1,
$$
where $\omega_0$ is any configuration on $\Omega_\de\setminus B(b';2\mathrm{l}_1')$, $\omega_1$ is any configuration on $\Omega_\de\setminus\big(\Omega_\de\setminus B(b';2\mathrm{l}_1')\big)$, $\omega_1'$ is any configuration on $\Omega'_\de\setminus\big(\Omega'_\de\setminus B(b';2\mathrm{l}_1')\big)$. This automatically couples the exploration paths $\ga_\de(\omega|_{\Omega_\de})$ and $\ga_\de(\omega|_{\Omega'_\de})$ starting from $a_\de^\diamond$. The two paths are identical from $a_\de^\diamond$ to their first hitting point of the boundary of $B(b';2\mathrm{l}_1')$. So $\mathrm{d}\big(\ga_\de(\omega|_{\Omega_\de}), \ga_\de(\omega|_{\Omega'_\de})\big)>2\sqrt{\mathrm{l}_1'}$ only if either $\ga_\de(\omega|_{\Omega_\de})$ or $\ga_\de(\omega|_{\Omega'_\de})$ returns to the boundary of $B(b';\sqrt{\mathrm{l}_1'})$ after hitting $B(b';2\mathrm{l}_1')$. By \eqref{one}, we have
\beqn
\Pro\Big( \mathrm{d}\big(\ga_\de(\omega|_{\Omega_\de}), \ga_\de(\omega|_{\Omega'_\de})\big)>2\sqrt{\mathrm{l}_1'}\Big) \preceq (\mathrm{l}_1')^{\al/2}. \label{couple}
\eeqn
Now in \eqref{couple},  letting $\de\to 0$, we can obtain the weak convergence of $\ga_\de(\omega|_{\Omega'_\de})$ to $SLE_6$ by Proposition \ref{pathconvergeC'}. Then letting $\mathrm{l}_1'\to 0$, we can complete the proof.
 \qed

\

\noindent
{\bf Proof of Corollary \ref{card}}. This is a direct consequence of Theorem \ref{pathconverge}. \qed

\section*{Appendix}
\addcontentsline{toc}{section}{Appendix}
\addcontentsline{toc}{subsection}{Windings of random walks in the plane}
\subsection*{Windings of random walks in the plane}
Let $Z(t)=X(t)+iY(t)$ be a two dimensional Brownian motion with $Z(0)=z_0\not=0$, $\theta(t)$ the total angle wound by $Z(t)$ around the origin up to time $t$. Define $\theta_{+}(t)=\int_0^t \I(|Z_u|>1)d\theta(u)$. Let $X_1,X_2,\cdots$ be a sequence of independent and identically distributed two dimensional random vectors such that
\beq
\Pro(X_1=\sqrt 2\be_1)=1/4, \ \Pro(X_1=-\sqrt 2\be_1)=1/4,  \\
 \Pro(X_1=\sqrt 2\be_2)=1/4, \ \Pro(X_1=-\sqrt 2\be_2)=1/4,
\eeq
and $S=\{S_n, n\geq 1\}$ the random walk defined by $S_n=z_0+\sum_{j=1}^nX_j$. Define $\Theta(n)$ to be the total angle wound by $S$ around the origin up to time $n$. More specifically, $\Theta(n)=\sum_{k=1}^n\phi(k)$, where $\phi(k)$ is defined in the following way: If $S_{k-1}$, $S_k$  and $z$ are colinear, $\phi(k)=0$; otherwise $\phi(k)$ is the unique number between $-\pi$ and $\pi$ such that
$$
\frac{S_{k-1}}{|S_{k-1}|}e^{i\phi(k)}=\frac{S_k}{|S_k|}.
$$
 Our result is presented in the next lemma.
\begin{lemma} \label{winding}
$$\Ex\sup_{1\leq k\leq n}|\Theta(k)|^2\preceq \log^2 n.$$
\end{lemma}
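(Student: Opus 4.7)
The plan is to decompose $\Theta(k)=N_k+A_k$, where $N_k=\sum_{j=1}^k\big(\phi(j)-\Ex[\phi(j)\mid\mathcal F_{j-1}]\big)$ is the martingale part and $A_k=\sum_{j=1}^k\Ex[\phi(j)\mid\mathcal F_{j-1}]$ is the compensator. Even though the step distribution is symmetric under reflections through the coordinate axes, it is \emph{not} symmetric under reflection through the line spanned by a generic $S_{j-1}$, so $\Theta$ is not itself a martingale; however the compensator $A$ is small enough that both pieces can be controlled by $(\log n)^2$.

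The key one-step estimates are (i) $\Ex[\phi(j)^2\mid S_{j-1}]\preceq(1+|S_{j-1}|^2)^{-1}$ and (ii) $\big|\Ex[\phi(j)\mid S_{j-1}]\big|\preceq(1+|S_{j-1}|^4)^{-1}$. The first is elementary trigonometry: when $|S_{j-1}|=r\geq 2$ the four candidate angle increments have magnitudes $O(1/r)$, while for $r=O(1)$ the bound is trivial since $|\phi|\leq\pi$. The second uses that $f(z)=\arg z$ is harmonic off $\{0\}$: Taylor-expanding $f(S_{j-1}+X_j)$ around $S_{j-1}$ and averaging over the four equiprobable $X_j=\pm\sqrt 2\,\be_1,\pm\sqrt 2\,\be_2$ kills the first- and second-order terms (the latter because $\Delta f=0$), leaving a remainder of order $\|f^{(4)}\|_{B(S_{j-1},\sqrt 2)}\preceq|S_{j-1}|^{-4}$; near the origin the right-hand side is $O(1)$ and the bound is again trivial (with $\phi=0$ whenever $S_{j-1}=0$ or $S_j=0$ by the definition in the lemma).

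Next I would integrate these using standard 2-d random walk potential theory. By the local CLT, the expected number of visits to $x$ up to time $n$ starting from $z_0$ is at most $C\log_+(n/|x-z_0|^2)+C$ whenever $|x-z_0|\leq\sqrt n$, and $0$ otherwise. Summing against $(1+|x|^2)^{-1}$ gives
\begin{equation*}
\Ex\sum_{j=1}^n\frac{1}{1+|S_j|^2}\preceq\sum_{|x|\leq\sqrt n}\frac{\log(n\vee 2)}{1+|x|^2}\preceq(\log n)^2.
\end{equation*}
For the drift-sum I need a \emph{second} moment estimate. Conditioning at the earlier time and using $P(S_j=x\mid S_i=y)\preceq 1/(j-i+1)$ uniformly in $y$, one obtains $\Ex\big[(1+|S_j|^4)^{-1}\mid\mathcal F_i\big]\preceq 1/(j-i+1)$ (since $\sum_x(1+|x|^4)^{-1}<\infty$), hence
\begin{equation*}
\Ex\Big(\sum_{j=1}^n\frac{1}{1+|S_j|^4}\Big)^2\preceq\sum_{1\leq i\leq j\leq n}\frac{1}{i(j-i+1)}\preceq(\log n)^2.
\end{equation*}

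Combining these, Doob's $L^2$ maximal inequality gives $\Ex\sup_{k\leq n}N_k^2\leq 4\,\Ex\,\langle N\rangle_n\leq 4\sum_{j=1}^n\Ex[\phi(j)^2]\preceq(\log n)^2$, while $\sup_{k\leq n}|A_k|\leq\sum_{j=1}^n|\Ex[\phi(j)\mid\mathcal F_{j-1}]|$ is directly estimated by the second display above, yielding $\Ex\sup_k A_k^2\preceq(\log n)^2$. The triangle inequality then finishes the proof. The main technical obstacle is the second-moment bound for the drift sum, since a naive Cauchy–Schwarz would produce an extra factor of $n$; the resolution is to exploit, via the local CLT, the fact that $(1+|S_j|^4)^{-1}$ decorrelates at rate $1/(j-i)$ rather than requiring any pointwise bound on each term.
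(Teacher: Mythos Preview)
Your proof is correct and takes a genuinely different route from the paper's. The paper embeds the walk in planar Brownian motion via stopping times $\tau_k$, writes $\Theta(n)=\big(\Theta(n)-\theta_+(\tau_n)\big)+\theta_+(\tau_n)$, bounds the first piece by quoting the martingale calculation of B\'elisle~\cite{Be91}, and controls the second via Shi's exponential tail for the big Brownian winding $\theta_+$~\cite{Sh98}; it then reduces general $z_0$ to real $z_0$ by a first-hitting-of-$\R$ argument. Your argument instead works directly with the Doob decomposition of $\Theta$ and never touches Brownian motion: the crucial point is that harmonicity of $\arg$ (equivalently, $\Ex[X_j^2]=0$ for the complex step $X_j\in\{\pm\sqrt2,\pm i\sqrt2\}$) together with odd symmetry kills the first three Taylor terms of $\Ex[\phi(j)\mid S_{j-1}]$, yielding the $O(|S_{j-1}|^{-4})$ drift that makes the compensator $L^2$-summable via the local CLT. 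Your approach is more elementary and self-contained, avoids the external winding literature entirely, and delivers the uniformity in $z_0$ needed in \eqref{windexp} directly from the uniform local-CLT bound $\sup_x P(S_k=x)\leq C/k$. The paper's route, by contrast, would adapt more readily if one wanted distributional information on $\Theta(n)$ via the known Brownian winding limit theorems. One small imprecision: in your first Green's-function display the summation range should be $|x-z_0|\preceq\sqrt n$ rather than $|x|\leq\sqrt n$; when $|z_0|\gg\sqrt n$ the resulting sum is $O(\log n)$ rather than $(\log n)^2$, so the bound still holds uniformly.
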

\begin{proof}
We first assume that $z_0\in \R$.
Define $\tau_0=0$ and $\tau_k=\inf\{t\geq \tau_{k-1}: |Z(t)-Z(\tau_{k-1})|=\sqrt 2\}$ for $k\geq 1$. Then $\tau_1,\tau_2-\tau_1,\cdots,\tau_k-\tau_{k-1},\cdots$ is a sequence of independent and identically distributed random variables with mean $1$. Note that
$$
\Ex\sup_{1\leq k\leq n}\big(\Theta(k)\big)^2\preceq \Ex\sup_{1\leq k\leq n}\big(\Theta(k)-\theta_{+}(\tau_k)\big)^2+\Ex\sup_{1\leq k\leq n}\big(\theta_{+}(\tau_k)\big)^2.
$$
We will analyze each of the expectations on the right hand side of the above identity.

Let
\beq
M(k)=\Ex\big(\Theta(n)|S_0,S_1,\cdots,S_{k}\big)-\Ex\big(\Theta(n)|S_0,S_1,\cdots,S_{k-1}\big), \ 1\leq k\leq n.
\eeq
Since $z_0\in \R$, $M(0)=\Ex\big(\Theta(n)|S_0=z_0\big)=0$, which is correct even if $z_0=0$. So,
$$
\Theta(n)-\theta_{+}(\tau_n)=\sum_{k=0}^n \big(M(k)+\theta_{+}(\tau_k)-\theta_{+}(\tau_{k-1})\big),
$$
where $\theta_{+}(\tau_{-1})$ is understood as $0$. This implies that $\{\Theta(n)-\theta_{+}(\tau_n),n\geq 1\}$ is a martingale.
By the same calculation leading to Proposition 2 in~\cite{Be91}, we can obtain that
\beqn
\Ex \big(\Theta(n)-\theta_{+}(\tau_n)\big)^2 \preceq \log^2 n. \label{mart1}
\eeqn
Actually, if we follow the notation in~\cite{Be91}, $\Delta_j=\lambda_j-\eta_j$ with $\lambda_j=M(j)$ and $\eta_j=\theta_{+}(\tau_j)-\theta_{+}(\tau_{j-1})$. Then
\beq
\Ex \Delta_j^2&=&\int_{[0,1]}\int_{[0,\infty)}\Ex\big(\Delta_j^2|R_j=r,|S_{j-1}|=u\big)\mu(dr)\mu^{(j-1)}(du)+\\
&&\qquad +\int_{[1,\infty]}\int_{[u-1,\infty)}\Ex\big(\Delta_j^2|R_j=r,|S_{j-1}|=u\big)\mu(dr)\mu^{(j-1)}(du),
\eeq
where $\mu$ is the distribution of $R_j=|X_j|=\sqrt 2$, $\mu^{(j-1)}$ is the distribution of $|S_{j-1}|$. Note that
\beqn
\Ex\big(\Delta_j^2|R_j=r,|S_{j-1}|=u\big)\leq 2 \Ex\big(\lambda_j^2|R_j=r,|S_{j-1}|=u\big)+2 \Ex\big(\eta_j^2|R_j=r,|S_{j-1}|=u\big). \label{lambdaeta}
\eeqn
The first expectation in \eqref{lambdaeta} is bounded above by $2\pi^2$ since $|M(j)|\leq \pi$ by coupling the two simple random walks in $\Ex\big(\Theta(n)|S_0,S_1,\cdots,S_{j}\big)$ and $\Ex\big(\Theta(n)|S_0,S_1,\cdots,S_{j-1}\big)$. The remaining argument is exactly the same as that in~\cite{Be91}. So we omit the details.

Next, from page 121 in~\cite{Sh98}, we have, for $xt\geq \exp(16\pi)$,
\beq
\Pro(\sup_{0\leq u\leq t}|\theta_{+}(u)|>x)\leq 11\exp\Big(-\frac{\pi x}{\log(xt)} \Big).
\eeq
This implies that
\beqn
\Pro(\sup_{0\leq u\leq t}\frac{|\theta_{+}(u)|}{\log t}>y)\leq 11\exp(-\sqrt y) \label{diff-equ}
\eeqn
for $y$ large enough. Hence,
\beqn
\Ex\sup_{1\leq k\leq n}\big(\theta_{+}(k)\big)^2 \preceq \log^2 n. \label{supthe}
\eeqn
On the other hand,
\beq
&&\Ex\sup_{1\leq k\leq n}\big(\theta_{+}(\tau_k)\big)^2\\
&=&\Ex\sup_{1\leq k\leq n}\big(\theta_{+}(\tau_k)\big)^2\I(\tau_n<n)
 +\sum_{j=1}^\infty\Ex\sup_{1\leq k\leq n}\big(\theta_{+}(\tau_k)\big)^2\I\big(\tau_n\in [n+(jn)^3,n+(j+1)^3n^3)\big).
\eeq
Observe that
\beq
&&\Ex\sup_{1\leq k\leq n}\big(\theta_{+}(\tau_k)\big)^2\I\big(\tau_n\in [n+(jn)^3,n+(j+1)^3n^3)\big)\\
&\leq& \Ex\sup_{1\leq k\leq n+(j+1)^3n^3} \big(\theta_{+}(k)\big)^2\I\big(\tau_n\in [n+(jn)^3,n+(j+1)^3n^3)\big)\\
&\leq& \Big(\Ex\sup_{1\leq k\leq n+(j+1)^3n^3} \big(\theta_{+}(k)\big)^4\Big)^{1/2}\Big(\Pro\big(\tau_n\in [n+(jn)^3,n+(j+1)^3n^3)\big)\Big)^{1/2}.
\eeq
It follows from \eqref{diff-equ} that
$$
\Ex\sup_{1\leq k\leq n+(j+1)^3n^3} \big(\theta(k)\big)^4 \preceq \big(\log(n+(j+1)^3n^3)\big)^4.
$$
Also Markov's inequality gives that
$$
\Pro\big(\tau_n\in [n+(jn)^3,n+(j+1)^3n^3)\big)\leq \frac{\Ex \tau_n}{n+(jn)^3}=\frac 1{1+j^3n^2}.
$$
Combining the above four relations and \eqref{supthe}, we obtain that
$$
\Ex\sup_{1\leq k\leq n}\big(\theta_{+}(\tau_k)\big)^2 \preceq \log^2 n,
$$
which combined with \eqref{mart1} shows that $\Ex\big(\Theta(n)\big)^2\preceq\log^2 n$.

 Now we turn to the case where $z_0\not\in \R$.  Define $\tau_\R=\inf\{j\geq 0: S_j\in \R\}$. If $\tau_\R\geq n$, $|\Theta(n)|\leq \pi$. If $\tau_\R< n$, $|\Theta(n)|\leq |\Theta(\tau_\R)|+|\Theta(n)-\Theta(\tau_\R)|\leq \pi+|\Theta(n)-\Theta(\tau_\R)|$.  Note that $\Theta(n)-\Theta(\tau_\R)$ is the total angle wound by $S[\tau_\R+1,n]$ around the origin.
 Hence given $\{S_1,S_2,\cdots,S_{\tau_\R}\}$, $\Ex\big(\Theta(n)\big)^2\leq  2\pi^2+2\Ex\big(\Theta(n)-\Theta(\tau_\R)\big)^2 \preceq \log^2 n$, where the last relation follows from the case where $z_0\in \R$. Therefore,
 \beqn
 \Ex\big(\Theta(n)\big)^2\preceq\log^2 n, \ \text{for any} \ z_0\not=0. \label{EThetan2}
 \eeqn

 This combined with Doob's maximal inequality and the fact that $|M(0)|=|\Ex \Theta(n)|\leq \big(\Ex \Theta(n)^2\big)^{1/2}\preceq \log n$ implies that, for any $z_0\not=0$,
\beqn
\Ex \max_{1\leq n_1\leq n} \big(\sum_{k=1}^{n_1}M(k)\big)^2 \preceq \log^2 n. \label{doob}
\eeqn
Now the triangle inequality shows that
\beq
&&| \Big(\Ex \max_{1\leq n_1\leq n} \big(\sum_{k=1}^{n_1}M(k)\big)^2\Big)^{1/2}-\Big(\Ex \max_{1\leq n_1\leq n} \big(\Theta(n_1)\big)^2\Big)^{1/2}|\\
&\leq&  \Big(\Ex \max_{1\leq n_1\leq n} \big( \sum_{k=1}^{n_1}M(k)-\Theta(n_1)\big)^2\Big)^{1/2} \\
&=&  \Big(\Ex \max_{1\leq n_1\leq n} \big( \Ex(\Theta(n)-\Theta(n_1)|S_0,S_1,\cdots,S_{n_1})-\Ex(\Theta(n)|S_0)\big)^2\Big)^{1/2}\\
&\preceq& \log n,
\eeq
where the last step follows from \eqref{EThetan2} and the fact that
$$
\big( \Ex(\Theta(n)-\Theta(n_1)|S_0,S_1,\cdots,S_{n_1})\big)^2\leq \Ex\big([\Theta(n)-\Theta(n_1)]^2|S_0,S_1,\cdots,S_{n_1}\big).
$$
 Hence we can complete the proof by \eqref{doob}..
\end{proof}

\addcontentsline{toc}{subsection}{Discrete harmonic measure}
\subsection*{Discrete harmonic measure}
\begin{lemma} \label{dhm}
Let $z\in \hat\Omega_\de$ with $\liminf_{\de\to 0}\dist(z,\p\hat\Omega_\de)>0$. In Figure \ref{boundab}, suppose $a_\de=0$ and $\mathrm{L}_\mathrm{n}$ is in the imaginary axis. Then
\beqn
&&\Pro\big(\mathrm{S}_\tau\in [z_{\ep,1/2}',z_{\ep,1/2}'']\cup [\mathfrak{z}_{\ep,1/2},z_{\ep,1/2}'']|\mathrm{S}_0=z\big)\preceq \de^{1/2-\ep}/r_\de, \label{hm1}\\
&&\Pro\big(\mathrm{S}_\tau\in [z_{\ep,1/2}'+ik[\de^{-1/2-\ep}]\de,z_{\ep,1/2}'+i(k+1)[\de^{-1/2-\ep}]\de] |\mathrm{S}_0=z\big)\leq 30\de^{1/2-\ep}/r_\de, \label{hm2} \\
&&\Pro\big(\mathrm{S}_\tau\in [\mathfrak{z}_{\ep,1/2}'+k[\de^{-1/2-\ep}]\de,\mathfrak{z}_{\ep,1/2}'+(k+1)[\de^{-1/2-\ep}]\de] |\mathrm{S}_0=z\big)\leq 30\de^{1/2-\ep}/r_\de, \label{hm3} \\
&&\quad \quad \quad \quad k=0,1,2,\cdots,[10^{-1}r_\de/\de^{1/2-\ep}], \nonumber\\
&&\Prob(\mathrm{S}_\tau\in [z_{\ep,1/2}',z_{\ep,1/2}'+i\de^{1/3} |\mathrm{S}_0=z\big)\preceq\de^{1/3}/r_\de, \label{hm4}\\
&&\Pro\big(\mathrm{S}_\tau\in [\mathfrak{z}_{\ep,1/2}',\mathfrak{z}_{\ep,1/2}'+\de^{1/3}] |\mathrm{S}_0=z\big)\preceq\de^{1/3}/r_\de. \label{hm5}
\eeqn
\end{lemma}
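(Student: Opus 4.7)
The plan is to reduce all five bounds to the following single discrete harmonic-measure principle: for any segment $I\subseteq\partial\hat\Omega_\de$ of length $\ell$ lying within the $r_\de$-thick neighbourhood of $\p_{ba,\de}\cup\p_{ab,\de}^*$ and within distance $O(\de^\ep)$ of $a_\de$, the hitting probability $\Pro^z(\mathrm{S}_\tau\in I)$ is bounded by a constant multiple of $\ell/r_\de$, provided $\dist(z,\partial\hat\Omega_\de)$ is bounded away from zero as $\de\to 0$. All five target intervals fit this template, with $\ell\asymp\de^{1/2-\ep}$ for \eqref{hm1}--\eqref{hm3} and $\ell=\de^{1/3}$ for \eqref{hm4}--\eqref{hm5}.

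The first step is a two-scale decomposition of the walk. By the discrete Harnack inequality for bounded discrete harmonic functions on $\hat\Omega_\de$, the hitting probability $u(y):=\Pro^y(\mathrm{S}_\tau\in I)$ is, up to multiplicative constants depending only on the geometry, independent of the bulk starting point. Applying the strong Markov property at $\sigma$, the first time the walk enters the slab-like neighbourhood $U$ of $\mathrm{L}_\mathrm{n}\cup\mathrm{L}^*_{\mathrm{n}^*}$ of width $r_\de$ around $a_\de$, one obtains $\Pro^z(\mathrm{S}_\tau\in I)\le \Ex^z[u(\mathrm{S}_\sigma)\,\I(\sigma<\tau)]\le C\sup_{y\in\partial_{in}U}u(y)$, where $\partial_{in}U$ denotes the inner boundary of $U$.

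The second step estimates the supremum of $u$ over $\partial_{in}U$. From $y\in\partial_{in}U$ at height of order $r_\de$ above $\mathrm{L}_\mathrm{n}$, the walk effectively sees $\mathrm{L}_\mathrm{n}$ as a long straight reflective wall (since $\mathrm{L}_\mathrm{n}$ has length $\ge\mathrm{l}_\mathrm{n}\gg r_\de$), and the half-plane discrete Poisson-kernel bound gives
$$
u(y)\le C\,\frac{\Im(y-\mathrm{L}_\mathrm{n})\cdot \ell}{(\Im(y-\mathrm{L}_\mathrm{n}))^2+\dist(y,I)^2}\le C\ell/r_\de,
$$
uniformly in the position of $I$ on $\p^{up}$ or $\p^{lo}$; this yields \eqref{hm2}, \eqref{hm3}, \eqref{hm4}, \eqref{hm5}. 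For \eqref{hm1} the two target segments sit on perpendicular inner faces of $\mathrm{SQ}_1$, so a second reflection across $\mathrm{L}^*_{\mathrm{n}^*}$ reduces the problem to the quarter-plane Poisson kernel, giving the same $\ell/r_\de$ bound. The explicit constant $30$ in \eqref{hm2}--\eqref{hm3} comes out of the half-plane Poisson kernel after tracking the walker's height $\Im(y-\mathrm{L}_\mathrm{n})\asymp r_\de$ and the worst-case position of $I$ within the range $k\in[0,[10^{-1}r_\de/\de^{1/2-\ep}]]$.

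The main technical obstacle will be to justify treating $\mathrm{L}_\mathrm{n}$ and $\mathrm{L}^*_{\mathrm{n}^*}$ as straight reflective walls, despite the jagged polygonal boundary of $\partial\Omega_\de^{in}$ inserted between them and $\partial\hat\Omega_\de$ by the construction in Section~\ref{bpro}. Since every inserted corner lies at distance at most $\de^{1/2+\ep}\ll r_\de$ from the straight wall, a local comparison of the discrete harmonic measures on the jagged and straightened domains (via optional stopping applied to $u$ on a small ball around each corner neighbourhood, combined with a reflection-coupling of the walk across the idealised wall) shows that the error introduced is at most multiplicative $O(1)$, which is enough for the $\preceq$-form of \eqref{hm1}, \eqref{hm4}, \eqref{hm5} and does not degrade the explicit constant $30$ by more than a computable absolute factor in \eqref{hm2}--\eqref{hm3}.
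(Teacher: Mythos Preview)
Your approach differs substantially from the paper's, and as written it has a real gap.

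The paper never compares $\hat\Omega_\de$ to a half-plane and never invokes a Poisson kernel. For \eqref{hm2} it argues by \emph{reflection and pigeonhole} entirely inside $\hat\Omega_\de$: for the horizontal line $\mathbf{L}_k$ through the top endpoint of $\mathbf{l}_{k-1}$, any walk from $z$ that exits through $\mathbf{l}_{k-1}$ must cross $\mathbf{L}_k$; reflecting the tail of the walk across $\mathbf{L}_k$ (with an iterative fix when the reflected tail exits $\hat\Omega_\de$ prematurely) gives an injection from paths exiting through $\mathbf{l}_{k-1}$ into paths exiting through $\mathbf{l}_k$. This yields the monotonicity $\Pro(\mathrm{S}_\tau\in\mathbf{l}_{k-1})\le\Pro(\mathrm{S}_\tau\in\mathbf{l}_k)$. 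Since there are $1+[10^{-1}r_\de/\de^{1/2-\ep}]$ disjoint segments above $\mathbf{l}_k$ with total probability at most $1$, the smallest (namely $\mathbf{l}_k$) has probability at most $10\de^{1/2-\ep}/r_\de$; the constant $30$ is simply $3\times 10$, the extra factor $3$ coming from bounding the two closed endpoints separately by the same reflection. Estimates \eqref{hm3}--\eqref{hm5} follow identically, and \eqref{hm1} is handled by a separate hitting-time argument combined with \eqref{hm2}.

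Your argument, by contrast, hinges on the inequality $u(y)\le C\,\Im(y-\mathrm{L}_\mathrm{n})\cdot\ell\,/\,\bigl((\Im(y-\mathrm{L}_\mathrm{n}))^2+\dist(y,I)^2\bigr)$, which you attribute to a half-plane Poisson kernel with $\mathrm{L}_\mathrm{n}$ acting as a ``reflective wall.'' Two problems: first, the boundary $\partial\hat\Omega_\de$ is \emph{absorbing}, not reflecting --- the walk never sees $\mathrm{L}_\mathrm{n}$ at all, since it stops on $\partial\Omega_\de^{in}$ at distance $\de^{1/2+\ep}$ inside. Second, and more seriously, the domain monotonicity you need (namely $\omega_{\hat\Omega_\de}(y,I)\le\omega_H(y,I)$ for the half-plane $H$ bounded by the line through the $\mathbf{l}_k$'s) would require $\hat\Omega_\de\subset H$, but under Condition~$\mathrm C$ the domain $\Omega_\de$ can extend past the imaginary axis away from $a_\de$, so this containment fails in general. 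The ``main technical obstacle'' you flag --- jagged corners at scale $\de^{1/2+\ep}$ --- is not the real issue; the global shape of $\hat\Omega_\de$ is. Finally, your account of the constant $30$ as a Poisson-kernel artifact is incorrect: in the paper it is the purely combinatorial $3\times 10$ just described.
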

\begin{proof}
We start with \eqref{hm2}. Let $\mathbf{L}_k$ be the line passing $z_{\ep,1/2}'+ik[\de^{-1/2-\ep}]\de$ which is parallel to the real axis, and $\mathbf{l}_k$ the line segment with two ends $ [z_{\ep,1/2}'+ik[\de^{-1/2-\ep}]\de$ and $z_{\ep,1/2}'+i(k+1)[\de^{-1/2-\ep}]\de$ . Define
$\tau_{k,1}=\inf\{j\geq 0:$ the random walk path from $z$ to $\mathrm{S}_{j-1}$ doesn't share any point with $\mathbf{L}_k$,  $\mathrm{S}_{j}\in\mathbf{L}_k\}$,  $\tau_{k,m}=\min\big(\inf\{j>\tau_{k,m-1}:$ the random walk path from $\mathrm{S}_{\tau_{k,m-1}}$ to $\mathrm{S}_{j-1}$ doesn't share any point with $\mathbf{L}_k$,  $\mathrm{S}_{j} \in\mathbf{L}_k\}, \tau_{k,m-1}+1\big)$. Here we use $\min$ in the definition of $\tau_{k,m}$ since it may happen that $\mathrm{S}_{\tau_{k,m-1}+1}\in\mathbf{L}_k$.

Now suppose $\mathrm{S}_\tau\in\mathbf{l}_{k-1}$.  We reflect $\mathrm{S}[\tau_{k,1}+1,\tau]$ around $\mathbf{L}_k$ to get the reflected path $\mathrm{S}^r[\tau_{k,1}+1,\tau]$.. Then $\mathrm{S}^{(1)}=\mathrm{S}[0,\tau_{k,1}]\cup\mathrm{S}^r[\tau_{k,1}+1,\tau]$ is also a random walk path stopped at $\mathbf{l}_k$. However $\mathrm{S}^{(1)}$ may cross or touch $\p\hat\Omega_\de$ before $\tau$. So we decompose all random walk paths stopped at $\mathbf{l}_{k-1}$ into two classes, $\mathfrak{S}_1=\{\mathrm{S}: \mathrm{S}_\tau\in\mathbf{l}_{k-1}, \mathrm{S}^r[\tau_{k,1}+1,\tau-1]\cap\p\hat\Omega_\de=\emptyset\}$, and $\mathfrak{S}_1'=\{\mathrm{S}: \mathrm{S}_\tau\in \mathbf{l}_{k-1}, \mathrm{S}^r[\tau_{k,1}+1,\tau-1]\cap\p\hat\Omega_\de\not=\emptyset\}$. By iteration, we decompose $\mathfrak{S}_{m}'$ into two classes, $\mathfrak{S}_{m+1}=\{\mathrm{S}: \mathrm{S}\in\mathfrak{S}_m', \mathrm{S}^r[\tau_{k,m+1}+1,\tau-1]\cap\p\hat\Omega_\de=\emptyset\}$, $\mathfrak{S}_{m+1}'=\{\mathrm{S}: \mathrm{S}\in\mathfrak{S}_m', \mathrm{S}^r[\tau_{k,m+1}+1,\tau-1]\cap\p\hat\Omega_\de\not=\emptyset\}$, where $\mathrm{S}^r[\tau_{k,m+1}+1,\tau-1]$ is the reflection of $ \mathrm{S}[\tau_{k,m+1}+1,\tau-1]$ around $\mathbf{L}_k$. Since $\tau<\infty$ almost surely, $\cup_{m=1}^\infty \mathfrak{S}_m$ is just the set of all random walk path stopped at $\mathbf{l}_{k-1}$. Note that when $\mathrm{S}\in \mathfrak{S}_m'$, $\mathrm{S}^r[\tau_{k,m}+1,\tau-1]\cap\p\hat\Omega_\de\not=\emptyset$. Hence the mapping from $\mathrm{S}$ to $\mathrm{S}[0,\tau_{k,m}]\cup \mathrm{S}^r[\tau_{k,m}+1,\tau]$ is actually one-to-one. Hence
$$
\Pro(\mathrm{S}_\tau\in\mathbf{l}_{k-1}|\mathrm{S}_0=z)\leq \Pro(\mathrm{S}_\tau\in\mathbf{l}_{k}|\mathrm{S}_0=z),
$$
which implies that
$$\Pro(\mathrm{S}_\tau\in (z_{\ep,1/2}'+ik[\de^{-1/2-\ep}]\de,z_{\ep,1/2}'+i(k+1)[\de^{-1/2-\ep}]\de)   |\mathrm{S}_0=z)\leq 1/(1+[10^{-1}r_\de/\de^{1/2-\ep}]).$$
By the same reflection method, we can obtain that
\beq
\Pro(\mathrm{S}_\tau=z_{\ep,1/2}'  |\mathrm{S}_0=z)\leq 1/(1+[10^{-1}r_\de/\de^{1/2-\ep}]),\\
\Pro(\mathrm{S}_\tau=z_{\ep,1/2}'+i[\de^{-1/2-\ep}]\de   |\mathrm{S}_0=z)\leq 1/(1+[10^{-1}r_\de/\de^{1/2-\ep}]).
\eeq
The above three relations conclude \eqref{hm2} with $k=0$. For general $k$, consider $\mathbf{l}_k$, $\mathbf{l}_{k+1}$,$ \cdots$, $\mathbf{l}_{k+[10^{-1}r_\de/\de^{1/2-\ep}]}$ and apply the above reflection method to get the assertion. So we omit the details. The proof of \eqref{hm3}, \eqref{hm4} and \eqref{hm5} is similar too.

Now we turn to the proof of \eqref{hm1}. Define $\mathfrak{l}_k=\{z: \Im z=k[\de^{-1/2-\ep}]\de, 0\leq \Re z\leq k[\de^{-1/2-\ep}]\de\}$, $\mathfrak{l}_k'=\{z: \Re z=k[\de^{-1/2-\ep}]\de, 0\leq \Im z\leq k[\de^{-1/2-\ep}]\de\}$,
and $t_k=\inf\{j\geq 0: \mathrm{S}_j\in \mathfrak{l}_k\cup \mathfrak{l}_k'\}$. Since $\mathrm{S}_\tau\in \mathfrak{l}_1\cup\mathfrak{l}_1'$, $t_2<\infty$.
Without loss of generality, suppose $\mathrm{S}_{t_2}\in \mathfrak{l}_2$.
Now we run a random walk from $\mathrm{S}_{t_2}$. This new random walk is denoted by $\hat {\mathrm{S}}$ with $\hat{ \mathrm{S}}_0=\mathrm{S}_{t_2}$.
Define $\hat\tau=\inf\{j\geq 0: \hat {\mathrm{S}}_j\in \p\hat\Omega_\de\}$. Since $\dist(\hat{\mathrm{S}}_{\hat\tau},\p\hat\Omega_\de)\leq \de^{1/2-\ep}$,
$$
\Pro(\hat{\mathrm{S}}_{\hat\tau}\in \mathbf{l}_0|\hat{ \mathrm{S}}_0=\mathrm{S}_{t_2})\geq \ep_0
$$
for some absolute positive constant $\ep_0$. This and \eqref{hm2} with $k=0$  imply that $\Pro(t_2<\infty|\mathrm{S}_0=z)\preceq \de^{1/2-\ep}/r_\de$. Hence we can conclude the proof of \eqref{hm1}.
\end{proof}

\addcontentsline{toc}{subsection}{Asymptotic harmonic function}
\subsection*{Asymptotic harmonic function}
Let $\mathrm{U}=\{(x,y): x^2+ y^2\leq 1\}$ be the unit disk with center $(0,0)$ in $\R^2$, and $G(x,y)$ a real function defined on the discrete domain $\mathrm{U}_{\de_0}=\mathrm{U}\cap \de_0\Z^2$ and its boundary  $\p \mathrm{U}_{\de_0}$, where $0<\de_0<1$, and $\p \mathrm{U}_{\de_0}=\{(x,y): (x,y)\notin \mathrm{U}_{\de_0}, \exists v=(x_v,y_v)\in \mathrm{U}_{\de_0} \ \text{such that} \ |x_v-x|=\de_0 \ \text{or} \ |y_v-y|=\de_0\}$.
 For $v=(x_v,y_v)\in \mathrm{U}_{\de_0}$,
 suppose
\beqn
|\Delta G(v)| \preceq \de_0^{2+2\al}, \label{LapG}
\eeqn
where
$$\Delta G(v)=\frac 14\big(G(v+\de_0\be_1)+G(v+\de_0\be_2)+G(v-\de_0\be_1)+G(v-\de_0\be_2)-4G(v)\big).$$
For each $u\in \mathrm{U}_{\de_0}$ such that $\limsup_{\de_0\to 0}|u/\de_0|<\infty$, define
$$
\nabla_u G(v)=G(v+u)-G(v), \ \nabla_u^2 G(v)=G(v+u)+G(v-u)-2G(v).
$$
\begin{lemma} \label{asyhar}
Suppose $G(v)$ satisfies \eqref{LapG}. Then for any $v\in  \mathrm{U}_{\de_0}$ with $|v|\leq 1/2$,
$$
|\nabla_u G(v)|\preceq \de_0(\|G\|+\de^{2\al}), \ |\nabla_u^2 G(v)|\preceq \de_0^2(\|G\|,\de^{\al}),
$$
where $\|G\|=\max_{v\in \p\mathrm{U}_{\de_0}}|G(v)|$. In addition, if  $G(v)$ satisfies
\begin{equation} \label{weakLap}
|\Delta G(v)| \preceq \begin{cases}
      \de_0^{2+2\al}& y\not=0,\\
     \de_0^{1+\al} & y=0,
    \end{cases}
\end{equation}
for any $v\in  \mathrm{U}_{\de_0}$ with $|v|\leq 1/2$, we still have
$$
|\nabla_u G(v)|\preceq \de_0(\|G\|+\de^{\al}).
$$
\end{lemma}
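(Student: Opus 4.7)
The plan is to split $G$ into a discrete harmonic piece and a remainder piece, and to handle each via standard potential-theoretic estimates on the lattice. Precisely, first I would localize: for the point $v\in \mathrm{U}_{\de_0}$ with $|v|\le 1/2$ I work inside the disk $B(v;1/4)$, whose discrete version is still well inside $\mathrm{U}_{\de_0}$. Write $G=H+R$ on $B(v;1/4)\cap\de_0\Z^2$, where $H$ is the discrete harmonic extension of $G|_{\partial B(v;1/4)}$ (so $\Delta H=0$ and $|H|\le\|G\|$ by the maximum principle), and $R=G-H$ vanishes on $\partial B(v;1/4)$ with $\Delta R=\Delta G$ in the interior. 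This decoupling lets me treat the boundary data $\|G\|$ and the interior forcing $|\Delta G|$ separately.

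For $H$ I would invoke the standard discrete Cauchy/derivative estimates for bounded harmonic functions on the lattice (see e.g.\ Theorem 1.7.1 and its corollaries in Lawler's \emph{Intersections of Random Walks}): at an interior point separated from $\partial B(v;1/4)$ by a constant distance,
\[
|\nabla_u H(v)|\preceq \de_0\|G\|,\qquad |\nabla_u^2 H(v)|\preceq \de_0^2\|G\|.
\]
For $R$ I would use the Green's function representation $R(v)=-\sum_w G^B_{\de_0}(v,w)\Delta G(w)$ where $G^B_{\de_0}$ is the discrete Green's function of $B(v;1/4)$, and combine $|\Delta G(w)|\preceq \de_0^{2+2\al}$ with the gradient estimates
\[
|\nabla_u G^B_{\de_0}(v,w)|\preceq |u|\min(\de_0^{-1},|v-w|^{-1}),\qquad |\nabla_u^2 G^B_{\de_0}(v,w)|\preceq |u|^2\min(\de_0^{-2},|v-w|^{-2}),
\]
which follow by coupling simple random walks started at $v$ and $v+u$ (or two reflected copies, for the second difference) and using the usual potential kernel asymptotics. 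Summing, a short annular computation gives $\sum_{w}\min(\de_0^{-1},|v-w|^{-1})\preceq \de_0^{-2}$ and $\sum_{w}\min(\de_0^{-2},|v-w|^{-2})\preceq \de_0^{-2}\log\de_0^{-1}$, so $|\nabla_u R(v)|\preceq \de_0^{1+2\al}$ and $|\nabla_u^2 R(v)|\preceq \de_0^{2+2\al}\log\de_0^{-1}$, which are absorbed into the bounds $\de_0(\|G\|+\de_0^{2\al})$ and $\de_0^2(\|G\|+\de_0^{\al})$ respectively.

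For the weaker hypothesis \eqref{weakLap} I would split the sum for $R$ according to whether $w$ lies on the line $\{y=0\}$ or not. Off the line the previous bound applies. On the line, the discrete set inside $B(v;1/4)$ contains only $O(\de_0^{-1})$ points, so writing $d=\mathrm{dist}(v,\{y=0\})$ and comparing with the one-dimensional integral yields $\sum_{w\in\{y=0\}\cap B(v;1/4)}\min(\de_0^{-1},|v-w|^{-1})\preceq \de_0^{-1}$; together with $|\Delta G(w)|\preceq \de_0^{1+\al}$ on this set this contributes $\preceq |u|\de_0^{\al}$ to $|\nabla_u R|$, which again folds into $\de_0(\|G\|+\de_0^{\al})$. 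The main technical obstacle is making the Green's-function gradient inequalities rigorous (in particular the near-diagonal regime $|v-w|\lesssim \de_0$ where $G^B_{\de_0}$ is only $O(\log\de_0^{-1})$ and one must be careful not to lose a power of $\de_0$); I would reduce this to the classical free Green's-function estimates on $\Z^2$ by writing $G^B_{\de_0}(v,\cdot)$ as the free Green's function minus its boundary harmonic extension and then applying step~3 to the latter.
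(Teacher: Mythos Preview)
Your proposal is correct and follows essentially the same route as the paper: decompose $G=H+R$ with $H$ discrete harmonic, apply derivative estimates for bounded harmonic functions to $H$, and represent $R$ via the discrete Green's function to reduce matters to gradient bounds $|\nabla_u G^B_{\de_0}(v,w)|\preceq \de_0/|v-w|$ and $|\nabla_u^2 G^B_{\de_0}(v,w)|\preceq \de_0^2/|v-w|^2$. The only notable technical difference is that the paper obtains these Green's-function gradient bounds directly from the potential-kernel identity $\mathrm{G}_{\mathrm{U}_{\de_0}}(v,w)=\sum_{z\in\partial\mathrm{U}_{\de_0}}H(v,z)\big(a(z-w)-a(v-w)\big)$ together with Lawler's asymptotics for $a(\cdot)$, rather than via a coupling argument; this also handles the near-diagonal regime you flag as the main obstacle, since the paper simply uses the crude $O(\log\de_0^{-1})$ bound for the Green's function when $|v-w|<\de_0^{1-\al/2}$ and absorbs the resulting loss into the spare power of $\de_0$.
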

\begin{proof}
Let $H_1(v)$ be the discrete harmonic function on $\mathrm{U}_{\de_0}$ such that $H_1(v)=G(v)$ when $v\in \p\mathrm{U}_{\de_0}$, and let $R(v)=G(v)-H_1(v)$. So
\beq
R|_{\p\mathrm{U}_{\de_0}}=0.
\eeq
Since $R|_{\p\mathrm{U}_{\de_0}}=0$, we have in $\mathrm{U}_{\de_0}$,
$$
R(v)=\sum_{w\in \mathrm{U}_{\de_0}}\big(-\Delta R(w)\big)\mathrm{G}_{\mathrm{U}_{\de_0}}(v,w),
$$
where $\mathrm{G}_{\mathrm{U}_{\de_0}}(v,w)$ is the expected number of visits to $w$ by a simple random walk starting at $v$ before leaving $\mathrm{U}_{\de_0}$.

Since $\mathrm{G}_{\mathrm{U}_{\de_0}}(v,w)=\sum_{z\in \p\mathrm{U}_{\de_0}} H(v,z)\big(a(z-w)-a(v-w)\big)$, where $H(v,z)$ is the hitting distribution of $\p\mathrm{U}_{\de_0}$ defined by $H(v,z)=\Pro(\mathrm{S}_{\tau_0}=z|\mathrm{S}_0=v)$ with $\tau$ the hitting time of $\p\mathrm{U}_{\de_0}$ by $\mathrm{S}$, the function $a(\cdot)$ is defined by (1.40) in~\cite{Law91}, it follows from Theorem 1.6.5 in~\cite{Law91} that
$$
|\nabla_u \mathrm{G}_{\mathrm{U}_{\de_0}}(v,w)| \preceq \de_0/|v-w|, \ |\nabla_u^2 \mathrm{G}_{\mathrm{U}_{\de_0}}(v,w)| \preceq \de_0^2/|v-w|^2
$$
if $\de_0/|v-w|\to 0$ as $\de_0\to 0$. In the above relation, we regard $ \mathrm{G}_{\mathrm{U}_{\de_0}}(v,w)$ as a function of $v$ when we apply the operator $\nabla_u$ or $\nabla_u^2$. Hence by \eqref{LapG}, one has
\beq
&&|\nabla_uR(v)|\\
&\leq& |\sum_{w\in \mathrm{U}_{\de_0},|v-w|\geq \de_0^{1-\al/2}}\big(-\Delta R(w)\big)\nabla_u\mathrm{G}_{\mathrm{U}_{\de_0}}(v,w)|\\
&&\qquad \qquad+|\sum_{w\in \mathrm{U}_{\de_0},|v-w|< \de_0^{1-\al/2}}\big(-\Delta R(w)\big)\nabla_u\mathrm{G}_{\mathrm{U}_{\de_0}}(v,w)|\\
&\preceq&\sum_{w\in \mathrm{U}_{\de_0},|v-w|\geq \de_0^{1-\al/2}} \de_0^{2+2\al} \de_0/|v-w|+\sum_{w\in \mathrm{U}_{\de_0},|v-w|< \de_0^{1-\al/2}} \de_0^{2+2\al} \log\de_0^{-1}\\
&\preceq&\de_0^{1+2\al}
\eeq
and
\beq
&&|\nabla_u^2R(v)|\\
&\preceq&\sum_{w\in \mathrm{U}_{\de_0},|v-w|\geq \de_0^{1-\al/2}} \de_0^{2+2\al} \de_0^2/|v-w|^2+\sum_{w\in \mathrm{U}_{\de_0},|v-w|< \de_0^{1-\al/2}} \de_0^{2+2\al} \log\de_0^{-1}\\
&\preceq&\de_0^{2+\al}.
\eeq
This combined with the property of discrete harmonic functions implies the first assertion. If \eqref{LapG} is replaced by \eqref{weakLap}, then $|\nabla_uR(v)|\preceq \de_0^{1+\al}$. So we also have the second assertion.
\end{proof}

\begin{remark}
When we only have \eqref{weakLap}, we can not obtain $|\nabla_u^2 G(v)|\preceq \de_0^2\max(\|G\|,1)$. However, it follows from the expressions of $R(v)$ and $\nabla_uR(v)$  in the proof of Lemma \ref{asyhar}  that $R(v)\preceq \de_0^{\al}$ and $\nabla_u R(v)\preceq \de_0^{1+\al}$ when $|v|\leq 1/2$ and \eqref{weakLap} holds. This is enough for us to derive Proposition \ref{convFb} in Section \ref{Con-F-boudary}.
\end{remark}


\addcontentsline{toc}{section}{References}

\bibliographystyle{halpha}

\begin{thebibliography}{00}
\bibitem[Be91]{Be91}
Claude B\'elisle,
\newblock Windings of spherically symmetric random walks via Brownian embedding.
\newblock {\em Statist. Probab. Lett.}, 12, 345-349, (1991).

\bibitem[BD13]{BD13}
Vincent Beffara and Hugo Duminil-Copin,
\newblock Lectures on planar percolation with a glimpse of Schramm Loewner Evolution.
\newblock {\em Probab. Surveys}, 10:1-50, (2013).

\bibitem[BR12]{BR12}
Bela Bollob\'as and  Oliver Riordan,
\newblock {\em Percolation},
\newblock Cambridge University Press, 2006.

\bibitem[BH57]{BH57}
Simon R. Broadbent and John M. Harnmerslev.
\newblock Percolation processes. I. crystals and mazes.
\newblock {\em Proc. Cambridge Philos. Soc.}, 53:629-641, (1957).

\bibitem[Car05]{Cardynotes}
John Cardy.
\newblock  $SLE$ for theoretical physicists.
\newblock  {\em Ann. Physics}, 318:81--118, 2005.

\bibitem[CN07]{CN}
Federico Camia and Charles M. Newman  (2007).
\newblock Critical percolation exploration path and $SLE_6$: a proof of convergence.
\newblock {\em Probability Theory and Related Fields},  139:473–519, (2007).

\bibitem[CDHKS]{CDHKS}
Dmitry Chelkak, Hugo Duminil-Copin, Cl\'ement Hongler, Antti Kemppainen and Stanislav Smirnov.
\newblock Convergence of Ising interfaces to Schramm's SLE curves.
\newblock {\em Comptes Rendus Mathematique}, 352:157–161, 2014 .


\bibitem[DC13]{DC13}
Hugo Duminil-Copin,
\newblock {\em Parafermionic observables and their applications to planar
statistical physics models},
\newblock Ensaios Matematicos, Brazilian Mathematical Society, vol 25, 2013.

\bibitem[DMT20]{DMT20}
Hugo Duminil-Copin, Ioan Manolescu, and Vincent Tassion.
 \newblock Planar random-cluster model: fractal properties of the critical phase.
 \newblock \mbox{arXiv:2007.14707}, 2020,  to appear in {\em Probability Theory and Related Fields}.

 \bibitem[DST17]{DST17}
Hugo Duminil-Copin, Vladas Sidoravicius and Vincent Tassion
\newblock Continuity of the Phase Transition for Planar Random-Cluster and Potts Models with $1\leq q\leq 4$.
\newblock {\em Communications in Mathematical Physics}, 349:47–107, 2017.

\bibitem[Grim99]{Grim99}
G.~Grimmett. (1999). {\em Percolation}. Springer, Berlin, second edition.

\bibitem[GK04]{GKnotes}
  Ilya~A. Gruzberg and Leo~P. Kadanoff.
 \newblock The Loewner equation: maps and shapes.
\newblock {\em J. Statist. Phys.}, 114:1183--1198, 2004.

\bibitem[IkPon12]{IkPon12}
Yacine Ikhlef and Anita Ponsaing.
\newblock Finite-size left-passage probability in percolation.
 \newblock {\em J Stat Phys.}, 149:10–36, 2012.

 \bibitem[KSZ98]{KSZ98}
Harry Kesten, Vladas Sidoravicius and Yu Zhang.
\newblock  Almost all words are seen in critical site percolation
on the tringular lattice.  \newblock {\em  Electr. J. Prob.}, 3: paper no. 10.  1998.

 \bibitem[LPS94]{LPS94}
Robert Langlands, Philippe Pouliot and Yvan Saint-Aubin.
\newblock Conformal invariance
in two-dimensional percolation.
\newblock {\em Bull. Arner. Math. Sac.}, 30:1-61. (1994).


 \bibitem[Law91]{Law91} Gregory~F. Lawler.
 \newblock {\em Intersections of random walks},
\newblock Birkh\"auser, Boston, 1991.

\bibitem[Law05]{lawlerbook} Gregory~F. Lawler. \newblock {\em Conformally Invariant Processes in the Plane}, volume 114 of
  {\em Mathematical Surveys and Monographs}.
\newblock American Mathematical Society, Providence, RI, 2005.

\bibitem[LSW01A]{LSW01A} Gregory~F. Lawler, Oded Schramm, and Wendelin Werner.
\newblock Values of Brownian intersection exponents I: half-plane exponents.
\newblock{\em Acta Math.}, 187:237-273, 2001.

\bibitem[LSW01B]{LSW01B} Gregory~F. Lawler, Oded Schramm, and Wendelin Werner.
\newblock  Values of Brownian intersection exponents II:  plane exponents.
 \newblock  {\em Acta Math.}, 187:275-308, 2001.


\bibitem[LSW02]{LSW02} Gregory~F. Lawler, Oded Schramm, and Wendelin Werner.
\newblock   Values of Brownian intersection exponents III: two-sided exponents.
\newblock {\em Ann. Int. Henri Poincar\'{e} Probab. Statist.}, 38:109-123, 2002.

\bibitem[LSW04]{LSWlerw}
Gregory~F. Lawler, Oded Schramm, and Wendelin Werner.
\newblock Conformal invariance of planar loop-erased random walks and uniform
  spanning trees.
\newblock {\em Ann. Probab.}, 32(1B):939--995, 2004.

\bibitem[Nolin]{Nolin}
Nolin Pierre
\newblock Near-critical percolation in two dimensions. \newblock {\em Electron. J. Probab.},13, 1562–1623, 2008.

\bibitem[Sch00]{Schramm}
Oded Schramm.
\newblock Scaling limits of loop-erased random walks and uniform spanning
  trees.
\newblock {\em Israel J. Math.}, 118:221--288, 2000.

\bibitem[SS09]{SSfree}
Oded Schramm and Scott Sheffield.
\newblock Contour lines of the two-dimensional discrete Gaussian free field.
\newblock  {\em Acta Math.}, 202:21-137, 2009.

\bibitem[SS11]{SS11}
Oded Schramm and Stanislav Smirnov.
\newblock On the scaling limits of planar
percolation. \newblock {\em Ann. Probab.}, 39(5):1768–1814, 2011. With an appendix by Christophe Garban.

\bibitem[Sh98]{Sh98}
Zhan Shi
\newblock Windings of Bronian motion and random walks in the plane.
 \newblock {\em Ann. Probab.}, 26(1):112-131, 1998.

\bibitem[Smi01]{Smirnov}
 Stanislav Smirnov.
\newblock Critical percolation in the plane: Conformal invariance,
  Cardy's formula, scaling limits.
\newblock  {\em C. R. Acad. Sci. Paris S\'er. I Math.},
  333:239--244, 2001.


\bibitem[Smi10]{Smi10}
 Stanislav  Smirnov.
\newblock Conformal invariance in random cluster models.
 I. Holomorphic fermions in the Ising model. \newblock {\em Ann. Math. (2)}, 172:1435-1467, 2010.
\newblock \mbox{arXiv:0708.0039v1}.

\bibitem[Uch10]{Uch10}
 K\"ohei Uchiyama.
\newblock The hitting distribution of a line for two dimensional random walks. \newblock {\em Trans. Amer. Math. Soc.}, 362:2559-2588, 2010.

\bibitem[Wer04]{wernernotes}
Wendelin Werner. \newblock \emph{Random planar curves and Schramm-Loewner evolutions},
  Lectures on probability theory and statistics,
  Lecture Notes in Math.,
  1840, pp.~107--195, \newblock Springer, Berlin.  \mbox{arXiv:math.PR/0303354},  (2004).


\end{thebibliography}

\end{document}